\newtheorem{theorem}{Theorem}[chapter]
\newtheorem{lemma}[theorem]{Lemma}
\newtheorem{proposition}[theorem]{Proposition}
\theoremstyle{definition}
\newtheorem{definition}[theorem]{Definition}
\newtheorem{example}[theorem]{Example}
\newtheorem{conventions}[theorem]{Conventions}
\theoremstyle{remark}
\newtheorem{remark}[theorem]{Remark}
\def\R{\mathbb{R}}
\def\Z{\mathbb{Z}}
\def\Q{{\mathbb{Q}}}
\def\C{{\mathcal{C}}}
\def\CP1{\mathbb{CP}^1}
\def\wt{\widetilde}
\def\ol{\overline}
\def\a{\alpha}
\def\g{\gamma}
\def\G{\Gamma}
\def\zp{\mathbb{Z}[\pi]}
\def\zpdag{\Z[\pi^{\dag}]}
\def\zpd{\Z[\pi']}
\def\hp{h_{\partial}}
\def\NG{\mathcal{N}\Gamma}
\def\K{\mathcal{K}}
\def\Ups{\Upsilon}
\def\zh{\Z[\Z \ltimes H]}
\def\zhd{\Z[\Z \ltimes H']}
\def\zhdag{\Z[\Z \ltimes H^{\dag}]}
\def\zhddag{\Z[\Z \ltimes H^{\ddag}]}
\def\zhpc{\Z[\Z \ltimes H^\%]}
\def\COT{Cochran-Orr-Teichner }
\def\COTN{Cochran-Orr-Teichner}
\def\CHL{Cochran-Harvey-Leidy }
\def\CG{Casson-Gordon }
\def\UG{\mathcal{U}\Gamma}
\def\zpx{\Z[\pi_1(X)]}
\def\px{\pi_1(X)}
\def\la{\lambda}
\def\P{\mathcal{P}}
\def\Y{\mathcal{Y}}
\def\ac2{\mathcal{AC}_2}
\def\ba{\begin{array}}
\def\ea{\end{array}}
\def\bn{\begin{enumerate}}
\def\en{\end{enumerate}}
\def\toiso{\xrightarrow{\simeq}}
\def\mc{\mathcal}
\newcommand{\eps}{\varepsilon}
\DeclareMathOperator{\Ext}{Ext}
\DeclareMathOperator{\Hom}{Hom}
\DeclareMathOperator{\im}{im}
\DeclareMathOperator{\cl}{cl}
\DeclareMathOperator{\Wr}{Wr}
\DeclareMathOperator{\coker}{coker}
\DeclareMathOperator{\tr}{tr}
\DeclareMathOperator{\spec}{spec}
\DeclareMathOperator{\Id}{Id}
\DeclareMathOperator{\Bl}{Bl}
\begin{document}
\frontmatter
\title{A Second Order Algebraic Knot Concordance Group}

\author{Mark Powell}
\address{Department of Mathematics, Rawles Hall, 831 East Third Street, Bloomington, IN 47405, USA}
\email{macp@indiana.edu}


\subjclass[2010]{Primary 57M25, 57M27, 55U15, 57N70;\\Secondary 57M05, 57M10, 57R65, 57R67, 57P10}

\begin{abstract}
Let $Knots$ be the abelian monoid of isotopy classes of knots $S^1 \subset S^3$ under connected sum, and let $\mathcal{C}$ be the topological knot concordance group of knots under connected sum modulo slice knots.  Cochran, Orr and Teichner defined a filtration of $\C$:
\[\C \supset \mathcal{F}_{(0)} \supset \mathcal{F}_{(0.5)} \supset \mathcal{F}_{(1)} \supset \mathcal{F}_{(1.5)} \supset \mathcal{F}_{(2)} \supset \dots \]
The quotient $\mathcal{C}/\mathcal{F}_{(0.5)}$ is isomorphic to Levine's algebraic concordance group, which we denote $\mathcal{AC}_1$; $\mathcal{F}_{(0.5)}$ is the algebraically slice knots.  The quotient $\mathcal{C}/\mathcal{F}_{(1.5)}$ contains all metabelian concordance obstructions.  The Cochran-Orr-Teichner $(1.5)$-level two stage obstructions map the concordance class of a knot to a pointed set $(\mathcal{COT}_{(\C/1.5)},U)$.

We define an abelian monoid of chain complexes $\mathcal{P}$, with a monoid homomorphism $Knots \to \mathcal{P}$.  We then define an algebraic concordance equivalence relation on $\mathcal{P}$ and therefore a group $\mathcal{AC}_2 := \mathcal{P}/\sim$, our \emph{second order algebraic knot concordance group}.  Our results can be summarised in the following diagram:
\[\xymatrix @R+1cm @C+1cm{
Knots \ar[r] \ar @{->>}[d] & \mathcal{P} \ar @{->>}[d]  \\
\mathcal{C} \ar[r] \ar @{->>} [d] & \mathcal{AC}_2 \ar@{-->}[d]  \\
\mathcal{C}/\mathcal{F}_{(1.5)} \ar@{-->}[r]  \ar[ur] & \mathcal{COT}_{(\C/1.5)}.
}\]
That is, we define a group homomorphism $\mathcal{C} \to \mathcal{AC}_2$ which factors through $\mathcal{C}/\mathcal{F}_{(1.5)}$.  We can extract the two stage Cochran-Orr-Teichner obstruction theory from $\mathcal{AC}_2$: the dotted arrows are morphisms of pointed sets.  There is a surjective homomorphism $\mathcal{AC}_2 \to \mathcal{AC}_1$, and we show that the kernel of this homomorphism is non--trivial.  Our second order algebraic knot concordance group $\mathcal{AC}_2$ is a \emph{single stage obstruction group.}
\end{abstract}

\maketitle

\setcounter{page}{4}
\tableofcontents

\mainmatter


\chapter{Introduction}



\section{Introduction to knot concordance}

\begin{definition}
An oriented knot $K$ is an oriented, locally flat embedding $K \colon S^1 \subset S^{3}$.  An oriented knot $K$ is topologically \emph{slice} if there is an oriented locally flat embedding of a disk $D^2 \subseteq D^4$ whose boundary $\partial D^2 \subset \partial D^4 = S^3$ is the knot $K$.  Here locally flat means locally homeomorphic to a standardly embedded $\R^k \subseteq \R^{k+2}$.

Two knots $K_1, K_2 \colon S^1 \subset S^3$ are \emph{concordant} if there is an oriented locally flat embedding of an annulus $S^1 \times I \subset S^3 \times I$ such that $\partial (S^1 \times I)$ is $K_1 \times \{0\} \subseteq S^3 \times \{0\}$ and $-K_2 \times \{1\} \subset S^3 \times \{1\}$.   Given a knot $K$, the knot $-K$ arises by reversing the orientation of the knot and of the ambient space $S^3$: on diagrams reversing the orientation of $S^3$ corresponds to switching under crossings to over crossings and vice versa.  The set of concordance classes of knots form a group $\C$ under the operation of connected sum with the identity element given by the class of slice knots, or knots concordant to the unknot.
\qed \end{definition}

Fox and Milnor first defined the knot concordance group $\C$ in \cite{fm}; they were interested in removing singularities of piecewise--linear surfaces in a 4-manifold: a singularity is removable if a piecewise--linear sphere centred on the singularity intersects the surface in a slice knot.  They gave a condition which a slice knot satisfies, namely that, up to a unit, its Alexander polynomial factorises in the form $f(t)f(t^{-1})$ for some $f$.

One can also consider smoothly slice knots and require that embeddings are smooth rather than just locally flat, but we will primarily consider topological manifolds and locally flat embeddings in this work.

The aim of this work is to unify some previously known obstructions to the concordance of knots using chain complexes with a Poincar\'{e} duality structure, and to present the beginning of a framework with which to apply the algebraic theory of surgery of A. Ranicki \cite{Ranicki3} to classification problems involving 3- and 4-dimensional manifolds.

The first major progress in the study of the concordance group was in 1968 when Levine defined an \emph{algebraic concordance group} $\mathcal{AC}_1$, namely the Witt group of integral Seifert forms.  The \emph{Seifert form} is the linking form on the first homology $H_1(F;\Z)$ of a Seifert surface $F$, defined by pushing one of a pair of curves off the surface slightly along a normal vector.  A form is said to be \emph{algebraically null-concordant} if it is represented by a matrix congruent to one of the form:
\[\left(
    \begin{array}{cc}
      0 & A \\
      B & C \\
    \end{array}
  \right),
\]
for block matrices $A,B,C$ such that $C=C^T$ and $A-B^T$ is invertible.  To obtain a group, we add two forms together via direct sum and $-V$ is the inverse of $V$.

The idea is that if there is a half-basis of curves on $F$ with self linking zero, it might be possible to cut the Seifert surface along these curves and glue in discs, embedded in $D^4$, so as to construct a slice disc.  This is called \emph{ambient surgery}.  For knots with Alexander polynomial one, this is possible \cite{FQ}, \cite{GT03}; we can embed the discs topologically.  However, in general this is problematic as we shall see below.  Certainly a slice knot has an algebraically null-concordant Seifert form, so we have an algebraic obstruction.  Levine \cite{Levine} and Stoltzfus \cite{Stoltzfus} calculated the Witt group of integral Seifert forms $\mathcal{AC}_1$ to be isomorphic to the countably infinitely generated abelian group:
\[\bigoplus_{\infty}\;\Z \oplus \bigoplus_{\infty}\;\Z_2 \oplus \bigoplus_{\infty}\;\Z_4.\]
The infinite cyclic summands are detected by the Levine-Tristram $\omega$-signatures: for a Seifert form $V$ and $\omega \in S^1\setminus \{1\} \subset \mathbb{C}$, the $\omega$-signature is the signature of:
\[(1-\omega)V + (1-\ol{\omega})V^T.\]

\begin{definition}\label{Defn:highdimknots}
An oriented $m$-dimensional knot $K$ is an oriented, locally flat embedding of $S^m \subset S^{m+2}$.  An $m$-knot is topologically slice if there is an oriented, locally flat embedding of a disk $D^{m+1} \subseteq D^{m+3}$ whose boundary $\partial D^{m+1} \subset \partial D^{m+3} = S^{m+2}$ is the knot $K$.  The group of concordance classes of $m$-knots is denoted $\C_m$.
\qed \end{definition}

Every $m$-knot has a Seifert $(m+1)$-manifold $F$ in $S^{m+2}$, with boundary the knot, and there is a linking form on the middle dimensional homology of $F$ defined as above which gives us the Seifert form.  We push the interior of $F$ into $D^{m+3}$, and try to perform ambient surgery in $D^{m+3}$ on the Seifert manifold to make it highly connected and therefore, by the $h$-cobordism theorem, a disk $D^{m+1}$.
In the case of even-dimensional knots there is no obstruction to this, and we can always guarantee by general position that we can glue in embedded rather than immersed discs when we try to do ambient surgery.  Kervaire \cite{Kervaire2} showed that:
\[\C_{2n} \cong 0.\]
For odd dimensional knots $K \colon S^{2n-1} \subset S^{2n+1}$, the algebraic concordance class of the Seifert form obstructs the possibility of embedding all of the surgery disks. Levine \cite{Levine} showed for odd high dimensional knots, with $n \geq 2$, that this is the only obstruction, so that\footnote{Actually, $\mathcal{AC}_1$ takes a slightly different form when $n$ is even: we require the Seifert form $V$ to satisfy that $V + (-1)^n V^T$ is invertible over $\Z$.  Also $\C_3$ maps to an index 2 subgroup of $\mathcal{AC}_1$.  See \cite{Levine} for details.}:
\[\C_{2n-1} \xrightarrow{\simeq} \mathcal{AC}_1.\]
For high-dimensional knots we can always assume by surgery that the fundamental group of the complement of a Seifert $2n$-manifold pushed into $D^{2n+2}$ is $\Z$, and using the Whitney trick we can always guarantee that we can glue in embedded discs, as long as the algebraic obstruction vanishes, when we try to do ambient surgery.  An odd-dimensional knot in high dimensions, so when $n > 1$, is slice if and only if it is algebraically null-concordant.  However when $n=1$, our case of interest, the Whitney trick fails, this program does not work and Levine's map is only a surjection.  Whenever we try to do surgery to kill an element of the fundamental group of the knot complement, we simultaneously create another element of the fundamental group, so we cannot assume, even up to concordance, that the knot group is $\Z$; indeed, by the Loop theorem of Papakyriakopoulos \cite{Papa57}, \cite{Hempel}, the only knot with cyclic fundamental group is the unknot.  As a result, the fundamental group of a slice disc complement will not typically be $\Z$, unless the Alexander polynomial of the knot is one, but will also be more complicated.  In dimension four there is no guarantee that disks can be embedded, only immersed, even if the linking form obstruction vanishes, and attempts to remove intersection points create further problems with the fundamental group.  These problems do not disappear in general unless, as was done by Casson and Freedman (\cite{Casson}, \cite{FQ}), we can push them away to infinity.  The fundamental groups of knot concordance exteriors are in general not ``good'' in the sense of Freedman, so this will not be possible.  Obstructing concordance of knots in dimension three starts with the high-dimensional obstruction, but in contrast to the high-dimensional case, this is only the first stage.

There is a more intrinsic version of the algebraic concordance obstruction.  We will primarily make use of this version.  We now return, for the exposition, to considering 1-dimensional knots, as they are our primary case of interest\footnote{The following construction has an analogous version for all odd dimensional knots.}. If we cut the knot exterior
\[X := \cl(S^3 \setminus (K(S^1) \times D^2))\]
open along a Seifert surface, and then glue infinitely many copies of $X$ together along the Seifert surface, we obtain a space $X_{\infty}$, the \emph{infinite cyclic} or \emph{universal abelian cover} of the knot exterior, which is independent of the choice of Seifert surface.  The $\Z[\Z]$-module $H_1(X_{\infty};\Z) \cong H_1(X;\Z[\Z])$, called the \emph{Alexander module}, is therefore an invariant of the knot.  It is a torsion module (see \cite{Levine2}), and we can define the Blanchfield homology linking pairing
\[\Bl \colon H_1(X;\Z[\Z]) \times H_1(X;\Z[\Z]) \to \Q(t)/\Z[t,t^{-1}]\]
as follows \cite{Blanchfield}.  For $x,y \in C_1(X_{\infty};\Z),$  find $z \in C_2(X_{\infty};\Z)$ such that $\partial z = p(t) x$ for some Laurent polynomial $p(t) \in \Z[\Z]= \Z[t,t^{-1}]$ where $t$ generates the deck transformation group of $X_{\infty}$ (taking $p(t) = \Delta_K(t)$, the Alexander polynomial, will always work, for instance).  Then define:
\[\Bl(x,y) = \frac{\sum_{i=-\infty}^{\infty}(z,yt^{-i})t^i}{p(t)} \in \Q(t)/\Z[t,t^{-1}]\]
where $(\text{ },\text{ })$ is the $\Z$-valued intersection pairing of chains in $C_2$ and $C_1$.

This is equivalent to defining the Blanchfield pairing via the isomorphisms:
\begin{eqnarray*}H_1(X;\Z[t,t^{-1}]) \xrightarrow{\simeq} H^2(X;\Z[t,t^{-1}]) \xrightarrow{\simeq} H^1(X; \Q(t)/\Z[t,t^{-1}])\\ \xrightarrow{\simeq} \Hom_{\Z[t,t^{-1}]}(H_1(X;\Z[t,t^{-1}]),\Q(t)/\Z[t,t^{-1}])\end{eqnarray*}
where the isomorphisms come from Poincar\'{e} duality, a connecting Bockstein homomorphism, and a Universal Coefficient Spectral Sequence.  The Blanchfield form arises from a Seifert matrix $V$ as follows (see \cite{Kearton}):
\[\Bl(a,b) = a^T(1-t)(tV-V^T)^{-1}b \mod \Z[\Z].\]
Note that in order to invert the matrix it is necessary to pass to the field of fractions $\Q(t)$ of $\Z[t,t^{-1}]$.  The appearance of the factor $(1-t)$ corresponds to the duality; it measures the intersection of 2-chains and 1-chains in a certain handle decomposition which begins with the Seifert surface: \cite[page~158]{Kearton2}.  For a slice knot, the Blanchfield form is \emph{metabolic}; that is, there is a submodule $P \subset H_1(X;\Z[\Z])$, a metaboliser, such that $P = P^{\bot}$, where
\[P^{\bot}:= \{v \in H_1(X;\Z[\Z])\,|\, \Bl(v,w) = 0 \text{ for all } w \in P\}.\]

The next significant development in the study of the classical knot concordance group $\C_1$ was the seminal work of Casson and Gordon \cite{CassonGordon}, who found the first algebraically null-concordant knots which are not slice; they used the metaboliser of a linking form on a $k$-fold branched covering of $S^3$ over a knot, for prime power $k$, to define representations of the fundamental group of a 4-manifold whose boundary is $M_K$, the result of performing zero-framed surgery on $K$.  They used these representations to calculate the signature of the twisted intersection form of the 4-manifold.  They made use of the key observation that the vanishing of first-order linking information in a 3-manifold controls the representations of the fundamental group which extend over a 4-manifold which has the 3-manifold as its boundary.  This enables the construction of a second order intersection form on the 4-manifold.  For a slice disc exterior the signatures of the intersection form which Casson and Gordon defined vanish, yielding an obstruction theory.

In 1999, Cochran-Orr-Teichner \cite{COT} defined an infinite filtration of the concordance group.  They understood that the Casson-Gordon invariants obstructed sliceness on a second level.  Recall the heuristic above that if the Seifert form is algebraically null--concordant we can attempt to surger along the curves with zero self-linking and try to create a slice disk.  Instead of being able to glue in disks, we can certainly glue in surfaces.  We can then ask whether these surfaces have sufficiently many curves with zero self linking: the Casson-Gordon invariants obstruct, roughly speaking, the existence of these curves.  The \COT filtration essentially iterates this idea.  It is defined by looking at successive quotients of the derived series (Definition \ref{Defn:derived_subgroups}) of the fundamental group, and constructing so-called higher order Blanchfield forms to control which representations extend over their 4-manifolds.  By using the Blanchfield form on the infinite cyclic cover instead of the $\Q/\Z$-valued linking forms on the finite cyclic covers as in the Casson-Gordon type representations, \COT keep greater control on the fundamental group, which significantly improves the power of their obstruction theory.  Their representations map into fixed groups which they call universally solvable groups, and the values of the representations depend for their definitions on choices of the way in which the lower level obstructions vanish.  See Chapter \ref{chapter:COTsurvey} for a survey of the \COT theory.

Finally, with this extra control on the fundamental group, extra technology is required to extract invariants of the Witt classes of intersection forms.  \COT use the theory of $L^{(2)}$-signatures, in particular the \emph{Cheeger--Gromov--Von--Neumann $\rho$-invariant}, to obtain signatures which capture their obstruction theory and are able to show that their filtration is highly non-trivial.

The goal of this work is to present a unified obstruction theory for the first two stages of the \COT filtration, which does not depend on any choices.

\begin{definition}
We recall the definition of the zero-framed surgery along $K$ in $S^3$, which we denote by $M_K$: attach a solid torus to the boundary of the knot exterior $$X=\cl(S^3 \setminus (K(S^1) \times D^2))$$ in such a way that the longitude of the knot bounds in the solid torus.
\[M_K= X \cup_{S^1 \times S^1} D^2 \times S^1.\]
\qed \end{definition}

The \COT filtration is based on the following characterisation of topologically slice knots: notice that the exterior of a slice disc for a knot $K$ is a 4-manifold whose boundary is $M_K$, since the extra $D^2 \times S^1$ which is glued onto the knot exterior $X$ is the boundary of a regular neighbourhood of a slice disc.

\begin{proposition}\label{basicfact_intro}
A knot $K$ is topologically slice if and only if $M_K$ bounds a topological 4-manifold $W$ such that
\begin{description}
\item[(i)]$i_* \colon H_1(M_K;\Z) \xrightarrow{\simeq} H_1(W;\Z)$ where $i \colon M_K \hookrightarrow W$ is the inclusion map;
\item[(ii)] $H_2(W;\Z) \cong 0$; and
\item[(iii)] $\pi_1(W)$ is normally generated by the meridian of the knot.
\end{description}
\end{proposition}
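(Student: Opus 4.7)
The plan is to prove both implications separately.

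For the forward direction, given a slice disk $D \subset D^4$ for $K$, take $W := \cl(D^4 \setminus \nu(D))$, where $\nu(D)$ is an open tubular neighbourhood of $D$. Since $\nu(D) \cong D \times D^2$ and $\nu(D) \cap S^3 = \nu(K)$, a direct inspection identifies
\[\partial W = X \cup_{T^2} (D \times S^1) \cong X \cup_{T^2} (D^2 \times S^1) = M_K,\]
with the meridian of the slice disk $D$ becoming the meridian $\mu$ of $K$. To verify conditions (i) and (ii), I apply Mayer--Vietoris to the decomposition $D^4 = W \cup \nu(D)$: both $D^4$ and $\nu(D)$ are contractible while $W \cap \nu(D) \simeq S^1$ (a meridian circle of $D$ representing $\mu$), so the long exact sequence immediately gives $H_1(W) \cong \Z$ generated by $\mu$ and $H_2(W) = 0$. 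Van Kampen applied to the same decomposition gives $\pi_1(W) / \langle\langle \mu \rangle\rangle = \pi_1(D^4) = 1$, establishing (iii).

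For the reverse direction, given $W$ satisfying (i)--(iii), I attach a four-dimensional $2$-handle $h^2 \cong D^2 \times D^2$ to $W$ along a tubular neighbourhood of $\mu \subset M_K$, with the framing coming from a meridional disk of $K$ in $S^3$ (equivalently, the framing extending the product structure of the glued $D^2 \times S^1 \subset M_K$). Let $V := W \cup h^2$. A short Van Kampen and long-exact-sequence computation using hypotheses (i)--(iii) yields
\[\pi_1(V) = 1, \qquad H_1(V) = 0, \qquad H_2(V) = 0,\]
since the attaching circle $\mu$ generates $H_1(W) \cong \Z$. Next I observe that $\partial V$ is obtained from $M_K$ by Dehn surgery on $\mu$ with the chosen framing, which on the level of $S^3$ is $0$-surgery on the two-component link $K \cup \mu$ (with $\mu$ a $0$-framed unknot linking $K$ once); these two handles form a canceling $1$-handle/$2$-handle pair in Kirby calculus, so $\partial V \cong S^3$.

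At this stage $V$ is a compact simply-connected topological $4$-manifold with boundary $S^3$ and trivial $H_2$. Capping off with a topological $4$-ball yields a closed simply-connected topological $4$-manifold with trivial intersection form, which by Freedman's topological classification is homeomorphic to $S^4$; hence $V \cong D^4$. The cocore $\{0\} \times D^2 \subset h^2$ is then a locally flat embedded disk in $V \cong D^4$ whose boundary lies in $\partial V \cong S^3$, and a final Kirby-calculus check, using the duality between the cocore of $h^2$ and the core of the canceling $2$-handle along $K$ in the trace $B^4 \cup h^2_K$, identifies this boundary curve with $K$ itself, giving the desired slice disk. The main obstacle is the invocation of Freedman's topological classification, which is why the proposition is confined to the locally flat (rather than smooth) category; a secondary technical point is the bookkeeping of framings and surgery duals ensuring that the boundary of the cocore is $K$ rather than some other knot in $S^3$.
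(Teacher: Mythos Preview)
Your proof is correct and follows essentially the same approach as the paper. The forward direction is identical (Mayer--Vietoris and Van Kampen on $D^4 = W \cup \nu(D)$). For the reverse direction, the paper phrases the construction more directly: rather than attaching a $2$-handle along $\mu$ and invoking Kirby calculus, it simply glues $D^2 \times D^2$ to the $D^2 \times S^1$ piece of $M_K = X \cup_{T^2} D^2 \times S^1$, so that the new boundary is immediately $X \cup (S^1 \times D^2) = S^3$ and the core $D^2 \times \{0\}$ is visibly a disk bounded by $K$; this is the same gluing you perform, just without the surgery-and-cancellation detour. The paper cites Freedman's topological $h$-cobordism theorem rather than the closed classification, but both formulations suffice here.
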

\begin{proof}
The exterior of a slice disc $D$, $W:= \cl(D^4 \setminus (D \times D^2))$, satisfies all the conditions of the proposition, as can be verified using Mayer-Vietoris and Seifert-Van Kampen arguments on the decomposition of $D^4$ into $W$ and $D \times D^2$.  Conversely, suppose we have a manifold $W$ which satisfies all the conditions of the proposition.  Glue in $D^2 \times D^2$ to the $D^2  \times S^1$ part of $M_K$.  This gives us a 4-manifold $W'$ with $H_*(W';\Z) \cong H_*(D^4;\Z)$, $\pi_1(W') \cong 0$ and $\partial W' = S^3$, so $K$ is slice in $W'$.  We can then apply Freedman's topological $h$-cobordism theorem \cite{FQ} to show that $W' \approx D^4$ and so $K$ is in fact slice in $D^4$.
\end{proof}

We give the definition of the \COT filtration of the knot concordance group.  An $(n)$-solution $W$ is an approximation to a slice disc complement; if $K$ is slice then it is $(n)$-solvable for all $n$, so if we can obstruct a knot from being $(n)$- or $(n.5)$-solvable then in particular we show that it is not slice.

\begin{definition}[\cite{COT} Definition 1.2]\label{Defn:COTnsolvable_intro}
A \emph{Lagrangian} of a symmetric form $\lambda \colon P \times P \to R$ on a free $R$-module $P$ is a submodule $L \subseteq P$ of half-rank on which $\lambda$ vanishes.  For $n \in \mathbb{N}_0 := \mathbb{N} \cup \{0\}$, let $\lambda_n$ be the intersection form, and $\mu_n$ the self-intersection form, on the middle dimensional homology $H_2(W^{(n)};\Z) \cong H_2(W;\Z[\pi_1(W)/\pi_1(W)^{(n)}])$ of the $n$th derived cover of a 4-manifold $W$, that is the regular covering space $W^{(n)}$ corresponding to the subgroup $\pi_1(W)^{(n)} \leq \pi_1(W)$:
\[\lambda_n \colon H_2(W^{(n)};\Z) \times H_2(W^{(n)};\Z) \to \Z[\pi_1(W)/\pi_1(W)^{(n)}].\]
An $(n)$-\emph{Lagrangian} is a submodule of $H_2(W^{(n)};\Z)$, on which $\lambda_n$ and $\mu_n$ vanish, which maps via the covering map onto a Lagrangian of $\lambda_0$.

We say that a knot $K$ is \emph{$(n)$-solvable} if $M_K$ bounds a topological spin 4-manifold $W$ such that the inclusion induces an isomorphism on first homology and such that $W$ admits two dual \emph{$(n)$-Lagrangians}.  In this setting, dual means that $\lambda_n$ pairs the two Lagrangians together non-singularly and their images freely generate $H_2(W;\Z)$.

We say that $K$ is \emph{$(n.5)$-solvable} if in addition one of the $(n)$-Lagrangians is the image of an $(n+1)$-Lagrangian.
\qed \end{definition}

\section{Uniting abelian and metabelian concordance obstructions}

In this section we give a summary of the main results of this monograph.  We will focus on the $(0.5),(1)$ and $(1.5)$ levels of the filtration, corresponding ot the abelian and metabelian quotients of the the fundamental group.  The \COT obstructions to a knot being $(1.5)$-solvable depend for their definitions on the vanishing of the first order obstructions; that is, for each metaboliser of the Blanchfield form, we have a different obstruction.  Our goal is to have an algebraically defined \emph{second order algebraic concordance group}, which obstructs $(0.5)$-, $(1)$- and $(1.5)$-solvability in a single stage definition.  Rather than filter the condition that zero-surgery bounds a 4-manifold whose intersection form is hyperbolic with respect to coefficients of increasing complexity, we filter the condition that the \emph{chain complex} of the zero-surgery bounds an \emph{algebraic} 4-manifold which is a $\Z$-homology circle, with respect to coefficients of increasing complexity.

Something similar, but with respect to the \CG invariants, was attempted by Gilmer\footnote{Unfortunately \cite[page~43]{Friedl}, there is a gap in Gilmer's proofs.} in \cite{Gilmer}: his work was an inspiration for this work.  Gilmer uses homology pairings, however, and his group is altogether different in character from ours.

The knot exterior $X$ is a manifold with boundary $S^1 \times S^1$.  We can split $S^1 \times S^1$ into $S^1 \times D^1 \cup_{S^1 \times S^0} S^1 \times D^1$, cutting the longitude of the knot in two.  We think of this as two trivial cobordisms of the circle.  We use the \emph{symmetric chain complex of the universal cover of the knot exterior}, considered as a chain complex cobordism from the chain complex of $S^1 \times D^1$ to itself, as our fundamental object.  A manifold triad is a manifold with boundary $(X,\partial X)$ such that the boundary splits along a submanifold into two manifolds with boundary:
\[\partial X = \partial X_{0} \cup_{\partial X_{01}} \partial X_1.\]
In our case we have the manifold triad:
\[\xymatrix{
S^1 \times S^0 \ar[r] \ar[d] & S^1 \times D^1 \ar[d]\\ S^1 \times D^1 \ar[r] & X.
}\]
We think of the fundamental object as a $\Z$-homology chain complex cobordism from the chain complex of  $S^1\times D^1$ to itself, which is a product along the boundary; the knot exterior has the homology of a circle and the inclusion of each of the boundary components induces an isomorphism on $\Z$-homology.

We now give an outline of the contents of each chapter.  Broadly, Chapters \ref{Chapter:handledecomp} -- \ref{Chapter:duality_symm_structures} describe an algorithm to produce the symmetric Poincar\'{e} triad associated to the knot exterior, starting with a diagram of a knot.  Chapters \ref{Chapter:2ndderived} -- \ref{Chapter:extractingCOTobstructions} then fit these objects into our group $\ac2$, and relate $\ac2$ to the \COT theory.

Our geometric constructions are described in Chapter \ref{Chapter:handledecomp}.  We explain how to decompose a knot exterior into handles, algorithmically, based on a diagram of the knot. We have:
\begin{theorem}[Theorem \ref{Thm:includingboundary}]
Given a reduced diagram (Definition \ref{Defn:reduceddiagram}) for a knot $K: S^1 \hookrightarrow S^3$, with $c \geq 3$ crossings, there is a handle decomposition of the knot exterior $X$ which includes a regular neighbourhood of the boundary $\partial X \times I \approx S^1 \times S^1 \times I$ as a sub-complex:
\[X = \hp^0 \cup \bigcup_{i=1}^{c+2}\, h_i^1 \cup \bigcup_{j=1}^{c+3}\,h_j^2 \cup \bigcup_{k=1}^{2}\,h_k^3. \]
\end{theorem}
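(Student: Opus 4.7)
The plan is to give an explicit, diagram-driven construction of the handle decomposition in three stages, with the handle counts emerging naturally from a Wirtinger-type analysis. \textbf{Stage 1 (collar).} First I would fix a basepoint $p\in\partial X$ and give the collar $\partial X\times I\subset X$ the thickened handle structure coming from the minimal CW decomposition of the torus, namely one 0-cell, two 1-cells along the meridian $\mu$ and preferred longitude $\lambda$, and one 2-cell carrying the relator $[\mu,\lambda]$. This contributes $\hp^0$, two 1-handles, and one 2-handle, whose union realises $\partial X\times I$ as a sub-complex; its outer torus boundary is $\partial X$ itself, and its inner torus boundary $T$ serves as the attaching region for everything that follows.

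\textbf{Stages 2 and 3 (diagram and closure).} Next I would use the $c$ Wirtinger arcs and $c$ crossings of the diagram to attach $c$ further 1-handles to $T$ (one per arc, with co-core a small meridional disk to that arc) and $c$ further 2-handles (one per crossing, with attaching circle realising the corresponding Wirtinger relation on the updated boundary surface). Two additional 2-handles are then attached: one to absorb the single redundant Wirtinger relator, and a second to split the inner boundary into two 2-spheres so that two final 3-handles can cap it off. Geometrically those 3-handles are the upper and lower 3-balls of $S^3$ relative to the projection plane of the diagram. This produces the stated counts $1$, $c+2$, $c+3$, $2$, which are compatible with $\chi(X)=0$, and the inner boundary surface evolves from $T$ to a genus-$(c+1)$ surface to $S^2\sqcup S^2$ and finally to $\varnothing$ as the three batches of handles are applied.

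\textbf{The hard part} will be verifying that the 3-manifold produced by these attaching data is actually homeomorphic to $X$, not merely some compact 3-manifold with the correct boundary and Euler characteristic. I would settle this by laying out the attaching circles concretely inside $S^3\setminus\nu(K)$ using the standard picture in which the projection sphere splits $S^3$ into upper and lower 3-balls joined by tunnels at each crossing, and then appealing to the uniqueness of handle decompositions once the geometric placement is fixed. The ``reduced'' hypothesis prevents nugatory crossings from collapsing the arc/generator correspondence, while $c\ge 3$ rules out the small-crossing diagrams of the unknot for which the stated formulas would need separate treatment.
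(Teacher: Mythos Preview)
Your handle counts are correct, but the roles you assign to the two extra 2-handles are not, and this is where the construction breaks. After your Stage~1 the collar contributes two 1-handles carrying $\mu$ and $\lambda$, and in Stage~2 you add $c$ further meridional 1-handles $g_1,\dots,g_c$. These two families are, at this point, entirely unrelated: there is nothing in your attaching data telling the 3-manifold that $\mu$ is one of the $g_i$ or that $\lambda$ is the preferred longitude word $l$ in the $g_i$. The $c$ Wirtinger 2-handles involve only the $g_i$, so they do nothing to tie the collar into the rest. A 2-handle ``absorbing the redundant Wirtinger relator'' would just bound a 2-sphere (redundancy among relators is what the 3-handle encodes, not a 2-handle), and your ``splitting'' 2-handle has no specified attaching circle. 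With your data the result is not $X$: it has an extra $\Z^2$ in $\pi_1$ coming from the unattached $\mu,\lambda$, and your own ``hard part'' would fail.

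The paper runs the construction in the opposite order and this is exactly what makes it work. It starts from the decomposition of $X$ already built in Theorem~\ref{handledecomp} (one 0-handle $h^0_o$, $c$ 1-handles, $c$ 2-handles, one 3-handle $h^3_o$), then lays down a separate torus collar $h^0_\partial\cup h^1_\mu\cup h^1_\lambda\cup h^2_\partial$, and finally realises the boundary inclusion as a mapping cylinder by adding one connecting handle for each boundary cell: a 1-handle $h^1_\partial$ joining the two 0-handles, 2-handles $h^2_{\partial\mu}$ and $h^2_{\partial\lambda}$ identifying $\mu$ with $g_1$ and $\lambda$ with the longitude word $l$, and a 3-handle $h^3_\partial$ relating $h^2_\partial$ to the original 2-handles. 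A single $h^0_o$--$h^1_\partial$ cancellation then gives the stated counts. So the two ``extra'' 2-handles are precisely the $\mu\!\sim\! g_1$ and $\lambda\!\sim\! l$ identifications you are missing, and the two 3-handles are not the upper and lower balls of $S^3$ but rather $h^3_o$ (from the original decomposition) together with the connecting handle $h^3_\partial$.
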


There are relatively few handles, so that the chain complexes which arise from the handle decompositions can be explicitly exhibited.  In Chapter \ref{chapter:chaincomplex}, we do this, and in doing so pass from geometry to algebra.  In order to include the unknot we can either make a reduced diagram for the unknot with 3 crossings, or can work out handle decompositions and chain complexes separately for this case, since it is relatively simple.    The main theorem of Chapter \ref{chapter:chaincomplex} is:
\begin{theorem}[Theorem \ref{Thm:mainchaincomplex}]
Suppose that we are given a knot $K$ with exterior $X$, and a reduced knot diagram for $K$ with $c \geq 3$ crossings.  Denote by $F(g_1, \dots ,g_c)$ the free group on the letters $g_1,\dots,g_c$, and let $l \in F(g_1,\dots,g_c)$ be the word corresponding to a zero--framed longitude of $K$.  Then there is a presentation
\[\pi_1(X) = \langle\,g_1,\dots,g_c, \mu, \lambda\,|\,r_1,\dots,r_c, r_{\mu},r_{\lambda},r_{\partial}\,\rangle\]
with the Wirtinger relations $r_1,\dots,r_c \in F(g_1,\dots,g_c)$ read off from the knot diagram, and
\[r_{\mu} = g_1\mu^{-1};\;\;r_{\lambda} = l\lambda^{-1};\; r_{\partial} = \lambda\mu\lambda^{-1}\mu^{-1}.\]
The generators $\mu$ and $\lambda$ correspond to the generators, and $r_{\partial}$ to the relation, for the fundamental group of the boundary torus $\pi_1(S^1 \times S^1) \cong \Z \oplus \Z$. The generator $\mu$ is a meridian and $\lambda$ is a longitude.  The relations $r_{\mu}$ and $r_{\lambda}$ are part of Tietze moves: they show the new generators to be consequences of the original generators.

The handle chain complex of the $\pi$-cover $\widetilde{X}$ (the cover with deck group $\pi := \pi_1(X)/S$ for some normal subgroup $S \unlhd \pi_1(X)$), with chain groups being based free left $\Z[\pi]$-modules, and with the chain complex $C(\wt{\partial X})$ of the $\pi_1(X)$-cover of $\partial X$ as a sub-complex, is given, with the convention that matrices act on row vectors on the right, by:

\[\xymatrix @C+2cm{
\bigoplus_2\,\Z[\pi] \cong \langle h^3_o,h^3_{\partial} \rangle \ar[d]^{\partial_3}\\
 \bigoplus_{c+3} \,\Z[\pi] \cong \langle h^2_1,\dots,h^2_c,h^2_{\partial \mu},h^2_{\partial \lambda},h^2_{\partial} \rangle \ar[d]^{\partial_2}\\
  \bigoplus_{c+2} \,\Z[\pi] \cong \langle h^1_1,\dots,h^1_c,h^1_{\mu},h^1_{\lambda} \rangle \ar[d]^{\partial_1}\\
  \Z[\pi] \cong \langle h^0_{\partial} \rangle
}\]
where:
\[\ba{rcl}\partial_3 &=& \left(
               \begin{array}{cccccccc}
                 w_1 &  & \hdots &  & w_c & 0 & 0 & 0 \\
                 -u_1 &  & \hdots &  & -u_c & 1-\lambda & \mu-1 & -1 \\
               \end{array}
             \right);\\ & & \\
\partial_2 &= &\left(
                 \begin{array}{ccccccc}
                   \left(\partial r_1/\partial g_1\right) &  & \hdots &  & \left(\partial r_1/\partial g_c\right) & 0 & 0\\
                    &  &  &  &  &  & \\
                   \vdots &  & \ddots &  & \vdots & \vdots & \vdots\\
                    &  &  &  &  &  &  \\
                   \left(\partial r_c/\partial g_1\right) &  & \hdots &  & \left(\partial r_c/\partial g_c\right) & 0 & 0\\
                   1 & 0 & \hdots & 0 & 0 & -1 & 0\\
                   \left(\partial l/\partial g_1\right) &  & \hdots &  & \left(\partial l/\partial g_c\right) & 0 & -1\\
                   0 &  & \hdots &  & 0 & \lambda-1 & 1-\mu\\
                 \end{array}
               \right); \text{ and}\\
                & & \\
\partial_1 &=& \left(
               \begin{array}{ccccccc}
                 g_1 - 1 &  & \hdots &  & g_c - 1 & \mu-1 & \lambda-1\\
               \end{array}
             \right)^T \ea \]
See Theorem \ref{Thm:mainchaincomplex} for the full explanation of the $\partial_i$.
There is then a pair of chain complexes,
\[f \colon C_*(\partial X;\Z[\pi]) \to C_*(X;\Z[\pi]),\] with the map $f$ given by inclusion, expressing the manifold pair $(X,\partial X)$.
\end{theorem}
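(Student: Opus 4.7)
The plan is to derive both the presentation and the chain complex directly from the explicit handle decomposition given in Theorem \ref{Thm:includingboundary}. The handle counts $1 + (c+2) + (c+3) + 2$ match the ranks of the claimed chain complex in each degree, so the work is to identify the generators/relations and compute the attaching data.

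First I would extract the presentation of $\pi_1(X)$ by applying van Kampen to the handle decomposition. The $c$ interior 1-handles correspond to arcs of the diagram and give the Wirtinger generators $g_1, \dots, g_c$, while the two boundary 1-handles (one for each $S^1$ factor of $\partial X$) give $\mu$ and $\lambda$. The $c$ interior 2-handles impose the Wirtinger relations $r_1, \dots, r_c$ read off from the crossings, the two ``Tietze'' 2-handles impose $r_\mu = g_1 \mu^{-1}$ and $r_\lambda = l \lambda^{-1}$ (identifying the newly introduced boundary generators with words in the Wirtinger generators), and the torus 2-handle imposes $r_\partial = \lambda \mu \lambda^{-1} \mu^{-1}$. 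Since 3-handles do not affect $\pi_1$, this yields the claimed presentation.

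Second, I would compute the cellular boundary maps of the $\pi$-cover $\wt{X}$. Each $k$-handle in $X$ lifts to a free $\Z[\pi]$-module summand of rank one after fixing a choice of lift. The standard formula relating the universal-cover chain complex of a presentation 2-complex to Fox calculus then gives $\partial_1 = ((g_i-1))^T$ for each 1-handle generator and the Jacobian $\left(\partial r_i / \partial g_j\right)$ for $\partial_2$ restricted to the Wirtinger 2-handles. The remaining rows of $\partial_2$ (those for $r_\mu$, $r_\lambda$, $r_\partial$) are immediate Fox-calculus computations: $\partial r_\mu / \partial g_1 = 1$, $\partial r_\mu / \partial \mu = -1$; $\partial r_\lambda / \partial g_j = \partial l / \partial g_j$, $\partial r_\lambda / \partial \lambda = -1$; and $\partial r_\partial / \partial \mu = \lambda - 1$, $\partial r_\partial / \partial \lambda = 1 - \mu$.

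Third, and this is where the bulk of the work lies, I would analyse the attaching 2-spheres of the two 3-handles to pin down $\partial_3$. One 3-handle $h^3_o$ closes off the knot exterior away from the boundary torus; its attaching sphere meets each Wirtinger 2-handle with a coefficient $w_i \in \Z[\pi]$ that must be tracked through the explicit handle decomposition of Theorem \ref{Thm:includingboundary}, and does not meet the three boundary 2-handles. The other 3-handle $h^3_\partial$ is the collar 3-handle adjacent to $\partial X$; its attaching sphere meets the Wirtinger 2-handles with coefficients $-u_i$ and the three boundary 2-handles with the indicated coefficients $1 - \lambda$, $\mu - 1$, and $-1$. The coefficients on the boundary 2-handles are forced by the requirement that $C_*(\wt{\partial X})$, namely the standard two-cell torus-cross-interval complex with $\partial_3 = 0$ on its own, sits inside the full complex as a subcomplex.

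The main obstacle is the careful geometric bookkeeping needed to extract the $w_i$ and $u_i$; this is genuinely a by-hand determination of how the attaching maps of the two 3-handles sweep over the 2-handles in the handle decomposition, and it constitutes the substantive content alluded to by ``See Theorem \ref{Thm:mainchaincomplex} for the full explanation of the $\partial_i$.'' Once these coefficients are in hand, the relations $\partial_1 \partial_2 = 0$ and $\partial_2 \partial_3 = 0$ follow formally: the former from the Fox-calculus identity $(g-1) \cdot (\partial r/\partial g) \mapsto r - 1 = 0$ in $\Z[\pi]$ since each $r_i$ is a relator, and the latter from the fact that the attaching spheres of the 3-handles bound in the 2-skeleton. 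Inclusion of the boundary triad is automatic at the handle level, so the chain-level inclusion $f \colon C_*(\partial X; \Z[\pi]) \to C_*(X;\Z[\pi])$ is read off directly from which handles of $X$ lie in $\partial X \times I$.
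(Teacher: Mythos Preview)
Your proposal is correct and follows essentially the same route as the paper: read off the presentation from the handle decomposition of Theorem~\ref{Thm:includingboundary}, compute $\partial_1$ and $\partial_2$ via Fox calculus, and extract $\partial_3$ from the attaching data of the two 3-handles. The one organisational difference worth noting is that the paper packages the $\partial_3$ computation via \emph{identities of the presentation} (Definition~\ref{identitypresentation}): the 3-handle $h^3_o$ corresponds to the identity $s_o = \prod w_{j_k} r_{j_k} w_{j_k}^{-1}$ and $h^3_\partial$ to an explicit identity $s_\partial$ involving $r_\partial$, $r_\mu$, $r_\lambda$ and the conjugates $u_{k+j} r_{k+j}^{-1} u_{k+j}^{-1}$, from which the coefficients $w_i$, $u_i$, $1-\lambda$, $\mu-1$, $-1$ can be read off directly and the relation $\partial_2\partial_3 = 0$ is automatic (since an identity maps to $1$ in the free group). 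This is tidier than your framing of ``by-hand determination of how the attaching maps sweep over the 2-handles'', and it also makes precise what the $u_i$ actually are as words. The paper also systematically uses \emph{threadings} of handles (Definition~\ref{handlechain}) to fix lifts and compute twisted incidence numbers; your account suppresses this bookkeeping but is not wrong for doing so.
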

We therefore obtain, using this, from a knot diagram, a triad of chain complexes:
\[\xymatrix{
C_*(S^1 \times S^0;\Z[\pi_1(X)]) \ar[r]^{i_-} \ar[d]_{i_+} & C_*(S^1 \times D^1_-;\Z[\pi_1(X)]) \ar[d]^{f_-}\\ C_*(S^1 \times D^1_+;\Z[\pi_1(X)]) \ar[r]^{f_+} & C_*(X;\Z[\pi_1(X)]).
}\]
In Chapter \ref{Chapter:duality_symm_structures}, we define and explain the extra structure, namely the \emph{symmetric structure}, with which we endow our chain complexes in order to be able to use Ranicki's theory of algebraic surgery.  The symmetric structure is the chain level version of Poincar\'{e} duality.  We explain the vital relationship between the symmetric structure on the boundary and that on the interior of a manifold, and the further complications which arise when the boundary splits into two along a submanifold.  Making use of formulae of Trotter \cite{Trotter}, we obtain this structure for a knot exterior, and extract what we call the \emph{fundamental symmetric Poincar\'{e} triad of a knot}:
\[\xymatrix{
(C_*(S^1 \times S^0;\Z[\pi_1(X)]),\varphi\oplus -\varphi) \ar[r]^{i_-} \ar[d]_{i_+} & (C_*(S^1 \times D^1_-;\Z[\pi_1(X)]),0) \ar[d]^{f_-}\\ (C_*(S^1 \times D^1_+;\Z[\pi_1(X)]),0) \ar[r]^{f_+} & (C_*(X;\Z[\pi_1(X)]),\Phi).
}\]
In Chapter \ref{Chapter:2ndderived}, we explain how to add knots together.  The desire for the ability to perform addition is the reason for splitting the boundary into two.  The connected sum of knots corresponds to gluing the two knot exteriors together along one $S^1 \times D^1$ half of each of their boundaries.  This operation translates very well into the algebraic gluing of chain complexes, so that we can define a monoid of chain complexes - see Chapter \ref{Chapter:semigroup} for the use of the gluing construction to add together symmetric Poincar\'{e} triads.  We first need to know how addition of knots translates onto the fundamental groups.
 \begin{proposition}[Proposition \ref{prop:addingknotgroups}]\label{Prop:addingknots_intro}
Denote by $X^{\ddag} := \cl(S^3 \setminus N(K \, \sharp \, K^{\dag}))$ the knot exterior for the connected sum $K^{\ddag} := K \, \sharp \, K^{\dag}$ of two oriented knots.  Let $g_1,g_1^{\dag}$ be chosen generators in the fundamental groups $\pi_1(X;x_0)$ and $\pi_1(X^{\dag};x_0^{\dag})$ respectively, generating preferred subgroups $\langle g_1 \rangle \xrightarrow{\simeq} \Z \leq \pi_1(X;x_0)$ and $\langle g_1^{\dag} \rangle \xrightarrow{\simeq} \Z \leq \pi_1(X^{\dag};x_0^{\dag})$.

The knot group for a connected sum $K\,\sharp\,K^{\dag}$ is given by the amalgamated free product of the knot groups of $K$ and $K^{\dag}$, with our chosen meridians identified: \[\pi_1(X^{\ddag}) = \pi_1(X) \ast_{\Z} \pi_1(X^{\dag}),\]
so that $g_1 = g_1^{\dag}$.
\end{proposition}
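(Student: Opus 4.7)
The plan is to decompose the exterior of the connected sum into two pieces, each homotopy equivalent to one of the two original knot exteriors, glued along an annulus whose core is the shared meridian, and then apply Seifert--Van Kampen.

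First I would set up the geometric picture. Choose a point $p$ on $K$ (resp.\ $p^{\dag}$ on $K^{\dag}$) lying on the chosen meridian disk, and a small closed ball $B\subset S^3$ (resp.\ $B^{\dag}\subset S^3$) around $p$ (resp.\ $p^{\dag}$) meeting the knot in an unknotted arc $\alpha$ (resp.\ $\alpha^{\dag}$). The connected sum $(S^3,K\,\sharp\,K^{\dag})$ is obtained, in the usual way, by removing the interiors of $B$ and $B^{\dag}$ and gluing $S^3\setminus\interior(B)$ to $S^3\setminus\interior(B^{\dag})$ along an orientation reversing homeomorphism $\partial B\to\partial B^{\dag}$ that sends $\partial\alpha$ to $\partial\alpha^{\dag}$ in the correct way.

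Second I would decompose the connected sum exterior. Writing $Y:=X\setminus\interior(B)$ and $Y^{\dag}:=X^{\dag}\setminus\interior(B^{\dag})$, the exterior of the connected sum splits as
\[X^{\ddag}\;=\;Y\;\cup_{A}\;Y^{\dag},\]
where $A:=\partial B\setminus\nu(\alpha)=\partial B^{\dag}\setminus\nu(\alpha^{\dag})$ is an annulus (a 2-sphere with two open disks removed). Next I would observe that $Y$ deformation retracts onto $X$: one pushes the excised ball outward through the extra meridian disk back into a collar of $\partial X$, so $Y\hookrightarrow X$ is a homotopy equivalence; the same holds for $Y^{\dag}\hookrightarrow X^{\dag}$. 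After choosing basepoints on $A$ which identify with $x_0$ and $x_0^{\dag}$ under the two retractions, this yields isomorphisms $\pi_1(Y)\cong\pi_1(X;x_0)$ and $\pi_1(Y^{\dag})\cong\pi_1(X^{\dag};x_0^{\dag})$.

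Third I would identify the core circle of $A$. The annulus $A$ lies on $\partial B$ and its core is a small loop bounding a disk in $B$ which meets $\alpha$ transversely once; by definition this is a meridian of $K$, namely our chosen generator $g_1\in\pi_1(X;x_0)$. Viewed from the other side it is a meridian of $K^{\dag}$, and the gluing identifies it with $g_1^{\dag}\in\pi_1(X^{\dag};x_0^{\dag})$. Since $A$ is an open annulus, $\pi_1(A)\cong\Z$ is generated by this curve, and $Y$, $Y^{\dag}$, $A$ are all path connected and open in $X^{\ddag}$ (after thickening slightly). Applying Seifert--Van Kampen to the decomposition $X^{\ddag}=Y\cup_A Y^{\dag}$ gives
\[\pi_1(X^{\ddag})\;\cong\;\pi_1(Y)\ast_{\pi_1(A)}\pi_1(Y^{\dag})\;\cong\;\pi_1(X)\ast_{\Z}\pi_1(X^{\dag}),\]
with the amalgamating $\Z$ identifying $g_1$ with $g_1^{\dag}$, as claimed.

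The statement is classical, so no step is genuinely difficult; the only point requiring a little care is the identification of $Y$ with $X$ and of the core of $A$ with the chosen meridian, together with a consistent choice of basepoints on $A$, so that the amalgamation genuinely sends the preferred generator $g_1$ to $g_1^{\dag}$ (and not to an inverse or a conjugate). This bookkeeping is what will make the proposition directly usable in the chain-level gluing construction of later chapters.
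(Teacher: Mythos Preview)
Your proposal is correct and follows essentially the same approach as the paper: both apply Seifert--Van Kampen to the decomposition of $X^{\ddag}$ into two pieces homotopy equivalent to $X$ and $X^{\dag}$, glued along an annulus whose core is the common meridian. The paper packages the decomposition slightly differently, invoking the preceding Proposition~\ref{Prop:decompknot-extconnectedsum} to write $X^{\ddag} = X \cup_{S^1 \times \mathring{D}^1} X^{\dag}$ directly (gluing the actual exteriors along annuli in their boundary tori) rather than removing balls and arguing a homotopy equivalence; but your pieces $Y, Y^{\dag}$ are in fact homeomorphic to $X, X^{\dag}$, so the two decompositions coincide. One cosmetic slip: you write ``$Y$ deformation retracts onto $X$'' when $Y\subset X$; your subsequent phrasing ``$Y\hookrightarrow X$ is a homotopy equivalence'' is the correct one.
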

For the rest of Chapter \ref{Chapter:2ndderived}, we study the quotient of knot groups $\pi_1(X)/\pi_1(X)^{(2)}$, which it turns out has the structure of a semi-direct product $\Z \ltimes H_1(X;\Z[\Z])$.  This is the coefficient group over which we work in order to obtain $(1.5)$-level, or metabelian, obstructions. We prove:
\begin{proposition}[Proposition \ref{prop:2ndderivedsubgroup}]
Let $\phi$ be the quotient map
\[\phi \colon \frac{\pi_1(X)}{\pi_1(X)^{(2)}} \to \frac{\pi_1(X)}{\pi_1(X)^{(1)}} \xrightarrow{\simeq} \Z.\]
Then for each choice of homomorphism
\[\psi \colon \Z \to \frac{\pi_1(X)}{\pi_1(X)^{(2)}}\]
such that $\phi \circ \psi = \Id$, there is an isomorphism:
\[ \theta \colon \frac{\pi_1(X)}{\pi_1(X)^{(2)}} \xrightarrow{\simeq} \Z \ltimes H,\]
where $H := H_1(X;\Z[\Z])$ is the Alexander module.  In the notation of Proposition \ref{Prop:addingknots_intro}, and denoting $H^{\dag} := H_1(X^{\dag};\Z[\Z])$ and $H^{\ddag} := H_1(X^{\ddag};\Z[\Z])$, the behaviour of the second derived quotients under connected sum is given by:
\[\frac{\pi_1(X^{\ddag})}{\pi_1(X^{\ddag})^{(2)}} \cong \Z \ltimes H^{\ddag} \cong  \Z \ltimes (H \oplus H^{\dag}).\]
That is, we can take the direct sum of the Alexander modules.
\end{proposition}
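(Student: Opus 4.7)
The proof has two parts, corresponding to the two assertions.

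\textbf{Part 1: Single knot.} The plan is to identify the terms of the short exact sequence
\[1 \to \pi^{(1)}/\pi^{(2)} \to \pi/\pi^{(2)} \to \pi/\pi^{(1)} \to 1\]
where $\pi := \pi_1(X)$. Since $X$ is a knot exterior, $\pi/\pi^{(1)} \cong H_1(X;\Z) \cong \Z$. The kernel $\pi^{(1)}/\pi^{(2)}$ is the abelianisation of $\pi^{(1)}$, which by covering space theory equals $H_1(X_\infty;\Z) \cong H_1(X;\Z[\Z]) = H$, where $X_\infty$ is the universal abelian cover. Any splitting $\psi$ of $\phi$ then yields, by the standard semidirect product construction, an isomorphism $\theta \colon \pi/\pi^{(2)} \xrightarrow{\simeq} \Z \ltimes H$ in which $1 \in \Z$ acts on $H$ by conjugation by $\psi(1)$.

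\textbf{Part 2: Connected sum.} Applying Part 1 to $\pi_1(X^{\ddag})$ gives $\pi_1(X^{\ddag})/\pi_1(X^{\ddag})^{(2)} \cong \Z \ltimes H^{\ddag}$, where $H^{\ddag} = H_1(X^{\ddag};\Z[\Z])$. It remains to identify $H^{\ddag}$ with $H \oplus H^{\dag}$ as $\Z$-modules. I would use a Mayer-Vietoris argument for the decomposition $X^{\ddag} = X \cup_A X^{\dag}$ corresponding to the amalgamated product structure of Proposition \ref{Prop:addingknots_intro}, where $A \cong S^1 \times I$ is an annulus whose core is the common meridian $g_1 = g_1^{\dag}$. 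Since this core generates the covering group $\Z$, the abelian cover of $A$ is $\R \times I$, so $H_1(A;\Z[\Z]) = 0$ and $H_0(A;\Z[\Z]) = \Z$. The relevant portion of the Mayer-Vietoris sequence reduces to
\[0 \to H \oplus H^{\dag} \to H^{\ddag} \to \Z \xrightarrow{(1, -1)} \Z \oplus \Z,\]
and since the last map is injective the middle arrow is surjective; hence $H^{\ddag} \cong H \oplus H^{\dag}$. Compatibility of the $\Z$-actions follows from naturality, since a single meridian conjugates in each summand via the action already defined on $H$ and $H^{\dag}$.

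\textbf{Expected difficulty.} The routine steps are the algebraic identification of the second derived quotient and the Mayer-Vietoris computation. The main subtlety is twofold: first, to verify that the amalgamated free product structure of Proposition \ref{Prop:addingknots_intro} is realised geometrically by an annular gluing $X^{\ddag} = X \cup_A X^{\dag}$ with core a common meridian, so that $\Z[\Z]$-coefficients pull back consistently across the decomposition; second, to choose the splittings $\psi$ for $X$, $X^{\dag}$, and $X^{\ddag}$ compatibly, so that the isomorphism $\theta^{\ddag}$ restricts correctly and the semidirect product structure is preserved under the identification $H^{\ddag} \cong H \oplus H^{\dag}$.
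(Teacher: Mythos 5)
Your Part 1 matches the paper's argument exactly: both use the short exact sequence $\pi^{(1)}/\pi^{(2)} \rightarrowtail \pi/\pi^{(2)} \twoheadrightarrow \pi/\pi^{(1)}$, identify the kernel with $H_1(X_\infty;\Z) \cong H_1(X;\Z[\Z])$ via covering space theory, and observe that the sequence splits because $\Z$ is free.

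Your Part 2, however, takes a genuinely different route. The paper proves $H^{\ddag} \cong H \oplus H^{\dag}$ purely group-theoretically: it takes Proposition \ref{prop:addingknotgroups} ($\pi_1(X^{\ddag}) = \pi_1(X) \ast_{\Z} \pi_1(X^{\dag})$), quotients by the second derived subgroup, identifies $(\pi_1(X) \ast_{\Z} \pi_1(X^{\dag}))''$ with the subgroup generated by $\pi_1(X)'', \pi_1(X^{\dag})''$ and $[\pi_1(X)',\pi_1(X^{\dag})']$, and simplifies the resulting amalgamated product of semidirect products. You instead compute $H_1(X^{\ddag};\Z[\Z])$ directly via a Mayer-Vietoris sequence for the annular gluing $X^{\ddag}=X\cup_A X^{\dag}$, using $H_1(A;\Z[\Z])=0$ and the injectivity of $H_0(A;\Z[\Z]) \to H_0(X;\Z[\Z]) \oplus H_0(X^{\dag};\Z[\Z])$. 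Both arguments are valid; your MV computation is arguably cleaner for the module identification, while the paper's algebraic route has the advantage of making explicit, via the conjugation computation with $(1-t)$ an automorphism of $H$, that any splitting $\psi$ can be adjusted by an inner automorphism so that the common meridian $g_1 = g_1^{\dag}$ lands at $(1,0)$. That extra control is precisely what the paper uses in the subsequent Remark and in Proposition \ref{prop: fundtriaddefinesanelement} to show the fundamental symmetric Poincar\'e triad does not depend on choices. You flag this compatibility-of-splittings issue as a difficulty but do not resolve it; for the proposition as stated one can simply choose $\psi(1)=g_1$ and $\psi^{\dag}(1)=g_1^{\dag}$ from the outset, and then your argument closes without needing the inner-automorphism step.
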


In Chapter \ref{Chapter:semigroup} we define, in purely algebraic terms, a monoid $\P$ of chain complexes (Definition \ref{Defn:algebraicsetofchaincomplexes}).  Our monoid comprises triples $(H,\Y,\xi)$, where $H$ is $\Z[\Z]$-module which satisfies certain conditions which we call the conditions to be an \emph{Alexander Module} (Theorem \ref{Thm:Levinemodule}, \cite{Levine2}), $\Y$ is a $3$-dimensional symmetric Poincar\'{e} triad over the group ring $\zh$, of the form:
\[\xymatrix{
(C_*(S^1 \times S^0;\zh),\varphi\oplus -\varphi) \ar[r]^-{i_-} \ar[d]_{i_+} & (C_*(S^1 \times D^1_-;\zh),0) \ar[d]^{f_-}\\ (C_*(S^1 \times D^1_+;\zh),0) \ar[r]^-{f_+} & (Y,\Phi),
}\]
and $\xi \colon H \toiso H_1(\Z[\Z] \otimes_{\zh} Y)$ is an isomorphism.
The $\zh$-module chain complex $Y$ represents, pedagogically, the chain complex of the knot exterior; however it need not be the chain complex of any manifold. We require that:
\[f_{\pm} \colon H_*(S^1 \times D^1_{\pm};\Z) \toiso H_*(Y;\Z).\]
We call the existence of $\xi$ the \emph{consistency condition}.  We consider two such triples $(H,\Y,\xi)$ and $(H^\%,\Y^\%,\xi^\%)$ to be equivalent, corresponding to isotopy of knots, if there exists an isomorphism $\omega \colon H \toiso H^\%$ and a chain equivalence of triads $j \colon \Z[\Z \ltimes H^\%] \otimes_{\zh} \Y \xrightarrow{\sim} \Y^\%$ such that the following induced diagram commutes:
\[\xymatrix @R+1cm @C+1cm {H \ar[r]_-{\xi}^-{\cong} \ar[d]^-{\omega}_-{\cong} & H_1(\Z[\Z] \otimes_{\zh} Y) \ar[d]^-{j_*}_-{\cong}\\
H^\%  \ar[r]_-{\xi^{\%}}^-{\cong} & H_1(\Z[\Z] \otimes_{\Z[\Z \ltimes H^\%]} Y^\%).}\]

To add two elements $(H,\Y,\xi),(H^\dag\Y^{\dag},\xi^\dag)$ of $\P$ we first tensor all chain complexes up over $\Z[\Z \ltimes (H \oplus H^{\dag})]$, and then use the following diagram.
\[\xymatrix @R+1cm{
 C(S^1 \times D^1) \ar[d]^{f_-} & C(S^1 \times S^0) \ar[l]_-{i_-} \ar[d]_{i_+ = i_-^{\dag}} \ar[r]^-{i_+^{\dag}} & C(S^1 \times D^1) \ar[d]^{f_+^{\dag}} \\
Y & C(S^1 \times D^1) \ar[l]_-{f_+} \ar[r]^-{f_-^{\dag}} & Y^{\dag}
}\]
By taking the mapping cone $Y^{\ddag} := \mathscr{C}((-f_+,f_-^{\dag})^T)$, and using the algebraic gluing construction of Definition \ref{Defn:unionconstruction}, we construct a new element of $\P$.  We have:
\begin{proposition}[Proposition \ref{Prop:abelianmonoid}]
The set $\P$ with the addition $\sharp$ yields an abelian monoid $(\P,\sharp)$.  That is, the sum operation $\sharp$ on $\P$ is abelian, associative and has an identity, namely the fundamental symmetric Poincar\'{e} triad of the unknot.  Let ``$Knots$'' denote the abelian monoid of isotopy classes of locally flat knots in $S^3$ under the operation of connected sum.  Then we have a homomorphism $Knots \to \P$.
\end{proposition}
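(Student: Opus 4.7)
The plan is to verify, in sequence, well-definedness of $\sharp$ on $\P$, commutativity, associativity, the identity axiom, and finally that the assignment from a knot to its fundamental symmetric Poincar\'e triad respects both operations so as to yield a monoid homomorphism $Knots \to \P$.

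First, to check that $(H,\Y,\xi) \sharp (H^{\dag},\Y^{\dag},\xi^{\dag}) = (H^{\ddag},\Y^{\ddag},\xi^{\ddag})$ is again in $\P$, I work inside the coefficient ring $\Z[\Z \ltimes (H \oplus H^{\dag})]$ after tensoring up, and apply the algebraic gluing construction of Definition \ref{Defn:unionconstruction} to the displayed pushout diagram. That construction automatically produces a 3-dimensional symmetric Poincar\'e triad whose boundary is the required amalgamation of two copies of $C(S^1 \times D^1)$ along $C(S^1 \times S^0)$, so only two things need checking: that the new inclusions $f_{\pm}^{\ddag}$ induce isomorphisms on $\Z$-coefficient homology, and that a consistency isomorphism $\xi^{\ddag} \colon H \oplus H^{\dag} \toiso H_1(\Z[\Z] \otimes_{\Z[\Z \ltimes (H \oplus H^{\dag})]} Y^{\ddag})$ exists. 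Both follow from Mayer--Vietoris applied to $Y^{\ddag} = \mathscr{C}((-f_+, f_-^{\dag})^T)$: over $\Z$ the middle term has circle homology and the pushout inherits circle $\Z$-homology, while after tensoring down to $\Z[\Z]$ the middle term contributes nothing in degree one, so $H_1$ splits as $H \oplus H^{\dag}$ via $\xi$ and $\xi^{\dag}$.

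Commutativity is immediate from the symmetry of the gluing diagram: swapping the two summands corresponds to the involution that exchanges the two boundary halves $S^1 \times D^1_{\pm}$ (and which preserves the boundary symmetric structure $\varphi \oplus -\varphi$ in the sense of the equivalence relation on $\P$), producing a chain equivalence of triads. Associativity reduces to showing that for three triples, the iterated gluings $(\Y \sharp \Y^{\dag}) \sharp \Y'$ and $\Y \sharp (\Y^{\dag} \sharp \Y')$ are naturally chain equivalent; this is a standard three-term pushout rearrangement, since iterated mapping cones can be regrouped up to canonical chain equivalence. For the identity, I take the unknot triad $\Y_U$, whose underlying complex is essentially $C(S^1 \times D^2)$ and whose boundary inclusions $C(S^1 \times D^1_{\pm}) \to C(S^1 \times D^2)$ are chain equivalences by deformation retraction. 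Since the pushout along a chain equivalence yields a chain equivalence, the underlying complex of $\Y \sharp \Y_U$ is chain equivalent to $Y$; confirming that this equivalence respects the triad maps, the symmetric structure $\Phi$, and the consistency isomorphism $\xi$ completes the verification that $\Y \sharp \Y_U \sim \Y$ in $\P$.

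Finally, for the homomorphism $Knots \to \P$, I need (a) that isotopic knots yield equivalent triples in $\P$, and (b) that the geometric connected sum maps to the algebraic $\sharp$. Part (a) follows because an ambient isotopy induces a homeomorphism of knot exteriors preserving the boundary decomposition, hence a chain isomorphism of fundamental triads intertwining the Alexander-module isomorphisms $\xi$. For (b), by Proposition \ref{Prop:addingknots_intro} the exterior $X^{\ddag}$ of a connected sum is the union of $X$ and $X^{\dag}$ along a $S^1 \times D^1$ neighbourhood of the shared meridian, which on the chain level is exactly the pushout defining $\sharp$; the semi-direct product decomposition of the second derived quotient matches the coefficient ring over which $\sharp$ is computed. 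The step I expect to be the main obstacle is the identity verification, because the chain equivalence $\Y \sharp \Y_U \sim \Y$ must respect simultaneously the boundary inclusions $f_{\pm}$, the symmetric structure $\Phi$, and the isomorphism $\xi$; while the underlying chain-complex equivalence is essentially formal, compatibility of the symmetric structures along the gluing is delicate and is likely to require constructing an explicit chain homotopy rather than invoking an abstract model-categorical argument.
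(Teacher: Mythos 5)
Your proposal follows essentially the same route as the paper's proof --- tensor up to a common coefficient ring, glue via the union/mapping-cone construction, check the homological and consistency conditions, and verify the monoid axioms by producing explicit equivalences of symmetric Poincar\'{e} triads --- but you misjudge where the real difficulty sits.  You single out the identity verification as the delicate step; in fact the paper handles that by a fairly mechanical explicit chain equivalence, the key point being that $f_{\pm}^U = \Id$ so the mapping cone on $(-f_+^U, f_-)^T$ collapses onto $Y$.  The genuinely subtle step is commutativity, which you dismiss as ``immediate from the symmetry of the gluing diagram.''

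The gluing diagram is \emph{not} symmetric under swapping the summands.  The sum $\Y \,\sharp\, \Y^{\dag}$ identifies $D_+$ with $D_-^{\dag}$ and takes the cone on $(-f_+ \circ \varpi,\, f_-^{\dag})^T$, whereas $\Y^{\dag}\,\sharp\,\Y$ identifies $D_+^{\dag}$ with $D_-$ and cones on $(-f_+^{\dag} \circ \varpi^{\dag},\, f_-)^T$: these are different chain maps.  What makes the two outcomes equivalent is the existence, for each triad, of the canonical chain isomorphism $\varpi \colon D_- \to D_+$ and the chain homotopy $\mu \colon f_+ \circ \varpi \simeq f_-$, the algebraic shadow of the geometric fact that the collar cylinder $S^1 \times D^1$ can be slid around the boundary torus to exchange its two ends.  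One then invokes Lemma \ref{Lemma:homotopicmapscones} to identify the mapping cones, and must still verify that the induced map on $Q$-groups carries $\Phi \cup_{\delta\varphi_-^{\dag}} \Phi^{\dag}$ to $\Phi^{\dag} \cup_{\delta\varphi_-} \Phi$, that the equivalence commutes up to homotopy with the triad structure maps $f_{\pm}^{\ddag}$, and that the resulting square involving $\xi \oplus \xi^{\dag}$ and $\xi^{\dag} \oplus \xi$ commutes, so that one has an honest equivalence of \emph{triples} and not merely of underlying complexes.  This matters precisely because, as the paper remarks, abelian-ness here is a special feature of homology cylinders over $S^1 \times D^1$; for a general surface the analogous monoid fails to be abelian, so an appeal to an ``obvious'' symmetry of the diagram is exactly the intuition one should distrust.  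The rest of your argument --- well-definedness via Mayer--Vietoris, associativity as regrouping of iterated mapping cones, the identity via $f_{\pm}^U$ being equivalences, and the homomorphism from $Knots$ via Propositions \ref{Prop:decompknot-extconnectedsum} and \ref{prop:2ndderivedsubgroup} --- tracks the paper closely and is sound.
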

Chapter \ref{chapter:COTsurvey} contains a survey of the work of \COTN, which motivates the definition in Chapter \ref{chapter:algconcordance} of an algebraic notion of concordance of chain complexes.  We have the following generalisation of Proposition \ref{basicfact_intro}:
\begin{proposition}[Proposition \ref{lemma:basicfactconcordance}]
Two knots $K$ and $K^{\dag}$ are topologically concordant if and only if the 3-manifold
\[Z := X \cup_{\partial X = S^1 \times S^1} S^1 \times S^1 \times I \cup_{S^1 \times S^1 = \partial X^{\dag}} -X^{\dag}\]
is the boundary of a topological 4-manifold $W$ such that
\begin{description}
\item[(i)] the inclusion $i \colon Z \hookrightarrow W$ restricts to $\Z$-homology equivalences $$H_*(X;\Z) \xrightarrow{\simeq} H_*(W;\Z) \xleftarrow{\simeq} H_*(X^\dag;\Z); \text{ and}$$
\item[(ii)] the fundamental group $\pi_1(W)$ is normally generated by a meridian of (either of) the knots.
\end{description}
\end{proposition}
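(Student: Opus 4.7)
The plan is to prove both directions constructively by passing between concordance annuli $A \subset S^3 \times I$ with $\partial A = K \sqcup -K^\dagger$ and their exteriors $W := \cl((S^3 \times I) \setminus \nu(A))$, where $\nu(A) \cong A \times D^2$ is a tubular neighborhood.

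For the forward direction, assume such an $A$ exists and set $W$ as above. A direct inspection confirms $\partial W = Z$: the $S^1 \times S^1 \times I$ piece comes from $\partial \nu(A)$ after removing the two meridional disks lying in $S^3 \times \{0, 1\}$, while the $X$ and $-X^\dagger$ pieces are $S^3 \times \{0,1\}$ with open tubular neighborhoods of $K$ and $K^\dagger$ removed. I verify (i) by applying Mayer-Vietoris to $S^3 \times I = W \cup \nu(A)$ with intersection $A \times S^1 \simeq T^2$; since $H_*(S^3 \times I) \cong H_*(S^3)$ and $H_*(\nu(A)) \cong H_*(S^1)$, the sequence produces $H_*(W) \cong H_*(S^1)$ with first homology generated by a meridian of $A$. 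For (ii), the standard codimension-two argument gives a surjection $\pi_1(W) \twoheadrightarrow \pi_1(S^3 \times I) = 1$ with kernel normally generated by a meridian of $A$, so $\pi_1(W)$ itself is normally generated by a meridian, which restricts to a meridian of each boundary knot.

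For the backward direction, assume $W$ satisfies (i) and (ii). I glue a thickened solid torus $D^2 \times S^1 \times I$ to $W$ along the $S^1 \times S^1 \times I$ component of $\partial W$, identifying $\partial D^2$ with the meridian $\mu$ and the $S^1$ factor with the zero-framed longitude $\lambda$. Call the result $W_2$. The $D^2$ now plays the role of a meridional disk, so each end of $W_2$ becomes $X \cup_{\partial X} (D^2 \times S^1) \cong S^3$, with the knot recovered as the core of the solid torus. Thus $\partial W_2 = S^3 \sqcup -S^3$, and the embedded annulus $A := \{0\} \times S^1 \times I \subset D^2 \times S^1 \times I \subset W_2$ has boundary $K \sqcup -K^\dagger$. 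By Seifert-Van Kampen applied to $W_2 = W \cup (D^2 \times S^1 \times I)$ with intersection $\simeq T^2$, and using (ii), one obtains $\pi_1(W_2) = \pi_1(W)/\langle\langle \mu \rangle\rangle = 1$. A Mayer-Vietoris calculation -- exploiting that the map $H_1(T^2) = \Z^2 \to H_1(W) \oplus H_1(D^2 \times S^1 \times I) = \Z \oplus \Z$ sending $(a\mu + b\lambda) \mapsto (a, b)$ is an isomorphism (by (i) and since $\lambda$ bounds a Seifert surface in $X$ and hence is nullhomologous in $W$) -- shows $H_*(W_2) \cong H_*(S^3)$.

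Thus $W_2$ is a simply connected topological cobordism from $S^3$ to $S^3$ with $H_*(W_2) \cong H_*(S^3)$, which forces both inclusions of the boundary components to be homology equivalences, hence homotopy equivalences by Whitehead; that is, $W_2$ is a topological $h$-cobordism. Freedman's topological $h$-cobordism theorem then yields a homeomorphism $W_2 \approx S^3 \times I$, and the image of $A$ under this homeomorphism furnishes the desired locally flat concordance annulus between $K$ and $K^\dagger$. The main technical point is the interplay of the homological bookkeeping on the boundary torus with the appeal to Freedman's theorem in the topological category; the gluing and the fundamental-group computations are routine.
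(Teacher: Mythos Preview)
Your proof is correct and follows essentially the same approach as the paper's: both take $W$ to be the concordance exterior for the forward direction and verify (i) and (ii) via Mayer--Vietoris and Seifert--Van Kampen on $S^3 \times I = W \cup (S^1 \times D^2 \times I)$, and both glue $S^1 \times D^2 \times I$ back along the $S^1 \times S^1 \times I$ piece of $\partial W$ for the converse. The only cosmetic difference is the endgame: the paper caps off $W_2$ with two copies of $D^4$ to obtain a homotopy $4$-sphere and invokes Freedman's theorem in that form, whereas you apply the topological $h$-cobordism theorem directly to $W_2$; these are equivalent finishes.
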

The algebraic definition is similar: see Figure \ref{Fig:existenceofinverse7}; we say that two triples $(H,\Y,\xi)$ and $(H^\dag,\Y^\dag,\xi^\dag)$ are \emph{second order algebraically concordant} if there exists a $\Z$-homology chain complex cobordism $(V,\Theta)$ between two symmetric pairs $$C_*(S^1 \times S^1;\zh) \to Y$$ and $$C_*(S^1 \times S^1;\zhdag) \to Y^\dag$$ which is a product cobordism on the boundary.  Since the Alexander module changes in a concordance of knots, we require the existence of a $\Z[\Z]$-module $H'$ with a homomorphism $\omega \colon H \oplus H^\dag \to H'$. The algebraic cobordism $V$ must be over the ring $\Z[\Z \ltimes H']$.  We tensor the two symmetric pairs with $\zhd$ so that this makes sense.  There is a similar consistency condition: we require that there is an isomorphism
\[\xi' \colon H' \cong H_1(\Z[\Z] \otimes_{\zhd} V)\]
such that the following diagram commutes:
\[\xymatrix @R+1cm @C+1cm {H \oplus H^\dag \ar[r]_-{\xi \oplus \xi^\dag}^-{\cong} \ar[d]^-{\omega} & H_1(\Z[\Z] \otimes_{\zh} Y) \oplus H_1(\Z[\Z] \otimes_{\zhdag} Y^\dag) \ar[d] \\
H'  \ar[r]_-{\xi'}^-{\cong} & H_1(\Z[\Z] \otimes_{\Z[\Z \ltimes H']} V).}\]
This guarantees that the correspondence between the group of the complex and the 1-chains of the complex remains strong: we do not use the Blanchfield pairing, but we still need a mechanism to make sure we have the control that it exercises over the fundamental group, and representations of it, in the work of \COT (see Chapter \ref{chapter:COTsurvey}).
We say that two knots are \emph{second order algebraically concordant} if their triples are, and that a knot which is second order algebraically concordant to the unknot is \emph{second order algebraically slice} or \emph{algebraically $(1.5)$-solvable}.

\begin{figure}
    \begin{center}
 {\psfrag{A}{$Y$}
 \psfrag{B}{$C(S^1 \times D^1)$}
 \psfrag{C}{$C(S^1 \times D^1)$}
 \psfrag{D}{$V$}
 \psfrag{E}{}
 \psfrag{F}{}
 \psfrag{G}{$C(S^1 \times D^1)$}
 \psfrag{H}{$Y^{\dag}$}
 \psfrag{J}{$C(S^1 \times D^1)$}
 \psfrag{K}{$C(S^1 \times S^0)$}
 \psfrag{L}{$C(S^1 \times S^0)$}
\includegraphics[width=10cm]{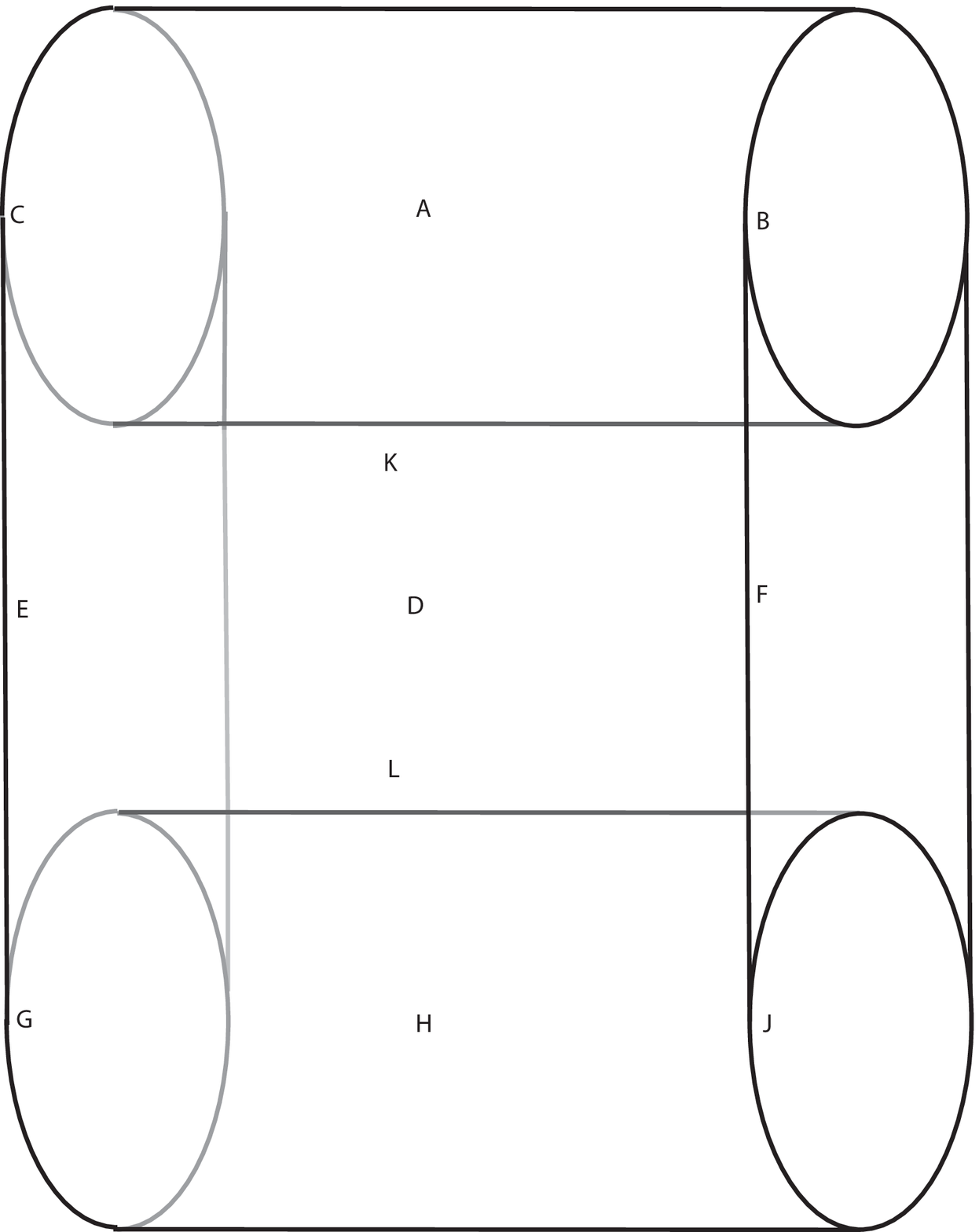}
 }
 \caption{The cobordism which shows that $\mathcal{Y} \sim \mathcal{Y}^{\dag}$.}
 \label{Fig:existenceofinverse7}
 \end{center}
\end{figure}

Taking the quotient of $\P$ by our algebraic concordance equivalence relation we finally arrive at the definition of our group $\ac2$.  The result of Chapter \ref{chapter:algconcordance} is that there is a diagram:
\[\xymatrix @R+1cm @C+1cm{
Knots \ar[r] \ar @{->>}[d] & \P \ar @{->>}[d]  \\
\C \ar[r] & \mathcal{AC}_2,
}\]
where the top row consists of monoids and the bottom row consists of groups.  The maps are therefore monoid homomorphisms, except for the map in the bottom row which is a group homomorphism.

We proceed to show how our group $\ac2$ relates to the \COT filtration and the \COT obstruction theory.  Roughly speaking, our group lies in between the two. First, Chapter \ref{chapter:one_point_five_solvable_knots} contains the proof of the following theorem:
\begin{theorem}[Theorem \ref{Thm:1.5solvable=>2nd_alg_slice}]\label{Thm:1.5solvableintro}
A $(1.5)$-solvable knot is second order algebraically slice.
\end{theorem}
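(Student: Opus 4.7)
The plan is to construct an algebraic concordance between the fundamental symmetric Poincar\'{e} triad of $K$ and that of the unknot, directly from the chain complex of a $(1.5)$-solution $W$ equipped with metabelian coefficients.

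Let $W$ be a $(1.5)$-solution for $K$, and set $H' := H_1(W;\Z[\Z])$, where $\Z[\Z]$-coefficients are taken via the abelianisation $\pi_1(W) \twoheadrightarrow H_1(W;\Z) \toiso \Z$ (using that $H_1(M_K) \toiso H_1(W)$). Arguing analogously to Proposition \ref{prop:2ndderivedsubgroup}, but applied to $W$, the metabelian quotient $\pi_1(W)/\pi_1(W)^{(2)}$ is identified with $\Z \ltimes H'$. The homomorphism $\omega \colon H \oplus H^{\dag} \to H'$ required by the definition of second order algebraic concordance is furnished by the inclusion $X \hookrightarrow M_K = \partial W \hookrightarrow W$, noting that $H^{\dag} = 0$ is the trivial Alexander module of the unknot.

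Next, the boundary decomposition $M_K = X \cup_{T^2} (D^2 \times S^1)$ exhibits $\partial W$ as the geometric union of the knot-exterior triad of $K$, a toral product piece, and the manifold triad of the unknot. Taking $V_0 := C_*(W; \zhd)$ with the symmetric structure supplied by Poincar\'{e}-Lefschetz duality (as developed in Chapter \ref{Chapter:duality_symm_structures}), we obtain a $4$-dimensional symmetric Poincar\'{e} triad cobordism between the $\zhd$-base-change of the knot's triad and that of the unknot, product along the toral boundary. The consistency isomorphism $\xi' \colon H' \toiso H_1(\Z[\Z] \otimes_{\zhd} V_0)$ is then tautological from the definition $H' := H_1(W;\Z[\Z])$, as is the commutativity of the consistency square by naturality of the inclusion.

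The main obstacle is verifying that (a suitable modification of) $V_0$ is a genuine \emph{$\Z$-homology} chain complex cobordism. One has $H_1(W;\Z) \cong \Z$ from the $(1.5)$-solvability hypothesis, but in general $H_2(W;\Z) \neq 0$, so $V_0 \otimes_{\zhd} \Z$ is not a $\Z$-homology cobordism on the nose. The $(1.5)$-solvability provides exactly the data needed to repair this via Ranicki's algebraic surgery on symmetric Poincar\'{e} complexes \cite{Ranicki3}: the $(2)$-Lagrangian is a half-rank summand of $H_2(W;\zhd)$ on which $\lambda_2$ and $\mu_2$ vanish, whose image in $H_2(W;\Z)$ pairs non-singularly (under the $\Z$-intersection form) with the dual $(1)$-Lagrangian, so that the two together freely generate $H_2(W;\Z)$. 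Algebraically surgering $V_0$ along this $(2)$-Lagrangian preserves the toral boundary product structure and yields a symmetric Poincar\'{e} triad cobordism $V$ whose underlying $\Z$-coefficient complex has $H_1(V \otimes \Z) \cong \Z$ and $H_2(V \otimes \Z) = 0$. The resulting data $(V, \Theta, \omega, \xi')$ then provide the desired algebraic concordance, showing $K$ to be second order algebraically slice. The technical heart of the argument is the careful implementation of this algebraic surgery to ensure both the symmetric structure and the toral boundary are preserved while the $\Z$-homology is cut down to that of a concordance exterior.
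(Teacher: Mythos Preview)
Your proposal is correct and follows the paper's approach closely: take the chain complex of a $(1.5)$-solution $W$ over $\Z[\Z \ltimes H']$ with $H' = H_1(W;\Z[\Z])$, observe that $H_2(W;\Z)\neq 0$ is the obstacle, and repair it by algebraic surgery along the $(2)$-Lagrangian.

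One refinement worth noting: the ``technical heart'' you identify at the end is slightly misplaced. Preservation of the boundary homotopy type and of the symmetric structure are general features of Ranicki's algebraic surgery machinery, not special to this situation. The genuine point requiring the $(1.5)$-solvability hypothesis is the homology computation after surgery. The $(2)$-Lagrangian classes $l_i$ furnish the surgery data (with $\ol{f}\,\ol{\Theta}_0\,\ol{f}^* = 0$ because $\lambda_2$ vanishes on them), and the dual $(1)$-Lagrangian classes $d_j$ are then used twice: first, the non-singular pairing $l_i\ol{\Theta}_0^* d_j^* = \delta_{ij}$ shows that after surgery both the $l_i$ and the $d_j$ cease to be cohomology classes, so $H^2$ vanishes over $\Z$; second, the same pairing shows the map $-\ol{f}\,\ol{\Theta}_0^*$ is surjective onto the new summand $B_2$, which is exactly what prevents the surgery from creating spurious classes in $H_1$ (over both $\Z$ and $\Z[\Z]$, the latter being essential to preserve the consistency isomorphism $\xi'$). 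The paper also checks at the end that $H'$ is of type $K$, invoking Levine's arguments applied to the post-surgery $\Z$-homology circle.
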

The idea is that the assumptions satisfied by the symmetric chain complex associated to a $(1.5)$-solution, which is a 4-manifold $W$ whose intersection form is hyperbolic, yield precisely the data needed to do \emph{algebraic surgery} on the chain complex of $W$ to make it into a $\Z$-homology circle without affecting $H_1(W;\Z[\Z])$.  The resulting chain complex may not be the chain complex of any manifold, since the corresponding geometric surgeries will not in general be possible: if they were, the knot would be slice rather than just $(1.5)$-solvable.

Our construction, the group $\ac2$, is in some sense very clean.  We avoid references to homology pairings and we avoid the use of the Ore localisation.  We also avoid the problems of universal coefficients which often require the ad-hoc introduction of principal ideal domains, and we obtain a \emph{group} with a \emph{non-trivial homomorphism} $\C \to \ac2$: the chain complexes behave well under connected sum.  Traditionally, cobordism groups use disjoint union to define their addition operation.  Our operation of addition is superior because it mirrors much more closely the geometric operation of addition of knots.  Most importantly, by defining our obstruction in terms of chain complexes, we have a \emph{single stage} obstruction which captures the first two main stages of the \COT obstruction theory.  In Chapter \ref{Chapter:extractingCOTobstructions}, we have the following results.  First:
\begin{proposition}[Proposition \ref{Thm:zeroAC2_goesto_zeroAC1}]
There is a surjective homomorphism
\[\ac2 \to \mathcal{AC}_1,\]
where $\mathcal{AC}_1$ is the algebraic concordance group of Seifert forms.
\end{proposition}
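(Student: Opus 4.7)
The plan is to send a triple $(H,\Y,\xi) \in \mathcal{P}$ to the Witt class of the Blanchfield linking form on $H$ extracted from the symmetric structure on $\Y$, and then use the Kearton--Levine correspondence to land in $\mathcal{AC}_1$. Concretely, let $\eps \colon \zh \to \Z[\Z]$ be the augmentation sending $H$ to zero. Tensoring the triad $\Y$ with $\Z[\Z]$ produces a $3$-dimensional symmetric Poincar\'e triad over $\Z[\Z]$ whose outer complex $\Z[\Z] \otimes_{\zh} Y$ has first homology isomorphic to $H$ via $\xi$; the remaining homology is that of a circle, so the boundary pieces contribute nothing torsion. The symmetric Poincar\'e structure then yields a non-singular Blanchfield form $\mathrm{Bl}_{\Y} \colon H \times H \to \Q(t)/\Z[t^{\pm 1}]$ in the standard way (cf.\ the description via Poincar\'e duality, Bockstein, and universal coefficients recalled in the introduction). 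Composing $[(H,\Y,\xi)] \mapsto [\mathrm{Bl}_{\Y}]$ with the Kearton--Levine isomorphism from the Witt group of non-singular Blanchfield forms to $\mathcal{AC}_1$ defines the candidate map.

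Well-definedness and additivity I would establish as follows. A chain equivalence of triads $j \colon \Z[\Z \ltimes H^\%] \otimes_{\zh} \Y \xrightarrow{\sim} \Y^\%$ with compatible $\omega \colon H \toiso H^\%$ descends, under $\eps$, to a chain equivalence of symmetric Poincar\'e triads over $\Z[\Z]$ intertwining the two Blanchfield forms via $\omega$, so the $\P$-equivalence relation is respected. For additivity, the connected-sum construction of Chapter \ref{Chapter:semigroup} glues two triads along $C(S^1 \times D^1)$; tensoring down to $\Z[\Z]$ both $H$ and $H^\dag$ die in the coefficients, the mapping cone $\mathscr{C}((-f_+,f_-^\dag)^T) \otimes_{\zh} \Z[\Z]$ has $H_1 = H \oplus H^\dag$, and the Blanchfield form of the sum is the orthogonal direct sum $\mathrm{Bl}_{\Y} \oplus \mathrm{Bl}_{\Y^\dag}$, which is the group operation in $\mathcal{AC}_1$.

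The key step is showing that a second order algebraic concordance induces Witt equivalence. Given $(V,\Theta)$ over $\Z[\Z \ltimes H']$ as in Figure \ref{Fig:existenceofinverse7}, apply the augmentation $\Z[\Z \ltimes H'] \to \Z[\Z]$. The $\Z$-homology product condition on $V$ forces $H_*(\Z[\Z] \otimes V)$ to be that of $S^1$, while the symmetric Poincar\'e cobordism produced is a null-cobordism of $(\Z[\Z] \otimes_{\zh} Y) \cup -(\Z[\Z] \otimes_{\zhdag} Y^\dag)$ over $\Z[\Z]$. The standard argument then shows that the kernel $P := \ker(H \oplus H^\dag \to H_1(\Z[\Z] \otimes V))$ satisfies $P = P^\perp$ with respect to $\mathrm{Bl}_{\Y} \oplus -\mathrm{Bl}_{\Y^\dag}$; the consistency isomorphism $\xi'$ ensures that the algebraic boundary map out of $H \oplus H^\dag$ really detects the metabolizer, not some extraneous part of the homology of $V$. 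This identifies the two Witt classes in $\mathcal{AC}_1$.

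Surjectivity is then automatic: the composition $Knots \to \P \to \mathcal{AC}_2 \to \mathcal{AC}_1$ coincides with the classical homomorphism assigning to a knot the Witt class of its Seifert form, since the Blanchfield form of $Y$ when $\Y$ is the fundamental symmetric Poincar\'e triad of $K$ is (by construction in Chapter \ref{Chapter:duality_symm_structures}) the usual Blanchfield form of $K$. Levine's realization theorem provides a knot representing any given Witt class, so the map is surjective. The main technical obstacle is verifying that extracting the Blanchfield form from a symmetric Poincar\'e triad with the split boundary $S^1 \times S^1 = (S^1 \times D^1) \cup_{S^1 \times S^0} (S^1 \times D^1)$ agrees with the usual extraction from a symmetric Poincar\'e pair on the knot exterior; this requires checking that the algebraic union of the two halves of the boundary commutes with the base change $\zh \to \Z[\Z]$ and with the duality maps, essentially a bookkeeping exercise with the algebraic Thom construction of Ranicki.
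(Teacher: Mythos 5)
Your overall strategy coincides with the paper's: send $(H,\Y,\xi)$ to the Witt class of the Blanchfield form extracted from the symmetric structure, check additivity under connected sum, use a chain-level version of the Cochran--Orr--Teichner metaboliser argument for well-definedness, and appeal to Levine realization for surjectivity. However, there is a genuine gap in your key step, the well-definedness argument.

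You claim that after applying the augmentation $\Z[\Z \ltimes H'] \to \Z[\Z]$ to a putative second order algebraic concordance $(V,\Theta)$, ``the $\Z$-homology product condition on $V$ forces $H_*(\Z[\Z] \otimes V)$ to be that of $S^1$.'' This is false: the condition in Definition \ref{defn:2ndorderconcordant} only controls homology with $\Z$-coefficients, i.e. $H_*(\Z \otimes_{\zhd} V) \cong H_*(S^1;\Z)$. The consistency condition says $H_1(\Z[\Z] \otimes_{\zhd} V) \cong H'$, which is typically a nontrivial torsion $\Z[\Z]$-module, not the $\Z[\Z]$-homology of a circle. More seriously, you then try to run the ``$P = P^\perp$'' argument entirely over $\Z[\Z]$ with $\Q(t)/\Z[t^{\pm 1}]$-valued forms. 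This fails for two reasons emphasized in the paper's surrounding discussion: $\Z[\Z]$ is not a principal ideal domain, so the universal-coefficient and relative-pairing isomorphisms needed in the duality argument do not go through automatically; and $H_1(\Z[\Z] \otimes V)$ may have $\Z$-torsion, in which case one can only conclude $P \subseteq P^\perp$, not $P = P^\perp$ --- this is exactly the ribbon--slice issue discussed after Theorem \ref{Lemma:COT4.4}. The paper circumvents this by proving the metaboliser statement (Theorem \ref{thm:COT4.4chainversion}) over the PID $\Q[\Z]$, getting a rational metaboliser $P_\Q = P_\Q^\perp$, and then separately verifying that the Witt group of integral Blanchfield forms injects into the Witt group of rational ones (by intersecting $P_\Q$ with the image of the integral module and checking the chain-level pairing is unchanged after clearing denominators). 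Without this rationalize-then-restrict step your argument does not close.

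One smaller remark: your appeal to ``the Kearton--Levine correspondence'' to translate from Blanchfield forms to $\mathcal{AC}_1$ is fine and matches the paper's use of Definition \ref{Defn:AC1} (equivalence of the Witt group of Blanchfield forms with the Witt group of Seifert forms), so no issue there.
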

The key point is that a symmetric chain complex contains all the information necessary to algebraically define the Blanchfield form.  This gives us the algebraic concordance group.  When the image in $\mathcal{AC}_1$ vanishes, we can then use the Blanchfield form to define the second order \COT obstructions.

Depending on a choice of metaboliser for the Blanchfield form, the \COT obstructions map a knot to an element of the $L$-group $L^4(\Q\G,\Q\G-\{0\})$ (see Definitions \ref{Defn:Lgroups} and \ref{defn:localisationexactsequence}), where $\G := \Z \ltimes \Q(t)/\Q[t,t^{-1}]$ is the \COT universally $(1)$-solvable group.  For an element $(H,\Y,\xi) \in \ac2$, we define the chain complex $N$ to be:
\[N := Y \cup_{C(S^1 \times S^1)} C(S^1 \times D^2),\]
the algebraic equivalent of the zero surgery $M_K := X \cup_{S^1 \times S^1} S^1 \times D^2$; we perform the gluing so that the longitude bounds.  For each $p \in H$, there is a representations $\rho \colon \Z \ltimes H \to \G$, which defines the tensor product $(\Q\G \otimes_{\zh} N)_p$.  We use the subscript to remind us that this depends on a choice of $p$.  We prove:
\begin{theorem}[Theorem \ref{Thm:betterstatementzeroAC2_to_zeroL4QG}]\label{Thm:ac2_to_zeroL4QG_intro}
Let $(H,\Y,\xi) \in \ac2$ be in the equivalence class of the fundamental symmetric Poincar\'{e} triad $(0,\Y^U,\Id_{\{0\}})$ of the unknot.  Then there exists a metaboliser $P = P^{\bot}$ of the rational Blanchfield form, such that for any $p \in P$, using a representation $\rho \colon \Z \ltimes H \to \G$ which depends on $\xi$, $\Bl$ and $p$, $\Y$ produces:
\[(\Q\G \otimes_{\zh} N, \theta)_p,\]
which represents:
\[0 \in L^4(\Q\G,\Q\G - \{0\}).\]
\end{theorem}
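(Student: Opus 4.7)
The plan is to exploit the algebraic concordance $(H,\Y,\xi) \sim (0,\Y^U,\Id_{\{0\}})$ to produce a symmetric null-cobordism of $(\Q\G \otimes_{\zh} N, \theta)_p$ in the relative $L$-group. By hypothesis there is a $\Z[\Z \ltimes H']$-module algebraic cobordism $(V,\Theta)$ of symmetric pairs from $C(S^1 \times S^1) \to Y$ to $C(S^1 \times S^1) \to Y^U$ (tensored appropriately), together with a module map $\omega \colon H \oplus 0 \to H'$ and a consistency isomorphism $\xi' \colon H' \toiso H_1(\Z[\Z] \otimes_{\Z[\Z \ltimes H']} V)$. First I would cap off the $S^1 \times S^1$ boundary by gluing the algebraic $C(S^1 \times D^2)$ (using the gluing construction of Definition \ref{Defn:unionconstruction}) to both the $Y$ end and the $Y^U$ end, turning $V$ into an algebraic 4-manifold cobordism $W$ whose boundary is $N \sqcup (-N^U)$ where $N^U \simeq C(S^1 \times D^2 \cup_{S^1 \times S^1} S^1 \times D^2)$ is the algebraic zero surgery on the unknot. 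Since $N^U$ is a $\Z[\Z]$-homology $S^1 \times S^2$, it bounds an obvious symmetric $4$-complex so modulo that we may regard $W$ as a symmetric nullcobordism of $N$.

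The metaboliser is constructed from $W$. The inclusion-induced map $H \toiso H_1(\Z[\Z]\otimes_{\zh} Y) \to H_1(\Z[\Z] \otimes V)$ has a kernel $P \leq H$, and I would define this $P$ to be the candidate metaboliser for the rational Blanchfield form $\Bl$. The proof that $P = P^{\perp}$ with respect to $\Bl$ is the standard half-lives/half-dies argument done algebraically: the symmetric structure $\Theta$ on $V$ combined with the $\Z$-homology cobordism hypothesis forces Poincar\'e-Lefschetz duality between the kernel and cokernel of $H_1(Y) \to H_1(V)$, and this duality, passed through the Blanchfield pairing presentation $\Bl(a,b) = a^T(1-t)(tV-V^T)^{-1}b$ that is derivable from the symmetric chain complex, identifies $P^\perp$ with $P$. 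This is one of the two main technical steps.

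Next, for $p \in P$, I would define the representation $\rho \colon \Z \ltimes H \to \G = \Z \ltimes \Q(t)/\Q[t,t^{-1}]$ by sending the $\Z$-factor to $\Z$ identically and sending $h \in H$ to $\Bl(h,p) \in \Q(t)/\Q[t,t^{-1}]$, using the isomorphism $\xi$ to identify $H$ with $H_1(\Z[\Z] \otimes_{\zh} Y)$ on which $\Bl$ is defined. The main content is then to show that $\rho$ extends to a representation $\widetilde\rho \colon \Z \ltimes H' \to \G$ on the cobordism side. Because $p$ lies in the metaboliser $P$ and $\omega(P) \subseteq H'$ consists of classes whose Blanchfield self-pairing vanishes in the appropriate sense — this is the exact algebraic analogue of the Cochran-Orr-Teichner extension of representations argument surveyed in Chapter \ref{chapter:COTsurvey} — the obstruction to extending $\rho$ over $V$ vanishes. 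The consistency square involving $\xi$ and $\xi'$ is precisely what lets one transport the vanishing of $\Bl(p,\cdot)$ on $P$ to vanishing on $\omega(P)$, ensuring the extension is well-defined.

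With $\widetilde\rho$ in hand, apply $\Q\G \otimes_{\Z[\Z\ltimes H']} -$ (via $\widetilde\rho$) to $(V,\Theta)$ after first gluing in the algebraic $2$-handle that closes off the boundary torus. This yields a $4$-dimensional symmetric Poincar\'e pair over $\Q\G$ whose boundary is exactly $(\Q\G \otimes_{\zh} N,\theta)_p$. Since $\Q\G$ embeds in a skew-field $\K$ (the Ore localisation of $\Q\G$ at $\Q\G - \{0\}$ — this is why the $L$-group is the relative one $L^4(\Q\G, \Q\G - \{0\})$), and since the cobordism is a $\Z$-homology cobordism, one checks that tensoring the cobordism further with $\K$ gives a contractible complex, verifying that the pair represents the zero class in $L^4(\Q\G, \Q\G - \{0\})$ by the definition of the relative $L$-group via the localisation exact sequence (Definition \ref{defn:localisationexactsequence}). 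The hardest step, as indicated, is the extension of $\rho$ across $V$; everything else is a careful bookkeeping of the gluings and of the duality already encoded in the symmetric structures.
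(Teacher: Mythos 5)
Your proposal follows essentially the same route as the paper's proof of Theorem \ref{Thm:betterstatementzeroAC2_to_zeroL4QG}: cap off the boundary torus of the algebraic concordance $(V,\Theta)$ with the chain complex of $S^1 \times D^2 \times I$ to get the algebraic zero surgeries $N$, $N^U$ as the new boundary; take $P$ to be the kernel of $H_1(\Q[\Z]\otimes_{\zh}N) \to H_1(\Q[\Z]\otimes V)$; prove $P = P^\bot$ by a duality argument (the paper's Theorem \ref{thm:COT4.4chainversion}, the chain-level version of COT Theorem 4.4); extend the Blanchfield representation over the 4-dimensional complex using the consistency square (the paper cites COT Theorem 3.6, whose proof is purely homological algebra); and finally check that the relative complex becomes contractible over $\K$, so that the lift to $L^4_S(\K)$ vanishes and hence so does the class in $L^4(\Q\G,\Q\G-\{0\})$.

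Two small flags. First, you invoke the Seifert matrix presentation $\Bl(a,b) = a^T(1-t)(tV - V^T)^{-1}b$ of the Blanchfield form as the mechanism for the half-lives/half-dies argument. That formula is not available here: a triple $(H,\Y,\xi) \in \ac2$ is an abstract symmetric Poincar\'e triad and need not be the chain complex of any manifold, so there is no Seifert matrix in sight. The paper instead defines the Blanchfield pairing directly from the symmetric structure on $\Q[\Z] \otimes_{\zh} N$ (Proposition \ref{Prop:chainlevelBlanchfield}), and the metaboliser argument is run against that chain-level pairing; your sentence ``derivable from the symmetric chain complex'' shows you sense this, but the literal Seifert matrix formula should be replaced by the chain-level one. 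Second, the step ``one checks that tensoring the cobordism with $\K$ gives a contractible complex'' is exactly where the PTFA property of $\G$ enters, via the chain-complex version of COT Proposition 2.10 (Proposition \ref{Prop:COT2.10chainversion} in the paper): a $\Q$-homology isomorphism promotes to a $\K$-homology isomorphism only because $\G$ is PTFA. Worth naming, since this is the hinge on which the final ``represents $0$'' rests, not merely a bookkeeping check.
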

We denote by $(\mathcal{COT}_{(\C/1.5)},U)$ the \emph{\COT pointed set}, which captures the \COT obstruction theory: see Definition \ref{defn:COTobstructionset_2}.  We then use Theorems \ref{Thm:1.5solvableintro} and \ref{Thm:ac2_to_zeroL4QG_intro} to extend our diagram to:
\[\xymatrix @R+1cm @C+1cm{
Knots \ar[r] \ar @{->>}[d] & \P \ar @{->>}[d]  \\
\C \ar[r] \ar @{->>} [d] & \mathcal{AC}_2 \ar@{-->}[d]  \\
\C/\mathcal{F}_{(1.5)} \ar@{-->}[r]  \ar[ur] & \mathcal{COT}_{(\C/1.5)}.
}\]
The homomorphism $\C \to \ac2$ factors through $\mathcal{F}_{(1.5)}$ by Theorem \ref{Thm:1.5solvable=>2nd_alg_slice}, and we can extract the \COT obstructions from our algebraic group of chain complexes.  As above, the maps starting in the top row are monoid homomorphisms.  The dotted arrows are maps of pointed sets.  The maps emanating from the middle row are group homomorphisms.  We would prefer that the homomorphism $\C/\mathcal{F}_{(1.5)} \to \ac2$ were injective but this seems hard with present technology.

Furthermore, we are able to use $L^{(2)}$-signatures to obstruct elements in $\ac2$ from being second order algebraically slice.  We have a purely algebraic definition of a Von Neumann $\rho$--invariant.

\begin{definition}[Theorem \ref{Defn:alg(1)solvable}]
We say that an element $(H,\Y,\xi) \in \ac2$ with image $0 \in \mathcal{AC}_1$ is \emph{algebraically $(1)$-solvable} if the following holds.  There exists a metaboliser $$P = P^{\bot} \subseteq H_1(\Q[\Z] \otimes_{\zh} N)$$ for the rational Blanchfield form such that for any $p \in H$ satisfying $\xi(p) \in P$, we obtain an element:
\[\Q\G \otimes_{\zh} N \in \ker(L^4(\Q\G,\Q\G -\{0\}) \to L^3(\Q\G)),\]
via a symmetric Poincar\'{e} pair over $\Q\G$: \[(j \colon (\Q\G \otimes_{\zh} N)_p \to V_p, (\Theta_p,\theta_p)),\] with
\[P = \ker(j_* \colon H_1(\Q[\Z] \otimes_{\zh} N) \to H_1(\Q[\Z] \otimes_{\Q\G} V_p)),\]
and such that:
\[j_*\colon H_1(\Q \otimes_{\zh} N) \xrightarrow{\simeq} H_1(\Q \otimes_{\Q\G} V_p)\]
is an isomorphism.  We call each such $(V_p,\Theta_p)$ an \emph{algebraic $(1)$-solution}.
\qed\end{definition}
\begin{definition}
Let $\K$ be the skew field which comes from the Ore localisation of $\Q\G$ with respect to $\Q\G - \{0\}$ (Definition \ref{Defn:OreLocalisation}). See \cite[Section~5]{COT}, or our section \ref{chapter:COTsurvey}.\ref{Chapter:L2signature}, for the definition of the \emph{$L^{(2)}$-signature homomorphism}:
\[\sigma^{(2)} \colon L^0(\K) \to \R,\]
which we use to detect non-trivial elements of the Witt group $L^0(\K)$ of non-singular Hermitian forms over $\K$.
\qed\end{definition}
\begin{theorem}[Theorem \ref{Thm:extractingL2signatures}]
Suppose that $(H,\Y,\xi) \in \ac2$ is algebraically $(1)$-solvable with algebraic $(1)$-solution $(V_p,\Theta_p)$.  Then since:
\[\ker(L^4(\Q\G,\Q\G -\{0\}) \to L^3(\Q\G)) \cong \frac{L^4(\K)}{L^4(\Q\G)} \cong \frac{L^0(\K)}{L^0(\Q\G)},\]
we can apply the $L^{(2)}$-signature homomorphism:
\[\sigma^{(2)} \colon L^0(\K) \to \R,\]
to the intersection form:
\[\lambda_{\K} \colon H_2(\K \otimes_{\Q\G} V_p) \times H_2(\K \otimes_{\Q\G} V_p) \to \K.\]
We can also calculate the signature $\sigma(\lambda_{\Q})$ of the ordinary intersection form:
\[\lambda_{\Q} \colon H_2(\Q \otimes_{\Q\G} V_p) \times H_2(\Q \otimes_{\Q\G} V_p) \to \Q,\]
and so calculate the reduced $L^{(2)}$-signature \[\wt{\sigma}^{(2)}(V_p) = \sigma^{(2)}(\lambda_{\K}) - \sigma(\lambda_{\Q}).\]  This is independent, for fixed $p$, of changes in the choice of chain complex $V_p$.    Provided we check that the reduced $L^{(2)}$-signature does not vanish, for each metaboliser $P$ of the rational Blanchfield form with respect to which $(H,\Y,\xi)$ is algebraically $(1)$-solvable, and for each $P$, for at least one $p \in P \setminus \{0\}$, then we have a \emph{chain--complex--Von--Neumann $\rho$--invariant} obstruction.  This obstructs the image of the element $(H,\Y,\xi)$ in $\mathcal{COT}_{(\C/1.5)}$ from being $U$, and therefore obstructs $(H,\Y,\xi)$ from being second order algebraically slice.
\end{theorem}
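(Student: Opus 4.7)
The plan is to follow the blueprint of Cochran--Orr--Teichner's \COTN Section~4, but executed entirely on the chain complex level, using the hypothesis of algebraic $(1)$-solvability as the substitute for the existence of a $(1)$-solution. I would organise the argument around three issues: (i) identifying the class of the intersection form in $L^0(\K)/L^0(\Q\G)$; (ii) proving the reduced $L^{(2)}$-signature is independent of the chain-level solution $V_p$; and (iii) deducing that its non-vanishing obstructs second order algebraic sliceness via the pointed set $\mathcal{COT}_{(\C/1.5)}$.

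First I would record the identifications
\[\ker\bigl(L^4(\Q\G,\Q\G-\{0\}) \to L^3(\Q\G)\bigr) \cong L^4(\K)/L^4(\Q\G) \cong L^0(\K)/L^0(\Q\G),\]
the first coming from the localisation exact sequence of Definition \ref{defn:localisationexactsequence}, and the second from the $4$-periodicity of symmetric $L$-groups over rings in which $2$ is invertible. Given an algebraic $(1)$-solution $(V_p,\Theta_p)$, tensoring with $\K$ converts the symmetric Poincar\'{e} pair into a non-singular $4$-dimensional symmetric Poincar\'{e} complex over $\K$ (because $\K\otimes_{\Q\G}(\Q\G\otimes_{\zh} N)$ is acyclic by the standard fact that Blanchfield-type modules vanish after inverting $\Q\G-\{0\}$), and hence represents the image in $L^0(\K)/L^0(\Q\G)$ of the original relative class. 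The associated non-singular Hermitian form is exactly $\lambda_{\K}$ on $H_2(\K\otimes_{\Q\G} V_p)$.

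The central step is showing that $\wt{\sigma}^{(2)}(V_p) = \sigma^{(2)}(\lambda_{\K})-\sigma(\lambda_\Q)$ depends only on $(H,\Y,\xi)$ and $p$, not on the choice of $V_p$. Given two algebraic $(1)$-solutions $V_p,V_p'$ for the same $p$, I would form the algebraic union along their common boundary $(\Q\G \otimes_{\zh} N)_p$ with opposite orientations, using the gluing construction of Definition \ref{Defn:unionconstruction}; the hypotheses that $j_*\colon H_1(\Q\otimes N)\toiso H_1(\Q\otimes V_p)$ is an isomorphism and that both kernels on $H_1$ with $\Q[\Z]$-coefficients coincide with $P$, imply that the glued closed symmetric Poincar\'{e} complex $U := V_p \cup_{(\Q\G \otimes_{\zh} N)_p} -V_p'$ over $\Q\G$ is a $\Q$-homology $S^4$, and in particular its class lies in the image of $L^0(\Q\G) \to L^0(\K)$. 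Additivity of both $\sigma^{(2)}$ and the ordinary signature under such gluings then gives
\[\wt{\sigma}^{(2)}(V_p) - \wt{\sigma}^{(2)}(V_p') = \sigma^{(2)}(\lambda_\K(U)) - \sigma(\lambda_\Q(U)).\]
The right-hand side vanishes by the algebraic $L^{(2)}$-induction principle used in \cite[Section~5]{COT}: the map $L^0(\Q\G)\to L^0(\K)\xrightarrow{\sigma^{(2)}-\sigma}\R$ is zero, since $\sigma^{(2)}$ of a form defined over $\Q\G$ agrees with its ordinary signature (this is the algebraic incarnation of the Cheeger-Gromov-L\"uck-Schick $L^2$-signature theorem, which we invoke as a black box).

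Finally, to conclude the obstruction-theoretic statement, suppose $(H,\Y,\xi)$ were second order algebraically slice. By Theorem \ref{Thm:betterstatementzeroAC2_to_zeroL4QG} there exists a metaboliser $P = P^{\bot}$ of the rational Blanchfield form such that for every $p \in P$ the relative class $(\Q\G \otimes_{\zh} N,\theta)_p$ is zero in $L^4(\Q\G,\Q\G-\{0\})$; in particular it lies in the kernel of the boundary to $L^3(\Q\G)$, so $(H,\Y,\xi)$ is algebraically $(1)$-solvable with respect to $P$, and the induced $\lambda_\K$ represents $0\in L^0(\K)/L^0(\Q\G)$, which by the independence just established forces $\wt{\sigma}^{(2)}(V_p)=0$ for every $p \in P$. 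Contrapositively, if there is some metaboliser $P$ for which the reduced $L^{(2)}$-signature is non-zero on some $p\in P\setminus\{0\}$ for every algebraic $(1)$-solution, then the image of $(H,\Y,\xi)$ in $\mathcal{COT}_{(\C/1.5)}$ cannot be the basepoint $U$, completing the obstruction. The main obstacle is the additivity/vanishing argument for $\sigma^{(2)}-\sigma$ on classes coming from $\Q\G$ carried out purely at the chain level; this is where one must translate the analytic $L^2$-signature theorem into a statement about symmetric Poincar\'{e} complexes, which I would do by reducing to the closed case via the gluing and then citing the algebraic form of the theorem already used in \chaptername~\ref{chapter:COTsurvey}.\ref{Chapter:L2signature}.
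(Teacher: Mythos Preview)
Your approach is essentially the same as the paper's: form the union $U = V_p \cup_{N} -V_p'$ over $\Q\G$, use that over $\K$ the boundary $N$ becomes contractible so the class splits as a difference, and then apply Novikov additivity together with \cite[Proposition~5.12]{COT} (which rests on Higson--Kasparov) to conclude that $\sigma^{(2)} - \sigma$ vanishes on the image of $L^0(\Q\G)$. That is exactly how the paper argues.

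Two corrections are needed. First, the claim that $U$ is a ``$\Q$-homology $S^4$'' is false and irrelevant: one has $H_1(U;\Q)\cong\Q$, not $0$, and $H_2(U;\Q)$ is typically non-trivial. What you actually need is only that $U$ is a \emph{closed} symmetric Poincar\'{e} complex \emph{over} $\Q\G$, hence represents a class in $L^4(\Q\G)$; this is automatic from the union construction and has nothing to do with the $\Q$-homology type. The paper also takes a moment to verify that $\lambda_\Q$ is non-singular on $H^2(\Q\otimes\mathscr{C}(j))$, using the hypothesis that $j_*$ is a $\Q$-homology isomorphism on $H_1$; you skip this, but it is what justifies treating $\lambda_\Q$ as an honest element of $L^0(\Q)$.

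Second, your final contrapositive has the quantifier on $P$ reversed. You correctly argue that if $(H,\Y,\xi)$ is second order algebraically slice then \emph{there exists} a metaboliser $P$ with $\wt\sigma^{(2)}(V_p)=0$ for all $p\in P$. The contrapositive of this is: if \emph{for every} metaboliser $P$ (with respect to which $(H,\Y,\xi)$ is algebraically $(1)$-solvable) there is \emph{some} $p\in P\setminus\{0\}$ with $\wt\sigma^{(2)}(V_p)\neq 0$, then $(H,\Y,\xi)$ is not second order algebraically slice. You wrote ``if there is some metaboliser $P$\ldots'', which is too weak to obstruct anything.
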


This shows that our group $\ac2$ is highly non-trivial.   Previous definitions of $\rho$--invariants invoke a 4-manifold in some way for the definition.


Philosophically, when we talk about obstructions to topological knot concordance, we are really talking about algebraic obstructions to $\Z$-homology chain complex cobordism.  The more sophisticated the obstructions that we are dealing with, the more complicated must the coefficient ring be to which we are able to lift our $\Z$-homology chain complex cobordism.  As such, we outline, in Appendix \ref{Appendix:nth_order_group}, a definition of what we conjecture to be an $n$th order algebraic concordance group $\mathcal{AC}_n$, which, as we hope to show in future work, should extend the results of this present work to capture the whole of the \COT filtration.

\section*{Acknowledgement}

This work is essentially my PhD thesis.  Most of all, I would like to thank my supervisor Andrew Ranicki, for all the help he has given me over the last three and a half years, in particular for suggesting this project, and for the ideas and advice which were instrumental in solving so many of the problems encountered.

Stefan Friedl and Kent Orr were involved in this project from an early stage.  Their suggestions and criticism have proved vital in the evolution of the ideas presented in this work.  I would like to take this opportunity to thank both Kent and Stefan for all their advice and encouragement.

During Andrew's sabbaticals in the autumns of 2008 and 2010 in M\"{u}nster and MPIM Bonn respectively, Wolfgang L\"{u}ck and Peter Teichner arranged that I could accompany him to Germany, where I enjoyed productive and stimulating research environments.  I would also like to thank Peter Teichner for helpful discussions, particularly concerning the equivalence relation which defines the group of chain complexes.

I would also like to thank Diarmuid Crowley, Tibor Macko, Daniel Moskovich and Dirk Sch\"{u}tz for their useful advice and helpful comments.

Throughout my time at Edinburgh, I have had countless, invaluable mathematical conversations with my friends and fellow graduate students Spiros Adams-Florou, Julia Collins and Paul Reynolds.  I can only hope they found my contributions to these dialogues half as useful as I found theirs.




\chapter{A Handle Decomposition of a Knot Exterior}\label{Chapter:handledecomp}

In this chapter we shall explain how to obtain, in an algorithmic fashion, a concrete handle decomposition of the exterior $X_K = X$, of a knot $K$, given a diagram of the knot.  This will be the starting point in geometry, from which we pass to algebra via the handle chain complex associated to our decomposition.

The results of this chapter are not particularly novel.  However, the details presented here are crucial for the rest of the construction, in chapters \ref{chapter:chaincomplex} and \ref{Chapter:duality_symm_structures}, of an element of our group $\ac2$.

The boundary $\partial X$ of a knot exterior $X$ is the boundary of a neighbourhood $S^1 \times D^2$ of the knot.  Since in our applications we shall be using the chain complex level version of the Poincar\'{e}-Lefschetz duality of $X$ relative to $\partial X$, it is important that we have a handle decomposition which includes, as a sub-complex, a handle decomposition of the boundary.  Moreover, in order to define a group, we have to be able to add the chain complexes of the knot exteriors together.  We therefore split the boundary $S^1 \times S^1$ into two copies of $S^1 \times D^1$, splitting the longitude in two, and consider the knot exterior as a cobordism from $S^1 \times D^1$ to itself, relative to a product cobordism on $S^1 \times S^0$.  We call this cobordism the \emph{fundamental cobordism} of a knot, since the chain complex of this cobordism, with the duality maps, will be the main algebraic object which we associate to a knot in order to obtain an element of our algebraic concordance group.

The cylinder $S^1 \times D^1$ is a trivial cobordism from the circle $S^1$ to itself.  The knot exterior is a $\Z$-homology cobordism from $S^1 \times D^1$ to itself; it is an easy Mayer-Vietoris argument to show that both of the inclusion induced maps $H_*(S^1 \times D^1;\Z)$ $\xrightarrow{\simeq}$ $H_*(X;\Z)$ are isomorphisms; this is the definition of a $\Z$-homology cobordism.  Therefore $X$ looks like a product cobordism to $\Z$-homology.  We will consider covering spaces of $X$ whose homology modules have more discerning coefficients.  For these coefficients $X$ does not typically have the homology of a product.  Our obstruction theory measures the obstruction to changing (by a homology cobordism) the chain complex of the fundamental cobordism of the knot to being the chain complex of a product cobordism.

We begin by making explicit some standard notation.
\begin{definition}\label{conventions0}
We denote $D^n := \{x \in \R^{n} \,|\, |x|\leq 1\}.$  In particular $D^1 = [-1,1].$  We denote $I := [0,1]$, the unit interval.  Although they are topologically the same, there are semantic differences.  We shall principally use $I$ when thickening a submanifold in the neighbourhood of a boundary.  The notation $\mathring{D}^n$ and $\mathring{I}$ shall be used to mean the respective interiors $D^n \setminus \partial D^n \approx \R^n$ and $(0,1)$.
\qed \end{definition}

\begin{definition}
We adopt the following conventions for certain common equivalence relations.  For algebraic objects such as groups, rings and modules, we use the symbol $\cong$ for abstract isomorphism; we use $\xrightarrow{\simeq}$ when there is a choice of map which induces the isomorphism; we use $=$ when there is equality but differing notation for the same object. We use $\simeq$ to denote homotopy equivalence of topological spaces or chain equivalence of chain complexes, and $\approx$ denotes homeomorphism, while $\xrightarrow{\approx}$ denotes a particular choice of homeomorphism.
\qed \end{definition}

\begin{definition}
A \emph{knot} is an isotopy class of oriented locally flat embeddings $K:S^1 \hookrightarrow S^3$.  Such an embedding is \emph{locally flat} if it is locally homeomorphic to a ball-arc pair: that is, for all $x \in S^1$, there is a neighbourhood $U$ of $K(x)$, such that $(U,U \cap K(S^1)) \approx (\mathring{D}^3,\mathring{D}^1)$.  We often abuse notation and also refer to the image $K(S^1)$ as $K$.
\qed \end{definition}
\begin{remark}
In particular a tame knot in the smooth or piecewise-linear categories is locally flat.  We do not make restrictions to the smooth or piecewise-linear categories in this work; our obstructions work at the level of the topological category.  We therefore work with topological manifolds and locally flat embeddings when there is no comment otherwise.  Our obstructions are not intended to detect the gap between smooth and topological concordance.
\end{remark}
\begin{definition}
A \emph{knot diagram} for an oriented knot $K$ is the image of the composite of a representative embedding in the isotopy class $K \colon S^1 \to S^3$, with a  projection $p \colon S^3 \backslash \{\infty\} \approx \R^3 \to S^2$.  If necessary, we must either isotope the knot slightly, or perturb $p$, so that $p \circ K \colon S^1 \to S^2$ is an immersion with at most transverse double points.  We make pictures in the plane to define knots\footnote{The locally flat condition ensures that space filling curves do not occur, so the images $K(S^1)$ and $p \circ K(S^1)$ can be isotoped away from the $\infty$ points of the respective spheres in which they lie, and nothing is lost by considering the diagram as the image of $p \colon \R^3 \to \R^2$.}, and mark the diagram with the crossing points of the knot, the image of the double point set of $p \circ K$, using a small break in the line to indicate an under-crossing.  We also mark the diagram with an arrow to indicate the orientation of the knot.
\qed \end{definition}

\begin{definition}
We can associate a \emph{graph} to a knot diagram in $S^2$ by putting a vertex at each crossing point.  All the arcs of the diagram, between the crossings, become the edges of the graph, and the areas bounded by the edges are the \emph{regions}.  Note that the area outside the knot also counts as a region, sometimes called the unbounded region, since we are considering the diagram as part of $S^2$. (See Figure \ref{Fig:regions})
\qed \end{definition}
\begin{figure}[h!]
 \begin{center}
 \includegraphics [width=7cm] {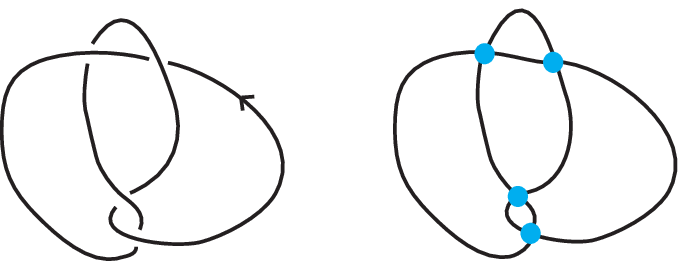}
 \caption {A knot diagram for the figure eight knot, and its associated graph.  The graph has 4 vertices, 8 edges, and 6 regions.}
 \label{Fig:regions}
 \end{center}
\end{figure}

\begin{definition}\label{Defn:reduceddiagram}
A knot diagram is \emph{reduced} if there does not exist a region in the associated graph which abuts itself at a vertex.  At each vertex, four regions meet;  in a reduced diagram, they must be four distinct regions.  If a diagram is un--reduced, there is a move similar to a Reidemeister type I move with some possibly non-trivial part of the knot diagram on either side, which can be made.  Take a closed curve inside the region which abuts itself, which starts and ends at the crossing.  By the Jordan Curve Theorem this divides $S^2$ into two discs, each containing a part of the knot diagram.  To remove the offending crossing, lift one of the parts up to $S^3$ and rotate it by $\pi$ radians, before projecting down again.  This constitutes an isotopy of the knot.  Thus any knot has a reduced diagram.\\
Figure \ref{Fig:reduceddiagram} shows some un--reduced diagrams.  Note that the region which abuts itself can be an inside region, or it can be the region ``outside'' the knot diagram.
\qed \end{definition}
\begin{figure}[h!]
 \begin{center}
 {\psfrag{X}{$X$}
 \psfrag{Y}{$Y$}
 \psfrag{A}{$X'$}
 \psfrag{B}{$Y'$}
 \includegraphics [width=9cm] {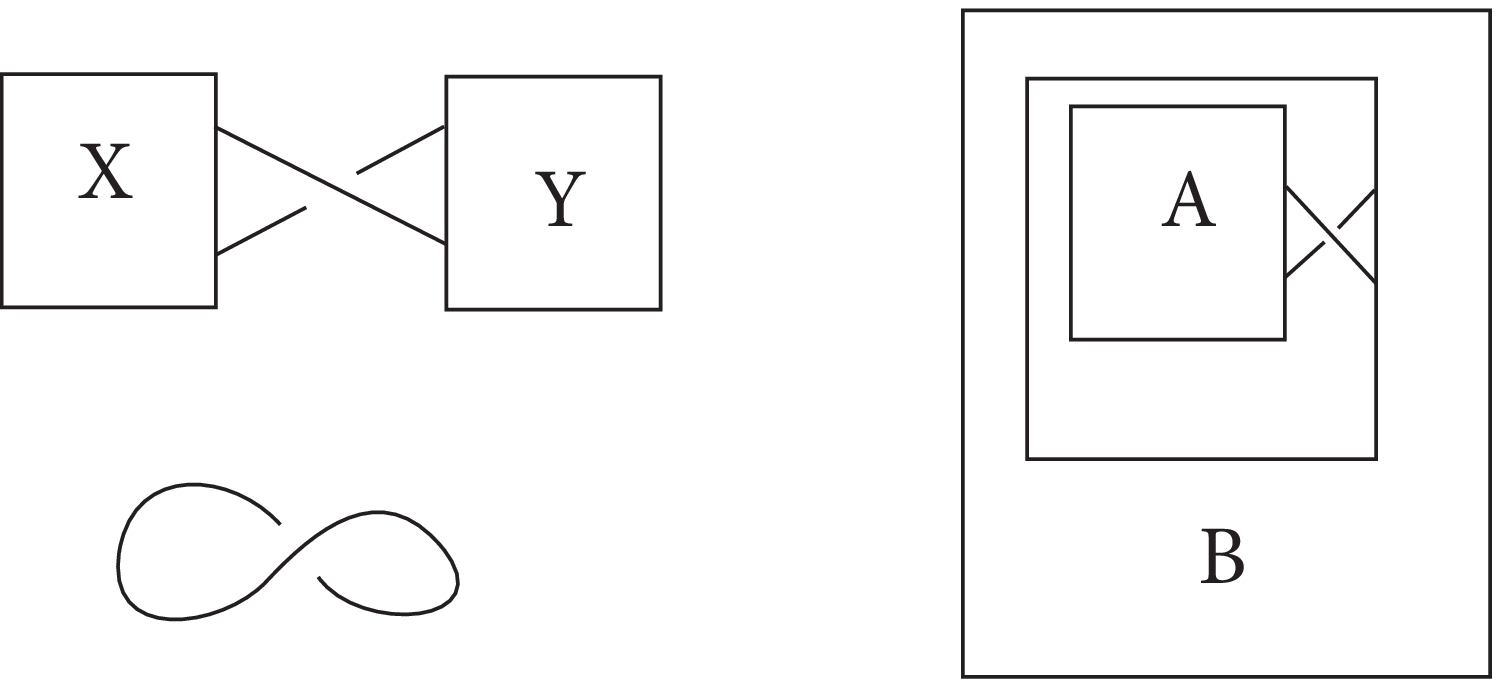}}
 \caption{Some examples of un--reduced diagrams.  The labels $X$, $X'$, $Y$ and $Y'$ denote some other part of the knot diagrams.}
  \label{Fig:reduceddiagram}
 \end{center}
\end{figure}

\begin{definition}\label{Defn:quaddecomp}
A reduced knot diagram with a non-zero number of crossings determines a \emph{quadrilateral decomposition} of $S^2$ (\cite{Sanderson}).  It is the graph which is \emph{dual} to the associated graph.  This means putting a vertex in each region, and then joining a pair of vertices with an edge if the original regions were separated by an edge in the original graph.
\begin{figure}[h!]
 \begin{center}
 \includegraphics [width=7cm] {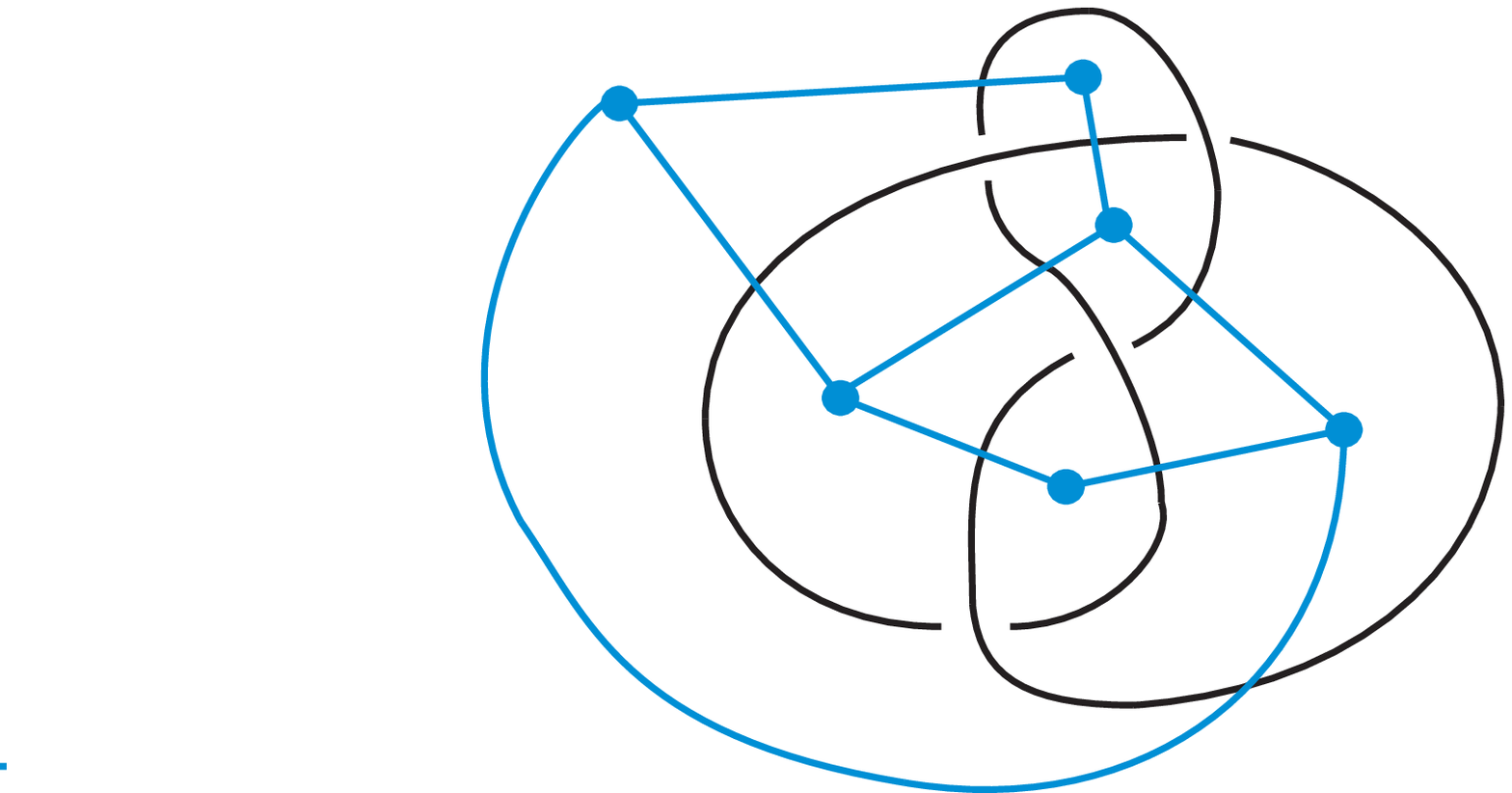}
 \caption {Quadrilateral decomposition for our diagram of the figure eight.}
 \label{Fig:quaddecomp}
 \end{center}
\end{figure}
  Each region in the dual graph then has a single crossing in its interior, and as the original graph is four-valent, each region is a quadrilateral.\\
\qed \end{definition}

\begin{remark}
If we want to include the unknot then we can make a reduced diagram of the unknot with at least 3 crossings e.g. simply change one of the crossings in a 3 crossing diagram for the trefoil.  Alternatively, note that the exterior of the unknot is a solid torus $S^1 \times D^2$ and one can then fathom simple handle decompositions for this manifold; this will be made explicit soon.

Just as the quadrilateral decomposition fails for unreduced diagrams, it also fails for those which are not connected; it does not generalise to split link diagrams.
\end{remark}

\begin{definition}
For a knot $K \colon S^1 \hookrightarrow S^3$, the \emph{knot exterior} is $X_K := \overline{S^3 \backslash N}$ where $N \approx S^1 \times D^2$ is a regular neighbourhood of the knot.  Where there is only one (usually generic) knot being considered, this will be written just as $X$.
\qed \end{definition}

\begin{definition}
We attach an $r$-handle $h^r = D^r \times D^{3-r}$ to a $3$-manifold with boundary $(M,\partial M)$ by gluing $S^{r-1} \times D^{3-r} = \partial D^r \times D^{3-r} \subset D^r \times D^{3-r}$ to an embedding \\$S^{r-1} \times D^{3-r} \hookrightarrow \partial M$.

The subset $S^{r-1} \times \{0\} \subseteq D^r \times D^{3-r}$ is known as the \emph{attaching sphere}, $D^r \times \{0\}$ is the \emph{core}, while $\{0\} \times D^{3-r}$ is called the \emph{cocore}, and $\{0\} \times S^{3-r-1}$ is the \emph{belt sphere}.  By convention, $S^{-1} := \emptyset$.

By Morse theory, any manifold $X$ can be constructed by iteratively attaching handles, yielding a \emph{handle decomposition} of $X$.

The $k$-\emph{skeleton} $X^{(k)}$ of a handle decomposition of $X$ is the union of all the $i$-handles of the decomposition for all $i \leq k$.  We shall principally be concerned with 3-manifold decompositions, so the definition was given in this case, but handles which decompose $n$-manifolds are defined by replacing $3$ with $n$ in the definition above.

We employ the standard techniques of rounding or smoothing the corners without further comment.
\qed \end{definition}

\begin{remark}
In order to specify the attaching of an $r$-handle, it suffices to give the embedding of the attaching $(r-1)$-sphere in the boundary of $X^{(r-1)}$, and to give a framing for this embedding.  In the case of 3-manifolds, the embeddings are of 0, 1, or 2 manifolds into surfaces, and so there is essentially only one choice of framing.  0- and 3- handles are copies of $D^3$; a 1-handle is attached at two points, called its feet, and a 2-handle is attached along a circle.  Figure \ref{Fig:handlesall} shows a copy of each of these handles.
\begin{figure}[h!]
 \begin{center}
 \includegraphics [width=9cm] {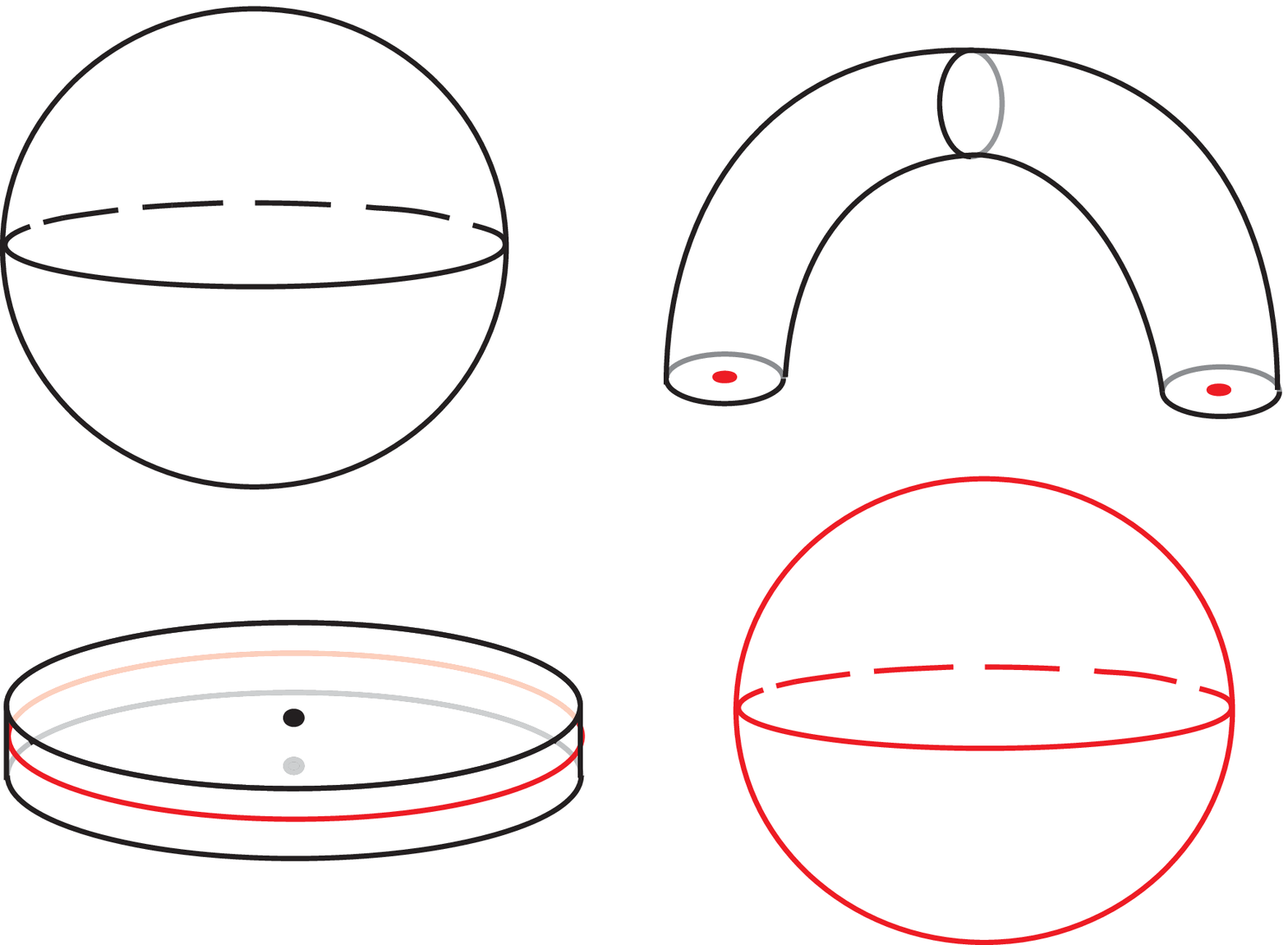}
 \caption {A standard $r$-handle for $r=0,1,2,3$.}
 \label{Fig:handlesall}
 \end{center}
\end{figure}

\end{remark}

\begin{theorem}\label{handledecomp}
Given a reduced diagram for a knot $K: S^1 \hookrightarrow S^3$, with $c \geq 3$ crossings, there is a handle decomposition of the knot exterior $X = \overline{S^3 \setminus N}$:
\[X = h^0 \cup \bigcup_{i=1}^{c}\, h_i^1 \cup \bigcup_{j=1}^{c}\,h_j^2 \cup h^3. \]
\end{theorem}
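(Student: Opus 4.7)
The plan is to build the handle decomposition explicitly from the combinatorial data of the reduced diagram, guided by the quadrilateral decomposition of $S^2$ from Definition \ref{Defn:quaddecomp}. First I would place the knot in a standardized position: write $S^3 = B^3_+ \cup_{S^2} B^3_-$ and arrange $K$ to lie in a small collar neighborhood of the equatorial $S^2$, projecting onto the diagram on $S^2$, with each over-strand perturbed slightly into $B^3_+$ and each under-strand perturbed slightly into $B^3_-$. Take $N = N(K)$ to be a tubular neighborhood of $K$ contained in this thin collar, so that both $B^3_+ \setminus N$ and $B^3_- \setminus N$ retain a simple structure. The quadrilateral decomposition provides a cell structure on $S^2$ with $c+2$ vertices (one per region), $2c$ edges, and $c$ quadrilateral $2$-cells (one per crossing); this combinatorial scaffolding will organise the handle attachments.

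I would then build $X$ handle by handle. Start with a single $0$-handle $h^0$, a small closed ball placed deep inside $B^3_-$ away from $K$. For the $1$-handles, pick $c$ pairwise disjoint embedded arcs in $X$, each running from $h^0$ back to itself by passing once ``over'' a distinct arc of the knot diagram (the arcs in the Wirtinger sense, bounded by under-crossings). Thickening these arcs gives $c$ disjoint $1$-handles $h^1_1, \ldots, h^1_c$, one for each Wirtinger generator $g_i$, so that $Y := h^0 \cup \bigcup_i h^1_i$ is a genus-$c$ handlebody whose $\pi_1$ is free on $g_1, \ldots, g_c$. Next, for each crossing attach a $2$-handle $h^2_j$ whose attaching circle, read off on $\partial Y$, records the Wirtinger relation $g_k = g_i g_j g_i^{-1}$ at that crossing (with sign according to crossing sign). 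The hypothesis that the diagram is reduced, so that four distinct regions meet at each crossing, is exactly what ensures these $c$ attaching circles can be taken to be embedded and pairwise disjoint in $\partial Y$; here I would use the quadrilateral decomposition to exhibit the attaching circles as small meridional loops living in the boundary of the appropriate quadrilateral. Finally a single $3$-handle $h^3$ fills in the remaining complementary region in $B^3_+ \setminus (N \cup Y \cup \bigcup h^2_j)$.

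The main obstacle is verifying that after attaching the $c$ two-handles the remaining complement is actually a $3$-ball, so that there is a genuine $3$-handle left to attach and not some more complicated piece. I would address this in two compatible ways. The first is a direct geometric identification: once the $1$- and $2$-handles are thickenings of a spine of $X$ built from the diagram on $S^2$, the complement of this thickening in $B^3_+ \setminus N$ deformation retracts onto a region over one of the regions of $S^2 \setminus D$, which is homeomorphic to a ball. The second is a consistency check via Euler characteristic and fundamental group: the proposed decomposition has $\chi = 1 - c + c - 1 = 0$, matching $\chi(X) = \chi(T^2)/2 = 0$, and its $\pi_1$ is $\langle g_1,\ldots,g_c \mid r_1,\ldots,r_c\rangle$, which is the standard Wirtinger presentation of $\pi_1(X)$. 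Combined with the fact that the boundary of the resulting handlebody contains an unfilled torus component (coming from $\partial N(K)$) and the $S^2$ filled by $h^3$, this identifies the handlebody with $X$ as an orientable compact $3$-manifold with torus boundary. Chasing through the quadrilateral decomposition makes the bookkeeping of which $1$-handle is encircled by each $2$-handle completely explicit, which is what the subsequent chapters need for writing down the chain complex.
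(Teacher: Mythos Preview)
Your proposal is essentially correct and follows the same overall strategy as the paper: build a handlebody from Wirtinger generators, attach $2$-handles realising the Wirtinger relations, and cap off with a single $3$-handle. The differences are in the bookkeeping. The paper takes the whole lower hemisphere $D^3_-$ as the $0$-handle and first attaches $2c$ short $1$-handles, one for each edge of the quadrilateral decomposition, before using a cancelling $2$-handle at each crossing to amalgamate the two $1$-handles passing over the under-strand into a single one. This leaves exactly $c$ $1$-handles, and then the $c$ $2$-handles are attached one per quadrilateral region, directly over each crossing. The payoff of this more roundabout route is that the $2$-handles visibly tile the upper hemisphere: their union with the $1$-skeleton covers everything in $D^3_+$ below the knot except for a region above it whose boundary is manifestly $S^2$, so the existence of a single $3$-handle is immediate from the picture rather than needing a separate argument.

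Your direct route, placing one $1$-handle per Wirtinger arc from the outset, is cleaner but leaves you with the burden you correctly identify: showing the remaining piece is a $3$-ball. Your Euler-characteristic and $\pi_1$ check is only a consistency check, not a proof (a homology ball with torus boundary component removed would pass it), and your first argument via deformation retraction onto a single region is along the right lines but would need to be made as explicit as the paper's tiling argument to be convincing. If you want to keep your more economical construction, the cleanest fix is to observe that your $1$- and $2$-handles, after the obvious handle slides, are exactly the paper's post-cancellation configuration, and then borrow the paper's tiling observation.
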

\begin{proof}
This is based on notes of Sanderson \cite{Sanderson}.  Divide $S^3$ into an upper and lower hemisphere: $S^3 \approx D_-^3 \cup_{S^2} D_+^3$.  Let the knot diagram be in $S^2$, and arrange the knot itself to be close to its image in the diagram in $S^2$ but all contained in $D_+^3$.  Let $D_-^3$ be $h^0$.  Attach 1-handles which start and end at the 0-handle and go over the knot, one for each edge of the quadrilateral decomposition of $S^2$.  The feet of each one handle should be at or near the vertices of the quadrilateral decomposition, which is a graph in $S^2$, which are at either end of the corresponding edge.  Figure \ref{Fig:bigcrossing2} shows a single crossing of the knot inside one region of the quadrilateral decomposition, and the part of the boundary of $D^3_- = h^0$ which lies inside this region.  According to some numbering of the crossings of the knot diagram (see Conventions \ref{conventions}) and therefore of the regions of the quadrilateral decomposition we call this crossing $i$.

\begin{figure}[h]
  \begin{center}
    {\psfrag{A}{$= K(S^1)$ near crossing $i$.}
    \psfrag{B}{$=$ the boundary of the $0$-handle.}
    \includegraphics[width=9cm]{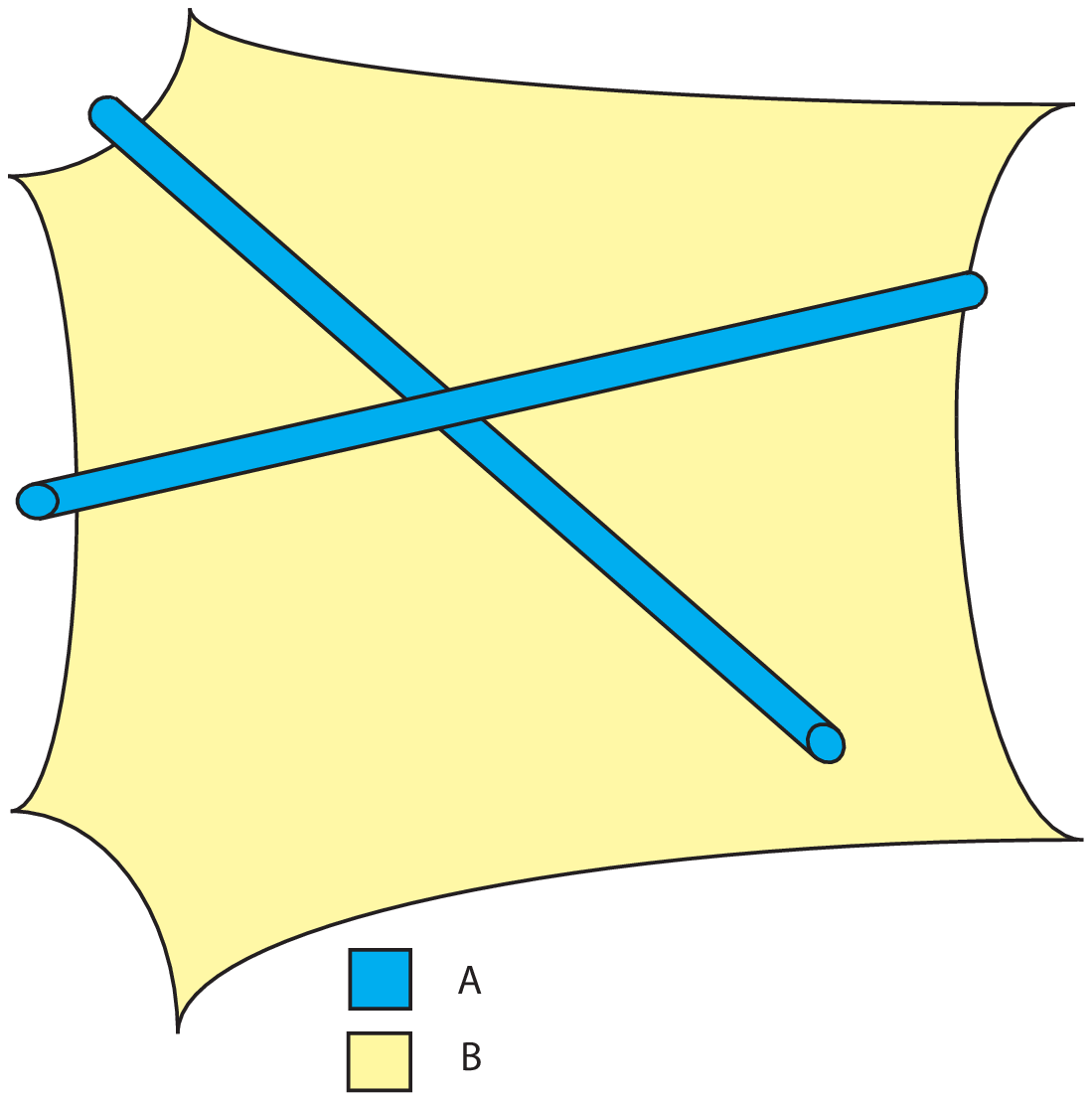}}
  \caption{A close up of one region of the quadrilateral decomposition.}
  \label{Fig:bigcrossing2}
  \end{center}
\end{figure}

Figure \ref{Fig:bigcrossing25} shows the attaching of the 1-handles adjacent to this crossing.  It also shows the orientation of the over-strand and some labels which we associate to each of the 1-handles.

\begin{figure}[h]
  \begin{center}
    {\psfrag{A}{$=$ 1-handles for under-strand.}
    \psfrag{B}{$=$ 1-handle for over-strand.}
    \psfrag{C}{$h^1_{i_1}$}
    \psfrag{D}{$h^1_{i_2}$}
    \psfrag{E}{$h^1_{i_3}$}
    \psfrag{F}{$h^1_{i_4}$}
    \includegraphics[width=9cm]{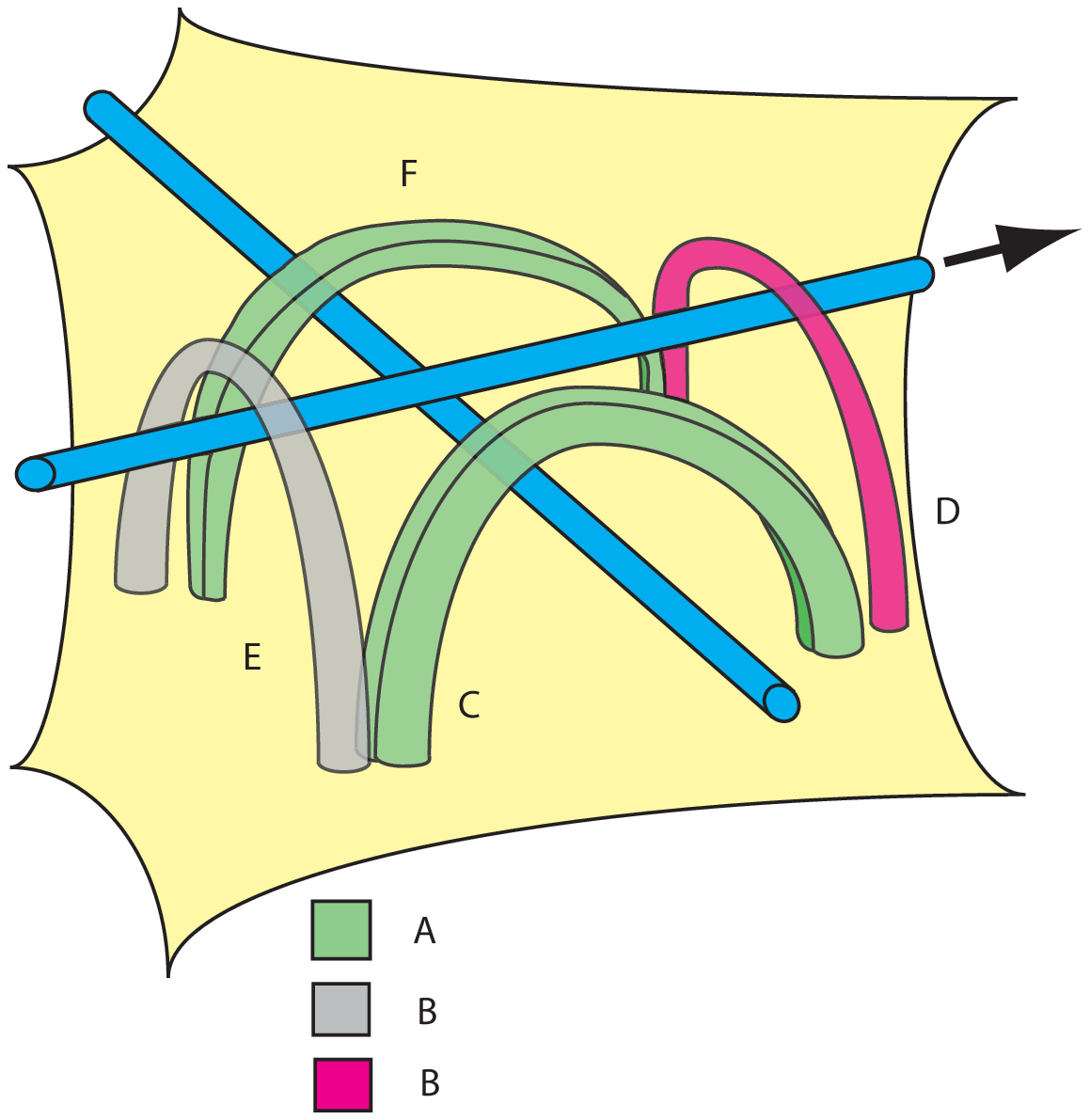}}
  \caption{Attaching the 1-handles.}
  \label{Fig:bigcrossing25}
  \end{center}
\end{figure}

There are $c$ regions and therefore $2c$ edges and currently $2c$ 1-handles.  Now, for each crossing, attach a 2-handle which goes between the strands of the knot, so that the 1-handles which are labelled $h^1_{i_1}$ and $h^1_{i_4}$ from Figure \ref{Fig:bigcrossing25}, and this 2-handle, by handle cancellation, can be amalgamated into a single 1-handle.  Look now at Figure \ref{Fig:bigcrossing29} for an illustration.

\begin{figure}[h]
  \begin{center}
    {\psfrag{A}{$=$ original 1-handles.}
    \psfrag{B}{$=$ cancelling 2-handle.}
    \psfrag{C}{$h^1_{i_1}$}
    \psfrag{D}{$h^1_{i_4}$}
    \includegraphics[width=9cm]{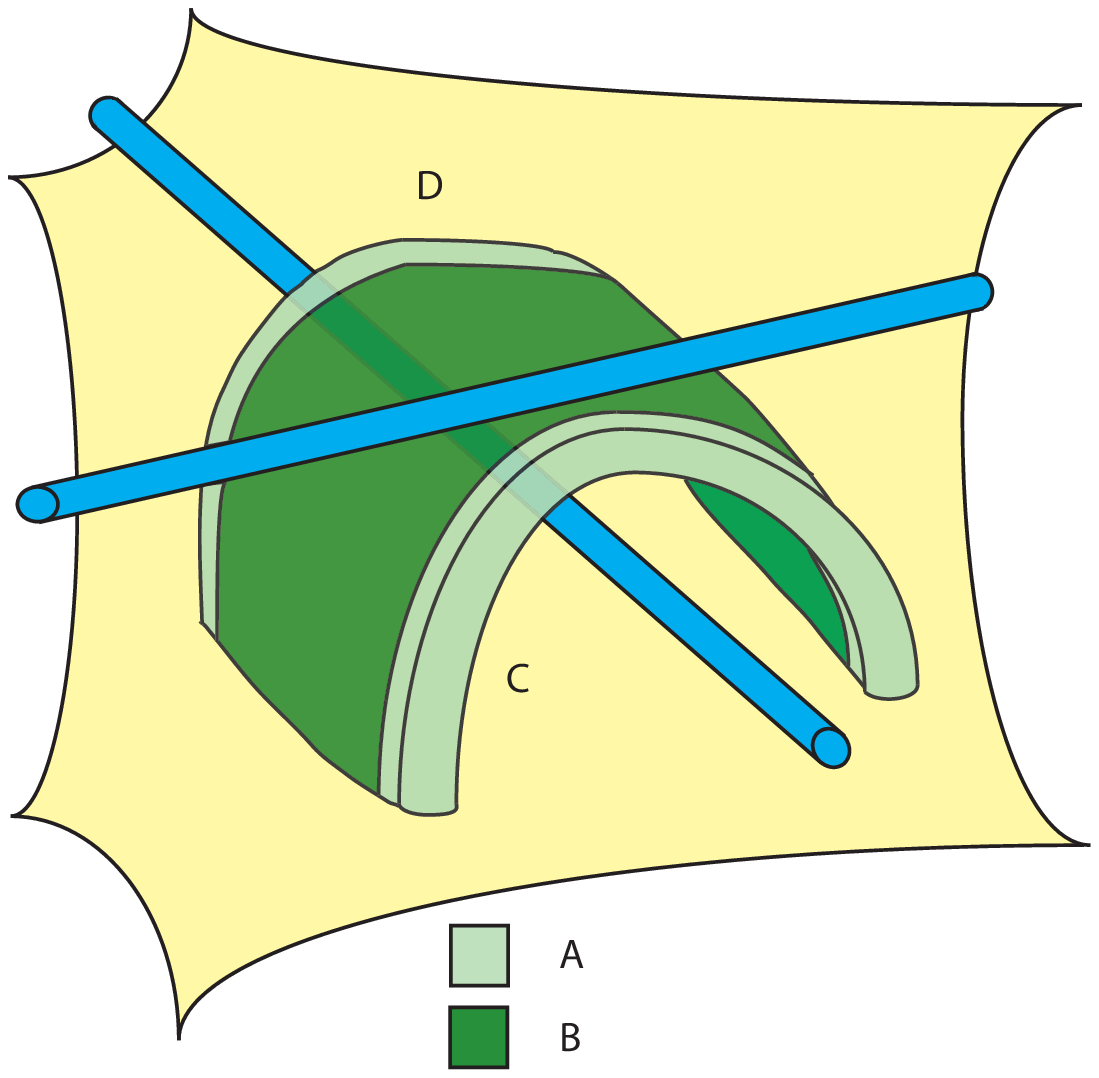}}
  \caption{Combining two of the 1-handles.}
  \label{Fig:bigcrossing29}
  \end{center}
\end{figure}

The outcome of this is that we half the number of 1-handles, so there are now $c$ in total.  Figure \ref{Fig:bigcrossing3} shows the final configuration on 1-handles at each crossing.

\begin{figure}[h]
  \begin{center}
    {\psfrag{A}{$=$ 1-handles.}
    \psfrag{C}{$h^1_{i_1}$}
    \psfrag{D}{$h^1_{i_2}$}
    \psfrag{B}{$h^1_{i_3}$}
    \includegraphics[width=9cm]{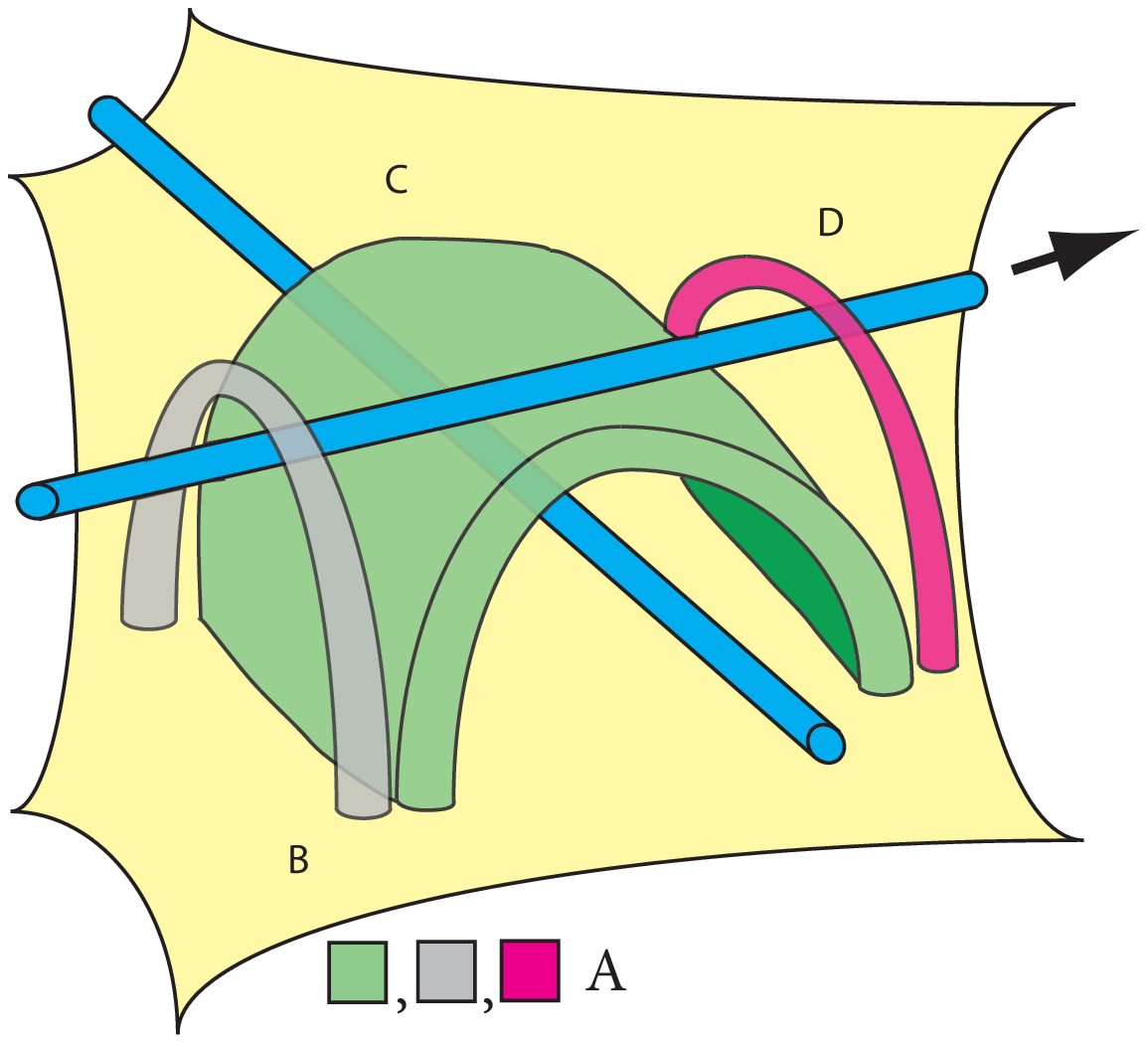}}
  \caption{The 1-handles.}
  \label{Fig:bigcrossing3}
  \end{center}
\end{figure}

The next step is to attach the 2-handles.  For each crossing, and therefore region of the quadrilateral decomposition of $S^2$, we glue a 2-handle on top of the knot, with boundary circle which goes around the 1-handles according to the boundary of the region of $S^2$.  Figure \ref{Fig:bigcrossingfinish} shows the status of the handles thus far; however it only shows the core of the 2-handle for clarity.

\begin{figure}[h]
  \begin{center}
    {\psfrag{A}{$= K(S^1)$ near crossing $i$.}
    \psfrag{B}{$=$ boundary of $h^0$.}
    \psfrag{C}{$=$ attaching sphere $S^1 \times \{0\}$ of $h^2_i$.}
    \psfrag{D}{$=$ core $D^2 \times \{0\}$ of $h^2_i$.}
    \psfrag{E}{$= h^1_{i_1}.$}
    \psfrag{F}{$= h^1_{i_2}$ and $h^1_{i_3}$.}
    \includegraphics[width=10cm]{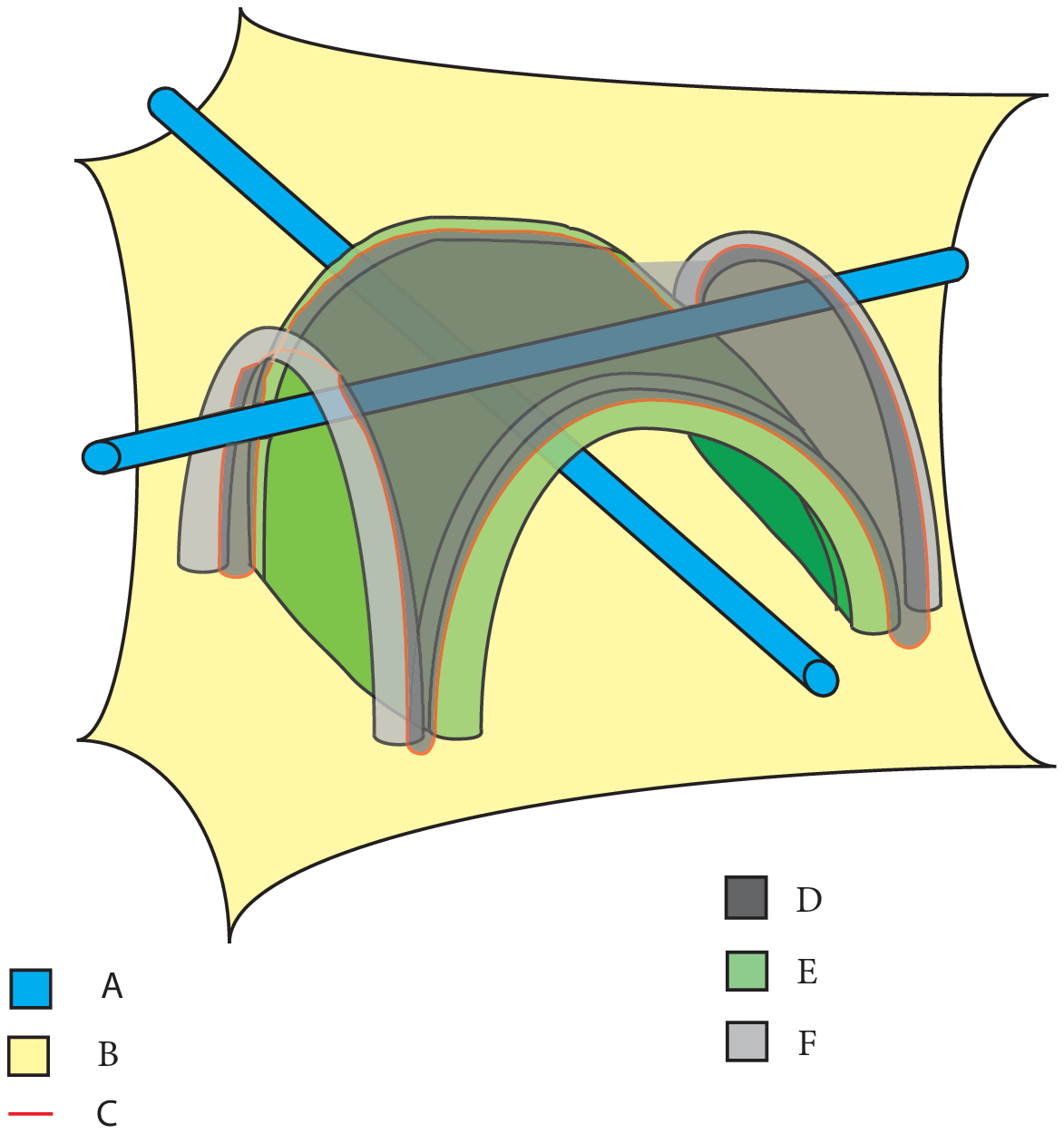}}
  \caption{The 2-handle attachment.}
  \label{Fig:bigcrossingfinish}
  \end{center}
\end{figure}

Finally, after a 2-handle is attached over each crossing of the knot, we have $c$ 2-handles, and the upper boundary of the 2-skeleton is again homeomorphic to $S^2$.  This means that we can attach a 3-handle to fill in the rest of $S^3$ and so complete the handle decomposition of $X$ as claimed.
\end{proof}

\begin{proposition}\label{wirtingerpresentation}
Let $x_0 \in X$ be $D^0 \times \{0\}$, the centre of the 0-handle.  A presentation for the fundamental group of $X$ is given by:
\[\pi_1(X,x_0)\; = \; \langle \; g_1,..,g_{c}\,|\, r_1,..,r_c\,\rangle\]
where:
\[r_i =
\begin{cases}
g_{i_2}^{-1}g_{i_1}^{-1}g_{i_3}g_{i_1} & \text{if crossing }i \text{ is of sign }+1;\\
g_{i_2}^{-1}g_{i_1}g_{i_3}g_{i_1}^{-1} & \text{if crossing }i \text{ is of sign }-1.
\end{cases}\]
A crossing of sign $+1$ is shown in Figure \ref{Fig:crossinglabels}.  A crossing of sign $-1$ has the orientation on the under-strand reversed.  There is one more relation in this presentation than necessary.
\end{proposition}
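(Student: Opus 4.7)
The plan is to read the presentation directly off the handle decomposition of Theorem \ref{handledecomp}, using the fact that $\pi_1(X,x_0)$ depends only on the $2$-skeleton $X^{(2)} = h^0 \cup \bigcup_{i=1}^c h^1_i \cup \bigcup_{j=1}^c h^2_j$.

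First I would set up the generators. The basepoint $x_0$ is the centre of the unique $0$-handle, and $h^0 \cup \bigcup_i h^1_i$ deformation retracts onto a wedge of $c$ circles at $x_0$; each $1$-handle $h^1_i$ contributes one generator $g_i$ of the resulting free group $F(g_1,\dots,g_c)$. The construction in Theorem \ref{handledecomp} places each surviving $1$-handle over a strand of the knot (after cancellation at each crossing of the auxiliary pair $h^1_{i_1}, h^1_{i_4}$ of Figure \ref{Fig:bigcrossing25} against the small cancelling $2$-handle), so that the loop representing $g_i$ is, up to homotopy in $X$, a small meridional loop encircling the $i$th arc of the knot diagram. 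This is the standard Wirtinger generator.

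Next I would compute, for each crossing $i$, the word in $F(g_1,\dots,g_c)$ traced by the attaching circle $S^1 \times \{0\} \subset \partial h^2_i$ as it sits in $\partial X^{(1)}$. By the construction depicted in Figures \ref{Fig:bigcrossing3} and \ref{Fig:bigcrossingfinish}, this attaching circle runs around the three $1$-handles $h^1_{i_1}$ (over-strand) and $h^1_{i_2}, h^1_{i_3}$ (incoming and outgoing under-strands) in a cyclic pattern determined by the local picture of the crossing. Tracing this boundary carefully, with the labelling conventions of Figure \ref{Fig:crossinglabels}, yields the word
\[
r_i = g_{i_2}^{-1} g_{i_1}^{-1} g_{i_3} g_{i_1}
\]
at a positive crossing, and the word $g_{i_2}^{-1} g_{i_1} g_{i_3} g_{i_1}^{-1}$ at a negative crossing --- this is exactly the geometric content that the meridian of the over-strand conjugates the in-coming meridian of the under-strand into the outgoing one, with the direction of conjugation reversing with the sign of the crossing. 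By the handle-decomposition form of the Seifert--Van Kampen theorem, $\pi_1(X,x_0)$ is the free group on $g_1,\dots,g_c$ modulo the normal closure of $r_1,\dots,r_c$.

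Finally I would explain the redundancy of one relation. The $3$-handle $h^3$ is attached along a $2$-sphere in $\partial X^{(2)}$, and its attaching map sweeps once over each $2$-handle; equivalently, after abelianising the relations we get the Alexander-module boundary map whose rank must accommodate $H_2(X;\Z) \cong 0$ together with $H_1(X;\Z) \cong \Z$, forcing one of the $r_i$ to be a consequence of the others in the normal closure. (The same fact is encoded in the classical observation that the cyclic product of the Wirtinger relations around the knot, suitably conjugated, is trivial in $F(g_1,\dots,g_c)$.) The only real work in the argument is the careful orientation bookkeeping in step two; the rest is the standard translation between a handle decomposition and a group presentation.
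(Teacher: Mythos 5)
Your proposal is correct and takes essentially the same route as the paper: crush the thickening disks to a CW complex homotopy equivalent to $X$, read the generators off the $1$-handles and the Wirtinger relators off the attaching circles of the $2$-handles, and observe that the $3$-handle's attaching $2$-sphere, which sweeps once over every $2$-cell, witnesses that one relation is a consequence of the others. One small caution about your final step: the parenthetical appeal to the abelianized boundary map does not by itself establish redundancy in the free group (a rank drop after abelianization is strictly weaker than membership in the normal closure generated by the other relators), but your first reason — the $3$-cell bounding the union of the $2$-cells — is exactly the paper's argument and suffices, as does the classical cyclic-product observation you mention.
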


\begin{proof} If we crush each of the thickening disks (the $D^{3-r}$) of the handles in our handle decomposition for $X$, each $r$-handle $h^r_j$ becomes an $r$-cell $e^r_j$, and so we are left with a CW complex $X'$ which is homotopy equivalent to $X$.  The fundamental group of $X$ only depends on the homotopy type, and it can be simply derived from $X'$.
Each 1-cell $e^1_j$ corresponds to a generator $g_j$ for the knot group $\pi_1(X)$, and each 2-cell to a relation, in the Wirtinger presentation (see \cite{BZ}, \cite{Fox1}).  We list the four 1-cells for which the composition of the attaching map of the 2-cell with the collapse to a single 1-cell,
$\theta \colon \partial D^2 = S^1 \to \bigvee_{i=1}^c \, S^1 \to S^1$, has non-zero degree \emph{i.e.} the four 1-handles over which the attaching circle of the 2-handle runs, in order, with an exponent $\pm 1$ according to the degree of $\theta$.

The 3-handle corresponds to a redundancy between the relations, which arises as all the 2-cells together make up a sphere, which is the boundary of the 3-cell (see Remark \ref{Rmk:identitypresentation} and \cite{Trotter}).
\end{proof}

\begin{remark}\label{Remark:unknothandles}
Although by making a diagram for the unknot with $3$ or more crossings we can include it into the discussion thus far, it can be dealt with much more simply than this.  We elucidate how for completeness and since this will be of use later.  We note that the exterior of the unknot $U$ is homeomorphic to $D^2 \times S^1$. The decomposition $S^3 \approx S^1 \times D^2 \cup_{S^1 \times S^1} D^2 \times S^1$ is sometimes called the Clifford decomposition of $S^3$.  As we are about to do for any knot, we give a handle decomposition which contains the boundary torus $S^1 \times S^1 \times I$ as a sub-complex; it includes a collar neighbourhood since all our handles must be 3-dimensional.  There is one 0-handle, $h^0_{\partial}$.  Attached to this are two 1-handles, $h^1_{\mu}$ and $h^1_{\lambda}$, the meridian and longitude 1-handles respectively, with corresponding generators of the fundamental group $\mu$ and $\lambda$.  We then glue a 2-handle $\hp^2$ to this using the word $\lambda \mu \lambda^{-1} \mu^{-1}$.

We then fill in the exterior of our torus to make a solid torus $D^2 \times S^1$.  To begin, we attach another 2-handle $h^2_s$ whose boundary is the longitude.  We then fill in the remaining exterior, which is now a 3-ball, with a 3-handle $h^3_s$.  The 3-handle attaches to either side of $h^2_s$, and to the inside of $\hp^2$.  This completes our handle decomposition of $D^2 \times S^1 = X_U$.
The fundamental group of a solid torus is of course \[\pi_1(X_U) \cong \pi_1(S^1 \times D^2) \xrightarrow{\simeq} \langle \mu \rangle \cong \Z.\]
\end{remark}

\begin{conventions}\label{conventions}
It is worthwhile at this point to establish our numbering and orientation conventions.  First, we describe how to number the crossings of the knot, and hence both the regions of the quadrilateral decompositions, and the 2-handles, in a coherent way.  Call the crossing in the infinite region number 1.  Then, starting at this crossing, we go along the over crossing strand and follow around the knot, according to its orientation.  \emph{We enumerate each crossing as we come to it along an over strand.}

At a crossing $i$ of positive sign we label the 1-handles which go over its strands, and the corresponding generators of $\pi_1(X)$, as shown in Figure \ref{Fig:crossinglabels}.  That is, the over-strand has handles $h^1_{i_2}$ and $h^1_{i_3}$, the former being the furthest along the knot with respect to the orientation.  The handle over the under strand is labelled $h^1_{i_1}$; recall that we used handle cancellation in order to have only one 1-handle here.  The same convention for naming the 1-handles is used if the orientation on the under strand of the knot is reversed, and the crossing is a negative one.  Let\footnote{Unless two or more of the 1-handles $h^1_{1_1}, h^1_{1_2}$, or $h^1_{1_3}$ turn out to be the same due to the handle cancellation of the 2-handles which go through the crossings.} $h^1_1 = h^1_{1_1}$, $h^1_2 = h^1_{1_2}$ and $h^1_3 = h^1_{1_3}$.  After that, all other naming of 1-handles is arbitrary.

We use the notation $g_i$ when we are referring to the group element in $\pi_1(X)$ or in the free group $F(g_1,\dots,g_c)$ for which the core of the handle (and some paths from the basepoint to its feet) is a representative.  We reserve the notation $h^1_i$ for when we are referring to the ``physical'' handle.

We orient each of the 1-handles according to the right-hand thumb rule: one puts the thumb of one's right hand along the knot in line with its orientation and one's fingers then indicate the direction of the orientation to be put on the 1-handle which goes over the knot there.  This ensures that each 1-handle has linking number $+1$ with the knot\footnote{Technically in order to define linking number we have to talk about the 1-cell which is created when the handles are contracted to cells as in the derivation of $\pi_1(X)$.}.  The 2-handles are oriented in an anti-clockwise manner, looking down on them from within the 3-handle.  Note that in the knot diagram quadrilateral decomposition the 2-handles are oriented in an anti-clockwise manner as we look at them, except that the 2-handle over the infinite region appears to be oriented in a clockwise manner, since it is in fact the rest of $S^2$.  When listing relations in the knot group, as the boundary of a 2-handle, we always list $g_{i_2}$ first.

\begin{figure}[h]
  \begin{center}
 {\psfrag{A}{$h^1_{i_3}, g_{i_3}$}
 \psfrag{B}{$h^1_{i_2}, g_{i_2}$}
 \psfrag{D}{$h^1_{i_1}, g_{i_1}$}
 \includegraphics[width=7cm]{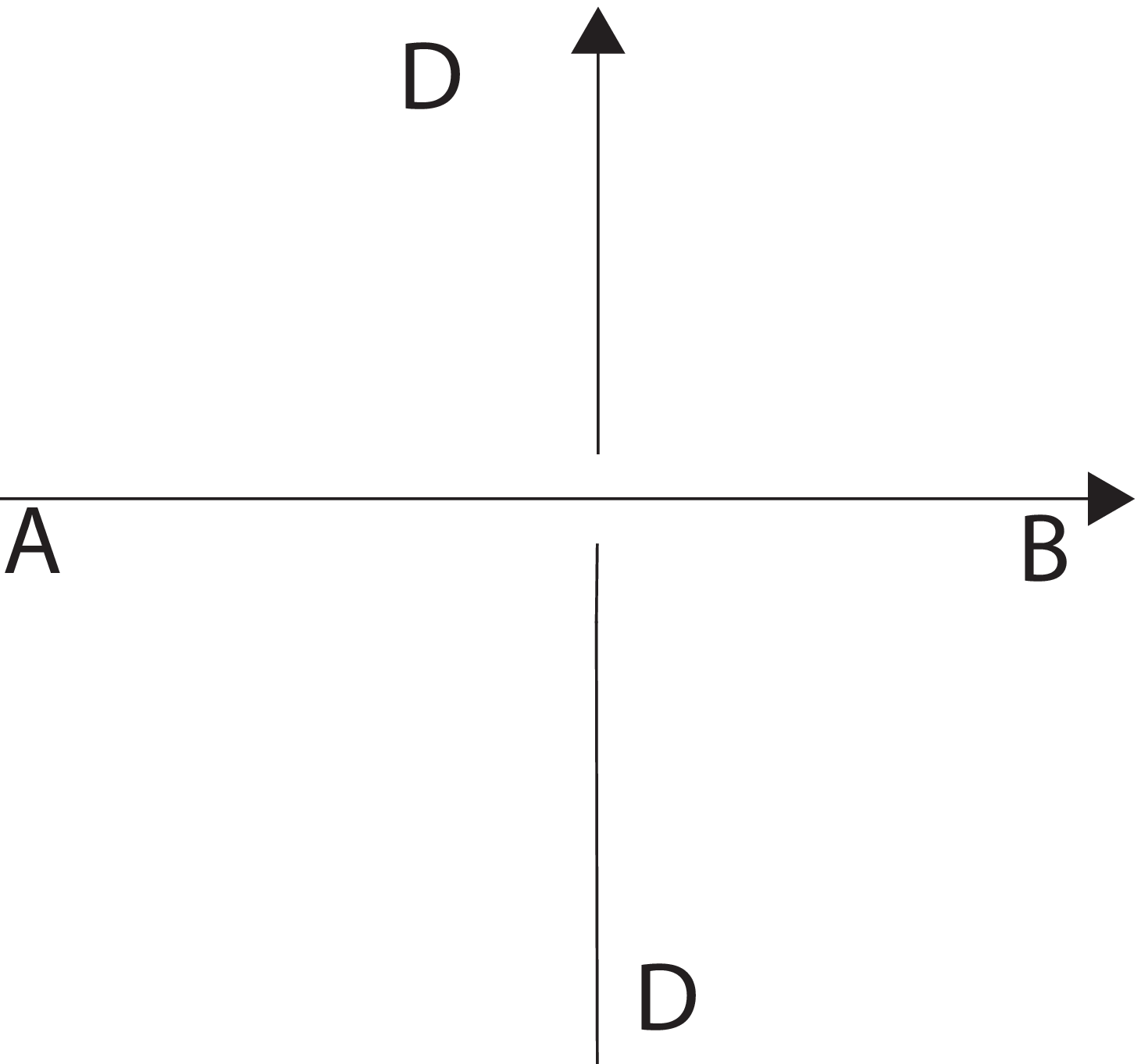}}
 \caption{A crossing of positive sign and the labelling of the fundamental group generators associated to it.}
\label{Fig:crossinglabels}
\end{center}
\end{figure}

\qed \end{conventions}

We now extend our decomposition so as to include the boundary of $X$.

\begin{remark}\label{Rmk:needperipheralstructure}
The handle decomposition of Theorem \ref{handledecomp} depends only on the fundamental group of $X$ (and a presentation of the group).  Indeed, since a knot exterior is an Eilenberg-Maclane space $K(\pi,1)$ (see Remark \ref{Rmk:identitypresentation} for an explanation of this), the homotopy type of $X$ only depends on the knot group.  As explained in the introduction we need more information than this, which we now illustrate.  Recall (see \cite{Fox1}) that the reef and granny knots have isomorphic groups.  However the reef knot is the trefoil connect summed with the reverse of its obverse, and so is slice.  The granny knot is the trefoil summed with itself, so has signature 4 and is not slice; the two knots are not concordant.  We must have boundary information in order to define concordance invariants; it is the inclusion of the boundary which differentiates between the reef and granny knots. In order to have this information algebraically in the chain complex of the fundamental cobordism of a knot, we must first include the boundary in our handle decomposition.
\end{remark}

\begin{theorem}\label{Thm:includingboundary}
Given a reduced diagram for a knot $K: S^1 \hookrightarrow S^3$, with $c \geq 3$ crossings, there is a handle decomposition of the knot exterior $X = \overline{S^3 \setminus N}$ which includes a regular neighbourhood of the boundary $\partial X \times I \approx S^1 \times S^1 \times I$ as a sub-complex:
\[X = \hp^0 \cup \bigcup_{i=1}^{c+2}\, h_i^1 \cup \bigcup_{j=1}^{c+3}\,h_j^2 \cup \bigcup_{k=1}^{2}\,h_k^3. \]
\end{theorem}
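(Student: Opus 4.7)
The plan is to augment the handle decomposition of Theorem \ref{handledecomp} with extra handles that realise a collar $T^2 \times I$ of the boundary torus as a sub-complex. I would add six new handles in total: two 1-handles $h^1_\mu, h^1_\lambda$ and one 2-handle $h^2_\partial$ (building the collar), two Tietze 2-handles $h^2_{\partial\mu}, h^2_{\partial\lambda}$ (reconciling the new boundary generators with the Wirtinger generators), and one extra 3-handle. The resulting change in handle counts is $(0,+2,+3,+1)$, giving the total $1+(c+2)+(c+3)+2$ claimed in the theorem.

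Concretely, I would first relocate the 0-handle: take $h^0_\partial$ to be a small 3-ball in $X$ whose intersection with $\partial X$ is a small disc $D$ near crossing 1. Then attach $h^1_\mu$ and $h^1_\lambda$ with both feet on $D$ and with cores tracing a meridian and a longitude on $\partial X$, and attach $h^2_\partial$ along the commutator $[\mu,\lambda]$. A direct check (modelled on the unknot case in Remark \ref{Remark:unknothandles}) shows that $h^0_\partial \cup h^1_\mu \cup h^1_\lambda \cup h^2_\partial \approx T^2 \times I$, the desired collar of $\partial X$, with outer torus identified with $\partial X$ and inner torus a parallel push-in.

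Next I would reattach the Wirtinger handles $h^1_1,\ldots,h^1_c$ and $h^2_1,\ldots,h^2_c$ of Theorem \ref{handledecomp}, after an isotopy running the feet of the 1-handles through the collar onto $h^0_\partial$. I would then add two Tietze 2-handles: $h^2_{\partial\mu}$ with attaching circle $g_1\mu^{-1}$, and $h^2_{\partial\lambda}$ with attaching circle $l\lambda^{-1}$, where $l$ is the longitude word in the Wirtinger generators. Finally, I would cap off with two 3-handles: one in the upper hemisphere, as in Theorem \ref{handledecomp}, and one filling the region where the old $h^0 = D^3_-$ previously sat.

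The main obstacle will be verifying that the resulting complex is genuinely homeomorphic to $X$, and that the boundary of the 2-skeleton splits as $\partial X$ together with two disjoint 2-spheres (so that two 3-handles really complete the decomposition). My preferred approach is to exhibit the new decomposition as the old one modified by three cancelling handle pairs: the $(1,2)$-pair $(h^1_\mu, h^2_{\partial\mu})$, the $(1,2)$-pair $(h^1_\lambda, h^2_{\partial\lambda})$, and a $(2,3)$-pair consisting of $h^2_\partial$ together with the extra 3-handle; successive handle cancellation then returns to the decomposition of Theorem \ref{handledecomp}, confirming that the manifold is $X$. As a final sanity check, the Euler characteristic $1-(c+2)+(c+3)-2 = 0$ agrees with the fact that $X \simeq S^1$.
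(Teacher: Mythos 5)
Your final decomposition coincides with the paper's, but the route and---more importantly---the verification differ. The paper adjoins a separate $0$-handle $\hp^0$ for the collar together with a connecting $1$-handle $h^1_\partial$ joining $\hp^0$ to the old $h^0_o$, the two Tietze $2$-handles, and the $3$-handle $h^3_\partial$, and only at the end cancels the $(0,1)$-pair $(h^0_o, h^1_\partial)$ to reach the stated count; you relocate the $0$-handle to the boundary from the outset and so never introduce the intermediate pair. What this streamlining costs you is the paper's built-in exhaustiveness: their construction is visibly a handle-by-handle subdivision of $X$, with $h^3_\partial$ filling the one remaining gap, whereas yours moves the $0$-handle and isotopes the Wirtinger $1$-handles' feet through the collar, so the fact that the union of your handles is all of $X$ has to be argued separately. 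Your cancellation check is a sensible way to do that, but two points are owed. First, the $(2,3)$-cancellation criterion is that the attaching $S^2$ of the extra $3$-handle meets the belt $S^0$ of $h^2_\partial$ in exactly one point, i.e.\ the $3$-handle covers exactly one of the two faces $D^2\times\{\pm 1\}$ of $h^2_\partial$; this is true here because one face of $h^2_\partial$ lies in $\partial X$ and so is unavailable to any $3$-handle, but that should be said explicitly rather than left implicit. Second, after the three cancellations your $0$-handle is still $h^0_\partial$ by the boundary, not the original $D^3_-$, so you do not literally recover the decomposition of Theorem \ref{handledecomp}---one more isotopy (sliding the $0$-handle and the $1$-handle feet back) is needed to close the comparison. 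Also, the description of the second $3$-handle as ``filling the region where the old $h^0 = D^3_-$ previously sat'' is imprecise: once the Wirtinger handles are reattached, the leftover region is not $D^3_-$, and the paper's description of $h^3_\partial$ attaching to one side of $\hp^2$, both sides of $h^2_{\partial\mu}$ and $h^2_{\partial\lambda}$, and the undersides of $h^2_k,\ldots,h^2_{k+c-1}$ is the accurate picture. None of this is an obstruction, just bookkeeping the paper's direct subdivision avoids; with the $(2,3)$-criterion and the final isotopy spelled out, your argument goes through.
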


\begin{proof}
We begin by renaming the 0- and 3-handles which are already in our decomposition of $X$ as $h^0_o = h^0_1 := h^0$ and $h^3_o = h^3_1 :=h^3$, where the o stands for original.

The idea of the construction is to begin by including a decomposition of the boundary torus with collar neighbourhood $S^1 \times S^1 \times I \approx \partial X \times I$: we use the decomposition of the torus described above in Remark \ref{Remark:unknothandles}.

We define $\hp^0:= h^0_2$, $h^1_{\mu}:=h^1_{c+1}$, $h^1_{\lambda}:=h^1_{c+2}$ and $\hp^2:=h^2_{c+1}$.

We then must connect the boundary to the rest of $X$.  We do this by adding, for each $n$-handle of $\partial X$, an $(n+1)$-handle of $X$.  In this way we realise the inclusion of the boundary as a sequence of elementary handle additions: a simple homotopy equivalence.  Equivalently, we are taking the mapping cylinder of the inclusion map $f \colon S^1 \times S^1 \hookrightarrow S^1 \times S^1 \times I \approx \partial X \times I \to X$.

To begin, we add a connecting 1-handle for the 0-handle $\hp^0$, which connects it to $h^0_o$, oriented so as to point from $h^0_o$ to $\hp^0$.  We call this $h^1_{\partial}$.  We then need to connect each of the 1-handles in the boundary to the 1-handles already there.  To do this, first pick a 1-handle: we choose $h^1_1$ for no special reason.  This is a meridian so we add a 2-handle, which we call $h^2_{\partial \mu}=h^2_{c+2}$, with an attaching map which starts at $\hp^0$, goes around $h^1_{\mu}$ against its orientation, along $h^1_{\partial}$, around $h^1_1$ with its orientation, and then back along $h^1_{ \partial}$.

Next, we need to see how the longitude lives in our handle decomposition.  Look again at Figure \ref{Fig:bigcrossingfinish}, and imagine the longitude as a curve following the knot, just underneath it.
\begin{definition}
To crossing $j$ of an oriented knot diagram we associate a sign $\eps_j \in \{-1,+1\}$, which is $+1$ if the crossing is as shown in Figure \ref{Fig:crossinglabels}, and $-1$ if the orientation on the under-strand is reversed.  The writhe of the diagram, $\Wr$, is \[\Wr := \sum_{j=1}^c \, \eps_j.\]
\qed \end{definition}
Since the writhe of the diagram is potentially non-zero, in order to have the zero-framed longitude, we take it to wind $(-\Wr)$ times around the knot, underneath the tunnel created by the 1-handle $h^1_1$.  Then, deforming the longitude directly downwards, that is towards the 0-handle $h^0_o$, everywhere apart from underneath $h^1_1$, we can see that at the over-strand of crossing $j$, the longitude follows the 1-handle $h^1_{i_1}$ respecting the orientation if $\eps_j = 1$ and opposite to the orientation if $\eps_j=-1$.  As it follows under-strands we deform it to the 0-handle, so these have no contribution to the longitude.  However, within the tunnel underneath $h^1_1$, we instead deform the longitude outwards to see that it follows $h^1_1$, $(-\Wr)$ times.  A word for the longitude, as an element of $\pi_1(X)$, in terms of the Wirtinger generators, is:
\[l = g_1^{-\Wr}g_{k_1}^{\eps_k}g_{(k+1)_1}^{\eps_{k+1}}\dots g_{(k+c-1)_1}^{\eps_{k+c-1}};\]
where $k$ is the number of the crossing reached first as an over crossing, when starting on the under crossing strand of the knot which lies in region 1; the indices $k, k+1,\dots,k+c-1$ are to be taken mod $c$, with the exception that we prefer the notation $c$ for the equivalence class of $0 \in \Z_c$.  This happily coincides with \cite[Remark~3.13]{BZ}.


This now enables us to attach the longitude 1-handle of the boundary to the rest of the 1-handles.  We use a 2-handle $h^2_{\partial \lambda} = h^2_{c+3}$ which has an attaching map which starts at $\hp^0$, traverses $h^1_{\partial}$ and then follows the 1-handles according to the letters in the word $l$.  It then traverses $\hp^1$, before finally following along $h^1_{\lambda}$ against its orientation.

Finally we include a 3-handle into the gap remaining, in order to relate $h^2_{\partial}$ to the 2-handles already in $X$, which we call  $h^3_{\partial} = h^3_2$.  It attaches to $\hp^2$, and to both $h^2_{\partial \mu}$ and $h^2_{\partial \lambda}$ on the top and the bottom.  It also attaches to the underneath of the 2-handles $h^2_k,..,h^2_{k+c-1}$, again with the same convention on the indices, so as to connect these handles to the boundary 2-handle.

To finish we remark that we can, to reduce the number of handles, cancel the 1-handle $\hp^1$ with the 0-handle $h^0_o$, amalgamating $\hp^0, \hp^1$ and $h^0_o$ into a single 0-handle, which we still call $\hp^0$ to emphasise that it is also the 0-handle of the boundary sub-complex.

This completes our description of the additional handles required to include the boundary as a sub-complex.
\end{proof}

\begin{definition}
We denote by $M_K$ the zero-surgery on the knot; by which we mean the manifold obtained from $S^3$ by 0-framed surgery along the knot $K \colon S^1 \hookrightarrow S^3$.
\[M_K := X \cup_{S^1 \times S^1} D^2 \times S^1,\]
where the gluing is done so that the \emph{longitude} bounds.
\qed \end{definition}

\begin{remark}\label{rmk:zerosurgery1}
If a knot $K$ is a slice knot then if $W$ is the exterior of a slice disk $\Delta^2$, \[W := \ol{D^4 \setminus (\Delta^2 \times D^2)},\] then the boundary of $W$ is the zero-surgery $M_K$.  The zero-surgery has the useful property that it is a closed manifold, and so has Poincar\'{e} duality without having to factor out the boundary.  In many applications this makes it simpler to work with than the knot exterior.  We can construct a handle decomposition for the zero-surgery by adding just two handles to our decomposition for $X$, to make a handle decomposition of the solid torus $D^2 \times S^1$.  We may as well go back to the original decomposition of Theorem \ref{handledecomp}, since the boundary and connected handles of Theorem \ref{Thm:includingboundary} will now be superfluous.  The first is a 2-handle, $h^2_s$, where the s stands for surgery, which has as its attaching map the longitude of the knot, much like the attachment of the 2-handle $h^2_{\partial \lambda}$ in Theorem \ref{Thm:includingboundary}.  The rest of the solid torus is a 3-ball, so we attach a 3-handle $h^3_s$ to fill it in, to either side of the 2-handle $h^2_s$, and to the underside of each of the 2-handles of $X$, much like the attachment of the 3-handle $h^3_{\partial}$ in Theorem \ref{Thm:includingboundary}.
\end{remark}

Finally in this chapter, we describe the handle decomposition of a torus $S^1 \times S^1$ split into two, as the union of two copies of $S^1 \times D^1$ along their common boundary $S^1 \times S^0$.

\begin{figure}[h]
    \begin{center}
 {\psfrag{A}{$h^0_+$}
 \psfrag{B}{$h^0_-$}
 \psfrag{C}{$h^1_-$}
 \psfrag{D}{$h^1_+$}
 \psfrag{E}{$h^1_a$}
  \psfrag{F}{$h^1_b$}
 \psfrag{G}{$h^2_b$}
 \psfrag{H}{$h^2_a$}
 \includegraphics[width=7.5cm]{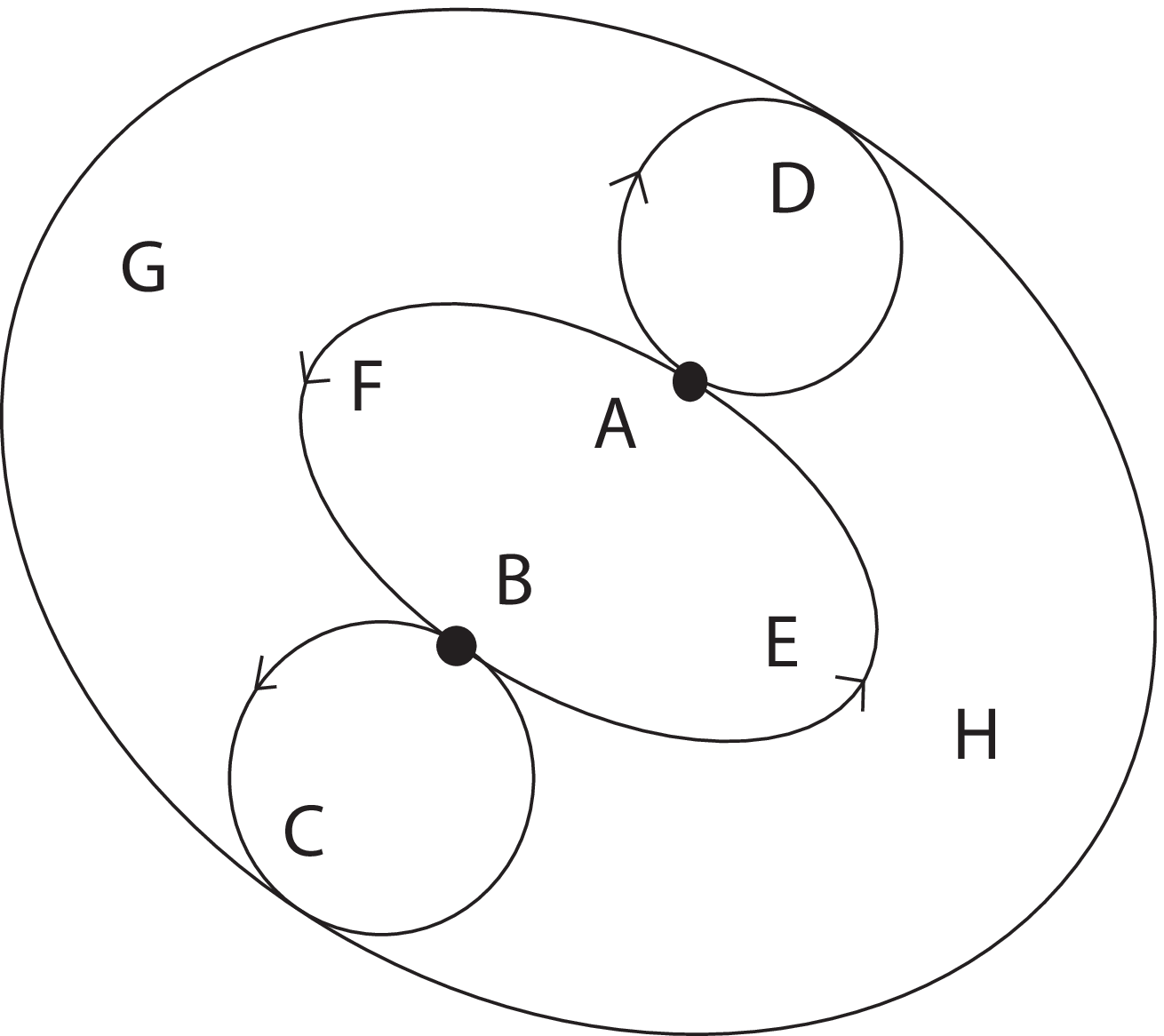}
 }
 \caption{A handle decomposition of the torus with a splitting into two cylinders.}
 \label{Fig:torusintwocylinders}
 \end{center}
\end{figure}

There are two 0-handles, $h^0_+$ and $h^0_-$.  Attached to each of these is a 1-handle, $h^1_+$ and $h^1_-$, which form the boundaries of the two cylinders, a copy of $S^1 \times S^0$.  We then join the two circles with the longitudinal 1-handles, $h^1_{a}$ and $h^1_{b}$, before filling in the holes to complete the torus with 2-handles $h^2_a$ and $h^2_b$.

We shall make use of this as we use these handle decompositions to yield chain complexes.  Considering the boundary split in this manner allows us to consider the knot exterior as the fundamental cobordism of the knot, as described in the introduction to this chapter. 

\chapter[The chain complex of a knot exterior]{A Chain Complex of the Universal Cover of a Knot Exterior}\label{chapter:chaincomplex}

In this chapter we make the transfer from geometry to algebra: using our handle decomposition of a knot exterior, we derive a chain complex of its universal cover, and thence of any covering space.

The constructions are given in some detail.  The reader interested in the symmetric structure, whose application to low dimensional knot theory is perhaps less common, may wish to skip to chapter \ref{Chapter:duality_symm_structures}.

Let $S \leq \pi_1(X)$ be a normal subgroup of the fundamental group of $X$, and let $\wt{X}$ be the regular covering space associated to it, so that $\pi_1(\wt{X}) \cong S$, and the deck transformations of the covering space are $\pi := \pi_1(X)/S$.  We refer to $\wt{X}$ as the $\pi$-\emph{covering} of $X$.  We will be interested primarily in the cases where $S$ is an element of the derived series, which is given by taking iterated commutator subgroups, of $\pi_1(X)$.  If we work initially with the universal covering space, then when situations demand we can go to simpler covering spaces at will; by taking account of the whole fundamental group we do not lose any information at this crucial initial stage of converting the geometry into algebra.  One of the broad goals of this work is to support the philosophy that the chain complex of the universal cover of the knot exterior is a universal invariant for topological concordance.
\begin{definition}\label{handlechain}
The \emph{handle chain complex} of a $\pi$-covering space $\wt{X}$ of an $n$-manifold $X$ with a handle decomposition has as chain groups $C_i(\wt{X})$ the free left $\Z[\pi]$-modules generated on the $i$-handles, since there is one lift of each handle for each element of $\pi$.  The boundary maps $\partial_{i+1} \colon C_{i+1}(\wt{X}) \to C_i(\wt{X})$ are given by the \emph{twisted $\Z[\pi]$-incidence numbers} $\langle h^{i+1}_j \,|\, h^i_k \rangle_{\Z[\pi]}$ (defined below):
\[\partial_{i+1}(\widetilde{h}^{i+1}_j) = \sum_k  \, \langle h^{i+1}_j \,|\, h^i_k \rangle_{\Z[\pi]}\,\widetilde{h}^i_k,\]
where $\wt{h}^i_j$ is a chosen lift of $h^i_j$.  The boundary maps need only be defined on such elements, and the $\Z[\pi]$-module structure determines the rest.

The $\Z$-incidence numbers $\langle h^{i+1}_j\,|\,h^i_k \rangle$ are the algebraic intersection numbers of the attaching sphere $S^i \times \{0\}$ of $h^{i+1}_j$ with the belt sphere $\{0\} \times S^{n-i-1}$ of $h^i_k$.  These spheres both live with complementary dimensions in the $(n-1)$-manifold which is the boundary of $X^{(i)}$, and so their intersection can be made transverse, and hence a finite number of points with signs, which can then be counted algebraically.

\begin{conventions}\label{orientationconventions}
In order to be able to attribute signs to intersection points, we need to fix our orientation conventions.  To put an orientation on the boundary of $X^{(i)}$ as a basis for comparison, we pick an outward pointing normal vector to $\partial X^{(i)}$ and list this \emph{first}, followed by a basis of $n-1$ tangent vectors to $\partial X^{(i)}$.  We choose the orientation on $\partial X^{(i)}$ in such a way that the orientation of these $n$ tangent vectors to $X$ agrees with our fixed standard orientation of $X$.  Our handles are oriented as subsets of $X$.  We described in Conventions \ref{conventions} how to orient the cores $D^i \times \{0\}$ of each of our handles.  Given the fixed orientation on $X$, this fixes an orientation on the cocores $\{0\} \times D^{n-i}$.  The attaching and belt spheres inherit orientations from the cores and cocores respectively, again using the outward-pointing-normal-first convention.  For $i=n-1$ the belt spheres are copies of $S^0$, while for $i=1$ the attaching spheres are copies of $S^0$.  In these cases, rather than giving an ordering of a basis for the tangent space, which is hard to accomplish for a 0-dimensional vector space, we give each point of $S^0 = \{-1,+1\}$ a sign in the obvious way: it inherits this from the orientation of $D^1 = [-1,1]$ of which it is a boundary.

For simplicity we can assume we are working with smooth manifolds, in order to define tangent spaces and use transversality here.  However we know that these constructions can be carried out with considerably more effort for topological manifolds too: see \cite{MilnorMicrobundles}.  We therefore do not actually have to restrict categories here.

\end{conventions}

In the case where $\pi$ is non-trivial, each intersection point of the attaching and belt spheres has not only a sign, but also an element of $\pi_1(X)$ associated to it, and hence the incidence number lives in $\Z[\pi]$.  We have to record whether, when lifted, the intersection is between $\widetilde{h}^{i+1}_j$ and $\widetilde{h}^i_k$ or if in fact the intersection is with some other lift, or $\pi$-translate, of $h^i_k$.

A \emph{threaded} handle (\cite{Scorpan}, 1.7) is a handle $h^i_k$ of an $n$-manifold $X$ together with a path $c^i_k \colon [0,1] \to X$ from the base point $x_0$ of $X$ to the centre $\{0\} \times \{0\} \in D^i \times D^{n-i}$.  A handle $h^i_k$ is contractible, so for an intersection point $p$ there is a unique homotopy class of paths $[\gamma^i_{kp}]$, from the centre of $h^i_k$ to $p$.  We can then form a loop associated to an intersection point of two threaded handles $h^{i+1}_j$ and $h^i_k$:
\[c^{i+1}_j \ \ast_p \overline{c^i_k} := c^{i+1}_j \cdot \gamma^{i+1}_{jp} \cdot \overline{\gamma^i_{kp}} \cdot \overline{c^i_k},\]
where the bar means that we take the reverse of the path.
This represents a homotopy class in $\pi_1(X,x_0)$, giving us an element $[c^{i+1}_j \ \ast_p \overline{c^i_k}] \in \pi$.  We define
\[\langle h^{i+1}_j \,|\, h^i_k \rangle_{\Z[\pi]} := \sum_p \pm [c^{i+1}_j \ \ast_p \overline{c^i_k},]\]
taking the sum over all intersections of the attaching and belt spheres of the two handles in $X$; the sign is from matching orientations, just as with the $\Z$-incidence numbers.
\qed \end{definition}

An element of $\pi_1(X)$ is represented by a word $w$ in $F$, the free group on $g_1,\dots,g_c$.  This in turn determines a path in $\partial \wt{X}^{(1)}$, which in the case $w=r_i$ is a lift of the attaching sphere of $h^2_i$.  The free differential calculus (Definition \ref{freederivative}), due to Fox (\cite{Fox1}), is a formalism that tells us which chain this path is in $C_1(\wt{X})$.  In particular, it will be used to derive the boundary map $\partial_2$ in our chain complex:
\[\frac{\partial r_i}{\partial g_j} = \langle h^2_i \,|\, h^1_j \rangle_{\Z[\pi]}.\]

\begin{definition}\label{freederivative}
The \emph{free derivative} of a word $w$ in a free group $F$ with respect to a generator $g_i$ is a map $\frac{\partial}{\partial g_i} \colon F \to \Z[F]$ defined inductively, using the following rules:
\[\frac{\partial (1)}{\partial g_j} = 0; \;\;\; \frac{\partial g_i}{\partial g_j} = \delta_{ij}; \;\;\; \frac{\partial(uv)}{\partial g_j} = \frac{\partial u}{\partial g_j} + u \frac{\partial v}{\partial g_j}.\]
Extending this using linearity makes the free derivative into an endomorphism of the group ring $\Z[F]$.
\qed \end{definition}

\begin{conventions}\label{conventions2}
We consider the handle chain groups as based free left $\Z[\pi]$-modules in order to define our conventions: the chain groups inherit a particular choice of basis from our geometric constructions.  For some basis element, $\wt{h}^i$, and for $g, g_1, g_2 \in \pi$, we define $g\wt{h}^i$ to be the lift of the handle $h^i$ arrived at by translating $\wt{h}^i$ along $g$.  In particular, note that this means that we define $g_1g_2\wt{h}^i$ to be the lift of the handle $h^i$ arrived at by translating $\wt{h}^i$ first along $g_1$, and then along $g_2$.
We define module homomorphisms only on the basis elements of a free module, and use the left $\zp$ module structure to define the map on the whole module.  This has the effect, in the non-commutative setting, that when we want to formally represent elements of our based free modules as vectors with entries in $\Z[\pi]$ detailing the coefficients, then the vectors are written as row vectors, and the matrices representing a map must be multiplied on the right.  This is because the order of multiplication of two matrices should be preserved when multiplying elements to calculate the coefficients.

In later chapters, when considering matrices of homomorphisms which compose on the left in the usual way, acting on direct sums of modules which themselves may or may not be free modules, we retain the usual convention of column vectors and matrices acting on the left.
\qed\end{conventions}

\begin{theorem}\label{Thm:unicoverchaincomplex}
Suppose that we are given a knot $K$ with exterior $X$, and a reduced knot diagram for $K$ with $c \geq 3$ crossings.  Then there is a presentation
\[\pi_1(X) = \langle\,g_1,\dots,g_c\,|\,r_1,\dots,r_c\,\rangle\]
with the Wirtinger relations $r_1,\dots,r_c \in F(g_1,\dots,g_c)$ read off from the knot diagram.  The handle chain complex of the $\pi$-cover $\widetilde{X}$ (the cover with deck group $\pi := \pi_1(X)/S$ for some normal subgroup $S \unlhd \pi_1(X)$), with chain groups being free left $\Z[\pi]$-modules is given by:
\[\Z[\pi] \xrightarrow{\partial_3} \bigoplus_{c} \,\Z[\pi] \xrightarrow{\partial_2} \bigoplus_{c} \,\Z[\pi] \xrightarrow{(\Phi(g_1-1),\dots,\Phi(g_c-1))^T} \Z[\pi]\]
\[(\partial_2)_{ij} = \Phi \left( \frac{\partial r_i}{\partial g_j}\right)\]
\[\partial_3 = (\Phi(w_1),\dots,\Phi(w_c)).\]
The ring homomorphism $\Phi \colon \Z[F] \to \Z[\pi]$ is defined by linearly extending the homomorphism $\phi \colon F \to \pi$.
To determine the words $w_i$ which arise in $\partial_3$, consider the quadrilateral decomposition of the knot diagram (Definition \ref{Defn:quaddecomp}).  At each crossing $i$, we have a distinguished edge which we always list first in the relation, $g_{i_2}$.  Choose the vertex, call it $v_i$, which is at the end of $g_{i_2}$.  For crossing $i$, choose a path in the 1-skeleton of the quadrilateral decomposition from $v_1$ to $v_i$.  This yields a word $w_i$ in $g_1,\dots,g_c$.  Then the component of $\partial(h^3)$ along $h^2_i$ is $\Phi (w_i)$.
\end{theorem}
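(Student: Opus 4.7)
My plan is to assemble the three pieces already developed in the paper: the handle decomposition of Theorem \ref{handledecomp}, the Wirtinger presentation of Proposition \ref{wirtingerpresentation}, and the definition of the handle chain complex (Definition \ref{handlechain}) via twisted $\Z[\pi]$-incidence numbers. The chain groups are immediate: one $0$-handle, $c$ one-handles $h^1_1,\dots,h^1_c$, $c$ two-handles $h^2_1,\dots,h^2_c$ and one $3$-handle each contribute a free rank-one $\Z[\pi]$-summand, giving the groups $\Z[\pi], \bigoplus_c \Z[\pi], \bigoplus_c \Z[\pi], \Z[\pi]$. The remaining work is to identify each boundary map by choosing compatible threadings of the handles and applying the incidence-number recipe. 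I will choose the base lifts $\wt{h}^i_j$ so that the threading paths $c^i_j$ all start at the chosen basepoint in the $0$-handle and that the base lift of each handle contains the lifts of its threading data.

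For $\partial_1$, the attaching sphere of $h^1_j$ is a copy of $S^0$ sitting in $\partial h^0$; with the orientation convention of \ref{orientationconventions} the two points contribute with opposite signs, and when lifted one endpoint lies in the base lift $\wt{h}^0$ while the other lies in the $g_j$-translate (since following the core of $h^1_j$ traverses the loop $g_j$). This gives $\partial_1(\wt{h}^1_j) = (g_j-1)\wt{h}^0$, matching the stated column vector under the row-vector convention of \ref{conventions2}. For $\partial_2$, the attaching circle of $h^2_i$ spells out the Wirtinger relator $r_i \in F(g_1,\dots,g_c)$ when read against the $1$-handles. The fundamental property of the Fox derivative, which I would cite or quickly verify by induction on word length using the Leibniz rule, is exactly that the twisted $\Z[\pi]$-incidence number $\langle h^2_i\,|\,h^1_j\rangle_{\Z[\pi]}$ equals $\Phi(\partial r_i/\partial g_j)$: each traversal of $h^1_j$ along the attaching circle contributes a sign together with the group element represented by the prefix of $r_i$ read up to that traversal, and this is precisely what Fox's free calculus computes. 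Thus $(\partial_2)_{ij} = \Phi(\partial r_i/\partial g_j)$.

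The main obstacle is the $\partial_3$ formula; this is where the quadrilateral decomposition and the choice of $w_i$ enter. The $3$-handle $h^3$ fills in the upper hemisphere $D^3_+$ from Theorem \ref{handledecomp}, and its attaching $S^2$ runs once over each $2$-handle $h^2_i$, so the underlying $\Z$-incidence numbers are all $\pm 1$ (and in fact $+1$ with our anti-clockwise convention on the $h^2_i$ viewed from inside $h^3$, including the infinite region, as spelled out in \ref{conventions}). What remains is to determine, for each $i$, which $\pi$-translate of $\wt{h}^2_i$ is hit by the fixed lift $\wt{h}^3$; equivalently, to compute the loop $c^3 \ast_{p_i} \ol{c^2_i}$. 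I would thread $h^3$ from the basepoint straight up through the interior of region $1$ so that the meeting point $p_1$ with $\wt{h}^2_1$ gives the trivial loop, and thread each $h^2_i$ from the basepoint via a fixed reference arc lying on $\partial D^3_+$. With these threadings fixed, the path from $p_1$ across $\partial h^3$ to $p_i$ can be pushed onto the $1$-skeleton of the quadrilateral decomposition: crossing an edge of the quadrilateral decomposition dual to a knot arc $g_k$ contributes a factor $g_k^{\pm 1}$, with the sign determined by whether the crossing is with or against the orientation convention of \ref{orientationconventions}. Choosing any path in the $1$-skeleton from $v_1$ to $v_i$ therefore yields a word $w_i \in F(g_1,\dots,g_c)$ whose image $\Phi(w_i)$ is exactly the group element recording which lift of $h^2_i$ meets $\wt{h}^3$. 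Different choices of path differ by loops in the quadrilateral $1$-skeleton, which are products of the $2$-cell boundaries, i.e.\ relators $r_j$; since $\Phi$ kills the relators, $\Phi(w_i)$ is well-defined and independent of the path, which is why the statement makes sense.

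Putting this together gives $\partial_3 = (\Phi(w_1),\dots,\Phi(w_c))$ as a row vector acting on $\wt{h}^3$ on the right. I expect the only subtle bookkeeping to be the sign checks in $\partial_3$ (verifying that the orientation conventions of \ref{conventions} and \ref{orientationconventions} really do produce $+1$ rather than $-1$ at each meeting point, including at the infinite region), and the verification that the two chosen threadings of $h^2_i$ via edges of the knot and via the quadrilateral dual graph are compatible. These are essentially local checks at a single crossing, which I would do once using Figure \ref{Fig:bigcrossingfinish}.
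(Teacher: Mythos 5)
Your proposal is correct and follows essentially the same route as the paper's (very terse) proof: fix the chain groups from the handle count, thread the handles starting at the centre $x_0$ of the $0$-handle, and read off the twisted $\Z[\pi]$-incidence numbers, using Fox calculus for $\partial_2$ and the quadrilateral decomposition for $\partial_3$. The paper's proof simply specifies the threadings (for $c^2_i$: go to the vertex $v_i$ on $\partial h^0$ and then up; for $c^3$: go to $v_1$ and then up) and leaves the verification of the formulas to the reader, so your fleshed-out argument — including the observation that $\Phi$ kills the normal closure of the relators, making $\Phi(w_i)$ independent of the chosen path even though the word $w_i\in F$ itself is not — is a faithful expansion of the intended argument. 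One small remark: your threading of $h^3$ ``straight up through the interior of region $1$'' differs superficially from the paper's ``from $x_0$ to $v_1$ and then up,'' but as you note, any two such threadings alter the $w_i$ only by conjugates of relators, so the resulting $\partial_3$ agrees; the distinction only matters later (Remark \ref{Rmk:identitypresentation}) when the precise word $w_i \in F$ is needed to produce an identity of the presentation for Trotter's formulae.
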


\begin{proof}
Follow Definitions \ref{handlechain} and \ref{freederivative}.  We therefore have to thread the handles.  Let $x_0$ be the centre of the 0-handle.  We define the paths $c^i$ using the Conventions \ref{conventions}.  For 1-handles, go from $x_0$ to the foot at which it starts, with respect to the orientation of the 1-handle, then along its core to its centre.  For a 2-handle $h^2_i$, we have a vertex $v_i$ on the quadrilateral decomposition at which the word $r_i$ which represents the boundary starts.  Take for $c^2_i$ a path from $x_0$ to this vertex on $\partial h^0$, then follow from there to the centre of the 2-handle.  For $c^3$, go from $x_0$ to $v_1$, and from there up to the centre of the 3-handle.
\end{proof}

\begin{remark}
The fundamental formula of Fox for his derivative (see \cite{Fox2}) says that for a word $w \in F(g_1,\dots,g_c)$ we have
\[w-1 = \sum_{i=1}^c\, \frac{\partial w}{\partial g_i} (g_i - 1).\]
Suppose that a word $w=\wt{w}g_j$, for some $\wt{w}$ and some $j$.  Then
\[\wt{w}g_j = \wt{w}(g_j-1) + \wt{w}.\]
Similarly if $w=\wt{w}g_j^{-1}$ then
\[\wt{w}g_j^{-1} = -\wt{w}g_j^{-1}(g_j-1) + \wt{w}.\]
Working inductively on the length of $w$, at each letter $g_j$ or $g_j^{-1}$ of $w$ one factors out $g_j-1$ as above.  The formula then follows using the inductive definition of the Fox derivative.  In the case that $w \in \ker \phi$, the fundamental formula verifies that $\partial_2 \circ \partial_1 = 0$.
\end{remark}

\begin{remark}\label{Rmk:identitypresentation}
For the universal cover, the only map of the chain complex which has non-zero kernel is $\partial_2$.  This kernel is generated by the sum of all the 2-handles over the top of the knot, \emph{i.e.} $\sum_i \phi(w_i)\wt{h}^2_i$.  However this is of course precisely the boundary of $\wt{h}^3$, and so, after augmenting the chain complex with $C_0(\wt{X}) \to \Z$, we have an acyclic complex.  This is just the fact that $X$ is an Eilenberg-Maclane space; the only opportunity for $\pi_2(X)$ to be non-trivial is through 2-spheres which encompass the knot: by the Sphere theorem of Papakyriakopoulos \cite{Papa57}, a non-zero class in $\pi_2(X)$ can be represented by an embedded $S^2$; the Sch\"{o}nflies theorem means that this sphere must encompass the knot on one side, and a 3-handle on the other side, and therefore is in fact null-homotopic.  Once $\pi_2(X)$ is seen to be zero, the rest of the homotopy groups $\pi_j(X)$ for $j \geq 3$ also vanish by the Hurewicz theorem applied to the universal cover $\wt{X}$, since $H_j(\wt{X};\Z) \cong 0$ for $j \geq 3$, and since $\pi_j(X) \cong \pi_j(\wt{X})$ for $j \geq 2$.

The fact that the knot exterior is an Eilenberg-MacLane space is borne out in the way that the chain map $\partial_3$ can be expressed purely from knowledge of the fundamental group.  Given a presentation for the knot group of deficiency zero, one of the relations will be a consequence of all the others.  This is of course due to the fact that the sum of certain lifts of the 2-handles form a cycle in $C_2(\wt{X})$.
\end{remark}

\begin{definition}\label{identitypresentation}
Let $\langle \,g_1,..,g_a\,|\,r_1,\dots,r_c\,\rangle$ be a presentation of a group and let $F = F(g_1,\dots,g_a)$ be the free group on the generators $g_i$. Following Trotter \cite{Trotter}, let $P$ be the free group on letters $\rho_1,..,\rho_c$, and let $\psi:P \ast F \to F$ be the homomorphism such that $\psi(\rho_i)=r_i$ and $\psi(g_j)=g_j$. An \emph{identity of the presentation} is a word in $\ker(\psi) \leq P \ast F$ of the form:
\begin{equation}\label{identity}
s = \prod_{k=1}^c \, w_{j_k} \rho_{j_k}^{\eps_{j_k}} w_{j_k}^{-1}
\end{equation}
where $\eps_{j_k} = \pm 1$.
\qed \end{definition}

The $w_i$ here in our case coincide with the $w_i$ from Theorem \ref{Thm:unicoverchaincomplex}.  Here, however, the word chosen matters, or in other words the path in the 1-skeleton of the quadrilateral decomposition matters, rather than just the element of $\pi_1(X)$ represented, \emph{i.e.} the end point of the path.

\begin{example}

Figure \ref{Fig:labelledtrefoil} shows a reduced diagram of a trefoil, with quadrilateral decomposition.  The knot is oriented, as shown by the arrows, and the edges of the quadrilateral decomposition have been correspondingly oriented, so that they have linking number 1 with the knot.  The edges and the regions have been labelled according to the conventions laid out in \ref{conventions}.  Each region has a specific vertex, labelled $v_1, v_2$ and $v_3$, as described in the statement of Theorem \ref{Thm:unicoverchaincomplex}.

\begin{figure}
  \begin{center}
    {\psfrag{g1}{$g_1$}
    \psfrag{g2}{$g_2$}
    \psfrag{g3}{$g_3$}
    \psfrag{A}{$1$}
    \psfrag{B}{$2$}
    \psfrag{C}{$3$}
    \psfrag{va}{$v_1$}
    \psfrag{v2}{$v_2$}
    \psfrag{vc}{$v_3$}
    \includegraphics[width=6cm]{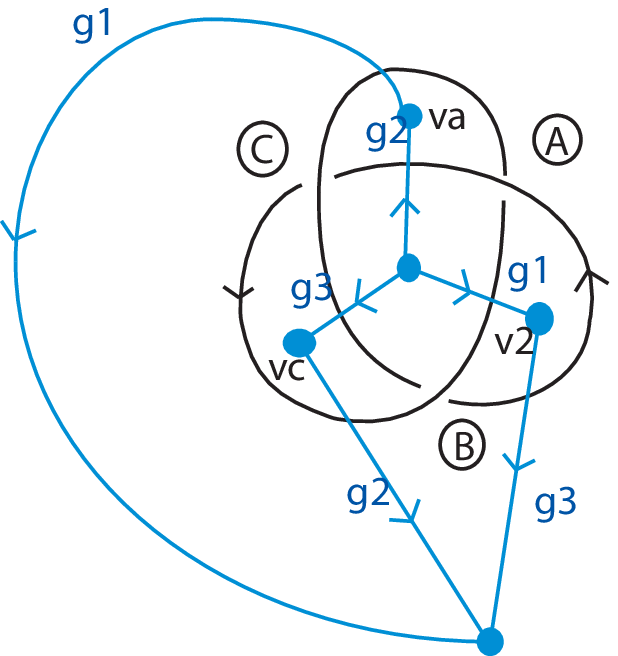}}
  \caption{An oriented diagram of a trefoil with labelled quadrilateral decomposition.}
  \label{Fig:labelledtrefoil}
  \end{center}
\end{figure}

We apply the construction of the handle decomposition associated to this diagram as in Theorem \ref{handledecomp}.  A presentation for the fundamental group of the exterior, $X$, of the knot, can be read off from the diagram, by looking at the boundaries of the regions:

\[ \pi_1(X,x_0)\; = \; \langle \; g_1,g_2,g_3\,|\, r_1 = g_2^{-1}g_1g_3g_1^{-1}, r_2 = g_1^{-1}g_3g_2g_3^{-1}, r_3 = g_3^{-1}g_2g_1g_2^{-1}\,\rangle\]
where the basepoint $x_0$ is the centre $D^0 \times \{0\}$ of the 0-handle.

By finding paths in the quadrilateral decomposition we can find words which give the boundary of the 3-handle.  Note that it is allowed here to take a different vertex of the diagram for our base vertex; this corresponds to choosing a different lift of $h^3$ as our chosen lift $\wt{h}^3$.  In this example we take the central vertex, just because it gives a more symmetrical answer, and then read off the boundary map:
\[\partial_3(\wt{h}^3) = (w_1, w_2, w_3) = (g_2,g_1,g_3).\]
The reader can check that $s_1 = g_1r_2g_1^{-1}g_2r_1g_2^{-1}g_3r_3g_3^{-1}$ is an identity of the presentation; that is, when $r_1, r_2$, and $r_3$ are substituted, $s_1 = 1 \in F(g_1,g_2,g_3)$.
We can therefore take the free derivatives of the relations in order to find $\partial_2$, and so give the handle chain complex of the universal cover of the trefoil exterior, with $\pi := \pi_1(X)$, as:
\[\Z[\pi] \xrightarrow{(g_2, g_1, g_3)} \bigoplus_{3} \,\Z[\pi] \xrightarrow{\partial_2} \bigoplus_{3} \,\Z[\pi] \xrightarrow{(g_1-1,g_2-1,g_3-1)^T} \Z[\pi]\]
where
\[\partial_2 = \left(
          \begin{array}{ccc}
            g_2^{-1} - 1 & -g_2^{-1} & g_2^{-1}g_1\\
            -g_1^{-1} & g_1^{-1}g_3 & g_1^{-1} - 1\\
             g_3^{-1}g_2  &  g_3^{-1} - 1 & -g_3^{-1}\\
          \end{array}
        \right).\]
The reader may check that the composite maps are zero here.
\end{example}

We now describe the chain complex of the boundary torus, its attachment to $X$, and the expression of this as the chain complex of a cobordism.  Our strategy is as follows.  We begin by describing the chain complex of the torus and then attach it to the rest of $C(X;\Z[\pi_1(X)])$ using the attaching handles as in Theorem \ref{Thm:includingboundary}.
Next, recall that we split the torus into two halves, each a copy of $S^1 \times D^1$, glued together along $S^1 \times S^0$ using two inclusions $i_{\pm} \colon S^1 \times S^0 \hookrightarrow S^1 \times D^1_{\pm}$.  We consider the pull-back $\pi_1(X)$-covers of these spaces (Definition \ref{pullbackcovers}), and describe their chain complexes of finitely-generated free $\Z[\pi_1(X)]$-modules.  These can then be mapped via the induced maps of the inclusions $f_{\pm} \colon S^1 \times D^1_{\pm} \hookrightarrow \partial X \hookrightarrow X$ into the chain complex of universal cover of the knot exterior, to form the triad:
\[\xymatrix{C_*(S^1 \times S^0;\Z[\pi_1(X)]) \ar[r]^{i_-} \ar[d]_{i_+} & C_*(S^1 \times D^1_-;\Z[\pi_1(X)]) \ar[d]^{f_-} \\ C_*(S^1 \times D^1_+;\Z[\pi_1(X)]) \ar[r]^{f_+} & C_*(X;\Z[\pi_1(X)]),
}\]
the algebraic version of a cobordism relative to the identity cobordism of the boundary $S^1 \times D^1$ to itself.  The compositions $f_{-}\circ i_{-}$ and $f_+ \circ i_+$ will typically not coincide on the chain level so we will also have a chain homotopy $g \colon f_{-}\circ i_{-} \simeq f_+ \circ i_+$ as part of the algebraic data which enables us to write the chain map \[\eta \colon \mathscr{C}((i_-,i_+)^T ) \simeq C_*(S^1 \times S^1;\Z[\pi_1(X)]) \to C_*(X;\Z[\pi_1(X)])\]
which is the algebraic version of the pair $(X,\partial X)$.  See Definition \ref{Defn:algmappingcone} for the algebraic mapping cone construction $\mathscr{C}$.
\begin{definition}\label{pullbackcovers}
Let $p \colon \wt{X} \to X$ be a covering space with deck transformation group $G$; we call this the $G$-cover of $X$.  Given a space $Y$ and a map $g \colon Y \to X$ we define the \emph{pull-back $G$-cover of $Y$}, $\wt{Y}_G$, to be induced from the diagram:
\[\xymatrix{\wt{Y}_G \ar[r] \ar[d] & \wt{X} \ar[d]^p \\ Y \ar[r]_g & X;}\]
\[\wt{Y}_G := \{(y,x) \in Y \times \wt{X} \,|\, g(y) = p(x)\}.\]
This construction can yield both irregular and disconnected covering spaces, since there is no requirement that $G$ be a normal subgroup of $\pi_1(Y)$.
\qed \end{definition}

\begin{proposition}\label{prop:toruschaincopmlex}
The chain complex for the $\pi_1(X)$-cover of the torus $S^1 \times S^1 \approx \partial X$ is given below, where the image in $\pi_1(X)$ of the generators of $\pi_1(\partial X)$ are $\mu$ and $\lambda$, a meridian and longitude of the knot respectively:
\[\Z[\pi_1(X)] \xrightarrow{\partial_2 = \left(\begin{array}{cc} \lambda-1 & 1-\mu \end{array} \right)} \bigoplus_2 \, \Z[\pi_1(X)] \xrightarrow{\partial_1 = \left(\begin{array}{c} \mu-1 \\ \lambda - 1 \end{array} \right)} \Z[\pi_1(X)].\]
\end{proposition}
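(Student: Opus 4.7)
The plan is to apply directly the handle decomposition of the boundary torus described in Remark \ref{Remark:unknothandles}: one $0$-handle $h^0_\partial$, two $1$-handles $h^1_\mu$ and $h^1_\lambda$ (meridian and longitude), and one $2$-handle $h^2_\partial$ attached along the commutator word $r_\partial = \lambda\mu\lambda^{-1}\mu^{-1}$. Since this handle decomposition has one handle of each relevant type (apart from the two $1$-handles), the chain groups of the pull-back $\pi_1(X)$-cover are the free $\Z[\pi_1(X)]$-modules appearing in the statement, generated by chosen lifts $\wt{h}^0_\partial$, $\wt{h}^1_\mu$, $\wt{h}^1_\lambda$ and $\wt{h}^2_\partial$ of these handles. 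Although the pull-back cover is disconnected whenever the peripheral subgroup has infinite index in $\pi_1(X)$, the construction of Definition \ref{handlechain} makes sense verbatim.

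For $\partial_1$, I would thread each $1$-handle as in the proof of Theorem \ref{Thm:unicoverchaincomplex} and compute twisted incidence numbers. The two feet of $\wt{h}^1_\mu$ sit at $\wt{h}^0_\partial$ and $\mu\cdot\wt{h}^0_\partial$ with opposite orientation signs, giving $\partial_1(\wt{h}^1_\mu) = (\mu-1)\wt{h}^0_\partial$; identically $\partial_1(\wt{h}^1_\lambda) = (\lambda-1)\wt{h}^0_\partial$. Written in the row-vector convention of Conventions \ref{conventions2}, this gives $\partial_1 = (\mu-1,\lambda-1)^T$ as claimed.

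For $\partial_2$, I would apply Fox calculus (Theorem \ref{Thm:unicoverchaincomplex}) to $r_\partial = \lambda\mu\lambda^{-1}\mu^{-1}$. Using the product rule together with $\partial g^{-1}/\partial g = -g^{-1}$, one computes
\[
\frac{\partial r_\partial}{\partial \lambda} = 1 - \lambda\mu\lambda^{-1}, \qquad \frac{\partial r_\partial}{\partial \mu} = \lambda - \lambda\mu\lambda^{-1}\mu^{-1}.
\]
The crucial simplification is that in $\pi_1(X)$ the meridian $\mu$ and longitude $\lambda$ commute, because they both lie in the image of the peripheral subgroup $\pi_1(\partial X)\cong\Z\oplus\Z$ under the inclusion-induced map to $\pi_1(X)$. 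Hence $\lambda\mu\lambda^{-1} = \mu$ and $\lambda\mu\lambda^{-1}\mu^{-1} = 1$ after applying the ring map $\Phi\colon\Z[F]\to\Z[\pi_1(X)]$, yielding coefficients $1-\mu$ on $\wt{h}^1_\lambda$ and $\lambda-1$ on $\wt{h}^1_\mu$. Ordering the $1$-handle basis as $(h^1_\mu, h^1_\lambda)$ so that it is compatible with $\partial_1$, this gives $\partial_2 = (\lambda-1,\; 1-\mu)$.

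There is essentially no hard step: the only subtlety worth stating explicitly is the use of $[\mu,\lambda] = 1$ in $\pi_1(X)$, without which the Fox derivatives would not reduce to the stated form, and which is precisely what also verifies $\partial_1\circ\partial_2 = (\lambda-1)(\mu-1) + (1-\mu)(\lambda-1) = 0$ in $\Z[\pi_1(X)]$.
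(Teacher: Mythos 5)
Your proof is correct and follows the same route as the paper's: both invoke the handle decomposition of Remark \ref{Remark:unknothandles}, derive $\partial_1$ by threading the $1$-handles, and obtain $\partial_2$ from the Fox derivatives of $r_\partial = \lambda\mu\lambda^{-1}\mu^{-1}$. You make explicit a point the paper leaves implicit --- that the reduction of $\partial r_\partial/\partial \lambda$ and $\partial r_\partial/\partial\mu$ to $1-\mu$ and $\lambda-1$ uses the fact that $\mu$ and $\lambda$ commute in $\pi_1(X)$ --- which is a worthwhile clarification but not a different argument.
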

\begin{proof}
This corresponds to the standard handle decomposition for the torus described in Remark \ref{Remark:unknothandles}.  We have to thread the handles.  Each threading begins at the basepoint $x_0 \in X$ at the centre of $\hp^0$; recall that we amalgamated the 0-handles of $X$ into one 0-handle.  The 1-handles $h^1_{\mu}$ and $h^1_{\lambda}$ are threaded by following their cores, agreeing with their orientations, until reaching their centres.  The 2-handle $\hp^2$ is threaded by following along the threading for $h^1_{\lambda}$ before leaving in the direction of the orientation of $h^1_{\mu}$ and heading straight to the centre of $\hp^2$.  The boundary maps claimed then follow by considering the concatenation of paths described in Definition \ref{handlechain}.  Alternatively we can see, where $r_{\partial} = \lambda\mu\lambda^{-1}\mu^{-1}$, that $\partial_2 = (\partial r_{\partial}/\partial \mu,\partial r_{\partial}/\partial \lambda)$ (see Definition \ref{freederivative}).
\end{proof}

As promised, we now include the chain complex of the $\pi_1(X)$-cover of the boundary into the chain complex of the universal cover of $X$.  We therefore need to describe the chain complex $C(X;\Z[\pi_1(X)])$ with the additional summands generated by the additional handles which make up the boundary and the attaching handles for the boundary.
The following theorem is an extension of Theorem \ref{Thm:unicoverchaincomplex} but rather than just stating the new assertions we state it in full so that the full result is given in one location.  As before we work at the level of the universal cover, presenting our results in this generality so that we can work at the level of any smaller covering space we require.

\begin{theorem}\label{Thm:mainchaincomplex}
Suppose that we are given a knot $K$ with exterior $X$, and a reduced knot diagram for $K$ with $c \geq 3$ crossings.  Denote by $F(g_1, \dots ,g_c)$ the free group on the letters $g_1,\dots,g_c$, and let $l \in F(g_1, \dots,g_c)$ be the word for the longitude defined in the proof of Theorem \ref{Thm:includingboundary}.  Then there is a presentation
\[\pi_1(X) = \langle\,g_1,\dots,g_c, \mu, \lambda\,|\,r_1,\dots,r_c, r_{\mu},r_{\lambda},r_{\partial}\,\rangle\]
with the Wirtinger relations $r_1,\dots,r_c \in F(g_1,\dots,g_c)$ read off from the knot diagram, and
\[r_{\mu} = g_1\mu^{-1};\;\;r_{\lambda} = l\lambda^{-1};\; r_{\partial} = \lambda\mu\lambda^{-1}\mu^{-1}.\]
The generators $\mu$ and $\lambda$ correspond to the generators, and $r_{\partial}$ to the relation, for the fundamental group of the boundary torus $\pi_1(S^1 \times S^1) \cong \Z \oplus \Z$. The generator $\mu$ is a meridian and $\lambda$ is a longitude.  The relations $r_{\mu}$ and $r_{\lambda}$ are part of Tietze moves: they show the new generators to be consequences of the original generators.

The handle chain complex of the $\pi$-cover $\widetilde{X}$, that is the cover with deck group $\pi := \pi_1(X)/S$ for some normal subgroup $S \unlhd \pi_1(X)$, with chain groups being based free left $\Z[\pi]$-modules, and with the chain complex $C(\wt{\partial X})$ of the $\pi_1(X)$-cover of $\partial X$ as a sub-complex, is given, recalling the convention of \ref{conventions2} that matrices act on row vectors on the right, by:

\[\xymatrix @C+2cm{
\bigoplus_2\,\Z[\pi] \cong \langle h^3_o,h^3_{\partial} \rangle \ar[d]^{\partial_3}\\
 \bigoplus_{c+3} \,\Z[\pi] \cong \langle h^2_1,\dots,h^2_c,h^2_{\partial \mu},h^2_{\partial \lambda},h^2_{\partial} \rangle \ar[d]^{\partial_2}\\
  \bigoplus_{c+2} \,\Z[\pi] \cong \langle h^1_1,\dots,h^1_c,h^1_{\mu},h^1_{\lambda} \rangle \ar[d]^{\partial_1}\\
  \Z[\pi] \cong \langle h^0_{\partial} \rangle
}\]
where:
\[\ba{rcl} \partial_3 &=& \left(
               \begin{array}{cccccccc}
                 w_1 &  & \hdots &  & w_c & 0 & 0 & 0 \\
                 -u_1 &  & \hdots &  & -u_c & 1-\lambda & \mu-1 & -1 \\
               \end{array}
             \right);\\ & & \\
\partial_2 &=& \left(
                 \begin{array}{ccccccc}
                   \left(\partial r_1/\partial g_1\right) &  & \hdots &  & \left(\partial r_1/\partial g_c\right) & 0 & 0\\
                    &  &  &  &  &  & \\
                   \vdots &  & \ddots &  & \vdots & \vdots & \vdots\\
                    &  &  &  &  &  &  \\
                   \left(\partial r_c/\partial g_1\right) &  & \hdots &  & \left(\partial r_c/\partial g_c\right) & 0 & 0\\
                   1 & 0 & \hdots & 0 & 0 & -1 & 0\\
                   \left(\partial l/\partial g_1\right) &  & \hdots &  & \left(\partial l/\partial g_c\right) & 0 & -1\\
                   0 &  & \hdots &  & 0 & \lambda-1 & 1-\mu\\
                 \end{array}
               \right);\text{ and} \\ & & \\
\partial_1 &=& \left(
               \begin{array}{ccccccc}
                 g_1 - 1 &  & \hdots &  & g_c - 1 & \mu-1 & \lambda-1\\
               \end{array}
             \right)^T. \ea\]
The word $l$ for the longitude was defined in the proof of Theorem \ref{Thm:includingboundary}.  We have:
\[l = g_1^{-\Wr}g_{k_1}^{\eps_k}g_{(k+1)_1}^{\eps_{k+1}}\dots g_{(k+c-1)_1}^{\eps_{k+c-1}},\]
where $k$ is the number of the crossing reached first as an over crossing, when starting on the under crossing strand of the knot which lies in region 1; the indices $k, k+1,\dots,k+c-1$ are to be taken mod $c$, with the exception that we prefer the notation $c$ for the equivalence class of $0 \in \Z_c$.  The sign of crossing $j$ is $\eps_j$ and $\Wr$ is the writhe of the diagram, which is the sum of the $\eps_j$.
The $u_{k+i}$ are given by $g_1^{1-\Wr}$ followed by the next $i+1$ letters in the word for the longitude:
\[u_{k+i} = g_1^{1-\Wr}g_{k_1}^{\eps_k}g_{(k+1)_{1}}^{\eps_{k+1}}\dots g_{(k+i)_1}^{\eps_{k+i}}.\]

To determine the words $w_i$ which arise in $\partial_3$, consider the quadrilateral decomposition of the knot diagram (Definition \ref{Defn:quaddecomp}).  At each crossing $i$, we have a distinguished edge which we always list first in the relation, $g_{i_2}$.  Choose the vertex, call it $v_i$, which is at the end of $g_{i_2}$.  For crossing $i$, choose a path in the 1-skeleton of the quadrilateral decomposition from $v_1$ to $v_i$.  This yields a word $w_i$ in $g_1,\dots,g_c$.  Then the component of $\partial(h^3)$ along $h^2_i$ is $w_i$.

As written, each component of the boundary matrices is an element of the group ring on the free group $\Z[F(g_1,\dots,g_c,\mu,\lambda)]$.  We therefore act on each element by the homomorphism \[\Phi \colon \Z[F(g_1,\dots,g_c,\mu,\lambda)] \to \Z[\pi]\] defined by linearly extending the group homomorphism $\phi \colon F(g_1,\dots,g_c,\mu,\lambda) \to \pi$.

There is then a pair of chain complexes,
\[f \colon C_*(\partial X;\Z[\pi]) \to C_*(X;\Z[\pi]),\] with the map $f$ given by inclusion, expressing the manifold pair $(X,\partial X)$.
\end{theorem}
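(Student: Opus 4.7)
The plan is to apply the general framework of Definitions \ref{handlechain} and \ref{freederivative} to the handle decomposition supplied by Theorem \ref{Thm:includingboundary}. The chain groups are, by construction, based free $\Z[\pi]$-modules on the lifts of the listed handles, so the task reduces to computing the twisted incidence numbers using the threading prescription of Conventions \ref{conventions}, \ref{orientationconventions}, and \ref{conventions2}. The map $\partial_1$ is immediate: every $1$-handle starts and ends at the single amalgamated $0$-handle $\hp^0$, so $h^1_i$ contributes $g_i - 1$, and likewise $\mu-1$ and $\lambda-1$ for $h^1_{\mu}$ and $h^1_{\lambda}$.

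For $\partial_2$, the $c$ original Wirtinger $2$-handles inherit their rows from Theorem \ref{Thm:unicoverchaincomplex}, namely $\partial r_i / \partial g_j$ in the $h^1_j$ column with zeros in the $h^1_{\mu}, h^1_{\lambda}$ columns since the Wirtinger relations do not involve $\mu$ or $\lambda$. The Tietze $2$-handles $h^2_{\partial \mu}$ and $h^2_{\partial \lambda}$ have attaching words $r_{\mu} = g_1 \mu^{-1}$ and $r_{\lambda} = l \lambda^{-1}$; taking Fox derivatives and pushing forward via $\Phi$ gives exactly the rows $(1,0,\dots,0,-1,0)$ and $(\partial l/\partial g_1,\dots,\partial l/\partial g_c,0,-1)$. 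The torus $2$-handle $h^2_{\partial}$ with relation $r_{\partial} = \lambda\mu\lambda^{-1}\mu^{-1}$ produces the row $(0,\dots,0,\lambda-1,1-\mu)$, agreeing with Proposition \ref{prop:toruschaincopmlex}. The expression of $l$ as a word in the $g_i$ is read off from the description of the zero-framed longitude given in the proof of Theorem \ref{Thm:includingboundary}, so $\partial l/\partial g_j$ is explicit.

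The genuinely new calculation, and the main obstacle, is $\partial_3$. The original $h^3_o$ contributes the Theorem \ref{Thm:unicoverchaincomplex} row $(w_1,\dots,w_c,0,0,0)$, with zero components along the new $2$-handles since $h^3_o$ sits above the knot and is disjoint from the belt circles of $h^2_{\partial \mu}, h^2_{\partial \lambda}, h^2_{\partial}$. For $h^3_{\partial}$ one must work geometrically from the attaching description in the proof of Theorem \ref{Thm:includingboundary}: it abuts $h^2_{\partial}$ from the outside once (giving the entry $-1$), attaches to both sides of $h^2_{\partial \mu}$ and $h^2_{\partial \lambda}$, and meets the underside of those original $2$-handles $h^2_k,\dots,h^2_{k+c-1}$ whose attaching circles pass above the strand of the knot lying below $h^1_1$. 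Thread $h^3_{\partial}$ by starting at the basepoint, going to $v_1$, and following the tunnel under $h^1_1$ up to its centre; for each belt-circle intersection one must then form the loop prescribed by Definition \ref{handlechain} and read off its class in $\pi$. The longitude's $(-\Wr)$ wraps under $h^1_1$ together with the traversal of successive over-strand $1$-handles are precisely what produce the words $u_{k+i} = g_1^{1-\Wr}g_{k_1}^{\eps_k}\cdots g_{(k+i)_1}^{\eps_{k+i}}$; the entries $1-\lambda$ and $\mu-1$ on $h^2_{\partial \mu}$ and $h^2_{\partial \lambda}$ arise from the fact that the attaching sphere of $h^3_{\partial}$ meets the belt of each of these $2$-handles in two points, separated in $\pi$ by the loops $\lambda$ and $\mu$ respectively, with opposite orientation signs per Conventions \ref{orientationconventions}. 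The sign bookkeeping in front of the $u_i$ requires the most care and is the step that cannot be automated by Fox calculus alone.

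For the final statement, the sub-complex claim follows by inspection: the summands generated by $\hp^0, h^1_{\mu}, h^1_{\lambda}, h^2_{\partial}$ are closed under the three boundary matrices above, and the resulting restriction is the chain complex of Proposition \ref{prop:toruschaincopmlex} for the $\pi_1(X)$-cover of $\partial X$. The inclusion-induced chain map $f \colon C_*(\partial X;\Z[\pi]) \to C_*(X;\Z[\pi])$ is then literally the inclusion of these direct summands, exhibiting $(X,\partial X)$ as a pair of chain complexes.
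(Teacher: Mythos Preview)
Your approach is correct and is essentially the paper's own: apply Definitions \ref{handlechain} and \ref{freederivative} to the handle decomposition of Theorem \ref{Thm:includingboundary}, with the interior rows inherited from Theorem \ref{Thm:unicoverchaincomplex} and the boundary-torus rows from Proposition \ref{prop:toruschaincopmlex}. The paper likewise reduces everything to specifying the threadings of the new attaching handles $h^2_{\partial\mu}$, $h^2_{\partial\lambda}$, $h^3_{\partial}$ and then reading off twisted incidence numbers.

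One point worth noting, since you flag the $h^3_{\partial}$ sign bookkeeping as ``the step that cannot be automated by Fox calculus alone'': the paper does in fact automate it. Rather than only computing the belt-sphere intersections geometrically, the paper writes down an explicit \emph{identity of the presentation} (in the sense of Definition \ref{identitypresentation}) for $h^3_{\partial}$,
\[
s_{\partial} = (r_{\partial}^{-1})(\lambda r_{\mu}^{-1} \lambda^{-1})(r_{\lambda}^{-1}) \Bigl(\textstyle\prod_{j=0}^{c-1}\,u_{k+j}r^{-1}_{k+j}u_{k+j}^{-1}\Bigr) (r_{\mu})(\mu r_{\lambda}\mu^{-1}) = 1 \in F(g_1,\dots,g_c,\mu,\lambda),
\]
from which the second row of $\partial_3$ is read off mechanically: the conjugating words give the entries and the exponents $\pm 1$ on each $r$ give the signs. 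This reduces the delicate geometric sign tracking to the purely algebraic check that $s_{\partial}$ is trivial in the free group. The paper also pins down the threadings of $h^2_{\partial\mu}$, $h^2_{\partial\lambda}$, $h^3_{\partial}$ explicitly (via $h^1_{\mu}$, $h^1_{\lambda}$, and the threading of $h^2_{\partial}$ respectively), which is what fixes the particular matrix entries rather than chain-equivalent variants; your proposed threading of $h^3_{\partial}$ via $v_1$ and the tunnel under $h^1_1$ differs from this and would need to be reconciled to recover exactly the stated matrix.
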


\begin{proof}
The new presentation for the fundamental group of $X$ reflects the new handles which have been added.  There are two new generators and two new relations which express the new generators as consequences of the old generators.  The extra relation $r_{\partial}$ is already a consequence of the Wirtinger relations,
since the meridian and the longitude already commute in a knot exterior, entirely independently of a presentation chosen for $\pi_1(X)$.  The reason for its inclusion is that we shall require, in the next chapter, that a presentation for the fundamental group of the boundary sits inside our presentation for the group of the whole manifold.

In order to see that the boundary maps are as claimed we need to describe the threadings.  These have already been described for the interior handles in the proof of Theorem \ref{Thm:unicoverchaincomplex} and for the boundary handles in the proof of Proposition \ref{prop:toruschaincopmlex}.  We therefore only need to describe the threadings for handles which are the attaching handles for our boundary handles, namely for $h^2_{\partial\mu},h^2_{\partial\lambda}$, and $h^3_{\partial}$.  Recall that we are taking the basepoint $x_0 \in X$ to be the centre of the 0-handle $h^0_{\partial}$.

To thread the 2-handles we have a choice as to where to enter the 2-handle, which corresponds to choosing a preferred lift of the 2-handle in a covering space. To thread $h^2_{\partial\mu}$, we follow from $h^0_{\partial}$, along the core of $h^1_{\mu}$ in agreement with its orientation and then enter the 2-handle from the centre of $h^1_{\mu}$ passing directly to the centre of $h^2_{\partial \mu}$.  The handle $h^2_{\partial \lambda}$ is also threaded in this manner; starting from $h^0_{\partial}$ we follow $h^1_{\lambda}$ along its core in agreement with its orientation until we reach its centre, and from there we pass directly to the centre of $h^2_{\partial \lambda}$.

Finally, to thread the 3-handle $h^3_{\partial}$ we follow the threading for $h^2_{\partial}$ to its centre before passing directly to the centre of $h^3_{\partial}$.

All of the boundary maps claimed then follow by considering the concatenation of paths which define the twisted intersection numbers as in Definition \ref{handlechain} and expressing the loops which result in terms of the $g_i$.  The pair \[f \colon C_*(\partial X;\Z[\pi]) \to C_*(X;\Z[\pi])\] is as claimed, with a $f$ a split injection of free modules.

One should also note that the entries in the matrix for $\partial_2$ are given by taking the free derivatives of the relation words, and that the following are identities of this presentation, and yield the boundary map $\partial_3$, by taking the word which conjugates each relation, and the sign $\pm 1$ according to the exponent of each relation:
\[s_o = \prod_{k=1}^c \, w_{j_k}r_{j_k}w_{j_k}^{-1} = 1 \in F(g_1,\dots,g_c,\mu,\lambda);\]
\begin{multline*} s_{\partial} = (r_{\partial}^{-1})(\lambda r_{\mu}^{-1} \lambda^{-1})(r_{\lambda}^{-1}) \left(\prod_{j=0}^{c-1}\,u_{k+j}r^{-1}_{k+j}u_{k+j}^{-1}\right) (r_{\mu})(\mu r_{\lambda}\mu^{-1})\\
 = 1 \in F(g_1,\dots,g_c,\mu,\lambda).\end{multline*}
\end{proof}

\begin{remark}\label{Rmk:relativefundclass}
Passing to $\Z$ coefficients, the 3-dimensional chain
\[[X,\partial X] := h^3_o + h^3_{\partial} \in C_3(X;\Z) = \Z \otimes_{\Z[\pi_1(X)]} C_3(X;\Z[\pi_1(X)])\]
represents a cycle in $C_3(X,\partial X;\Z)$, since $\partial_3([X,\partial X]) = -h^2_{\partial} = (-1)^3f([\partial X]) \in C_2(X;\Z)$.  This is the \emph{relative fundamental class} for the knot exterior, which we shall use in Chapter \ref{Chapter:duality_symm_structures} to derive the symmetric structure on the chain complex.
\end{remark}

We now describe how to construct a chain complex of the boundary torus $C_*(\partial X;\Z[\pi_1(X)])$ with a splitting.  We define maps:
\[(i_-,i_+)^T \colon C_*(S^1 \times S^0;\Z[\pi_1(X)]) \to C_*(S^1 \times D^1_-;\Z[\pi_1(X)]) \oplus C_*(S^1 \times D^1_+;\Z[\pi_1(X)]),\]
with chain maps \[f_{\pm} \colon C_*(S^1 \times D^1_{\pm};\Z[\pi_1(X)]) \to C_*(X;\Z[\pi_1(X)])\] corresponding to the geometric inclusion maps, and a chain homotopy \[g \colon f_-\circ i_- \simeq f_+ \circ i_+ \colon C_*(S^1 \times S^0;\Z[\pi_1(X)])_* \to C_{*+1}(X;\Z[\pi_1(X)])\] which measures the failure of these two compositions to coincide on the chain level.  Combining $f_{\pm}$ and $g$ will yield a chain equivalence $\eta$ from the mapping cone (Definition \ref{Defn:algmappingcone}) to the chain complex of the boundary of $X$ from Proposition \ref{prop:toruschaincopmlex}: \[\eta \colon \mathscr{C}((i_-,i_+)^T) \xrightarrow{\sim} C_*(\partial X;\Z[\pi_1(X)]).\]  Since this latter complex is a sub-complex of $C_*(X;\Z[\pi_1(X)])$, we can then use the inclusion to define the map $f \circ \eta$ which gives us the pair of complexes: \[f \circ \eta \colon \mathscr{C}((i_-,i_+)^T) \to C_*(X;\Z[\pi_1(X)]),\] so that we have a triad of chain complexes:
\[\xymatrix{C_*(S^1 \times S^0;\Z[\pi_1(X)]) \ar @{} [dr] |{\stackrel{g}{\sim}} \ar[r]^{i_-} \ar[d]_{i_+} & C_*(S^1 \times D^1_-;\Z[\pi_1(X)]) \ar[d]^{f_-} \\ C_*(S^1 \times D^1_+;\Z[\pi_1(X)]) \ar[r]^-{f_+} & C_*(X;\Z[\pi_1(X)]).
}\]
as desired.

In order to better understand the following, recall our splitting of the torus into two halves as shown in Figure \ref{Fig:torusintwocylinders2}.
\begin{figure}[h]
    \begin{center}
 {\psfrag{A}{$h^0_+$}
 \psfrag{B}{$h^0_-$}
 \psfrag{C}{$h^1_-$}
 \psfrag{D}{$h^1_+$}
 \psfrag{E}{$h^1_a$}
 \psfrag{F}{$h^1_b$}
 \psfrag{G}{$h^2_b$}
 \psfrag{H}{$h^2_a$}
 \includegraphics[width=7.5cm]{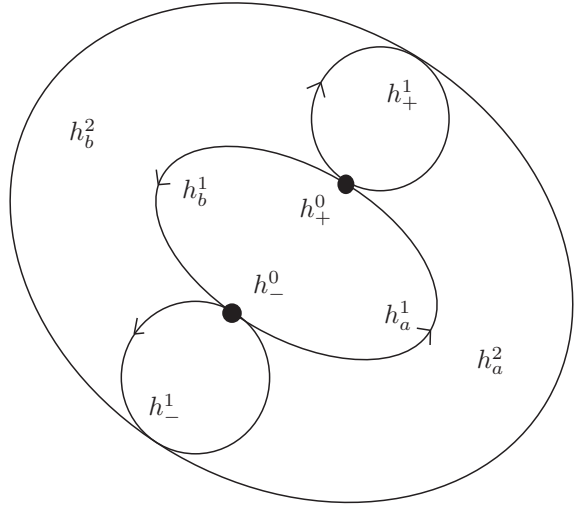}
 }
 \caption{A handle decomposition of the torus $\partial X \approx S^1 \times S^1$ with a splitting into two cylinders.}
 \label{Fig:torusintwocylinders2}
 \end{center}
\end{figure}

\begin{proposition}\label{Prop:circlechaincomplex}
An equivariant chain complex of the universal cover of the circle $C_*(\wt{S^1};\Z) \cong C_*(S^1;\Z[\Z])$ is given by the following $\Z[\Z] = \Z[t,t^{-1}]$-module chain complex:
\[\Z[\Z] \xrightarrow{\partial_1 = (t-1)} \Z[\Z].\]
A chain complex of $S^1 \times S^0$ is therefore given by the $\Z[\Z \oplus\Z] = \Z[t,t^{-1},s,s^{-1}]$-module chain complex $C_*(S^1;\Z[t,t^{-1}]) \oplus C_*(S^1;\Z[s,s^{-1}])$:
\[\bigoplus_2 \, \Z[\Z \oplus \Z]  \xrightarrow{\partial_1 = \left(\begin{array}{cc} t-1 & 0 \\ 0 & s-1 \end{array}\right)}  \bigoplus_2 \, \Z[\Z \oplus \Z].\]
Let $g_1,g_q$ also denote the images of $t,s$ in $\pi_1(X)$ under the induced map of the composition of geometric maps $f_+ \circ i_+$ (which agrees on $S^1 \times S^0$ with $f_- \circ i_-$), for some $q$ such that $2 \leq q \leq c$, depending on where we split the torus.  Then a chain complex of the $\pi_1(X)$-cover of $S^1 \times S^0$ is given by:
\[\bigoplus_2 \, \Z[\pi_1(X)]  \xrightarrow{\partial_1 = \left(\begin{array}{cc} g_1-1 & 0 \\ 0 & g_q-1 \end{array}\right)}  \bigoplus_2 \, \Z[\pi_1(X)].\]

\end{proposition}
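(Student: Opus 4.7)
The plan is to apply the handle chain complex formalism of Definition \ref{handlechain} to the decomposition of $S^1 \times S^0$ inherited from the split-torus handle structure of Figure \ref{Fig:torusintwocylinders2}, and then extend scalars to produce the $\Z[\pi_1(X)]$-module complex.

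First I would handle a single circle $S^1$ with a one-0-handle-plus-one-1-handle structure. Threading $h^0$ at its centre and $h^1$ along its oriented core, the two points of the attaching sphere of $h^1$ contribute $-1$ and $t$ respectively to the twisted intersection number under the sign conventions of \ref{orientationconventions}, yielding $\partial_1 = (t-1)$ and hence the asserted $\Z[\Z]$-module complex. This is a direct instance of the formalism developed in Chapter \ref{chapter:chaincomplex}.

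Since $S^1 \times S^0 = S^1_+ \sqcup S^1_-$ is a disjoint union, its handle decomposition consists of $h^0_\pm$ and $h^1_\pm$. Extending each circle's $\Z[\Z]$-module complex to a $\Z[\Z \oplus \Z]$-module complex along the two inclusions $\Z[t,t^{-1}] \hookrightarrow \Z[\Z \oplus \Z]$ and $\Z[s,s^{-1}] \hookrightarrow \Z[\Z \oplus \Z]$, and taking their direct sum, produces a rank-two free $\Z[\Z \oplus \Z]$-module complex in each of degrees zero and one. The boundary decouples into the diagonal matrix $\mathrm{diag}(t-1, s-1)$ because on each connected component the opposite $\pi_1$-generator corresponds to a loop in the other connected component and hence is invisible to the boundary of the local handles.

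For the $\pi_1(X)$-cover, I would invoke Definition \ref{pullbackcovers} to pull back the universal cover of $X$ along $f_\pm \circ i_\pm$; algebraically this is base change along the induced ring homomorphism $\Z[\Z \oplus \Z] \to \Z[\pi_1(X)]$. The image of $t$ is $\mu = g_1$, using the Tietze relation $r_\mu = g_1\mu^{-1}$ of Theorem \ref{Thm:mainchaincomplex} and arranging the $+$-boundary circle of $S^1 \times S^0$ to be the meridian near the 1-handle $h^1_1$. The image of $s$ is the parallel meridian at whatever location on the longitude the $-$-boundary circle sits; with the splitting chosen appropriately this is one of the Wirtinger generators $g_q$ with $2 \leq q \leq c$, the precise index depending on where the torus is split. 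The main obstacle is exactly this last identification: one must choose the splitting of the torus so that the $-$-boundary circle coincides (up to a basepoint path) with a single meridian $h^1_q$ rather than winding past several, after which the claim is a direct application of the handle-chain-complex machinery already developed.
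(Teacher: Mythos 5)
Your proof takes essentially the same route as the paper's: a one-0-handle-plus-one-1-handle decomposition of $S^1$ giving $\partial_1 = (t-1)$, direct sum for the disjoint union $S^1 \sqcup S^1$ over $\Z[\Z\oplus\Z]$, and base change along the induced homomorphism $\Z\oplus\Z \to \pi_1(X)$ to obtain the $\pi_1(X)$-cover, with $t\mapsto g_1$ and $s\mapsto g_q$ determined by a choice of basepoint path (equivalently, by where the torus is split). The only cosmetic discrepancy is that you have the $+$-component at the basepoint (so $t\mapsto g_1$) while the paper places the $-$-component there; this is a harmless relabelling.
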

\begin{proof}
The circle has an obvious handle decomposition involving one 0-handle and one 1-handle.  Thread the 1-handle by starting at its beginning and following the orientation until reaching its centre.  The boundary map $\partial_1$ is therefore as claimed.\\
For the chain complex of $S^1 \times S^0$ over $\Z[\Z \oplus \Z]$, one merely needs to observe that disjoint union of topological spaces corresponds to the direct sum of their chain complexes.  The chains are free modules over the group ring of $\pi_1(S^1;x_-) \oplus \pi_1(S^1;x_+)$, where $x_-,x_+$ are base-points which belong to each of the connected components of $S^1 \times S^0$, the centres of the 0-handles $h^0_-$ and $h^0_+$ from Figure \ref{Fig:torusintwocylinders2}.
For the chain complex of the $\pi_1(X)$-cover, one tensors the chain complex over $\Z[\Z \oplus \Z]$ on the left with $\Z[\pi_1(X)]$;
 \[\Z[\pi_1(X)] \otimes_{\Z[\Z \oplus \Z]} C_*(S^1 \times S^0;\Z[\Z\oplus \Z])\]
using the homomorphism $\Z \oplus \Z \to \pi_1(X)$ given by $t \mapsto g_1$ and $s \mapsto g_q$.  This map is derived by conjugating the inclusions of loops in $\pi_1(S^1 \times S^0; x_{\pm})$ with paths from the basepoint of $x_0 \in X$ to the images of the basepoints $f_+ \circ i_+(x_{\pm})$.  We have that $f_+ \circ i_+(x_{-}) = x_0$, so no path is required here, whereas $f_+ \circ i_+(x_{+})$ is half way around the boundary so requires a path in $X$ from $x_0$ to the image $f_+\circ i_+(x_+)$.  This choice of path determines the element $g_q$; that is, it determines where we split the torus.
\end{proof}

We now include $S^1 \times S^0$ into the cylinder $S^1 \times D^1$ in two ways, so that we have two null-cobordisms of $S^1 \times S^0$:
\[i_{\pm} \colon S^1 \times S^0 \hookrightarrow S^1 \times D^1.\]
Let $\{1\} \times \{-1\} \in S^1 \times S^0 \subset S^1 \times D^1$ be the basepoint of both spaces, considering each $S^i$ with its standard embedding in $\R^{i+1}$ in order to describe coordinates.  Recall (Definition \ref{conventions0}) that $D^1 \cong [-1,1]$ and $S^0 = \{-1,1\}$.  Let $j \colon S^0 \to D^1$ be the inclusion.  We define the maps $i_{\pm}$ to be:
\[i_{\pm} =(\Id, \pm \Id \circ j) \colon S^1 \times S^0 \to S^1 \times D^1.\]

\begin{proposition}\label{prop:chainmapsiplusiminus}
The cylinder $S^1 \times D^1$ is homotopy equivalent to $S^1$, so we can use the same chain complex for the two spaces.  The chain maps $i_-$ and $i_+$ which are induced by the inclusions $i_{\pm} \colon S^1 \times S^0 \to S^1 \times D^1_{\pm}$ on the chain complexes of the $\pi_1(X)$-covers are given by:

\[\xymatrix @R+1cm @C-0.4cm{C_*(S^1 \times D^1_-;\Z[\pi_1(X)])\colon & \Z[\pi_1(X)] \ar[rrrrr]^{\left(\begin{array}{c}g_1-1\end{array}\right)} &&&&& \Z[\pi_1(X)] \\
C_*(S^1 \times S^0;\Z[\pi_1(X)])\colon \ar[u]_{i_-} \ar[d]^{i_+} & \bigoplus_2\,\Z[\pi_1(X)] \ar[rrrrr]_{\left(\begin{array}{cc}g_1-1 & 0 \\ 0 & g_q-1 \end{array}\right)} \ar[u]^{\left(\begin{array}{c} 1 \\ l_a^{-1} \end{array}\right)} \ar[d]_{\left(\begin{array}{c} l_b^{-1} \\ 1 \end{array}\right)} &&&&& \bigoplus_2\,\Z[\pi_1(X)] \ar[u]_{\left(\begin{array}{c} 1 \\ l_a^{-1} \end{array}\right)} \ar[d]^{\left(\begin{array}{c} l_b^{-1} \\ 1 \end{array}\right)} \\
C_*(S^1 \times D^1_+;\Z[\pi_1(X)])\colon & \Z[\pi_1(X)] \ar[rrrrr]_{\left(\begin{array}{c}g_q-1\end{array}\right)} &&&&& \Z[\pi_1(X)],
}\]
where the words $l_a,l_b$ are given by splitting the word $l$ for the longitude in two as follows.  We take the letters of the word for $l$ before and after a certain point, which corresponds to the point that the knot passes under $h^1_q$.  That is, let $p$, between $0$ and $c-2$ be such that $g_{(k+p)_2} = g_q$.  Then, with the indices taken mod $c$ as above:
\[l_a := g_1^{-\Wr}g_{k_1}^{\eps_k}g_{(k+1)_1}^{\eps_{k+1}}\dots g_{(k+p)_1}^{\eps_{k+p}}; \text{ and}\]
\[l_b := g_{(k+p+1)_1}^{\eps_{k+1}}\dots g_{(k+c-1)_1}^{\eps_{k+p}}.\]
\end{proposition}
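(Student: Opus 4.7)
The plan is to verify the claimed chain maps $i_{\pm}$ by: (a) reducing the chain complex of $S^1 \times D^1_\pm$ to rank one via handle cancellation; (b) specifying threadings that implement the twist factors $1, l_a^{-1}, l_b^{-1}$; and (c) checking the chain map condition via an identity in $\pi_1(X)$ equivalent to the commutativity of meridian and longitude.

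From Figure \ref{Fig:torusintwocylinders2}, the cylinder $S^1 \times D^1_-$ carries handles $h^0_-, h^0_+, h^1_-, h^1_+, h^1_a, h^2_a$. Cancelling $h^0_+$ against $h^1_a$ and $h^1_+$ against $h^2_a$ leaves the rank-one pair $(h^0_-, h^1_-)$ with boundary $g_1 - 1$, matching the stated complex. The analogous cancellation in $S^1 \times D^1_+$ leaves $(h^0_+, h^1_+)$ with boundary $g_q - 1$, where $g_q$ is determined (as in the proof of Proposition \ref{Prop:circlechaincomplex}) by the choice of threading path in $X$ from $x_0$ to the centre of $h^0_+$.

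Take the basepoint of $S^1 \times S^0$ to be $h^0_-$, which maps to $x_0 \in X$, and thread $h^0_+$ via the path through $h^1_a$, so that the associated element of $\pi_1(X)$ is $l_a$. Under $i_-$ the $(-1)$ basis elements map identically (twist $1$), while the $(+1)$ basis elements pick up twist $l_a^{-1}$: in the $\pi_1(X)$-cover of $S^1 \times D^1_-$, the preferred lift of $h^0_+$ lies at displacement $l_a$ from the lift of $h^0_-$, and translating back to the basepoint yields the coefficient $l_a^{-1}$. For $i_+$ we instead take the basepoint of $S^1 \times D^1_+$ at $h^0_+$ and thread $h^0_-$ through $h^1_b$, giving by symmetry twist $l_b^{-1}$ on the $(-1)$ component and $1$ on the $(+1)$ component.

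It remains to verify the chain map condition. For $i_-$ on the $(+1)$ generator, we require $(g_1 - 1) l_a^{-1} = l_a^{-1}(g_q - 1)$, equivalently $l_a^{-1} g_1 l_a = g_q$. Since meridian and longitude commute, $l g_1 l^{-1} = g_1$ in $\pi_1(X)$, and writing $l = l_a l_b$ gives $l_a^{-1} g_1 l_a = l_b g_1 l_b^{-1}$; both sides equal the Wirtinger meridian $g_q$ at the splitting point, which is precisely what the threading choice determines. Read as $l_b g_1 l_b^{-1} = g_q$, the same identity yields the chain map property for $i_+$. The main obstacle is orchestrating the orientation and threading conventions consistently so that the $g_q$ appearing in the differential of $S^1 \times D^1_+$ agrees with the conjugate of $g_1$ by $l_a^{-1}$ (and by $l_b$); once these conventions are fixed, the chain map property reduces to a single commutator identity in $\pi_1(X)$.
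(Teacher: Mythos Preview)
Your proof is correct and follows essentially the same approach as the paper's: both derive the chain maps from threadings and verify the chain map condition via the relations $g_q = l_a^{-1} g_1 l_a$ and $g_q = l_b g_1 l_b^{-1}$. The only cosmetic differences are that the paper reduces each cylinder to a circle by deformation retraction onto the boundary component containing its basepoint rather than by handle cancellation, and justifies the two conjugation identities directly via homotopies across the $2$-handles $h^2_a$ and $h^2_b$ (equivalently, via the Wirtinger relations) rather than by invoking the commutator $[l,g_1]=1$ together with a single threading identification of $g_q$.
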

\begin{proof}
The basepoint of the cylinder $S^1 \times D^1_{\pm}$ is the point $x_{\pm}$ at the centre of $h^0_{\pm}$ as in the proof of Proposition \ref{Prop:circlechaincomplex} and the preamble to this proposition.  This means that the chain complexes of the $\pi_1(X)$-covers are the same as the chain complexes of $S^1 \times \{-1\}_{\pm} \subset S^1 \times D^1_{\pm}$: we can retract onto this circle without moving the basepoint.  The chain maps $i_{\pm}$ are derived by considering the loops created by concatenating the paths which begin at the basepoint $x_0 = x_- \in X$, follow the threading of the handle in $S^1 \times S^0$, then pass using the geometric map $i_{\pm}$ to the relevant handle of $S^1 \times D^1_{\pm}$, before returning to the basepoint using the threading of this latter handle.  The threadings of the handles of $S^1 \times S^0$ pass through the southern hemisphere ($D^3_-$ from the proof of Theorem \ref{handledecomp}) so avoid the knot entirely.  Since we consider $S^1 \times D^1_{\pm}$ as being retracted onto the end which contains its basepoint the threadings here are identical.  The coefficients $l_a^{-1}$ and $l_b^{-1}$ in the chain maps arise since the effect of this retract is to make the geometric map pass around half the boundary torus for the pairs in which the basepoints do not coincide.  The fundamental group elements $l_a$ and $l_b$ can be visualised in Figure \ref{Fig:torusintwocylinders2} as following the cores of the handles labelled $h^1_a$ and $h^1_b$ respectively.  We need to check that the maps of complexes given are indeed chain maps; for this one needs the following relations, which can be checked algebraically using the Wirtinger relations, and which should geometrically hold:
\[g_q = l_a^{-1}g_1l_a;\;\text{ and}\]
\[g_q = l_bg_1l_b^{-1}.\]
The homotopy for the first relation deforms the loop across the core of $h^2_a$ from Figure \ref{Fig:torusintwocylinders2} while the homotopy for the second relation deforms across $h^2_b$.
\end{proof}

In order to glue the two cylinders together along their common boundary we use the algebraic mapping cone construction.

\begin{definition}\label{Defn:algmappingcone}
The \emph{algebraic mapping cone} $\mathscr{C}(g)$ of a chain map $g \colon C \to D$ is the chain complex given by:
\[d_{\mathscr{C}(g)} = \left(\begin{array}{cc} d_D & (-1)^{r-1}g \\ 0 & d_C \end{array} \right) \colon \mathscr{C}(g)_r = D_r \oplus C_{r-1} \to \mathscr{C}(g)_{r-1} = D_{r-1} \oplus C_{r-2}.\]
This of course mirrors the geometric mapping cone construction algebraically.
\qed \end{definition}


\begin{proposition}\label{Prop:torussplitchainequivalence}
For purposes of brevity we make the following definitions:
\[C:= C_*(S^1 \times S^0;\Z[\pi_1(X)]); \text{ and}\]
\[D_{\pm} := C_*(S^1 \times D^1_{\pm};\Z[\pi_1(X)]).\]
There is a chain complex of the $\pi_1(X)$-cover of the torus $S^1 \times S^1$ given by the mapping cone
\[E := \mathscr{C}((i_-,i_+)^T \colon C \to D_- \oplus D_+)\]
with
\[E_r = (D_-)_r \oplus C_{r-1} \oplus (D_+)_r,\]
so that the chain complex is given by:
\[\xymatrix{ E_2 \cong \bigoplus_2 \, \zpx \cong \langle h^2_a,h^2_b \rangle \ar[d]_{\partial_2} \\
E_1 \cong \bigoplus_4 \, \zpx \cong \langle h^1_-,h^1_a,h^1_b,h^1_+ \rangle \ar[d]_{\partial_1} \\
E_0 \cong \bigoplus_2 \, \zpx \cong \langle h^0_-,h^0_+ \rangle
}\]
where:
\[\ba{rcl}\partial_2 &=& \left(\begin{array}{cccc} -1 & g_1-1 & 0 & -l_b^{-1} \\ -l_a^{-1} & 0 & g_q-1 & -1 \end{array} \right); \text{ and}\\
& & \\
\partial_1 &=& \left(\begin{array}{cc} g_1-1 & 0  \\ 1 & l_b^{-1} \\ l_a^{-1} & 1 \\ 0 & g_q-1 \end{array} \right).\ea\]
We included the geometric interpretation relating each of the $\zpx$-summands to handles in Figure \ref{Fig:torusintwocylinders2}.
This chain complex is chain equivalent to the chain complex for the torus \[E':= C_*(S^1 \times S^1;\zpx) \subset C_*(X;\zpx)\] from Proposition \ref{prop:toruschaincopmlex}, given again here:
\[\Z[\pi_1(X)] \xrightarrow{\partial_2 = \left(\begin{array}{cc} l_al_b-1 & 1-g_1 \end{array} \right)} \bigoplus_2 \, \Z[\pi_1(X)] \xrightarrow{\partial_1 = \left(\begin{array}{c} g_1-1 \\ l_al_b - 1 \end{array} \right)} \Z[\pi_1(X)].\]
\end{proposition}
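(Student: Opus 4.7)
The proof splits into two parts: (a) verifying the explicit formulae for $\partial_1,\partial_2$ in $E$, and (b) constructing a chain equivalence $E \simeq E'$.

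For (a), unpack Definition \ref{Defn:algmappingcone} applied to $(i_-, i_+)^T \colon C \to D_- \oplus D_+$, with $i_\pm$ as in Proposition \ref{prop:chainmapsiplusiminus}. Since $C$ and $D_\pm$ are concentrated in degrees $0$ and $1$, the cone $E$ is concentrated in degrees $0,1,2$, with $E_r = (D_-)_r \oplus C_{r-1} \oplus (D_+)_r$ after re-ordering the cone summands. Substituting the $2\times 1$ matrices $\left(\ba{c} 1 \\ l_a^{-1} \ea\right)$ and $\left(\ba{c} l_b^{-1} \\ 1 \ea\right)$ for $(i_\pm)_1$ into the cone differential, and tracking the sign $(-1)^{r-1}$ at $r=2$ (which supplies the minus signs in front of the $i_\pm$ blocks of $\partial_2$), yields precisely the matrices displayed. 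The labels $h^0_\pm,h^1_\pm,h^1_{a,b},h^2_{a,b}$ match the handle decomposition of the torus in Figure \ref{Fig:torusintwocylinders2}: $h^1_{a,b}$ are the degree shifts of the two components of $C_0$, and $h^2_{a,b}$ are the degree shifts of the two components of $C_1$.

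For (b), both $E$ and $E'$ are handle chain complexes of the $\pi_1(X)$-cover of the same torus $\partial X$ — $E$ using the subdivided decomposition of Figure \ref{Fig:torusintwocylinders2}, and $E'$ the one-$0$-handle decomposition from Remark \ref{Remark:unknothandles}. The two decompositions are related by cancelling the handle pairs $(h^0_+,h^1_a)$ and $(h^2_b,h^1_+)$, each with incidence number a unit in $\Z[\pi_1(X)]$. I would exhibit an explicit chain equivalence modelled on these cancellations, setting
\[\ba{ll} \eta'(h^0_\partial) = h^0_-, & \eta'(h^1_\mu) = h^1_-, \\ \eta'(h^1_\lambda) = l_al_b\,h^1_a - l_a\,h^1_b, & \eta'(h^2_\partial) = -l_al_b\,h^2_a + l_a\,h^2_b, \ea\]
and defining $\eta\colon E \to E'$ by $h^0_- \mapsto h^0_\partial$, $h^0_+ \mapsto -l_b h^0_\partial$, $h^1_- \mapsto h^1_\mu$, $h^1_a \mapsto 0$, $h^1_b \mapsto -l_a^{-1}h^1_\lambda$, $h^1_+ \mapsto -l_a^{-1}h^1_\mu - (g_q-1)l_a^{-1}h^1_\lambda$, $h^2_a \mapsto -l_b^{-1}l_a^{-1}h^2_\partial$, $h^2_b \mapsto 0$. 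One then checks by direct computation that $\eta\eta' = \id_{E'}$ on the nose, and $\eta'\eta \simeq \id_E$ via an explicit chain homotopy recording the two cancellations (for instance $h^0_+ + l_b h^0_- = \partial(l_b\,h^1_a)$ in degree $0$).

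The only nontrivial algebraic inputs used to show that $\eta'$ and $\eta$ are chain maps are the identities $g_q = l_a^{-1}g_1 l_a = l_b g_1 l_b^{-1}$ in $\pi_1(X)$ already proved at the end of Proposition \ref{prop:chainmapsiplusiminus}, together with the commutation $l_al_b \cdot g_1 = g_1 \cdot l_al_b$, i.e.\ $\lambda\mu = \mu\lambda$, which is the content of the relation $r_\partial$ in Theorem \ref{Thm:mainchaincomplex}. I expect the main obstacle to be bookkeeping: aligning the row-on-right matrix convention of \ref{conventions2}, the order chosen on cone summands, and the cone sign $(-1)^{r-1}$ simultaneously. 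Once these conventions are fixed, each required verification reduces to a short $\Z[\pi_1(X)]$-linear identity.
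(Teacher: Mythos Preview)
Your argument is correct: part (a) is the straightforward mapping-cone unpacking the paper also does, and your explicit $\eta,\eta'$ do give a chain equivalence $E\simeq E'$. Your $\eta'$ even coincides with the paper's chain homotopy inverse $\xi$.

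The difference lies in the map $E\to E'$. The paper builds its $\eta$ so that it splits on $E_r=(D_-)_r\oplus C_{r-1}\oplus (D_+)_r$ as
\[
\eta=\big(f_-,\;(-1)^{r-1}g,\;-f_+\big),
\]
with $f_\pm\colon D_\pm\to E'\subset C_*(X;\zpx)$ the inclusion-induced chain maps and $g\colon f_-\!\circ i_-\simeq f_+\!\circ i_+$ the chain homotopy. Concretely this forces e.g.\ $h^1_+\mapsto -l_a^{-1}h^1_\mu$ and $h^1_a\mapsto l_b^{-1}l_a^{-1}h^1_\lambda$, coming from the handle cancellations $(h^0_+,h^1_b)$ and $(h^1_+,h^2_b)$, whereas you cancel $(h^0_+,h^1_a)$ and $(h^1_+,h^2_b)$ and get $h^1_+\mapsto -l_a^{-1}h^1_\mu-(g_q-1)l_a^{-1}h^1_\lambda$, $h^1_a\mapsto 0$. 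Both are perfectly good chain equivalences and either proves the proposition as stated. What the paper's choice buys is that $\eta$ \emph{is} the triad map: it simultaneously exhibits $f_-,f_+,g$, which are exactly the data needed in Section~\ref{Chapter:fundsymmtriad} to assemble the fundamental symmetric Poincar\'e triad and to match the symmetric structures (Proposition~\ref{prop:symmstructureonX}). Your $\eta$ does not restrict to a map $D_+\to E'$ on the $(D_+)_r$ summand, so with your version one would have to separately produce $f_\pm$ and $g$ for the subsequent constructions.
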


\begin{proof}
The first statement is just an application of the algebraic mapping cone construction.  In defining the chain equivalence, since the chain complex of the $\pi_1(X)$-cover of the torus, $C_*(S^1 \times S^1;\zpx)$, is a sub-complex of the chain complex $C_*(X;\zpx)$, we shall simultaneously define the maps \[f_{\pm} \colon D_{\pm}=C_*(S^1 \times D^1_{\pm};\zpx) \to C_*(X;\zpx),\] and the chain homotopy  \[g \colon f_-\circ i_- \simeq f_+ \circ i_+ \colon C_*(S^1 \times S^0;\Z[\pi_1(X)])_* \to C_{*+1}(X;\Z[\pi_1(X)]):\]
The geometric maps $f_-\circ i_-$ and $f_+ \circ i_+$ coincide, whereas their algebraic counterparts do not; $g$ is the algebraic data which reflects this.
We use these maps to construct the chain equivalence:
\[\eta \colon E \xrightarrow{\sim} E'\]
shown here (where since we are dealing with matrices of maps and not group ring elements, chain groups are column vectors and matrices act on the left):
\[ \xymatrix @C+0.8cm @R+1cm{  E_2 = C_1 \ar[r]^-{\partial_E} \ar[d]^{-g = \eta} & E_1 = (D_-)_1 \oplus C_0 \oplus (D_+)_1 \ar[r]^-{\partial_E} \ar[d]_{(f_-,g,-f_+)=\eta} & E_0 = (D_-)_0 \oplus (D_+)_0 \ar[d]_{(f_-,-f_+) = \eta} \\ E'_2 \ar[r]^{\partial_{E'}} & E'_1 \ar[r]^{\partial_{E'}} & E'_0,
}\]
where
\[\partial_E = (-i_-,\partial_C,-i_+)^T \colon E_2 = C_1 \to E_1 = (D_-)_1 \oplus C_0 \oplus (D_+)_1; \text{ and}\]
\[\partial_E = \left(\begin{array}{ccc} \partial_{D_-} & i_- & 0 \\ 0 & i_+ & \partial_{D_+}\end{array} \right) \colon E_1 = (D_-)_1 \oplus C_0 \oplus (D_+)_1 \to E_0 = (D_-)_0 \oplus (D_+)_0.\]

The conditions $f_{\pm}\partial_{D_{\pm}} = \partial_{E'}f_{\pm}$ that $f_{\pm}$ are chain maps and $g\partial_{C} + \partial_{E'}g = f_-\circ i_- - f_+\circ i_+$, that $g$ is chain homotopy, are equivalent to the condition that $\eta$ is a chain map.

Returning to the convention of row vectors and matrices acting on the right, we define the chain map $\eta \colon E \to E'$ explicitly to be:
\[\xymatrix @R+2cm @C+1.9cm{\bigoplus_2 \,\zpx \ar[r]^{\partial_E} \ar[d]^{\left(\begin{array}{c} -l_b^{-1}l_a^{-1} \\ 0 \end{array} \right)} & \bigoplus_4 \,\zpx \ar[r]^{\partial_E} \ar[d]^{\left(\begin{array}{cc} 1 & 0 \\ 0 & l_b^{-1}l_a^{-1} \\ 0 & 0 \\ -l_a^{-1} & 0 \end{array} \right)} & \bigoplus_2 \,\zpx \ar[d]_{\left(\begin{array}{c} 1 \\ -l_a^{-1} \end{array} \right)} \\
\zpx \ar[r]^{\partial_{E'}} &  \bigoplus_2 \,\zpx \ar[r]^{\partial_{E'}} & \zpx.}\]
The reader can check that this is indeed a chain map.  To see that it is a chain equivalence, we exhibit here a chain homotopy inverse $\xi$.
\[\xi \colon E' \xrightarrow{\sim} E;\]
\[\xymatrix @R+2cm @C+1.9cm{\zpx \ar[r]^{\partial_{E'}} \ar[d]^{\left(\begin{array}{cc} -l_al_b & l_a \end{array} \right)} &  \bigoplus_2 \,\zpx \ar[r]^{\partial_{E'}} \ar[d]^{\left(\begin{array}{cccc} 1 & 0 & 0 & 0 \\ 0 & l_al_b & -l_a & 0 \end{array} \right)} & \zpx \ar[d]_{\left(\begin{array}{cc} 1 & 0 \end{array} \right)} \\
\bigoplus_2 \,\zpx \ar[r]^{\partial_E}  & \bigoplus_4 \,\zpx \ar[r]^{\partial_E}  & \bigoplus_2 \,\zpx.
}\]
The reader can check that:
\[\eta \circ \xi - \Id = 0 \colon E' \to E',\]
and that the chain map:
\[\xi \circ \eta - \Id \colon E \to E\]
is given by:
\[\xymatrix @R+2cm @C+1.9cm{
\bigoplus_2 \,\zpx \ar[r]^{\partial_E} \ar[d]^<<<<<<<<<{\left(\begin{array}{cc} 0 & -l_b^{-1} \\ 0 & -1 \end{array} \right)}  & \bigoplus_4 \,\zpx \ar[r]^{\partial_E} \ar[dl]^{k} \ar[d]^{\xi \circ \eta - \Id}  & \bigoplus_2 \,\zpx \ar[d]_>>>>>>>>>{\left(\begin{array}{cc} 0 & 0 \\ -l_a^{-1} & -1 \end{array} \right)} \ar[dl]_{k} \\
\bigoplus_2 \,\zpx \ar[r]^{\partial_E}  & \bigoplus_4 \,\zpx \ar[r]^{\partial_E}  & \bigoplus_2 \,\zpx,
}\]
with
\[\xi \circ \eta - \Id = \left(\begin{array}{cccc} 0 & 0 & 0 & 0 \\ 0 & 0 & -l_b^{-1} & 0 \\ 0 & 0 & -1 & 0 \\ -l_a^{-1} & 0 & 0 & -1 \end{array} \right),\]
and where $k \colon E_i \to E_{i+1}$ is a chain homotopy such that $k\partial_E + \partial_E k = \xi \circ \eta - \Id$, showing that $\eta$ and $\xi$ are indeed inverse chain equivalences, given by:
\[\ba{rcl} k &=& \left(\begin{array}{cccc} 0 & 0 & 0 & 0 \\ 0 & 0 & -1 & 0 \end{array} \right) \colon E_0 \to E_1; \text{ and} \\ & & \\
k &=& \left(\begin{array}{cc} 0 & 0 \\ 0 & 0 \\ 0 & 0 \\ 0 & 1 \end{array} \right) \colon E_1 \to E_2.\ea\]
\end{proof}

In conclusion, since $\eta$ splits up as described into $f_-, f_+$ and $g$, and since $E'$ includes into $C(X;\zpx)$ as a sub-complex, we have now exhibited, as claimed, a triad of chain complexes.  Each of the chain complexes and chain maps are algorithmically extractable from a knot diagram.  There is a homotopy $g$ which measures the chain level failure of the diagram to commute:
\[\xymatrix{\ar @{} [dr] |{\stackrel{g}{\sim}}
C_*(S^1 \times S^0;\Z[\pi_1(X)]) \ar[r]^{i_-} \ar[d]_{i_+} & C_*(S^1 \times D^1_-;\Z[\pi_1(X)]) \ar[d]^{f_-}\\ C_*(S^1 \times D^1_+;\Z[\pi_1(X)]) \ar[r]^{f_+} & C_*(X;\Z[\pi_1(X)]).
}\]


\chapter{Poincar\'{e} Duality and Symmetric Structures}\label{Chapter:duality_symm_structures}

In order to use the chain complex of the knot exterior given in Chapter \ref{chapter:chaincomplex} to generate concordance invariants, we will also need a chain equivalence between the chain complex and its dual, which yields the duality isomorphisms upon passing to homology.  Since the knot exterior is a manifold with boundary, we will in fact require the universal coefficient chain level version of Poincar\'{e}-Lefschetz duality.

This duality comes from taking cap product with a fundamental class $[X,\partial X] \in C_3(X,\partial X;\Z)$.  It turns out that not only the homological duality information in the cap product, but also the cup product and the Steenrod squares, are all encoded on the chain level in the diagonal approximation maps.  This algebraic information encodes the geometric linking and intersection information relating to the manifold.

Note that any concordance invariant requires that duality information is taken into account, whether explicitly or otherwise.  For example, the proof that the Alexander polynomial of a slice knot factorises as $f(t)f(t^{-1})$ uses duality; this was the first slice obstruction described in the original paper of Fox and Milnor on knot concordance \cite{fm}.  As in Remark \ref{Rmk:needperipheralstructure}, for our chain complex we need the peripheral structure (Definition \ref{Defn:peripheral}) for concordance invariants, since without taking account of how the boundary relates to the knot exterior, we cannot obtain a fundamental class and therefore cannot obtain Poincar\'{e}-Lefschetz duality.

\begin{definition}\label{Defn:peripheral}
The \emph{peripheral structure} of a knot is a homomorphism $\Z \oplus \Z \to \pi_1(X)$ which records the image of the longitude and the meridian of the boundary torus $S^1 \times S^1 \xrightarrow{\approx} \partial X$ in the fundamental group of $X$.  The longitude and meridian are canonically defined as the curves in $\pi_1(\partial X) \cong H_1(\partial X;\Z) \leq \pi_1(X)$ which represent the unique non-trivial primitive homology classes in
\[\ker(H_1(\partial X;\Z) \to H_1(X;\Z))\]
and
\[\ker(H_1(\partial X;\Z) \to H_1(N(K);\Z))\]
respectively.
In our construction this information allows us to construct a chain map $$C_*(\partial X;\Z[\pi_1(X)]) \to C_*(X;\Z[\pi_1(X)])$$  which records the inclusion of $\partial X$ algebraically on the chain complex level.  When we refer to the peripheral structure we shall also mean this chain map as well as the map on the level of the fundamental groups.
\qed \end{definition}

For the benefit of the reader we include an introduction to diagonal approximation chain maps and their involvement in the symmetric construction in this chapter.  We shall describe how to produce a symmetric structure on a chain complex, in particular on the chain complex of the universal cover of a 3-dimensional model for an Eilenberg MacLane space $K(\pi,1)$, such as a knot exterior.  This produces what shall be the basic algebraic object of our consideration; that is, a collection of $\Z[\pi_1(X)]$-modules and maps of the form:
\[\xymatrix @R+1cm @C+1cm{
C^0 \ar[r]^{\delta_1} \ar[d]^{\varphi_0}& C^1 \ar[r]^{\delta_2} \ar[d]^{\varphi_0} & C^2 \ar[d]^{\varphi_0} \ar[r]^{\delta_3} & C^3 \ar[d]^{\varphi_0}\\
C_3 \ar[r]_{\partial_3} & C_2 \ar[r]_{\partial_2} & C_1 \ar[r]_{\partial_1} & C_0}\]
There are also higher chain homotopies $\varphi_s \colon C^r \to C_{3-r+s}$ which measure the failure of $\varphi_{s-1}$ to be symmetric on the chain level; we shall describe these in detail later in this chapter.
The main references for this material are \cite{Ranicki2} and \cite[part~I]{Ranicki3}.  Experts may wish to skip to Section \ref{Chapter:duality_symm_structures}.\ref{section:formulaediagonal}.

\section{The symmetric construction}\label{Chapter:symmconstruction}

To begin, for simplicity, we take $M$ to be an $n$-dimensional \emph{closed} manifold with $\pi_1(M) = \pi$ and universal cover $\wt{M}$.  Using the trivial homomorphism $\pi \to \{1\}$, we can form the tensor product \[\Z \otimes_{\Z[\pi]} C_*(\wt{M}) = C_*(M;\Z),\] and so calculate $H_*(M;\Z)$.  With $\Z$ coefficients there is a fundamental class $[M] \in H_n(M;\Z)$, which we require in order to furnish the chain complex with Poincar\'{e} duality.  The universal Poincar\'{e} duality isomorphisms:
\[[M] \cap \bullet\; \colon H^{r}(M;\Z[\pi]) \to H_{n-r}(M;\Z[\pi]),\]
as given by the cap product with the fundamental class, are given explicitly on the chain level using the symmetric construction.  Take an equivariant diagonal chain approximation map:
\[\Delta_0 \colon C(M;\Z[\pi])_* \to (C(M;\Z[\pi]) \otimes_{\Z} C(M;\Z[\pi]))_*;\]
there are many choices of such maps; for singular chains an acyclic models argument can be used to show that they exist and that any two choices are chain homotopic.  For the handle chain complex of an Eilenberg-Maclane space, a theorem of Davis (see Theorem \ref{Thm:davisdiag}) is required.

The diagonal maps are chain maps induced by the diagonal map of a topological space.
\begin{equation}\label{topoldiagonal}
\Delta \colon \wt{M} \to \wt{M} \times \wt{M}; \; y \mapsto (y,y).
\end{equation}
This map is $\pi$-equivariant, so we can take the quotient by the action of $\pi$.  This yields
\begin{equation}\label{topoldiagonal2}
\Delta \colon M \to \wt{M} \times_{\pi} \wt{M},
\end{equation}
where:
\[\wt{M} \times_{\pi} \wt{M} := \frac{\wt{M} \times \wt{M}}{\{(x,y) \sim (gx,gy) \,|\,g \in \pi\}} \]
\begin{theorem}[Eilenberg-Zilber] Let $X$ and $Y$ be topological spaces, and let $C(X),C(Y)$ and $C(X \times Y)$ be the corresponding singular or simplicial chain complexes.  There is a natural chain homotopy equivalence:
\[EZ \colon C(X \times Y) \simeq C(X) \otimes C(Y).\]
\end{theorem}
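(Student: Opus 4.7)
The plan is to invoke the method of acyclic models of Eilenberg--MacLane. Consider the two functors from pairs of topological spaces to non-negative chain complexes of abelian groups:
\[
F(X,Y) = C(X \times Y), \qquad G(X,Y) = C(X) \otimes C(Y).
\]
Both come with natural augmentations to $\Z$, and in degree zero they are canonically identified: a singular $0$--simplex in $X \times Y$ is the same data as a pair of $0$--simplices, one in $X$ and one in $Y$. So the identity in degree zero supplies a natural transformation between the degree--$0$ groups compatible with augmentations.

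Next I would verify the two hypotheses required by the acyclic models machine, with models $\mathcal{M} = \{(\Delta^p, \Delta^q) : p,q \geq 0\}$. \emph{Freeness on models:} $F_n(X,Y)$ is freely generated by the natural transformations $\sigma \colon \Delta^n \to X \times Y$, each of which is the $(\Delta^n, \Delta^n)$--pullback of the canonical element $\Id \in F_n(\Delta^n, \Delta^n)$ along the pair of maps $(\pr_1 \sigma, \pr_2 \sigma)$; similarly $G_n(X,Y) = \bigoplus_{p+q=n} C_p(X) \otimes C_q(Y)$ is freely generated by pairs of singular simplices, pulled back from the canonical generators of $G_n(\Delta^p, \Delta^q)$. \emph{Acyclicity on models:} the augmented complex $F(\Delta^p,\Delta^q)$ is acyclic because $\Delta^p \times \Delta^q$ is contractible, and $G(\Delta^p,\Delta^q) = C(\Delta^p) \otimes C(\Delta^q)$ is acyclic because each factor is chain--contractible onto $\Z$, and the tensor product of two chain contractions (with the appropriate sign) contracts the tensor product.

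With both conditions in hand, the acyclic models theorem produces:
\begin{enumerate}
\item[(i)] natural chain maps $\alpha \colon F \to G$ (a version of the Alexander--Whitney map) and $\beta \colon G \to F$ (the shuffle map) extending the identity in degree zero;
\item[(ii)] any two natural transformations of augmented functors $F \to G$ (respectively $F \to F$, or $G \to G$) that agree in degree zero are naturally chain homotopic.
\end{enumerate}
Applying (ii) to the compositions $\beta \alpha \colon F \to F$ and $\alpha \beta \colon G \to G$, both of which extend the identity in degree zero, I conclude $\beta \alpha \simeq \Id_F$ and $\alpha \beta \simeq \Id_G$ via natural chain homotopies. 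This establishes the required natural chain equivalence.

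The main technical point to get right is the acyclicity of $G$ on models: one must produce an explicit chain contraction of $C(\Delta^p) \otimes C(\Delta^q)$ down to $\Z$, with the correct Koszul signs, from the cone contractions of $C(\Delta^p)$ and $C(\Delta^q)$ individually. For the simplicial version of the theorem stated in the excerpt, the same argument runs verbatim with simplicial instead of singular chains, using standard simplices as models and the fact that each simplicial chain complex of $\Delta^p$ is again acyclic; alternatively, the explicit Alexander--Whitney and shuffle formulas make the chain maps $\alpha$ and $\beta$ entirely combinatorial, bypassing acyclic models in favour of a direct (if somewhat lengthy) sign check that $\alpha \beta = \Id$ and $\beta \alpha \simeq \Id$ via an explicit natural homotopy.
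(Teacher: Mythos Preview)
Your proposal is correct and follows the standard acyclic models argument. The paper does not give its own proof of this classical result but simply refers the reader to \cite[pages~315--8]{Bredon}; the proof found there is precisely the acyclic models approach you outline, so your argument is essentially the same as the one the paper cites.
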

\begin{proof}
See \cite[pages~315--8]{Bredon}.
\end{proof}
Therefore, algebraically, we want a map:
\[\Delta_0 \colon C(\wt{M}) \to C(\wt{M}) \otimes_{\Z} C(\wt{M}).\]
We then take tensor product over $\Z[\pi]$ with $\Z$, on the left, of both the domain and codomain, to get a chain map:
\begin{equation}\label{tensorwithZ}
\Delta_0 \colon \Z \otimes_{\Z[\pi]} C(\wt{M}) \to \Z \otimes_{\Z[\pi]} (C(\wt{M}) \otimes_{\Z} C(\wt{M})).
\end{equation}
Since $\pi$ acts trivially on $\Z$, and diagonally on $C(\wt{M}) \otimes_{\Z} C(\wt{M})$, we are left with a chain map:
\[\Delta_0 \colon C(M) \to C(\wt{M})^t \otimes_{\Z[\pi]} C(\wt{M})\]
which algebraically encodes the topological map \[\Delta \colon M \to \wt{M} \times_{\pi} \wt{M}\] from equation (\ref{topoldiagonal2}) above.
The superscript $t$ denotes the involution $\overline{g} = g^{-1}$ on $\zp$ being used to make $C(\wt{M})$ into a right module in order to form the tensor product.  This is precisely the effect of tensoring on the left with $\Z$ as in (\ref{tensorwithZ}).

Note that in the case that $\wt{M}$ is contractible, such as when $M$ is a $K(\pi,1)$, the map \[\Delta \colon M \xrightarrow{\sim} \wt{M} \times_{\pi} \wt{M}\]
is a homotopy equivalence.  This means that the composition
\[EZ \circ \Delta_* \colon C_*(M) \to C_*(\wt{M} \times_{\pi} \wt{M}) \to C_*(\wt{M}) \otimes_{\Z[\pi]} C_*(\wt{M})\]
is a chain equivalence, so induces an isomorphism on homology, which as we shall see gives us Poincar\'{e} duality isomorphisms.  The problem is to realise these algebraic maps explicitly on small chain complexes.
\begin{theorem} (cf. \cite{Davis})
Let $S(Y)$ denote the singular chain complex of a topological space $Y$.  Then there exists a chain map
\[\Delta_0 \colon S(Y) \to S(Y) \otimes_{\Z} S(Y)\]
such that $\Delta_0(c) = c \otimes c$ for all $c \in S_0(Y)$.
\end{theorem}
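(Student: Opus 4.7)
The plan is to prove this by the method of acyclic models, inducting on the dimension of a singular simplex and exploiting the universal example of the identity simplex $\iota_n\colon \Delta^n \to \Delta^n$. Since any singular $n$-simplex $\sigma\colon \Delta^n \to Y$ factors as $\sigma = \sigma_*(\iota_n)$, naturality forces us to have $\Delta_0(\sigma) = (\sigma_* \otimes \sigma_*)(\Delta_0(\iota_n))$, so it is enough to define $\Delta_0(\iota_n) \in (S(\Delta^n) \otimes_{\Z} S(\Delta^n))_n$ for each $n \geq 0$, compatibly with the boundary.

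First I would set $\Delta_0(\iota_0) := \iota_0 \otimes \iota_0$, which, extended naturally and then $\Z$-linearly, yields $\Delta_0(c) = c \otimes c$ for every $c \in S_0(Y)$. For the inductive step, assume that $\Delta_0$ has been defined on singular simplices of dimension less than $n$ in such a way that $\partial \Delta_0 = \Delta_0 \partial$ holds in those degrees and naturality holds. Then the chain $\Delta_0(\partial \iota_n) \in (S(\Delta^n) \otimes S(\Delta^n))_{n-1}$ is already determined, and one computes
\[
\partial\bigl(\Delta_0(\partial \iota_n)\bigr) \;=\; \Delta_0(\partial \partial \iota_n) \;=\; 0,
\]
so it is a cycle. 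The task is then to find a chain $\Delta_0(\iota_n)$ of degree $n$ whose boundary is this cycle; once that is done, defining $\Delta_0(\sigma) := (\sigma_* \otimes \sigma_*)(\Delta_0(\iota_n))$ for each singular $n$-simplex $\sigma$ and extending $\Z$-linearly produces the desired map in degree $n$, and naturality together with the chain-map property follow automatically from the way the extension was built.

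The main obstacle, then, is the acyclicity input that makes the lift exist. For this I would invoke the fact that $\Delta^n$ is contractible, so $S(\Delta^n)$ is chain homotopy equivalent to $\Z$ concentrated in degree $0$; the Künneth theorem (with no Tor terms, since everything is $\Z$-free) then gives $H_k(S(\Delta^n) \otimes_{\Z} S(\Delta^n)) = 0$ for all $k \geq 1$. Hence for $n \geq 2$ the cycle $\Delta_0(\partial \iota_n)$ in degree $n-1 \geq 1$ is a boundary, and for $n = 1$ one checks by hand that $\iota_1(1) \otimes \iota_1(1) - \iota_1(0) \otimes \iota_1(0)$ is the boundary of $\iota_1 \otimes \iota_1(1) + \iota_1(0) \otimes \iota_1$ (equivalently, it vanishes in $H_0$ since both vertices are homologous in $\Delta^1$). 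This completes the induction and yields the required $\Delta_0$. An acyclic-models argument of the same shape also shows that any two such $\Delta_0$ are chain homotopic, although that uniqueness is not needed for the existence statement.
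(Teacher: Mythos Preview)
Your proposal is correct and follows exactly the approach the paper indicates: the paper's proof consists solely of the sentence ``The proof is by the method of acyclic models: see e.g.\ \cite[page~317]{Bredon} for an exposition of the method,'' and you have written out that acyclic-models induction in detail. One small caveat: after extending $\Z$-linearly the formula $\Delta_0(c)=c\otimes c$ holds only for basis elements (singular $0$-simplices), not for arbitrary $c\in S_0(Y)$, but this ambiguity is already present in the paper's statement and does not affect the argument.
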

\begin{proof}
The proof is by the method of acyclic models: see e.g. \cite[page~317]{Bredon} for an exposition of the method.
\end{proof}

While the method of acyclic models guarantees the existence of such a map on the singular chain groups, the handle chain groups are considerably smaller.  While this is a virtue in that all the information is contained in something which can be explicitly written down (e.g. Theorem \ref{Thm:unicoverchaincomplex}), it means that the topological diagonal map cannot be approximated nearly as closely, and since the ``models'' are the handles themselves, the complex itself must be acyclic.

The fact is that the product of two handles will not in general be a handle in the diagonal of the product space.  For example, consider the circle decomposed into a 0-handle and a 1-handle.  The universal cover of the circle is $\wt{S^1} = \R$, and the chain groups $C_i(\R)$ are $\Z[\Z]$-modules, generated by a point for $C_0(\R)$, and by the interval $[0,1]$ for $C_1(\R)$.  We seek to approximate $\Delta \colon \R \to \R \times \R; \; x \mapsto (x,x)$, by a map:
\[C(\R) \to C(\R) \otimes C(\R) \simeq C(\R \times \R).\]

\begin{figure}[h!]
 \begin{center}
 {\psfrag{R}{$\R$}
 \psfrag{D}{Diagonal}
 \includegraphics [width=7cm] {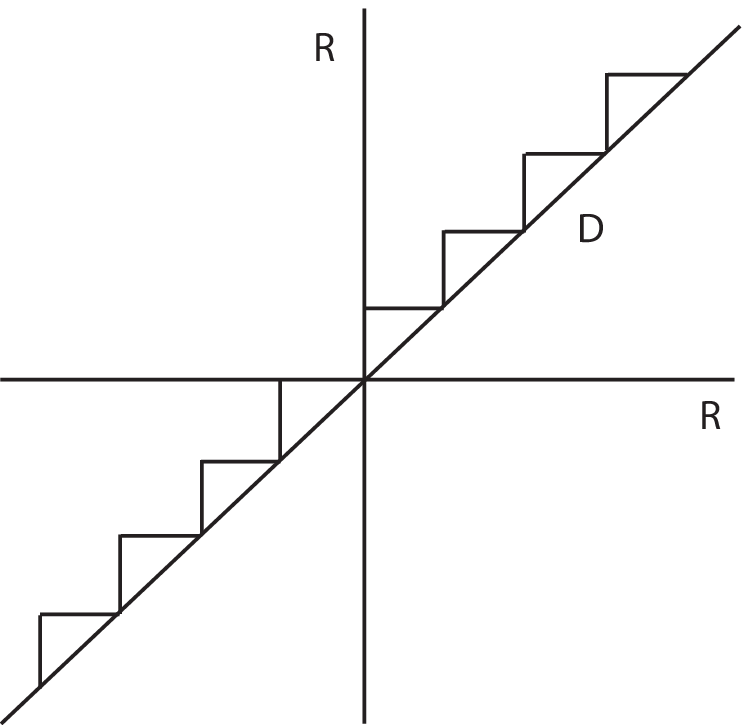}}
 \caption{The diagonal in $\R \times \R = \wt{S^1} \times \wt{S^1}$ with a choice of handle chain approximation to it.}
 \label{circlediagonalapprox}
 \end{center}
\end{figure}

The diagonal map does not map a 1-handle to a 1-handle; the best we can do is to make a staircase with integral increments.  One such is shown in Figure \ref{circlediagonalapprox}.  The question then arises as to whether this staircase should be above or below the diagonal line; Figure \ref{circlediagonalapprox} shows the above version, but putting it below would seem equally as valid.  One is the transpose of the other, and the two can be seen to be geometrically homotopic by homotoping across the boxes which are the product of the 1-handles from each copy of $\R$.  This hopefully motivates the following two definitions.

\begin{definition}
Let $C_*$ be a chain complex of finitely generated (f.g.) projective $A$-modules for a ring with involution $A$, and let $\eps = \pm 1$.  We define the $\eps$-transposition map
\[T_{\eps} \colon C_p^t \otimes C_q \to C_q^t \otimes C_p\]
by
\[x^t \otimes y \mapsto (-1)^{pq} y^t \otimes \eps x.\]
$T$ generates an action of $\Z_2$ on $C \otimes_{A} C$.
We also denote by $T_{\eps}$ the corresponding map on homomorphisms:
\[T_{\eps} \colon \Hom_A(C^p,C_q) \to \Hom_A(C^q,C_p)\]
given by
\[\theta \mapsto (-1)^{pq}\eps \theta^*.\]
\qed \end{definition}

\begin{definition}\label{Defn:higherdiagonalmaps}
A \emph{chain diagonal approximation} is a chain map $\Delta_0 \colon C_* \to C_* \otimes C_*$, with a choice of a collection, for $i \geq 1$, of chain homotopies $\Delta_i \colon C_* \to C_* \otimes C_*$ between $\Delta_{i-1}$ and $T_{\eps}\Delta_{i-1}$.  That is, the $\Delta_i$ satisfy the relations:
\[\partial \Delta_i - (-1)^i\Delta_i\partial = \Delta_{i-1} + (-1)^iT_{\eps}\Delta_{i-1}. \]
Note that $\Delta_i \colon C_k \to (C_* \otimes C_*)_{k+i}$ is a map of degree $i$.
\qed \end{definition}

The following theorem of Davis \cite{Davis} ensures the existence algebraically of the diagonal approximation for an abstract acyclic chain complex.  In particular, it is indeed possible to choose the maps $\Delta_i$ as in Definition \ref{Defn:higherdiagonalmaps} for the handle chain complex of the universal cover of a $K(\pi,1)$ such as the knot exterior, and any choices only affect the answer up to a chain homotopy, as long as the $\Delta_i$ satisfy certain geometrically motivated conditions.

\begin{theorem}\label{Thm:davisdiag}
Let $C = (C_i,\partial)_{0 \leq i \leq n}$ be a chain complex of free $\Z[\pi]$-modules in non-negative dimensions, with augmentation\footnote{We define $\a\left(\left(\sum_{g \in \pi}a_g g\right)\wt{h}^0\right) := \sum_{g \in \pi}a_g$, ($a_g \in \Z$, only finitely many $a_g \neq 0$) for some generator $\wt{h}^0$ of $C_0$, and $\a(x)=0$ for any other generators $x$ of $C_0$.  Also, for convenience, define $\a$ to be zero on $C_i$ for $i>0$.} $\a \colon C_0 \to \Z$, such that the augmented chain complex is acyclic.  Then there exists a $\Z[\pi]$-module chain diagonal approximation $\Delta_i, i=0,1,\dots,n$ ($\Delta_i = 0$ for $i >n$), as in Definition \ref{Defn:higherdiagonalmaps}, satisfying:
\begin{description}
  \item[\emph{(i)}] For all $j$, $\Delta_j(C_i) \subset \bigoplus_{m\leq i,n\leq i}\,C_m\otimes C_n$.
  \item[\emph{(ii)}]  $(\a \otimes 1) \circ \Delta_0 = 1$.
  \item[\emph{(iii)}]$(1 \otimes \a) \circ \Delta_0 = 1$.
  \item[\emph{(iv)}] For all $i$, for any $c \in C_i$, there is an $a \in C_i \otimes C_i$ such that:\\
  \hspace{20pt}$\Delta_i(c) - c \otimes c = a + (-1)^iT_{\eps}a$.
\end{description}
Furthermore, any two choices of such maps are chain homotopic.
\end{theorem}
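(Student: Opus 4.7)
The plan is an inductive construction in the spirit of acyclic models, exploiting an equivariant acyclicity of $C \otimes_{\Z} C$. First I would observe that under the diagonal $\pi$-action, $C \otimes_{\Z} C$ is a complex of free $\Z[\pi]$-modules: the diagonal action on $\Z[\pi] \otimes_{\Z} \Z[\pi]$ is free with $\Z[\pi]$-basis $\{1 \otimes g : g \in \pi\}$. Since the hypotheses imply that $C$ is $\Z$-acyclic in positive degree with $H_0 = \Z$ (each $C_i$ is $\Z[\pi]$-free, hence $\Z$-free), the K\"unneth formula gives that $C \otimes_{\Z} C$ is augmented acyclic. Thus every equivariant cycle in positive degree is an equivariant boundary, enabling the lifts required below.

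Construction of $\Delta_0$: set $\Delta_0(c) = c \otimes c$ on each basis element $c \in C_0$ and extend $\Z[\pi]$-linearly; this makes (ii), (iii), and the degree-zero case of (iv) automatic. For $r \geq 1$, inductively on $r$: for each basis element $c \in C_r$, the element $\Delta_0(\partial c)$ is a cycle (by induction and $\partial^2 = 0$), so we take $\Delta_0(c)$ to be any equivariant lift. Condition (i) is trivial for $\Delta_0$ because $(C \otimes_{\Z} C)_r = \bigoplus_{m+n = r} C_m \otimes C_n$ already has all indices $\leq r$.

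Construction of $\Delta_i$ for $i \geq 1$: assume $\Delta_0, \dots, \Delta_{i-1}$ built. Induct on degree $r$: for $c \in C_r$ a basis element, define $\Delta_i(c)$ as a lift of
\[\Delta_{i-1}(c) + (-1)^{i} T_{\eps}\Delta_{i-1}(c) + (-1)^{i}\Delta_i(\partial c),\]
which is a cycle by a direct computation using the inductive defining relation for $\Delta_{i-1}$ and $\partial^2 = 0$. To arrange condition (i), one checks that this cycle lies in the filtered subcomplex $F_r := \bigoplus_{m, n \leq r} C_m \otimes C_n$; the needed relative acyclicity of $F_r$ in the degree range $r < * < 2r$ follows from a K\"unneth computation on the truncation $C^{\leq r}$, whose homology sits in degrees $0$ and $r$, so that $C^{\leq r} \otimes_{\Z} C^{\leq r}$ has homology only in degrees $0$, $r$, and $2r$, leaving the intermediate range where the lifts live acyclic. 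Condition (iv): when $r = i$, condition (i) forces $\Delta_i(c) \in C_i \otimes C_i$, and one may adjust the lift within this summand by adding a cycle in $C_i \otimes C_i$ to achieve $\Delta_i(c) - c \otimes c \in \im(1 + (-1)^{i} T_{\eps})$.

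Uniqueness: any two choices $\Delta_0, \Delta_0'$ are $\Z[\pi]$-chain maps between two free resolutions of $\Z$ (namely $C$ and $C \otimes_{\Z} C$ with diagonal action) inducing the identity on $H_0$, hence chain homotopic by the fundamental theorem of homological algebra. Uniqueness of the higher $\Delta_i$ up to chain homotopy propagates inductively through the same construction. The main obstacle is the simultaneous management of conditions (i) and (iv) through the double induction: one must arrange the inductive lifts to lie in the correct filtered subcomplex while also respecting the symmetric-up-to-coboundary constraint in top degree, and this bookkeeping is what distinguishes the theorem from a routine acyclic models application.
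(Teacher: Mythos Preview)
Your proposal is correct and follows essentially the same route as the paper, which does not prove the theorem itself but refers to \cite[Theorem~2.1]{Davis} with the one-line summary that Davis ``uses a chain contraction for the augmented complex, which induces a chain contraction on the augmented product complex $C_*^t \otimes_{\Z[\pi]} C_*$, to inductively define the $\Delta_i$.'' The only cosmetic difference is that Davis carries an explicit chain contraction through the induction (so the lifts are given by a formula rather than by an existence statement), whereas you invoke abstract equivariant acyclicity of $C\otimes_{\Z} C$; these are equivalent ways of organising the same argument. Your filtration $F_r = C^{\leq r}\otimes C^{\leq r}$ and its K\"unneth analysis are exactly what is needed to secure condition~(i), and your remark that condition~(iv) is arranged by adjusting the top-degree lift is the key point that makes $\Delta_{i+1}|_{C_i}=0$ consistent with the recursion.
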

\begin{proof}
See \cite[Theorem~2.1]{Davis}.  He uses a chain contraction for the augmented complex, which induces a chain contraction on the augmented product complex $C_*^t \otimes_{\Z[\pi]} C_*$, to inductively define the $\Delta_i$.
\end{proof}

We will make use of the diagonal chain approximation maps as follows.  Applying the slant isomorphism (defined below) to the image of the fundamental class \[\Delta_0([M]) \in (C_*(M;\Z[\pi])^t \otimes_{\Z[\pi]} C_*(M;\Z[\pi]))_n\]
yields a set of $\Z[\pi]$-module homomorphisms
\[\varphi_0 \colon C(M;\Z[\pi])^{n-r} \to C(M;\Z[\pi])_{r},\]
which give the cap product explicitly upon descent to homology.  See e.g. \cite[chapter~6]{Bredon} or \cite[chapter~4]{Ranicki} for the standard construction of the cap product using the Alexander-Whitney simplicial diagonal approximation.  We now make the necessary definitions and fix our sign conventions.
\begin{definition}\label{Defn:signsoontensor}
Given chain complexes $(C,d_C)$ and $(D,d_D)$ of f.g. projective left $A$-modules, with $C_r,D_r = 0$ for $r<0$, where $A$ is a ring with involution, we can form the tensor product chain complex $C^t \otimes_{A} D$ defined as:
\[(C^t \otimes_A D)_n := \bigoplus_{p+q=n} \, C^t_p \otimes_A D_q,\]
where the $t$ superscript means that the involution on $A$ is used to make $C_p$ into a right module, with boundary map:
\[d_{\otimes} \colon (C^t \otimes_A D)_n \to (C^t \otimes_A D)_{n-1}\]
given, for $x \otimes y \in C^t_p \otimes_A D_q \subseteq (C^t \otimes_A D)_n$, by
\[d_{\otimes}(x \otimes y) = x \otimes d_D(y) + (-1)^q d_C(x) \otimes y.\]
We define the complex $\Hom_A(C,D)$ by
\[\Hom_A(C,D)_n := \bigoplus_{q-p=n}\,\Hom_A(C_p,D_q)\]
with boundary map
\[d_{\Hom} \colon \Hom_A(C,D)_n \to \Hom_A(C,D)_{n-1}\]
given, for $g \colon C_p \to D_q$, by
\[d_{\Hom}(g) = d_D g + (-1)^q g d_C.\]
The dual complex $C^*$ is defined as a special case of this with $D_0 = A$ as the only non--zero chain group.  Explicitly we define $C^r := \Hom_A(C_r,A)$, with boundary map
\[\delta=d^*_C \colon C^{r-1} \to C^{r}\]
defined as
\[\delta(g) = g \circ d_C.\]
Note that the dual complex $(C^*,\delta)$ consists of chain groups which naturally are right modules, so we use the involution to make them into left modules.
Define, for $g \in C^*$:
\[(a \cdot g)(x) := g(x) \ol{a}. \]
There is an isomorphism:
\[C_* \xrightarrow{\simeq} C^{**} ;\; x \mapsto (f \mapsto \ol{f(x)}).\]
The slant map is:
\[\ba{rcl} \backslash \colon C^t \otimes_A C & \to & \Hom_A(C^{-*},C_*)\\
x \otimes y & \mapsto & \left(g \mapsto \overline{g(x)}y\right) \ea\]
where the chain complex $C^{-*}$ is defined to be
\[(C^{-*})_r = C^{-r};\;\; d_{C^{-*}} = (d_C)^*=\delta.\]
\qed \end{definition}
\begin{proposition}\label{prop:chainmap}
The slant map is an isomorphism between each chain group and commutes with the differentials and is therefore an isomorphism of chain complexes.
\end{proposition}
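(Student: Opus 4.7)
My plan is to verify the two assertions of the proposition separately: first that the slant map is an isomorphism of $A$-modules in each degree, and second that it commutes with the differentials of $C^t \otimes_A C$ and $\Hom_A(C^{-*}, C_*)$.

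For the degree-wise claim, I would first unwind the indexing: in degree $n$, the slant map is a direct sum over $p+q=n$ of maps $\backslash_{p,q} \colon C_p^t \otimes_A C_q \to \Hom_A(C^p, C_q)$ sending $x \otimes y$ to $g \mapsto \overline{g(x)} y$. The plan is to verify $\backslash_{p,q}$ is an isomorphism when $C_p$ is f.g.\ free, then extend to f.g.\ projectives by taking direct summands (both $\otimes_A$ and $\Hom_A(-,C_q)$ commute with finite direct sums in the first variable, and $\backslash_{p,q}$ is natural in $C_p$). In the free case with basis $\{e_1, \dots, e_k\}$ for $C_p$ and dual basis $\{e_1^*, \dots, e_k^*\}$ for $C^p$, the element $\sum_i e_i \otimes y_i$ maps to the homomorphism sending $e_j^*$ to $y_j$, which visibly exhibits $\backslash_{p,q}$ as a bijection. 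The $A$-linearity follows from the definition of the $A$-action on $C^* = \Hom_A(C,A)$ via the involution $(a \cdot g)(x) := g(x)\overline{a}$ from Definition \ref{Defn:signsoontensor}; the bar in $\overline{g(x)}$ is exactly what is needed to turn a right-module tensor into a left-module Hom.

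For the chain-map claim, I would compute $d_{\Hom}(\backslash(x \otimes y))$ and $\backslash(d_\otimes(x \otimes y))$ for $x \in C_p$, $y \in C_q$ and compare. Using Definition \ref{Defn:signsoontensor}, the boundary of $\backslash(x \otimes y) \colon C^p \to C_q$, regarded as an element of $\Hom_A(C^{-*}, C_*)_{n}$ with $n = p+q$, is $d_C \circ \backslash(x \otimes y) + (-1)^q \backslash(x \otimes y) \circ \delta$, where $\delta = d_C^*$ is the differential of $C^{-*}$ (shifted appropriately, so the sign of the second term must be tracked carefully against the $(C^{-*})_{-p}$ convention). Evaluated on $g \in C^p$, the first summand is $d_C(\overline{g(x)} y)$, and the second summand, after moving the $\overline{\phantom{g(x)}}$ past the sign and using $\delta(g) = g \circ d_C$, contributes $\pm \overline{g(d_C x)} y$. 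On the other side, $\backslash(d_\otimes(x \otimes y)) = \backslash(x \otimes d_C y) + (-1)^q \backslash(d_C x \otimes y)$, which upon evaluating on $g$ gives $\overline{g(x)} d_C(y) + (-1)^q \overline{g(d_C x)} y$. The two expressions should match term-for-term.

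The expected main obstacle is purely bookkeeping of signs and degree shifts: because $C^{-*}$ is defined with $(C^{-*})_r = C^{-r}$ and its differential is $\delta = d_C^*$ (rather than $\pm d_C^*$), the degree of $g \in C^p$ as an element of $C^{-*}$ is $-p$, so the sign $(-1)^q$ appearing in $d_{\Hom}$ on $\Hom_A(C_p^{(-*)}, C_q)$ must be reconciled with the sign $(-1)^q$ appearing in $d_\otimes$ on $C_p^t \otimes C_q$. One needs to verify that the involution $\overline{\phantom{a}}$ in the formula $\overline{g(x)}y$, together with the fact that $\delta$ raises degree in $C^{-*}$ by $-1$, produces exactly the matching signs. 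I expect this to work out cleanly, but it is the only place where a careless choice of sign convention could break the statement; once the signs are pinned down in the free rank-one case, the general case follows by additivity and projectivity.
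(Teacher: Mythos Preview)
Your proposal is correct and essentially complete; the degree-wise isomorphism argument via reduction to the free case is standard and sound, and your sign computation for the chain-map claim goes through exactly as you outline (indeed, with $g = \backslash(x \otimes y)$ one finds $d_C \circ g = \backslash(x \otimes d_C y)$ and $g \circ \delta = \backslash(d_C x \otimes y)$, so both sides equal $\backslash(x \otimes d_C y) + (-1)^q \backslash(d_C x \otimes y)$). The paper itself does not prove this proposition but simply refers the reader to \cite[Chapter~4]{Ranicki}, so your direct verification is in fact more explicit than what appears in the text.
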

\begin{proof}
See \cite[Chapter~4]{Ranicki}.
\end{proof}
Let $x \in C_m(M;\Z)$ be a chain.  Since  $\Delta_0$ is a chain map, we have
\[d_{\otimes} \Delta_0(x) = \Delta_0 d_C(x).\]
Therefore \[d_{\Hom}\backslash \Delta_0(x) = \backslash d_{\otimes} \Delta_0(x) = \backslash \Delta_0 (d_C x)\]
since the slant map is a chain isomorphism.  Suppose that $[x] \in H_m(M)$ is homology class.  Then it is a cycle, so
\[d_{\Hom}\backslash \Delta_0([x]) = \backslash \Delta_0 (d_C x) = \backslash \Delta_0 (0) = 0.\]
Therefore $$\backslash \Delta_0([x]) \in \Hom_{\Z[\pi]}(C^{-*}(\wt{M}),C_*(\wt{M}))_m$$ yields a collection of homomorphisms $$g = \{g_r \colon C^{-(m-r)} \to C_{r}\}_{r=0}^m,$$ which satisfy:
\begin{equation}\label{signconventionduality} d_C g_{r+1} + (-1)^r g_{r} \delta = 0.\end{equation}
Note that we can rearrange this to give:
\[d_C g_{r+1} = (-1)^{r+1}g_{r} \delta.\]
We want to use the language of homological algebra to claim that a homology class in $H_m(M;\Z)$ induces a chain map between the chain complex and its dual.  In order to do this we need to take care of the signs.  We therefore, for an $A$-module chain complex $C$, define the complex $C^{m-*}$ by:
\[(C^{m-*})_r = \Hom_A(C_{m-r},A)\]
with boundary maps
\[\partial^* \colon (C^{m-*})_{r+1} \to (C^{m-*})_{r}\]
given by
\[\partial^* = (-1)^{r+1}\delta.\]
With this new chain complex, we have:
\begin{proposition}
A homology class $[x] \in H_m(Y)$ induces a chain homotopy class of chain maps $g = \{g_{m-r} \colon C^{m-r} \to C_r\}_{r=0}^m = \backslash \Delta_0(x) \in \Hom_{\Z[\pi]}(C^{m-*},C_*)$ which descend to give the cap product with $[x]$ on homology.  If $m=n=\dim M$ and $[x] =[M]$, a fundamental class of the manifold, then $g = \varphi_0$ is a chain equivalence which gives rise to the Poincar\'{e} duality isomorphisms between cohomology and homology.
\end{proposition}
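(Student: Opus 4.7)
The plan is to break the proposition into three assertions: (i) $g := \backslash \Delta_0(x)$ is a chain map $C^{m-*} \to C_*$ when $x$ is a cycle; (ii) its chain homotopy class depends only on the class $[x]$ and not on the choice of $\Delta_0$ or the cycle representative; (iii) when $m=n$ and $[x]=[M]$ the resulting $\varphi_0$ is a chain equivalence.

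For (i), I would simply unpack what is already on the page. Since $\Delta_0$ is a chain map and $x$ is a cycle, $d_\otimes \Delta_0(x) = 0$; applying the slant map (Proposition \ref{prop:chainmap}), which is a chain isomorphism, gives $d_{\Hom}\backslash \Delta_0(x) = 0$. Written out in components this is exactly the identity $d_C g_{r+1} + (-1)^r g_r \delta = 0$ from (\ref{signconventionduality}). The purpose of redefining $C^{m-*}$ with boundary $\partial^* = (-1)^{r+1}\delta$ is precisely to absorb that sign, so the identity becomes $d_C g_{r+1} = g_r \partial^*$, the chain map equation. Conversely, for any $x$, $\backslash \Delta_0(\partial x) = d_{\Hom} \backslash \Delta_0(x)$, which will feed into (ii).

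For (ii), if $x' = x + \partial y$ then $\Delta_0(x') - \Delta_0(x) = d_\otimes \Delta_0(y)$, and applying the slant map, as above, shows that $g' - g = d_{\Hom}(\backslash \Delta_0(y))$, so $\backslash \Delta_0(y)$ is the desired chain homotopy. Independence of the choice of diagonal approximation $\Delta_0$ uses the uniqueness-up-to-chain-homotopy clause of Theorem \ref{Thm:davisdiag}: a chain homotopy between two choices $\Delta_0, \Delta_0'$ is carried by the slant isomorphism to a chain homotopy between the corresponding maps $g, g'$. To identify $g$ with cap product on homology, I would recall the standard formula $\alpha \cap x = (\overline{\alpha} \otimes 1)\Delta_0(x)$ and observe that this is precisely the definition of $\backslash \Delta_0(x)(\alpha)$; hence on passage to $H_*$ the map $[x] \cap (\text{--})$ is recovered.

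For (iii), the key input is classical twisted Poincar\'e duality for the closed manifold $M$: cap product with $[M]$ induces isomorphisms $H^r(M;\zp) \xrightarrow{\simeq} H_{n-r}(M;\zp)$ for all $r$. Granting this, $\varphi_0 \colon C^{n-*} \to C_*$ is a chain map between bounded chain complexes of finitely generated free $\zp$-modules which induces isomorphisms on homology, and so is a chain equivalence by the standard algebraic Whitehead theorem for such complexes. The main obstacle is of course the homological Poincar\'e duality statement itself, but in the present handle-chain setup it can be established by induction on the number of handles via an algebraic Mayer--Vietoris argument (dualising one handle at a time), using that the statement is immediate for a single $0$-handle. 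Everything else in the proposition is a careful bookkeeping of signs and of the slant isomorphism.
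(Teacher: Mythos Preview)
Your proposal is correct and follows essentially the same approach as the paper. The paper's own proof is in fact terser than yours: for (i) it simply notes that the sign convention makes $d_C g_{r+1} = g_r \partial^*$ immediate from (\ref{signconventionduality}); for (ii) it observes that replacing $x$ by $x + d_C y$ changes $g$ by a boundary in $\Hom_{\Z[\pi]}(C^{n-*},C_*)$ and hence by a chain homotopy; and for (iii) it simply cites Hatcher and Bredon for the fact that cap product with the fundamental class gives Poincar\'{e} duality, without spelling out the handle-inductive Mayer--Vietoris argument or the algebraic Whitehead step that you sketch.
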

\begin{proof}
With the change in sign in the coboundary maps, it is straight-forward that $d_C g_{r+1} = g_{r} \partial^*$.  If we change $x$ to $x+d_C y$ for some $y \in C_{m+1}(Y)$, then this changes the resulting chain map by a boundary in $\Hom_{\Z[\pi]}(C^{n-*},C_*)$, that is to a chain homotopic map.  The corresponding map on homology, which is the cap product with $[x]$, is therefore well defined.
See e.g. \cite{Hatcher}, \cite{Bredon} for the proof that cap product with the fundamental class induces Poincar\'{e} duality isomorphisms.
\end{proof}

While we use this chain complex $C^{n-*}$ to express the cap product maps as chain maps, we prefer to maintain our original notation to discuss the duality maps \emph{i.e.} maps $g \colon C^{n-r} \to C_{r}$ which satisfy equation (\ref{signconventionduality}).  In the case of a closed manifold $M$, we take $x = [M]$ to be the fundamental class and we call these maps \[\varphi_0 := \backslash  \Delta_0 ([M]).\]
Note the subscript on $\Delta_0$ and $\varphi_0$; this is because there are also higher chain homotopies as in Definition \ref{Defn:higherdiagonalmaps} and Theorem \ref{Thm:davisdiag} which take care of the failure of $\Delta_0$ to be symmetric.  They are related to the Steenrod squares which encode higher level information about the intersection properties of the manifold: just as the cup product of $f \in H^i(C)$ and $g \in H^j(C)$ is
\[f\cup g = \Delta_0^*(f^t \otimes g) \in H^{i+j}(C),\]
where $f^t$ is the induced map on $C_i^t$, for a cohomology class $f \in H^r(C)$ we define
\[Sq^i(f) = \Delta_{r-i}^*(f^t \otimes f).\]
Using the higher $\Delta_i$ we can define the entire symmetric structure on a chain complex, which we now proceed to do.

Let $C_*$ be a chain complex of finitely generated projective $A$-modules.  Our principal example is $C_* = C_*(\wt{M})$ with $A=\Z[\pi]$, however once the symmetric structure is obtained a symmetric chain complex is a purely algebraic object.

Recall that the diagonal chain approximation map
\[\Delta_0 \colon C(\wt{M}) \to C(\wt{M})^t \otimes C(\wt{M})\]
was far from unique.  In particular, the transpose $T_{\eps} \circ \Delta_0$ carries essentially the same information: the $\Z_2$ action yields different maps $\varphi_0$ which have the same effect on the homology level; this is the fact that the cup product is signed-commutative on cohomology.  Therefore, as in Definition \ref{Defn:higherdiagonalmaps}, there is a chain homotopy which we call $\Delta_1 \colon C_n \to (C \otimes C)_{n+1}$ between $\Delta_0$ and $T_{\eps}\Delta_0$:
\[d\Delta_1 + \Delta_1 d = \Delta_0 - T_{\eps} \Delta_0.\]
This induces maps $\varphi_1 := \backslash \Delta_1([M]) \colon C^{n-r+1} \to C_r$ which are a chain homotopy from $\varphi_0$ to its transpose; \emph{i.e.} such that:
\[d_C \varphi_1 + (-1)^r \varphi_1 \delta_C + (-1)^n(\varphi_0 - T_{\eps}\varphi_0) = 0 \colon C^{n-r} \to C_r.\]
This process now iterates.  The homotopy $\Delta_1$ and therefore $\varphi_1$ itself fails to be symmetric in general, and so we need a chain homotopy $\Delta_2$ between $\Delta_1$ and its transpose.  Again, $\Delta_2$ fails to be symmetric, and so on, until we reach $\Delta_n \colon C_n \to (C \otimes C)_n$.  The map $\Delta_n \colon C_n \to C_n \otimes_A C_n$ corresponds to the zeroth Steenrod square $Sq^0$ and so must be non-trivial.  All this information can be encoded in a single algebraic object as follows.
\begin{definition}\label{Defn:Qgroups}
Let $W$ be the standard free $\Z[\Z_2]$-resolution of $\Z$, but without the $\Z$ at the end, shown below.  Geometrically it arises as the augmented chain complex of the universal cover $S^{\infty}$ of the model for $K(\Z_2,1)$, namely $\mathbb{RP}^{\infty}$, constructed as a CW complex with a cell decomposition which has one cell in each dimension $0,1,2,\dots$ and so on.  We have:
\[W:\;\;\; \dots \to \Z[\Z_2] \xrightarrow{1+T} \Z[\Z_2] \xrightarrow{1-T}\Z[\Z_2] \xrightarrow{1+T}\Z[\Z_2] \xrightarrow{1-T} \Z[\Z_2].\]
Given a f.g. projective chain complex $C_*$ over $A$ and $\eps \in \{-1,1\}$, define the $\eps$-symmetric $Q$-groups to be:
\[Q^n(C,\eps) := H_n(\Hom_{\Z[\Z_2]}(W,C^t \otimes_A C)) \cong H_n(\Hom_{\Z[\Z_2]}(W,\Hom_{A}(C^{-*},C_*)))\]
An element $\varphi \in Q^n(C,\eps)$ can be represented by a collection of $A$-module homomorphisms
\[\{\varphi_s \in \Hom_A(C^{n-r+s},C_r)\,|\,r \in \Z, s \geq 0\}\]
such that:
\[d_C\varphi_s + (-1)^r \varphi_s\delta_C + (-1)^{n+s-1}(\varphi_{s-1}+(-1)^sT_{\eps}\varphi_{s-1}) = 0 \colon C^{n-r+s-1} \to C_r\]
where $\varphi_{-1} = 0$.  The signs which appear here arise from our choice of convention on the boundary maps in Definition \ref{Defn:signsoontensor}.  If we omit $\eps$ from the notation we take $\eps=1$, so that $Q^n(C) := Q^n(C,1)$.

A pair $(C_*,\varphi)$, with $\varphi \in Q^n(C)$, is called an $n$-dimensional symmetric $A$-module chain complex.  It is called an $n$-dimensional symmetric \emph{Poincar\'{e}} complex if the maps $\varphi_0 \colon C^{n-r} \to C_r$ form a chain equivalence.  In particular this implies that they induce isomorphisms (the cap products) on homology:
\[\varphi_0 \colon H^{n-r}(C) \xrightarrow{\simeq} H_r(C).\]
The symmetric structure is covariantly functorial with respect to chain maps.  A chain map $f \colon C \to C'$ induces a map\footnote{The upper indices here \emph{do not} indicate contravariance; they are used to distinguish from the quadratic structure, which is dual to the symmetric structure in a different way.} $f^\% \colon Q^n(C) \to Q^n(C')$ given by
\[f^\%(\varphi)_s = (f^t \otimes_A f)(\varphi_s) \in C'^t \otimes_A C';\;\text{or}\]
\[\varphi_s \mapsto f\varphi_sf^*.\]
A homotopy equivalence of $n$-dimensional symmetric complexes $f \colon (C,\varphi) \to (C',\varphi')$ is a chain equivalence $f \colon C \to C'$ such that $f^\%(\varphi) = \varphi'$.
\qed \end{definition}

We remark that although we used the geometry to construct the symmetric structure on the chain complex of a manifold, once we have the information we have a purely algebraic object, albeit a fairly complex and unwieldy one, but nevertheless purely algebraic.  This completes our description of the symmetric construction for closed manifolds; we now move on to the important case of manifolds with boundary.

\section{Symmetric structures on manifolds with boundary}

One of the great strengths of the theory of algebraic surgery is that it copes extremely well with manifolds with boundary, particularly when the boundary is split into more than one piece.

So, suppose that instead of a closed manifold $M$ that we have $(X,\partial X)$, an $(n+1)$-manifold with $n$-dimensional boundary.  Then we can take a relative fundamental class $[X,\partial X] \in C_{n+1}(X;\Z)$, which maps in the homology long exact sequence of a pair to a generator of $H_{n+1}(X,\partial X;\Z)$.  On the chain level, $d_C([X,\partial X]) = (-1)^{n+1} f([\partial X]) \in C_{n}(X)$, where $f$ is the chain level inclusion of the boundary into $X$, and $[\partial X]$ is the fundamental class of the boundary $\partial X$.  It is unfortunately necessary to introduce a sign into the identification of the boundary of the fundamental class with the fundamental class of the boundary, in order to fit in with the general scheme of signs in (\cite{Ranicki3} part I) and in Definitions \ref{Defn:signsoontensor} and \ref{Defn:Qgroups}: this sign comes from the use of an algebraic mapping cone of $f \otimes f$ to define the matching conditions of $Q^{n+1}(f)$ in Definition \ref{poincarepaireqns} below.  In the case of a manifold with boundary we have:
\[d_{\otimes} \Delta_0([X,\partial X]) = \Delta_0 d_C ([X,\partial X]) = \Delta_0((-1)^{n+1} f([\partial X]))\]
We adopt the following notation: for a manifold with boundary we call by $\delta \varphi$ the collection of maps given by $\backslash \Delta([X,\partial X])$, and for the duality maps on the boundary $\backslash \Delta([\partial X])$ we use $\varphi$, since $\partial X$ is a closed manifold.  To define a symmetric pair we first recall the algebraic mapping cone construction.
\begin{definition}
The \emph{algebraic mapping cone} $\mathscr{C}(g)$ of a chain map $g \colon C \to D$ is the chain complex given by:
\[d_{\mathscr{C}(g)} = \left(\begin{array}{cc} d_D & (-1)^{r-1}g \\ 0 & d_C \end{array} \right) \colon \mathscr{C}(g)_r = D_r \oplus C_{r-1} \to \mathscr{C}(g)_{r-1} = D_{r-1} \oplus C_{r-2}.\]
\qed \end{definition}

\begin{definition}\label{poincarepaireqns}
The \emph{relative $Q$-groups} of an $A$-module chain map $f \colon C \to D$ are defined to be:
\[Q^{n+1}(f) := H_{n+1}(\Hom_{\Z[\Z_2]}(W,\mathscr{C}(f^t \otimes_{A} f))).\]
An element $(\delta\varphi,\varphi) \in Q^{n+1}(f)$ can be represented by a collection:
\[\{(\delta\varphi_s,\varphi_s) \in (D^t \otimes_A D)_{n+s+1} \oplus (C^t \otimes_A C)_{n+s}\, |\, s \geq 0\}\]
such that:
\begin{eqnarray*}(d_{\otimes}(\delta\varphi_s) + (-1)^{n+s}(\delta\varphi_{s-1}+(-1)^sT_{\eps}\delta\varphi_{s-1}) + (-1)^nf\varphi_sf^*,\\  d_{\otimes}(\varphi_s) + (-1)^{n+s-1}(\varphi_{s-1}+(-1)^sT_{\eps}\varphi_{s-1})) = 0 \\ \in (D^t \otimes_{A} D)_{n+s} \oplus (C^t \otimes C)_{n+s-1}
\end{eqnarray*}
where as before $\delta\varphi_{-1} = 0 = \varphi_{-1}$.
A chain map $f\colon C \to D$ together with an element $(\delta\varphi,\varphi) \in Q^{n+1}(f)$ is called an \emph{$(n+1)$-dimensional symmetric pair}.  A chain map $f$ together with an element of $Q^{n+1}(f)$ is called an \emph{$(n+1)$-dimensional symmetric Poincar\'{e} pair} if the relative homology class\footnote{The (co)homology groups of a chain map are defined to be the homology groups of the (dual of) the algebraic mapping cone.} $(\delta\varphi_0,\varphi_0) \in H_{n+1}(f^t \otimes_A f)$ induces isomorphisms
\[H^{n+1-r}(D,C) := H^{n+1-r}(f) \xrightarrow{\simeq} H_r(D) \;(0 \leq r \leq n+1).\]
For a symmetric Poincar\'{e} pair corresponding to an $(n+1)$-dimensional manifold with boundary, these are the isomorphisms of Poincar\'{e}-Lefschetz duality.
\qed \end{definition}

In the sequel we shall be particularly concerned with the maps for $s=0$, and we shall give explicit formulae for these for knot exteriors, whereas for the higher $\varphi_s$ maps we shall have to content ourselves with the knowledge that these maps exist.  For each null-cobordism in our triad from Chapter \ref{chapter:chaincomplex}, we will have explicit algebraic data which consists of a map of chain complexes, and the duality maps:
\[(f \colon C \rightarrow D,(\delta \varphi_0,\varphi_0)),\]
such that
\begin{equation}\label{compatabilityboundary}
d_{\Hom}(\delta\varphi_0) = \partial(\delta\varphi_0)_{r+1} + (-1)^r(\delta\varphi_0)_{r}\delta = (-1)^{n+1}f (\varphi_0)_r f^*
\end{equation}
where $(\delta\varphi_0)_r \colon D^{n+1-r} \to D_{r}$, and
\[\partial (\varphi_0)_{r+1} + (-1)^r (\varphi_0)_r \delta = 0\]
where $(\varphi_0)_r \colon C^{n-r} \to C_r$.

This algebraic situation mirrors the situation that $D = C(X;\zp)$ and $C=C(\partial X;\zp)$; the condition checks that the duality on the boundary is consistent with that on the interior manifold.  It follows from the fact that the slant map, the diagonal chain approximation and $f$ are chain maps.  We have:
\begin{multline*} d_{\Hom}(\backslash \Delta_0([X,\partial X])) = \backslash d \Delta_0([X,\partial X])= (-1)^{n+1}\backslash\Delta_0(f([\partial X])) \\ = (-1)^{n+1} \backslash (f^t \otimes f)(\Delta_0([\partial X])).
\end{multline*}
Heuristically this says that the boundary represents precisely the failure of Poincar\'{e} duality on $X$; instead there will be Poincar\'{e}-Lefschetz duality.  The chain level version crucially provides more information.  This is because $\delta\varphi_0$ is a chain null-homotopy of $f\varphi_0f^*$; that is a particular reason why cycles of $\partial X$ do not have duals upon inclusion in $X$.  Firstly, the dimension shift means that intersections of a cycle and its dual in $\partial X$ are no longer transverse, since one of them can be pushed into the interior.  The algebraic null-cobordism of the boundary - the chain complex of the interior of the manifold - records which cycles bound in the interior, and how the relative cycles thence created intersect.  In the case where $\partial X = S^1 \times S^1$ we record key algebraic information about the particular knot exterior $X$.

There is another way to express manifolds with boundary algebraically, which we include for completeness, since the distinctions discussed here are common sources of confusion for beginners: the chain complex $(C(X),\delta\varphi)$ is not even a symmetric complex since the maps $\delta\varphi_0$ are not chain maps: the terms $f\varphi_0 f^*$ in Equation (\ref{compatabilityboundary}) prevent this.  However, the relative chain complex $(C_*(X,\partial X), \delta \varphi /\varphi)$ is a $n$-dimensional symmetric chain complex which is \emph{not Poincar\'{e}}.  The chain complex of the boundary of a symmetric chain complex measures in a precise way the failure of the complex to be Poincar\'{e}; the boundary is given by the mapping cone on the duality maps: $\mathscr{C}(\varphi_0 \colon C^{n-r} \to C_r)_{*+1}$.  This algebraic mapping cone is contractible if and only if $\varphi_0$ is a chain equivalence, which is precisely the condition for $(C,\varphi)$ to be a Poincar\'{e} complex.  We can therefore encode a symmetric Poincar\'{e} pair $(f \colon C \to D,(\delta\varphi,\varphi))$ in a single symmetric chain complex $(\mathscr{C}(f),\delta\varphi/\varphi)$.  We call the two ways of expressing a manifold with boundary \emph{the fundamental confusion of algebraic surgery}.

\begin{definition}\label{Defn:algThomcomplexandthickening}
An $n$-dimensional symmetric complex $(C,\varphi \in Q^n(C,\eps))$ is \emph{connected} if
\[H_0(\varphi_0 \colon C^{n-*} \to C_*) = 0.\]
The \emph{algebraic Thom complex} of an $n$-dimensional $\eps$-symmetric Poincar\'{e} pair over $A$
\[(f \colon C \to D, (\delta \varphi, \varphi) \in Q^n(f,\eps))\]
is the connected $n$-dimensional $\eps$-symmetric complex over $A$
\[(\mathscr{C}(f),\delta \varphi/\varphi \in Q^n(\mathscr{C}(f),\eps))\]
where
\begin{eqnarray*}(\delta\varphi/\varphi)_s &:=& \left( \begin{array}{cc} \delta \varphi_s & 0 \\ (-1)^{n-r-1}\varphi_s f^* & (-1)^{n-r+s}T_{\eps} \varphi_{s-1} \end{array} \right) \colon \mathscr{C}(f)^{n-r+s} \\ &=& D^{n-r+s} \oplus C^{n-r+s-1} \to \mathscr{C}(f)_r = D_r \oplus C_{r-1} \;\; (s \geq 0).\end{eqnarray*}

The \emph{boundary} of a connected $n$-dimensional $\eps$-symmetric complex $(C,\varphi \in Q^n(C,\eps))$ over $A$, for $n \geq 1$, is the $(n-1)$-dimensional $\eps$-symmetric \emph{Poincar\'{e}} complex over $A$
\[(\partial C,\partial \varphi \in Q^{n-1}(\partial C,\eps))\]
given by:
\[d_{\partial C} = \left(\begin{array}{cc} d_C & (-1)^r \varphi_0 \\ 0 & \partial^*=d_{C^{n-*}} \end{array} \right) \colon \partial C^{r} = C_{r+1} \oplus C^{n-r} \to \partial C_r = C_{r} \oplus C^{n-r+1};\]
\begin{multline*} \partial \varphi_0 = \left(\begin{array}{cc} (-1)^{n-r-1}T_{\eps}\varphi_1 & (-1)^{r(n-r-1)}\eps \\ 1 & 0 \end{array} \right) \colon \partial C^{n-r-1} = C^{n-r} \oplus C_{r+1} \\ \to \partial C_r = C_{r+1} \oplus C^{n-r}; \end{multline*}
and, for $s \geq 1$,
\begin{multline*} \partial \varphi_s = \left(\begin{array}{cc} (-1)^{n-r+s-1}T_{\eps}\varphi_{s+1} & 0 \\ 0 & 0 \end{array} \right) \colon \partial C^{n-r+s-1} = C^{n-r+s} \oplus C_{r-s+1} \\ \to \partial C_r = C_{r+1} \oplus C^{n-r}. \end{multline*}
See \cite[Part~I,~Proposition~3.4~and~pages~141--2]{Ranicki3} for the full details on the boundary construction.

The \emph{algebraic Poincar\'{e} thickening} of a connected $\eps$-symmetric complex over $A$
\[(C,\varphi \in Q^{n}(C,\eps)),\]
is the $\eps$-symmetric Poincar\'{e} pair over $A$:
\[(i_C \colon \partial C \to C^{n-*}, (0,\partial \varphi ) \in Q^n(i_C,\eps))\]
where
\[i_C = (0 , 1) \colon \partial C = C_{r+1} \oplus C^{n-r} \to C^{n-r}.\]

The algebraic Thom complex and algebraic Poincar\'{e} thickening are inverse operations \cite[part~I,~Proposition~3.4]{Ranicki3}.
\qed \end{definition}

Finally, we give the definition of a symmetric Poincar\'{e} triad.  This is the algebraic version of a manifold with boundary where the boundary is split into two along a submanifold; in other words a cobordism of cobordisms which restricts to a product cobordism on the boundary.  Note that our notion is not quite as general as the notion in \cite[Sections~1.3~and~2.1]{Ranicki2}, since we limit ourselves to the case that the cobordism restricted to the boundary is a product.  We also circumvent the difficult definitions of \cite{Ranicki2}, and define the triads by means of \cite[Proposition~2.1.1]{Ranicki2}, with a sign change in the requirement of $i_-$ to be a symmetric Poincar\'{e} pair.

\begin{definition}\label{Defn:symmPoincaretriad}
A \emph{$(n+2)$-dimensional (Poincar\'{e}) symmetric triad} is a triad of f.g. projective $A$-module chain complexes:
\[\xymatrix @R+1cm @C+1cm{\ar @{} [dr] |{\stackrel{g}{\sim}}
C \ar[r]^{i_-} \ar[d]_{i_+} & D_- \ar[d]^{f_-} \\ D_+ \ar[r]_{f_+} & Y
}\]
with chain maps $i_{\pm},f_{\pm}$, a chain homotopy $g \colon f_- \circ i_- \simeq f_+ \circ i_+$ and structure maps $(\varphi,\delta\varphi_-,\delta\varphi_+,\Phi)$ such that:
$(C,\varphi)$ is an $n$-dimensional symmetric (Poincar\'{e}) complex,
\[(i_+ \colon C \to D_+,(\delta\varphi_+,\varphi))\]
and
\[(i_- \colon C \to D_-,(\delta\varphi_-,-\varphi))\]
are $(n+1)$-dimensional symmetric (Poincar\'{e}) pairs, and
\[(e \colon D_- \cup_{C} D_+ \to Y,(\Phi,\delta\varphi_- \cup_{\varphi} \delta\varphi_+)) \]
is a $(n+2)$-dimensional symmetric (Poincar\'{e}) pair, where:
\[e = \left(\begin{array}{ccc} f_- \, , & (-1)^{r-1}g \, , & -f_+ \end{array} \right) \colon (D_-)_r \oplus C_{r-1} \oplus (D_+)_r \to Y_r.\]
See Definition \ref{Defn:unionconstruction} for the union construction, used to define $(D_- \cup_C D_+,\delta\varphi_- \cup_{\varphi} \delta\varphi_+)$, which glues together two chain complexes along a common part of their boundaries with opposite orientations.

A chain homotopy equivalence of symmetric triads is a set of chain equivalences:
\[\ba{lllcl} \nu_C &\colon& C &\to& C'\;\;\,;\\
\nu_{D_-} &\colon& D_- &\to& D'_-\;;\\
\nu_{D_+} &\colon& D_+ &\to& D'_+ \;;\text{ and}\\
\nu_E &\colon& Y &\to& Y' \ea\]
which commute with the chain maps of the triads up to chain homotopy, and such that the induced maps on $Q$-groups map the structure maps $(\varphi,\delta\varphi_-,\delta\varphi_+,\Phi)$ to the equivalence class of the structure maps $(\varphi',\delta\varphi'_-,\delta\varphi'_+,\Phi')$.  See \cite[Part I,~page~140]{Ranicki3} for the definition of the maps induced on relative $Q$-groups by an equivalence of symmetric pairs.
\qed \end{definition}

\begin{definition}\label{Defn:unionconstruction}
(\cite[Part I,~pages~134--6]{Ranicki3}) An \emph{$\eps$-symmetric cobordism} between symmetric complexes $(C,\varphi)$ and $(C',\varphi')$ is a $(n+1)$-dimensional $\eps$-symmetric Poincar\'{e} pair with boundary $(C \oplus C',\varphi \oplus -\varphi')$:
\[((f_C,f_{C'}) \colon C \oplus C' \to D,(\delta\varphi,\varphi \oplus -\varphi')\in Q^{n+1}((f_C,f_{C'}),\eps)).\]

We define the \emph{union} of two $\eps$-symmetric cobordisms:
\[c  = ((f_C,f_{C'}) \colon C \oplus C' \to D,(\delta\varphi,\varphi \oplus -\varphi')); \text{ and}\]
\[c' = ((f'_{C'},f'_{C''}) \colon C' \oplus C'' \to D',(\delta\varphi',\varphi' \oplus -\varphi'')),\]
to be the $\eps$-symmetric cobordism given by:
\[c \cup c' = ((f''_C,f''_{C''}) \colon C \oplus C'' \to D'',(\delta\varphi'',\varphi \oplus -\varphi'')),\]
where:
\[D''_r:= D_r \oplus C'_{r-1} \oplus D'_r;\]
\[d_{D''} = \left(\begin{array}{ccc} d_D & (-1)^{r-1}f_{C'} & 0 \\ 0 & d_{C'} & 0 \\ 0 & (-1)^{r-1}f'_{C'} & d_{D'}
\end{array}\right)\colon D''_r \to D''_{r-1};\]
\[f''_{C} = \left(
              \begin{array}{c}
                f_C \\
                0 \\
                0
              \end{array}
            \right) \colon C_r \to D''_r;
\]
\[f''_{C'} = \left(
              \begin{array}{c}
                0 \\
                0 \\
                f'_{C''}
              \end{array}
            \right) \colon C_r \to D''_r; \text{ and}
\]
\[\delta\varphi''_s = \left(
                        \begin{array}{ccc}
                          \delta\varphi_s & 0 & 0 \\
                          (-1)^{n-r}\varphi'_sf_{C'}^* & (-1)^{n-r+s+1}T_{\eps}\varphi'_{s-1} & 0 \\
                          0 & (-1)^sf'_{C'}\varphi'_s & \delta\varphi'_s \\
                        \end{array}
                      \right)\colon\]
\[(D'')^{n-r+s+1} = D^{n-r+s+1} \oplus C'^{n-r+s} \oplus D'^{n-r+s+1} \to D''_r = D_r \oplus C'_{r-1} \oplus D'_r \;\;(s \geq 0).\]
We write:
\[(D'' = D \cup_{C'}D',\delta\varphi'' = \delta\varphi\cup_{\varphi'}\delta\varphi').\]
\qed \end{definition}

\section{Formulae for the diagonal chain approximation}\label{section:formulaediagonal}

Trotter \cite{Trotter} gave explicit formulae, which we shall now exhibit, for a choice of diagonal chain approximation map on the 3-skeleton of a $K(\pi,1)$, given a presentation of $\pi$ with a full set of identities.  First, we recall from Definition \ref{identitypresentation}, the concept of an identity of a presentation of a group.

\begin{definition}
Let $\langle \,g_1,..,g_a\,|\,r_1,\dots,r_c\,\rangle$ be a presentation of a group and let $F = F(g_1,\dots,g_a)$ be the free group on the generators $g_i$.  Let $P$ be the free group on letters $\rho_1,\dots,\rho_c$, and let $\psi \colon P \ast F \to F$ be the homomorphism such that $\psi(\rho_i)=r_i$ and $\psi(g_j)=g_j$. An \emph{identity of the presentation} is a word in $\ker(\psi) \leq P \ast F$ of the form:
\begin{equation}\label{identity}
s = \prod_{k=1}^c \, w_{j_k} \rho_{j_k}^{\eps_{j_k}} w_{j_k}^{-1}
\end{equation}
where $\eps_{j_k} = \pm 1$.
\qed \end{definition}

Each identity corresponds to the inclusion of a 3-handle which says that there is a relation amongst the relations.  Recall that the word chosen matters rather than just the element of $\pi_1(X)$ represented, to ensure that we get the trivial element in the free group as the image of $\psi$.  This means more care must be taken, in particular when finding the words $w_i$ from $\partial_3$ of Theorem \ref{Thm:unicoverchaincomplex}: it is not enough to simply choose any path in the quadrilateral decomposition, but rather a path must be chosen such that the relevant cancellation occurs.

\begin{theorem}[\cite{Trotter}]\label{trotterdelta0}
Let $\pi$ be a group with a presentation $$\langle \,g_1,..,g_a\,|\,r_1,\dots,r_b\,\rangle$$ with a full set of identities $s_m = \prod_{k=1}^c \, w_{j_k} r_{j_k}^{\eps_{j_k}} w_{j_k}^{-1}$ for the presentation.  Let $Y$ be a $K(\pi,1)$, and $\wt{Y}$ its universal cover, with a handle structure which corresponds to the presentation and identities.  The diagonal map $\Delta_0 \colon C(\wt{Y}) \to C(\wt{Y}) \otimes_{\Z} C(\wt{Y})$ can be defined on the 3-skeleton $\wt{Y}^{(3)}$ as follows.  Let $h^i$ be the basis elements of the $\Z[\pi]$-modules $C_i(\wt{Y}) \;\; (0 \leq i \leq 3)$, with 1-handles corresponding to generators of $\pi$, 2-handles corresponding to relations, and 3-handles corresponding to identities.  Let $\a \colon F(g_1,\dots,g_a) \to C_1(\wt{Y})$ be given by $\alpha(v) = \sum_i \, \frac{\partial v}{\partial g_i} h^1_i$, using the Fox derivative (defined in Definition \ref{freederivative}). Let $\g \colon F \to C_1(\wt{Y}) \otimes C_1(\wt{Y})$ be the unique homomorphism given by $\g(1)=\g(g_i)=0$, and:
\begin{equation}\label{gammadef}
\g(uv)=\g(u) +u\g(v) + \a(u) \otimes u\a(v).
\end{equation}
Such so-called crossed homomorphisms are well defined and can be arbitrarily prescribed on the generators \cite[page~472]{Trotter}.  Then we can define:
\begin{eqnarray*}
\Delta_0(h^0) & = & h^0 \otimes h^0 \\
\Delta_0(h^1_i) & = & h^0 \otimes h^1_i + h^1_i \otimes g_i h^0 \\
\Delta_0(h^2_j) & = & h^0 \otimes h^2_j + h^2_j \otimes h^0 - \g(r_j) \\
\Delta_0(h_m^3) & = & h^0 \otimes h^3_m + h^3_m \otimes h^0 + \sum_{k=1}^c \, \eps_k \left(\a(w_k) \otimes w_k h^2_{k} + w_k h^2_{k} \otimes \a(w_k)\right)\\
& & + \sum_{k=1}^c \delta_k w_k (h^2_{k} \otimes \alpha(r_{k})) - \sum_{1\, \leq\, l \, <\, k\, \leq \, c} \eps_l w_l h^2_{l} \otimes \eps_k w_k \a(r_{k}).
\end{eqnarray*}
where $\delta_k = \frac{1}{2}(\eps_k - 1)$.
\end{theorem}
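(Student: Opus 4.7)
The plan is a direct verification that the four displayed formulae extend by $\Z[\pi]$-linearity to a well-defined chain map on the 3-skeleton $\wt{Y}^{(3)}$. Once the chain-map identity $d_\otimes \Delta_0 = \Delta_0 \partial$ has been checked, conditions (i)--(iv) of Theorem \ref{Thm:davisdiag} are immediate from inspection of the formulae together with the prescription $\Delta_0(h^0) = h^0 \otimes h^0$. By $\Z[\pi]$-equivariance, it suffices to verify the chain-map identity on each basis handle in degrees $1, 2, 3$ separately.

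Two preparatory facts underpin everything. First, the Fox fundamental formula gives $\partial_1 \alpha(v) = \Phi(v - 1) h^0$ for every $v \in F$, by induction on word length from the inductive definition of the free derivative. Second, the map $\gamma$ is well-defined because a crossed homomorphism from a free group may be arbitrarily prescribed on generators (\cite[page~472]{Trotter}), and induction on word length using the cocycle rule (\ref{gammadef}) yields a compact formula for $d_\otimes \gamma(v)$ purely in terms of $\alpha(v)$, $v$ and $h^0$. The degree one check is then a direct manipulation: $d_\otimes(h^0 \otimes h^1_i + h^1_i \otimes g_i h^0) = (g_i - 1)(h^0 \otimes h^0) = \Delta_0(\partial h^1_i)$. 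In degree two, using $\alpha(r_j) = \partial h^2_j$, the symmetric $h^0$-terms of $d_\otimes \Delta_0(h^2_j)$ match $\Delta_0 \partial h^2_j$, and the term $-\gamma(r_j)$ was designed precisely so that $-d_\otimes \gamma(r_j)$ cancels the remaining mixed cross terms.

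The main obstacle is the 3-handle check, which is where the hypothesis that $s_m$ is an \emph{identity} of the presentation (i.e.\ $\psi(s_m) = 1 \in F$) is essential. Using $\partial h^3_m = \sum_k \eps_k w_k h^2_k$, one substitutes the degree two formula for $\Delta_0(h^2_k)$ and expands $d_\otimes$ applied to the displayed expression for $\Delta_0(h^3_m)$. After regrouping, the single sum $\sum_k \eps_k(\alpha(w_k) \otimes w_k h^2_k + w_k h^2_k \otimes \alpha(w_k))$ supplies the boundary-compatible $h^2_k$--$h^1$ cross terms; the $\delta_k$ correction absorbs the sign asymmetry arising from $\gamma(r_k^{-1}) = -r_k^{-1}\gamma(r_k)$ when $\eps_k = -1$; and the ordered double sum reassembles, through iterated applications of (\ref{gammadef}) to the product $s_m = \prod_k w_k r_k^{\eps_k} w_k^{-1}$, into $\gamma(s_m)$. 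The hypothesis $\psi(s_m) = 1$ together with $\gamma(1) = 0$ then forces the residual error to vanish. The delicate part of the argument is the bookkeeping of signs and the strict inequality $l < k$ in the double sum, which reflects both the order in which the conjugated relations appear inside $s_m$ and the non-commutativity of $\otimes$ when $\gamma$ is built recursively via (\ref{gammadef}); once these are aligned the identity $d_\otimes \Delta_0(h^3_m) = \Delta_0(\partial h^3_m)$ falls out and the theorem is established.
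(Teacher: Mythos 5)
Your overall plan---direct verification of the chain-map identity $d_\otimes \Delta_0 = \Delta_0 \partial$ handle-by-handle, using the Fox fundamental formula $\partial_1 \alpha(v) = (\Phi(v)-1)h^0$ and the cocycle rule~(\ref{gammadef})---is the same route that Trotter takes in the cited source. The paper's own proof does not reproduce that verification: it merely refers to Trotter's pages 475--6 and records that the only change needed to align with the paper's $(-1)^q$ sign convention on $d_\otimes$ is the minus sign placed in front of $\gamma(r_j)$. So in spirit you are reconstructing Trotter's argument, which is a reasonable thing to attempt; the degree one and degree two checks you describe are essentially correct (for degree two the key identity is $d_\otimes\gamma(v) = h^0 \otimes \alpha(v) + \alpha(v) \otimes vh^0 - \Delta_0(\alpha(v))$, which for a relator $r_j$ gives exactly the required cancellation since $\Phi(r_j)=1$).

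There is, however, a concrete error in your degree-three step. You claim $\gamma(r_k^{-1}) = -r_k^{-1}\gamma(r_k)$. This is false. Applying~(\ref{gammadef}) to $0 = \gamma(1) = \gamma(r_k r_k^{-1})$ and using $\alpha(r_k^{-1}) = -r_k^{-1}\alpha(r_k)$ gives
\[ \gamma(r_k^{-1}) = -r_k^{-1}\gamma(r_k) + r_k^{-1}\bigl(\alpha(r_k)\otimes\alpha(r_k)\bigr), \]
and the ``square'' term $\alpha(r_k)\otimes\alpha(r_k)$ is a genuine nonzero element of $C_1\otimes C_1$. Your $\delta_k$ correction cannot absorb it: $\delta_k\, w_k(h^2_k \otimes \alpha(r_k))$ lies in $C_2 \otimes C_1$, a different graded summand. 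So the assertion that ``the $\delta_k$ correction absorbs the sign asymmetry arising from $\gamma(r_k^{-1}) = -r_k^{-1}\gamma(r_k)$'' is unsubstantiated as written. A correct bookkeeping has to track these $\alpha(r_k)\otimes\alpha(r_k)$ contributions separately and show their cancellation against the other pieces of $d_\otimes\Delta_0(h^3_m) - \Delta_0(\partial h^3_m)$; this is exactly the delicate part of Trotter's computation that your sketch passes over, and until it is repaired the degree-3 identity is not established.
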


\begin{proof}
See \cite[pages 475--6]{Trotter}, where Trotter shows that these are indeed chain maps \emph{i.e.} that\[\Delta_0 \circ \partial = d \circ \Delta_0.\]
Trotter does not state his sign conventions explicitly; however, careful inspection of his calculations shows that his convention for the boundary map of $C^t \otimes C$ disagrees with ours.  We therefore undertook to rework his proof using our sign convention.  It turned out that the only change required in the formulae was a minus sign in front of $\g(r_j)$, which alteration we have already made for the statement of the theorem.
\end{proof}

Note in particular that with $u=g_i, v=g_i^{-1}$, equation (\ref{gammadef}) implies that $\g(g_i^{-1})= g_i^{-1} h^1_i \otimes g_i^{-1} h^1_i$.  When interpreting this formula and those in Theorem \ref{trotterdelta0} we let $\pi$ act on $C(\wt{Y}) \otimes_{\Z} C(\wt{Y})$ by the diagonal action.

\begin{example}
We give the result of the calculation of $\g$ for a typical word which arises in the Wirtinger presentation of the knot group:
\begin{eqnarray*}
\g(g_i^{-1}g_kg_jg_k^{-1}) & = & (g_i^{-1}h^1_i \otimes g_i^{-1}h^1_i) - (g_i^{-1}h^1_i \otimes g_i^{-1}h^1_k) + (h^1_k \otimes h^1_k) -\\ & & (g_i^{-1} g_k h^1_j \otimes h^1_k) + (g_i^{-1} h^1_k - g_i^{-1}h^1_i) \otimes (g_i^{-1}g_k h^1_j - h^1_k).
\end{eqnarray*}
\end{example}

The following fact is now pertinent:
\begin{theorem}\label{Thm:EMspaces}
The knot exterior $X$ and the zero framed surgery $M_K$ (the latter for $K$ not the unknot) are both Eilenberg-MacLane spaces: $X \simeq K(\pi_1(X),1)$ and $M_K \simeq K(\pi_1(M_K),1)$.
\end{theorem}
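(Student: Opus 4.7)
The plan is to deduce asphericity of both spaces from the sphere theorem of Papakyriakopoulos together with the non-compactness of their universal covers, using iterated Hurewicz.

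First I would handle the knot exterior $X$. Any class in $\pi_2(X)$ is represented by an embedded 2-sphere $\Sigma \subset X \subset S^3$ by the sphere theorem, and the Sch\"{o}nflies theorem shows that $\Sigma$ bounds a 3-ball $B \subset S^3$ on each side. The knot $K$ lies in only one of these balls, so the other is a 3-ball inside $X$, and $\Sigma$ bounds in $X$. Hence $\pi_2(X) = 0$, and $\pi_2(\widetilde{X}) \cong \pi_2(X) = 0$. Since $H_1(X;\Z) \cong \Z$ is infinite, $\pi_1(X)$ is infinite, so $\widetilde{X}$ is a non-compact 3-manifold with boundary. In particular $H_k(\widetilde{X};\Z) = 0$ for $k \geq 3$, and the Hurewicz theorem (applied to the simply-connected $\widetilde{X}$) together with $\pi_2(\widetilde{X}) = 0$ gives $H_2(\widetilde{X};\Z) = 0$. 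A second application of Hurewicz iteratively yields $\pi_k(\widetilde{X}) = 0$ for all $k \geq 2$, so $\widetilde{X}$ is weakly contractible; being a CW complex it is then contractible, and $X$ is a $K(\pi_1(X),1)$.

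For $M_K$ with $K$ not the unknot, the argument is parallel once we know $\pi_2(M_K) = 0$, which reduces to showing that $M_K$ is irreducible. I would use that $X$ itself is irreducible and that, for a non-trivial knot, $\partial X$ is incompressible in $X$ by Dehn's lemma and the loop theorem (this is exactly the step where the hypothesis $K \neq $ unknot is required, since $M_{\mathrm{unknot}} \approx S^1 \times S^2$ is visibly not aspherical). Given an essential 2-sphere $\Sigma$ in $M_K = X \cup_{\partial X}(D^2 \times S^1)$, I would isotope $\Sigma$ to be transverse to a meridional disk $D$ of the attached solid torus, and then remove the intersection circles by a standard innermost-circle / innermost-disk argument, using incompressibility of $\partial X$ and irreducibility of $X$. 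After these reductions $\Sigma$ lies entirely in $X$ and therefore bounds a ball, so $M_K$ is irreducible and the sphere theorem gives $\pi_2(M_K) = 0$. Since $H_1(M_K;\Z) \cong \Z$, $\pi_1(M_K)$ is infinite, so the universal cover $\widetilde{M_K}$ is a non-compact closed-ambient 3-manifold with $H_3(\widetilde{M_K};\Z) = 0$; the same iterated Hurewicz argument as above shows $\widetilde{M_K}$ is contractible.

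The routine part is the chain of Hurewicz applications, which is identical in both cases. The main obstacle is the irreducibility of $M_K$: establishing incompressibility of $\partial X$ and carrying out the innermost-disk reduction is the one place where genuine 3-manifold input (beyond the sphere theorem used for $X$) is needed, and it is precisely what singles out the unknot as the exceptional case.
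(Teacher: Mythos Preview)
Your argument for $X$ is correct and is exactly the one the paper uses: sphere theorem plus Sch\"onflies for $\pi_2(X)=0$, then iterated Hurewicz on the non-compact universal cover.

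For $M_K$, however, there is a genuine gap. The ``standard innermost-circle / innermost-disk argument'' does not establish irreducibility of $M_K$. After isotoping a putative essential sphere $\Sigma$ to minimise its intersection with the torus $T=\partial X$ and eliminating inessential circles (using irreducibility of $X$ and of the solid torus), the remaining circles of $\Sigma\cap T$ are all meridians of the attached solid torus, i.e.\ longitudes of $K$. The pieces of $\Sigma$ on the solid-torus side are then meridional disks, and what remains in $X$ is a properly embedded planar surface $P$ whose boundary components are parallel longitudes. Incompressibility of $T$ only rules out the case where $P$ is a single disk; it does not exclude annuli or more complicated planar pieces with longitudinal boundary, and there is no elementary compression that reduces $|\Sigma\cap T|$ further. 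The variant intersecting with a single meridional disk $D$ fares no better: the base case $\Sigma\cap D=\emptyset$ only places $\Sigma$ in $M_K$ with an open $3$-ball removed, and the map $\pi_2(M_K\setminus B^3)\to\pi_2(M_K)$ is surjective, so this gives no information.

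Irreducibility of $0$-surgery on a non-trivial knot is in fact a deep theorem of Gabai, proved using taut foliations, and it contains Property~R as a special case; had an innermost-disk argument sufficed, Property~R would not have been a famous open problem. This is exactly what the paper invokes: it cites Gabai's result to conclude that attaching the solid torus creates no new $\pi_2$. Your instinct that the unknot hypothesis enters via incompressibility of $\partial X$ is correct in spirit, but the $3$-manifold input required here is substantially heavier than you are allowing for.
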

\begin{proof}
As in Remark \ref{identitypresentation} this follows from the Sphere theorem of Papakyriakopoulos and the Sch\"{o}nflies theorem for $X$.  In addition, for $M_K$ a result of Gabai (\cite[Corollary~5]{Gabai}) using taut foliations says that attaching the solid torus to $X$ to make $M_K$ does not create any new elements of $\pi_2$.
\end{proof}

 Suppose that $X$ is a 3-dimensional manifold with boundary such that both $X$ and $\partial X$ are $K(\pi,1)$s.  Suppose furthermore that we have a presentation of $\pi_1(X)$ which contains a presentation of $\pi_1(\partial X)$ as a sub-presentation, and we have a handle decomposition of $X$ which contains $\partial X$ as a subcomplex, corresponding to the presentation of $\pi_1(X)$.  We can tensor the domain and codomain of Trotter's map with $\Z$ to get a map:
\[\Delta_0 \colon C_*(X;\Z) \to (C(X;\Z[\pi_1(X)])^t \otimes_{\Z[\pi_1(X)]} C(X;\Z[\pi_1(X)]))_*\]
so that
\[\backslash\Delta_0([X,\partial X]) =: \delta\varphi_0;\;\; \backslash\Delta_0([\partial X]) =: \varphi_0,\]
and
\[d_{\otimes} \Delta_0([X,\partial X]) = \Delta_0 d_C ([X,\partial X]) = \Delta_0((-1)^{2+1} f([\partial X]))\]
so that the equations of Definition \ref{poincarepaireqns} are satisfied.  We do not have explicit formulae for the higher diagonal maps $\Delta_i$, for $i=1,2,3$, but at least Theorem \ref{Thm:davisdiag} ensures that they always exist.  We therefore have:
\[(f \colon C(\partial X;\Z[\pi_1(X)]) \to C(X;\Z[\pi_1(X)]),(\delta\varphi,\varphi)),\]
a 3-dimensional symmetric Poincar\'{e} pair, using the pull-back $\pi_1(X)$-cover of $\partial X$.

\section{The fundamental symmetric Poincar\'{e} triad of a knot}\label{Chapter:fundsymmtriad}

We now describe how to use Theorem \ref{trotterdelta0} in order to produce the symmetric Poincar\'{e} triad which will be the main algebraic object which we extract from geometry via our handle decomposition, namely the fundamental symmetric Poincar\'{e} triad of a knot.  Our algebraic concordance group comprises such objects, with some extra data, as the elements of its underlying set.

We proceed as follows.  We first describe the symmetric structure $\varphi$ on our chain complex of the $\px$-cover of a circle $S^1$, thence producing a symmetric Poincar\'{e} complex \[(C_*(S^1 \times S^0;\zpx),\varphi\oplus -\varphi) = (C,\varphi \oplus -\varphi).\]
We have two null-cobordisms of $S^1 \times S^0$ and two algebraic null-cobordisms of its chain complex
\[i_{\pm} \colon C = C_*(S^1 \times S^0;\zpx) \to D_{\pm} = C_*(S^1 \times D^1_{\pm};\zpx).\]
We show that we can consider these as $2$-dimensional symmetric Poincar\'{e} pairs:
\[(i_{\pm} \colon C \to D_{\pm},(\delta\varphi_{\pm}=0,\pm(\varphi \oplus -\varphi))).\]
The orientation induced on the circle at either end of the cylinder is opposite, which is reflected by the symmetric structure on $C$ being $\varphi \oplus -\varphi$.  The next step is to glue the two null-cobordisms together along their common boundary to form a symmetric Poincar\'{e} chain complex of a torus $S^1 \times S^1$, with chain complex $E := \mathscr{C}((i_-,i_+)^T \colon C \to D_- \oplus D_+)$ as in Proposition \ref{Prop:torussplitchainequivalence} with the symmetric structure defined using the union construction (Definition \ref{Defn:unionconstruction}): \[\phi := 0 \cup_{\varphi \oplus -\varphi} 0.\]
We then use the chain equivalence $\eta$ from Proposition \ref{Prop:torussplitchainequivalence} to construct the push-forward symmetric structure on the standard chain complex of the torus, $E'$ from Proposition \ref{Prop:torussplitchainequivalence}:   \[(E',\phi':= \eta\phi\eta^*).\]
We also calculate the symmetric structure $\phi^{Tr}$ which arises on $E'$ from the formula of Trotter in Theorem \ref{trotterdelta0}, and compare the two.  We note that $\phi'_0 - \phi^{Tr}_0 = \backslash d \chi$ for the chain
\[\chi = h^1_{\lambda} \otimes h^2_{\partial} + h^2_{\partial} \otimes h^1_{\mu} \in (E' \otimes E')_3.\]
This enables us to use the formulae of Trotter and our relative fundamental class $[X,\partial X]$ from Remark \ref{Rmk:relativefundclass} to define the symmetric structure on $X$:
\[(C_*(X;\zpx), \Phi)\]
with
\[\Phi_0 := \backslash (\Delta_0([X,\partial X]) - \chi )\]
so that our triad yields a symmetric Poincar\'{e} pair:
\[(f \circ \eta \colon \mathscr{C}((i_-,i_+)^T) = E \to C_*(X;\zpx), (\Phi, \phi')),\]
and we indeed define a symmetric Poincar\'{e} triad.

First, as promised, here is the symmetric structure on a chain complex \\$C_*(S^1;\Z[\Z])$.

\begin{proposition}\label{prop:symmstructurecircle}
The symmetric structure on the chain complex of a circle $C_*(S^1;\Z[\Z])$, where $\Z[\Z] = \Z[t,t^{-1}]$,
\[\xymatrix @C+1cm @R+1cm{ C^0(S^1;\Z[\Z]) \ar[r]^{\delta_1} \ar[d]_{\varphi_0} & C^1(S^1;\Z[\Z]) \ar[d]_{\varphi_0} \ar[dl]^{\varphi_1} \\
C_1(S^1;\Z[\Z]) \ar[r]^{\partial_1} & C_0(S^1;\Z[\Z])}\]
is given by:
\[\xymatrix @C+1cm @R+1cm{ \Z[\Z] \ar[r]^{(t^{-1}-1)} \ar[d]_{(1)} & \Z[\Z] \ar[d]_{(t)} \ar[dl]^{(1)} \\
\Z[\Z] \ar[r]^{(t-1)} & \Z[\Z].}\]
Using the homomorphisms $\Z \to \pi_1(X)$:
\[t \mapsto g_1; \text{ and}\]
\[t \mapsto g_q,\]
we can form two chain complexes:
\[C_*(S^1;\zpx)_j = \zpx \otimes_{\Z[\Z]} C_*(S^1;\Z[\Z]),\]
for $j=1,q$.  The symmetric Poincar\'{e} chain complex of the $\pi_1(X)$-cover of $S^1 \times S^0$ is then:
\begin{eqnarray*}(C,\varphi\oplus-\varphi) &=& (C_*(S^1 \times S^0;\zpx),\varphi^1\oplus-\varphi^q) \\ &=& (C_*(S^1;\zpx)_1,\varphi^1) \oplus (C_*(S^1;\zpx)_q,-\varphi^q).\end{eqnarray*}
Explicitly:
\[\xymatrix @C+2cm @R+1cm{C^0 \ar[r]^{\delta_1} \ar[d]_{(\varphi_0)^1 \oplus -(\varphi_0)^q} & C^1 \ar[d]^{(\varphi_0)^1 \oplus -(\varphi_0)^q} \ar[dl]^{(\varphi_1)^1 \oplus -(\varphi_1)^q}\\
C_1 \ar[r]^{\partial_1} & C_0}\]
is given by:
\[\xymatrix @C+3cm @R+3cm{\bigoplus_2\,\zpx \ar[r]^{\left(\begin{array}{cc} g_1^{-1}-1 & 0 \\ 0 & g_q^{-1}-1 \\ \end{array} \right)} \ar[d]_{\left(\begin{array}{cc} 1 & 0 \\ 0 & -1 \\ \end{array} \right)} & \bigoplus_2\,\zpx \ar[d]^{\left(\begin{array}{cc} g_1 & 0 \\ 0 & -g_q \\ \end{array} \right)} \ar[dl]_{\left(\begin{array}{cc} 1 & 0 \\ 0 & -1 \\ \end{array} \right)}\\
\bigoplus_2\,\zpx \ar[r]^{\left(\begin{array}{cc} g_1-1 & 0 \\ 0 & g_q-1 \\ \end{array} \right)} & \bigoplus_2\,\zpx.}\]
\end{proposition}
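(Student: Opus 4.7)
The plan is to apply Trotter's explicit diagonal formula (Theorem \ref{trotterdelta0}) to the one--handle handle decomposition of $S^1$, and then transport the resulting symmetric Poincar\'{e} structure across tensor products and direct sums.  Throughout I will use the handle decomposition of $S^1$ with a single 0-handle $h^0$ and single 1-handle $h^1$, corresponding to the presentation $\pi_1(S^1) = \langle t \mid \rangle$ with no relations (hence no 2-handles and no identities).  Since $S^1 = K(\Z,1)$, Theorem \ref{trotterdelta0} applies and gives $\Delta_0(h^0) = h^0 \otimes h^0$ and $\Delta_0(h^1) = h^0 \otimes h^1 + h^1 \otimes t h^0$, with no contributions from 2- or 3-handles.

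First I would compute $\varphi_0 = \backslash \Delta_0([S^1])$ for the fundamental class $[S^1] = h^1 \in C_1(S^1;\Z)$.  Applying the slant map of Definition \ref{Defn:signsoontensor} to $h^0 \otimes h^1$ yields the map $(1) \colon C^0 \to C_1$, while the slant applied to $h^1 \otimes t h^0$ yields the map $(t) \colon C^1 \to C_0$, exactly the entries claimed.  A direct check then verifies the symmetric structure equation for $s = 0$, namely $\partial_1 \circ \varphi_0 = \varphi_0 \circ \delta_1$ in the form $(t-1)(1) = (t)(t^{-1} - 1)$ (once one tracks the involution making $C^*$ into a left module as in Definition \ref{Defn:signsoontensor}).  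Moreover, since $\varphi_0 \colon C^{1-r} \to C_r$ is an isomorphism of $\Z[\Z]$-modules at both $r = 0$ and $r = 1$, the complex $(C_*(S^1;\Z[\Z]),\varphi)$ is Poincar\'{e}.

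Next I would handle $\varphi_1$.  Trotter's theorem does not give an explicit formula for $\Delta_1$, but Theorem \ref{Thm:davisdiag} guarantees its existence, and since $C_*(S^1;\Z[\Z])$ is very small ($\varphi_1$ has only one potentially non-zero component $C^1 \to C_1$) I simply verify by direct computation that the assignment $(1) \colon C^1 \to C_1$ satisfies the required relation $d_C \varphi_1 + (-1)^r \varphi_1 \delta_C + (-1)^{n+s-1}(\varphi_0 + (-1)^s T_\eps \varphi_0) = 0$ for $n=1$, $s=1$; this reduces to checking that $(t-1) \cdot 1 - 1 \cdot (t^{-1} - 1) = (\varphi_0 - T\varphi_0)$ after accounting for the involution.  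This establishes the first diagram.

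Finally, for the $\pi_1(X)$-covers of $S^1 \times S^0$, I would observe that if $\alpha \colon \Z \to \pi_1(X)$ is any group homomorphism then base change $\zpx \otimes_{\Z[\Z]} (-)$ is functorial on symmetric complexes (the induced map on $Q$-groups is recorded by the natural action of Definition \ref{Defn:Qgroups} applied to $\alpha$), so applying the two ring maps $t \mapsto g_1$ and $t \mapsto g_q$ gives symmetric Poincar\'{e} complexes $(C_*(S^1;\zpx)_1, \varphi^1)$ and $(C_*(S^1;\zpx)_q, \varphi^q)$, with structure maps obtained from the formulas above by substituting $g_1$ and $g_q$ for $t$ respectively.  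Since $S^1 \times S^0$ is the disjoint union of these two circles, its chain complex is the direct sum and its symmetric structure is the direct sum of the two symmetric structures.  The sign flip on the second summand records the fact that the standard orientation of $D^1 = [-1,1]$ induces opposite orientations on its two boundary points $S^0$, so the copy of $S^1$ at $+1$ carries the opposite fundamental class from that at $-1$, producing the symmetric structure $\varphi^1 \oplus -\varphi^q$ and matching the block-diagonal form displayed in the proposition.  The main point requiring care throughout is the compatibility of the involution and left/right module conventions of \ref{conventions2} with the signs in Definition \ref{Defn:Qgroups}, but once these are fixed each entry of the displayed matrices follows directly from Trotter's formula.
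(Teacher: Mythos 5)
Your proposal takes essentially the same approach as the paper: apply Trotter's diagonal formula (Theorem \ref{trotterdelta0}) to the single-0-handle, single-1-handle decomposition of $S^1$, slant with the fundamental class $[S^1] = h^1$ to obtain the $\varphi_0$ entries, solve the symmetric complex equations directly for the single possibly nonzero component of $\varphi_1$, and then pass to the $\pi_1(X)$-cover of $S^1 \times S^0$ by base change and direct sum with opposite orientations on the two components.

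One correction: the $s=0$ symmetric structure relation you need to verify is $\partial_1\varphi_0 + \varphi_0\delta_1 = 0$ (taking $n=1$, $s=0$, $r=0$ in the equations of Definition \ref{Defn:Qgroups}), not $\partial_1\varphi_0 = \varphi_0\delta_1$. Indeed $(t-1)\cdot(1) = t-1$ while $(t)\cdot(t^{-1}-1) = 1-t$, so these are negatives of each other, consistent with the correct sign in the relation. Similarly, in the $\varphi_1$ check your display $(t-1)\cdot 1 - 1\cdot(t^{-1}-1)$ collapses the two separate cases $r=0$ (map $C^1 \to C_0$, giving $\partial\varphi_1 = t-1 = \varphi_0 - T\varphi_0$) and $r=1$ (map $C^0 \to C_1$, giving $-\varphi_1\delta = 1-t^{-1} = \varphi_0 - T\varphi_0$) into a single incorrect equation; each must be checked on its own.
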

\begin{proof}
Let $h^0$ be the $0$-handle, and $h^1$ be the 1-handle of the circle; we use the same notation for the corresponding generators of $C_*(S^1;\Z[\Z])$.  The diagonal map applied to the fundamental class $[S^1] = 1 \otimes_{\Z[\Z]} h^1 \in C_1(S^1;\Z) = \Z \otimes_{\Z[\Z]} C_1(S^1;\Z[\Z])$ of the circle yields (Theorem \ref{trotterdelta0}):
\[\Delta_0([S^1]) = h^0 \otimes_{\Z[\Z]} h^1 +h^1 \otimes_{\Z[\Z]} t h^0.\]
Application of the slant map to this gives us the $\varphi_0$ maps as claimed.  Note that:
\[\partial \varphi_0 + \varphi_0 \delta = 0.\]
The map $\varphi_1=1$ arises as the solution to the equations of a symmetric complex:
\[\partial\varphi_s + (-1)^r \varphi_s\delta_C + (-1)^{n+s-1}(\varphi_{s-1}+(-1)^sT_{\eps}\varphi_{s-1}) = 0 \colon C^{n-r+s-1} \to C_r,\]
which in this case give us the equations, for $r=0,1$:
\[\partial \varphi_1 + (-1)^r\varphi_1 \delta = \varphi_0 - T\varphi_0 \colon C^{1-r} \to C_r.\]
We check these for $r=0,1$:
\[\varphi_0 - T\varphi_0 = t-1 = \partial \varphi_1 + 0 = \partial\varphi_1 + \varphi_1 \delta \colon C^1 \to C_0,\]
and
\[\varphi_0 - T\varphi_0 = 1-t^{-1} = 0 - (t^{-1}-1) = \partial\varphi_1 - \varphi_1 \delta \colon C^0 \to C_1.\]
The $\varphi_0$ maps induce isomorphisms on the chain groups and therefore induce isomorphisms on homology, so the complex is Poincar\'{e}.

We use the two homomorphisms to $\pi_1(X)$ to reflect the two copies of $S^1$ as representing two meridians of the knot, $g_1$ and $g_q$, which encircle the knot at different places.  The chain complex of a disjoint union of spaces is just the direct sum of the chain complexes.  We take opposite orientations on the components so that they are jointly the boundary of $S^1 \times D^1$, as we shall see presently.
\end{proof}

We now check that the complexes $D_{\pm} = C_*(S^1 \times D^1_{\pm};\zpx)$ give two algebraic null-cobordisms of $(C, \varphi \oplus -\varphi)$.

\begin{lemma}\label{lemma:productcobordism}
Given a homotopy equivalence
\[f \colon (C,\varphi) \to (C',\varphi')\]
of $n$-dimensional symmetric Poincar\'{e} chain complexes such that $f^{\%}(\varphi) = \varphi'$, there is a symmetric cobordism, corresponding to a product cobordism in geometry:
\[((f,1) \colon C \oplus C' \to C',(0,\varphi \oplus - \varphi')).\]
This symmetric pair is also Poincar\'{e}.
\end{lemma}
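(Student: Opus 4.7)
The plan is to verify directly that the proposed triple satisfies the two clauses of Definition \ref{poincarepaireqns}: the structure equations of an $(n+1)$-dimensional symmetric pair, and the Poincar\'e property. Both reductions are essentially forced once we set $\delta\varphi_s = 0$ for all $s$ and import the hypothesis $f^\%(\varphi) = \varphi'$.

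First I would check the structure equations. With $\delta\varphi_s = 0$ for every $s \geq 0$, each of the relations in Definition \ref{poincarepaireqns} involving $\delta\varphi_s$ collapses to the single matching condition
\[
(-1)^n (f,1)\bigl(\varphi_s \oplus -\varphi'_s\bigr)(f,1)^* = 0 \in (C')^t \otimes_A C'.
\]
Multiplying out the $1 \times 2$ matrix against its dual yields $f \varphi_s f^* - \varphi'_s$, which vanishes for all $s$ by the hypothesis $f^\%(\varphi) = \varphi'$. The accompanying equation, requiring $(C \oplus C', \varphi \oplus -\varphi')$ to be an $n$-dimensional symmetric complex, is automatic since $(C,\varphi)$ and $(C', \varphi')$ are. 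Thus the triple is a genuine symmetric pair.

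Next I would verify the Poincar\'e condition, namely that $(0, \varphi_0 \oplus -\varphi'_0)$ induces isomorphisms $H^{n+1-r}((f,1)) \xrightarrow{\simeq} H_r(C')$ for all $r$. The mapping cone $\mathscr{C}(f,1)$ fits into the standard short exact sequence $0 \to C' \to \mathscr{C}(f,1) \to (C \oplus C')[-1] \to 0$, and the dual short exact sequence organises $\mathscr{C}((f,1)^*)^{n+1-*}$. The duality maps fit these into a ladder of long exact sequences whose other rungs are $1_{C'} \colon (C')^{n+1-*} \to C'$, and $\varphi_0 \oplus -\varphi'_0 \colon (C \oplus C')^{n-*} \to C \oplus C'$. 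Since $(C,\varphi)$ and $(C',\varphi')$ are symmetric Poincar\'e, the latter two rungs are chain equivalences; the five lemma then shows the middle rung is an isomorphism on homology, which, since all chain complexes are bounded complexes of finitely generated projective $A$-modules, upgrades to a chain equivalence.

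The main obstacle is purely bookkeeping: making sure the sign convention in the mapping cone of Definition \ref{Defn:algmappingcone} and those in the symmetric pair equations of Definition \ref{poincarepaireqns} combine compatibly so that the reduction genuinely collapses to $f^\%(\varphi) = \varphi'$, and that the ladder for the five lemma commutes up to the conventional signs. No new ideas are needed beyond the definitions and the hypothesis; indeed this lemma is the algebraic counterpart of the geometric triviality ``a product cobordism $M \times I$ is a symmetric Poincar\'e cobordism from $M$ to $M$''.
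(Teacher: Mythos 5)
There is a genuine gap in your Poincar\'{e} verification. Your ladder does not make sense as described. Writing ``$1_{C'} \colon (C')^{n+1-*} \to C'$'' as a rung is not a well-defined chain map (the degrees don't match, and there is no canonical identification of $(C')^{n+1-*}$ with $C'$ to appeal to); the natural map from $(C')^{n+1-*}$ into the mapping cone $\mathscr{C}(f,1)$ is the $W$-duality $\left(\begin{array}{c} \delta\varphi_0 \\ \varphi_0 f^*\end{array}\right)$, not the identity. More fundamentally, even with the correct Poincar\'{e}--Lefschetz ladder, the rungs \emph{alternate} boundary duality / pair duality / $W$-duality / boundary duality / pair duality. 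In any window of five consecutive terms, only the two boundary-duality rungs are known isomorphisms; both the pair duality (what you want) and the $W$-duality ($(C')^{n+1-*} \to \mathscr{C}(f,1)$) are a priori unknown. The five lemma therefore does \emph{not} apply directly. You would need an additional input: either the ``adjoint'' observation that the pair duality and the $W$-duality are dual to each other so that one is an equivalence iff the other is, or an independent argument that $W$-duality is an equivalence (for instance by noting $\mathscr{C}(1 \colon C' \to C')$ is contractible so $\mathscr{C}(f,1) \simeq SC$, which lets one factor the $W$-duality through the known equivalences $\varphi_0$ and $f_*$).

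The paper avoids all of this by exploiting the specific shape of $(f,1)$ rather than treating it as a generic boundary inclusion. It runs the cohomology long exact sequence of the pair, notes that $(f^*,1^*)^T \colon H^r(C') \to H^r(C\oplus C')$ is injective because of the ``$1$'' factor, concludes $j^*=0$ and hence that $\partial$ is surjective with $H^r((f,1))$ generated by classes of the form $(0,y')$, and then simply evaluates the duality map on such representatives to get $-\varphi'_0(y')$. This is a single LES computation, no ladder and no five lemma, and the injectivity of $1^*$ is doing the real work. Your first part (reducing the symmetric-pair equations to $f\varphi_s f^* = \varphi'_s$ when $\delta\varphi = 0$) is essentially right and is in fact more explicit than the paper bothers to be, but the second part needs to be reworked along one of the lines above.
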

\begin{proof}
We need to check that the symmetric structure maps $(0,\varphi\oplus -\varphi') \in Q^{n+1}((f,1))$ induce isomorphisms:
 \[H^r((f,1)) \xrightarrow{\simeq} H_{n+1-r}(C').\]
We use the long exact sequence in cohomology of a pair, associated to the short exact sequence
\[0 \to C' \xrightarrow{j} \mathscr{C}((f,1)) \to S(C \oplus C') \to 0\]
 to calculate the homology $H^r((f,1))$.  The sequence is:
 \[H^r(C \oplus C') \xleftarrow{(f^*,1^*)^T}H^r(C') \xleftarrow{j^*} H^r((f,1)) \xleftarrow{\partial} H^{r-1}(C \oplus C') \xleftarrow{(f^*,1^*)^T} H^{r-1}(C').\]
We have that $$\ker((f^*,1^*)^T \colon H^r(C') \to H^r(C \oplus C')) \cong 0,$$ so $j^*$ is the zero map, and therefore $\partial$ is surjective.  The image \[\im((f^*,1^*)^T\colon H^{r-1}(C') \to H^{r-1}(C) \oplus H^{r-1}(C'))\]
is the diagonal, so that the images of elements of the form $(0,y') \in H^{r-1}(C) \oplus H^{r-1}(C')$ generate $H^r((f,1))$.

The map $H^r((f,1)) \to H_{n-r+1}(C')$ generated by $(0,\varphi \oplus - \varphi')$, on the chain level, is
\[\left( 0, \left( \begin{array}{cc} f & 1 \end{array} \right)\left( \begin{array}{cc} \varphi_0 & 0 \\ 0 & -\varphi'_0 \end{array} \right)\right) \colon (C')^r \oplus (C \oplus C')^{r-1} \to C'_{n-r+1}\]
which sends $y' \in H^{r-1}(C')$ to $-\varphi'_0(y')$.  We therefore have an isomorphism on homology since $(C',\varphi')$ is a symmetric Poincar\'{e} complex. We have a symmetric Poincar\'{e} pair
\[((f,1) \colon C \oplus C' \to C',(0,\varphi \oplus - \varphi')),\]
as claimed.
\end{proof}

\begin{proposition}\label{prop:symmstructureiplusminus}
Recall the chain maps $i_{\pm} \colon C \to D_{\pm}$:
\[\xymatrix @R+1cm @C-0.9cm{D_- = C_*(S^1 \times D^1_-;\Z[\pi_1(X)])\colon & \Z[\pi_1(X)] \ar[rrrrrrrrrrrrrrrrrrr]^{\left(\begin{array}{c}g_1-1\end{array}\right)} &&&&&&&&&&&&&&&&&&& \Z[\pi_1(X)] \\
C = C_*(S^1 \times S^0;\Z[\pi_1(X)])\colon \ar[u]_{i_-} \ar[d]^{i_+} & \bigoplus_2\,\Z[\pi_1(X)] \ar[rrrrrrrrrrrrrrrrrrr]_{\left(\begin{array}{cc}g_1-1 & 0 \\ 0 & g_q-1 \end{array}\right)} \ar[u]^{\left(\begin{array}{c} 1 \\ l_a^{-1} \end{array}\right)} \ar[d]_{\left(\begin{array}{c} l_b^{-1} \\ 1 \end{array}\right)} &&&&&&&&&&&&&&&&&&& \bigoplus_2\,\Z[\pi_1(X)] \ar[u]_{\left(\begin{array}{c} 1 \\ l_a^{-1} \end{array}\right)} \ar[d]^{\left(\begin{array}{c} l_b^{-1} \\ 1 \end{array}\right)} \\
D_+ = C_*(S^1 \times D^1_+;\Z[\pi_1(X)])\colon & \Z[\pi_1(X)] \ar[rrrrrrrrrrrrrrrrrrr]_{\left(\begin{array}{c}g_q-1\end{array}\right)} &&&&&&&&&&&&&&&&&&& \Z[\pi_1(X)].
}\]
These chain maps $i_{\pm}$ induce symmetric Poincar\'{e} pairs:
\[(i_{\pm} \colon C \to D_{\pm},(\delta\varphi_{\pm}=0,\pm(\varphi \oplus -\varphi)) \in Q^2(i_{\pm})).\]
\end{proposition}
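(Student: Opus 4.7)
The plan is to realize each $i_\pm$, after identifying $D_\pm \simeq C_*(S^1;\zpx)$ (equipped with symmetric Poincar\'{e} structure $\varphi^1$ or $\varphi^q$ from Proposition \ref{prop:symmstructurecircle}), as the sum of an identity map and a chain isomorphism of $1$-dimensional symmetric Poincar\'{e} complexes, and then to invoke Lemma \ref{lemma:productcobordism}. Inspection of the matrices in Proposition \ref{prop:chainmapsiplusiminus} exhibits $i_-$ as the identity on the first summand of $C = C_*(S^1;\zpx)_1 \oplus C_*(S^1;\zpx)_q$ and as multiplication by $l_a^{-1}$ on the second summand, and exhibits $i_+$ as multiplication by $l_b^{-1}$ on the first summand and the identity on the second. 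That these assignments define chain maps is precisely the content of the relations $l_a^{-1}g_1 l_a = g_q$ and $l_b g_1 l_b^{-1} = g_q$ established in the proof of Proposition \ref{prop:chainmapsiplusiminus}.

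Next I would verify directly that the relations of Definition \ref{poincarepaireqns} hold with $\delta\varphi_\pm = 0$. For $s=0$ this reduces to the on-the-nose identity $i_\pm \circ (\pm(\varphi \oplus -\varphi))_0 \circ i_\pm^* = 0$. Substituting the matrices from Proposition \ref{prop:symmstructurecircle}, together with the dual matrices obtained via the involution ($\overline{l_a^{-1}} = l_a$ and $\overline{l_b^{-1}} = l_b$), each entry of the composite collapses to an expression of the form $g_1 - l_a g_q l_a^{-1}$ or $l_b g_1 l_b^{-1} - g_q$, both of which vanish by the same fundamental group relations. The choice of sign $\pm$ reflects the convention needed to orient the two cobordisms oppositely, so that the union construction of Definition \ref{Defn:unionconstruction} produces the correct symmetric structure on the torus obtained by gluing $D_-$ and $D_+$ along $C$. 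The analogous calculation handles the $\varphi_1$ component, and the higher $\varphi_s$ vanish automatically for dimensional reasons on this $1$-dimensional complex.

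For the Poincar\'{e} property I will appeal to Lemma \ref{lemma:productcobordism}. The chain isomorphism $\beta_\pm$ given by multiplication by $l_a^{-1}$ (resp.\ $l_b^{-1}$) is an isomorphism of symmetric Poincar\'{e} complexes $(C_*(S^1;\zpx)_q, \varphi^q) \to (C_*(S^1;\zpx)_1, \varphi^1)$ (resp.\ in the opposite direction), with $\beta_-^\%(\varphi^q) = \varphi^1$ following once more from the same relation. Lemma \ref{lemma:productcobordism} then produces a symmetric Poincar\'{e} pair of the form $((\beta_\pm, 1), (0, \varphi^{q\text{ or }1} \oplus -\varphi^{1\text{ or }q}))$; reordering the two summands of $C$, and where necessary flipping the overall sign of the symmetric structure (both operations preserving the $Q$-group cycle conditions by linearity) then matches this data with the pair asserted in the proposition.

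The principal obstacle will be the routine but fiddly sign bookkeeping: matching the $\pm$ in $\pm(\varphi \oplus -\varphi)$ to the asymmetric roles of the two summands of $C$ and to the inclusion convention $i_\pm = (\Id, \pm \Id \circ j)$, and ensuring that the symmetric pair equations hold on the nose rather than merely up to chain homotopy. Conceptually, however, the entire proposition is a direct consequence of Lemma \ref{lemma:productcobordism} applied to the chain isomorphism $\beta_\pm$.
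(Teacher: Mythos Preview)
Your proposal is correct and follows essentially the same approach as the paper: first verify directly that $i_{\pm}(\pm(\varphi_s \oplus -\varphi_s))i_{\pm}^* = 0$ so that $(0,\pm(\varphi\oplus-\varphi))$ defines a symmetric pair, then invoke Lemma~\ref{lemma:productcobordism} for the Poincar\'e property. The paper's proof is terser --- it leaves the vanishing check to the reader and simply asserts that Lemma~\ref{lemma:productcobordism} applies --- whereas you unpack the application of the lemma by explicitly identifying the chain isomorphism $\beta_\pm$ (multiplication by $l_a^{-1}$ or $l_b^{-1}$) and checking $\beta_\pm^\%$ carries $\varphi^q$ to $\varphi^1$ (or vice versa), together with the summand swap needed to match the form $(f,1)$ of the lemma.
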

\begin{proof}
There is no relative fundamental class in the chain complex for $D_{\pm}$, since $(D_{\pm})_2 \cong 0$, so we take $\delta\varphi_{\pm}=0$.  The reader can check that the homomorphisms:
\[i_{\pm}(\pm(\varphi_s \oplus -\varphi_s))i_{\pm}^* = 0 \colon (D_{\pm})^{r} \to (D_{\pm})_{1-r+s}\]
for $r=0,1, s=0,1$, so that the equations for a symmetric pair are satisfied.  Lemma \ref{lemma:productcobordism} applies here to show that the pairs are also Poincar\'{e}.

\end{proof}

We now use the union construction of Definition \ref{Defn:unionconstruction} to glue the symmetric pairs $(i_{\pm}\colon C\to D_{\pm},(0,\pm(\varphi\oplus-\varphi)))$ together along $(C,\varphi \oplus -\varphi)$.  Recall from Proposition \ref{Prop:torussplitchainequivalence} that the mapping cone
\[E := \mathscr{C}((i_-,i_+)^T \colon C \to D_- \oplus D_+)\]
with
\[E_r = (D_-)_r \oplus C_{r-1} \oplus (D_+)_r,\]
is given by:
\[\xymatrix{ E_2 \cong \bigoplus_2 \, \zpx \xrightarrow{\partial_2} E_1 \cong \bigoplus_4 \, \zpx  \xrightarrow{\partial_1} E_0 \cong \bigoplus_2 \, \zpx
}\]
where:
\[\partial_1 = \left(\begin{array}{cc} g_1-1 & 0  \\ 1 & l_b^{-1} \\ l_a^{-1} & 1 \\ 0 & g_q-1 \end{array} \right)\]
\[\partial_2 = \left(\begin{array}{cccc} -1 & g_1-1 & 0 & -l_b^{-1} \\ -l_a^{-1} & 0 & g_q-1 & -1 \end{array} \right).\]
Recall also that the chain complex \[E':= C_*(S^1 \times S^1;\zpx) \subset C_*(X;\zpx)\] from Proposition \ref{prop:toruschaincopmlex} is given by:
\[E'_2 \cong \Z[\pi_1(X)] \xrightarrow{\partial_2} E'_1 \cong \bigoplus_2 \, \Z[\pi_1(X)] \xrightarrow{\partial_1} E'_0 \cong \Z[\pi_1(X)],\]
where
\[\partial_1 = \left(\begin{array}{c} g_1-1 \\ l_al_b - 1 \end{array} \right)\]
\[\partial_2 = \left(\begin{array}{cc} l_al_b-1 & 1-g_1 \end{array} \right),\]
and that there is a chain equivalence $\eta \colon E \to E'$:
\[\xymatrix @R+2cm @C+1.3cm{\bigoplus_2 \,\zpx \ar[r]^{\partial_E} \ar[d]^{\eta = \left(\begin{array}{c} -l_b^{-1}l_a^{-1} \\ 0 \end{array} \right)} & \bigoplus_4 \,\zpx \ar[r]^{\partial_E} \ar[d]^{\eta = \left(\begin{array}{cc} 1 & 0 \\ 0 & l_b^{-1}l_a^{-1} \\ 0 & 0 \\ -l_a^{-1} & 0 \end{array} \right)} & \bigoplus_2 \,\zpx \ar[d]^{\eta = \left(\begin{array}{c} 1 \\ -l_a^{-1} \end{array} \right)} \\
\zpx \ar[r]^{\partial_{E'}} &  \bigoplus_2 \,\zpx \ar[r]^{\partial_{E'}} & \zpx.}\]

\begin{proposition}\label{prop:symmstructuretorus}
The symmetric structure on $E = C_*(S^1 \times S^1;\zpx)$ is:
\[\phi:= 0 \cup_{\varphi\oplus-\varphi} 0.\]
The symmetric structure map $\phi_0 \colon E^{2-*} \to E_*$:
\[\xymatrix @R+1cm @C+1cm{ E^0 \ar[r]^{\delta_1} \ar[d]^{\phi_0} & E^1 \ar[r]^{\delta_2} \ar[d]^{\phi_0} & E^2 \ar[d]^{\phi_0}\\
E_2 \ar[r]^{\partial_2} & E_1 \ar[r]^{\partial_1} & E_0
}\]
is given by:
\[\xymatrix @R+3cm @C+1.3cm{ \bigoplus_2\,\zpx \ar[r]^{\delta_1} \ar[d]^{\left(\begin{array}{cc} -1 & l_a \\ 0 & 0 \end{array} \right)} & \bigoplus_4\,\zpx \ar[r]^{\delta_2} \ar[d]^{\left(\begin{array}{cccc} 0 & g_1 & -l_ag_q & 0 \\ 0 & 0 & 0 & l_b^{-1} \\ 0 & 0 & 0 & -1 \\ 0 & 0 & 0 & 0 \end{array} \right)} & \bigoplus_2\,\zpx \ar[d]^{\left(\begin{array}{cc} 0 & g_1 l_b^{-1} \\ 0 & -g_q \end{array} \right)}\\
\bigoplus_2\,\zpx \ar[r]^{\partial_2} & \bigoplus_4\,\zpx \ar[r]^{\partial_1} & \bigoplus_2\,\zpx
}\]
Taking the image
\[\phi' := \eta^{\%}\phi = \eta \phi \eta^*\]
of the chain duality maps under the chain equivalence
\[\eta \colon E \to E',\]
which maps from the chain complex of the torus split into two cylinders, to the smallest possible chain complex of the torus, yields the symmetric structure map $\phi'_0 \colon E'^* \to E'_*$ as follows.
\[\xymatrix @R+1cm @C+1cm{ E'^0 \ar[r]^{\delta_1} \ar[d]^{\phi'_0} & E'^1 \ar[r]^{\delta_2} \ar[d]^{\phi'_0} & E'^2 \ar[d]^{\phi'_0}\\
E'_2 \ar[r]^{\partial_2} & E'_1 \ar[r]^{\partial_1} & E'_0
}\]
is given by:
\[\xymatrix @R+2cm @C+1.3cm{ \zpx \ar[rr]^-{\left(\begin{array}{cc} g_1^{-1}-1 & l_b^{-1}l_a^{-1}-1 \end{array}\right)} \ar[d]^{\left(\begin{array}{c} l_b^{-1}l_a^{-1} \end{array}\right)} && \bigoplus_2\,\zpx \ar[r]^-{\left(\begin{array}{c} l_b^{-1}l_a^{-1}-1 \\ 1-g_1^{-1} \end{array}\right)} \ar[d]_{\left(\begin{array}{cc} 0 & g_1l_b^{-1}l_a^{-1} \\ -1 & 0 \end{array}\right)} & \zpx \ar[d]_{\left(\begin{array}{c} g_1 \end{array}\right)}\\
\zpx \ar[rr]^{\left(\begin{array}{cc} l_al_b-1 & 1-g_1 \end{array}\right)} && \bigoplus_2\,\zpx \ar[r]^-{\left(\begin{array}{c} g_1-1 \\ l_al_b-1 \end{array}\right)} & \zpx.
}\]
\end{proposition}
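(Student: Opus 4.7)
The plan is to verify the two claimed formulas by direct matrix computation, feeding the pieces we already have into the union construction and then transporting the resulting structure along the chain equivalence $\eta$.

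First, for the computation of $\phi = 0 \cup_{\varphi \oplus -\varphi} 0$, I will apply Definition \ref{Defn:unionconstruction} with the input data $(i_- \colon C \to D_-,(0,\varphi\oplus-\varphi))$ and $(i_+ \colon C \to D_+,(0,-(\varphi\oplus-\varphi)))$ from Proposition \ref{prop:symmstructureiplusminus}. Thus $\delta\varphi_\pm = 0$ in both cobordisms, so the only surviving pieces of the union formula
\[\delta\phi_s = \begin{pmatrix} \delta\varphi_s^- & 0 & 0 \\ (-1)^{n-r}\varphi_s\, i_-^* & (-1)^{n-r+s+1}T_\eps\varphi_{s-1} & 0 \\ 0 & (-1)^s\, i_+ \varphi_s & \delta\varphi_s^+ \end{pmatrix}\]
are the two off-diagonal blocks $\pm\varphi_s i_-^*$ and $\pm i_+\varphi_s$, together with the interior $T_\eps\varphi_{s-1}$ term. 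Substituting the explicit $i_\pm$ from Proposition \ref{prop:chainmapsiplusiminus} and the $\varphi$ from Proposition \ref{prop:symmstructurecircle} (with $\eps=1$, $n=1$) and carefully reading off the signs, I expect the blocks to assemble into exactly the displayed matrix for $\phi_0$. The $T\varphi_0$ corner block is what produces the $l_b^{-1}$ and $-1$ entries in the middle column; the $\varphi_0\, i_-^*$ and $i_+\varphi_0$ blocks produce the $g_1, -l_a g_q, g_1 l_b^{-1}, -g_q$ and $-1, l_a$ entries. I then read off $\phi_1$ and verify the symmetric complex identities $d_{\otimes}\phi_s + (-1)^{n+s-1}(\phi_{s-1}+(-1)^s T\phi_{s-1})=0$ to confirm $\phi$ is a genuine $2$-dimensional symmetric structure on $E$.

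Second, for the push-forward $\phi' = \eta^\% \phi$, I will use the functoriality formula $\phi'_s = \eta\,\phi_s\,\eta^*$ and the explicit $\eta$ recalled just before the proposition. This is a straightforward matrix multiplication in each degree $r$: dualise $\eta_r$ (this replaces each entry $a$ by $\bar a$ and transposes the matrix), pre- and post-multiply $\phi_0$ by the appropriate $\eta$-blocks, and simplify using $\overline{l_a^{-1}} = l_a$, $\overline{g_1} = g_1^{-1}$, etc. The result should drop the off-diagonal $(1,2)$-entry contributions into the correct shape, yielding the $2\times 2$ matrix $\begin{pmatrix} 0 & g_1 l_b^{-1}l_a^{-1} \\ -1 & 0 \end{pmatrix}$ in middle degree and the claimed rank-one maps in top and bottom degree.

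The only real technical point is bookkeeping. The union-construction signs are notoriously delicate, and the middle column of the triad is indexed by $C_{*-1}$ rather than $C_*$, which shifts degrees in the $(-1)^{n-r}$ factors. I will therefore lay out, before any multiplication, a table identifying which summand of $E_r = (D_-)_r \oplus C_{r-1} \oplus (D_+)_r$ each matrix column refers to, and match it against the analogous table for $E^{n-r+s}$; this guarantees the signs $(-1)^{n-r}$, $(-1)^{n-r+s+1}$, $(-1)^s$ are applied to the correct blocks. Once the bookkeeping is set up, both formulas reduce to mechanical verification, and since $\phi_0$ on $E$ and $\phi'_0$ on $E'$ both induce the Poincaré duality isomorphisms of the torus (inherited from the fact that $(i_\pm, (0,\pm(\varphi\oplus-\varphi)))$ are Poincaré pairs and $\eta$ is a chain equivalence), the resulting symmetric complexes are automatically Poincaré, so no extra work is needed beyond the formulas themselves.
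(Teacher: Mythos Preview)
Your approach is correct and matches the paper's own proof, which simply says ``This is just an application of Definition \ref{Defn:unionconstruction} and a calculation.'' You are merely spelling out that calculation in detail, with appropriate care about the sign and degree-shift bookkeeping in the union construction and the matrix multiplication for $\eta\phi\eta^*$.
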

\begin{proof}
This is just an application of Definition \ref{Defn:unionconstruction} and a calculation.  Note that we could also calculate the higher chain homotopies $\phi_i$ for $i=1,2$ using the union construction but we do not need them explicitly here so we do not do so.
\end{proof}

We are now in the position that we have symmetric Poincar\'{e} pairs $$i_{\pm} \colon C_*(S^1 \times S^0;\zpx) \to C_*(S^1 \times D^1_{\pm};\zpx)$$ along with chain maps $$f_{\pm} \colon C_*(S^1 \times D^1_{\pm};\zpx) \to C_*(X;\zpx)$$ and our chain homotopy $$g \colon C_*(S^1 \times S^0;\zpx) \to C_{*+1}(X;\zpx).$$  In order to finish the construction of our symmetric Poincar\'{e} triad we need a symmetric structure on $C_*(X;\zpx)$ which is compatible with the structure on $\partial X$.

Recall that the 3-handles of our decomposition of $X$ are $h^3_o$ and $h^3_{\partial}$ corresponding to the identities of the presentation of $\pi_1(X)$:
\[s_o = \prod_{k=1}^c \, w_{j_k}r_{j_k}w_{j_k}^{-1} = 1 \in F(g_1,..g_c,\mu,\lambda);\]
\begin{multline*} s_{\partial} = (r_{\partial}^{-1})(\lambda r_{\mu}^{-1} \lambda^{-1})(r_{\lambda}^{-1}) \left(\prod_{j=0}^{c-1}\,u_{k+j}r^{-1}_{k+j}u_{k+j}^{-1}\right) (r_{\mu})(\mu r_{\lambda}\mu^{-1})\\
 = 1 \in F(g_1,..,g_c,\mu,\lambda).\end{multline*}
As in Remark \ref{Rmk:relativefundclass}, the chain
\[[X,\partial X] := h^3_o + h^3_{\partial} \in C_3(X;\Z) = \Z \otimes_{\Z[\pi_1(X)]} C_3(X;\Z[\pi_1(X)])\]
satisfies \[\partial_3([X,\partial X]) = -h^2_{\partial} = (-1)^3f([\partial X]) \in C_2(X;\Z),\]
where $f$ is the inclusion $E' \hookrightarrow C_*(X;\zpx)$.  $[X,\partial X]$ is the \emph{relative fundamental class} for the knot exterior, which we shall use to derive the symmetric structure on the chain complex.

\begin{proposition}\label{prop:symmstructureonX}
We denote the symmetric structure on $C_*(X;\zpx)$ by $\Phi$.  In particular, we use Trotter's formulae for $\Delta_0$ from Theorem \ref{trotterdelta0} to explicitly define
\[\Phi_0 := \backslash(\Delta_0([X,\partial X]) - \chi )= \backslash(\Delta_0(h^3_o+\hp^3) -\chi) \]
where
\begin{multline*}\chi:= h^1_{\lambda} \otimes h^2_{\partial} + h^2_{\partial} \otimes h^1_{\mu} \in (E' \otimes_{\zpx} E')_3 \\ \subset (C_*(X;\zpx) \otimes_{\zpx} C_*(X;\zpx))_3.\end{multline*}
The higher chain homotopies $\Phi_i$ for $i=1,2,3$ also exist but we do not give explicit formulae.
The composition of $\eta$ with the split monomorphism $$f \colon E' \to C_*(X;\zpx)$$ then yields a symmetric Poincar\'{e} pair
\[(f\circ \eta \colon E = \mathscr{C}((i_-,i_+)^T) \to C_*(X;\zpx), (\Phi,\phi)).\]
\end{proposition}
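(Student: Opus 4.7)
The plan is to verify that the prescribed collection $(\Phi,\phi)$ defines a class in $Q^3(f\circ\eta)$ whose symmetric-pair equations at each level $s\geq 0$ are satisfied, and then to establish the Poincar\'e condition from the underlying manifold duality. At level $s=0$, since Trotter's diagonal $\Delta_0$ of Theorem \ref{trotterdelta0} is a chain map and the slant map is a chain isomorphism (Proposition \ref{prop:chainmap}),
\[ d_{\mathrm{Hom}}(\backslash \Delta_0([X,\partial X])) = \backslash \partial_\otimes \Delta_0([X,\partial X]) = \backslash \Delta_0(\partial_3[X,\partial X]) = -\backslash \Delta_0(f([\partial X])), \]
where the final equality uses $\partial_3[X,\partial X] = -f([\partial X])$ from Remark \ref{Rmk:relativefundclass}. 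Naturality of Trotter's formula under inclusion of the boundary sub-complex gives $\Delta_0(f([\partial X])) = (f\otimes f)\Delta_0^{\partial X}([\partial X])$, which after applying slant equals $f\,\phi^{Tr}_0\,f^*$, where $\phi^{Tr}_0$ denotes the Trotter symmetric structure on $E'$. Using the identity $\phi'_0 - \phi^{Tr}_0 = \backslash\, d\chi$ noted in the preamble together with $\phi' = \eta^\%(\phi)$ from Proposition \ref{prop:symmstructuretorus}, one then obtains
\[ d_{\mathrm{Hom}}(\backslash(\Delta_0([X,\partial X])-\chi)) = -f\phi^{Tr}_0 f^* - f(\phi'_0-\phi^{Tr}_0)f^* = -(f\circ\eta)\phi_0(f\circ\eta)^*, \]
which is precisely the $s=0$ matching equation of Definition \ref{poincarepaireqns} for a $3$-dimensional symmetric pair.

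For the higher chain homotopies $\Phi_i$, $i = 1,2,3$, by Theorem \ref{Thm:EMspaces} the knot exterior is a $K(\pi_1(X),1)$, so the augmented handle chain complex of $\widetilde{X}$ is acyclic and Theorem \ref{Thm:davisdiag} of Davis applies to produce higher diagonal approximations $\Delta_i$ satisfying the relations of Definition \ref{Defn:higherdiagonalmaps}. Applying the slant map to $\Delta_i([X,\partial X])$ and adding at each level a correction $\chi_i$ chosen so that the boundary matches the already-fixed $\phi_i$ on $E'$ yields the required $\Phi_i$; such corrections exist precisely because any discrepancy between the natural Trotter-derived boundary term and the prescribed $\phi_i$ is a cycle in a contractible complex. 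No explicit formulas are required.

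Finally, the Poincar\'e condition demands that $(\Phi_0,\phi_0)$ induce isomorphisms $H^{3-r}(f\circ\eta) \xrightarrow{\simeq} H_r(C_*(X;\zpx))$ for $0 \leq r \leq 3$. Since $\eta$ is a chain equivalence, the mapping cone of $f\circ\eta$ computes $H_*(X,\partial X;\zpx)$, and the induced maps are precisely the cap-product-with-$[X,\partial X]$ Poincar\'e--Lefschetz duality isomorphisms for the compact oriented $3$-manifold with boundary $(X,\partial X)$; the $\chi$-correction, being a boundary in the tensor product complex, does not alter the map induced on homology. The main obstacle is administrative rather than conceptual: threading the sign conventions of Definitions \ref{Defn:signsoontensor}, \ref{Defn:Qgroups}, and \ref{poincarepaireqns} consistently through the identification $\phi'_0 - \phi^{Tr}_0 = \backslash d\chi$ so that the $s=0$ matching equation holds \emph{on the nose}; once this verification is in hand, the higher levels are handled abstractly by Davis's theorem and the Poincar\'e property is inherited from the ambient manifold.
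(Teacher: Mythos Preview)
Your proposal is essentially correct and follows the same strategy as the paper: verify the $s=0$ pair equation by computing $d_{\mathrm{Hom}}\Phi_0$ and matching it to $(f\circ\eta)\phi_0(f\circ\eta)^*$, invoke Davis's theorem for the higher $\Phi_i$, and then appeal to Poincar\'e--Lefschetz duality of the manifold pair $(X,\partial X)$ for the Poincar\'e condition. The chief difference is one of explicitness: the paper does not take the identity $\phi'_0-\phi^{Tr}_0=\backslash d\chi$ as a black box from the preamble but actually \emph{proves} it inside this very proof, by writing out Trotter's $\Delta_0(h^2_\partial)=h^0_\partial\otimes h^2_\partial+h^2_\partial\otimes h^0_\partial-\gamma(\lambda\mu\lambda^{-1}\mu^{-1})$, expanding $\gamma(\lambda\mu\lambda^{-1}\mu^{-1})$ term by term, computing $d\chi$ explicitly, and watching most terms cancel to leave exactly the four-term expression that matches $\phi'_0$ from Proposition~\ref{prop:symmstructuretorus}. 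So your last paragraph is right that the obstacle is administrative, but the paper's proof \emph{is} that administration.

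One imprecision: you write that ``the $\chi$-correction, being a boundary in the tensor product complex, does not alter the map induced on homology.'' The chain $\chi$ is not a boundary (nor even a cycle). The correct reason the correction is harmless is that $\chi$ is supported in $E'\otimes_{\zpx}E'$, so $\backslash\chi$ only contributes to $\Phi_0\colon C^{3-r}(X)\to C_r(X)$ for $r=1,2$ and only on boundary basis elements; in particular it vanishes on the generator $(h^3_\partial)^*\in H^3(X,\partial X;\zpx)$, which is all one must check since $\widetilde{X}$ is contractible. The paper makes exactly this direct check, observing that $h^3_\partial\otimes h^0_\partial$ appears in $\Delta_0(h^3_o+h^3_\partial)$ and so $\Phi_0$ sends $(h^3_\partial)^*$ to $1\in C_0(X;\zpx)$.
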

\begin{proof}
We check that the equations for a symmetric pair are satisfied.  We have:
\begin{eqnarray*} & & d(\Delta_0([X,\partial X]) - \chi) \\ &=& \Delta_0(\partial([X,\partial X])) - d\chi = \Delta_0(-f([\partial X])) - d\chi = \Delta_0(f(-h^2_{\partial})) - d\chi \\ &=& \Delta_0(-h^2_{\partial}) - d\chi = -(\Delta_0(\hp^2) + d\chi) \\ &=&  -(\hp^2 \otimes \hp^0 + \hp^0 \otimes \hp^2 - \gamma(\lambda \mu \lambda^{-1} \mu^{-1}) + d\chi)\\  &=& -(\hp^2 \otimes \hp^0 + \hp^0 \otimes \hp^2 - (h^1_{\la}\otimes \la h^1_{\mu} + h^1_{\la} \otimes h^1_{\la} + h^1_{\mu} \otimes h^1_{\mu} + h^1_{\la} \otimes \mu^{-1}h^1_{\mu}\\ & & - h^1_{\la} \otimes \mu h^1_{\la} - h^1_{\la} \otimes h^1_{\mu} -h^1_{\mu} \otimes \la^{-1} \mu h^1_{\la} - h^1_{\mu} \otimes \la^{-1} h^1_{\mu}) + d\chi) \\ & = & -(\hp^2 \otimes \hp^0 + \hp^0 \otimes \hp^2 - h^1_{\la}\otimes \la h^1_{\mu} - h^1_{\la} \otimes h^1_{\la} - h^1_{\mu} \otimes h^1_{\mu} - h^1_{\la} \otimes \mu^{-1}h^1_{\mu}\\ & & + h^1_{\la} \otimes \mu h^1_{\la} +  h^1_{\la} \otimes h^1_{\mu} +h^1_{\mu} \otimes \la^{-1} \mu h^1_{\la} + h^1_{\mu} \otimes \la^{-1} h^1_{\mu} + d\chi) \\ & = & -(\hp^2 \otimes \hp^0 + \hp^0 \otimes \hp^2 - h^1_{\la}\otimes \la h^1_{\mu} - h^1_{\la} \otimes h^1_{\la} - h^1_{\mu} \otimes h^1_{\mu} - h^1_{\la} \otimes \mu^{-1}h^1_{\mu}\\ & & + h^1_{\la} \otimes \mu h^1_{\la} +  h^1_{\la} \otimes h^1_{\mu} +h^1_{\mu} \otimes \la^{-1} \mu h^1_{\la} + h^1_{\mu} \otimes \la^{-1} h^1_{\mu} \\ & & + h^1_{\la} \otimes (\la-1)h^1_{\mu} + h^1_{\la} \otimes (1-\mu)h^1_{\la} + \hp^0 \otimes (\la^{-1}-1)\hp^2 \\ & & + \hp^2 \otimes (\mu-1)\hp^0 + h^1_{\mu} \otimes (1-\la^{-1})h^1_{\mu} + h^1_{\la} \otimes (\mu^{-1}-1)h^1_{\mu}) \\ &=& -(\hp^2 \otimes \mu\hp^0 + \hp^0 \otimes \la^{-1}\hp^2 + h^1_{\mu} \otimes \la^{-1}\mu h^1_{\la} -h^1_{\la} \otimes h^1_{\mu}) \\ &=& -(\hp^2 \otimes \mu\hp^0 + \hp^0 \otimes \la^{-1}\hp^2 + h^1_{\mu} \otimes \mu\la^{-1} h^1_{\la} -h^1_{\la} \otimes h^1_{\mu}) \\ &=& -(\hp^2 \otimes g_1\hp^0 + \hp^0 \otimes l_b^{-1}l_a^{-1}\hp^2 + h^1_{\mu} \otimes g_1 l_b^{-1}l_a^{-1} h^1_{\la} -h^1_{\la} \otimes h^1_{\mu}).
\end{eqnarray*}
Then on the one hand \[\backslash d(\Delta_0([X,\partial X]) - \chi) = d_{\Hom} \backslash (\Delta_0([X,\partial X]) - \chi) = d_{\Hom}\Phi_0 = \partial_X\Phi_0 + (-1)^r\Phi_0\delta_X,\]
while on the other hand, by comparing the image under the slant map of the result of the calculation above with the symmetric structure maps $\phi'_0$ on $E'$ from Proposition \ref{prop:symmstructuretorus}, we see that $\backslash d(\Delta_0([X,\partial X]) - \chi) = (-1)^3 f\phi'_0 f^* = -\phi'_0$ (since $f$ is just the inclusion).  The equations for a symmetric pair are therefore satisfied.

To see that the pair $(f\circ \eta \colon E \to C_*(X;\zpx), (\Phi,\phi))$ is Poincar\'{e}, we need to check that the maps $(\Phi_0,\phi_0)$ induce isomorphisms $H^{3-r}(C_*(X;\zpx),E) \xrightarrow{\simeq} H_r(C_*(X;\zpx))$.  First, since $X$ is a $K(\pi,1)$, its universal cover is contractible so that \[H_0(C_*(X;\zpx)) \cong \Z\] and \[H_r(C_*(X;\zpx)) \cong 0\] for $r \neq 0$.  Since $C_*(X;\zpx)$ is the handle chain complex of a manifold, we know that Poincar\'{e}-Lefschetz duality holds, and that abstractly $H^{3-r}(X,\partial X) \cong H_r(X)$.  We could prove this for example using the isomorphism of singular homology to handle homology, or more elegantly by turning the handles upside down i.e. we can consider a $r$-handle $D^r \times D^{3-r}$ as a \emph{relative} $(3-r)$-handle $D^{3-r} \times D^r$ with the handle chain complex boundary maps becoming the relative cochain complex coboundary maps. The homology $H_0(C_*(X;\zpx)$ is generated by $ 1 \in C_0(X;\zpx) \cong \zpx$.  The mapping cone $\mathscr{C}(f\circ \eta \colon E \to C_*(X;\zpx))$ is given, with $\zpx$ coefficients used for the chain groups of $X$, by:
\[E_2\oplus C_3(X) \to E_1 \oplus C_2(X) \to E_0 \oplus C_1(X) \to C_0(X), \]
so that the cochain complex is given by:
\[C^0(X) \to E^0 \oplus C^1(X) \to E^1 \oplus C^2(X) \to E^2 \oplus C^3(X).\]
Since $f$ is the inclusion of sub-complex, the $E^2$ term lies in the image of $f^*$, so does not generate cohomology, and in fact $(h^3_{\partial})^* = (0,1) \in C^3(X;\zpx) \cong \bigoplus_2\,\zpx$ (or $(1,0)$) generates $H^3(C_*(X;\zpx),E) \cong \Z$.  The element $(0,1)$ is sent to $1 \in C_0(X;\zpx)$ by $\Phi_0$, since $h^3_{\partial} \otimes \hp^0$ lies in the image of Trotter's $\Delta_0$ map applied to $(h^3_o + \hp^3)$.  This completes the check that $(\Phi_0,\phi_0)$ induces isomorphisms $H^{3-r}(C_*(X;\zpx),E) \xrightarrow{\simeq} H_r(C_*(X;\zpx))$ for $r=0,1,2,3$.  The higher chain homotopies $\Phi_i$ are then guaranteed to exist by Theorem \ref{Thm:davisdiag}, so that we indeed have a symmetric Poincar\'{e} pair as claimed.
\end{proof}

This completes our description of the \emph{fundamental symmetric Poincar\'{e} triad of a knot}:
\[\xymatrix{\ar @{} [dr] |{\stackrel{g}{\sim}}
(C = C_*(S^1 \times S^0;\Z[\pi_1(X)]),\varphi\oplus -\varphi) \ar[r]^-{i_-} \ar[d]_{i_+} & (D_- = C_*(S^1 \times D^1_-;\Z[\pi_1(X)]),0) \ar[d]^{f_-}\\ (D_+ = C_*(S^1 \times D^1_+;\Z[\pi_1(X)]),0) \ar[r]^-{f_+} & (C_*(X;\Z[\pi_1(X)]),\Phi),
}\]
as has been the goal of Chapters \ref{Chapter:handledecomp}, \ref{chapter:chaincomplex} and \ref{Chapter:duality_symm_structures}.
\begin{remark}
If, as in Remark \ref{rmk:zerosurgery1}, we form the chain complex of the zero surgery $M_K$ by adding two handles to our original handle decomposition for $X$ in Theorem \ref{Thm:unicoverchaincomplex}, a 2-handle $h^2_s$ along the longitude using the word $l$ and a 3-handle $h^3_s$ to fill the rest of the solid torus in using the words $u_j$, then a fundamental class is given by
\[[M_K] = [h^3_o + h^3_s] \in C_3(M_K;\Z),\]
and this can be used directly with Trotter's $\Delta_0$ formulae to obtain the symmetric structure on a closed manifold.  If we just want to extract a sliceness obstruction and do not need to add knots together then this allows a significant simplification of the formulae.
\end{remark} 

\chapter{Adding Knots and Second Derived Covers}\label{Chapter:2ndderived}

In this chapter we describe in some detail how to add together oriented knots, and translate this into addition of the corresponding fundamental cobordisms, which we will later apply in the algebraic setting to define addition of fundamental symmetric Poincar\'{e} triads of knots.  We will primarily be interested in working at the level of the second derived cover.  Since in future work we hope to extend our results beyond this we have worked up until now at the level of the universal cover, however we now begin to specialise to the covering space defined by the second derived subgroup of the knot exterior.  We make a detailed study of the behaviour of the knot groups under connected sum and under the operation which factors out the second derived subgroup.

Experts may prefer to skip to Chapter \ref{Chapter:semigroup}.

\begin{definition}\label{Defn:connectedsum}
We form the \emph{connected sum} of two oriented knots $K$ and $K^{\dag}$ as follows.  Parametrise the regular neighbourhood $N(K)$ of a knot $K$ as $\left[0,1\right] \times D^2 \to S^1 \times D^2$ so that $(0,x) \sim (1,x)$.  We require that this defines a framing of the knot which is compatible with a Seifert surface $F \subset X$, by which we mean that $\partial F = S^1 \times \{(1,0)\} \subset S^1 \times D^2$.  The canonical orientation of $[0,1]$ must agree with the orientation of the knot.  We then extend the embedding $K \colon S^1 \to S^3$ to an embedding $K \colon (([0,1] \times D^2)/\sim) \hookrightarrow S^3$.  Do this similarly for $K^{\dag}$.  Then remove from each copy of $S^3$ a solid cylinder:
\[ \ba{rcl} K([0,1/2] \times D^2) &\subset& N(K) \subset S^3\\
K^{\dag}([0,1/2] \times D^2) &\subset & N(K^{\dag}) \subset (S^3)^{\dag}, \ea\]
and form the closures:
\[\ba{rcl}Z &:=&\cl(S^3 \setminus (K([0,1/2] \times D^2)))\\
Z^{\dag}&:=&\cl(S^3 \setminus (K^{\dag}([0,1/2] \times D^2))).\ea\]
Note that since $[0,1/2] \times D^2 \approx D^3$, $Z \approx Z^{\dag} \approx D^3$ and \[\partial Z \approx \partial([0,1/2]\times D^2) \approx \{0,1/2\} \times D^2 \cup_{\{0,1/2\} \times S^1} [0,1/2]\times S^1 \approx S^2.\]  We then identify $\partial Z \approx \partial Z^{\dag}$ in order to form the union
\[S^3 \approx Z \cup_{\partial Z \approx \partial Z^{\dag}} Z^{\dag},\]
using the identification which inverts the first coordinate, so that our addition produces another oriented knot, as follows:
\[K((0,x)) \xrightarrow{\sim} K^{\dag}((1/2,x)) \text{ for } x \in D^2; \text{ and}\]
\[K((t,y)) \xrightarrow{\sim} K^{\dag}((1/2 - t,y)); \;\; t \in [0,1/2],y \in S^1 = \partial D^2.\]
The knot $K \,\sharp\, K^{\dag}$ is then given by \[K([1/2,1] \times \{0\}) \cup_{K(\{1/2,1\} \times \{0\})} K^{\dag}([1/2,1]\times \{0\}) \subseteq Z \cup_{\partial Z \approx \partial Z^{\dag}} Z^{\dag} \approx S^3.\]
\qed \end{definition}

We study the effect of connected sum on the knot groups using the Seifert-Van-Kampen Theorem.

\begin{proposition}\label{Prop:decompknot-extconnectedsum}
Denote by $X^{\ddag} := \cl(S^3 \setminus N(K \, \sharp \, K^{\dag}))$ the knot exterior for the connected sum $K^{\ddag} = K \, \sharp \, K^{\dag}$ of two oriented knots.  Let $g_1,g_1^{\dag}$ be chosen generators in the fundamental groups $\pi_1(X;x_0)$ and $\pi_1(X^{\dag};x_0^{\dag})$ respectively, generating preferred subgroups $\langle g_1 \rangle \xrightarrow{\simeq} \Z \leq \pi_1(X;x_0)$ and $\langle g_1^{\dag} \rangle \xrightarrow{\simeq} \Z \leq \pi_1(X^{\dag};x_0^{\dag})$.  Recall that the basepoints $x_0,x_0^{\dag}$ are chosen to lie in the boundaries.  $X^{\ddag}$ admits a decomposition as:
\[X^{\ddag} = X \cup_{S^1 \times \mathring{D}^1} X^{\dag}\]
where $S^1 \times \mathring{D}^1 \subseteq \partial X,\partial X^{\dag}$, are cylinders in the boundaries of $X$ and $X^{\dag}$ chosen so that if we deform $g_1$ and $g_1^{\dag}$ so that they lie in $\partial X$ and $\partial X^{\dag}$ respectively, then the images of $g_1$ and $g_1^{\dag}$ coincide with the boundary component $S^1 \times \{1\}$ of the closure of the subsets $S^1 \times \mathring{D}^1$ in  $\partial X$ and $\partial X^{\dag}$, and $x_0,x_0^{\dag}$ is $\{1\} \times \{1\}$. In addition, they should be chosen so that the orientation on $K$ agrees, whereas that on $K^{\dag}$ disagrees, with the orientation of $\mathring{D}^1$ in $S^1 \times \mathring{D}^1$.  The subset $S^1 \times \mathring{D}^1 \subset \partial X^\dag$ has the orientation of $\mathring{D}^1$ reversed from the orientation on the $S^1 \times \mathring{D}^1$ which was used in the identification of Definition \ref{Defn:connectedsum}
\end{proposition}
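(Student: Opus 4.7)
The proof will be a direct topological verification working from the construction of the connected sum given in Definition \ref{Defn:connectedsum}. My starting point is the decomposition $S^3 \approx Z \cup_{\partial Z \approx \partial Z^{\dag}} Z^{\dag}$ together with the observation that the regular neighbourhood $N(K\,\sharp\,K^{\dag})$ meets the gluing sphere transversely in two disks, so that it is the union of a ``half solid torus'' $K([1/2,1] \times D^2) \subset Z$ and $K^{\dag}([1/2,1] \times D^2) \subset Z^{\dag}$, glued along their end disks. The plan is to deduce
\[X^{\ddag} \;=\; \bigl(Z \setminus \interior(K([1/2,1]\times D^2))\bigr) \;\cup\; \bigl(Z^{\dag}\setminus \interior(K^{\dag}([1/2,1]\times D^2))\bigr),\]
with gluing along what remains of $\partial Z \cap \partial Z^{\dag}$.

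Next I would identify each piece with the correct knot exterior. Since $N(K) = K([0,1/2]\times D^2) \cup K([1/2,1]\times D^2)$ and $Z = \cl(S^3 \setminus K([0,1/2]\times D^2))$, it is straightforward to check set-theoretically that $Z \setminus \interior(K([1/2,1]\times D^2)) = \cl(S^3 \setminus N(K)) = X$, and similarly for $X^{\dag}$. This uses nothing more than noting that the two removed sets are disjoint except on a codimension-one piece that lies entirely in the new boundary. To compute the gluing locus, I track the intersection of $X$ with $\partial Z$: the disks $K(\{0\}\times D^2)$ and $K(\{1/2\}\times D^2)$ in $\partial Z$ lie inside $N(K)$ and so are removed (apart from their bounding circles), leaving precisely the annulus $K([0,1/2]\times S^1) \subset \partial X$. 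This annulus is an embedded closed cylinder in $\partial X$, which I write as $S^1 \times D^1$ (with its interior being the indicated $S^1 \times \mathring{D}^1$), and under the gluing of Definition \ref{Defn:connectedsum} it is identified with the corresponding cylinder in $\partial X^{\dag}$ with the $D^1$-coordinate reversed, establishing the claimed orientation discrepancy.

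Finally I would verify the basepoint and meridian statement. A meridian $g_1$ of $K$ can, after a small isotopy in $X$, be pushed into $\partial X$ to coincide with any of the curves $K(\{t_0\}\times S^1)$ for a chosen $t_0 \in [0,1]$; choosing $t_0 = 1/2$ places $g_1$ on one boundary circle of the gluing cylinder $K([0,1/2]\times S^1)$, which becomes the ``$S^1 \times \{1\}$'' of the statement. The basepoint $x_0 = K((1/2,1))$ is then automatically at the corresponding $\{1\} \times \{1\}$. The orientation compatibility with the $D^1$-direction follows from the framing convention in Definition \ref{Defn:connectedsum} (the canonical orientation of $[0,1]$ agrees with that of $K$), and for $K^{\dag}$ the sign flips by the coordinate-reversing rule $(t,y) \mapsto (1/2 - t,y)$ in the identification $\partial Z \approx \partial Z^{\dag}$.

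I do not anticipate any hard step here: the whole argument is essentially bookkeeping. The only real subtlety --- and what I would flag as the place most likely to go wrong --- is keeping the identifications between $K$-coordinates and $K^{\dag}$-coordinates consistent, so that the final statement about the direction of $\mathring{D}^1$ in $\partial X^{\dag}$ is recorded with the correct sign. Everything else reduces to checking that closures, interiors and complements behave the way the pictures suggest, which is routine once the pieces $Z$, $Z^{\dag}$, $N(K)\cap Z$ and $N(K^{\dag})\cap Z^{\dag}$ are drawn explicitly.
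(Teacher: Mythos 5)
Your proof is correct and follows the same route as the paper's: a direct application of Definition~\ref{Defn:connectedsum} to identify the exterior of $K\,\sharp\,K^\dag$ as the two knot exteriors glued along the annulus $K([0,1/2]\times S^1) \approx S^1 \times D^1$ that survives from the gluing sphere $\partial Z$. You carry out the set-theoretic bookkeeping (verifying $Z\setminus\interior(K([1/2,1]\times D^2)) = X$, locating the gluing annulus, and tracking how $(t,y)\mapsto(1/2-t,y)$ reverses the $D^1$-coordinate) explicitly, whereas the paper compresses exactly these steps into the visualization $X \approx D^3\setminus(\mathring{D}^2\times D^1)$ and a reference to Figure~\ref{Fig:addingknots2}.
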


\begin{proof}

This is an application of the definition of connected sum to the knot exteriors.  The effect of Definition \ref{Defn:connectedsum} is to glue the two knot exteriors together along a copy of $S^1 \times D^1$ in each of the boundaries.  For definiteness we choose these to include the basepoint and certain specific generators of the fundamental groups.  With the boundary of $X$ split into two we can visualise this by considering the knot exterior to be $D^3 \setminus (\mathring{D}^2 \times D^1)$, with boundary: \[S^2 \setminus (\mathring{D}^2 \times S^0) \cup_{S^1 \times S^0} S^1 \times D^1 \approx S^1 \times D^1 \cup_{S^1 \times S^0} S^1 \times D^1 \approx S^1 \times S^1.\]
We then identify half of the boundary of each of $X$ and $X^{\dag}$ as shown in Figure \ref{Fig:addingknots2}.

\begin{figure}
\begin{center}
{\psfrag{A}{$X$}
\psfrag{Y}{$X^{\dag}$}
\includegraphics[width=10cm]{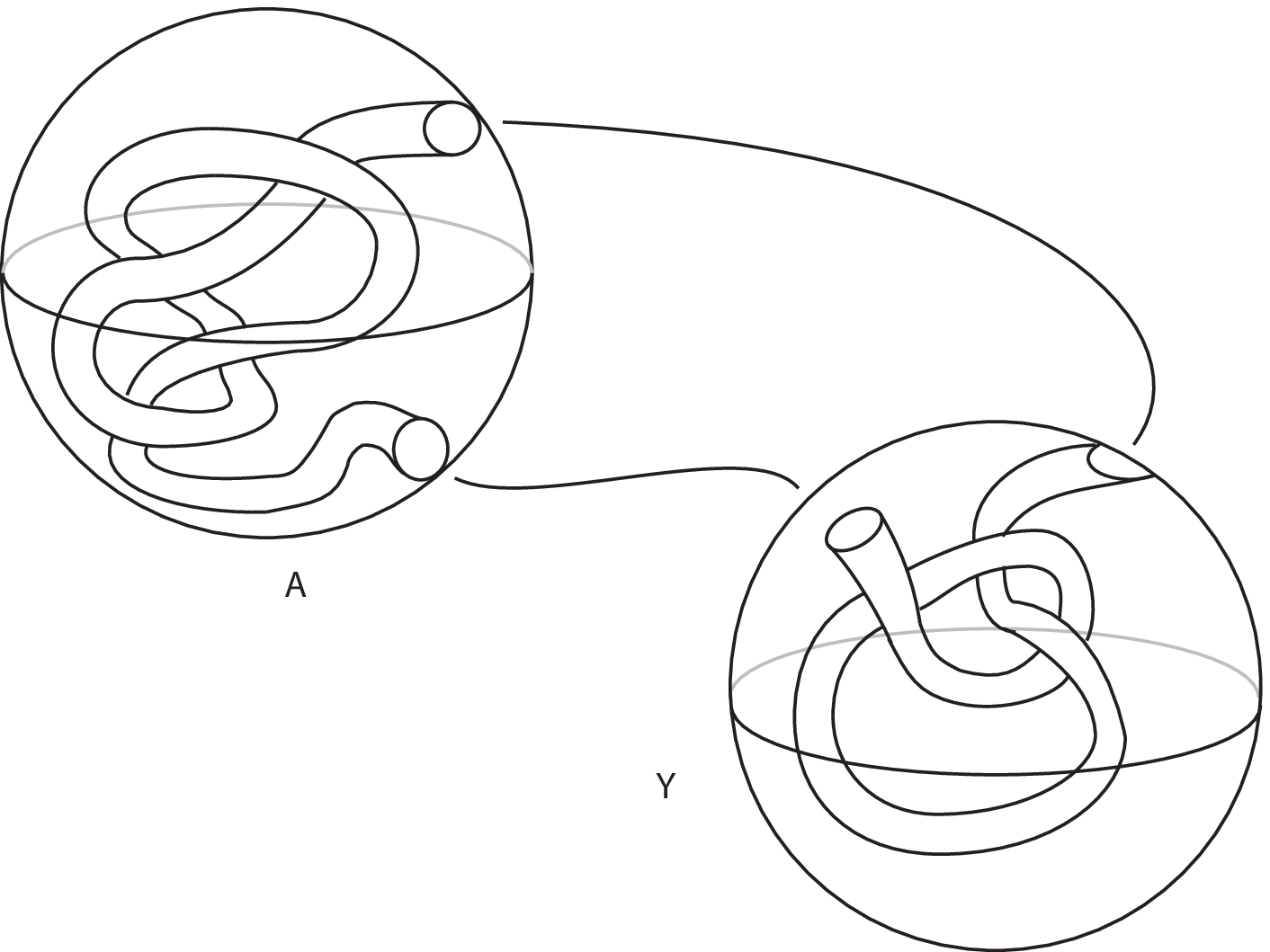}}
\caption{Gluing $X$ and $X^{\dag}$ together along part of their boundaries $S^2 \setminus (S^0 \times \mathring{D}^2)$ to create the exterior of the connected sum, $X^{\ddag}$.}
\label{Fig:addingknots2}
\end{center}
\end{figure}

\end{proof}

\begin{proposition}\label{prop:addingknotgroups}
The knot group for a connected sum $K\,\sharp\,K^{\dag}$ is given by the amalgamated free product of the knot groups of $K$ and $K^{\dag}$, with our chosen meridians identified: \[\pi_1(X^{\ddag}) = \pi_1(X) \ast_{\Z} \pi_1(X^{\dag}),\]
so that $g_1 = g_1^{\dag}$.
\end{proposition}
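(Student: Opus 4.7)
The plan is to apply the Seifert-Van Kampen theorem to the decomposition $X^{\ddag} = X \cup_{S^1 \times \mathring{D}^1} X^{\dag}$ provided by Proposition \ref{Prop:decompknot-extconnectedsum}. First I would replace the closed pieces by slightly thickened open neighborhoods $U \supset X$ and $U^\dag \supset X^\dag$ in $X^{\ddag}$, each deformation retracting onto its respective knot exterior, with intersection $U \cap U^\dag$ deformation retracting onto $S^1 \times \mathring{D}^1$. This avoids the usual open-set hypothesis issue; alternatively one can invoke the CW version of Van Kampen directly, since our handle decompositions in Chapter \ref{Chapter:handledecomp} make all three spaces into CW complexes meeting along a subcomplex.

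Next I would identify the fundamental groups of the three pieces. Since $S^1 \times \mathring{D}^1 \simeq S^1$, we have $\pi_1(S^1 \times \mathring{D}^1) \cong \Z$, generated by the loop $S^1 \times \{1\}$ based at $\{1\} \times \{1\}$. By the careful choice of basepoints $x_0, x_0^\dag$ and of the gluing cylinder made in Proposition \ref{Prop:decompknot-extconnectedsum}, this loop is freely homotopic in $\partial X$ (respectively $\partial X^\dag$) to the preferred meridian $g_1$ (respectively $g_1^\dag$); the matching orientation conditions on $\mathring{D}^1$ relative to the orientations of $K$ and $K^\dag$ guarantee that the image in $\pi_1(X)$ is exactly $g_1$ and the image in $\pi_1(X^\dag)$ is exactly $g_1^\dag$ (and not their inverses).

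Seifert-Van Kampen then yields
\[
\pi_1(X^{\ddag}) \;\cong\; \pi_1(X) \ast_{\pi_1(S^1 \times \mathring{D}^1)} \pi_1(X^\dag) \;=\; \pi_1(X) \ast_{\Z} \pi_1(X^\dag),
\]
with the amalgamation relation being precisely $g_1 = g_1^\dag$, as claimed. The only subtlety — and the place requiring the most care — is bookkeeping of basepoints and orientations: one must verify that under the inclusions $S^1 \times \mathring{D}^1 \hookrightarrow X$ and $S^1 \times \mathring{D}^1 \hookrightarrow X^\dag$, the \emph{same} generator of $\Z$ maps to $g_1$ on one side and to $g_1^\dag$ on the other, rather than to $g_1^{\pm 1}$ and $(g_1^\dag)^{\mp 1}$. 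This is precisely what the orientation-reversal convention on the second copy of $\mathring{D}^1$ in the gluing (built into Definition \ref{Defn:connectedsum} and recorded in Proposition \ref{Prop:decompknot-extconnectedsum}) is designed to ensure, and once it is verified the amalgamated product description follows immediately.
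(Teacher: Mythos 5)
Your proposal is correct and takes essentially the same approach as the paper: both apply Seifert-Van Kampen to the decomposition from Proposition \ref{Prop:decompknot-extconnectedsum}, using open neighbourhoods that deformation retract onto $X$ and $X^\dag$ with intersection homotopy equivalent to $S^1$, and both identify the images of the generator as $g_1$ and $g_1^\dag$. The extra emphasis you place on orientation and basepoint bookkeeping is appropriate but is the same conceptual content the paper handles via its choices in Proposition \ref{Prop:decompknot-extconnectedsum} and Definition \ref{Defn:connectedsum}.
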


\begin{proof}
We make use of the decomposition of $X^{\ddag}$ from Proposition \ref{Prop:decompknot-extconnectedsum} above, and apply the Seifert-Van Kampen theorem to open subsets $U,V \subseteq X^{\ddag}$ where $U$ is an extension of $X$ to include a collar neighbourhood of $S^1 \times \mathring{D}^1$ in $X^{\dag}$: $X$ is a deformation retract of $U$. The advantage of $U$ is that it is an open set. Similarly we take $V$ to be an extension of $X^{\dag}$ into $X$.  We therefore have that
\[X^{\ddag} = U \cup_{S^1 \times \mathring{D}^1 \times \mathring{I}} V.\]
Now, $\pi_1(U) = \pi_1(X)$ and $\pi_1(V) = \pi_1(X^{\dag})$.  For the intersection we have $$U \cap V = S^1 \times \mathring{D}^1 \times \mathring{I} \simeq S^1,$$ so that $$\pi_1(U \cap V) \cong \pi_1(S^1) \cong \Z \cong \langle t \rangle.$$  Let $i_U,i_V \colon U \cap V \hookrightarrow U,V$ be the inclusion maps, with $$(i_U)_*,(i_V)_* \colon \Z \cong \pi_1(U \cap V) \rightarrow \pi_1(X),\pi_1(X^{\dag})$$ the induced maps on fundamental groups.  We have:
\[(i_U)_*(t) = g_1; \text{ and}\]
\[(i_V)_*(t) = g_1^{\dag}.\]
Note that we arranged the basepoints of $U, V$ and $U \cap V$ to coincide.  The Seifert Van-Kampen theorem then implies that
\[\pi_1(X^{\ddag}) = \frac{\pi_1(U) \ast \pi_1(V)}{((i_U)_*(t)(i_V)_*(t)^{-1})} = \pi_1(X) \ast_{\small{\Z}} \pi_1(X^{\dag})\]
as claimed.
\end{proof}

We are aiming to specialise to the case of the second derived cover of the knot exterior: we now investigate the structure of the relevant quotient of the fundamental group.

\begin{definition}\label{Defn:derived_subgroups}
The \emph{derived subgroups} $G^{(n)}$ of a group $G$, also denoted $G' = G^{(1)}$, $G'' = G^{(2)}$ et c{\ae}tera, are defined inductively via
\[G^{(0)} := G;\;\;\; G^{(n)} := [G^{(n-1)},G^{(n-1)}],\]
where as usual square brackets indicate the commutator subgroup.
\qed \end{definition}

\begin{lemma}\label{lemma:longitude}
For any knot $K$, the longitude $l$ satisfies $l \in \pi_1(X)^{(2)} = \pi_1(X)''$, the second derived subgroup of the knot group.
\end{lemma}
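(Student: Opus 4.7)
The plan is to use a Seifert surface for $K$ to write the longitude explicitly as a product of commutators of elements of $\pi_1(X)'$, which immediately places it in $\pi_1(X)''$.

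First, I would choose a Seifert surface $F \subset S^3$ for $K$ with $\partial F = K$, and push its interior slightly into $X$ along the normal direction compatible with the zero-framing, so that $\partial F \subset \partial X$ represents the zero-framed longitude $l$. Since $F$ is a compact oriented surface with a single boundary component, of some genus $g$, the fundamental group $\pi_1(F)$ is free on $2g$ generators $a_1, b_1, \dots, a_g, b_g$ and the boundary relator reads
\[
l \;=\; \prod_{i=1}^{g}\,[a_i, b_i] \;\in\; \pi_1(F).
\]
Pushing this identity forward under the inclusion $F \hookrightarrow X$ gives the same expression inside $\pi_1(X)$.

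Next, I would show that the image of $\pi_1(F) \to \pi_1(X)$ lands in $\pi_1(X)^{(1)}$. Recall $H_1(X;\Z) \cong \Z$, generated by a meridian $\mu$, and the abelianization map is given by linking number with $K$. For any loop $\gamma \subset F \subset X$, we may push $\gamma$ off $F$ in the chosen normal direction to obtain a disjoint parallel copy $\gamma'$. Then $\gamma'$ is disjoint from $F$, so its algebraic intersection number with $F$ (a Seifert surface for $K$) is zero; hence $\mathrm{lk}(\gamma, K) = 0$, and $\gamma \in \ker(\pi_1(X) \to H_1(X;\Z)) = \pi_1(X)^{(1)}$.

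Applying this to each $a_i$ and $b_i$, every commutator $[a_i, b_i]$ lies in $[\pi_1(X)^{(1)}, \pi_1(X)^{(1)}] = \pi_1(X)^{(2)}$. Since $\pi_1(X)^{(2)}$ is a subgroup, the product $l = \prod_i [a_i, b_i]$ also lies in $\pi_1(X)^{(2)}$, completing the proof. There is no genuine obstacle here; the only subtlety is being careful that the pushed-in boundary represents the \emph{zero-framed} longitude rather than some other framing, which is automatic from choosing the normal push-off consistent with the Seifert surface framing (this is the standard fact that the Seifert framing equals the zero framing).
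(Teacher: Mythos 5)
Your proposal is correct and follows essentially the same route as the paper: write the longitude as a product of commutators of generators of $\pi_1(F)$ via a Seifert surface, and use the push-off/linking-number argument to see that those generators map into $\pi_1(X)^{(1)}$, whence $l \in \pi_1(X)^{(2)}$.
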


\begin{proof}
By definition of the zero framing, the longitude $l$ lies in a Seifert surface for the knot $K$, that is some choice of a compact, connected, orientable surface $F^2 \subseteq S^3$ such that $\partial F = K$. The longitude is isotopic to $K$ in $S^3$ via an isotopy which moves across an annulus in the regular neighbourhood of the knot.  A Seifert surface is homeomorphic to a surface of genus $g$, for some $g$, with a single $S^1$ boundary component.  Therefore $F$ is homotopy equivalent to a wedge of circles $\bigvee_{2g}\,S^1,$ so $\pi_1(F) \cong \underset{\small{2g}}{\ast}\,\Z$, generated by curves $a_1,\dots,a_g$ and their geometric duals $b_1,\dots,b_{2g}$.  The longitude, the element of $\pi_1(F)$ which represents the boundary of $F \cap X$, is the product of commutators
\[l = [a_1,b_1][a_2,b_2]\dots[a_g,b_g].\]
Now, \[H_1(X;\Z) \cong \frac{\pi_1(X)}{\pi_1(X)'} \cong \Z\]
generated by a meridian of the knot.  The isomorphism $H_1(X;\Z) \xrightarrow{\simeq} \Z$ is given by the linking number with $K$, which can be calculated by counting, with signs, transverse intersections of the cycle with a Seifert surface.  An element $x \in \pi_1(X)$ represents the zero cycle in $H_1(X;\Z)$ if and only if it has linking number zero with the knot, but also if and only if it lies in $\pi_1(X)'$, by the Hurewicz theorem.  The commutator subgroup therefore consists of those loops which do not link with the knot.  The image of the generators of $\pi_1(F)$ under the inclusion induced map $i_* \colon \pi_1(F) \to \pi_1(X)$ have linking number zero with the knot, since they can be pushed off the Seifert surface along the trivial normal bundle of $F \subseteq X$ so as not to intersect it at all.  This means that $l \in \pi_1(X)''$ as claimed.
\end{proof}

Complementary to this lemma, note that if the Alexander polynomial of $K$ is not equal to 1, then the longitude does not lie in $\pi_1(X)'''$, as shown in \cite[Proposition~12.5]{Cochran}.  Since we already know by work of Freedman \cite[Theorem~11.7B]{FQ}, that knots with Alexander polynomial one are slice, in all interesting cases we have $l \neq 1 \in \pi_1(X)^{(2)}/\pi_1(X)^{(3)}$.

\begin{proposition}\label{prop:2ndderivedsubgroup}
Let $\phi$ be the quotient map
\[\phi \colon \frac{\pi_1(X)}{\pi_1(X)^{(2)}} \to \frac{\pi_1(X)}{\pi_1(X)^{(1)}} \xrightarrow{\simeq} \Z.\]
Then for each choice of homomorphism
\[\psi \colon \Z \to \frac{\pi_1(X)}{\pi_1(X)^{(2)}}\]
such that $\phi \circ \psi = \Id$, there is an isomorphism:
\[ \theta \colon \frac{\pi_1(X)}{\pi_1(X)^{(2)}} \xrightarrow{\simeq} \Z \ltimes H,\]
where $H := H_1(X;\Z[\Z])$ is the Alexander module.  In the notation of Proposition \ref{Prop:decompknot-extconnectedsum}, and denoting $H^{\dag} := H_1(X^{\dag};\Z[\Z])$ and $H^{\ddag} := H_1(X^{\ddag};\Z[\Z])$, the behaviour of the second derived quotients under connected sum is given by:
\[\frac{\pi_1(X^{\ddag})}{\pi_1(X^{\ddag})^{(2)}} \cong \Z \ltimes H^{\ddag} \cong \Z \ltimes (H \oplus H^{\dag}).\]
That is, we can take the direct sum of the Alexander modules.
\end{proposition}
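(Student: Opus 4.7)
The plan proceeds in two stages. First, to establish the semidirect product structure, I begin from the short exact sequence of groups
\[
1 \to \pi_1(X)^{(1)}/\pi_1(X)^{(2)} \to \pi_1(X)/\pi_1(X)^{(2)} \xrightarrow{\phi} \pi_1(X)/\pi_1(X)^{(1)} \to 1.
\]
The right-hand group is $\Z$, generated by the class of a meridian. The left-hand group is the abelianisation of $\pi_1(X)^{(1)}$, which is the fundamental group of the infinite cyclic cover $X_\infty$; by Hurewicz it is $H_1(X_\infty;\Z) \cong H_1(X;\Z[\Z]) = H$. Any section $\psi \colon \Z \to \pi_1(X)/\pi_1(X)^{(2)}$ of $\phi$ exists because $\Z$ is free, and it provides a semidirect product decomposition. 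The induced action of $\Z$ on $H$ by conjugation coincides with the deck transformation action of $\Z = \pi_1(X)/\pi_1(X)^{(1)}$ on $H_1(X_\infty;\Z)$, which is exactly the $\Z[\Z]$-module structure on the Alexander module. This delivers $\theta \colon \pi_1(X)/\pi_1(X)^{(2)} \xrightarrow{\simeq} \Z \ltimes H$.

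Second, for the behaviour under connected sum, I apply the decomposition $X^{\ddag} = X \cup_{S^1 \times \mathring{D}^1} X^{\dag}$ of Proposition \ref{Prop:decompknot-extconnectedsum}, together with the amalgamated free product structure $\pi_1(X^{\ddag}) = \pi_1(X) \ast_{\Z} \pi_1(X^{\dag})$ of Proposition \ref{prop:addingknotgroups}, where the identified generator $g_1 = g_1^{\dag}$ is a common meridian. By the first part, $\pi_1(X^{\ddag})/\pi_1(X^{\ddag})^{(2)} \cong \Z \ltimes H^{\ddag}$ where $H^{\ddag} = H_1(X^{\ddag};\Z[\Z])$. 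I identify $H^{\ddag}$ by passing to the infinite cyclic cover $X^{\ddag}_\infty$: since the meridian of $S^1 \times \mathring{D}^1$ maps isomorphically to the generator of $\Z$ under abelianisation, the preimage of $S^1 \times \mathring{D}^1$ in $X^{\ddag}_\infty$ is a single connected copy of $\R \times \mathring{D}^1$. The Mayer-Vietoris sequence of the resulting decomposition $X^{\ddag}_\infty = X_\infty \cup X^{\dag}_\infty$ then reads
\[
H_1(\R) \to H \oplus H^{\dag} \to H^{\ddag} \to H_0(\R) \to H_0(X_\infty) \oplus H_0(X^{\dag}_\infty),
\]
and since $H_1(\R) = 0$ and the rightmost map is the diagonal inclusion $\Z \hookrightarrow \Z \oplus \Z$, it collapses to $H^{\ddag} \cong H \oplus H^{\dag}$.

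The main subtlety will be upgrading this abstract isomorphism of kernels to an isomorphism of semidirect products. I resolve this by choosing the splittings $\psi, \psi^{\dag}, \psi^{\ddag}$ compatibly: all three send the generator $1 \in \Z$ to the class of the common meridian $g_1 = g_1^{\dag}$. With this choice the $\Z$-factors are canonically identified, and the Mayer-Vietoris sequence above, being a sequence of $\Z[\Z]$-modules with $\Z$ acting by deck transformations, is automatically equivariant for the action of this meridian. Consequently the action of $\Z$ on $H^{\ddag}$ goes over to the diagonal action on $H \oplus H^{\dag}$, yielding the desired isomorphism of semidirect products. The most delicate point in the plan is the verification that the Mayer-Vietoris decomposition of $X^{\ddag}_\infty$ is the correct one; in particular that the preimage of the gluing cylinder is connected and that each $X_\infty, X^{\dag}_\infty$ appears as a single subspace rather than as multiple translates.
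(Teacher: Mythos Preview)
Your argument for the first assertion is essentially identical to the paper's: both use the short exact sequence with kernel $\pi_1(X)'/\pi_1(X)'' \cong H_1(X_\infty;\Z) = H$, split it using freeness of $\Z$, and identify the conjugation action with the deck-transformation $\Z[\Z]$-module structure.

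For the connected-sum part you take a genuinely different route. The paper works purely group-theoretically: it computes $\big(\pi_1(X)\ast_\Z\pi_1(X^\dag)\big)/\big(\pi_1(X)\ast_\Z\pi_1(X^\dag)\big)''$ by first passing to $\big((\pi_1(X)/\pi_1(X)'')\ast_\Z(\pi_1(X^\dag)/\pi_1(X^\dag)'')\big)/[\pi_1(X)',\pi_1(X^\dag)']$, rewrites this as $\big(\Z\ltimes(H\ast H^\dag)\big)/[H,H^\dag]\cong \Z\ltimes(H\oplus H^\dag)$, and then shows via an explicit inner automorphism (using that $1-t$ acts invertibly on $H$) that the meridians can be taken to map to $(1,0)$. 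You instead compute $H^\ddag$ directly by Mayer--Vietoris on the infinite cyclic cover, observing that the gluing cylinder lifts to a single contractible copy of $\R\times\mathring D^1$; this is correct and arguably cleaner. Your verification that the lifts of $X$, $X^\dag$ and the cylinder are each connected is exactly right, and the equivariance of Mayer--Vietoris gives you the $\Z[\Z]$-module isomorphism you need. The one thing the paper's approach buys that yours does not is the explicit inner-automorphism statement: the paper records (and later uses, in showing that the fundamental symmetric Poincar\'e triad is independent of the choice of meridian) that any two choices of $\psi(1)$ are related by conjugation in $\Z\ltimes H$. Your proof of the proposition as stated is complete, but you would need to supply that observation separately if you follow the paper further.
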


\begin{proof}
The first statement follows from the exact sequence of groups:
\[\frac{\pi_1(X)'}{\pi_1(X)''} \rightarrowtail \frac{\pi_1(X)}{\pi_1(X)''} \twoheadrightarrow \frac{\pi_1(X)}{\pi_1(X)'}.\]
Note that $\pi_1(X_{\infty}) = \pi_1(X)'$ since loops which link the knot lift to paths in $X_{\infty}$ with end points in different sheets of the covering.  Therefore $\pi_1(X)'/\pi_1(X)'' \cong \pi_1(X_{\infty})/\pi_1(X_{\infty})' \cong H_1(X_{\infty};\Z) \cong H_1(X;\Z[\Z]) = H$.  After making a choice of isomorphism $\pi_1(X)/\pi_1(X)' \xrightarrow{\simeq} \Z$, which we make using the orientation rule and our Conventions \ref{conventions}, so that the isomorphism is given by linking number, the above short exact sequence therefore yields
\[H_1(X;\Z[\Z]) \rightarrowtail \frac{\pi_1(X)}{\pi_1(X)^{''}} \twoheadrightarrow \Z.\]
Since $\Z$ is a free group, the sequence splits.  We call by $\phi \colon \pi_1(X)/\pi_1(X)'' \to \Z$ the surjection from this sequence, and let the splitting be given by a map $\psi \colon \Z \to \pi_1(X)/\pi_1(X)''$ such that \[t := \psi(1),\] $\phi \circ \psi = \text{Id}$.  We therefore have a map \[\theta \colon \frac{\pi_1(X)}{\pi_1(X)^{''}} \to \Z \ltimes H\] given by \[g \mapsto (\phi(g), gt^{-\phi(g)}).\]  We claim that $\theta$ is an isomorphism.  Note that $\phi(gt^{-\phi(g)}) = \phi(g) - \phi(g) = 0$, so by exactness we have an element of $\pi_1(X)'/\pi_1(X)'' = H$.  We therefore have that $\theta$ is an injection.  To see that it is a surjection note that \[(n,h) = (n,(ht^n)t^{-n}) = \theta(ht^n)\]
for $n \in \Z, h \in H \leq \pi_1(X)/\pi_1(X)''$.  We also check that $\theta$ is a group homomorphism. Suppose $\theta(g_1) = (n,g_1t^{-n})$ and $\theta(g_2) = (m,g_2t^{-m})$.
\[\theta(g_1g_2) = (\phi(g_1g_2),g_1g_2t^{-\phi(g_1g_2)}) = (n+m,g_1g_2t^{-n-m}).\]
The action of $n \in \pi_1(X)/\pi_1(X)' \cong \Z$ on $h \in \pi_1(X)'/\pi_1(X)''$ which occurs in the semi-direct product is by conjugation:
\[(n,h) \mapsto t^{n}ht^{-n}.\]
Therefore:
\[(n,g)\cdot(m,h) = (n+m,gt^{n}ht^{-n}) \in \Z \ltimes \frac{\pi_1(X)'}{\pi_1(X)''}.\]
which yields:
\begin{eqnarray*}\theta(g_1)\theta(g_2) = (n,g_1t^{-n})\cdot(m,g_2t^{-m}) = (n+m,g_1t^{-n}(t^n(g_2t^{-m})t^{-n})) &=& \\ (n+m,g_1g_2t^{-n-m}) &=& \theta(g_1g_2).\end{eqnarray*}
This action by conjugation corresponds to translating in the infinite cyclic cover, so if $\Z \xrightarrow{\simeq} \langle t\rangle$, then the action of $\Z$ on $H = H_1(X;\Z[t,t^{-1}])$ is by (left) multiplication by $t$.  The group element $t$ corresponds to our chosen meridian for a knot as in Proposition \ref{Prop:decompknot-extconnectedsum}.  Considering $\Z = \langle t \rangle$ and $H$ as a left $\Z[t,t^{-1}]$-module, we can write the multiplication of $\Z \ltimes H$ as:
\[(t^n,g)\cdot(t^m,h) = (t^{n+m},g + t^nh).\]

To see how the second derived quotients of the fundamental groups add under addition of knots we take the quotient of the conclusion of Proposition \ref{prop:addingknotgroups} by the second derived subgroups.  Note that $H$, $H^{\dag}$ and $H^{\ddag}$ are modules over the group ring $\Z[t,t^{-1}]$ for the same $t$, which comes from the preferred meridian of each of $X,X^{\dag}$ and $X^{\ddag}$ respectively; when the spaces are identified these meridians all coincide.
\begin{eqnarray*} \Z \ltimes H^{\ddag} & \cong & \frac{\pi_1(X^{\ddag})}{\pi_1(X^{\ddag})^{''}}\\
& \cong & \frac{\pi_1(X) \ast_{\Z} \pi_1(X^{\dag})}{(\pi_1(X) \ast_{\Z} \pi_1(X^{\dag}))''}\\
& \cong & \left(\frac{\pi_1(X)}{\pi_1(X)''} \ast_{\Z} \frac{\pi_1(X^{\dag})}{\pi_1(X^{\dag})''}\right)\text{\huge{/}}[\pi_1(X)',\pi_1(X^{\dag})']\\
& \cong & \frac{(\Z \ltimes H)\ast_{\Z}(\Z \ltimes H^{\dag})}{[\pi_1(X)',\pi_1(X^{\dag})']}.\\
\end{eqnarray*}
We now need to be sure that the two group elements which we identify, $g_1$ and $g_1^{\dag}$, map to $(1,0) \in \Z \ltimes H$ and $(1,0^{\dag}) \in \Z \ltimes H^{\dag}$ respectively under the compositions
\[\pi_1(X) \to \frac{\px}{\px^{(2)}} \to \Z \ltimes H\]
and
\[\pi_1(X^{\dag}) \to \frac{\pi_1(X^{\dag})}{\pi_1(X^{\dag})^{(2)}} \to \Z \ltimes H^{\dag}.\]
If we had chosen \[\psi(1) = g_1 \in \frac{\pi_1(X)}{\pi_1(X)''}\] and \[\psi^{\dag}(1) = g_1^{\dag} \in \frac{\pi_1(X^{\dag})}{\pi_1(X^{\dag})''}\] then this would be the case and we would have:
\begin{eqnarray*}
& & \frac{(\Z \ltimes H)\ast_{\Z}(\Z \ltimes H^{\dag})}{[\pi_1(X)',\pi_1(X^{\dag})']}\\
& \cong & \frac{\Z \ltimes (H \ast H^{\dag})}{[H,H^{\dag}]}\\
& \cong & \Z \ltimes (H \oplus H^{\dag}),
\end{eqnarray*}
and the proof would be complete.  The point is that we can always arrange that the image of $g_1$ is $(1,0)$ by applying an inner automorphism of $\Z \ltimes H$, and similarly for $g_1^\dag$ and $\Z \ltimes H^\dag$.  Recall (\cite[Proposition~1.2]{Levine2}) that $1-t$ acts as an automorphism of $H$.  We can therefore choose $h'_1 \in H$ such that $(1-t)h'_1 = h_1$.  Then we have that:
\begin{eqnarray*}
(0,h'_1)^{-1}(1,h_1)(0,h'_1) &=& (0,-h'_1)(1,h_1)(0,h'_1) \\
&=& (1,-h'_1 + h_1)(0,h'_1)\\
&=& (1,-h'_1 + h_1 + th'_1)\\
&=& (1,h_1 - (1-t)h'_1)\\
&=& (1,h_1-h_1) = (1,0).
\end{eqnarray*}
So, as claimed, we can compose the splittings $\psi$ and $\psi^\dag$ with suitable inner automorphisms and so achieve the desired conditions on the meridians which we identify.  Therefore the second derived quotients of the fundamental groups indeed add under connected sum as claimed.
\end{proof}

\begin{remark}
The fact that we can use an \emph{inner} automorphism in the proof of Proposition \ref{prop:2ndderivedsubgroup} will be useful in the next chapter when we wish to show that our fundamental symmetric Poincar\'{e} triad does not depend on choices, in particular the choice of group elements $g_1$ and $l_a$: all possible choices are related by a suitable conjugation.
\end{remark} 

\chapter{A Monoid of Chain Complexes}\label{Chapter:semigroup}

We are now ready to define a set of purely algebraic objects which capture the necessary information to produce concordance obstructions at the metabelian level.  We define a set comprising 3-dimensional symmetric Poincar\'{e} triads over the group ring $\zh$ for certain $\Z[\Z]$-modules $H$.  We have shown explicitly how to construct these objects, starting with a diagram of a knot.  In some sense, we are to forget that these chain complexes originally arose from geometry, and to perform operations on them purely with reference to the algebraic data which we store with each element.  The primary operation which we introduce in this chapter is a way to add these chain complexes, so that we obtain an abelian monoid.  On the other hand, we would not do well pedagogically to forget the geometry.  The great merit of the addition operation we put forwards here is that it closely mirrors geometric addition of knots by connected sum.

Our elements, representing the knot exteriors, are essentially algebraic $\Z$--homology cobordisms from the chain complex of the cylinder $S^1 \times D^1$ to itself, all over the group rings $\zh$ over the semi--direct products which arise as the quotients of knot groups by their second derived subgroups, with $H$ an Alexander module (Theorem \ref{Thm:Levinemodule}).  The crucial extra condition is a consistency condition, which relates $H$ to the actual homology of the chain complex.  Since the Alexander module changes under addition of knots and in a concordance, this extra control is vital in order to formulate a concordance obstruction theory.

\begin{remark}\label{rmk:keepnotationunicover}
Even though we work at the level of the second derived quotient, we maintain the notation for group ring elements of the universal cover.  While this introduces a certain amount of redundancy at this stage, we consider this a small price since it leaves the way open for generalisation further up the derived series in future work.
\end{remark}

We quote the following theorem of Levine, specialised here to the case of knots in dimension 3, and use it to define the notion of an abstract Alexander module.

\begin{theorem}[\cite{Levine2}]\label{Thm:Levinemodule}
Let $K$ be a knot, and let  $H:= H_1(M_K;\Z[\Z]) \cong H_1(X_{\infty};\Z)$  be its Alexander module.  Take $\Z[\Z] = \Z[t,t^{-1}]$.  Then $H$ satisfies the following properties:
\begin{description}
\item[(a)] The Alexander module $H$ is of type $K$: that is, $H$ is finitely generated over $\Z[\Z]$, and multiplication by $1-t$ is a module automorphism of $H$.  These two properties imply that $H$ is $\Z[\Z]$-torsion.
\item[(b)] The Alexander module $H$ is $\Z$-torsion free.  Equivalently, for $\Z[\Z]$-modules of type $K$, the homological dimension\footnote{This is defined as the minimal possible length of a projective resolution.} of $H$ is 1.
\item[(c)] The Alexander module $H$ satisfies Blanchfield Duality: $$\overline{H} \cong \Ext^1_{\Z[\Z]}(H,\Z[\Z]) \cong \Ext^0_{\Z[\Z]}(H,\Q(\Z)/\Z[\Z]) \cong \Hom_{\Z[\Z]}(H,\Q(\Z)/\Z[\Z])$$ where $\overline{H}$ is the conjugate module defined by using the involution defined by $t \mapsto t^{-1}$ to make $H$ into a right module.
\end{description}
Conversely, given a $\Z[\Z]$-module $H$ which satisfies properties (a), (b) and (c), there exists a knot $K$ such that $H_1(X;\Z[\Z]) \cong H$.

We say that a $\Z[\Z]$-module which satisfies (a),(b) and (c) is an Alexander module, and denote the class of Alexander modules by $\mathcal{A}$.
\end{theorem}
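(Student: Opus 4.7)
The plan is to follow Levine's original approach, treating the forward direction and converse separately. For the forward direction I would verify each of the three properties directly from the geometry of the knot exterior $X$ and its infinite cyclic cover $X_\infty$.

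For property (a), I would use the Wang exact sequence of the cover $X_\infty \to X$,
\[\cdots \to H_n(X_\infty;\Z) \xrightarrow{t-1} H_n(X_\infty;\Z) \to H_n(X;\Z) \to H_{n-1}(X_\infty;\Z) \to \cdots,\]
together with $H_*(X;\Z) \cong H_*(S^1;\Z)$ and $H_2(X;\Z)=0$. Injectivity of $t-1$ on $H_1(X_\infty;\Z)$ is forced by $H_2(X;\Z)=0$; surjectivity is forced by the fact that the connecting map $H_1(X;\Z) \to H_0(X_\infty;\Z)$ is an isomorphism, since $t$ acts trivially on $H_0$. Finite generation is immediate from the finiteness of a handle decomposition, as constructed in Chapter \ref{Chapter:handledecomp}. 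For property (b), I would use that $X$ deformation retracts onto a finite 2-complex (Chapter \ref{Chapter:handledecomp} produces an explicit such complex), giving a 2-term free $\Z[\Z]$-module chain complex computing $H_*(X;\Z[\Z])$; using the vanishing $H_2(X;\Z[\Z])=0$ coming again from the Wang sequence, this yields a free resolution of $H$ of length at most $1$, hence homological dimension $\le 1$. For $\Z$-torsion freeness, I would exploit the Seifert surface presentation $H \cong \coker(tV - V^T)$ and the unimodularity of the intersection pairing $V - V^T$ on a Seifert surface: any $n$-torsion element lifts to a vector whose $n$-fold multiple lies in the image of $tV - V^T$, and unimodularity of $V - V^T$ at $t = 1$ lets one divide through.

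Property (c), Blanchfield duality, I would obtain by applying Poincaré duality to the closed zero-surgery $M_K$, transferring across the pair $(M_K, D^2\times S^1)$ to $X$, and then reinterpreting Poincaré duality via the universal coefficient spectral sequence $\mathrm{Ext}^p_{\Z[\Z]}(H_q(X;\Z[\Z]),\Z[\Z]) \Rightarrow H^{p+q}(X;\Z[\Z])$. The fact that $H$ has homological dimension $1$ collapses the sequence sufficiently that only the $\mathrm{Ext}^1$ term contributes in the appropriate degree, and identifying this $\mathrm{Ext}^1$ with $\mathrm{Hom}(H,\Q(t)/\Z[\Z])$ via the coefficient Bockstein from $0 \to \Z[\Z] \to \Q(t) \to \Q(t)/\Z[\Z] \to 0$ (using that $H$ is $\Z[\Z]$-torsion, by (a)) yields the required isomorphism with $\overline{H}$.

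For the converse, given an abstract $H \in \mathcal{A}$, the plan is to manipulate a length-$1$ free resolution of $H$ using the Blanchfield pairing as an extra datum, so as to produce a square presentation matrix of the shape $tV - V^T$ with $V$ an integral matrix and $V - V^T$ unimodular — this is the key homological algebra step and the one I expect to be the main obstacle, since it requires carefully exploiting duality (c) and $\Z$-torsion freeness (b) simultaneously to bring a generic presentation into the special Seifert form. Once such a $V$ is obtained, realize it geometrically as the Seifert matrix of a knot by plumbing bands on a genus-$g$ surface in $S^3$ with linking data prescribed by $V$; a direct computation then shows the resulting knot has Alexander module isomorphic to $H$.
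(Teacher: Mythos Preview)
The paper does not prove this theorem; it is stated with attribution to Levine \cite{Levine2} and no proof is given, only a brief remark afterwards noting that $\Z[\Z]$ is not a PID and referring the reader to \cite{Levine2}, \cite{Kearton2}, \cite{Friedl}, and \cite{Blanchfield} for details. So there is no proof in the paper to compare your proposal against.

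Your outline is broadly in the spirit of Levine's original arguments: the Wang sequence for (a), a short free resolution for homological dimension $\leq 1$, Poincar\'e duality plus the universal coefficient spectral sequence for (c), and realisation via a Seifert matrix for the converse. One caution: in your sketch of (b) you say the 2-complex gives a 2-term free resolution of $H$, but the cellular chain complex has three terms $C_2 \to C_1 \to C_0$, and it is the kernel of $C_1 \to C_0$ (a submodule of a free module, hence torsion-free but not obviously projective over $\Z[\Z]$) that one must resolve; the passage to a length-1 \emph{free} resolution of $H$ requires the additional input that $H_2(X;\Z[\Z])=0$ together with a splitting argument, so that step deserves a bit more care. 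Likewise, the converse step of producing a presentation of the special form $tV - V^T$ from an abstract $H$ satisfying (a)--(c) is, as you correctly flag, the genuinely delicate part of Levine's paper and is not just routine homological algebra.
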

\begin{remark}
Note that if $\Z[\Z]$ were a principal ideal domain (PID), then (b) would be immediate since any module over a PID has homological dimension 1.  The ideal $(2,1+t) \lhd \Z[\Z]$ is not principal and does not contain 1, so $\Z[\Z]$ is not a PID.  Nevertheless Levine shows that classical Alexander modules are $\Z$-torsion free.  Levine \cite{Levine2} defines Blanchfield duality; see also \cite{Kearton2} or \cite{Friedl} for excellent accounts of the Blanchfield pairing; the original reference is \cite{Blanchfield}.  We will define Blanchfield pairings algebraically in Proposition \ref{Prop:chainlevelBlanchfield}.
\end{remark}
We now give the definition of our set of symmetric Poincar\'{e} triads.
\begin{definition}\label{Defn:algebraicsetofchaincomplexes}
We define the set $\mathcal{P}$ to be the set of equivalence classes of triples $(H,\Y,\xi)$ where: $H \in \mathcal{A}$ is an Alexander module; $\Y$ is a 3-dimensional symmetric Poincar\'{e} triad of finitely generated projective $\zh$-module chain complexes of the form:
\[\xymatrix @R+0.5cm @C+0.5cm {\ar @{} [dr] |{\stackrel{g}{\sim}}
(C,\varphi_C) \ar[r]^{i_-} \ar[d]_{i_+} & (D_-,\delta\varphi_-) \ar[d]^{f_-}\\ (D_+,\delta\varphi_+) \ar[r]^{f_+} & (Y,\Phi),
}\]
with chain maps $i_{\pm}$, chain maps $f_{\pm}$ which induce $\Z$-homology equivalences, and a chain homotopy $g \colon f_- \circ i_- \sim f_+ \circ i_+ \colon C_* \to Y_{*+1}$; and $$\xi \colon H \toiso H_1(\Z[\Z] \otimes_{\zh} Y)$$ is a $\Z[\Z]$-module isomorphism.  We give model chain complexes and symmetric structures for $C,D_{\pm}$ and for the chain maps $i_{\pm}$, which define the chain equivalence classes of these complexes.  To exhibit representatives for these chain equivalence classes, we denote by $g_1 = (1,h_1) \in \Z \ltimes H$ a specified element, and require two elements\footnote{We maintain the superfluous notation $l_b$ for $l_a^{-1}$ in order to keep the notation of the universal cover as promised in Remark \ref{rmk:keepnotationunicover}.} $l_a$ and $l_b$ of $\Z \ltimes H$ such that $l_al_b=1 \in \Z \ltimes H$.  We denote by
\[g_q := l_a^{-1}g_1l_a,\]
which implies that also:
\[g_q = l_bg_1l_b^{-1}.\]
 A model for $(C,\varphi_C = \varphi \oplus -\varphi)$:
\[\xymatrix @C+2cm @R+1cm{C^0 \ar[r]^{\delta_1} \ar[d]_{\varphi_0 \oplus -\varphi_0} & C^1 \ar[d]^{\varphi_0 \oplus -\varphi_0} \ar[dl]^{\varphi_1 \oplus -\varphi_1}\\
C_1 \ar[r]^{\partial_1} & C_0}\]
is given by:
\[\xymatrix @C+3cm @R+3cm{\bigoplus_2\,\zh \ar[r]^{\left(\begin{array}{cc} g_1^{-1}-1 & 0 \\ 0 & g_q^{-1}-1 \\ \end{array} \right)} \ar[d]_{\left(\begin{array}{cc} 1 & 0 \\ 0 & -1 \\ \end{array} \right)} & \bigoplus_2\,\zh \ar[d]^{\left(\begin{array}{cc} g_1 & 0 \\ 0 & -g_q \\ \end{array} \right)} \ar[dl]_{\left(\begin{array}{cc} 1 & 0 \\ 0 & -1 \\ \end{array} \right)}\\
\bigoplus_2\,\zh \ar[r]^{\left(\begin{array}{cc} g_1-1 & 0 \\ 0 & g_q-1 \\ \end{array} \right)} & \bigoplus_2\,\zh.}\]
The corresponding models for $(D_-,0)$ and $(D_+,0)$, with the chain maps $i_-$ and $i_+$, are given by:
\[\xymatrix @R+1cm @C-0.4cm{D_- & \zh \ar[rrrrr]^{\left(\begin{array}{c}g_1-1\end{array}\right)} &&&&& \zh \\
C \ar[u]^{i_-} \ar[d]_{i_+} & \bigoplus_2\,\zh \ar[rrrrr]_{\left(\begin{array}{cc}g_1-1 & 0 \\ 0 & g_q-1 \end{array}\right)} \ar[u]^{\left(\begin{array}{c} 1 \\ l_a^{-1} \end{array}\right)} \ar[d]_{\left(\begin{array}{c} l_b^{-1} \\ 1 \end{array}\right)} &&&&& \bigoplus_2\,\zh \ar[u]_{\left(\begin{array}{c} 1 \\ l_a^{-1} \end{array}\right)} \ar[d]^{\left(\begin{array}{c} l_b^{-1} \\ 1 \end{array}\right)} \\
D_+ & \zh \ar[rrrrr]_{\left(\begin{array}{c}g_q-1\end{array}\right)} &&&&& \zh,
}\]
The chain complexes $D_{\pm}$ arise by taking the tensor products:
\[\Z[\Z \ltimes H] \otimes_{\Z[\Z]} C_*(S^1;\Z[\Z]),\]
with homomorphisms $\Z[\Z] \to \zh$ given by:
\[t \mapsto g_1\]
for $D_-$ and
\[t \mapsto g_q\]
for $D_+$.  There is therefore a canonical chain isomorphism $\varpi \colon D_- \to D_+$,
\[\xymatrix @R+1cm @C+1cm{ (D_-)_1 \ar[r]^{\partial_1} \ar[d]_{\varpi} & (D_-)_0 \ar[d]^{\varpi} \\
(D_+)_1 \ar[r]^{\partial_1} & (D_+)_0,}\]
given by:
\[\xymatrix @R+1cm @C+1cm {\zh \ar[r]^{(g_1-1)}  \ar[d]^{(l_a)} & \zh \ar[d]^{(l_a)}  \\ \zh \ar[r]^{(g_q - 1)} & \zh.}\]
We require that the maps $\delta\varphi_{\pm}$ have the property that $\varpi \delta\varphi_- \varpi^* = - \delta\varphi_+$, and that there is a chain homotopy
\[\mu \colon f_+ \circ \varpi \simeq f_-.\]
This implies that objects of our set are independent of the choice of $f_-$ and $f_+$.

We take the symmetric structure on our models for $D_{\pm}$ to be zero; $\delta\varphi_{\pm} = 0$, so we do not show this in a diagram.  For the models we therefore have that $\varpi \delta\varphi_- \varpi^* = - \delta\varphi_+$.  We recapitulate the definition of a symmetric Poincar\'{e} triad (Definition \ref{Defn:symmPoincaretriad}).  It means that:
\[i_{\pm} \colon C \to D_{\pm}, (\delta\varphi_{\pm},\pm\varphi_C)\]
are symmetric Poincar\'{e} pairs, and that we have a symmetric Poincar\'{e} pair \[(\eta \colon E := \mathscr{C}((i_-,i_+)^T \colon C \to D_- \oplus D_+) \to Y,(\delta\varphi_- \cup_{\varphi_C} \delta\varphi_+,\Phi))\]  with $\eta$ defined by the chain map:
\[\xymatrix @R+1cm @C+0.5cm { &  C_1 \ar[rr]^-{(-i_-,\partial_C,-i_+)^T} \ar[d]^{-g} && (D_-)_1 \oplus C_0 \oplus (D_+)_1 \ar[r]^-{(\partial_{E})_1} \ar[d]^{(f_-,g,-f_+)} & (D_-)_0 \oplus (D_+)_0 \ar[d]_{(f_-,-f_+)} \\ Y_3 \ar[r]^{\partial_{Y}} & Y_2 \ar[rr]^{\partial_{Y}} && Y_1 \ar[r]^{\partial_{Y}} & Y_0,
}\]
with
\[(\partial_{E})_1 = \left(\begin{array}{ccc} \partial_{D_-} & i_- & 0 \\ 0 & i_+ & \partial_{D_+}\end{array} \right).\]
The maps $f_{\pm}$ must induce $\Z$-homology isomorphisms; note that $H_*(\Z \otimes_{\zh} D_{\pm}) \cong H_*(S^1;\Z)$:
\[(f_{\pm})_* \colon H_*(\Z \otimes_{\zh} D_{\pm}) \xrightarrow{\simeq} H_*(\Z \otimes_{\zh} Y).\]
We call the condition that the isomorphism \[\xi \colon H \xrightarrow{\simeq} H_1(\Z[\Z] \otimes_{\zh} Y)\]
exists, the \emph{consistency condition}, and we call $\xi$ the \emph{consistency isomorphism}.

We say that two triples $(H,\Y,\xi)$ and $(H^\%,\Y^\%,\xi^\%)$ are equivalent if there exists a $\Z[\Z]$-module isomorphism $\omega \colon H \toiso H^\%$, which induces a ring isomorphism $\zh \toiso \zhpc$, and if there exists a chain equivalence of triads $$j \colon \Z[\Z \ltimes H^\%] \otimes_{\zh} \Y \to \Y^\%,$$ such that the following diagram commutes:
\[\xymatrix @R+1cm @C+1cm{H \ar[r]_-{\xi}^-{\cong} \ar[d]_{\omega}^-{\cong} & H_1(\Z[\Z] \otimes_{\zh} Y) \ar[d]_-{j_*}^-{\cong} \\
H^\% \ar[r]_-{\xi^\%}^-{\cong} & H_1(\Z[\Z] \otimes_{\zhpc} Y^\%). }\]
The induced map $j_*$ on $\Z[\Z]$-homology makes sense, as there is an isomorphism
\[\Z[\Z] \cong \Z[\Z] \otimes_{\zhpc} \zhpc,\]
so that $$H_1(\Z[\Z] \otimes_{\zh} Y) \toiso H_1(\Z[\Z] \otimes_{\zhpc} \zhpc \otimes_{\zh} Y).$$
This is an equivalence relation: symmetry is seen using the inverses of the vertical arrows and transitivity is seen by vertically composing two such squares.
\qed\end{definition}

\begin{proposition}\label{prop: fundtriaddefinesanelement}
Given a knot $K$, with the quotient $\pi_1(X)^{(1)}/\pi_1(X)^{(2)} =:H$ considered as a $\Z[\Z]$-module via the action given by conjugation with a meridian, taking the fundamental symmetric Poincar\'{e} triad $\Y$ of $K$ as constructed in Chapters \ref{Chapter:handledecomp}, \ref{chapter:chaincomplex} and \ref{Chapter:duality_symm_structures}, and the geometrically defined canonical isomorphism $$\xi \colon H \toiso H_1(X;\Z[\Z]) \cong H_1(\Z[\Z] \otimes_{\Z[\Z \ltimes H]} Y),$$ we define an element $(H,\Y,\xi) \in \mathcal{P}$.
\end{proposition}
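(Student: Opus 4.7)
The plan is to verify the conditions of Definition 5.2 in turn: that $H \in \mathcal{A}$; that $\Y$ is a $3$-dimensional symmetric Poincar\'{e} triad over $\zh$ matching the prescribed models for $C$, $D_\pm$ and $i_\pm$; that $f_\pm$ induce $\Z$-homology equivalences; that the consistency isomorphism $\xi$ exists; and finally that the resulting element of $\mathcal{P}$ is independent of the choices made in the construction. The bulk of the work is assembly, since the explicit triad has already been produced in Chapters 2--4.

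First, the Hurewicz theorem applied to the infinite cyclic cover $X_\infty$ identifies $\pi_1(X)^{(1)}/\pi_1(X)^{(2)}$ with $H_1(X_\infty;\Z) \cong H_1(X;\Z[\Z])$, the classical Alexander module of $K$, with the $\Z[\Z]$-structure by meridian conjugation on the left agreeing with the deck-transformation structure on the right. Levine's Theorem 5.1 then places $H$ in $\mathcal{A}$. Choosing the splitting $\psi$ of Proposition 4.5 with $\psi(1) = g_1$ produces an isomorphism $\pi_1(X)/\pi_1(X)^{(2)} \cong \Z \ltimes H$ sending $g_1$ to $(1,0)$, as required by the model. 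Tensoring the fundamental symmetric Poincar\'{e} triad over $\zpx$ of Section 4.4 with $\zh$ along the induced surjection $\pi_1(X) \twoheadrightarrow \Z \ltimes H$ yields a $3$-dimensional symmetric Poincar\'{e} triad of finitely generated free $\zh$-modules, whose $C$, $D_\pm$ and $i_\pm$ coincide with the models of Definition 5.2 provided $l_a l_b = 1 \in \Z \ltimes H$. This last equation holds because $l_a l_b = l$ is the longitude, and by Lemma 4.4 the longitude lies in $\pi_1(X)^{(2)}$, hence its image in $\Z \ltimes H$ is trivial. The conditions on $\delta\varphi_\pm = 0$, $\varpi$ and $\mu$ are automatic or routine: $\varpi \delta\varphi_- \varpi^* = 0 = -\delta\varphi_+$ trivially, and $\mu$ is the chain-level record of the fact that $\varpi$ encodes the half-rotation of the boundary torus by multiplication by $l_a$.

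The maps $f_\pm$ arise geometrically from cylinder inclusions $S^1 \times D^1_\pm \hookrightarrow X$, which retract onto inclusions of meridians; since a meridian generates $H_1(X;\Z) \cong \Z$, these are $\Z$-homology equivalences. For $\xi$, tensor $Y = C_*(X;\zh)$ down to $\Z[\Z]$ via the abelianisation $\Z \ltimes H \twoheadrightarrow \Z$, obtaining $C_*(X;\Z[\Z])$ and $H_1(\Z[\Z] \otimes_{\zh} Y) \cong H_1(X;\Z[\Z])$; we take $\xi$ to be the Hurewicz identification of this latter module with $H$.

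The main obstacle is showing the equivalence class of $(H, \Y, \xi)$ in $\mathcal{P}$ is independent of the choices made: the knot diagram, the handle decomposition, the basepoint, threadings, the splitting $\psi$, the distinguished elements $g_1$ and $l_a$, and the higher diagonal approximations $\Delta_i$ for $i \geq 1$ produced by Davis's theorem. Different splittings differ by an inner automorphism of $\Z \ltimes H$ (Proposition 4.5 and the ensuing remark), which lifts to a conjugation isomorphism of triads commuting with $\xi$. The higher $\Delta_i$ are unique up to chain homotopy by Davis's theorem, giving a well-defined class in the appropriate symmetric $Q$-group. Reidemeister moves on the diagram and changes of handle decomposition realise simple homotopy equivalences of triads in the sense of Definition 4.12, and basepoint or threading changes induce further inner conjugations; in all cases the resulting triples satisfy the equivalence relation of Definition 5.2.
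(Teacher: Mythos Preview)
Your proposal is correct and follows essentially the same approach as the paper: verify the conditions of Definition~5.2 by tensoring the universal-cover triad down to $\zh$, check that the models are recovered, confirm the $\Z$-homology and consistency conditions via the meridian inclusion and Hurewicz, and then argue that the various choices only change things by inner automorphisms or chain equivalences. You make explicit a couple of points the paper leaves implicit, notably the appeal to Lemma~4.4 for $l_al_b = 1$ in $\Z\ltimes H$ and the invocation of Theorem~5.1 for $H\in\mathcal{A}$; conversely, the paper is slightly more concrete about $\mu$, computing directly that $f_+\circ\varpi = f_-$ so that $\mu = 0$, and about how a change of $g_1$ via conjugation by $h$ forces $\xi$ to be replaced by $h_*\circ\xi$.
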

\begin{proof}
We can define \[Y := C(X;\zh) := \zh \otimes_{\zpx} C(X;\zpx),\] using Theorem \ref{Thm:mainchaincomplex}, with $g_1,l_a$ and $l_b$ the images in $\pi_1(X)/\pi_1(X)^{(2)}$ of their original incarnations in Chapter \ref{chapter:chaincomplex}; see Proposition \ref{prop:chainmapsiplusiminus} for the definitions of $l_a$ and $l_b$.  Take $(C,\varphi_C),(D_{\pm},\delta\varphi_{\pm})$ and $i_{\pm}$ to be the models as defined in Definition \ref{Defn:algebraicsetofchaincomplexes} or indeed (for $\pi_1(X)$ coefficients) in Propositions \ref{Prop:circlechaincomplex}, \ref{prop:chainmapsiplusiminus}, \ref{prop:symmstructurecircle} and \ref{prop:symmstructureiplusminus}.  Define the map $\eta$ and therefore the maps $f_{\pm}$ and $g$ as in Proposition \ref{Prop:torussplitchainequivalence} and the symmetric structure $\Phi$ on $Y_* = C_*(X;\zh)$ to be as given in Proposition \ref{prop:symmstructureonX}.
We have that \[f_- = (1,0) \colon (D_-)_i \to E'_i \subset Y_i\]
for $i=0,1$, and
\[f_+ = (l_a^{-1},0) \colon (D_+)_i \to E'_i \subset Y_i\]
for $i=0,1$.  Also, $\varpi = (l_a) \colon (D_-)_i \to (D_+)_i$ so $f_+ \circ \varpi = f_-$ and we can take $\mu = 0$.

It is important that our objects do not depend on choices, so that equivalent knots define equivalent triads.
Different choices of $l_a$ and $l_b$ depend on a choice of the letter $p$ in Proposition \ref{prop:chainmapsiplusiminus}; this affects these elements only up to a conjugation, or in other words an application of an inner automorphism, which means we can vary $C, D_{+}$ and $f_+$ by a chain isomorphism and obtain chain equivalent triads.  A different choice of element $g_1 = (1,h_1) \in \Z \ltimes H$ is related by a conjugation, or in other words an application of an inner automorphism, as in the proof of Proposition \ref{prop:2ndderivedsubgroup}, so that we can change $C, D_{\pm}$ and $Y$ by chain isomorphisms and obtain chain equivalent triads.  The point is that we need to make choices of $g_1$ and of $l_a$ in order to write down a representative of an equivalence class of symmetric Poincar\'{e} triads, but still different choices yield equivalent triads.  We investigate the effect of such changes on the consistency isomorphism $\xi$.  A change in $l_a$ does not affect the isomorphism $\xi$.  A change in $g_1$ affects $\xi$ as follows.  When we wish to change the boundary maps and chain maps in a triad by applying an inner automorphism, conjugating by an element $h \in \Z \ltimes H$ say, we define the chain equivalence of triads $\Y \to \Y^\%$ which maps basis elements of all chain groups as follows: $e_i \mapsto he_i$: $\Y^\%$ has the same chain groups as $\Y$ but with the relevant boundary maps and chain maps conjugated by $h$.  This induces an isomorphism which by a slight abuse of notation we denote $h_* \colon H_1(\Z[\Z] \otimes_{\zh} Y) \toiso H_1(\Z[\Z] \otimes_{\zhpc} Y^\%)$.  We take $\omega \colon H \to H^\% = H$ as the identity.  In order to obtain an equivalent triple, we therefore take $\xi^\% = h_* \circ \xi$.

An isotopy of knots induces a homeomorphism of the exteriors $X \xrightarrow{\approx} X^\%$ which itself induces an isomorphism $$\omega \colon \pi_1(X)^{(1)}/\pi_1(X)^{(2)} = H \toiso \pi_1(X^\%)^{(1)}/\pi_1(X^\%)^{(2)} = H^\%.$$
Likewise the isotopy induces an equivalence of triads $\zhpc \otimes_{\zh} \Y \to \Y^\%$.  The geometrically defined maps $\xi$ and $\xi^\%$ fit into the commutative square as required in Definition \ref{Defn:algebraicsetofchaincomplexes}.



We finally have to show that the conditions on homology for an element of $\P$ are satisfied.  First, $\Z \otimes_{\zh} D_{\pm}$ is given by
\[ \Z \xrightarrow{0} \Z,\] which has the homology of a circle.  Alexander duality or an easy Mayer-Vietoris argument using the decomposition of $S^3$ as $X \cup_{\partial X \approx S^1 \times S^1} S^1 \times D^2$ shows that $H_*(C_*(X;\Z)) \cong H_*(S^1;\Z)$, with the generator of $H_1(X;\Z)$ being any of the meridians.  So the chain maps $\Id \otimes_{\zh} f_{\pm} \colon \Z \otimes D_{\pm} \to C_*(X;\Z)$ induce isomorphisms on homology, given as they are on the chain level by inclusion maps of direct summands: see the map $\eta$ from the proof of Proposition \ref{Prop:torussplitchainequivalence}.

The consistency condition is satisfied, since we have the canonical Hurewicz isomorphism $H \toiso H_1(X;\Z[\Z])$ as claimed. Therefore, we indeed have defined an element of $\mathcal{P}$.
\end{proof}

We now define the notion of addition of two triples $(H,\mathcal{Y},\xi)$ and $(H^\dag,\mathcal{Y}^{\dag},\xi^\dag)$ in $\mathcal{P}$.  In the following, the notation should be transparent: everything associated to $\mathcal{Y}^{\dag}$ will be similarly decorated with a dagger.

\begin{definition}\label{Defn:connectsumalgebraic}
We define the sum of two triples
\[(H^\ddag,\mathcal{Y}^{\ddag},\xi^\ddag) = (H,\mathcal{Y},\xi) \, \sharp \, (H^\dag,\mathcal{Y}^{\dag},\xi^\dag),\]
as follows.  The first step is to make sure that the two triads are over the same group ring.  Define $H^{\ddag} := H \oplus H^{\dag}$.
We define the pull-back group:
\[(\Z \ltimes H) \times_{\Z} (\Z \ltimes H^{\dag}) := \{(g,g^{\dag}) \in (\Z \ltimes H) \times (\Z \ltimes H^{\dag})\,|\, \phi(g) = \phi^{\dag}(g^{\dag}) \in \Z \}.\]
For any choice of splitting maps $\psi \colon \Z \to \Z \ltimes H$ and $\psi^{\dag} \colon \Z \to \Z \ltimes H^{\dag}$ (see Proposition \ref{prop:2ndderivedsubgroup} for the notation) we can define an isomorphism
\[\Xi \colon (\Z \ltimes H) \times_{\Z} (\Z \ltimes H^{\dag}) \xrightarrow{\simeq} \Z \ltimes (H \oplus H^{\dag})\]
by
\[(g,g^{\dag}) \mapsto (\phi(g),(g(\psi(1))^{-\phi(g)},g^{\dag}(\psi^{\dag}(1))^{-\phi^{\dag}(g^{\dag})})).\]
There are obvious inclusions:
\[\Z \ltimes H \rightarrowtail (\Z \ltimes H) \times_{\Z} (\Z \ltimes H^{\dag});\]
\[g \mapsto (g,0)\]
and
\[\Z \ltimes H^{\dag} \rightarrowtail (\Z \ltimes H) \times_{\Z} (\Z \ltimes H^{\dag});\]
\[g^{\dag} \mapsto (0,g^{\dag})\]
which, when composed with the isomorphism $\Xi$, using
\[\psi(1) = g_1 = (1,0) \in \Z \ltimes H\]
and
\[\psi^{\dag}(1) = g_1^{\dag} = (1,0^\dag) \in \Z \ltimes H^{\dag}\]
in the definition of $\Xi$, enable us to form the tensor products
\[\zhddag \otimes_{\zh} \Y \]
and
\[\zhddag \otimes_{\zhdag} \Y^{\dag}\]
so that both symmetric Poincar\'{e} triads are over the same group ring as required.  This will be assumed for the rest of the definition without further comment.

The next step is to exhibit a chain equivalence, in fact a chain isomorphism:
\[\nu \colon C^{\dag} \xrightarrow{\sim} C.\]
We show this for the models for each chain complex, since any $C,C^{\dag}$ which can occur is itself chain equivalent to these models.  In fact, for the operation of connected sum which we define here, we describe how to add our two symmetric Poincar\'{e} triads $\Y$ and $\Y^{\dag}$ using the models given for $i_{\pm} \colon (C,\varphi_C) \to (D_{\pm},\delta\varphi_{\pm})$ and $i^{\dag}_{\pm} \colon (C^{\dag},\varphi_{C^{\dag}}) \to (D^{\dag}_{\pm},\delta\varphi^{\dag}_{\pm})$ in Definition \ref{Defn:algebraicsetofchaincomplexes}, since by definition there is always an equivalence of symmetric triads mapping to one in which $C,C^{\dag}$ and $D^{\dag}_{\pm}$ have this form.  To achieve this with $g_1 = (1,0) = g_1^\dag$ we may have to change the isomorphisms $\xi$ and $\xi^\dag$ as in the proof of Proposition \ref{prop: fundtriaddefinesanelement}.

The chain isomorphism $\nu \colon C^{\dag}_* \to C_*$:
\[\xymatrix @R+1cm @C+1cm{ C^{\dag}_1 \ar[r]^{\partial^{\dag}_1} \ar[d]_{\nu} & C^{\dag}_0 \ar[d]^{\nu} \\
C_1 \ar[r]^{\partial_1} & C_0
}\]
is given by:
\[\xymatrix @R+2cm @C+2cm { \bigoplus_2\,\zhddag \ar[r]^{\left(\begin{array}{cc}  g^{\dag}_1-1 & 0 \\ 0 & g^{\dag}_q-1 \end{array}\right)} \ar[d]_{\left(\begin{array}{cc} 1 & 0 \\ 0 & (l_a^{\dag})^{-1}l_a \end{array}\right)} & \bigoplus_2\,\zhddag \ar[d]^{\left(\begin{array}{cc} 1 & 0 \\ 0 & (l_a^{\dag})^{-1}l_a \end{array}\right)} \\
\bigoplus_2\,\zhddag \ar[r]^{\left(\begin{array}{cc} g_1-1 & 0  \\ 0 & g_q-1 \end{array}\right)} & \bigoplus_2\,\zhddag.
}\]
In order to see that these are chain maps we need the relation:
\[g_1^{\dag} = g_1 \in \Z \ltimes H^{\ddag}\]
which, since by definition
\[g_q = l_a^{-1}g_1l_a\]
and
\[g_q^{\dag} = (l_a^{\dag})^{-1}g_1^{\dag}l_a^{\dag}\]
implies that
\[g_q = l_a^{-1}l_a^{\dag}g_q^{\dag}(l_a^{\dag})^{-1}l_a.\]
We can also use this to calculate that $\nu (\varphi^{\dag}\oplus -\varphi^{\dag}) \nu^{*} = \varphi \oplus -\varphi$.

Recall that we also have a chain isomorphism $\varpi \colon D_-^{\dag} = D_- \to D_+:$
\[\xymatrix @R+1cm @C+1cm{ (D_-^{\dag})_1 \ar[r]^{\partial^{\dag}_1} \ar[d]_{\varpi} & (D_-^{\dag})_0 \ar[d]^{\varpi} \\
(D_+)_1 \ar[r]^{\partial_1} & (D_+)_0,
}\]
given by:
\[\xymatrix @R+1cm @C+1cm {\zhddag \ar[r]^{(g^{\dag}_1-1)}  \ar[d]^{(l_a)} & \zhddag \ar[d]^{(l_a)}  \\ \zhddag \ar[r]^{(g_q - 1)} & \zhddag.}\]

We now glue the two symmetric triads together.  The idea is that we are following the geometric addition of knots, as in Proposition \ref{Prop:decompknot-extconnectedsum}, where the neighbourhood of a chosen meridian of each knot gets identified.  We have the following diagram:
\[\xymatrix @R+1cm @C-0.41cm {
(D_-,0 = \delta\varphi_-) \ar[d]^{f_-} & (C,\varphi \oplus -\varphi=\varphi_C) \ar[l]_-{i_-} \ar[d]_{i_+} \ar[dl]_{\stackrel{g}{\sim}} & (C^{\dag},\varphi^{\dag} \oplus -\varphi^{\dag}=\varphi_{C^{\dag}}) \ar[l]_-{\stackrel{\nu}{\simeq}} \ar[d]^{i_-^{\dag}} \ar[r]^-{i_+^{\dag}} \ar[dr]^{\stackrel{g^{\dag}}{\sim}} & (D_+^{\dag},0=\delta\varphi_+^{\dag}) \ar[d]^{f_+^{\dag}} \\
(Y,\Phi) & (D_+,0=\delta\varphi_+) \ar[l]_-{f_+} & (D_-^{\dag},0=\delta\varphi_-^{\dag}) \ar[r]^-{f_-^{\dag}} \ar[l]_-{\stackrel{\varpi}{\simeq}} & (Y^{\dag},\Phi^{\dag})
}\]
where the central square commutes.  We then use the union construction from Definition \ref{Defn:unionconstruction} to define $\Y^{\ddag}$:
\[\xymatrix @R+0.5cm @C+0.5cm {\ar @{} [dr] |{\stackrel{g^{\ddag}}{\sim}}
(C^{\ddag},\varphi_{C^{\ddag}}) \ar[r]^{i^{\ddag}_-} \ar[d]_{i^{\ddag}_+} & (D^{\ddag}_-,\delta\varphi^{\ddag}_-) \ar[d]^{f^{\ddag}_-}\\ (D^{\ddag}_+,\delta\varphi^{\ddag}_+) \ar[r]^{f^{\ddag}_+} & (Y^{\ddag},\Phi^{\ddag}).
}\]
where:
\[(C^{\ddag},\varphi_{C^{\ddag}}):=(C^{\dag},\varphi_{C^{\dag}});\]
\[i_+^{\ddag} := i_+^{\dag};\]
\[i_-^{\ddag} := i_-\circ \nu;\]
\[(D_-^{\ddag},\delta\varphi_-^{\ddag}):= (D_-,\delta\varphi_-=0);\]
\[(D_+^{\ddag},\delta\varphi_+^{\ddag}):= (D_+^{\dag},\delta\varphi_+^{\dag}=0);\]
\[(Y^{\ddag},\Phi^{\ddag}) := (\mathscr{C}((-f_+ \circ \varpi,f_-^{\dag})^T \colon D_-^{\dag} \to Y \oplus Y^{\dag}),\Phi \cup_{\delta\varphi_-^{\dag}} \Phi^{\dag}),\]
so that
\[Y^{\ddag}_r:= Y_r \oplus (D_-^{\dag})_{r-1} \oplus Y_r^{\dag};\]
\[d_{Y^{\ddag}} := \left(\begin{array}{ccc} d_Y & (-1)^{r}f_{+}\circ \varpi & 0 \\ 0 & d_{D_-^{\dag}} & 0 \\ 0 & (-1)^{r-1}f^{\dag}_{-} & d_{Y^{\dag}}
\end{array}\right)\colon Y^{\ddag}_r \to Y^{\ddag}_{r-1};\]
\[f^{\ddag}_{-} := \left(
              \begin{array}{c}
                f_- \\
                0 \\
                0
              \end{array}
            \right) \colon (D_-^{\ddag})_r = (D_-)_r \to Y^{\ddag}_r=Y_r \oplus (D_-^{\dag})_{r-1} \oplus Y^{\dag}_r;
\]
\[f^{\ddag}_+ = \left(
              \begin{array}{c}
                0 \\
                0 \\
                f^{\dag}_{+}
              \end{array}
            \right) \colon (D_+^{\ddag})_r = (D^{\dag}_+)_r \to Y^{\ddag}_r=Y_r \oplus (D_-^{\dag})_{r-1} \oplus Y^{\dag}_r;\]
\[\Phi^{\ddag}_s := (\Phi \cup_{\delta\varphi_-^{\dag}} \Phi^{\dag})_s = \left(
                        \begin{array}{ccc}
                          \Phi_s & 0 & 0 \\
                          0 & 0 & 0 \\
                          0 & 0 & \Phi^{\dag}_s \\
                        \end{array}
                      \right)\colon\]
\begin{multline*}(Y^{\ddag})^{3-r+s} = Y^{3-r+s} \oplus (D_-^{\dag})^{2-r+s} \oplus (Y^{\dag})^{3-r+s} \to Y^{\ddag}_r = Y_r \oplus (D_-^{\dag})_{r-1} \oplus Y^{\dag}_r \\ (0 \leq s \leq 3);\end{multline*}
\[g^{\ddag}:= \left(
              \begin{array}{c}
                g \circ \nu \\
                (-1)^{r+1} i_-^{\dag} \\
                g^{\dag}
              \end{array}\right) \colon C^{\ddag}_r = C_r^{\dag} \to Y_{r+1}^{\ddag} = Y_{r+1} \oplus (D_-^{\dag})_r \oplus Y^{\dag}_{r+1}.\]

The mapping cone is of the chain map $(-f_+ \circ \varpi,f_-^{\dag})^T$, with a minus sign to reflect the fact that when one adds together oriented knots, one must identify the boundaries with opposite orientations coinciding, as described in Definition \ref{Defn:connectedsum}, so that the resulting knot is also oriented.

We therefore have the chain maps $i_{\pm}^{\ddag}$, given by:
\[\xymatrix @R+1cm @C-0.4cm{
D^{\ddag}_- = D_-  & \zhddag \ar[rrrrr]^{\left(\begin{array}{c}g_1-1\end{array}\right)}  &&&&& \zhddag \\
C^{\ddag}=C^{\dag} \ar[u]^{i_-^{\ddag}=i_- \circ \nu} \ar[d]_{i_+^{\ddag} = i_+^{\dag}} & \bigoplus_2\,\zhddag \ar[rrrrr]_{\left(\begin{array}{cc}g^{\dag}_1-1 & 0 \\ 0 & g^{\dag}_q-1 \end{array}\right)} \ar[u]^{\left(\begin{array}{c} 1 \\ (l_a^{\dag})^{-1} \end{array}\right)} \ar[d]_{\left(\begin{array}{c} (l_b^{\dag})^{-1} \\ 1 \end{array}\right)} &&&&& \bigoplus_2\,\zhddag \ar[u]_{\left(\begin{array}{c} 1 \\ (l_a^{\dag})^{-1} \end{array}\right)} \ar[d]^{\left(\begin{array}{c} (l_b^{\dag})^{-1} \\ 1 \end{array}\right)} \\
D_+^{\ddag}=D_+^{\dag} & \zhddag \ar[rrrrr]_{\left(\begin{array}{c}g^{\dag}_q-1\end{array}\right)} &&&&& \zhddag,
}\]
which means we can take:
\[g_1^{\ddag}:= g_1^{\dag}=g_1 \in \Z \ltimes H^{\ddag} = \Z \ltimes (H \oplus H^{\dag});\]
\[l_a^{\ddag}:= l_a^{\dag} \in \Z \ltimes H^{\ddag}; \text{ and}\]
\[l_b^{\ddag}:= l_b^{\dag} \in \Z \ltimes H^{\ddag},\]
so that
\[g_q^{\ddag} := g_q^{\dag} \in \Z \ltimes H^{\ddag}.\]
We have a chain isomorphism $\varpi^{\dag} \colon D_- = D^{\dag}_- \to D_+^{\dag}$.  To construct a chain homotopy \[\mu^{\ddag} \colon \left( \begin{array}{c} 0 \\ 0 \\ f_+^{\dag} \circ \varpi^{\dag} \end{array} \right) \simeq  \left( \begin{array}{c} f_- \\ 0 \\ 0 \end{array} \right)\] we first use:
\[\mu^{\dag} \colon \left( \begin{array}{c} 0 \\ 0 \\ f_+^{\dag}\circ\varpi^{\dag} \end{array} \right) \simeq \left( \begin{array}{c} 0 \\ 0 \\ f_-^{\dag}\end{array} \right).\]
We then have a chain homotopy given by:
\[\left(
    \begin{array}{c}
      0 \\
      \Id \\
      0 \\
    \end{array}
  \right) \colon (D_-^{\dag})_0 \to Y^{\ddag}_1 = Y_1 \oplus (D_-^{\dag})_0 \oplus Y_1^{\dag},
\]
and
\[\left(
    \begin{array}{c}
      0 \\
      -\Id \\
      0 \\
    \end{array}
  \right) \colon (D_-^{\dag})_1 \to Y^{\ddag}_2 = Y_2 \oplus (D_-^{\dag})_1 \oplus Y_2^{\dag},\]
which shows that
\[\left( \begin{array}{c} 0 \\ 0 \\ f_-^{\dag} \end{array} \right) \simeq  \left( \begin{array}{c} f_+ \circ \varpi \\ 0 \\ 0 \end{array} \right) \colon D_-^{\dag} \to \mathscr{C}((-f_+ \circ \varpi, f_-^{\dag})^T).\]
We finally have \[\mu \colon \left( \begin{array}{c} f_+ \circ \varpi \\ 0 \\ 0 \end{array} \right) \simeq \left( \begin{array}{c} f_- \\ 0 \\ 0 \end{array} \right).\]  Combining these three homotopies yields
\[\mu^{\ddag} \colon \left( \begin{array}{c} 0 \\ 0 \\ f_+^{\dag} \circ \varpi^{\dag} \end{array} \right) \simeq \left( \begin{array}{c} f_- \\ 0 \\ 0 \end{array} \right).\]
This completes our description of the symmetric Poincar\'{e} triad
\[\mathcal{Y}^{\ddag} := \mathcal{Y} \, \sharp \, \mathcal{Y}^{\dag}.\]
We now need to check that the two homological conditions are satisfied for this triad, so that we indeed still have an element of $\P$, and the sum operation is well-defined.  For the $\Z$-homology condition, we have a mapping cone operation to construct $Y^{\ddag}$ which has an associated short exact sequence of chain complexes:
\[0 \to Y \oplus Y^{\dag} \to \mathscr{C}((-f_+\circ\varpi,f_-^{\dag})^T) = Y^{\ddag} \to SD_-^{\dag} \to 0,\] so there is therefore a $\Z$-homology Mayer-Vietoris long exact sequence:
\[H_3(Y;\Z) \oplus H_3(Y^{\dag};\Z) \to H_3(Y^{\ddag};\Z) \to H_2(D_-^{\dag};\Z) \to H_2(Y;\Z) \oplus H_2(Y^{\dag};\Z) \to \] \[ H_2(Y^{\ddag};\Z) \to
H_1(D_-^{\dag};\Z) \xrightarrow{(-1,1)^T} H_1(Y;\Z) \oplus H_1(Y^{\dag};\Z) \xrightarrow{(i_{Y},i_{Y^{\dag}})} H_1(Y^{\ddag};\Z) \xrightarrow{0} \]
\[H_0(D_-^{\dag};\Z) \xrightarrow{(-1,1)^T} H_0(Y;\Z) \oplus H_0(Y^{\dag};\Z) \xrightarrow{(i_{Y},i_{Y^{\dag}})} H_0(Y^{\ddag};\Z).\]
Since $H_*(S^1;\Z) \cong H_*(D_-^{\dag};\Z) \xrightarrow{\simeq} H_*(Y;\Z) \cong H_*(Y^{\dag};\Z)$ we deduce from this sequence that also $H_*(Y^{\ddag};\Z) \cong H_*(S^1;\Z)$.  Since we also have isomorphisms \[(f_-)_*\colon H_*(D_-;\Z) \xrightarrow{\simeq} H_*(Y;\Z)\]
and
\[(f_+^{\dag})_*\colon H_*(D_+^{\dag};\Z) \xrightarrow{\simeq} H_*(Y^{\dag};\Z),\]
and since the homology of $Y^{\ddag}$ is generated by either of the generators of $H_*(Y;\Z)$ or $H_*(Y^{\dag};\Z)$, amalgamated as they are by the gluing operation, we indeed have induced $\Z$-homology isomorphisms:
\[(f_{\pm}^{\ddag})_* \colon H_*(\Z \otimes_{\zhddag} D_{\pm}^{\ddag}) \xrightarrow{\simeq} H_*(\Z \otimes_{\zhddag} Y^{\ddag}) \cong H_*(S^1;\Z)\]
as claimed.
To check the consistency condition we only need look at the following part of the sequence, with $\Z[\Z]$ coefficients:
\begin{multline*} H_1(D_-^{\dag};\Z[\Z]) \to H_1(Y;\Z[\Z]) \oplus H_1(Y^{\dag};\Z[\Z]) \xrightarrow{(\Id,0,\Id)} H_1(Y^{\ddag};\Z[\Z]) \xrightarrow{0} \\ H_0(D_-^{\dag};\Z[\Z])\end{multline*}
Since $\ker((t-1)\colon \Z[\Z] \to \Z[\Z]) \cong 0$ we have that $H_1(D_-^{\dag};\Z[\Z]) \cong 0$ so that there is an isomorphism:
\[H_1(Y;\Z[\Z]) \oplus H_1(Y^{\dag};\Z[\Z]) \xrightarrow{\simeq} H_1(Y^{\ddag};\Z[\Z])\]
Composing this map with the isomorphism:
\[\left(
    \begin{array}{cc}
      \xi & 0 \\
      0 & \xi^{\dag}
    \end{array}
  \right) \colon H \oplus H^{\dag} = H^{\ddag} \toiso H_1(Y;\Z[\Z]) \oplus H_1(Y^{\dag};\Z[\Z])
\]
yields an isomorphism
\[\xi^{\ddag} \colon  H^{\ddag} \toiso H_1(Y^{\ddag};\Z[\Z]) ,\]
which shows that the consistency condition is satisfied and defines the third element of the triple
\[(H^\ddag,\mathcal{Y}^{\ddag},\xi^\ddag) = (H,\mathcal{Y},\xi) \, \sharp \, (H^\dag,\mathcal{Y}^{\dag},\xi^\dag) \in \P.\]
This completes the definition of the addition of two elements of $\P$.
\qed \end{definition}

\begin{lemma}\label{Lemma:homotopicmapscones}
Chain homotopic chain maps $f \simeq g \colon C \to D$ have chain isomorphic mapping cones.
\end{lemma}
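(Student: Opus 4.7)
The plan is to write down an explicit chain isomorphism between the two mapping cones using the chain homotopy, and then verify the three required properties: it is a chain map, it is invertible, and the construction gives a genuine isomorphism of $A$-module chain complexes (not merely a homotopy equivalence).

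Let $h \colon C_* \to D_{*+1}$ be a chain homotopy witnessing $f \simeq g$, so that $d_D h + (-1)^? h d_C = g - f$ under the sign convention fixed in Definition~\ref{Defn:signsoontensor}. I would define, for each degree $r$, the map
\[\Psi_r := \begin{pmatrix} \Id_{D_r} & \eps_r h \\ 0 & \Id_{C_{r-1}} \end{pmatrix} \colon \mathscr{C}(f)_r = D_r \oplus C_{r-1} \to \mathscr{C}(g)_r = D_r \oplus C_{r-1},\]
with the sign $\eps_r \in \{\pm 1\}$ chosen to match the sign on $f$ (respectively $g$) appearing in the differentials $d_{\mathscr{C}(f)}$ and $d_{\mathscr{C}(g)}$ given in Definition~\ref{Defn:algmappingcone}. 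The first main step is the verification that $d_{\mathscr{C}(g)} \circ \Psi = \Psi \circ d_{\mathscr{C}(f)}$: multiplying the two block matrices in each order produces $d_D$ and $d_C$ on the diagonal automatically, and the off-diagonal condition reduces precisely to the chain homotopy relation $d_D h \pm h d_C = g - f$, which is exactly what $h$ satisfies.

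The second step is to observe that $\Psi$ is an isomorphism. Since each $\Psi_r$ is upper triangular with identity maps on the diagonal, it is an isomorphism of $A$-modules with explicit inverse
\[\Psi_r^{-1} = \begin{pmatrix} \Id_{D_r} & -\eps_r h \\ 0 & \Id_{C_{r-1}} \end{pmatrix},\]
and the same block computation (now with $f$ and $g$ interchanged and with $-h$ as chain homotopy from $g$ to $f$) shows $\Psi^{-1}$ is a chain map. Hence $\Psi$ is a chain isomorphism $\mathscr{C}(f) \xrightarrow{\cong} \mathscr{C}(g)$.

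There is no serious obstacle here; the only place for error is bookkeeping the signs coming from Definition~\ref{Defn:algmappingcone}, where the term $(-1)^{r-1}f$ (respectively $(-1)^{r-1}g$) appears in the differential. I would therefore fix the sign convention once at the start, state the corresponding form of the chain homotopy relation, and then the block-matrix verification is a one-line computation. The lemma will be used in the sequel to show that the union construction, and in particular the connected sum $\sharp$ on $\mathcal{P}$, is insensitive to replacing the gluing maps $f_+ \circ \varpi$ and $f_-^{\dag}$ by chain homotopic representatives.
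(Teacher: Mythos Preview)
Your proposal is correct and is exactly the argument the paper gives: the paper writes down the upper-triangular block matrix $\left(\begin{smallmatrix}\Id & (-1)^{r+1}k \\ 0 & \Id\end{smallmatrix}\right)$ in degree $r$ (so $\eps_r = (-1)^{r+1}$ in your notation), verifies it is a chain map from $\mathscr{C}(f)$ to $\mathscr{C}(g)$, and records the inverse $\left(\begin{smallmatrix}\Id & (-1)^{r}k \\ 0 & \Id\end{smallmatrix}\right)$. Your remark about the lemma's role in showing that $\sharp$ is insensitive to replacing $f_+\circ\varpi$, $f_-^{\dag}$ by homotopic maps is also precisely how the paper uses it.
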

\begin{proof}
Let $k \colon f \simeq g \colon C \to D$ be the chain homotopy.  We have a chain isomorphism of the mapping cones:
\[\xymatrix @R+2cm @C+2cm{ D_r \oplus C_{r-1} \ar[r]^{\left(
                                                        \begin{array}{cc}
                                                          d_D & (-1)^{r-1}f \\
                                                          0 & d_C
                                                        \end{array}
                                                      \right)
} \ar[d]_{\left(
            \begin{array}{cc}
              \Id & (-1)^{r+1}k \\
              0 & \Id \\
            \end{array}
          \right)
} & D_{r-1} \oplus C_{r-2} \ar[d]^{\left(
            \begin{array}{cc}
              \Id & (-1)^{r}k \\
              0 & \Id \\
            \end{array}
          \right)} \\
D_r \oplus C_{r-1} \ar[r]^{\left(
                                                        \begin{array}{cc}
                                                          d_D & (-1)^{r-1}g \\
                                                          0 & d_C
                                                        \end{array}
                                                      \right)}
& D_{r-1} \oplus C_{r-2}
}\]
with inverse given by:
\[\left(\begin{array}{cc}
              \Id & (-1)^{r}k \\
              0 & \Id \\
            \end{array}
          \right) \colon \mathscr{C}(g)_r = D_{r} \oplus C_{r-1} \to \mathscr{C}(f)_r = D_r \oplus C_{r-1}. \]
\end{proof}

\begin{proposition}\label{Prop:abelianmonoid}
The sum operation $\sharp$ on $\P$ is abelian, associative and has an identity, namely the triple containing the fundamental symmetric Poincar\'{e} triad of the unknot.  Therefore, $(\P,\sharp)$ is an abelian monoid.  Let ``$Knots$'' denote the abelian monoid of isotopy classes of locally flat knots in $S^3$ under the operation of connected sum.  Then we have a monoid homomorphism $Knots \to \P$.
\end{proposition}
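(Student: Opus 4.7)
The plan has four parts, one for each claim (identity, commutativity, associativity, homomorphism), with the bulk of the work hidden in producing explicit chain equivalences of triads.

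First I would establish the identity. The unknot exterior $X_U \approx S^1 \times D^2$ has trivial Alexander module $H = 0$, so $\Z \ltimes H = \Z$, and its fundamental symmetric Poincar\'e triad $\Y^U$ has $Y^U = C_*(S^1 \times D^2; \Z[\Z])$ chain equivalent to $C_*(S^1; \Z[\Z])$, with $f_-^U, f_+^U$ chain equivalences by construction (they induce $\Z$-homology iso's and land in an acyclic-like target). After tensoring up to $\Z[\Z \ltimes H]$ so that both triads live over the same ring, the sum $(H,\Y,\xi) \sharp (0,\Y^U,\Id)$ involves the mapping cone of $(-f_+ \circ \varpi, f_-^U)^T \colon D_-^U \to Y \oplus Y^U$. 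Since $f_-^U$ is a chain equivalence, collapsing the acyclic summand $\mathscr{C}(f_-^U)$ (using the standard contraction coming from a chain homotopy inverse of $f_-^U$) gives a chain equivalence $Y^\ddag \xrightarrow{\sim} Y$ compatible with the symmetric structure via the union construction, since $\Phi^U$ contributes nothing to the compatible part. The resulting equivalence of triads, together with the canonical isomorphism $H \oplus 0 \cong H$ as $\omega$, fits into the commuting consistency square by construction of $\xi^\ddag$ from $\xi$ and $\xi^U = \Id$.

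For commutativity, given $\Y$ and $\Y^\dag$, the sums $\Y \sharp \Y^\dag$ and $\Y^\dag \sharp \Y$ are built from the same diagram with the two halves interchanged and with the chain isomorphism $\varpi$ swapping $D_\pm$. I will write down the explicit chain isomorphism between the two mapping cones $\mathscr{C}((-f_+\varpi, f_-^\dag)^T)$ and $\mathscr{C}((-f_+^\dag \varpi^\dag, f_-)^T)$: it is the obvious permutation of summands combined with the sign flip that reconciles the two orientation conventions on the glued-in copy of $D_-^\dag = D_-$. The union formula for the symmetric structure is symmetric in the two factors up to this permutation, so $\Phi \cup_{\delta\varphi_-^\dag} \Phi^\dag$ and $\Phi^\dag \cup_{\delta\varphi_-} \Phi$ correspond. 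The Alexander module side is immediate from $H \oplus H^\dag \cong H^\dag \oplus H$.

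For associativity, both $(\Y \sharp \Y^\dag) \sharp \Y^\ddag$ and $\Y \sharp (\Y^\dag \sharp \Y^\ddag)$ are iterated mapping cones gluing $Y, Y^\dag, Y^\ddag$ along two copies of $D_-^\dag$ and $D_-^\ddag$, and the union construction of Definition \ref{Defn:unionconstruction} is associative on symmetric cobordisms by a standard rearrangement of the triple mapping cone; the canonical isomorphism of underlying chain complexes $(Y \oplus SD_-^\dag \oplus Y^\dag) \oplus SD_-^\ddag \oplus Y^\ddag \cong Y \oplus SD_-^\dag \oplus (Y^\dag \oplus SD_-^\ddag \oplus Y^\ddag)$ respects the block-diagonal form of the glued symmetric structures, and the consistency maps $\xi^{\ddag\ddag}$ agree because the Mayer-Vietoris sequence calculation of $H_1(\Z[\Z] \otimes_{-} Y^{\ddag\ddag})$ in Definition \ref{Defn:connectsumalgebraic} gives the same direct sum $H \oplus H^\dag \oplus H^\ddag$ in both orders.

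Finally, for the monoid homomorphism $Knots \to \P$, Proposition \ref{prop: fundtriaddefinesanelement} already gives a well-defined map on isotopy classes. To see it respects connected sum, use the geometric decomposition $X^\ddag = X \cup_{S^1 \times \mathring D^1} X^\dag$ from Proposition \ref{Prop:decompknot-extconnectedsum}: this is precisely a geometric realisation of the mapping cone glue along a neighbourhood of a meridional annulus. Proposition \ref{prop:addingknotgroups} and Proposition \ref{prop:2ndderivedsubgroup} identify the second-derived quotient of $\pi_1(X^\ddag)$ with $\Z \ltimes (H \oplus H^\dag)$ in the way compatible with $\Xi$, and the handle decomposition from Theorem \ref{Thm:includingboundary} applied to the connected sum splits as the union of the handle decompositions for $X$ and $X^\dag$ along the boundary cylinders $S^1 \times D^1$, so the resulting handle chain complex with its symmetric structure is exactly the union-construction output. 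The geometric Hurewicz isomorphisms $\xi, \xi^\dag, \xi^\ddag$ are compatible via the Mayer-Vietoris square above. The main obstacle I expect is bookkeeping of the symmetric structures: verifying that the geometric $\Phi^\ddag$ of $X^\ddag$ agrees, after the chain equivalence of triads, with the algebraic $\Phi \cup_{\delta\varphi_-^\dag} \Phi^\dag$, because Trotter's formulae for the diagonal approximation depend on a choice of identities of the presentation, and the identity $s_o^\ddag$ for the connected-sum presentation must be shown to decompose as the concatenation of $s_o$ and $s_o^\dag$ up to a boundary correction, with the chain $\chi$ of Proposition \ref{prop:symmstructureonX} behaving additively across the gluing.
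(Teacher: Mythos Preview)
Your identity and associativity arguments are essentially the paper's, and your homomorphism discussion is if anything more careful than the paper's (which simply asserts that the algebraic glue mirrors Proposition~\ref{Prop:decompknot-extconnectedsum} and appeals to Theorem~\ref{Thm:davisdiag} for the independence of the diagonal approximation, so you need not worry about decomposing $s_o^{\ddag}$ by hand).

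The commutativity argument, however, has a real gap. You claim the chain equivalence between $\mathscr{C}((-f_+\circ\varpi, f_-^\dag)^T)$ and $\mathscr{C}((-f_+^\dag\circ\varpi^\dag, f_-)^T)$ is ``the obvious permutation of summands combined with the sign flip.'' It is not. After permuting $Y \leftrightarrow Y^\dag$ and flipping signs, the differentials do not match: on the $Y$-summand you have $f_-$ in one cone and $f_+\circ\varpi$ in the other, and these are different chain maps, only chain homotopic via the homotopy $\mu$ built into the data of an element of $\P$ (and likewise $f_-^\dag \simeq f_+^\dag\circ\varpi^\dag$ via $\mu^\dag$). The paper's proof passes through the sequence
\[
\mathscr{C}((-f_+\circ\varpi, f_-^\dag)^T) \simeq \mathscr{C}((-f_-, f_-^\dag)^T) \simeq \mathscr{C}((-f_-, f_+^\dag\circ\varpi^\dag)^T) \simeq \mathscr{C}((-f_+^\dag\circ\varpi^\dag, f_-)^T),
\]
using Lemma~\ref{Lemma:homotopicmapscones} for the first two steps (with $\mu$ and $\mu^\dag$ supplying the homotopies) and then the permutation-plus-sign-flip you describe for the last. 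The resulting explicit equivalence has off-diagonal $\mu,\mu^\dag$ entries, and one must then check separately (as the paper does) that the squares involving $f_-^\ddag$ and $f_+^\ddag$ commute up to further homotopies built from $\mu,\mu^\dag$ and the canonical homotopies of the mapping cone. The existence of $\mu$ is precisely what encodes the geometric fact that the two boundary cylinders of $X$ can be slid past each other; without invoking it your commutativity argument does not go through.
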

\begin{proof}
Associativity is straight--forward.  If we add three triples $$(H,\Y,\xi) \,\sharp \, (H',\Y',\xi') \,\sharp \,(H'',\Y'',\xi'')$$ together, then the Alexander module will be $H \oplus H' \oplus H''$, no matter the order of addition.  The mapping cones which add the complexes $Y,Y'$ and $Y''$ together are associative operations, and the extra  data will be $C'', D''_{\pm}, g''_1, l''_a$ and $l''_b$ no matter the order in which we choose to perform the addition.

The identity element is given by the fundamental symmetric Poincar\'{e} triad of the unknot, which we denote $\Y^U$, in the triple $(\{0\},\Y^U,\Id_{\{0\}})$.  That is, $H^U = \{0\}$, so all the chain complexes comprise $\Z[\Z]$-modules.  We can take $g_1^U = t$, and $l_a^U=l_b^U=1$.  The chain map and complexes $i_{\pm}^U \colon (C^U,\varphi_C^U) \to (D_{\pm}^U,\delta\varphi_{\pm}^U=0)$ are given by the models of Definition \ref{Defn:algebraicsetofchaincomplexes} with $g^U_1=g^U_q=t$.  The chain complex for $(Y^U,\Phi^U=0)$ is the same as that for $D_{\pm}^U$:
\[\Z[\Z] \xrightarrow{(t-1)} \Z[\Z],\]
with the maps:
\[f_{\pm}^U = \Id \colon D_{\pm}^U \to Y^U,\]
so that $g^U=0$.
When we form the sum $(H^\ddag,\Y^{\ddag},\xi^\ddag) := (\{0\},\Y^U,\Id_{\{0\}}) \,\sharp\,(H,\Y,\xi)$, we have that $H^{\ddag} = H$, and the tensor operation has the effect of identifying $t=g_1$.  This means performing the sum operation as described in Definition \ref{Defn:connectsumalgebraic} yields $(i_{\pm}^\ddag \colon C^{\ddag} \to D_{\pm}^{\ddag}) = (i_{\pm} \colon C \to D_{\pm}), l_a^{\ddag} = l_a$ and $l_b^{\ddag} = l_b$.  The map $\varpi \colon D_- \to D^U_+$ is the identity map so we do not include it in the notation here and take $D_- = D^U_+$.  The chain complex for $Y^{\ddag}$, defined as the mapping cone of $(-f_+^U,f_-)^T$, is given by:
\[\xymatrix @C+0.3cm { Y_3 \ar[r]^-{\partial_Y} &  (D_-)_1 \oplus Y_2 \ar[rr]^-{\left(
                                                                      \begin{array}{cc}
                                                                        f_+^U & 0  \\
                                                                        \partial_{D_-} & 0 \\
                                                                       -f_- & \partial_Y \\
                                                                      \end{array}
                                                                    \right)
} & & Y^U_1 \oplus (D_-)_0 \oplus Y_1 \ar[r]^-{(\partial_{Y^{\ddag}})_1
} & Y_0^U \oplus Y_0, }\]
where
\[(\partial_{Y^{\ddag}})_1 = \left(\begin{array}{ccc} \partial_{Y^U} & -f_+^U & 0 \\
                                                    0 & f_- & \partial_Y \end{array} \right)\]
with
\[f^{\ddag}_{-} := \left(
              \begin{array}{c}
                f_-^U \\
                0 \\
                0
              \end{array}
            \right) \colon (D_-^{\ddag})_r = (D_-^U)_r \to Y^{\ddag}_r=Y^U_r \oplus (D_-)_{r-1} \oplus Y_r\]
and
\[f^{\ddag}_{+} := \left(
              \begin{array}{c}
                0 \\
                0 \\
                f_+
              \end{array}
            \right) \colon (D_+^{\ddag})_r = (D_+)_r \to Y^{\ddag}_r=Y^U_r \oplus (D_-)_{r-1} \oplus Y_r.\]

Since $f_+^U = \Id$, this chain complex is chain equivalent to $Y$ via the chain map:
\[\xymatrix @ R+2cm{ Y_3 \ar[r]^-{\partial_Y} \ar[d]^>>>>>>>>>>{\left(\begin{array}{c}\Id \end{array}\right)} &  (D_-)_1 \oplus Y_2 \ar[rrr]^-{\left(
                                                                      \begin{array}{cc}
                                                                        f_+^U & 0  \\
                                                                        \partial_{D_-} & 0 \\
                                                                       -f_- & \partial_Y \\
                                                                      \end{array}
                                                                    \right)
}  \ar[d]_<<<<<<<<<<{\left( \begin{array}{cc} 0 \, , & \Id \end{array} \right)} & & & Y^U_1 \oplus (D_-)_0 \oplus Y_1 \ar[rr]^-{(\partial_{Y^{\ddag}})_1}  \ar[d]_{\left( \begin{array}{ccc} f_-\circ(f_+^U)^{-1} \, , & 0 \, , & \Id \end{array} \right)} & & Y_0^U \oplus Y_0  \ar[d]_{\left( \begin{array}{cc} f_-\circ(f_+^U)^{-1} & \, , \Id \end{array}\right)}\\
Y_3 \ar[r]^{\partial_3} & Y_2 \ar[rrr]^{\partial_2} & & & Y_1 \ar[rr]^{\partial_1} & & Y_0
}\]
with chain homotopy inverse:
\[\xymatrix @C+0.5cm @R+2cm{
Y_3 \ar[r]^{\partial_3} \ar[d]^{\left( \begin{array}{c}\Id \end{array}\right)} & Y_2 \ar[rr]^{\partial_2} \ar[d]_{\left( \begin{array}{c} 0 \\ \Id \end{array}\right)} & & Y_1 \ar[r]^{\partial_1} \ar[d]_{\left( \begin{array}{c} 0 \\ 0 \\ \Id \end{array}\right)} & Y_0 \ar[d]_{\left( \begin{array}{c} 0 \\ \Id \end{array}\right)}\\
Y_3 \ar[r]^-{\partial_Y} &  (D_-)_1 \oplus Y_2 \ar[rr]^-{\left(
                                                                      \begin{array}{cc}
                                                                        f_+^U & 0  \\
                                                                        \partial_{D_-} & 0 \\
                                                                       -f_- & \partial_Y \\
                                                                      \end{array}
                                                                    \right)
} & & Y^U_1 \oplus (D_-)_0 \oplus Y_1 \ar[r]^-{(\partial_{Y^{\ddag}})_1} & Y_0^U \oplus Y_0. }\]
The chain homotopy which shows the composition of these chain maps is homotopic to the identity chain map is given by:
\[\left(
    \begin{array}{cc}
      0 & 0 \\
     (f_+^U)^{-1} & 0 \\
      0 & 0
    \end{array}
  \right)
 \colon Y_0^U \oplus Y_0 \to Y_1^U \oplus (D^U_+)_0 \oplus Y_1;\]
and
\[\left(
    \begin{array}{ccc}
      -(f_+^U)^{-1} & 0 & 0 \\
      0 & 0 & 0 \\
    \end{array}
  \right) \colon Y_1^U \oplus (D^U_+)_0 \oplus Y_1 \to (D_+^U)_1 \oplus Y_2.
\]
Under this chain equivalence, the map $f_-^U \colon D_-^U \to Y^U$ becomes $$f_- \circ (f_+^U)^{-1} \circ f_-^U \colon D_-^U \to Y,$$ which makes sense since $D^U_- = D_+^U = D_-$ after the identification $t=g_1$, and means that we obtain once more the chain map $f_- \colon D_- \to Y$.  Therefore addition with the triple containing the fundamental symmetric Poincar\'{e} triad of the unknot acts as identity, so we have a monoid as claimed.

We now show that our monoid is abelian.  Suppose that we have two triples $(H,\Y,\xi)$ and $(H^\dag,\Y^{\dag},\xi^\dag)$ as before, and we form the sum of these in two ways.  Taking the sum $\Y \,\sharp \, \Y^{\dag}$ yields the triad:
\[\xymatrix @R+1.5cm @C+1.5cm { C^{\dag} \ar[r]^{i_- \circ \nu}  \ar[d]^{i_+^{\dag}} & D_- \ar[d]^{\left(\begin{array}{c} f_- \\ 0 \\ 0 \end{array} \right)}
\\ D^{\dag}_+ \ar[r]_-{\left(\begin{array}{c} 0 \\ 0 \\ f^{\dag}_+ \end{array} \right)}
& \mathscr{C}((-f_+ \circ \varpi , f^{\dag}_-)^T)}\]
where
\[\mathscr{C}((-f_+ \circ \varpi,f_-^{\dag})^T)_r = Y_r \oplus (D_-^{\dag})_{r-1} \oplus Y_r^{\dag}.\]
On the other hand, taking the sum $\Y^{\dag} \,\sharp \, \Y$ yields the triad:
\[\xymatrix @R+1.5cm @C+1.5cm { C \ar[r]^{i^{\dag}_- \circ \nu^{-1}}  \ar[d]^{i_+} & D^{\dag}_- \ar[d]^{\left(\begin{array}{c} f^{\dag}_- \\ 0 \\ 0 \end{array} \right)}
\\ D_+ \ar[r]_-{\left(\begin{array}{c} 0 \\ 0 \\ f_+ \end{array} \right)}
& \mathscr{C}((-f^{\dag}_+ \circ \varpi^{\dag} , f_-)^T)}\]
where
\[\mathscr{C}((-f^{\dag}_+ \circ \varpi^{\dag},f_-)^T)_r = Y^{\dag}_r \oplus (D_-)_{r-1} \oplus Y_r.\]
We exhibit chain equivalences which induce the desired morphisms on symmetric structures.  First, we have the chain isomorphism:
\[\nu \colon C^{\dag} \to C\]
which we saw in Definition \ref{Defn:connectsumalgebraic} induces $\nu^{\%}(\varphi^{\dag} \oplus -\varphi^{\dag}) = \varphi \oplus -\varphi$.  Since $g_1 =g_1^{\dag}$, we have that
\[D_- = D^{\dag}_-.\]
We then check that:
\[(i_-^{\dag} \circ \nu^{-1}) \circ \nu = i_- \circ \nu,\]
i.e. that:
\[i_-^{\dag} = i_- \circ \nu,\]
which translates to:
\[\left(\begin{array}{c} 1 \\ (l_a^{\dag})^{-1} \end{array} \right) = \left(\begin{array}{cc} 1 & 0 \\ 0 & (l_a^{\dag})^{-1}l_a \end{array} \right) \left(\begin{array}{c} 1 \\ l_a^{-1} \end{array} \right).\]
Next, we have a chain isomorphism:
\[\varpi \circ (\varpi^{\dag})^{-1} \colon D_+^{\dag} \to D_+.\]
We check that:
\[\varpi \circ (\varpi^{\dag})^{-1}\circ i_+^{\dag} = i_+ \circ \nu,\]
which translates to:
\[\left(\begin{array}{c} l_a^{\dag} \\ 1 \end{array} \right)\left(\begin{array}{c} (l_a^{\dag})^{-1} \end{array} \right)\left(\begin{array}{c} l_a \end{array} \right) = \left(\begin{array}{cc} 1 & 0 \\ 0 & (l_a^{\dag})^{-1}l_a \end{array} \right) \left(\begin{array}{c} l_a \\ 1 \end{array} \right).\]
Next, we use Lemma \ref{Lemma:homotopicmapscones} to obtain the first two chain equivalences of the following:
\begin{eqnarray*}
\mathscr{C}((-f_+ \circ \varpi,f_-^{\dag})^T) & \simeq & \mathscr{C}((-f_-,f_-^{\dag})^T) \\
& \simeq & \mathscr{C}((-f_-,f_+^{\dag} \circ \varpi^{\dag})^T) \\
& \simeq & \mathscr{C}((f_+^{\dag} \circ \varpi^{\dag}, -f_-)^T) \\
& \simeq & \mathscr{C}((-f_+^{\dag} \circ \varpi^{\dag}, f_-)^T).
\end{eqnarray*}
The last two chain equivalences are simply given by swapping orders and changing the signs.
Explicitly, the chain isomorphism from Lemma \ref{Lemma:homotopicmapscones} which gives us the first two equivalences is
\[\left(
    \begin{array}{ccc}
      \Id & (-1)^{r+1}\mu & 0 \\
      0 & \Id & 0 \\
      0 & (-1)^{r+1}\mu^{\dag} & \Id
    \end{array}
  \right)
 \colon Y_r \oplus (D_-^{\dag})_{r-1} \oplus Y_r^{\dag} \to Y_r \oplus (D_-^{\dag})_{r-1} \oplus Y_r^{\dag}.\]
The last two chain equivalences are given by the map:
\[\left(
    \begin{array}{ccc}
      0 & 0 & \Id \\
      0 & -\Id & 0 \\
      \Id & 0 & 0
    \end{array}
  \right)
 \colon Y_r \oplus (D_-^{\dag})_{r-1} \oplus Y_r^{\dag} \to Y_r^{\dag} \oplus (D_-)_{r-1} \oplus Y_r\]
so that when these are all combined we have a chain equivalence $$\mathscr{C}((-f_+ \circ \varpi,f_-^{\dag})^T) \simeq \mathscr{C}((-f_+^{\dag} \circ \varpi^{\dag}, f_-)^T),$$
given by:
\[\left(
    \begin{array}{ccc}
      0 & (-1)^{r+1}\mu^{\dag} & \Id \\
      0 & -\Id & 0 \\
      \Id & (-1)^{r+1}\mu & 0
    \end{array}
  \right) \colon Y_r \oplus (D_-^{\dag})_{r-1} \oplus Y_r^{\dag} \to Y_r^{\dag} \oplus (D_-)_{r-1} \oplus Y_r.
\]
Note that the induced map on $Q$-groups sends the symmetric structure:
\[\Phi \cup _{\delta\varphi_-^{\dag}} \Phi^{\dag} = \left(\begin{array}{ccc} \Phi_s & 0 & 0 \\
                          0 & 0 & 0 \\
                          0 & 0 & \Phi^{\dag}_s \\
                        \end{array}
                      \right)\colon\]
\[Y^{3-r+s} \oplus (D_-^{\dag})^{2-r+s} \oplus (Y^{\dag})^{3-r+s} \to Y_r \oplus (D_-^{\dag})_{r-1} \oplus Y^{\dag}_r \;\;(0 \leq s \leq 3),\]
to the symmetric structure:
\[\Phi^{\dag} \cup _{\delta\varphi_-} \Phi = \left(\begin{array}{ccc} \Phi_s^{\dag} & 0 & 0 \\
                          0 & 0 & 0 \\
                          0 & 0 & \Phi_s \\
                        \end{array}
                      \right)\colon\]
\[(Y^{\dag})^{3-r+s} \oplus (D_-)^{2-r+s} \oplus Y^{3-r+s} \to Y_r^{\dag} \oplus (D_-)_{r-1} \oplus Y_r \;\;(0 \leq s \leq 3),\]
as required.

We now check that this chain equivalence commutes, up to homotopy, with the maps of the triads.  First, we need to show that
\[\xymatrix @C+3cm @R+2cm{D_- \ar[r]^{=} \ar[d]_{\left(\begin{array}{c} f_- \\ 0 \\ 0 \end{array} \right)} & D_-^{\dag} \ar[d]^{\left(\begin{array}{c} f^{\dag}_- \\ 0 \\ 0 \end{array} \right)}\\
\mathscr{C}((-f_+\circ \varpi,f_-^{\dag})^T) \ar[r]_-{\left(
    \begin{array}{ccc}
      0 & (-1)^{r+1}\mu^{\dag} & \Id \\
      0 & -\Id & 0 \\
      \Id & (-1)^{r+1}\mu & 0
    \end{array}
  \right)} & \mathscr{C}((-f_+^{\dag} \circ \varpi^{\dag}, f_-)^T) }\]
commutes up to homotopy.  As we saw in Definition \ref{Defn:connectsumalgebraic}, the two maps which occur in the mapping cone are homotopic:
\[\left( \begin{array}{c} 0 \\ 0 \\ f_- \end{array} \right) \simeq  \left( \begin{array}{c} f_+^{\dag} \circ \varpi^{\dag} \\ 0 \\ 0 \end{array} \right) \colon D_- \to \mathscr{C}((-f_+^{\dag} \circ \varpi^{\dag}, f_-)^T).\]
We then use $\mu^{\dag}$ to see that:
\[\mu^{\dag} \colon \left( \begin{array}{c} f_+^{\dag} \circ \varpi^{\dag} \\ 0 \\ 0 \end{array} \right) \simeq \left( \begin{array}{c} f_-^{\dag} \\ 0 \\ 0 \end{array} \right),\]
so that the square above commutes up to homotopy as claimed.  Similarly we require that the following diagram also commutes up to homotopy:
\[\xymatrix @C+3cm @R+2cm{D_+^{\dag} \ar[r]^{\varpi \circ (\varpi^{\dag})^{-1}} \ar[d]_{\left(\begin{array}{c} 0 \\ 0 \\ f_+^{\dag} \end{array} \right)} & D_+ \ar[d]^{\left(\begin{array}{c} 0 \\ 0 \\ f_+ \end{array} \right)}\\
\mathscr{C}((-f_+\circ \varpi,f_-^{\dag})^T) \ar[r]_-{\left(
    \begin{array}{ccc}
      0 & (-1)^{r+1}\mu^{\dag} & \Id \\
      0 & -\Id & 0 \\
      \Id & (-1)^{r+1}\mu & 0
    \end{array}
  \right)} & \mathscr{C}((-f_+^{\dag} \circ \varpi^{\dag}, f_-)^T) }\]
First, we have:
\begin{multline*}\mu \colon \left(\begin{array}{c} 0 \\ 0 \\ f_+ \circ \varpi \circ (\varpi^{\dag})^{-1} \end{array} \right) \simeq \left(\begin{array}{c} 0 \\ 0 \\ f_- \circ (\varpi^{\dag})^{-1} \end{array} \right) = \left(\begin{array}{c} 0 \\ 0 \\ f_- \end{array} \right) \circ (\varpi^{\dag})^{-1} \colon \\ D^{\dag}_+ \to \mathscr{C}((-f_+^{\dag} \circ \varpi^{\dag}, f_-)^T).\end{multline*}
Then since, again as in Definition \ref{Defn:connectsumalgebraic}:
\[ \left(\begin{array}{c} 0 \\ 0 \\ f_- \end{array} \right) \simeq \left(\begin{array}{c} f_+^{\dag} \circ \varpi^{\dag} \\ 0 \\ 0 \end{array} \right) \colon D_-= D^{\dag}_- \to \mathscr{C}((-f_+^{\dag} \circ \varpi^{\dag}, f_-)^T), \]
we have that:
\[\left(\begin{array}{c} 0 \\ 0 \\ f_+ \circ \varpi \circ (\varpi^{\dag})^{-1} \end{array} \right) \simeq  \left(\begin{array}{c} 0 \\ 0 \\ f_- \end{array} \right) \circ (\varpi^{\dag})^{-1} \simeq \left(\begin{array}{c} f_+^{\dag} \circ \varpi^{\dag} \\ 0 \\ 0 \end{array} \right) \circ (\varpi^{\dag})^{-1} = \left(\begin{array}{c} f_+^{\dag} \\ 0 \\ 0 \end{array} \right)\]
as required.  This completes the description of the equivalence of symmetric Poincar\'{e} triads:
\[\Z[\Z \ltimes (H^\dag \oplus H)] \otimes_{\Z[\Z \ltimes (H \oplus H^\dag)]}(\Y \, \sharp \, \Y^{\dag}) \xrightarrow{\sim} \Y^{\dag} \, \sharp \, \Y.\]
To see that we have an equivalence of triples:
\[(H \oplus H^\dag,\Y \,\sharp\,\Y^\dag,\xi \oplus \xi^\dag) \sim (H^\dag \oplus H,\Y^\dag \,\sharp\,\Y,\xi^\dag \oplus \xi),\]
note that the following two diagrams commute:
\[\xymatrix @R+1cm @C+1cm {H \oplus H^{\dag} \ar[r]^-{\left(\ba{cc} \xi & 0 \\ 0 & \xi^\dag \ea\right)} \ar[d]_-{\left(\ba{cc} 0 & \Id \\ \Id & 0 \ea\right)} & H_1(Y;\Z[\Z]) \oplus H_1(Y^\dag;\Z[\Z]) \ar[d]_-{\left(\ba{cc} 0 & \Id \\ \Id & 0 \ea\right)}  \\
H^\dag \oplus H \ar[r]_-{\left(\ba{cc} \xi^\dag & 0 \\ 0 & \xi \ea\right)} & H_1(Y^\dag;\Z[\Z]) \oplus H_1(Y;\Z[\Z]),}\]
and
\[\xymatrix @R+1cm @C+1cm {
H_1(Y;\Z[\Z]) \oplus H_1(Y^\dag;\Z[\Z]) \ar[r]^-{\cong} \ar[d]_-{\left(\ba{cc} 0 & \Id \\ \Id & 0 \ea\right)} & H_1(\Z[\Z] \otimes_{\zhddag} \mathscr{C}((-f_+\circ \varpi,f_-^{\dag})^T)) \ar[d]_-{\left(\Id \otimes \left( \begin{array}{ccc}0 & (-1)^{r+1}\mu^{\dag} & \Id \\0 & -\Id & 0 \\\Id & (-1)^{r+1}\mu & 0\end{array}\right)\right)_*} \\
H_1(Y^\dag;\Z[\Z]) \oplus H_1(Y;\Z[\Z]) \ar[r]^-{\cong} & H_1(\Z[\Z] \otimes_{\zhddag} \mathscr{C}((-f_+^{\dag} \circ \varpi^{\dag} ,f_-)^T)).}\]
Combining the two gives us the required commutative square to show that we have an equivalence of triples.  We have therefore defined an \emph{abelian} monoid of symmetric Poincar\'{e} triads as claimed.

Our operation of connected sum of Definition \ref{Defn:connectsumalgebraic} performs a gluing construction which precisely mirrors the geometric gluing construction of Definition \ref{Defn:connectedsum} and Proposition \ref{Prop:decompknot-extconnectedsum}, in that we identify the neighbourhoods of a meridian of either knot in order to combine the fundamental cobordisms of two knots to form their sum.  The algebraic sum is well-defined, in that it does not depend on a choice of meridian, as shown by the chain homotopies $\mu$ and $\mu^{\dag}$ and Lemma \ref{Lemma:homotopicmapscones} which says that chain homotopic maps have isomorphic mapping cones.  Furthermore, an equivalence of knots, or a different choice of handle decomposition of our knot exterior, or of the chain level diagonal approximation $\Delta$ which we use in the symmetric construction, produces equivalent fundamental symmetric Poincar\'{e} triads.  As in the proof of Proposition \ref{prop: fundtriaddefinesanelement}, different choices of $g_1$ and $l_a$ also yield equivalent triads: these choices were only necessary to explicitly write down the model chain complexes.  We therefore have a well defined homomorphism of abelian monoids $Knots \to \P$ as claimed.
\end{proof}

\begin{remark}
We hope that our method of adding knots together algebraically is an improvement on previous methods.  The common geometric method (see e.g \cite[pages~313--4]{Gilmer}) is to add a zero framed unknot which links both knots to a diagram, and then show by Kirby moves that this is a surgery diagram for the zero framed surgery on the connected sum.  One can then calculate the effect of a single surgery on homology groups.  As well as operating at the level of chain complexes rather than on the level of homology, our method keeps a tight control on the peripheral structure.  It also has the advantage that it does not simply define algebraic addition by direct sum.  This would crucially destroy the property of being a $\Z$-homology circle; rather we combine the generators of the $\Z$-homology in our gluing operation to preserve this property.  See \cite[section~4]{Ranicki4} for the expression of the high-dimensional knot concordance groups as certain Witt groups with addition by direct sum.  In order to do this Ranicki formally inverts the element $1-t$ of the group ring $\Z[\Z] = \Z[t,t^{-1}]$, which has the effect of killing the meridian of the knot algebraically, so that it is then not necessary to identify the meridians in the addition operation.  In the high-dimensional setting we work over the group ring $\Z[\Z]$, since any high-dimensional knot is concordant to one whose knot group is just $\Z$.  As this is absolutely not the case with knots in dimension 3, this procedure does not analogously apply when we work further up the derived series than $\pi_1(X)/\pi_1(X)^{(1)} \cong \Z$.  Our remedy is this more sophisticated addition of symmetric triads.

It is a special feature of the monoid of homology cylinders from $S^1 \times D^1$ to itself that it is abelian.  See e.g. \cite{chafriedlkim} for a definition and study of homology cylinders.  The monoid of homology cylinders from a surface $F$ to itself, where $F$ is not homeomorphic to $S^1 \times D^1$, will not typically be abelian.  The feature here is that the product cobordism of the boundary $S^1 \times S^0$ is also two copies of $S^1 \times D^1$, so that the boundaries can be slid around by an isotopy to swap them: this is the geometric idea behind the homotopy $\mu$ which we used to represent the fact that the element of $\P$ is independent of the choice of $f_{-}$ and $f_+$.
\end{remark}

The next step is to impose a further concordance relation on our monoid of symmetric Poincar\'{e} triads, and so turn it into a group.  First, we motivate the algebraic concordance relation which we will introduce by recalling some knot concordance theory, in particular the work of \COT \cite{COT}, which was the principal motivation for this present project. 

\chapter[The Cochran-Orr-Teichner Filtration]{The Cochran-Orr-Teichner Filtration}\label{chapter:COTsurvey}

The work of \COT is the main background and motivation for this present work.  We aim to capture their obstruction theory using our symmetric Poincar\'{e} triads, so in this chapter we present a survey of their advances in knot concordance, as is principally contained in the main \COT paper \cite{COT}.  We also give, in \ref{defn:COTobstructionset_2}, the definition of the \COT obstruction set, which we denote $\mathcal{COT}_{(\C/1.5)}$.  The definition of the second level \COT obstructions depends on a choice.  The purpose of this set is to encapsulate the obstructions which result from all possible choices into a single algebraic object.

In order to make this chapter self-contained there is some overlap with the introduction.  This chapter owes a lot to lectures of Kent Orr which I attended in Heidelberg in December 2008 and to lecture notes of Peter Teichner from San Diego in 2001 which Julia Collins and I typed up \cite{sliceknots2}.

Experts may wish to skip to Chapter \ref{chapter:algconcordance}, and then return to Definition \ref{defn:COTobstructionset_2} in order to read Chapter \ref{Chapter:extractingCOTobstructions}.

\begin{definition}\cite{fm}
An oriented knot $K \colon S^1 \subset S^3$ is topologically \emph{slice} if there is an oriented embedded locally flat disk $D^2 \subseteq D^4$ whose boundary $\partial D^2 \subset \partial D^4 = S^3$ is the knot $K$.  Here locally flat means locally homeomorphic to a standardly embedded $\R^2 \subseteq \R^4$.

Two knots $K_1, K_2 \colon S^1 \subset S^3$ are \emph{concordant} if there is an embedded locally flat oriented annulus $S^1 \times I \subset S^3 \times I$ such that $\partial (S^1 \times I)$ is $K_1 \times \{0\} \subseteq S^3 \times \{0\}$ and $-K_2 \times \{1\} \subset S^3 \times \{1\}$.   Given a knot $K$, the knot $-K$ arises by reversing the orientation of the knot and of the ambient space $S^3$: on diagrams reversing the orientation of $S^3$ corresponds to switching under crossings to over crossings and vice versa.  The set of concordance classes of knots form a group $\C$ under the operation of connected sum with the identity element given by the class of slice knots, or knots concordant to the unknot.
\qed \end{definition}

\section{The geometric filtration of the knot concordance group}

\begin{definition}
We recall the definition of the zero--framed surgery along $K$ in $S^3$, $M_K$: attach a solid torus to the boundary of the knot exterior $X=\cl(S^3 \setminus (K(S^1) \times D^2))$ in such a way that the zero--framed longitude of the knot bounds in the solid torus.
\[M_K := X \cup_{S^1 \times S^1} D^2 \times S^1.\]
The homology groups of $M_K$ are given by:
\[H_i(M_K;\Z) \cong \Z \text{ for } i = 0,1,2,3;\]
and are $0$ otherwise.  $H_1(M_K;\Z)$ is generated by a meridian of the knot, and $H_2(M_K;\Z)$ is generated by a Seifert surface for $K$ capped off with a disc in $D^2 \times S^1$.  The fundamental group is given by:
\[\pi_1(M_K) \cong \frac{\pi_1(X)}{\langle l \rangle}\]
where as before $l \in \pi_1(X)$ represents the longitude of $K$.
\qed \end{definition}

\COT \cite{COT} defined a geometric filtration of the knot concordance group which revealed the depth of its structure.  The filtration is based on the following characterisation of slice knots: notice that the exterior of a slice disc for a knot $K$ is a 4-manifold whose boundary is $M_K$: where the extra $D^2 \times S^1$ which is glued onto the knot exterior $X$ is the boundary of a regular neighbourhood of a slice disc.

\begin{proposition}\label{basicfact}
A knot $K$ is topologically slice if and only if $M_K$ bounds a topological 4-manifold $W$ such that
\begin{description}
\item[(i)]$i_* \colon H_1(M_K;\Z) \xrightarrow{\simeq} H_1(W;\Z)$ where $i \colon M_K \hookrightarrow W$ is the inclusion map;
\item[(ii)] $H_2(W;\Z) \cong 0$; and
\item[(iii)] $\pi_1(W)$ is normally generated by the meridian of the knot.
\end{description}
\end{proposition}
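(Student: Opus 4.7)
The plan is to prove both implications by an explicit 4-manifold construction, verifying the homological and $\pi_1$-conditions by Mayer-Vietoris and Seifert-Van Kampen, and invoking Freedman's topological $h$-cobordism theorem for the converse.

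First I would dispose of the forward direction. Suppose $K$ bounds a locally flat slice disc $D \subset D^4$, with tubular neighbourhood $D \times D^2 \subset D^4$. Set $W := \cl(D^4 \setminus (D \times D^2))$, so that $D^4 = W \cup_{D \times S^1} (D \times D^2)$. The boundary $\partial W$ is the union of $\cl(S^3 \setminus (K \times D^2)) = X$ with $D \times S^1 |_{\partial D}= S^1 \times S^1$ together with the ``cap'' $D \times S^1$ attached along the meridian of $D$; gluing these correctly and observing that $D$ has trivial normal bundle identifies $\partial W$ with $M_K$, where the longitude bounds in the attached solid torus. Now apply Mayer-Vietoris to the decomposition $D^4 = W \cup (D \times D^2)$ with intersection $D \times S^1 \simeq S^1$. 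Since both $D^4$ and $D \times D^2$ are contractible, the sequence forces $H_2(W;\Z) = 0$ and an isomorphism $H_1(M_K;\Z) \toiso H_1(W;\Z) \cong \Z$ generated by the meridian. Seifert-Van Kampen applied to the same decomposition shows that $\pi_1(W)$ is normally generated by the meridian.

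Second I would address the converse. Given $W$ satisfying (i)-(iii), cap off the $D^2 \times S^1$ summand of $M_K \subset \partial W$ by gluing on $D^2 \times D^2$, producing a 4-manifold $W' := W \cup_{D^2 \times S^1} (D^2 \times D^2)$. The core $D^2 \times \{0\}$ is a locally flat disc in $W'$ with boundary the knot $K$ sitting on $\partial W' = S^3$. Mayer-Vietoris on $W' = W \cup (D^2 \times D^2)$ with intersection $D^2 \times S^1 \simeq S^1$, combined with (i) and (ii), yields $H_*(W';\Z) \cong H_*(D^4;\Z)$: the attached 2-handle precisely kills the $\Z$ generated by the meridian and introduces no new homology. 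Seifert-Van Kampen kills the meridian in $\pi_1$, and by (iii) this kills $\pi_1(W)$ entirely, giving $\pi_1(W') = 1$. Thus $W'$ is a compact, simply-connected topological 4-manifold with $\partial W' = S^3$ and the integral homology of a point.

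The main obstacle is to upgrade this algebraic conclusion to $W' \approx D^4$ as a topological manifold, since only then does the core of the attached 2-handle yield an honestly embedded slice disc for $K$ in $D^4$. The required input is Freedman's topological $h$-cobordism theorem \cite{FQ}: the trivial group is good, so a simply-connected topological homology 4-ball bounded by $S^3$ is homeomorphic to $D^4$. This is the deep ingredient of the proof; everything else is routine diagram chasing. Once the homeomorphism $W' \approx D^4$ is in hand, pushing forward the locally flat disc $D^2 \times \{0\} \subset W'$ produces the desired slice disc.
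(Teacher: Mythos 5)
Your proof is correct and follows exactly the same route as the paper: the forward direction takes $W$ to be the slice disc exterior and verifies (i)--(iii) via Mayer-Vietoris and Seifert-Van Kampen on $D^4 = W \cup (D \times D^2)$, and the converse caps off $M_K$ with $D^2 \times D^2$ to produce a homology 4-ball $W'$ bounded by $S^3$ and then invokes Freedman's topological $h$-cobordism theorem to identify $W' \approx D^4$. You supply slightly more detail (explicitly naming the core $D^2 \times \{0\}$ as the resulting slice disc), but the structure of the argument is identical to the paper's.
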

\begin{proof}
The exterior of a slice disc $D$, $W:= \cl(D^4 \setminus (D \times D^2))$, satisfies all the conditions of the proposition, as can be verified using Mayer-Vietoris and Seifert-Van Kampen arguments on the decomposition of $D^4$ into $W$ and $D \times D^2$.  Conversely, suppose we have a manifold $W$ which satisfies all the conditions of the proposition.  Glue in $D^2 \times D^2$ to the $D^2  \times S^1$ part of $M_K$.  This gives us a 4-manifold $W'$ with $H_*(W';\Z) \cong H_*(D^4;\Z)$, $\pi_1(W') \cong 0$ and $\partial W' = S^3$, so $K$ is slice in $W'$.  We can then apply Freedman's topological $h$-cobordism theorem \cite{FQ} to show that $W' \approx D^4$ and so $K$ is in fact slice in $D^4$.
\end{proof}

To filter the condition of sliceness with a geometric obstruction theory, \COT look for 4-manifolds which could potentially be changed to make a slice disk exterior.  We start with a 4-manifold $W$ with $\partial W = M_K$ which satisfies conditions (i) and (iii) of Proposition \ref{basicfact} and aim to perform homology surgery with respect to a circle; that is we aim to perform surgery on embedded 2-spheres in $W$ in order to kill $H_2(W;\Z)$ and obtain a $\Z$-homology circle.  Typically classes in $H_2(W;\Z)$ will be represented by immersed spheres or embedded surfaces of non-zero genus rather than by embedded spheres. We can measure how close we are to being able to kill $H_2(W;\Z)$ by surgery by looking at the middle-dimensional equivariant intersection form:
\[\lambda \colon H_2(W;\Z[\pi_1(W)]) \times H_2(W;\Z[\pi_1(W)]) \to \Z[\pi_1(W)].\]
Using coefficients in $\pi_1(W)$ allows us to detect surfaces and their intersections.  The next problem comes from the fact that there can be a large variation in $\pi_1(W)$ for different choices of $W$.  In order to define an obstruction theory, \COT take representations which factor through quotients by elements of the derived series to fixed groups $\G_{n-1}$:
\[\rho_{n-1} \colon \pi_1(W) \to \frac{\pi_1(W)}{\pi_1(W)^{(n)}} \to \G_{n-1}.\]
If there is an embedded surface $N \subseteq W$ with $\pi_1(N) \unlhd \pi_1(W)^{(n)}$, called an \emph{$(n)$-surface}, then as far as the $n$th level intersection form
\begin{multline*} \lambda_n \colon H_2(W;\Z[\pi_1(W)/\pi_1(W)^{(n)}]) \times H_2(W;\Z[\pi_1(W)/\pi_1(W)^{(n)}]) \\ \to \Z[\pi_1(W)/\pi_1(W)^{(n)}]\end{multline*}
can see we have an embedded sphere.  Of course it may not actually be embedded, but in this way \COT obtain calculable obstructions.  For $n=1$, this is  essentially the Cappell-Shaneson technique for obstructing the concordance of high-dimensional knots $S^m \subseteq S^{m+2}$.  We now give the definition of the \COT filtration:

\begin{definition}\label{Defn:COTnsolvable} \cite[Definition~1.2]{COT}
A \emph{Lagrangian} of a symmetric form $\lambda \colon P \times P \to R$ on a free $R$-module $P$ is a submodule $L \subseteq P$ of half-rank on which $\lambda$ vanishes.  For $n \in \mathbb{N}_0 := \mathbb{N} \cup \{0\}$, let $\lambda_n$ be the intersection form, and $\mu_n$ the self-intersection form, on the middle dimensional homology $H_2(W^{(n)};\Z) \cong H_2(W;\Z[\pi_1(W)/\pi_1(W)^{(n)}])$  of the $n$th derived cover of a 4-manifold $W$, that is the regular covering space $W^{(n)}$ corresponding to the subgroup $\pi_1(W)^{(n)} \leq \pi_1(W)$:
\begin{multline*}\lambda_n \colon H_2(W;\Z[\pi_1(W)/\pi_1(W)^{(n)}]) \times H_2(W;\Z[\pi_1(W)/\pi_1(W)^{(n)}]) \\ \to \Z[\pi_1(W)/\pi_1(W)^{(n)}].\end{multline*}
An $(n)$-\emph{Lagrangian} is a submodule of $H_2(W;\Z[\pi_1(W)/\pi_1(W)^{(n)}])$, on which $\lambda_n$ and $\mu_n$ vanish, which maps via the covering map onto a Lagrangian of $\lambda_0$.

We say that a knot $K$ is \emph{$(n)$-solvable} if $M_K$ bounds a topological spin 4-manifold $W$ such that the inclusion induces an isomorphism on first homology and such that $W$ admits two dual \emph{$(n)$-Lagrangians}.  In this setting, dual means that $\lambda_n$ pairs the two Lagrangians together non-singularly and their images freely generate $H_2(W;\Z)$.

We say that $K$ is \emph{$(n.5)$-solvable} if in addition one of the $(n)$-Lagrangians is the image of an $(n+1)$-Lagrangian.
\qed \end{definition}

\begin{remark}\label{Rmk:nsolvablity}
This filtration of the knot concordance group relates strongly to geometric filtrations using gropes and Whitney towers (see \cite[Section~8]{COT} for more information), objects which feature prominently in the theory of the classification of 4-manifolds (see e.g. \cite{FQ}).  A slice knot is $(n)$-solvable for all $n\in \mathbb{N}_0$ by Proposition \ref{basicfact}, and it is hoped, but not known to be true, that if a knot is $(n)$-solvable for all $n$ then it is topologically slice.

A knot is $(0)$-solvable if and only if its Arf invariant vanishes, and $(0.5)$-solvable if and only if it is algebraically slice i.e. its Seifert form is null-concordant.

The size of an $(n)$-Lagrangian is controlled only by its image under the map induced by the covering map $W^{(n)} \to W$ in $H_2(W;\Z)$; the intersection forms of $W^{(n)}$ are typically singular due to the presence of the boundary $M_K$.  The requirement roughly speaking is that we have a Lagrangian of $\lambda_n$ on the non-singular part of $H_2(W^{(n)};\Z)$.  We can see from the long exact sequence of a pair that the intersection form is non-singular on the part of $H_2(W^{(n)};\Z)$ which neither lies in the image under inclusion of $H_2(\partial W^{(n)};\Z)$ nor is Poincar\'{e} dual to a relative class in $H_2(W^{(n)},\partial W^{(n)};\Z)$ which has non-zero boundary in $H_1(\partial W^{(n)};\Z)$.  The existence of a dual $(n)$-Lagrangian means that we have a non-singular part of sufficient size.  The dual $(n)$-Lagrangian maps to a dual Lagrangian of $\lambda_0$, implying that the form $\lambda_0$ is hyperbolic on $H_2(W;\Z)$ (see \cite[Remark~7.6]{COT} for the required basis change), which is a necessary condition if we wish to modify $W$ by surgery into a homology circle.

Note that $H_2(W;\Z)$ is a free module since if it had torsion this would appear in $H^3(W;\Z)$ by universal coefficients.  However $H^3(W;\Z)$ is isomorphic to $H_1(W,M_K;\Z)$ by Poincar\'{e} duality, which is zero by the long exact sequence of a pair since the inclusion of the boundary $M_K$ into $W$ induces an isomorphism on first homology.

The dual classes are very important.  When looking for a half basis of embedded spheres, or perhaps just of $(n)$-surfaces, as candidates for surgery, or when looking for embedded gropes, we can use the duals to remove unwanted intersections between surfaces by tubing between an intersection point and the intersection of one of the surfaces with its dual - see \cite[Section~8]{COT}.  If we achieve a half basis of framed embedded spheres, when doing surgery on a such a 2-sphere $S^2 \times D^2$, and replacing it with $D^3 \times S^1$, without the existence of dual classes we would create new classes in $H_1(W;\Z)$.  This would ruin the condition that $i_* \colon H_1(M_K;\Z) \xrightarrow{\simeq} H_1(W;\Z)$ is an isomorphism, which is necessary for a 4-manifold to be a slice disc complement.

We ask for the 4-manifold to be \emph{spin} so that the self-intersection forms $\mu_n$ can be well-defined on homology classes of $H_2(W^{(n)};\Z)$ - see \cite[Section~7]{COT}.  The self-intersection form is crucial in surgery theory for keeping track of the bundle data.  The \COT obstructions do not depend on it and we have not yet included it into our algebraic framework, but hope to achieve this in the future in order to capture the geometric $(n)$-solvability criteria as closely as possible.
\end{remark}

Our aim in chapter \ref{chapter:algconcordance} will be to introduce an algebraic concordance relation on the elements of $\P$ which closely captures the notion of $(1.5)$-solvability\footnote{We hope that this relation will generalise to capture the notion of $(n.5)$-solvability for any $n \in \mathbb{N}_0$ -- see Appendix \ref{Appendix:nth_order_group}.}.  Just as Gilmer \cite{Gilmer} defined a group\footnote{Although unfortunately \cite[page~43]{Friedl}, there is a gap in Gilmer's proofs.} which aimed to capture the algebraic concordance group of Levine \cite{Levine} and the Casson-Gordon invariants \cite{CassonGordon} in a single algebraic object, we aim to define a group which captures the \COT filtration levels of $(0)$, $(0.5)$, $(1)$ and $(1.5)$-solvability in a single stage, in the sense that the \COT obstructions vanish if a knot is algebraically $(1.5)$-solvable (again, a notion to be defined in chapter \ref{chapter:algconcordance}) which in turn holds if a knot is geometrically $(1.5)$-solvable.

\section{The Cochran-Orr-Teichner obstruction theory}\label{Chapter:COTobstructionthy}

We now describe the obstruction theory of \COT \cite{COT} which they use to detect that certain knots are not $(1.5)$-solvable - and indeed in \cite{CochranTeichner} that certain knots are $(n)$-solvable but not $(n.5)$-solvable for any $n \in \mathbb{N}_0$, but we focus on $(1.5)$-solvability for this exposition.  To define their obstructions, \COT have representations $\rho$ of the fundamental group $\pi_1(M_K)$ of $M_K$ which extend to representations of $\pi_1(W)$ for $(1)$-solutions $W$:
\[\xymatrix{\pi_1(M_K) \ar[rr]^{i_*} \ar[dr]_{\rho}  && \pi_1(W) \ar[dl]^{\wt{\rho}} \\ & \Gamma &
}\]
where $\partial W = M_K$ and \[\Gamma = \Gamma_1 := \Z \ltimes \frac{\Q(t)}{\Q[t,t^{-1}]},\]
their \emph{universally $(1)$-solvable group}, where, to define the semi-direct product, $n \in \Z$ acts by left multiplication by $t^n$.  The representation:
\[\rho \colon \pi_1(M_K) \to \pi_1(M_K)/\pi_1(M_K)^{(2)} \to \Z \ltimes H_1(M_K;\Q[t,t^{-1}]) \to \Z \ltimes \frac{\Q(t)}{\Q[t,t^{-1}]}\]
is given by:
\[g \mapsto (n := \phi(g),h := gt^{-\phi(g)}) \mapsto (n,\Bl(p,h)),\]
where $\phi \colon \pi_1(M_K) \to \Z$ is the abelianisation homomorphism and $t$ is a preferred meridian in $\pi_1(M_K)$.  The pairing $\Bl$ is the Blanchfield form (Definition \ref{Defn:Blanchfieldform} below), and $p$ is an element of $H_1(M_K;\Q[t,t^{-1}])$ chosen to lie in a metaboliser of the Blanchfield form so that the representation extends over the 4-manifold $W$ (see Theorem \ref{Lemma:COT4.4}).

\begin{definition}\label{Defn:Blanchfieldform}
The rational \emph{Blanchfield form} is the non-singular Hermitian pairing
\[\Bl \colon H_1(M_K;\Q[\Z]) \times H_1(M_K;\Q[\Z]) \to \Q(\Z)/\Q[\Z] = \Q(t)/\Q[t,t^{-1}]\]
which is defined by the sequence of isomorphisms:
\begin{multline*}\ol{H_1(M_K;\Q[\Z])} \xrightarrow{\simeq} H^2(M_K;\Q[\Z]) \xrightarrow{\simeq} H^1(M_K;\Q(\Z)/\Q[\Z]) \\ \xrightarrow{\simeq} \Hom_{\Q[\Z]}(H_1(M_K;\Q[\Z]),\frac{\Q(\Z)}{\Q[\Z]}).\end{multline*}
The first isomorphism is Poincar\'{e} duality: this involves the involution on the group ring to convert right modules to left modules.  The second isomorphism is the inverse of a Bockstein homomorphism: associated to the short exact sequence of coefficient groups
\[0 \to \Q[t,t^{-1}] \to \Q(t) \to \Q(t)/\Q[t,t^{-1}] \to 0,\]
is a long exact sequence in cohomology
\[H^1(M_K;\Q(t)) \to H^1(M_K;\Q(t)/\Q[t,t^{-1}]) \xrightarrow{\beta} H^2(M_K;\Q[t,t^{-1}]) \to H^2(M_K;\Q(t)).\]
The homology $H_1(M_K;\Q[t,t^{-1}])$ is a torsion $\Q[t,t^{-1}]$-module, with the Alexander polynomial annihilating the module.  As a $\Q[t,t^{-1}]$-module, $\Q(t)$ is flat, so
\[H_1(M_K;\Q(t)) \cong \Q(t) \otimes_{\Q[t,t^{-1}]} H_1(M_K;\Q[t,t^{-1}]) \cong 0.\]
Then on the one hand universal coefficients, since $\Q(t)$ is a field, and on the other hand Poincar\'{e} duality, shows that:
\[H^1(M_K;\Q(t)) \cong \Hom_{\Q(t)}(H_1(M_K;\Q(t)),\Q(t)) \cong 0\]
and
\[H^2(M_K;\Q(t)) \cong H_1(M_K;\Q(t)) \cong 0,\]
which together imply that Bockstein homomorphism $\beta$ is an isomorphism.  The final isomorphism:
\[H^1(M_K;\Q(\Z)/\Q[\Z]) \xrightarrow{\simeq} \Hom_{\Q[\Z]}(H_1(M_K;\Q[\Z]),\Q(\Z)/\Q[\Z])\]
is given by the universal coefficient theorem.  This applies since $\Q[\Z]$ is a principal ideal domain.  To see that the map is an isomorphism we need to see that:
\[\Ext_{\Q[\Z]}^1(H_0(M_K;\Q[\Z]),\Q(\Z)/\Q[\Z]) \cong 0.\]
Now, $\Q(\Z)/\Q[\Z]$ is a divisible $\Q[\Z]$-module, which implies that it is injective since $\Q[\Z]$ is a PID (see \cite[I.6.10]{Stenstrom}).  We can calculate the $\Ext$ groups using the injective resolution of $\Q(\Z)/\Q[\Z]$ of length 0 (see \cite[IV.8]{HS}), which implies that
\[\Ext^j_{\Q[\Z]}(A,\Q(\Z)/\Q[\Z]) \cong 0\] for $j \geq 1$ for any $\Q[\Z]$-module $A$; in particular this holds for $A=H_0(M_K;\Q[\Z])$.
So indeed we have an isomorphism from the universal coefficient theorem as claimed and the rational Blanchfield form is non-singular.  For the improvements necessary to see that the Blanchfield form is also non-singular with $\Z[\Z]$ coefficients see \cite{Levine2}.


We say that the Blanchfield pairing is \emph{metabolic} if it has a metaboliser.  A \emph{metaboliser} for the Blanchfield form is a submodule $P \subseteq H_1(M_K;\Q[\Z])$ such that:
\[P=P^{\bot}:= \{v \in H_1(M_K;\Q[\Z])\,|\, \Bl(v,w) = 0 \text{ for all } w \in P\}.\]
\qed \end{definition}

One of the key theorems of \COT is \cite[Theorem~4.4]{COT}.  They show, using duality, that the kernel of the inclusion induced map:
\[i_* \colon H_1(M_K;\Q[\Z]) \to H_1(W;\Q[\Z]),\]
for $(1)$-solutions $W$, is a metaboliser for the Blanchfield form.  This then implies that choices of $p \in H_1(M_K;\Q[\Z])$ control which representations of the form of $\rho$ extend over the $(1)$-solution, where $p$ is in the definition of $\rho$.  Choosing $p \in P$, where $P$ is a metaboliser for the Blanchfield form, is necessary for the representation to extend to $\pi_1(W)$.  This is very useful for applications, since the Blanchfield form can be calculated explicitly for a given knot; one method \cite{Kearton} calculates the form in terms of a Seifert matrix.  The philosophy is that linking information in the 3-manifold controls intersection information in the 4-manifold.  Note that we require that the Blanchfield form is metabolic, i.e. that the first order obstruction vanishes, in order for the representation $\wt{\rho}$ and thence the second order obstruction to be defined.  This is the weakness of homology pairings which we avoid by working at the chain level.  We give the proof of the following theorem in full since it is a crucial argument and since we will need to construct an analogous argument in due course from our chain complexes.
\begin{theorem}[\cite{COT} Theorem 4.4]\label{Lemma:COT4.4}
Suppose $M_K$ is $(1)$-solvable via $W$.  Then if we define:
\[P:= \ker(i_* \colon H_1(M_K;\Q[\Z]) \to H_1(W;\Q[\Z])),\]
then the rational Blanchfield form $\Bl$ of $M_K$ is metabolic and in fact $P=P^{\bot}$ with respect to $\Bl$.
\end{theorem}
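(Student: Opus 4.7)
The plan is to use Poincar\'{e}--Lefschetz duality on $W$ with twisted $\Q[\Z]$-coefficients, combined with the Bockstein and universal coefficients already used to define $\Bl$, and to extract the conclusion from a commutative ladder relating the long exact sequence of the pair $(W,M_K)$ to its Poincar\'{e} dual. The key inputs from $(1)$-solvability are: $H_1(M_K;\Z)\toiso H_1(W;\Z)$, $H_2(W;\Q) \cong 0$ (or rather its rational hyperbolic structure), and the fact that $\Q[\Z]$ is a PID, so that torsion/free splittings and universal coefficients behave well.

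First I would verify the inclusion $P \subseteq P^\bot$. The argument is essentially naturality of the Blanchfield form under inclusion: for $x,y \in P$, pick chain representatives in the infinite cyclic cover $\widetilde{M_K}$ and bound them by $2$-chains in the infinite cyclic cover $\widetilde{W}$ (possible by the definition of $P$). Since the Blanchfield form on $M_K$ is computed via equivariant intersections of such $2$-chains with $1$-chains after clearing denominators by the Alexander polynomial, and since intersections that take place in the interior of $\widetilde{W}$ contribute nothing after passage to $\Q(t)/\Q[t,t^{-1}]$ (they lie in the image of $H^1(W;\Q(t)) \to H^1(W;\Q(t)/\Q[t,t^{-1}])$, which vanishes as $H_1(W;\Q(t))=0$ by rank considerations from $(1)$-solvability), we see $\Bl(x,y)=0$.

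Next I would tackle the reverse inclusion $P^\bot \subseteq P$, which is the main obstacle. The strategy is to build the commutative diagram
\[\xymatrix@C+0.4cm{
H_2(W;\Q[\Z]) \ar[r] \ar[d]^{\cong}_{PD} & H_2(W,M_K;\Q[\Z]) \ar[r]^{\partial} \ar[d]^{\cong}_{PD} & H_1(M_K;\Q[\Z]) \ar[d]^{\Bl}  \\
H^2(W,M_K;\Q[\Z]) \ar[r] & H^2(W;\Q[\Z]) \ar[r]^-{j^*} & \Hom_{\Q[\Z]}(H_1(M_K;\Q[\Z]),\tfrac{\Q(t)}{\Q[t,t^{-1}]})
}\]
where the right-hand vertical map $\Bl$ is, up to identifying the target via Bockstein plus universal coefficients, the adjoint of the Blanchfield form. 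Exactness of the top row at $H_2(W,M_K;\Q[\Z])$ shows $\im(\partial) = P$. A diagram chase then identifies $P^\bot$ with the image of $\partial$: an element $z \in H_1(M_K;\Q[\Z])$ pairs trivially with all of $P$ iff its image in $\Hom(P,\Q(t)/\Q[t,t^{-1}])$ extends over $H_1(W;\Q[\Z])$, and such extensions are exactly elements coming from $H^2(W;\Q[\Z])$ by the long exact cohomology sequence of $(W,M_K)$. This identifies $P^\bot$ with $\im(\partial) = P$.

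The principal difficulty is making the second diagram chase rigorous: one must justify that the cohomological long exact sequence and the homological one are genuinely dual (using the cap product with the relative fundamental class of $W$), and control torsion/free summands carefully. This is where $(1)$-solvability enters quantitatively. The existence of the dual $(1)$-Lagrangians in $H_2(W^{(1)};\Z) \cong H_2(W;\Z[\Z])$ is used to ensure that $H_2(W;\Q[\Z])$ has just the right rank: enough to produce the elements of $P^\bot$ needed in the chase, but not so much as to enlarge $P$ beyond $P^\bot$. Once the rank bookkeeping is set up, the non-singularity of $\Bl$ (already established in Definition~\ref{Defn:Blanchfieldform}) converts the two inclusions $P \subseteq P^\bot$ and the diagram-chase identification into the equality $P = P^\bot$, yielding metabolicity and completing the proof.
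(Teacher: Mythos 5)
Your plan for the reverse inclusion $P^\bot \subseteq P$ is essentially the paper's: construct a commutative ladder relating the long exact homology sequence of $(W,M_K)$ to a dual sequence, and chase using non-singularity of the two linking pairings. The paper packages the dual row via Pontryagin duality (with a relative linking pairing $\beta_{rel}$ on $\ol{TH_2(W,M_K;\Q[\Z])}$, whose non-singularity is proved through the Bockstein for $W$, which needs $H^1(W;\Q(\Z)) \cong 0$ and $\Q[\Z]$-torsion-freeness of $H^2(W;\Q(\Z))$ from COT Proposition 2.11); you instead identify the bottom row directly with the cohomology sequence of $(W,M_K)$ via Poincar\'e--Lefschetz duality. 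These are equivalent formulations, but your chase silently requires the Bockstein for $W$ to be an isomorphism onto $TH^2(W;\Q[\Z])$ in order to pass from an extension of $\Bl(z)$ over $H_1(W;\Q[\Z])$ to a class in $H^2(W;\Q[\Z])$; this is the same rational-homology input and should be made explicit rather than dismissed as bookkeeping.

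The genuine gap is in the forward inclusion $P \subseteq P^\bot$, and you have the role of $(1)$-solvability misplaced. Your geometric argument does not stand as written: a bounding $2$-chain $u$ in $\widetilde W$ and a $1$-cycle representing $y$ live in a $4$-manifold and do not intersect generically in points, and the Blanchfield form is $\Q(t)/\Q[\Z]$-valued rather than an equivariant intersection number in $\Q[\Z]$; the computation must go through a relative linking pairing on $W$, not through chain intersections. The point that is missing entirely is that the class $w \in H_2(W,M_K;\Q[\Z])$ with $\partial w = x$ must be chosen to be $\Q[\Z]$-\emph{torsion}, so that $\beta_{rel}(w,-)$ is defined and the commutativity $\Bl(\partial w) = \beta_{rel}(w)\circ i_*$ gives $\Bl(x,y) = \beta_{rel}(w)(i_*(y)) = 0$. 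That such a torsion lift exists, i.e.\ that $TH_2(W,M_K;\Q[\Z]) \to H_1(M_K;\Q[\Z]) \to H_1(W;\Q[\Z])$ is exact, is exactly COT Lemma 4.5, and its proof is where the $(1)$-Lagrangian with dual is used: they guarantee that $\lambda_1 \colon H_2(W;\Q[\Z]) \to \Hom_{\Q[\Z]}(H_2(W;\Q[\Z]),\Q[\Z])$ is onto, so the free part of $H_2(W,M_K;\Q[\Z])$ can be subtracted off a lift of $x$. By contrast, the chase for $P^\bot \subseteq P$ requires only the $\Q$-homology conditions on $W$ and injectivity of $\Q(\Z)/\Q[\Z]$ as a $\Q[\Z]$-module --- not the Lagrangian --- so your remark that the $(1)$-Lagrangians enter the second diagram chase as ``rank bookkeeping'' should be moved to the forward inclusion.
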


\begin{proof}
Before proving the theorem, \COT state in their Lemma 4.5, whose proof we only sketch, that the sequence:
\[TH_2(W,M_K;\Q[\Z]) \xrightarrow{\partial} H_1(M_K;\Q[\Z]) \xrightarrow{i_*} H_1(W;\Q[\Z]) \]
is exact, where $TH_2$ denotes the torsion part of the second homology.  The idea is that the $(1)$-Lagrangian and its duals generate the free part of $H_2(W;\Q[\Z])$.  More precisely, the existence of the duals is used to show that the intersection form:
\[\lambda_1 \colon H_2(W;\Q[\Z]) \to \Hom_{\Q[\Z]}(H_2(W;\Q[\Z]),\Q[\Z])\]
is surjective.  We consider those classes in $H_2(W,M_K;\Q[\Z])$ which map to zero under the composition of Poincar\'{e} duality and universal coefficients:
\[\kappa \colon H_2(W,M_K;\Q[\Z]) \xrightarrow{\simeq} H^2(W;\Q[\Z]) \to \Hom_{\Q[\Z]}(H_2(W;\Q[\Z]),\Q[\Z]).\]
Since the first map is an isomorphism and the final group is free, the kernel of this composition is torsion.  An element $p$ of $\ker(i_*)$ lifts to a relative class $x \in H_2(W;M_K;\Q[\Z])$.  We can remove any part of this which comes from $H_2(W;\Q[\Z])$ without affecting the boundary of $x$.  Choose $$y \in \lambda_1^{-1} (\kappa (x)) \subseteq H_2(W;\Q[\Z]),$$
and let $j_*(y)$ be its image in $H_2(W,M_K;\Q[\Z])$.  Then $x-j_*(y)$ lies in the kernel of $\kappa$, so is therefore torsion.  The boundary of $x-j_*(y)$ is still $p$.  This completes our sketch proof of \cite[Lemma~4.5]{COT}.

Next, we construct a non-singular \emph{relative linking pairing} on the 4-manifold with boundary $W$.
\[\beta_{rel} \colon \ol{TH_2(W,M_K;\Q[\Z])} \times H_1(W;\Q[\Z]) \to \Q(\Z)/\Q[\Z].\]
This is defined in a similar manner to the Blanchfield pairing on $M_K$; we use the composition of isomorphisms:
\begin{multline*} \ol{TH_2(W,M_K;\Q[\Z])} \xrightarrow{\simeq} TH^2(W;\Q[\Z]) \xrightarrow{\simeq} H^1(W,\Q(\Z)/\Q[\Z]) \\ \xrightarrow{\simeq} \Hom_{\Q[\Z]}(H_1(W;\Q[\Z]),\frac{\Q(\Z)}{\Q[\Z]})\end{multline*}
where as before these are given by Poincar\'{e} duality, a Bockstein homomorphism, and the universal coefficient theorem.  To see that the second map is an isomorphism, as before we have a long exact sequence with connecting homomorphism given by the Bockstein:
\[H^1(W;\Q(\Z)) \to H^1(W;\Q(\Z)/\Q[\Z]) \xrightarrow{b} H^2(W;\Q[\Z]) \to H^2(W;\Q(\Z)).\]
\cite[Proposition~2.11]{COT} says here that $H^1(W;\Q(\Z))\cong 0$, while $H^2(W;\Q(\Z))$ is $\Q[\Z]$-torsion free, so it follows that we have an isomorphism:
\[b^{-1} \colon TH^2(W;\Q[\Z]) \xrightarrow{\simeq} H^1(W,\Q(\Z)/\Q[\Z]).\]
The universal coefficients argument for the final map runs parallel to the corresponding argument for $M_K$ in Definition \ref{Defn:Blanchfieldform}.

We now make use of our non-singular pairings $\Bl$ and $\beta_{rel}$ in the following commuting diagram: all coefficients are taken to be $\Q[\Z]$ and the functor $\bullet^{\wedge}$ is the Pontryagin dual:
\[\bullet^{\wedge} := \ol{\Hom_{\Q[\Z]}(\bullet, \Q(\Z)/\Q[\Z])}.\]
We have:
\[\xymatrix @R+1cm @C+1cm{TH_2(W,M_K) \ar[r]^-{\partial_*} \ar[d]^{\cong}_{\beta_{rel}} & H_1(M_K) \ar[d]^{\cong}_{\Bl} \ar[r]^-{i_*} & H_1(W) \ar[d]^{\cong}_{\beta_{rel}} \\
H_1(W)^{\wedge} \ar[r]^-{i^{\wedge}} & H_1(M_K)^{\wedge} \ar[r]^-{\partial^{\wedge}} & TH_2(W,M_K)^{\wedge}.
}\]
We have shown above that the rows are exact and that the vertical maps are isomorphisms.  We show that $P:= \ker(i_*) \subseteq P^{\bot}$.  Let $x,y \in P$.  Then
\[i_*(x) =0\]
so there is a $w \in TH_2(W,M_K;\Q[\Z])$ such that $\partial (w) = x$.  By commutativity of the diagram above have that:
\[i^{\wedge} \circ \beta_{rel} (w) = \Bl(\partial(w)) = \Bl(x).\]
But also:
\[i^{\wedge} \circ \beta_{rel}(w) = \beta_{rel}(w) \circ i_*\]
which implies that:
\[\Bl(x) = \beta_{rel}(w) \circ i_*,\]
so
\[\Bl(x,y) = \Bl(x)(y) = \beta_{rel}(w)(i_*(y)) = \beta_{rel}(w)(0) = 0\]
since also $y \in P$.  Therefore $x \in P^{\bot}$ and $P \subseteq P^{\bot}$.

We now show that $P^{\bot} \subseteq P$.  Since $P = \ker(i_*)$ we have an induced monomorphism:
\[i_* \colon H_1(M_K;\Q[\Z])/P \to H_1(W;\Q[\Z])\]
As in Definition \ref{Defn:Blanchfieldform}, $\Q(\Z)/\Q[\Z]$ is a divisible $\Q[\Z]$-module, so it is injective since $\Q[\Z]$ is a PID (see \cite[I.6.10]{Stenstrom}).  This means that taking duals, we have that:
\[i^{\wedge} \colon H_1(W;\Q[\Z])^{\wedge} \to (H_1(M_K;\Q[\Z])/P)^{\wedge}\]
is surjective.
Let $x \in P^{\bot}$, so by definition $\Bl(x,y) = 0$ for all $y \in P$.  Therefore we can lift $\Bl(x) \in H_1(M_K;\Q[\Z])^{\wedge}$ to an element of $(H_1(M_K;\Q[\Z])/P)^{\wedge}$ and therefore to an element of $H_1(W;\Q[\Z])^{\wedge}$ since $i^{\wedge}$ is surjective.  As $\Bl(x) \in \im(i^{\wedge})$, and since the vertical maps of our diagram above are isomorphisms, we see that $x \in \im(\partial)$.  This means that $x \in P$ by exactness of the top row, so $P^{\bot} \subseteq P$ as claimed.
\end{proof}

\cite[Theorem~3.6]{COT} then shows that the representations $\rho \colon \pi_1(M_K) \to \Gamma$ with $p \in P$ extend over $\pi_1(W)$.

\begin{remark}
Note that we deliberately work over the PID $\Q[\Z]$ in the above argument, and that this is vital for the deductions in several instances.  There is always the problem in knot concordance that we do not know that $i_* \colon \pi_1(M_K) \to \pi_1(W)$ is surjective.  For ribbon knot exteriors $W$, this is the case, but otherwise we cannot guarantee a surjection.

In the case that $i_*$ were surjective for $(1)$-solutions $W$, $\pi_1(W)$ would be simply a quotient of $\pi_1(M)$, and then there would be  no need to localise coefficients; we would have that:
\[P:= \ker (i_* \colon H_1(M_K;\Z[\Z]) \to H_1(W;\Z[\Z]))\]
is a metaboliser for the Blanchfield form on $H_1(M_K;\Z[\Z])$.  However, one main reason for considering coefficients in $\Q[\Z]$ is that there could conceivably be $\Z$-torsion in $H_1(W;\Z[\Z])$, in which case the best we could hope to show is that $P \subset P^{\bot}$.  In order to get a metaboliser we need to introduce:
\[Q:=\{x \in H_1(M_K;\Z[\Z]) \,|\, nx \in P \text{ for some }n \in \Z\}.\]
Then $Q= Q^{\bot}$ with respect to the Blanchfield form (\cite[Proposition~2.7]{Friedl}, see also \cite{Letsche}).   For the proof of Theorem \ref{Lemma:COT4.4} above however, this means we lose control on the \emph{size} of $P$; the zero submodule also satisfies $P \subset P^{\bot}$.  If $i_* \colon \pi_1(M) \to \pi_1(W)$ is onto, then as in \cite[Proposition~6.3]{Friedl}, there is no $\Z$-torsion in $H_1(W;\Z[\Z])$.  Since we only know this to be the case for ribbon knots, \COT localise coefficients in order to get a principal ideal domain $\Q[\Z]$.  Since it is intimately related to the ribbon-slice problem this problem of $\Z$-torsion is often also referred to as a ribbon-slice problem.
\end{remark}

Now suppose that there is $(1)$-solution $W$.  Then for each $p \in P = \ker(i_*)$ we have a representation \[\wt{\rho} \colon \pi_1(W) \to \Gamma = \Z \ltimes \Q(\Z)/\Q[\Z]\] which enables us to define the intersection form:
\[\lambda_1 \colon H_2(W;\Q\Gamma) \times H_2(W;\Q\Gamma) \to \Q\Gamma.\]
$W$ is a manifold with boundary, so in general this will be a singular intersection form.  To define a non-singular form we localise coefficients: \COT use the non-commutative \emph{Ore localisation} to formally invert all the non-zero elements in $\Q\G$ to obtain a skew-field $\K$.

\begin{definition}\label{Defn:OreLocalisation}
A ring $A$ satisfies the \emph{Ore condition}, which defines when a multiplicative subset $S$ of a non-commutative ring without zero-divisors can be formally inverted, if, given $s \in S$ and $a \in A$, there exists $t \in S$ and $b \in A$ such that $at=sb$.  Then the Ore localisation $S^{-1}A$ exists.  If $S = A-\{0\}$ then $S^{-1}A$ is a skew-field which we denote by $\mathcal{K}(A)$, or sometimes just $\K$ if $A$ is understood.
\qed \end{definition}

See \cite[Chapter~2]{Stenstrom} for more details on the Ore condition.  Ore localisation is flat so \[H_2(W;\K) \cong \K \otimes_{\Q\G} H_2(W;\Q\G).\]
The idea is that if the homology of the boundary $M_K$ vanishes with $\K$ coefficients, as is proved in \cite[Section~2]{COT}, then the intersection form on the middle homology of $W$ becomes non-singular, and we have defined an element in the Witt group of non-singular Hermitian forms over $\K$.  Moreover, control over the size of the $\Z$-homology translates into control over the size of the $\K$-homology of $W$.  To explain how this gives us a well--defined obstruction, which does not depend on the choice of 4-manifold, and how this obstruction lives in a group, we define $L$-groups and the localisation exact sequence in $L$-theory.

\begin{definition}[\cite{Ranicki3} I.3]\label{Defn:Lgroups}
Two $n$--dimensional $\eps$--symmetric Poincar\'{e} finitely generated projective $A$-module chain complexes $(C,\varphi)$ and $(C',\varphi')$ are \emph{cobordant} if there is an $(n+1)$-dimensional $\eps$-symmetric Poincar\'{e} pair:
\[(f,f') \colon C \oplus C' \to D, (\delta\varphi,\varphi \oplus -\varphi').\]
The union operation of Definition \ref{Defn:unionconstruction} shows that cobordism of chain complexes is a transitive relation.  The equivalence classes of symmetric Poincar\'{e} chain complexes under the cobordism relation form a group $L^n(A,\eps)$, with
\[(C,\varphi) + (C',\varphi') = (C \oplus C',\varphi \oplus \varphi'); \; -(C,\varphi) = (C,-\varphi).\]
As usual if we omit $\eps$ from the notation we assume that $\eps=1$.  In the case $n=0$, $L^0(A)$ coincides with the Witt group of non-singular Hermitian forms over $A$.
\qed \end{definition}

Note that an element of an $L$-group is in particular a symmetric \emph{Poincar\'{e}} chain complex.  This means that the intersection forms of our 4-manifolds $W$ typically give elements of $L^0(\K)$ but not of $L^0(\Q\Gamma)$.

\begin{definition}[\cite{Ranicki2} Chapter 3]\label{defn:localisationexactsequence}
The \emph{Localisation Exact Sequence in $L$-theory} is given, for a ring $A$ and a multiplicative subset $S$ which satisfies the Ore condition, as follows:
\[ \cdots \to L^n(A) \to L_S^n(S^{-1}A) \to L^n(A,S) \to L^{n-1}(A) \to \cdots.\]
The relative $L$-groups $L^n(A,S)$ are defined to be the cobordism classes of $(n-1)$-dimensional symmetric Poincar\'{e} chain complexes over $A$ which become contractible over $S^{-1}A$, where the cobordisms are also required to be contractible over $S^{-1}A$.  For $n=2$ this is equivalent to the Witt group of $S^{-1}A/A$-valued linking forms on $H^1$ of the chain complex.

The decoration $S$ on $L^n_S(S^{-1}A)$ refers to a restriction on the class of modules involved in the chain complex.  Recall that, for a ring $A$, $K_0(A)$ is the Grothendieck group of isomorphism classes of finitely generated projective modules over $A$.  A ring homomorphism $g \colon A \to B$ induces a morphism $g \colon K_0(A) \to K_0(B)$ via $[P] \mapsto [B \otimes_{A} P]$.  The reduced $K_0$-groups are given by $\wt{K}_0(A):= K_0(A)/\im(K_0(\Z))$.  We define the subset \[S := \im(g \colon \wt{K}_0(A) \to \wt{K}_0(S^{-1}A)) \subset \wt{K}_0(S^{-1}A).\] We require that a chain complex $(C,\varphi) \in L^n_S(S^{-1}A)$ satisfies that the image of
\[\sum_{i}\, (-1)^i[C_i]\]
in $\wt{K}_0(S^{-1}A)$ lies in $S \subset \wt{K}_0(S^{-1}A)$,
so that an element $(C,\varphi) \in L^n_S(S^{-1}A)$ is chain equivalent to $S^{-1}(D,\phi) := (S^{-1}A \otimes_{A} D,\Id \otimes \phi)$ for a chain complex $D$ over $A$: $D$ is symmetric over $A$ but may not be Poincar\'{e} over $A$, so may not lift to an element of $L^n(A)$, as we shall see below.

The first map in the localisation sequence is given by considering a chain complex over the ring $A$ as a chain complex over $S^{-1}A$, by tensoring up using the inclusion $A \to S^{-1}A$.  The effect of this is that some maps become invertible which previously were not; when $n=4$, which is our primary case of interest, the boundary 3-dimensional chain complex, if it has torsion homology modules, becomes contractible and the middle-dimensional intersection form of the 4-dimensional chain complex $C_*(W,M_K;\K)$ becomes non-singular, so that we have a $4$-dimensional symmetric Poincar\'{e} complex.  The symmetric chain complex $C_*(W,M_K;\Q\G)$ does not typically lie in the image of this map, since its intersection form only becomes non-singular after localisation.  We say that a symmetric chain complex is $\K$-Poincar\'{e} if it is Poincar\'{e} after tensoring with $\K$.

The second map is the boundary construction: by definition of the $S$ decoration of $L^n_S(S^{-1}A)$ there is a chain complex which is chain equivalent to $(C_*,\varphi)$, in which all the maps are given in terms of $A$.  We may therefore assume that we have a symmetric but typically not Poincar\'{e} complex $(C_*,\varphi)$ over $A$, and take the mapping cone $\mathscr{C}(\varphi_0 \colon C^{n-*} \to C_*)$.  This gives an $(n-1)$-dimensional symmetric Poincar\'{e} chain complex over $A$ which becomes contractible over $S^{-1}A$, since $\varphi_0$ is a chain equivalence over $S^{-1}A$, i.e. we have an element of $L^n(A,S)$.  In our case we consider again $n=4$, and take $C_* = C_*(W,M_K;\Q\G)$; the boundary construction then yields a complex which is chain equivalent to $C_*(M_K;\Q\G)$.

On the level of Witt groups, this map sends a Hermitian $S^{-1}A$-non-singular intersection form over $A$, $$(L ,\lambda \colon L \to L^*),$$  to the linking form on $\coker \lambda \colon L \to L^*$ given by:
\[(x,y) \mapsto \frac{z(x)}{s}\]
where $x,y \in L^*, z \in L, sy=\lambda(z)$ \cite[pages~242--3]{Ranicki2}.

The third map is the forgetful map on the equivalence relation; it forgets the requirement that the cobordisms be contractible over $S^{-1}A$, simply asking for algebraic cobordisms over $A$.
\qed \end{definition}

The obstruction theory of \COTN, for suitable representations $\pi_1(M_K) \to \G$, detects the class of $C_*(M_K;\Q\Gamma)$ in $L^4(\Q\Gamma,S)$, where $S := \Q\G - \{0\}$; we have an invariant of the 3-manifold $M_K$ which does not depend on the choice of 4-manifold.  The first question we ask, corresponding to $(1)$-solvability, is whether the chain complex of $M_K$ bounds over $\Q\G$.  Supposing that it does, i.e. supposing that we have a $(1)$-solvable knot, we have a chain complex in
\[\ker(L^4(\Q\G,S) \to L^3(\Q\G)),\]
so we can express the group detecting that there is no $\K$-contractible null-cobordism of $C_*(M_K;\Q\G)$ as \[L^4(\K)/\im(L^4(\Q\G));\]
as noted above the intersection information of a 4-manifold is intimately related to the linking information of its boundary 3-manifold.

The $(1)$-solution $W$ defines an element of $L^4_S(\K)$ by taking the symmetric $\K$-Poincar\'{e} chain complex \[C_*(W,M_K;\K) = \K \otimes_{\Q\G} C_*(W,M_K;\Q\G).\]  The image of $L^4(\Q\G)$ represents the change corresponding to a different choice of 4-manifold $W$: the obstruction defined must be independent of this choice.
Since 2 is invertible in the rings $\K$ and $\Q\G$, we can do surgery below the middle dimension \cite[Part~I,~3.3~and~4.3]{Ranicki3} to see that our invariant lives in
\[\frac{L^0_S(\K)}{\im(L^0(\Q\G))}.\]
Taking two choices of 4-manifold $W,W'$ with boundary $M_K$ and gluing to form
\[V:= W \cup_{M_K} -W',\]
we obtain a 4-manifold whose image in $L^4(\Q\G) \cong L^0(\Q\G)$ gives the difference between the Witt classes of the intersection forms of $W$ and $W'$, showing that the invariant in $L^0_S(\K)/\im(L^0(\Q\G))$ is well-defined.  If this invariant vanishes then we can hope that the knot is slice or perhaps just $(1.5)$-solvable; more importantly if our class in $L^0_S(\K)/\im(L^0(\Q\G))$ does not vanish then it obstructs $(1.5)$-solvability and therefore in particular obstructs the possibility of the knot being slice.

The argument in \cite[page~458]{COT}, to show that the invariant is independent of the choice of $W$, uses $\Z\Gamma$ here instead of $\Q\G$.  In this case it is unclear that surgery below the middle dimension is possible on the chain complexes in symmetric $L$-theory, so that the symmetric signature of $V$ need not yield an element of $L^0(\Z\G)$.  Nothing is lost, however, by replacing $\Z$ with $\Q$ in their argument.
The main obstruction theorem of \COTN, at the $(1.5)$ level, is the following:

\begin{theorem}\label{Thm:COTmaintheorem}\cite[Theorem~4.2]{COT}
Let $K$ be a $(1)$-solvable knot.  Then there exists a metaboliser $P =P^\bot \subseteq H_1(M_K;\Q[\Z])$ such that for all $p \in P$, we obtain an obstruction
\[B:= (C_*(M_K;\Q\G),\backslash\Delta([M_K])) \in \ker(L^4(\Q\G,\Q\G-\{0\}) \to L^3(\Q\G)).\]
In addition, if $K$ is $(1.5)$-solvable, then $B=0$.
\end{theorem}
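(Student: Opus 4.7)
The plan is to lift the geometric $(1)$-solution $W$ to chain-level algebra and then pass through the localisation exact sequence of Definition \ref{defn:localisationexactsequence}. First, given the $(1)$-solvable knot $K$ with $(1)$-solution $W$, Theorem \ref{Lemma:COT4.4} immediately supplies the metaboliser $P = \ker(i_* \colon H_1(M_K;\Q[\Z]) \to H_1(W;\Q[\Z])) = P^{\bot}$. For $p \in P$, the representation $\rho \colon \pi_1(M_K) \to \G = \Z \ltimes \Q(t)/\Q[t,t^{-1}]$ defined using $\Bl(p,\bullet)$ extends to $\wt{\rho} \colon \pi_1(W) \to \G$ by the argument sketched after Theorem \ref{Lemma:COT4.4} (this is \cite[Theorem~3.6]{COT}), since the lift obstruction to $\Q(t)/\Q[t,t^{-1}]$ is precisely the failure of $\Bl(p,\bullet)$ to vanish on $\ker(i_*)$.

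Next, I would verify that $B$ is a well-defined element of $L^4(\Q\G, \Q\G-\{0\})$. By the results of \cite[Section~2]{COT}, $H_*(M_K;\K) = 0$, so $C_*(M_K;\Q\G)$ with its symmetric structure $\backslash \Delta([M_K])$ (obtained as in Chapter \ref{Chapter:duality_symm_structures}) is a $3$-dimensional symmetric Poincar\'e complex over $\Q\G$ which becomes contractible over the Ore localisation $\K$. To see $B$ lies in the kernel of the boundary map to $L^3(\Q\G)$, the extended representation $\wt\rho$ yields the symmetric Poincar\'e pair $(C_*(M_K;\Q\G) \to C_*(W;\Q\G), (\backslash\Delta([W,M_K]),\backslash\Delta([M_K])))$, which by Definition \ref{defn:localisationexactsequence} exhibits $B$ in the image of the forgetful map from $L^4_S(\K)$.

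For the second, and harder, statement, suppose $K$ is $(1.5)$-solvable with $(1.5)$-solution $W$. The plan is to show that the null-cobordism $C_*(W;\Q\G)$ of $C_*(M_K;\Q\G)$ is itself $\K$-contractible relative to its boundary, so that $B$ is null-cobordant in $L^4(\Q\G,\Q\G-\{0\})$ itself. Since $\wt{\rho}$ factors through $\pi_1(W)/\pi_1(W)^{(2)}$, the intersection form $\lambda_{\Q\G}$ on $H_2(W;\Q\G)$ vanishes on the image of any $(2)$-surface, and in particular vanishes on the image $L$ of the $(1.5)$-Lagrangian. Combined with the dual $(1)$-Lagrangian $L'$, whose image generates the free rank of $H_2(W;\Z)$, I would argue that $L \oplus L'$ represents a hyperbolic splitting of $H_2(W;\K) = \K \otimes_{\Q\G} H_2(W;\Q\G)$: the dual pairing $\lambda_0$ is hyperbolic by hypothesis on $H_2(W;\Z)$, and the flatness of $\K$ over $\Q\G$ together with $L = L^{\perp}$ with respect to $\lambda_{\Q\G}$ forces the same over $\K$. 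Performing surgery below the middle dimension on the symmetric Poincar\'e pair over $\K$ (using that $2$ is a unit in $\K$, as in \cite[Part~I,~\S 3.3--4.3]{Ranicki3}) then reduces the Witt class to the class of the intersection form on $H_2(W;\K)$, which is hyperbolic and hence zero in $L^0(\K)$.

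The main obstacle is the rank-counting step: confirming that the image of the $(1.5)$-Lagrangian really gives a genuine Lagrangian of the $\K$-valued form of the correct size, rather than a mere isotropic subspace. This requires carefully matching the $\Z$-rank condition (that the $(1)$-Lagrangian and its dual freely generate $H_2(W;\Z)$) with the $\Q\G$-rank of $H_2(W;\Q\G)$ via the flatness of $\K$ and the condition that the pair $(W, M_K)$ is $\K$-acyclic after surgery; it is here that the precise definitions of $(n)$- and $(n.5)$-Lagrangians in Definition \ref{Defn:COTnsolvable} and the observations of Remark \ref{Rmk:nsolvablity} have to be invoked most delicately, and where the philosophy of the present monograph—working at the chain-complex level from the outset, as in Chapter \ref{chapter:one_point_five_solvable_knots}—will later allow us to avoid a separate rank argument entirely.
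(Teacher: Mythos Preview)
Your proposal is correct and follows essentially the same route as the paper's sketch: obtain $P$ from Theorem~\ref{Lemma:COT4.4}, extend $\rho$ over $\pi_1(W)$ via \cite[Theorem~3.6]{COT}, use $\K$-acyclicity of $M_K$ to place $B$ in $L^4(\Q\G,\Q\G-\{0\})$, and for the $(1.5)$-solvable case use the $(2)$-Lagrangian and its dual to exhibit a hyperbolic form over $\K$, with the $\Z$-rank control translating to $\K$-rank control exactly as the paper indicates. One small correction: your sentence that $C_*(W;\Q\G)$ is ``$\K$-contractible relative to its boundary'' overstates the conclusion---$H_2(W,M_K;\K)$ is generally nonzero---but your subsequent argument correctly shows only that the Witt class in $L^0_S(\K)$ vanishes, which is what is needed.
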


\begin{proof}
We give a sketch proof.  The fact that a meridian of $K$ maps non--trivially under $\rho$ is sufficient, as in \cite[Section~2]{COT}, to see that $C_*(M_K;\K) \simeq 0$, so that indeed $B \in L^4(\Q\G,\Q\G-\{0\})$.  The $(1)$-solvable condition ensures that certain representations extend over $\pi_1(W)$, for $(1)$-solutions $W$, so that $B \mapsto 0 \in L^3(\Q\G)$.  If $W$ is also a $(1.5)$-solution, there is a metaboliser for the intersection form on $H_2(W;\K)$: as mentioned above the fact that we have control over the rank of the $\Z$-homology translates into control on the rank of the $\K$-homology.  We have a half-rank summand on which the intersection form vanishes: it is therefore trivial in the Witt group $L^0_S(\K)$.  Since $L^4_S(\K) \cong L^0_S(\K)$ by surgery below the middle dimension, we indeed have $B=0$.
\end{proof}

We now define a pointed set, which is algebraically defined, which we call the \emph{\COT obstruction set}, and denote $(\mathcal{COT}_{(\C/1.5)},U)$.  The above exposition then enables us to define a map of pointed sets $\C/\mathcal{F}_{(1.5)} \to \mathcal{COT}_{(\C/1.5)}$: the \COT obstructions do not necessarily add well, so we are only able to consider pointed sets, requiring that $(1.5)$-solvable knots map to $U$, the marked point of $\mathcal{COT}_{(\C/1.5)}$.  The reason for this definition is that the second order \COT obstructions depend for their definitions on certain choices of the way in which the first order obstructions vanish.  More precisely, for each element $p \in H_1(M_K;\Q[\Z])$ we obtain a different representation $\pi_1(M_K) \to \G$ and therefore, if it is defined, a potentially different obstruction $B$ from Theorem \ref{Thm:COTmaintheorem}.  The following definition gives an algebraic object, $\mathcal{COT}_{(\C/1.5)}$, which encapsulates the choices in a single set.  Our second order algebraic concordance group $\ac2$, defined in Chapter \ref{chapter:algconcordance} as a quotient of $\P$, gives a single stage obstruction group from which an element of $\mathcal{COT}_{(\C/1.5)}$ can be extracted; for this see Chapter \ref{Chapter:extractingCOTobstructions}.  I would like to thank Peter Teichner for pointing out that I ought to make such a definition.

\textbf{Recommendation.}  The reader would perhaps be best served to skip Definition \ref{defn:COTobstructionset_2} on the first reading.  Chapters \ref{chapter:algconcordance} and \ref{chapter:one_point_five_solvable_knots} serve to provide important context for this definition, and knowledge of it is not required until Chapter \ref{Chapter:extractingCOTobstructions}.


In the following definition, for intuition, $(N,\theta)$ should be thought of as corresponding to the symmetric Poincar\'{e} chain complex of the zero surgery $M_K$ on a knot in $S^3$, $\G := \Z \ltimes \Q(t)/\Q[t,t^{-1}]$, and $H$ should be thought of as corresponding to $H_1(M_K;\Q[\Z])$.  There is no requirement that $(N,\theta)$ actually is the chain complex associated to a knot: we are working more abstractly.

\begin{definition}\label{defn:COTobstructionset_2}
Let $H$ be a rational Alexander module, that is a $\Q[\Z]$-module such that $H = \Q \otimes_{\Z} H'$ for some $H' \in \mathcal{A}$.  We denote the class of such $H$ by $\Q \otimes_{\Z} \mathcal{A}$.  Let \[\Bl\colon H \times H \to \Q(t)/\Q[t,t^{-1}]\] be a non-singular Hermitian pairing, and let $p \in H$.  We define the set:
\[L^4_{H,\Bl,p}(\Q\G,\Q\G-\{0\})\]
to comprise pairs $((N,\theta \in Q^3(N)),\xi)$,
where $(N,\theta)$ is a 3-dimensional symmetric Poincar\'{e} complex over $\Q\G$ which is contractible when tensored with the Ore localisation $\K$ of $\Q\G$:
\[\K \otimes_{\Q\G} N \simeq 0,\]
which satisfies:
\[H_*(\Q \otimes_{\Q\G} N) \cong H_*(S^1 \times S^2 ; \Q);\]
and where $\xi$ is an isomorphism
\[\xi \colon H \xrightarrow{\simeq} H_1(\Q[\Z] \otimes_{\Q\G} N).\]
Using the 3-dimensional symmetric Poincar\'{e} chain complex $(\Q[\Z] \otimes_{\Q\G} N,\Id \otimes \theta)$, we can define the rational Blanchfield form (see Proposition \ref{Prop:chainlevelBlanchfield}):
\[\wt{\Bl} \colon H_1(\Q[\Z] \otimes_{\Q\G} N) \times H_1(\Q[\Z] \otimes_{\Q\G} N) \to \Q(t)/\Q[t,t^{-1}].\]
We require that:
\[\Bl(x,y) = \wt{\Bl}(\xi(x),\xi(y))\]
for all $x,y \in H$.  We have a further condition that:
\begin{equation}\label{Eqn:common_boundary}((N,\theta),\xi)_0 \cong ((\Q[\Z] \otimes_{\Q\G} N,\Id \otimes \theta),\xi) \end{equation}
for $((N,\theta),\xi)_0 \in L^4_{H,\Bl,0}(\Q\G,\Q\G-\{0\})$.
We consider the union, for a fixed $H \in \Q \otimes_{\Z} \mathcal{A}$ and a fixed $\Bl \colon H \times H \to \Q(t)/\Q[t,t^{-1}]$:
\[\mathcal{AF}_{(\C/1.5)}(H,\Bl) := \bigsqcup_{p \in H} \, L^4_{H,\Bl,p}(\Q\G,\Q\G-\{0\}),\]
over all $p \in H$.
Next, we define a partial ordering on the class of certain special subsets of $\mathcal{AF}_{(\C/1.5)}(H,\Bl)$:
\[\bigcup_{\stackrel{H \in \Q \otimes_{\Z} \mathcal{A}}{ \Bl \colon \ol{H} \xrightarrow{\simeq} \Ext_{\Q[\Z]}^1(H,\Q[\Z])} }\, \Big\{ \, \bigsqcup_{p \in H} \,((N,\theta),\xi)_{p} \subset \mathcal{AF}_{(\C/1.5)}(H,\Bl)\Big\},\] ranging over all possible $H$ and $\Bl$, so that we can make this class into a set by taking an inverse limit.  For each $\Q[\Z]$-module isomorphism $\a \colon H \xrightarrow{\simeq} H^{\%}$, we define a map
\[\a_* \colon L^4_{H,\Bl,p}(\Q\G,\Q\G-\{0\}) \to L^4_{H^{\%},\Bl^{\%},\a(p)}(\Q\G,\Q\G-\{0\}),\]
where $\Bl^{\%}(x,y) := \Bl(\a^{-1}(x),\a^{-1}(y))$ by
\[((N,\theta \in Q^3(N)),\xi) \mapsto ((N,\theta \in Q^3(N)), \xi \circ \a^{-1}).\]
This defines a map:
\[\a_* \colon \mathcal{AF}_{(\C/1.5)}(H,\Bl) \to \mathcal{AF}_{(\C/1.5)}(H^{\%},\Bl^{\%}),\]
which we use to map subsets to subsets.  We say that a subset:
\[\bigsqcup_{p \in H} \,((N,\theta),\xi)_{p} \subset \mathcal{AF}_{(\C/1.5)}(H,\Bl),\]
is less than or equal to
\[\bigsqcup_{q \in H^\%} \,((N,\theta),\xi^{\%})_{q} \subset \mathcal{AF}_{(\C/1.5)}(H^{\%},\Bl^{\%}),\]
if the latter is the image of the former under $\a_*$.  We then define:
\begin{eqnarray*}\mathcal{AF}_{(\C/1.5)} := \underleftarrow{\lim} \bigg\{\bigsqcup_{p \in H} \,((N,\theta),\xi)_{p} \subset \mathcal{AF}_{(\C/1.5)}(H,\Bl)\, | \, H \in \Q \otimes_{\Z} \mathcal{A},\\ \Bl \colon \ol{H} \xrightarrow{\simeq} \Ext_{\Q[\Z]}^1(H,\Q[\Z])\bigg\}.\end{eqnarray*}

Finally, we must say what it means for two elements of $\mathcal{AF}_{(\C/1.5)}$ to be equivalent, in such a way that isotopic and concordant knots map to equivalent elements of $\mathcal{AF}_{(\C/1.5)}$, and we must define the class of the zero object, so that we have a pointed set.

The distinguished point is the equivalence class of the 3-dimensional symmetric Poincar\'{e} chain complex: \begin{multline*} U := \Big(\big(\Q\G \otimes_{\Q[\Z]} C_*(S^1 \times S^2; \Q[\Z]),\backslash\Delta([S^1 \times S^2])\big), \xi = \Id \colon \{0\} \to \{0\} \Big) \\ \in \mathcal{AF}_{(\C/1.5)}(\{0\},\Bl_{\{0\}}).\end{multline*}
We declare two elements of $\mathcal{AF}_{(\C/1.5)}$ to be equivalent, denoted $\sim$, if we can choose a representative class for the inverse limit construction of each i.e. pick representatives:
\[\bigsqcup_{p \in H} \,((N,\theta),\xi)_{p} \subset \mathcal{AF}_{(\C/1.5)}(H,\Bl)\]
and
\[\bigsqcup_{q \in H^\dag} \,((N^{\dag},\theta^{\dag}),\xi^{\dag})_{q} \subset \mathcal{AF}_{(\C/1.5)}(H^{\dag},\Bl^{\dag})\]
for some $H,H^{\dag} \in \Q \otimes_{\Z} \mathcal{A}$, such that there is a metaboliser $P \subseteq H \oplus H^\dag$ of
\[\Bl \oplus - \Bl^\dag \colon H \oplus H^\dag \times H \oplus H^\dag \to \Q(\Z)/\Q[\Z]\]
for which all the elements of $L^4(\Q\G,\Q\G -\{0\})$ in the disjoint union:
\[\bigsqcup_{(p,q) \in P}\, ((N_p \oplus N_q^{\dag},\theta_p \oplus -\theta^{\dag}_q),\xi_p \oplus \xi_q^{\dag})\]
satisfy:
 \[((N_p \oplus N_q^{\dag},\theta_p \oplus -\theta^{\dag}_q) = 0 \in L^4(\Q\G,\Q\G-\{0\}),\]
with the corresponding reason that $(N_p \oplus N_q^{\dag},\theta_p \oplus -\theta^{\dag}_q) = 0$ being a 4-dimensional symmetric Poincar\'{e} pair
\[(j_p \oplus j_q^{\dag} \colon N_p \oplus N_q^{\dag} \to V_{(p,q)},(\delta \theta_{(p,q)}, \theta_p \oplus -\theta^{\dag}_q) \in Q^4(j_p \oplus j_q^{\dag}))\]
over $\Q\G$ such that
$$H_1(\Q \otimes_{\Q\G} N_p) \toiso H_1(\Q \otimes_{\Q\G} V_{(p,q)}) \xleftarrow{\simeq} H_1(\Q \otimes_{\Q\G} N_q^\dag),$$
that the isomorphism
\[\xi_p \oplus \xi_q^\dag \colon H \oplus H^\dag \toiso H_1(\Q[\Z] \otimes_{\Q\G} N_p) \oplus H_1(\Q[\Z] \otimes_{\Q\G} N_q^\dag)\]
restricts to an isomorphism
\[P \toiso \ker \big(H_1(\Q[\Z] \otimes_{\Q\G} N_p) \oplus H_1(\Q[\Z] \otimes_{\Q\G} N_q^\dag) \to H_1(\Q[\Z] \otimes_{\Q\G} V_{(p,q)})\big),\]
and that the algebraic Thom complex (Definition \ref{Defn:algThomcomplexandthickening}), taken over the Ore localisation, is algebraically null-cobordant in $L^4_S(\K) \cong L^0_S(\K)$:
\[[(\K \otimes_{\Q\G} \mathscr{C}((j_p \oplus j_q^\dag)),\Id \otimes \delta \theta_{(p,q)}/(\theta_{p}\oplus-\theta^\dag_q))] = [0] \in L^4_S(\K).\]
These conditions imply that we can do algebraic surgery (Definition \ref{Defn:algebraicsurgerymatrices}) on $\mathscr{C}((j_p \oplus j_q^\dag))$ to make it contractible over $\K$.  The relation $\sim$ is an equivalence relation: see Proposition \ref{prop:COTobstr_equiv_rln}.

Taking the quotient of $\mathcal{AF}_{(\C/1.5)}$ by this equivalence relation defines the second order \COT obstruction pointed set $(\mathcal{COT}_{(\C/1.5)},U)$: there is a well--defined map from concordance classes of knots modulo $(1.5)$-solvable knots to this set, which maps $(1.5)$-solvable knots to the equivalence class of $U$, as follows.

Define $H:= H_1(M_K;\Q[\Z])$.  For each $p\in H$, we use the corresponding representation $\rho \colon \pi_1(M_K) \to \G$ to form the complex:
\begin{multline*} ((N,\theta),\xi)_{p} := ((\Q\G \otimes_{\Z[\pi_1(M_K)]} C_*(M_K;\Z[\pi_1(M_K)]), \backslash \Delta ([M_K])),\xi) \\ \in L^4_{H,\Bl,p}(\Q\G,\Q\G - \{0\}).\end{multline*}
This gives a well--defined map: see Proposition \ref{Prop:COTobstr_well_defined}.  This completes our description of the \COT pointed set.

\qed\end{definition}

\begin{proposition}\label{prop:COTobstr_equiv_rln}
The relation $\sim$ of Definition \ref{defn:COTobstructionset_2} is indeed an equivalence relation.
\end{proposition}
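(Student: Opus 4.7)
The plan is to verify reflexivity, symmetry, and transitivity of $\sim$, of which the first two are essentially formal while transitivity carries the substantive content.

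For reflexivity, given $\bigsqcup_{p \in H}((N,\theta),\xi)_{p} \subset \mathcal{AF}_{(\C/1.5)}(H,\Bl)$, take the diagonal $\Delta_H := \{(h,h) : h \in H\} \subseteq H \oplus H$ as the metaboliser for $\Bl \oplus -\Bl$; isotropy is immediate, and $\Delta_H = \Delta_H^\bot$ follows from non-singularity of $\Bl$ since any $(u,v) \in \Delta_H^\bot$ satisfies $\Bl(u-v,h)=0$ for all $h \in H$. For each $(p,p) \in \Delta_H$, Lemma~\ref{lemma:productcobordism} provides the algebraic product symmetric Poincar\'{e} pair $((1,1) \colon N_p \oplus N_p \to N_p,(0,\theta_p \oplus -\theta_p))$, whose algebraic Thom complex is chain contractible over $\Q\G$ and hence over $\K$, so it represents $0 \in L^0_S(\K)$. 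The two inclusions of $N_p$ are the identity, verifying the $\Q$-homology condition, and the kernel on $H_1(\Q[\Z]\otimes_{\Q\G}-)$ of the induced map is precisely the diagonal, matching $\xi_p \oplus \xi_p$ applied to $\Delta_H$. For symmetry, the swap $\sigma \colon (p,q) \mapsto (q,p)$ defines an anti-isometry from $(H \oplus H^\dag, \Bl \oplus -\Bl^\dag)$ to $(H^\dag \oplus H, \Bl^\dag \oplus -\Bl)$, and anti-isometries carry metabolisers to metabolisers; if $P$ witnesses $A \sim B$ then $\sigma(P)$ witnesses $B \sim A$, using the same cobordisms $V_{(p,q)}$ with negated symmetric structure to reverse the boundary sign pattern.

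For transitivity, assume $A \sim B$ via a metaboliser $P_1 \subseteq H_A \oplus H_B$ with cobordisms $\{V^1_{(a,b)}\}$, and $B \sim C$ via $P_2 \subseteq H_B \oplus H_C$ with $\{V^2_{(b,c)}\}$. Define
\[P_3 := \{(a,c) \in H_A \oplus H_C \mid \exists\, b \in H_B \text{ with } (a,b) \in P_1 \text{ and } (b,c) \in P_2\}.\]
A standard Witt-group argument for non-singular linking forms shows $P_3 = P_3^\bot$ in $(H_A \oplus H_C, \Bl_A \oplus -\Bl_C)$: isotropy follows by chaining the vanishings supplied by $P_1$ and $P_2$ through $\Bl_B$, while $P_3^\bot \subseteq P_3$ uses non-singularity of $\Bl_B$ to produce a witness $b \in H_B$ for each element of $P_3^\bot$. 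For each $(a,c) \in P_3$, choose a witness $b$ and form the algebraic union
\[V^3_{(a,c)} := V^1_{(a,b)} \cup_{N^B_b} V^2_{(b,c)}\]
via Definition~\ref{Defn:unionconstruction}, producing a $4$-dimensional symmetric Poincar\'{e} pair over $\Q\G$ with boundary $(N^A_a,\theta^A_a) \oplus (N^C_c,-\theta^C_c)$. Additivity of the symmetric signature under gluing yields the required vanishing of $\mathscr{C}(j^A_a \oplus j^C_c)$ in $L^4_S(\K) \cong L^0_S(\K)$, and a $\Q$-coefficient Mayer--Vietoris argument, using the $\Q$-homology equivalences supplied by $V^1$ and $V^2$, shows that both inclusions $N^A_a \hookrightarrow V^3_{(a,c)} \hookleftarrow N^C_c$ remain $\Q$-homology equivalences.

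The main obstacle will be verifying the kernel identification for the glued cobordism, namely that
\[\xi^A_a \oplus \xi^C_c \colon P_3 \toiso \ker\bigl(H_1(\Q[\Z]\otimes_{\Q\G} N^A_a) \oplus H_1(\Q[\Z]\otimes_{\Q\G} N^C_c) \to H_1(\Q[\Z]\otimes_{\Q\G} V^3_{(a,c)})\bigr).\]
This requires a careful diagram chase through the $\Q[\Z]$-coefficient Mayer--Vietoris sequence for the decomposition $V^3_{(a,c)} = V^1_{(a,b)} \cup_{N^B_b} V^2_{(b,c)}$, combined with the kernel identifications already guaranteed for $V^1_{(a,b)}$ and $V^2_{(b,c)}$ by the hypotheses. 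Since the relation $\sim$ only demands \emph{existence} of a suitable witnessing cobordism for each $(a,c) \in P_3$, no independence from the choice of witness $b$ needs to be established, which simplifies the task. Once this kernel identification is in hand, all remaining conditions defining $\sim$ for the pair $(A,C)$ are in place, establishing transitivity and completing the proof that $\sim$ is an equivalence relation.
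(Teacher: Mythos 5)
Your proof follows the paper's own argument essentially step for step: the diagonal metaboliser with $V_{(p,p)}=N_p$ and zero relative structure for reflexivity, a swap for symmetry, and for transitivity the composed metaboliser
\[P_3 = \{(a,c)\in H_A\oplus H_C \mid \exists\, b\in H_B,\ (a,b)\in P_1,\ (b,c)\in P_2\}\]
together with the glued cobordism $V^1_{(a,b)}\cup_{N^B_b}V^2_{(b,c)}$. The paper invokes its Lemma~\ref{Lemma:Cancellation_Blanchfield} for the Witt-group fact that $P_3$ is a metaboliser, which is exactly the ``standard Witt-group argument'' you sketch, and the paper handles the kernel identification with the same ``easy Mayer--Vietoris'' wave that you flag as the main remaining step; the $L^4_S(\K)$-additivity is, as in the paper, justified by $\K\otimes_{\Q\G}N^B_b\simeq 0$.

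One small slip in your reflexivity step: the algebraic Thom complex of $\big((1,1)\colon N_p\oplus N_p\to N_p,(0,\theta_p\oplus-\theta_p)\big)$ is \emph{not} chain contractible over $\Q\G$. The underlying complex $\mathscr{C}((1,1))$ is chain equivalent to the suspension $SN_p$, and $N_p$ is not $\Q\G$-acyclic in general (otherwise it would be trivial already in $L^4(\Q\G,\Q\G-\{0\})$). What is true, and what the definition of $\sim$ actually requires, is contractibility after tensoring with $\K$: this holds because $\K\otimes_{\Q\G}N_p\simeq 0$ is part of the data of an element of $L^4_{H,\Bl,p}(\Q\G,\Q\G-\{0\})$, so $\K\otimes_{\Q\G}\mathscr{C}((1,1))\simeq 0$ and the class vanishes in $L^4_S(\K)$. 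Your conclusion is correct; the stated reason, and in particular the direction ``over $\Q\G$ and hence over $\K$,'' is not.
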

\begin{proof}
To see reflexivity, note that the diagonal $H \subseteq H \oplus H$ is a metaboliser for $\Bl \oplus - \Bl$.  Then take $V_{(p,p)} := N_p$ and $\delta\theta_{(p,p)} := 0$.  It is straight--forward to see that $\sim$ is symmetric.  For transitivity, suppose that
\[\bigsqcup_{p \in H} \,((N,\theta),\xi)_{p} \sim \bigsqcup_{q \in H^\dag} \,((N^\dag,\theta^\dag),\xi^\dag)_{q}\]
with a metaboliser $P \subseteq H \oplus H^\dag$ and chain complexes $(V_{(p,q)},\delta\theta_{(p,q)})$, and that
\[\bigsqcup_{q \in H^\dag} \,((N^\dag,\theta^\dag),\xi^\dag)_{q} \sim \bigsqcup_{r \in H^\ddag} \,((N^\ddag,\theta^\ddag),\xi^\ddag)_{r}.\]
with a metaboliser $Q \subseteq H^\dag \oplus H^\ddag$ and chain complexes $(\ol{V}_{(q,r)},\ol{\delta\theta}_{(q,r)})$.

We define the metaboliser $R \subseteq H \oplus H^\ddag$ by
\[R:= \{(p,r) \in H \oplus H^\ddag \, | \, \exists \, q \in H^\dag \text{ with } (p,q) \in P \text{ and } (q,r) \in Q\}.\]
The proof of Lemma \ref{Lemma:Cancellation_Blanchfield} shows that this is a metaboliser.
For each $(p,r) \in R$ we can therefore choose a suitable $q$ and so glue the chain complexes:
\[(\ol{\ol{V}}_{(p,r)},\ol{\ol{\delta\theta}}_{(p,r)}) := (V_{(p,q)} \cup_{N_q^\dag} \ol{V}_{(q,r)}, \delta\theta_{(p,q)} \cup_{\theta_q^\dag} \ol{\delta\theta}_{(q,r)}),\]
to create an algebraic cobordism for each $(p,r) \in R$.  Easy Mayer-Vietoris arguments show that the inclusions $N_p \to \ol{\ol{V}}_{(p,r)}$ and $N_r^\ddag \to \ol{\ol{V}}_{(p,r)}$ induce isomorphisms on first $\Q$-homology, and that
\[\xi_p \oplus \xi_r^\ddag \colon H \oplus H^\ddag \toiso H_1(\Q[\Z] \otimes_{\Q\G} N_p) \oplus H_1(\Q[\Z] \otimes_{\Q\G} N_r^\ddag)\]
restricts to an isomorphism
\[R \toiso \ker \big(H_1(\Q[\Z] \otimes_{\Q\G} N_p) \oplus H_1(\Q[\Z] \otimes_{\Q\G} N_r^\ddag) \to H_1(\Q[\Z] \otimes_{\Q\G} \ol{\ol{V}}_{(p,r)})\big).\]  Since $\K \otimes_{\Q\G} N_q^\dag \simeq 0$, the elements of $L^4_S(\K)$ add and we still have the zero element of $L^4_S(\K)$ as required.
\end{proof}

\begin{proposition}\label{Prop:COTobstr_well_defined}
The map $\C/\mathcal{F}_{(1.5)} \to \mc{COT}_{(\C/1.5)}$ in Definition \ref{defn:COTobstructionset_2} is well--defined.
\end{proposition}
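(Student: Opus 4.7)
The plan is to verify that knots whose classes agree in $\C/\mathcal{F}_{(1.5)}$---equivalently, knots $K$ and $K^{\dag}$ with $K\,\sharp\,{-K^{\dag}}$ being $(1.5)$--solvable---have images in $\mathcal{COT}_{(\C/1.5)}$ related by the equivalence relation $\sim$. First, for a single knot $K$ and $p \in H_1(M_K;\Q[\Z])$, I will check that the triple $((N,\theta),\xi)_{p}$ defined by the symmetric construction applied to $M_K$ with coefficients pulled back along $\rho_{p}\colon \pi_1(M_K) \to \G$ satisfies the conditions of Definition \ref{defn:COTobstructionset_2}: the $\K$--contractibility and $\Q$--homology being that of $S^1 \times S^2$ follow from \cite[Section~2]{COT}; the consistency $\Bl_K(x,y) = \wt{\Bl}(\xi(x),\xi(y))$ from Proposition \ref{Prop:chainlevelBlanchfield}; and condition (\ref{Eqn:common_boundary}) from the fact that at $p=0$ the representation $\rho_{0}$ factors through $\Z \hookrightarrow \G$. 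Isotopic knots produce homeomorphic zero surgeries and hence identical triples.

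The main content is to show that concordant knots have equivalent images. Given a concordance annulus $A \subset S^3 \times I$, I would set $W := \cl((S^3 \times I) \setminus \nu(A))$, a 4--manifold whose boundary decomposes as $M_K$ and $-M_{K^{\dag}}$ joined along $S^1 \times S^1 \times I$. The inclusions induce $\Z$--homology isomorphisms $H_*(M_K;\Z) \toiso H_*(W;\Z) \xleftarrow{\simeq} H_*(M_{K^{\dag}};\Z)$, and $\pi_1(W)$ is normally generated by a meridian. Theorem \ref{Lemma:COT4.4} extends verbatim to this two--boundary setting---replace $M_K$ throughout by $M_K \sqcup -M_{K^{\dag}}$ in the Bockstein--Poincar\'{e}--duality diagram---yielding a metaboliser
\[P := \ker\bigl(i_* \colon H_1(M_K;\Q[\Z]) \oplus H_1(M_{K^{\dag}};\Q[\Z]) \to H_1(W;\Q[\Z])\bigr)\]
for $\Bl_K \oplus -\Bl_{K^{\dag}}$. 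For each $(p,q) \in P$ the argument of \cite[Theorem~3.6]{COT} extends $\rho_{p}$ and $\rho_{q}^{\dag}$ to a common representation $\wt{\rho} \colon \pi_1(W) \to \G$, so we may set
\[V_{(p,q)} := \Q\G \otimes_{\Z[\pi_1(W)]} C_*(W;\Z[\pi_1(W)])\]
with the relative symmetric Poincar\'{e} pair structure derived from the relative fundamental class of $(W,\partial W)$ via the symmetric construction of Chapter \ref{Chapter:duality_symm_structures}.

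I would then check the three conditions of $\sim$ in turn: the $\Q$--homology condition follows from the $\Z$--homology concordance equivalences; the restriction of $\xi_{p} \oplus \xi_{q}^{\dag}$ to an isomorphism of $P$ onto the relevant kernel follows from the construction of $P$ and naturality of the Hurewicz map over $\Q[\Z]$; and the vanishing in $L^4_S(\K)/L^4(\Q\G) \cong L^0_S(\K)/L^0(\Q\G)$ follows because $W$ is a $\Z$--homology $S^1 \times D^2$, so $H_2(W;\K) \cong \K \otimes_{\Q\G} H_2(W;\Q\G) = 0$ by flatness of the Ore localisation, making the intersection form trivially zero in $L^0_S(\K)$.

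The general case $K \,\sharp\, {-K^{\dag}} \in \mathcal{F}_{(1.5)}$ reduces to the same framework: view a $(1.5)$--solution for $K\,\sharp\,{-K^{\dag}}$ glued to a standard cobordism realising the connected sum as a 4--manifold $W$ with $\partial W = M_K \sqcup -M_{K^{\dag}}$ satisfying the inclusion--induced homology and fundamental--group conditions of a $(1.5)$--solution. The metaboliser and representation--extension arguments proceed as above; the vanishing in $L^0_S(\K)/L^0(\Q\G)$ now invokes the image of the $(2)$--Lagrangian of the $(1.5)$--solution as a half--rank $\K$--Lagrangian of the intersection form, exactly as in the proof of Theorem \ref{Thm:COTmaintheorem}. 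The main obstacle will be the careful adaptation of Theorem \ref{Lemma:COT4.4} and of \cite[Theorem~3.6]{COT} to 4--manifolds with two boundary components and the verification that the $(2)$--Lagrangian descends appropriately to the Ore--localised chain complex; both steps should be mechanical but notation--heavy.
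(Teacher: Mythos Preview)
Your approach is essentially correct and parallels the paper's, but there are two points worth noting.

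First, the separate concordance paragraph is redundant and contains an imprecision: the exterior $W$ of a concordance annulus has \emph{connected} boundary $Z = X_K \cup_{S^1\times S^1} S^1\times S^1\times I \cup_{S^1\times S^1} -X_{K^{\dag}} \approx M_{K\,\sharp\,-K^{\dag}}$, not $M_K \sqcup -M_{K^{\dag}}$. So $(C_*(M_K)\oplus C_*(M_{K^{\dag}}) \to C_*(W))$ is not a symmetric Poincar\'{e} pair as written; one must first glue in $S^1\times D^2\times I$ along the $S^1\times S^1\times I$ part of $\partial W$ to split the boundary. This is exactly the ``standard cobordism'' move you already invoke in the general case, so the concordance case should simply be absorbed into it (a concordance exterior is in particular an $(n)$-solution for all $n$, with vanishing $H_2$, making the Lagrangian condition vacuous).

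Second, the paper orders the operations differently in a way that avoids the adaptations you flag at the end. It works directly with the $(1.5)$-solution $W$ for $K\,\sharp\,-K^{\dag}$, whose boundary is the single component $M_{K\,\sharp\,-K^{\dag}}$, so Theorem~\ref{Lemma:COT4.4}, \cite[Theorem~3.6]{COT}, and the $(2)$-Lagrangian argument of Theorem~\ref{Thm:COTmaintheorem} apply verbatim. Only \emph{after} establishing that the relative complex $C_*(W,M_{K\,\sharp\,-K^{\dag}};\Q\G)$ is zero in $L^4_S(\K)$ does the paper glue in $C_*(S^1\times D^2\times I;\Q\G)$ algebraically (via algebraic Poincar\'{e} thickening and the union construction) to split the boundary into $C_*(M_K)\oplus C_*(M_{K^{\dag}})$; since $C_*(S^1\times D^2\times I;\K)\simeq 0$, this gluing leaves the class in $L^4_S(\K)$ unchanged. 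Your geometric-gluing-first route requires extending Theorem~\ref{Lemma:COT4.4} to two boundary components and checking that the $(2)$-Lagrangian survives the gluing; both are doable but the paper's order of operations sidesteps them entirely.
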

\begin{proof}
To see that the map is well--defined, we show that if $K \, \sharp \, -K^\dag$ is $(1.5)$-solvable, then the image of $K$ is equivalent to the image of ${K^\dag}$ in $\mc{COT}_{(\C/1.5)}$.  Let $W$ be a $(1.5)$-solution for $K \, \sharp \, -K^\dag$, and let $$P := \ker(H_1(M_K;\Q[\Z]) \oplus H_1(M_{K^\dag};\Q[\Z]) \to H_1(W;\Q[\Z])),$$
noting that $$H_1(M_K;\Q[\Z]) \oplus H_1(M_{K^\dag};\Q[\Z]) \toiso H_1(M_{K \, \sharp \, -K^\dag};\Q[\Z]).$$
We define, for all $(p,q) \in P$, $V_{(p,q)} := C_*(W,M_{K \, \sharp \, -K^\dag};\Q\G)$ to be the chain complex of $W$ relative to $M_{K \, \sharp \, -K^\dag}$.

Then $\K \otimes_{\Q\G} V_{(p,q)}$ represents an element of $L^4_S(\K)$ as in Definition \ref{defn:localisationexactsequence}.  Since $W$ is a $(1.5)$-solution, as in Theorem \ref{Thm:COTmaintheorem}, we have $B=0$.  That is, the intersection form of $V_{(p,q)}$ is hyperbolic as required.

Applying the algebraic Poincar\'{e} thickening (Definition \ref{Defn:algThomcomplexandthickening}) yields a symmetric Poincar\'{e} pair
\[C_*(M_{K \, \sharp \, -K^\dag};\Q\G)_{(p,q)} \to V^{4-*}_{(p,q)}.\]
Now note that $$C_*(M_{K \, \sharp \, -K^\dag};\Q\G)_{(p,q)} \simeq C_*(X_K \cup S^1 \times S^1 \times I \cup X_{K^\dag};\Q\G)_{(p,q)}.$$
By gluing the chain complex $C_*(S^1 \times D^2 \times I;\Q\G)$ to $V_{(p,q)}^{4-*}$ along $C_*(S^1 \times S^1 \times I;\Q\G)$, we obtain a symmetric Poincar\'{e} pair \[(C_*(M_K;\Q\G)_{p} \oplus C_*(M_{K^\dag};\Q\G)_q \to \widehat{V}_{(p,q)},(\widehat{\delta\theta}_{(p,q)},\theta_p \oplus -\theta_q^\dag)).\]
This gluing does not change the element of $L^4_S(\K)$ produced, since $$C_*(S^1 \times D^2 \times I;\K) \simeq 0.$$  We therefore indeed have that $K$ and $K^\dag$ map to equivalent elements in $\mc{COT}_{(\C/1.5)}$, as claimed.
\end{proof}

\section{$L^{(2)}$-signatures}\label{Chapter:L2signature}

There remains the not insignificant task of detecting non-zero elements in the Witt group of Hermitian forms in $L^0(\K)$.  \COT use an \emph{$L^{(2)}$-signature} (see \cite[Section~5]{COT} for more complete details) to define a homomorphism
\[\sigma^{(2)} \colon L^0(\K) \to \R\]
which detects the Witt class of the intersection form and therefore obstructs $(1.5)$-solvability and in particular sliceness.  The $L^{(2)}$-signature agrees with the ordinary signature of $\Q$-homology on the image of $L^0(\Q\Gamma)$ so that we have a well defined obstruction, the reduced $L^{(2)}$-signature:
\[\sigma^{(2)}(W) - \sigma(W),\]
where $\sigma(W) \in \Z$ is the ordinary signature, for a $(1)$-solution $W$.  We now give an outline of the beautiful theory of $L^{(2)}$-signatures.  Once we have our notion of algebraic concordance of symmetric Poincar\'{e} triads we will describe a way to obtain these signatures algebraically without having to make a choice of a geometric 4-manifold.

The $L^{(2)}$-signature can be thought of as a way of taking a signature when the coefficients are in the group ring of an infinite group $\G$.  We first make the inclusion:
\[\Q\G \hookrightarrow \mathbb{C}\G.\]
We then consider $\mathbb{C}\G$ as a subset of $\mathcal{B}(l^2\G)$, the Hilbert space of square-summable sequences indexed by the elements $g \in \G$.  We complete $\mathbb{C}\G$ inside $\mathcal{B}(l^2\G)$ using pointwise convergence and obtain the \emph{Von Neumann Algebra} $\NG$.  We shall later include $\NG$ into the space $\UG$ of unbounded operators affiliated to $\NG$, the equivalent of Ore localisation for Von Neumann algebras.  For a $(1)$-solution $W$ the intersection form
\[\lambda_1 \colon H_2(W;\K) \times H_2(W;\K) \to \K\]
yields a Hermitian operator on the Hilbert space $(\UG)^m$.

We define the signature for Hermitian operators $\lambda$ on $(\NG)^m$, which will then extend to $(\UG)^m$.  To define a signature we need notions of the dimensions of the positive and negative eigenspaces of $\lambda$.

The \emph{functional calculus} yields a correspondence between bounded measurable functions on the spectrum $\spec(\lambda)$ of an operator $\lambda$:
\[f \colon \spec(\lambda) \to \mathbb{C}\]
and bounded operators $f(\lambda)$ on $(\NG)^m$, represented as $m \times m$ matrices with elements in $\NG$.  Choosing the characteristic functions
\[p_+,p_- \colon \spec(\lambda) \subseteq \R \to \{0,1\} \subset \mathbb{C}\]
of $(0,\infty)$ and $(-\infty,0)$ ($\spec(\lambda) \subseteq \R$ since $\lambda$ is Hermitian) we obtain two Hermitian projection operators $p_+(\lambda), p_-(\lambda)$.  The completion to the Von Neumann algebra is necessary for the functional calculus to be well defined on such Heaviside-type functions as $p_+,p_-$; they are limits of polynomials so the fact that limits commute with the functional calculus correspondence in Von Neumann algebras is crucial.  For example, let $p_i$ be a sequence of polynomials such that $\lim(p_i) = p_+.$  We have:
\[p_+(\lambda) = (\lim p_i)(\lambda) = \lim (p_i(\lambda)) \in M_m(\NG).\]
where $p_i(\lambda)$ makes immediate sense as we can evaluate polynomials on operators which live in a $C^*$-algebra.

We can then use the \emph{Von Neumann $\G$-trace} to define the dimension of the $\pm$ eigenspaces of $\lambda$.  The $\G$-trace of an operator $a$ is defined to be:
\[\tr_{\G}(a) := \langle (e)a,e \rangle_{l^2\G} \in \mathbb{C},\]
using the $l^2\G$ inner product, where $e \in \G \subseteq l^2\G$ is the identity element.  This extends to $m\times m$ matrices by taking the matrix trace, that is by summing over the $\G$-traces of the diagonal entries.  Recall that for projection operators on finite dimensional vector spaces, their trace is equal to the dimension of their image; the $\G$-trace is a generalisation of this concept.

We can now define the $L^{(2)}$-signature of a Hermitian operator $\lambda$ to be:
\[\sigma^{(2)}(\lambda) := \tr_{\G}(p_+(\lambda)) - \tr_{\G}(p_-(\lambda)) \in \R.\]
Hermitian projection operators $a = a^2 = a^*$ have real traces since:
\[\langle (e)a , e\rangle_{l^2\G} = \langle (e)a^2 , e \rangle_{l^2\G} = \langle (e) a, (e) a^* \rangle_{l^2\G} = \langle (e)a , (e)a \rangle_{l^2\G} \in \R.\]
Furthermore we can include $\NG \subset \UG$, where $\UG$ is the space of unbounded operators affiliated to $\NG$.  See \cite[Lemma~5.6]{COT} and the preamble to it for more details.  The functional calculus can be extended to unbounded operators, and it is a theorem that $\NG$ satisfies the Ore condition, with $S$ as the set of all non-zero divisors, and that this Ore localisation yields $\UG$.

The introduction of $\UG$ enables the definition of the $L^{(2)}$-signature to be extended from Hermitian forms over $\Q\Gamma^n$ to those on $\K^n$; $H_2(W;\Q\G)$ may not be free but $H_2(W;\K)$ is a module over a skew-field so is a free module, so we can express
\[\lambda_1 \colon H_2(W;\K) \times H_2(W;\K) \to \K\] as a matrix, and use this to obtain the $L^{(2)}$-signature.  The reduced $L^{(2)}$-signature:
\[\wt{\sigma}^{(2)}(W) := \sigma^{(2)}(W) - \sigma(W) \in \R\]
gives a real number which is independent of the choice of $W$, so can detect the image of $\lambda_1$ in $L^0(\K)/\im(L^0(\Q\G))$ and therefore obstructs the existence of a $(1.5)$-solution, provided we check all the metabolisers $P$ of the Blanchfield form and, for each $P$, at least one of the representations which arise from a choice of $p \in P \setminus \{0\}$.  Since the obstruction depends only on the 3-manifold, and the choice of representation, it is often referred to as the \emph{Cheeger--Gromov--Von--Neumann $\rho$-invariant} of $M_K$.  \COT and Cochran--Harvey--Leidy (\cite[Section~6]{COT} and \cite{COT2}, \cite{CochranTeichner}, \cite{cohale}) are able to use this obstruction and various satellite constructions to find knots which are $(n)$-solvable but which are not $(n.5)$-solvable for all $n\in \mathbb{N}$.  The beauty is that the $L^{(2)}$-signature of these knots can be calculated in a simple way by integrating the classical Levine-Tristram $\omega$-signatures of the \emph{infection knot} of the satellite construction as $\omega$ varies around the circle (see \cite[Lemma~5.4]{COT}, \cite{COT2} for more on this).

\begin{theorem}[\cite{COT2} Theorem 5.2]
Suppose $K$ is a (1.5)-solvable knot whose Alexander polynomial is not 1 and which admits a Seifert surface $F$ of genus 1.  Then there is a homologically essential simple closed curve $J$ on $F$, which has self linking number zero, so corresponds to a metaboliser of the Blanchfield form, such that the integral of the Levine-Tristram signature function of $J$ vanishes, considering $J$ as a knot in $S^3$.
\end{theorem}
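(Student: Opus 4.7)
The plan is to combine the COT main obstruction theorem (Theorem \ref{Thm:COTmaintheorem}) with a computation of the $L^{(2)}$ $\rho$-invariant in terms of classical Levine--Tristram signatures, specialized to the genus one Seifert surface setting.

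First, I would analyze the structure of the Alexander module and the Blanchfield form of $K$ using the genus one Seifert surface $F$. Let $\{a,b\}$ be a symplectic basis for $H_1(F;\Z)\cong \Z^2$ and let $V$ be the Seifert matrix in this basis. Since $\Delta_K(t) \ne 1$, the form $V+V^T$ is non-degenerate and the Alexander module $H_1(X;\Q[t,t^{-1}])$ is generated as a $\Q[t,t^{-1}]$-module (of rank zero, since $H$ is torsion) by the pushoffs of $a,b$. Concretely there is a presentation $H \cong \Q[t,t^{-1}]^2/(tV-V^T)$, and the rational Blanchfield form has the explicit Kearton form $\Bl(x,y) = x^T(1-t)(tV-V^T)^{-1}\bar y$ modulo $\Q[t,t^{-1}]$. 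Since $K$ is $(1.5)$-solvable, Theorem \ref{Lemma:COT4.4} (with $\Q$ coefficients) produces a metaboliser $P = P^\perp$ of $\Bl$.

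Next, I would show that $P$ is realized by a simple closed curve $J$ on $F$ with self-linking zero. Because $F$ has genus one and $\Delta_K(t)\ne 1$, the Alexander module is nontrivial and the metabolisers of the rational Blanchfield form correspond bijectively to rational lines in $H_1(F;\Q)$ on which $V$ vanishes, via the natural map $H_1(F)\to H_1(X;\Z[t,t^{-1}])$. A standard argument (clearing denominators and using that primitive classes in $H_1(F;\Z)\cong \Z^2$ are represented by simple closed curves on the genus one surface) then gives a primitive class $[J] = m a + n b$ with $\gcd(m,n)=1$, representable by an unknotted simple closed curve $J$ on $F$, with $V(J,J)=0$, i.e.\ self-linking zero. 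Homological essentiality follows from primitivity. The image $p_J$ of $[J]$ in $H$ lies in $P$.

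The main step is to identify the reduced $L^{(2)}$-signature produced by Theorem \ref{Thm:COTmaintheorem}, applied to $p_J \in P$, with the integral of the Levine--Tristram signature of $J$. For this I would use the following standard reinterpretation. Because $J$ has self-linking zero and lies on $F$, one may isotope $J$ off $F$ using the zero framing; then $J$ is unknotted in $S^3$ and bounds a disk $D_J$ in $D^4$ whose interior meets the pushoff of $F$ transversely. Cutting $F$ along $J$ and capping with a parallel copy of $D_J$ exhibits $M_K$ as homology cobordant, via a simple $\Z$-homology cobordism, to the zero surgery $M_{K'}$ on a knot $K'$ obtained from $K$ by an infection along $J$; the representation $\rho_{p_J}\colon \pi_1(M_K)\to \Gamma$ factors through the infection in such a way that the Cheeger--Gromov $\rho$-invariant of $(M_K,\rho_{p_J})$ equals the integrated Levine--Tristram signature $\int_{S^1}\sigma_\omega(J)\,d\omega$, by the satellite formula for $\rho$-invariants (\cite[Lemma~5.4]{COT}). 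Since $(1.5)$-solvability together with $p_J\in P$ gives, via Theorem \ref{Thm:COTmaintheorem}, vanishing of $\wt\sigma^{(2)}$, we conclude $\int_{S^1}\sigma_\omega(J)\,d\omega=0$.

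The hard part will be step three: the careful identification of $\wt\sigma^{(2)}$ of the $(1.5)$-solution with the integrated Levine--Tristram signature of $J$. The two essential ingredients are (i) the satellite/infection formula for Cheeger--Gromov $\rho$-invariants, and (ii) a verification that the representation $\rho_{p_J}$, which a priori factors through $\Z\ltimes \Q(t)/\Q[t,t^{-1}]$, restricts on the meridian of $J$ to a nontrivial character whose contribution is exactly the Levine--Tristram signature integral. Steps one and two, by contrast, are essentially linear algebra on the rank two Seifert form and an application of the COT metaboliser theorem already recorded in the excerpt.
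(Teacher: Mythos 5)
The theorem you are attempting to prove is not proved in the paper at all: it appears only as a quoted external result, attributed to \cite{COT2} and stated as background motivation at the end of Chapter \ref{Chapter:L2signature}. There is therefore no ``paper's own proof'' to compare against; I can only evaluate your attempt on its own terms.

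Your overall plan — use the $(1.5)$-solution $W$ to obtain a metaboliser $P$ of the rational Blanchfield form via Theorem~\ref{Lemma:COT4.4}, realize $P$ by a primitive simple closed curve $J$ on the genus-one Seifert surface with $\lk(J,J^+)=0$, and then compute the reduced $L^{(2)}$-signature using the satellite formula for Cheeger--Gromov $\rho$-invariants — is the correct strategy, and Steps 1 and 2 are essentially sound once ``unknotted'' in Step 2 is read as ``primitive, hence represented by an embedded curve on $F$.''

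However, Step 3 contains a fatal error. You write that because $J$ has self-linking zero and lies on $F$, ``$J$ is unknotted in $S^3$ and bounds a disk $D_J$ in $D^4$.'' This is false: self-linking zero means the Seifert push-off framing of $J$ within $F$ agrees with the zero framing, nothing more. The curve $J$ can be an arbitrary knot in $S^3$ — indeed for the twist knots discussed immediately afterwards in the text, the derivative curves are torus knots. Note that if $J$ really were unknotted then its Levine--Tristram signature function would be identically zero and the theorem would be vacuous; the entire force of the statement is that vanishing of $\int_{S^1}\sigma_\omega(J)\,d\omega$ is a genuine constraint on the knot type of $J$. Consequently the mechanism you describe — capping $F$ along a disk $D_J$ bounded by $J$ in $D^4$ — does not exist in general, and the identification of the $\rho$-invariant with the integrated signature needs a different argument, one built directly on the satellite/infection formula of \cite[Lemma~5.4]{COT} and the algebraic topology of the $(1)$-solution rather than on any assumed sliceness of $J$.
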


As a great example, we can use this to recreate the original Casson-Gordon result that the twist knots of Figure \ref{Fig:TwistKnots} are not slice.  The zero-linking curves on Seifert surfaces for the algebraically slice twist knots, which are those with $4k+1 = n^2$ for some $n \in \mathbb{N}$, are torus knots.  There exists a closed formula for the integral of the $\omega$-signatures of the torus knots, integrating over $\omega \in S^1$, written about by several people: see \cite{collins2010} for an excellent exposition and further references.  The relevant $L^{(2)}$-signatures of the torus knots are non-zero, proving once again that the only twist knots which are slice are for $k=0,2$.

\begin{figure}[h]
\begin{center}
{\psfrag{k}{$k$ full twists}
\includegraphics[width=7cm]{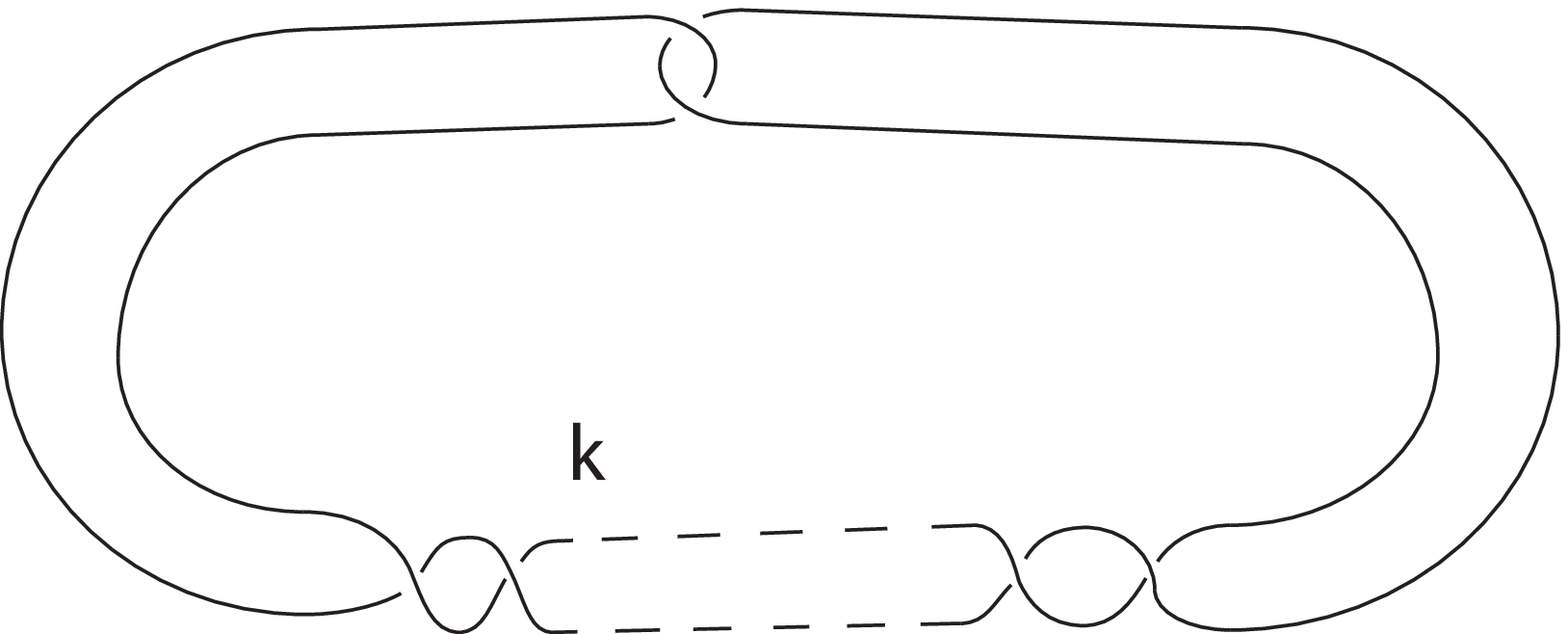}}
\caption{The $k$th Twist Knot} \label{Fig:TwistKnots}
\end{center}
\end{figure}

\chapter{Algebraic Concordance}\label{chapter:algconcordance}



The geometric obstruction theory of Chapter \ref{chapter:COTsurvey} motivates the definition of a purely algebraic obstruction theory, which we use to define a second order algebraic concordance group $\mathcal{AC}_2$.  We proceed as follows.

Given two triples $(H,\Y,\xi),(H^\dag,\Y^{\dag},\xi^\dag) \in \P$, we formulate an algebraic concordance equivalence relation, modelled on the concordance of knots and corresponding to $\Z$-homology cobordism of manifolds, with the extra control on the fundamental group which is evidently required, given the prominence of the Blanchfield form in the previous chapter when controlling representations.  We take the quotient of our monoid $\P$ by this relation, and obtain a group $\mathcal{AC}_2 := \P/\sim$.  Our main goal for this work is to complete the set up of the following commuting diagram, which has geometry in the left column and algebra in the right column:
\[\xymatrix @R+1cm @C+1cm{
Knots \ar[r] \ar @{->>} [d] & \P \ar @{->>} [d]  \\
\C \ar[r] \ar @{->>} [d] & \mathcal{AC}_2 \ar@{-->}[d]  \\
\C/\mathcal{F}_{(1.5)} \ar@{-->}[r]  \ar[ur] & \mathcal{COT}_{(\C/1.5)},}\]
where $Knots$ is the monoid of geometric knots under connected sum, $\C$ is the concordance group of knots and $\mathcal{F}_{(1.5)}$ denotes the subgroup of (1.5)-solvable knots. The top row consists of monoids, and arrows emanating from the top row should be monoid homomorphisms.  The rest of the maps should be homomorphisms of groups, apart from those with codomain $\mathcal{COT}_{(\C/1.5)}$ (the dotted arrows), which is the pointed set which contains the \COT obstructions.  Since the \COT obstructions are not guaranteed to behave well under connected sum (but see \cite{COT2}, where the obstructions do behave well under special circumstances), we require only that the maps with codomain $\mathcal{COT}_{(\C/1.5)}$ map zero to zero, so are morphisms of pointed sets: see Theorem \ref{Thm:betterstatementzeroAC2_to_zeroL4QG}.  When we are able to take $L^{(2)}$-signatures, as in Chapter \ref{chapter:COTsurvey}, to obstruct an element of $\mathcal{COT}_{(\C/1.5)}$ from being $U$, we are also obstructing triples in $\mathcal{AC}_2$ from being equivalent to the triple corresponding to the unknot (see Theorem \ref{Thm:extractingL2signatures}).

So far we have explained the diagram with $\mathcal{AC}_2$ and the maps with it as domain and codomain removed.  In this chapter we focus on completing the top square.  We define our concordance relation, and show that it is an equivalence relation.  We define an inverse $-(H,\Y,\xi)$ of a triple $(H,\Y,\xi)$, and show that $(H,\Y,\xi) \,\sharp\, -(H,\Y,\xi) \sim (\{0\},\Y^U,\Id_{\{0\}})$, where $(\{0\},\Y^U,\Id_{\{0\}})$ is the triple of the unknot, so that we obtain a group $\mathcal{AC}_2$.

\begin{proposition}\label{lemma:basicfactconcordance}
Two knots $K$ and $K^{\dag}$ are topologically concordant if and only if the 3-manifold
\[Z := X \cup_{\partial X = S^1 \times S^1} S^1 \times S^1 \times I \cup_{S^1 \times S^1 = \partial X^{\dag}} -X^{\dag}\]
is the boundary of a topological 4-manifold $W$ such that
\begin{description}
\item[(i)] the inclusion $i \colon Z \hookrightarrow W$ restricts to $\Z$-homology equivalences $$H_*(X;\Z) \xrightarrow{\simeq} H_*(W;\Z) \xleftarrow{\simeq} H_*(X^\dag;\Z); \text{ and}$$
\item[(ii)] the fundamental group $\pi_1(W)$ is normally generated by a meridian of (either of) the knots.
\end{description}
\end{proposition}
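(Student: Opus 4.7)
The plan is to follow the pattern of Proposition~\ref{basicfact_intro}, with a topological concordance annulus playing the role of the slice disc and Freedman's topological $4$-dimensional $h$-cobordism theorem applied to $S^3 \times I$ playing the role of the theorem applied to $D^4$.

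For the forward direction, given a concordance annulus $C \subset S^3 \times I$, I would take a topological tubular neighbourhood $N(C) \approx S^1 \times I \times D^2$---the normal bundle is trivial since $C \cong S^1 \times I$ is orientable, and such neighbourhoods exist in the locally flat topological category---and set $W := \cl((S^3 \times I) \setminus N(C))$. Unpacking the boundary, with $N(C) \cap (S^3 \times \{0,1\}) = N(K) \sqcup N(K^{\dag})$ and reversed induced orientation at $t = 1$, identifies $\partial W$ with $Z$. To verify~(i), I would compare the Mayer--Vietoris sequences for $S^3 = X \cup_{S^1 \times S^1} N(K)$ and $S^3 \times I = W \cup_{S^1 \times S^1 \times I} N(C)$, connected by the evident inclusions; four of the five maps in each block are homotopy equivalences (each an inclusion into a trivial $I$-factor extension), so the five-lemma forces $H_*(X;\Z) \to H_*(W;\Z)$ to be an isomorphism, and symmetrically for $X^{\dag}$. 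For~(ii), I would invoke van Kampen for $S^3 \times I = W \cup_{S^1 \times S^1 \times I} N(C)$: since the meridian of $C$ bounds a disc fibre of $N(C)$, the triviality of $\pi_1(S^3 \times I)$ forces $\pi_1(W)/\langle\langle m \rangle\rangle = 1$, giving~(ii) with $m$ any meridian of the concordance (equivalently a meridian of either knot).

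For the converse, given $W$ with properties~(i) and~(ii), I would attach $S^1 \times D^2 \times I$ to $W$ along the cylindrical torus piece $S^1 \times S^1 \times I \subset \partial W = Z$, identifying $S^1 \times \partial D^2 \times I$ with $S^1 \times S^1 \times I$ so that the meridian of each knot bounds the disc fibre. Call the resulting 4-manifold $W'$. Since $X \cup_{S^1 \times S^1}(D^2 \times S^1) \approx S^3$ under this gluing (the standard zero-surgery filling with meridian bounding), and similarly for $X^{\dag}$, one finds $\partial W' \approx S^3 \sqcup -S^3$. The aim is then to show $W'$ is a topological $h$-cobordism from $S^3$ to $S^3$ and invoke Freedman's topological $h$-cobordism theorem \cite{FQ}.

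Checking the $h$-cobordism conditions is a pair of diagram chases. Van Kampen gives $\pi_1(W') = \pi_1(W) *_{\Z \oplus \Z} \Z$, where the amalgamation kills the meridian and sends the longitude to the core of the attached solid torus; condition~(ii) then forces $\pi_1(W') = 1$. For homology, comparing the Mayer--Vietoris sequences for $S^3 = X \cup_{S^1 \times S^1}(D^2 \times S^1)$ and $W' = W \cup_{S^1 \times S^1 \times I}(S^1 \times D^2 \times I)$, the four non-target maps are homology equivalences by~(i), so the five-lemma yields $H_*(S^3;\Z) \xrightarrow{\simeq} H_*(W';\Z)$, and symmetrically for the other boundary $S^3$. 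Freedman's theorem then supplies a homeomorphism $W' \approx S^3 \times I$ extending the boundary identification, and the image of the core annulus $S^1 \times \{0\} \times I$ of the attached $S^1 \times D^2 \times I$ is a locally flat concordance from $K$ to $K^{\dag}$. The main subtle ingredients are the existence of topological tubular neighbourhoods for codimension-two locally flat submanifolds and Freedman's $h$-cobordism theorem; the remaining verifications are routine five-lemma and van Kampen computations parallel to Proposition~\ref{basicfact_intro}.
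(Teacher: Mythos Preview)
Your proof is correct and follows essentially the same approach as the paper: take the concordance exterior for the forward direction and verify (i), (ii) via Mayer--Vietoris and Seifert--Van Kampen; for the converse, glue in $S^1 \times D^2 \times I$ along the torus cylinder and invoke Freedman. The only cosmetic difference is that the paper caps off both $S^3$ boundary components with $D^4$ to obtain a homotopy $S^4$ and then applies the topological Poincar\'{e} conjecture, whereas you apply the $h$-cobordism theorem directly to the cobordism $W'$ between the two copies of $S^3$; these are equivalent invocations of Freedman's work.
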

\begin{proof}
This is a generalisation of Proposition \ref{basicfact} which deals with the case that $K^{\dag}$ is the unknot.  Let $W$ be the exterior of the embedded annulus $S^1 \times I \subset S^3 \times I$ which gives a concordance:
\[W:= \cl((S^3 \times I)\setminus (S^1 \times D^2 \times I)).\]
Then a Mayer-Vietoris calculation and the Seifert-Van-Kampen theorem using the decomposition of $S^3 \times I$ as $W \cup_{S^1 \times S^1 \times I} S^1 \times D^2 \times I$ verify that $W$ satisfies the claimed properties.  Conversely, suppose that we have a $W$ which satisfies these properties.  Then we can glue in $S^1 \times D^2 \times I$ to the $S^1 \times S^1 \times I$ part of the boundary $\partial W = Z$.  This yields a simply-connected 4-manifold with the homology of $S^3$ and boundary $S^3 \times \{0,1\}$; $K$ and $K^{\dag}$ are concordant in $W$.  Gluing $D^4$ to both ends yields a homotopy 4-sphere, which is homeomorphic to $S^4$ by Freedman's topological $h$-cobordism theorem \cite[Theorem~7.1B]{FQ}.  Removing the images of our added 4-balls yields $S^3 \times I$ as claimed.
\end{proof}

We need to construct the algebraic version of $Z$ from two symmetric Poincar\'{e} triads $\Y$ and $\Y^{\dag}$ so that we can impose conditions on the algebraic 4-dimensional complexes which have it as their boundary.  As part of the definition of a symmetric Poincar\'{e} triad $\Y$ over $\zh$ (Definition \ref{Defn:symmPoincaretriad}),
\[\xymatrix @R+0.5cm @C+0.5cm {\ar @{} [dr] |{\stackrel{g}{\sim}}
(C,\varphi_C) \ar[r]^{i_-} \ar[d]_{i_+} & (D_-,\delta\varphi_-) \ar[d]^{f_-}\\ (D_+,\delta\varphi_+) \ar[r]^{f_+} & (Y,\Phi),
}\]
we can construct a symmetric Poincar\'{e} pair
\[(\eta \colon E := D_- \cup_{C} D_+ \to Y ,(\Phi, \delta\varphi_- \cup_{\varphi_C} \delta\varphi_+))\]
where
\[\eta = \left(\begin{array}{ccc} f_-\, , & (-1)^{r-1}g \, , & -f_+ \end{array} \right) \colon E_r = (D_-)_r \oplus C_{r-1} \oplus (D_+)_r \to Y_r.\]
In our case of interest, $E$, for the standard models of $C, D_{\pm}$, is given by:
\[\xymatrix{ E_2 \cong \bigoplus_2 \, \zh \xrightarrow{\partial_2} E_1 \cong \bigoplus_4 \, \zh  \xrightarrow{\partial_1} E_0 \cong \bigoplus_2 \, \zh,
}\]
where:
\[\partial_1 = \left(\begin{array}{cc} g_1-1 & 0  \\ 1 & l_a \\ l_a^{-1} & 1 \\ 0 & g_q-1 \end{array} \right) ; \text{ and}\]
\[\partial_2 = \left(\begin{array}{cccc} -1 & g_1-1 & 0 & -l_a \\ -l_a^{-1} & 0 & g_q-1 & -1 \end{array} \right),\]
with $\phi_0 \colon E^{2-r} \to E_r$:
\[\xymatrix @R+1cm @C+1cm{ E^0 \ar[r]^{\delta_1} \ar[d]^{\phi_0} & E^1 \ar[r]^{\delta_2} \ar[d]^{\phi_0} & E^2 \ar[d]^{\phi_0}\\
E_2 \ar[r]^{\partial_2} & E_1 \ar[r]^{\partial_1} & E_0
}\]
given by:
\[\xymatrix @R+3cm @C+1.5cm{ \bigoplus_2\,\zh \ar[r]^{\delta_1} \ar[d]^{\left(\begin{array}{cc} -1 & l_a \\ 0 & 0 \end{array} \right)} & \bigoplus_4\,\zh \ar[r]^{\delta_2} \ar[d]^{\left(\begin{array}{cccc} 0 & g_1 & -l_ag_q & 0 \\ 0 & 0 & 0 & l_a \\ 0 & 0 & 0 & -1 \\ 0 & 0 & 0 & 0 \end{array} \right)} & \bigoplus_2\,\zh \ar[d]^{\left(\begin{array}{cc} 0 & g_1 l_a \\ 0 & -g_q \end{array} \right)}\\
\bigoplus_2\,\zh \ar[r]^{\partial_2} & \bigoplus_4\,\zh \ar[r]^{\partial_1} & \bigoplus_2\,\zh.
}\]
We have replaced $l_b^{-1}$ with $l_a$ here.  Note that the boundary and symmetric structure maps still depend on the group element $l_a$.  The next lemma shows that, over the group ring $\Z[\Z \ltimes (H \oplus H^{\dag})] = \zhddag$, the chain complexes $E,E^{\dag}$ of the boundaries of two different triads $\Y,\Y^{\dag}$ are isomorphic.

\begin{lemma}\label{lemma:torusindependantofl_a}
There is a chain isomorphism:
\[\varpi_E \colon \zhddag \otimes_{\zh} E \to \zhddag \otimes_{\zhdag} E^{\dag}, \]
\[\xymatrix @R+2cm @C+2cm{
E_2 \ar[r]^{\partial_2} \ar[d]_{\varpi_E} & E_1 \ar[r]^{\partial_1} \ar[d]_{\varpi_E} & E_0 \ar[d]_{\varpi_E} \\
E^{\dag}_2 \ar[r]^{\partial_2^{\dag}} & E^{\dag}_1 \ar[r]^{\partial_1^{\dag}} & E^{\dag}_0 \\
}\]
omitting $\zhddag \otimes_{\zh}$ and $\zhddag \otimes_{\zhdag}$ from the notation of the diagram, given by:
\[\xymatrix @R+2cm @C+2cm{
\bigoplus_2\,\Ups^{\ddag} \ar[rr]^{\left(\begin{array}{cccc} -1 & g_1-1 & 0 & -l_a \\ -l_a^{-1} & 0 & g_q-1 & -1 \end{array} \right)} \ar[d]^{\left(\begin{array}{cc} 1 & 0 \\ 0 & l_a^{-1}l^{\dag}_a \end{array} \right)} && \bigoplus_4\,\Ups^{\ddag} \ar[r]^{\left(\begin{array}{cc} g_1-1 & 0  \\ 1 & l_a \\ l_a^{-1} & 1 \\ 0 & g_q-1 \end{array} \right)} \ar[d]_{\left(\begin{array}{cccc} 1 & 0  & 0 & 0\\ 0 & 1 & 0 & 0 \\ 0 & 0 & l_a^{-1}l^{\dag}_a  & 0 \\ 0 & 0 & 0 & l_a^{-1}l_a^{\dag}\end{array} \right)} & \bigoplus_2\,\Ups^{\ddag} \ar[d]_{\left(\begin{array}{cc} 1 & 0 \\ 0 & l_a^{-1}l^{\dag}_a \end{array} \right)} \\
\bigoplus_2\,\Ups^{\ddag} \ar[rr]_{\left(\begin{array}{cccc} -1 & g^{\dag}_1-1 & 0 & -l^{\dag}_a \\ -(l^{\dag}_a)^{-1} & 0 & g^{\dag}_q-1 & -1 \end{array} \right)} && \bigoplus_4\,\Ups^{\ddag} \ar[r]_{\left(\begin{array}{cc} g^{\dag}_1-1 & 0  \\ 1 & l^{\dag}_a \\ (l^{\dag}_a)^{-1} & 1 \\ 0 & g^{\dag}_q-1 \end{array} \right)} & \bigoplus_2\,\Ups^{\ddag} \\
}\]
where $\Ups^{\ddag} := \zhddag$.
\end{lemma}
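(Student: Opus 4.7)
The plan is to verify the lemma in two steps: first check that the stated map $\varpi_E$ commutes with the boundary maps of $E$ and $E^\dag$, and then observe that $\varpi_E$ is invertible by inspection of the matrices. Conceptually, after tensoring both triads to $\zhddag$, the chain complexes $E$ and $E^\dag$ are two presentations of the "same" torus chain complex that differ only by a change of longitudinal element from $l_a$ to $l_a^\dag$, so one should expect $\varpi_E$ to be a conjugation-type chain isomorphism.

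The key algebraic identity, flowing from the construction in Definition \ref{Defn:connectsumalgebraic}, is that the tensor product identifies $g_1 = g_1^\dag \in \Z \ltimes (H \oplus H^\dag)$. Combined with the defining relations $g_q = l_a^{-1} g_1 l_a$ and $g_q^\dag = (l_a^\dag)^{-1} g_1 l_a^\dag$, this yields
\[
g_q \cdot (l_a^{-1} l_a^\dag) \;=\; (l_a^{-1} l_a^\dag) \cdot g_q^\dag,
\]
which is the relation that is needed every time one wishes to commute a factor $g_q - 1$ past the diagonal entry $l_a^{-1} l_a^\dag$ appearing in the components of $\varpi_E$.

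To check the square involving $\partial_1$, one computes entry by entry: the $(1,1)$ entry of $\partial_1^\dag \circ \varpi_E$ agrees with the $(1,1)$ entry of $\varpi_E \circ \partial_1$ because $g_1 = g_1^\dag$; the $(2,2)$ entry agrees by the displayed commutation relation; and the off-diagonal entries agree because $l_a \cdot (l_a^{-1} l_a^\dag) = l_a^\dag$ and $(l_a^\dag)^{-1} \cdot l_a^{-1}l_a^\dag \cdot l_a = l_a^{-1} \cdot l_a$ cancels suitably. The square involving $\partial_2$ is dispatched by the same relations; in particular the off-diagonal $-l_a$ in $\partial_2$ becomes $-l_a^\dag$ after composition with the appropriate $l_a^{-1}l_a^\dag$ factor, matching $\partial_2^\dag$. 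This step is pure bookkeeping and contains the only real content of the lemma.

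Finally, for invertibility, note that each of the three matrices defining $\varpi_E$ in degrees $0$, $1$, $2$ is (block-)diagonal with entries either $1$ or $l_a^{-1} l_a^\dag$, and $l_a^{-1} l_a^\dag$ is a group element of $\Z \ltimes (H \oplus H^\dag)$, hence a unit in $\zhddag$. The inverse chain map is therefore given by the same formulas with $l_a^{-1} l_a^\dag$ replaced by $(l_a^\dag)^{-1} l_a$, and the same calculation (with roles of $E$ and $E^\dag$ swapped) shows it is a chain map. The main obstacle is purely notational -- keeping $g_q$, $g_q^\dag$, $l_a$, $l_a^\dag$ and their inverses correctly tracked through the matrix multiplications -- and there is no conceptual difficulty beyond the single commutation identity displayed above.
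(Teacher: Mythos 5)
Your proposal is correct and takes essentially the same route as the paper: the key identity you isolate, $g_q \cdot (l_a^{-1} l_a^\dag) = (l_a^{-1} l_a^\dag)\cdot g_q^\dag$, is exactly equivalent to the identities $l_a g_q l_a^{-1} = g_1 = g_1^\dag = l_a^\dag g_q^\dag (l_a^\dag)^{-1}$ that the paper records, and the rest is the same matrix bookkeeping followed by the observation that the diagonal entries are units. One small slip in your narration: the product you display for the third row, $(l_a^\dag)^{-1}\cdot l_a^{-1}l_a^\dag\cdot l_a$, has the factors in the wrong order and an extra $l_a$; the relevant product in that entry is $(l_a^{-1}l_a^\dag)\cdot(l_a^\dag)^{-1} = l_a^{-1}$, which does match, so the error is only in the prose and not in the underlying argument. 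One further remark: the paper's proof additionally verifies $\varpi_E\,\phi\,\varpi_E^* = \phi^\dag$, i.e.\ compatibility with the symmetric structures on $E$ and $E^\dag$, since that is what is needed downstream in Definition \ref{defn:2ndorderconcordant}; your proof does not address this, but it is not demanded by the literal statement of the lemma, which asks only for a chain isomorphism.
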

\begin{proof}
To see that $\varpi_E$ is a chain map, as usual one needs the identities:
\[l_ag_ql_a^{-1} = g_1 = g_1^{\dag} = l_a^{\dag}g_q^{\dag}(l_a^{\dag})^{-1}.\]
The maps of $\varpi_E$ are isomorphisms, and the reader can calculate that $$\varpi_E \phi \varpi_E^* = \phi^{\dag}.$$  Note that this proof relies on the fact that $l_al_b=1$ and would require extra control over the longitude if we were not working modulo the second derived subgroup, but instead were only factoring out further up the derived series.
\end{proof}
\begin{definition}\label{defn:2ndorderconcordant}
We say that two triples $(H,\Y,\xi),(H^\dag,\Y^{\dag},\xi^\dag) \in \P$ are \emph{second order algebraically concordant} or \emph{algebraically $(1.5)$-equivalent}, written $\sim$, if there is a $\Z[\Z]$ module $H'$ of type $K$, that is $H'$ satisfies the properties of (a) of Theorem \ref{Thm:Levinemodule}, with a homomorphism \[(j_{\flat},j_{\flat}^{\dag}) \colon H \oplus H^{\dag} \to H'\] which induces a homomorphism
\[ \Z[\Z \ltimes (H \oplus H^{\dag})] \to \zhd\]
and therefore, by composition with the maps
\[\zh \to \Z[\Z \ltimes (H \oplus H^\dag)]\]
and
\[\zhdag \to \Z[\Z \ltimes (H \oplus H^\dag)]\]
from Definition \ref{Defn:connectsumalgebraic}, homomorphisms
\[\zh \to \zhd\]
and \[\zhdag \to \zhd,\]
along with a finitely generated projective $\zhd$-module chain complex with structure maps $(V,\Theta)$, the requisite chain maps $j,j^{\dag},\delta$, and chain homotopies $\g,\g^{\dag}$, such that there is a 4-dimensional symmetric Poincar\'{e} triad:
\[\xymatrix @R+1cm @C+1cm {\ar @{} [dr] |{\stackrel{(\gamma,\g^{\dag})}{\sim}}
(\zhd \otimes (E,\phi)) \oplus (\zhd \otimes (E^{\dag},-\phi^{\dag})) \ar[r]^-{(\Id, \Id \otimes \varpi_{E^{\dag}})} \ar[d]_{\left( \begin{array}{cc} \Id \otimes \eta & 0 \\ 0 & \Id \otimes \eta^{\dag}\end{array}\right)} & \zhd \otimes (E,0) \ar[d]^{\delta}
\\ (\zhd \otimes (Y,\Phi)) \oplus (\zhd \otimes (Y^{\dag},-\Phi^{\dag})) \ar[r]^-{(j,j^{\dag})} & (V,\Theta).
}\]
In what follows we frequently omit the tensor products when reproducing versions of the preceding diagram, taking as understood that all chain complexes are tensored up to be over $\zhd$ and all homomorphisms act with an identity on the $\zhd$ component of the tensor products.  The top row is a symmetric Poincar\'{e} pair by Lemma \ref{lemma:productcobordism}.  We require that the symmetric Poincar\'{e} triad satisfies two homological conditions.  The first is that:
\[j \colon H_*(\Z \otimes_{\zhd} (\zhd \otimes_{\zh} Y)) \xrightarrow{\simeq} H_*(\Z \otimes_{\zhd} V)\]
and
\[j^{\dag} \colon H_*(\Z \otimes_{\zhd} (\zhd \otimes_{\zhdag} Y^{\dag})) \xrightarrow{\simeq} H_*(\Z \otimes_{\zhd} V)\]
are isomorphism of $\Z$-homology, so that
\[H_*(\Z \otimes_{\zhd} V) \cong H_*(S^1;\Z).\]
The second homological condition is the consistency condition, that there is a consistency isomorphism:
\[\xi' \colon H' \xrightarrow{\simeq} H_1(\Z[\Z] \otimes_{\zhd} V),\]
such that the diagram below commutes:
\[\xymatrix @R+1cm @C+1cm{H \oplus H^{\dag} \ar[r]^{(j_{\flat},j_{\flat}^{\dag})} \ar[d]^{\left(\begin{array}{cc} \xi & 0 \\ 0 & \xi^{\dag} \end{array} \right)}_{\cong} & H' \ar[d]^{\xi'}_{\cong} \\
H_1(\Z[\Z] \otimes_{\zh} Y) \oplus H_1(\Z[\Z] \otimes_{\zhdag} Y^{\dag}) \ar[r]^-{\Id_{\Z[\Z]} \otimes (j_*,j^{\dag}_*)} & H_1(\Z[\Z] \otimes_{\zhd} V).
}\]
We say that two knots are second order algebraically concordant if their triples are, and we say that a knot is \emph{second order algebraically slice} or \emph{algebraically $(1.5)$-solvable} if it is second order algebraically concordant to the unknot.
\qed \end{definition}

\begin{definition}
The quotient of $\P$ by the relation $\sim$ of Definition \ref{defn:2ndorderconcordant} is the \emph{second order algebraic concordance group} $\ac2$.  See Proposition \ref{prop:equivrelation} for the proof that $\sim$ is an equivalence relation and Proposition \ref{prop:inverseswork} for the proof that $\ac2$ is a group.
\qed \end{definition}

\begin{remark}
A symmetric Poincar\'{e} triad is the natural way to algebraically encode a cobordism of cobordisms.  In particular, as with the Cappell-Shaneson method which underlies the \COT filtration, we are dealing with $\Z$-homology cobordism.  The \COT idea is to filter the condition of a knot exterior being $\Z$-homology cobordant to the exterior of the unknot by how far up the derived series their algebraic vanishing condition holds on the homology intersection pairing of a geometric 4-manifold.  We pass to algebra much sooner, and then filter the idea of the chain complex of the knot exterior being algebraically $\Z$-homology cobordant to the chain complex of the exterior of the unknot by how far up the derived series we can take our coefficients.

The consistency condition is crucial in order to have some control on the fundamental group.  Note the absence of Blanchfield linking pairings as well as intersection pairings.  As we will see, as long as the consistency condition holds, we can construct the Blanchfield pairing if desired and see that, due to the duality information stored in the symmetric structure, we still have the control it provided in Chapter \ref{chapter:COTsurvey} on the kernel of the induced map on homology of the inclusion of the 3-manifold into the 4-manifold.
\end{remark}

\begin{proposition}
Two concordant knots $K$ and $K^{\dag}$ are second order algebraically concordant.
\end{proposition}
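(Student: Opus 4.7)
The plan is to turn the geometric concordance into the required algebraic data by mimicking, at the chain level, the constructions already used for the knot exterior. Let $W$ be the exterior of the concordance annulus guaranteed by Proposition \ref{lemma:basicfactconcordance}, so that $\partial W = Z = X \cup_{S^1\times S^1} S^1\times S^1 \times I \cup_{S^1\times S^1} -X^{\dag}$, and $i_*\colon H_*(X;\Z), H_*(X^\dag;\Z) \toiso H_*(W;\Z)$. First I would define the candidate $\Z[\Z]$-module $H' := H_1(W;\Z[\Z])$, obtained from the infinite cyclic cover of $W$ determined by the abelianisation $\pi_1(W) \twoheadrightarrow \Z$ (well defined because a meridian normally generates $\pi_1(W)$). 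The type-$K$ property of $H'$ (finitely generated over $\Z[\Z]$ and $1-t$ acting as an automorphism) follows from the fact that $W$ is a compact $\Z$-homology $S^1$: finite generation comes from a finite handle decomposition of $W$, and the $(1-t)$-automorphism property is equivalent, by a Wang-sequence argument, to $H_*(W_\infty;\Z)$ being finitely generated as a $\Z$-module, which itself follows from $H_*(W;\Z) \cong H_*(S^1;\Z)$. The inclusion-induced maps $H \oplus H^\dag \to H'$ supply the homomorphism $(j_\flat,j_\flat^\dag)$ of Definition \ref{defn:2ndorderconcordant}, and hence the ring map $\zh, \zhdag \to \zhd$.

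Next I would build the chain complex $V$ and its symmetric structure $\Theta$ from a handle decomposition of $W$ relative to $Z$, exactly analogously to Chapters \ref{chapter:chaincomplex} and \ref{Chapter:duality_symm_structures}. Concretely, $V := C_*(W;\zhd)$, the handle chain complex of the pull-back cover of $W$ with deck group $\Z\ltimes H'$, viewed via the quotient $\pi_1(W) \twoheadrightarrow \pi_1(W)/\pi_1(W)^{(2)} \cong \Z\ltimes H'$ (using Proposition \ref{prop:2ndderivedsubgroup} applied to $W$). A relative fundamental class $[W,Z] \in C_4(W;\Z)$ exists because $W$ is an oriented topological 4-manifold with boundary; applying the slant map to a chain-level diagonal approximation $\Delta_0([W,Z])$ yields $\Theta_0$ together with the higher chain homotopies $\Theta_s$ whose existence is guaranteed by Theorem \ref{Thm:davisdiag} (with Trotter's formulas handling the interior, modified by a boundary correction chain as in Proposition \ref{prop:symmstructureonX} to make the boundary identification explicit). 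The chain maps $j,j^\dag$ and $\delta$ and the homotopies $\gamma, \gamma^\dag$ arise from the inclusions $X, X^\dag, S^1\times S^1 \times I \hookrightarrow W$, after extending the handle decomposition of $W$ so that the fundamental symmetric Poincaré triads of $X$ and $X^\dag$ and the standard handle decomposition of $S^1\times S^1 \times I$ sit inside it as subcomplexes.

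With $(V,\Theta)$ in hand, checking the axioms is largely mechanical. That the displayed square is a symmetric Poincaré triad follows from Poincaré--Lefschetz duality for $W$ relative to $Z$, expressed at the chain level through the symmetric construction and the union construction of Definition \ref{Defn:unionconstruction} applied to the decomposition of $Z$ into $X$, the torus cylinder, and $-X^\dag$: the fact that the two pieces of the boundary of $W$ are $X$ and $-X^\dag$ yields the signs $\Phi$ and $-\Phi^\dag$ in the bottom row. The chain equivalence $\varpi_{E^\dag}$ of Lemma \ref{lemma:torusindependantofl_a} is precisely what is needed to identify the two descriptions of the torus boundary coming from $X$ and $X^\dag$. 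The $\Z$-homology conditions $j_*\colon H_*(\Z\otimes Y) \toiso H_*(\Z\otimes V)$ and its $\dag$-counterpart come directly from property (i) of Proposition \ref{lemma:basicfactconcordance} combined with the $\Z$-homology isomorphism $H_*(\Z\otimes Y) \cong H_*(X;\Z)$ implicit in Proposition \ref{prop: fundtriaddefinesanelement}. For the consistency condition, define $\xi'\colon H' \toiso H_1(\Z[\Z]\otimes_{\zhd} V)$ to be the canonical Hurewicz-type isomorphism $H_1(W;\Z[\Z]) \toiso H_1(\Z[\Z]\otimes_{\zhd} C_*(W;\zhd))$; commutativity of the consistency square then follows from naturality of this isomorphism with respect to the inclusions $X,X^\dag \hookrightarrow W$, combined with the analogous naturality built into $\xi$ and $\xi^\dag$.

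The main obstacle, in my view, is not any single axiom but the bookkeeping required to set up $V$ as a handle chain complex that simultaneously contains the fundamental symmetric Poincaré triads of $X$ and $X^\dag$ as sub-objects and is compatible with the group-ring map $\Z[\pi_1(W)] \to \zhd$. Two subtleties will need care: first, Proposition \ref{prop:2ndderivedsubgroup} must be applied to $W$ (not just to the knot exteriors), which requires knowing that the second derived quotient of $\pi_1(W)$ has the semidirect-product form $\Z\ltimes H_1(W;\Z[\Z])$, and that the inclusions of $X$ and $X^\dag$ induce the expected maps of semidirect products so that the tensor products in the definition of algebraic concordance agree with the geometric ring homomorphisms; second, the explicit chain homotopies $\gamma$ and $\gamma^\dag$ controlling how the boundary triads $E,E^\dag$ embed into $V$ must be produced, which amounts to constructing, at the chain level, the compatibilities between Trotter's diagonal approximation on $W$ and those already chosen on $X$ and $X^\dag$. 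Both difficulties are essentially an exercise in extending a handle decomposition and keeping track of threading paths and chain-homotopy choices, but they are where the argument has to be carried out in detail.
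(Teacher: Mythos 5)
Your overall strategy matches the paper's: take $W$ the concordance exterior, set $H' := H_1(W;\Z[\Z]) \cong \pi_1(W)^{(1)}/\pi_1(W)^{(2)}$, take $V := C_*(W;\zhd)$ with $\Theta$ obtained from a diagonal approximation applied to $[W,\partial W]$, and use the Hurewicz isomorphism for the consistency condition. The type-$K$ argument you give (Wang sequence from $W$ being a $\Z$-homology $S^1$) is in substance the same as the paper's appeal to Levine's Propositions 1.1 and 1.2, though your chain of implications is stated slightly backwards: one goes directly from $H_*(W;\Z)\cong H_*(S^1;\Z)$ to bijectivity of $1-t$ via the Wang sequence, and the finite $\Z$-generation of $H_*(W_\infty;\Z)$ is a consequence rather than an intermediate step.

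The genuine gap is in the symmetric structure. You invoke Theorem \ref{Thm:davisdiag} and Trotter's formulas for $W$ itself, but those results require a $K(\pi,1)$, and a compact $4$-manifold $W$ with boundary $M_{K\,\sharp\,K^\dag}$ typically has $\pi_2(W)\neq 0$; they simply do not apply to $W$. You also correctly flag that one must match the boundary of the resulting symmetric structure with $\Phi\cup_\phi -\Phi^\dag$, but you leave this as "an exercise in extending a handle decomposition and keeping track of threading paths," which is not how the paper does it and would be substantially harder. The paper's resolution is an aspericity observation applied not to $W$ but to $Z = \partial W$: since $Z \approx M_{K\,\sharp\,K^\dag}$ is an Eilenberg--MacLane space (Theorem \ref{Thm:EMspaces}), Theorem \ref{Thm:davisdiag} guarantees that any two diagonal chain approximations on $Z$ are chain homotopic. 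Hence the boundary restriction of $\backslash\Delta([W,\partial W])$ is chain homotopic, via some $\chi$, to the already-constructed glued structure $\Phi\cup_\phi -\Phi^\dag$, and one can correct by setting $\Theta := \backslash\Delta([W,\partial W]) + \chi$ so that $d_{\Hom}\Theta = \backslash\Delta([Z]) = \Phi\cup_\phi -\Phi^\dag$ exactly. This is the insight your proposal is missing; without it, the claim that the diagram in Definition \ref{defn:2ndorderconcordant} is a symmetric Poincar\'{e} triad does not follow from what you have written.
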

\begin{proof}
Let $W$ be the exterior of the concordance as in Proposition \ref{lemma:basicfactconcordance}. Define:
\[H' := \pi_1(W)^{(1)}/\pi_1(W)^{(2)},\]
with the $\Z$ action given by conjugation with a choice of meridian.
We claim that $H'$ is of type $K$; that is we claim that $H'$ is finitely generated over $\Z[\Z]$ and that $1-t$ acts on $H'$ as an automorphism.  To see the claim, first note that $H' \cong H_1(W;\Z[\Z])$ by the Hurewicz isomorphism.  We modify \cite[Propositions~1.1~and~1.2]{Levine2}.  We see that $H'$ is finitely generated since $W$ is a compact topological 4-manifold, and so has the homotopy type of a finite simplicial complex, by \cite[Annex~B,~III,~page~301]{KS}. Therefore the infinite cyclic cover $\wt{W}$ has a chain complex whose chain groups are finitely generated free over $\Z[\Z]$, which implies in particular, since $\Z[\Z]$ is Noetherian, that the homology $H_1(W;\Z[\Z])$ is finitely generated over $\Z[\Z]$ \cite[Proposition~1.1]{Levine2}.  Inspection of the proof of \cite[Proposition~1.2]{Levine2} shows that the only hypothesis required is that $X$ is a $\Z$-homology circle.  Since $W$ is also a $\Z$-homology circle, the result also applies to $W$, and so $1-t$ acts on $H'$ as an automorphism.  This completes the proof of the claim.

We also define:
\[(V,\Theta') := (C_*(W;\zhd),\backslash \Delta([W,\partial W])).\]
Then $d_{\Hom}(\backslash \Delta([W,\partial W])) = \backslash\Delta([Z])$, where $Z$ is as in Proposition \ref{lemma:basicfactconcordance}.  Note that $Z \approx M_{K \, \sharp \, K^\dag}$: we then know by Theorem \ref{Thm:EMspaces} that $Z$ is an Eilenberg-Maclane space as long as we do not have $K=K^\dag=U$.  Therefore, by Theorem \ref{Thm:davisdiag}, any two choices of diagonal chain approximation are chain homotopic.  Therefore $\backslash \Delta([Z]) = \Phi \cup_{\phi} -\Phi^{\dag} \in Q^3(C_*(Z;\zhd))$ so there is a set of structure maps $\Theta$ which are equivalent to the maps $\Theta':= \backslash \Delta([W,\partial W])$, and which fit into the symmetric Poincar\'{e} triad required in Definition \ref{defn:2ndorderconcordant}.  To see this, note that there exists maps $\chi$, arising from the chain homotopy between the two diagonal approximations, such that:
\[\backslash\Delta([Z]) - d_{\Hom}(\backslash \Delta([W,\partial W]))=d_{\Hom}\chi.\]
Therefore defining $$\Theta := \Theta' + \chi = \backslash \Delta([W,\partial W]) + \chi,$$ we have that $$d_{\Hom}\Theta = d_{\Hom}\Theta' + d_{\Hom} \chi = \backslash\Delta([Z]),$$
as required.  The first homological condition is satisfied by (i) of Proposition \ref{lemma:basicfactconcordance}, and the consistency condition is satisfied by the Hurewicz isomorphism.
\end{proof}

\begin{proposition}\label{prop:equivrelation}
The relation $\sim$ of Definition \ref{defn:2ndorderconcordant} is an equivalence relation.
\end{proposition}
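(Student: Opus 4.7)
The plan is to verify the three axioms of an equivalence relation in turn. Reflexivity and symmetry are straightforward once one writes down the natural candidates; transitivity requires a gluing construction along the common intermediate triple and a pushout of the coefficient modules.

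For reflexivity, I would take $H' := H$, set $(j_\flat, j_\flat^\dag) := (\id, \id) \colon H \oplus H \to H$, and construct the 4-dimensional triad using an algebraic product cobordism. Concretely, take $V := Y$ (tensored appropriately), with $\Theta$ arising from the product-cobordism construction exhibited in Lemma \ref{lemma:productcobordism}, and with $j = j^\dag = \id_Y$. The top row is Poincaré by Lemma \ref{lemma:productcobordism}, the $\Z$-homology condition holds trivially, and the consistency isomorphism $\xi' := \xi$ fits into the required commutative square.

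For symmetry, given $(H,\Y,\xi) \sim (H^\dag,\Y^\dag,\xi^\dag)$ realised by $H'$, $(V,\Theta)$ and the various structure maps, I would keep the same module $H'$ but swap the two sides: replace $(j_\flat, j_\flat^\dag) \mapsto (j_\flat^\dag, j_\flat)$, replace $(j,j^\dag) \mapsto (j^\dag, j)$, and replace $\Theta$ by $-\Theta$. The sign on $\Theta$ is forced by the $\varphi_C \oplus -\varphi_C$ convention on the top row of the triad, and one checks that the result is again a symmetric Poincaré triad satisfying both the $\Z$-homology and the consistency conditions with $\xi'$ unchanged.

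For transitivity, assume $(H,\Y,\xi) \sim (H^\dag,\Y^\dag,\xi^\dag)$ via $H'$ and $(V,\Theta)$ with maps $(j_\flat, j_\flat^\dag)$, and $(H^\dag,\Y^\dag,\xi^\dag) \sim (H^\ddag,\Y^\ddag,\xi^\ddag)$ via $H''$ and $(V', \Theta')$ with maps $(k_\flat, k_\flat^\dag)$. I would form the pushout
\[
\widetilde{H} := \frac{H' \oplus H''}{\langle (j_\flat^\dag(x), -k_\flat(x)) \,:\, x \in H^\dag \rangle},
\]
which receives a homomorphism $H \oplus H^\ddag \to \widetilde{H}$ via the composites of $j_\flat$ and $k_\flat^\dag$ with the two pushout inclusions. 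The module $\widetilde{H}$ is of type $K$: it is finitely generated because $H'$ and $H''$ are, and $1-t$ acts as an automorphism because it did so on each summand and the relations are $\Z[\Z]$-linear. After tensoring $V$ and $V'$ up to $\Z[\Z \ltimes \widetilde{H}]$, applying the algebraic union construction of Definition \ref{Defn:unionconstruction} along the common $Y^\dag$ produces the chain complex $\widetilde{V} := V \cup_{Y^\dag} V'$ with glued symmetric structure $\widetilde{\Theta} := \Theta \cup_{\Phi^\dag} \Theta'$. The new inclusions $\widetilde{j} \colon Y \to \widetilde{V}$ and $\widetilde{j}^\ddag \colon Y^\ddag \to \widetilde{V}$ are induced from $j$ and the corresponding inclusion for $V'$, with the required chain homotopies inherited from the two input triads.

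The remaining work is to verify the two homological conditions for the glued triad. The $\Z$-homology condition follows from a Mayer–Vietoris argument applied to $\widetilde{V} = V \cup_{Y^\dag} V'$: since $H_*(Y^\dag;\Z) \to H_*(V;\Z)$ and $H_*(Y^\dag;\Z) \to H_*(V';\Z)$ are isomorphisms to $H_*(S^1;\Z)$, the long exact sequence collapses to show $H_*(\widetilde{V};\Z) \cong H_*(S^1;\Z)$ with inclusions from $Y$ and $Y^\ddag$ inducing isomorphisms. The main obstacle, and the step requiring the most care, is the consistency condition: one must produce an isomorphism
\[
\widetilde{\xi} \colon \widetilde{H} \toiso H_1(\Z[\Z] \otimes_{\Z[\Z \ltimes \widetilde{H}]} \widetilde{V})
\]
fitting into the commutative square of Definition \ref{defn:2ndorderconcordant} with $\xi$ and $\xi^\ddag$. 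For this I would use the $\Z[\Z]$-coefficient Mayer–Vietoris sequence for the decomposition $\widetilde{V} = V \cup_{Y^\dag} V'$ together with the two given consistency isomorphisms $\xi'$ and $\xi''$; the pushout defining $\widetilde{H}$ is tailored precisely to match the cokernel that arises, in the same way that $H^\ddag \cong H \oplus H^\dag$ matched the Mayer–Vietoris cokernel in the proof of Proposition \ref{Prop:abelianmonoid} establishing the consistency isomorphism for connected sums. Verifying the commutativity of the square under $\widetilde{\xi}$ is then a diagram chase using naturality of Mayer–Vietoris; the rest of the construction is bookkeeping within the union construction.
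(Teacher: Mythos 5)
Your proposal is correct and follows essentially the same strategy as the paper's proof: the diagonal product cobordism for reflexivity, a sign flip and swap for symmetry, and for transitivity the pushout of coefficient modules along $H^\dag$ together with the union construction of the two algebraic cobordisms along $Y^\dag$, with the consistency isomorphism extracted from a Mayer--Vietoris argument. The one place the paper does a bit more work than you acknowledge is in reflexivity, where it shows the stronger statement that triples equivalent in the sense of Definition~\ref{Defn:algebraicsetofchaincomplexes} are already $\sim$-related (so that $\sim$ descends well to $\P$), and in transitivity, where it verifies explicitly that the glued boundary $\ol{\ol{E}}$ is chain equivalent to $E$ so that the triad is returned to the standard form of Definition~\ref{defn:2ndorderconcordant} --- the latter falling under what you call ``bookkeeping,'' which is fair but worth spelling out.
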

\begin{proof}
We begin by showing that $\sim$ is reflexive: that $$(H,\Y,\xi) \sim (H^\%,\Y^\%,\xi\%),$$ where $(H,\Y,\xi)$ and $(H^\%,\Y^\%,\xi\%)$ are equivalent in the sense of Definition \ref{Defn:algebraicsetofchaincomplexes}.  This is the algebraic equivalent of the geometric fact that isotopic knots are concordant.  Suppose that we have an isomorphism $\omega \colon H \to H^\%$, and a chain equivalence of triads
\[j \colon \Z[\Z \ltimes H^\%] \otimes_{\zh} \Y \to \Y^\%,\] such that the relevant square commutes, as in Definition \ref{Defn:algebraicsetofchaincomplexes} (see below). To show reflexivity, we take $H':= H^\%$, and take $(j_{\flat},j_{\flat}) = (\omega,\Id) \colon H \oplus H^\% \to H^\%$ and $(V,\Theta):= (Y^\%,0)$.  We tensor all chain complexes with $\zhpc$, which do not already consist of $\zhpc$-modules.  We have, induced by $j$, an equivalence of symmetric Poincar\'{e} pairs:
 \begin{multline*} (j_E,j_Y;\, k) \colon (\Id \otimes \eta \colon \zhpc \otimes_{\zh} E \to \zhpc \otimes_{\zh} Y) \\ \to (\eta^\% \colon E^\% \to Y^\%),\end{multline*}
where $k \colon \eta^\% j_E \sim j_Y \eta$ is a chain homotopy (see \cite[Part~I,~page~140]{Ranicki3}).
We therefore have the symmetric triad:
\[\xymatrix @R+1cm @C+1cm {\ar @{} [dr] |{\stackrel{(k,0)}{\sim}}
\zhpc \otimes_{\zh} (E,\phi) \oplus (E^\%,-\phi^\%) \ar[r]^-{(j_E,\Id)} \ar[d]_{\left( \begin{array}{cc} \Id \otimes \eta & 0 \\ 0 & \eta^\% \end{array} \right)} & (E^\%,0) \ar[d]^{\eta^\%} \\ (Y,\Phi) \oplus (Y^\%,-\Phi^\%) \ar[r]^-{(j_Y,\Id)} & (Y^\%,0).
}\]
The proof of Lemma \ref{lemma:productcobordism} shows that it is a symmetric Poincar\'{e} triad.  Applying the chain isomorphism $\varpi_{E^\%} \colon E^\% \toiso \zhpc \otimes_{\zh} E$ to the top right corner produces the triad:
\[\xymatrix @R+1cm @C+1cm {\ar @{} [dr] |{\stackrel{(k,0)}{\sim}}
\zhpc \otimes_{\zh} (E,\phi) \oplus (E^\%,-\phi^\%) \ar[r]^-{(\varpi_{E^\%}\circ j_E,\varpi_{E^\%})} \ar[d]_{\left( \begin{array}{cc} \Id \otimes \eta & 0 \\ 0 & \eta^\% \end{array} \right)} & (\zhpc \otimes_{\zh} E,0) \ar[d]^{\eta^\% \circ(\varpi_{E^\%})^{-1}} \\ (Y,\Phi) \oplus (Y^\%,-\Phi^\%) \ar[r]^-{(j_Y,\Id)} & (Y^\%,0),
}\]
as required.

The homological conditions are satisfied since the maps $j,j^{\dag}$ from Definition \ref{defn:2ndorderconcordant} are chain equivalences and the chain complex $V=Y^\%$.  The consistency condition is satisfied since the commutativity of the square \[\xymatrix @R+1cm @C+1cm{H \ar[r]_-{\xi}^-{\cong} \ar[d]_{\omega}^-{\cong} & H_1(\Z[\Z] \otimes_{\zh} Y) \ar[d]_-{j_*}^-{\cong} \\
H^\% \ar[r]_-{\xi^\%}^-{\cong} & H_1(\Z[\Z] \otimes_{\zhpc} Y^\%), }\]which shows that $(H,\Y,\xi)$ and $(H^\%,\Y^\%,\xi\%)$ are equivalent in the sense of Definition \ref{Defn:algebraicsetofchaincomplexes}, extends to show that the square
\[\xymatrix @R+1cm @C+1cm{H \oplus H^{\%} \ar[r]^{(\omega,\Id)} \ar[d]^{\left(\begin{array}{cc} \xi & 0 \\ 0 & \xi^{\%} \end{array} \right)} & H^\% \ar[d]^{\xi^\%} \\
H_1(\Z[\Z] \otimes_{\zh} Y) \oplus H_1(\Z[\Z] \otimes_{\zh} Y^{\%}) \ar[r]^-{(j_*,\Id_*)} & H_1(\Z[\Z] \otimes_{\zhd} Y^\%)
}\]
is also commutative.  Therefore Definition \ref{defn:2ndorderconcordant} is satisfied, so $\sim$ is indeed a reflexive relation.

Symmetry is straight--forward.  If $(H,\Y,\xi) \sim (H^\dag,\Y^{\dag},\xi^\dag)$, that is there is a diagram:

\[\xymatrix @R+1cm @C+1.5cm {\ar @{} [dr] |{\stackrel{(\gamma,\g^{\dag})}{\sim}}
(E,\phi) \oplus (E^{\dag},-\phi^{\dag}) \ar[r]^-{(\Id, \varpi_{E^{\dag}})} \ar[d]_{\left( \begin{array}{cc} \eta & 0 \\ 0 & \eta^{\dag}\end{array}\right)} & (E,0) \ar[d]^{\delta}
\\ (Y,\Phi) \oplus (Y^{\dag},-\Phi^{\dag}) \ar[r]^-{(j,j^{\dag})} & (V,\Theta),
}\]
with a commutative square
\[\xymatrix @R+1cm @C+1cm{H \oplus H^{\dag} \ar[r]^{(j_{\flat},j_{\flat}^{\dag})} \ar[d]^{\left(\begin{array}{cc} \xi & 0 \\ 0 & \xi^{\dag} \end{array} \right)} & H' \ar[d]^{\xi'} \\
H_1(\Z[\Z] \otimes_{\zh} Y) \oplus H_1(\Z[\Z] \otimes_{\zh} Y^{\dag}) \ar[r]^-{\Id_{\Z[\Z]} \otimes (j_*,j^{\dag}_*)} & H_1(\Z[\Z] \otimes_{\zhd} V),
}\]
then there is also a diagram:
\[\xymatrix @R+1cm @C+1.5cm {\ar @{} [dr] |{\stackrel{(\gamma^{\dag},\gamma)}{\sim}}
(E^{\dag},\phi^{\dag}) \oplus (E,-\phi) \ar[r]^-{(\Id, \varpi_{E})} \ar[d]_{\left( \begin{array}{cc} \eta^{\dag} & 0 \\ 0 & \eta \end{array}\right)} & (E^{\dag},0) \ar[d]^{\delta \circ \varpi_{E^{\dag}}}
\\ (Y^{\dag},\Phi^{\dag}) \oplus (Y,-\Phi) \ar[r]^-{(j^{\dag},j)} & (V,-\Theta)
}\]
with a commutative square
\[\xymatrix @R+1cm @C+1cm{H^{\dag} \oplus H \ar[r]^{(j_{\flat}^{\dag},j_{\flat})} \ar[d]^{\left(\begin{array}{cc} \xi^{\dag} & 0 \\ 0 & \xi \end{array} \right)} & H' \ar[d]^{\xi'} \\
H_1(\Z[\Z] \otimes_{\zh} Y^{\dag}) \oplus H_1(\Z[\Z] \otimes_{\zh} Y) \ar[r]^-{\Id_{\Z[\Z]} \otimes (j^{\dag}_*,j_*)} & H_1(\Z[\Z] \otimes_{\zhd} V),
}\]
which shows that $\sim$ is a symmetric relation.  Finally, to show transitivity, suppose that $(H,\Y,\xi) \sim (H^\dag,\Y^{\dag},\xi^\dag)$ using $H'$, and also that $(H^\dag,\Y^{\dag},\xi^\dag) \sim (H^\ddag,\Y^{\ddag},\xi^\ddag)$, using \[(\ol{j_{\flat}},\ol{j_{\flat}^{\ddag}}) \colon H^{\dag} \oplus H^{\ddag} \to \ol{H'},\] so that there is a diagram of $\Z[\Z \ltimes \ol{H'}]$-module chain complexes:
\[\xymatrix @R+1cm @C+1.5cm {\ar @{} [dr] |{\stackrel{(\ol{\gamma^{\dag}},\ol{\g^{\ddag}})}{\sim}}
(E^{\dag},\phi^{\dag}) \oplus (E^{\ddag},-\phi^{\ddag}) \ar[r]^-{(\Id, \wt{\varpi}_{E^{\ddag}})} \ar[d]_{\left( \begin{array}{cc} \eta^{\dag} & 0 \\ 0 & \eta^{\ddag}\end{array}\right)} & (E^{\dag},0) \ar[d]^{\ol{\delta}}
\\ (Y^{\dag},\Phi^{\dag}) \oplus (Y^{\ddag},-\Phi^{\ddag}) \ar[r]^-{(\ol{j^{\dag}},\ol{j^{\ddag}})} & (\ol{V},\ol{\Theta}).
}\]
In this proof the bar is a notational device and has nothing to do with involutions.  To show that $(H,\Y,\xi) \sim (H^\ddag,\Y^{\ddag},\xi^\ddag)$, first we must define a $\Z[\Z]$-module $\ol{\ol{H'}}$ so that we can tensor everything with $\Z[\Z \ltimes \ol{\ol{H'}}]$.  We will glue the symmetric Poincar\'{e} triads together to show transitivity; first we must glue together the $\Z[\Z]$-modules.  Define:
\[(j_{\flat},\ol{j_{\flat}^{\ddag}}) \colon H \oplus H^{\ddag} \to \ol{\ol{H'}} := \coker((j_{\flat}^{\dag},-\ol{j_{\flat}^{\dag}}) \colon H^{\dag} \to H' \oplus \ol{H}').\]
Now, use the inclusions followed by the quotient maps: $$H' \to H' \oplus \ol{H'} \to \ol{\ol{H'}}$$ and $$\ol{H'} \to H' \oplus \ol{H'} \to \ol{\ol{H'}}$$ to take the tensor product of both the 4-dimensional symmetric Poincar\'{e} triads which show that $(H,\Y,\xi) \sim (H^\dag,\Y^{\dag},\xi^\dag)$, and that $(H,\Y^{\dag},\xi^\dag) \sim (H^\ddag,\Y^{\ddag},\xi^\ddag)$, with $\Z[\Z \ltimes \ol{\ol{H'}}]$, so that both contain chain complexes of modules over the same ring  $\Z[\Z \ltimes \ol{\ol{H'}}]$.  Then algebraically gluing the triads together, as in \cite[pages~117--9]{Ranicki2}, we obtain the 4-dimensional symmetric Poincar\'{e} triad:
\[\xymatrix @R+2cm @C+2cm {\ar @{} [dr] |{\ol{\ol{\g}} = \left(\begin{array}{cc} \g & 0 \\  0 & 0 \\  0 & \ol{\g^{\ddag}} \\ \end{array} \right)}
(E,\phi) \oplus (E^{\ddag},-\phi^{\ddag}) \ar[r]^-{\left(\begin{array}{cc} \Id & 0 \\  0 & 0 \\  0 & \wt{\varpi}_{E^{\ddag}} \\ \end{array} \right)} \ar[d]_{\left( \begin{array}{cc} \eta & 0 \\ 0 & \eta^{\ddag}\end{array}\right)} & (\ol{\ol{E}},- 0 \cup_{\phi^{\dag}} 0) \ar[d]^{\ol{\ol{\delta}} = \left( \begin{array}{ccc} \delta & (-1)^{r-1}\g^{\dag} & 0 \\ 0 & \eta^{\dag} & 0 \\ 0 & (-1)^{r-1}\ol{\g^{\dag}} & \ol{\delta} \\ \end{array} \right)}
\\ (Y,\Phi) \oplus (Y^{\ddag},-\Phi^{\ddag}) \ar[r]_-{\left(\begin{array}{cc} j & 0 \\  0 & 0 \\  0 & \ol{j^{\ddag}} \\ \end{array} \right)} & (\ol{\ol{V}},\ol{\ol{\Theta}}).
}\]
where:
\[\ol{\ol{E}} := \mathscr{C}((\varpi_{E^{\dag}},\Id)^T \colon E^{\dag} \to E \oplus E^{\dag});\]
\[\ol{\ol{V}} := \mathscr{C}((j^{\dag},\ol{j^{\dag}})^T \colon Y^{\dag} \to V \oplus \ol{V}); \text{ and}\]
\[\ol{\ol{\Theta}} := \Theta \cup_{\Phi^{\dag}} \ol{\Theta}.\]
We need to show that this is equivalent to a triad where the top right term is $(E,0)$.  First, to see that $E \simeq \ol{\ol{E}}$, the chain complex of $\ol{\ol{E}}$ is given by:
\[E_2^{\dag} \xrightarrow{\partial^{\ol{\ol{E}}}_3} E_2 \oplus E_1^{\dag} \oplus E_2^{\dag} \xrightarrow{\partial^{\ol{\ol{E}}}_2} E_1 \oplus E_0^{\dag} \oplus E_1^{\dag} \xrightarrow{\partial^{\ol{\ol{E}}}_1} E_0 \oplus E_0^{\dag},\]
where:
\[\partial^{\ol{\ol{E}}}_3 = \left(
                               \begin{array}{c}
                                 \varpi_{E^{\dag}} \\
                                 \partial_{E^{\dag}} \\
                                 \Id \\
                               \end{array}
                             \right);
\]
\[\partial^{\ol{\ol{E}}}_2 = \left(
                               \begin{array}{ccc}
                                 \partial_{E} & -\varpi_{E^{\dag}} & 0 \\
                                 0 & \partial_{E^{\dag}} & 0 \\
                                 0 & -\Id & \partial_{E^{\dag}} \\
                               \end{array}
                             \right); \text{ and}\]
\[\partial^{\ol{\ol{E}}}_1 = \left(
                               \begin{array}{ccc}
                                 \partial_E & \varpi_{E^{\dag}} & 0 \\
                                 0 & \Id & \partial_{E^{\dag}} \\
                               \end{array}
                             \right),
\]
and we have the chain map:
\[\nu' := \left(
     \begin{array}{ccc}
       \Id\, , & 0\, , & -\varpi_{E^{\dag}} \\
     \end{array}
   \right) \colon E_r \oplus E_{r-1}^{\dag} \oplus E_{r}^{\dag} \to E_r,\]
with a chain homotopy inverse:
\[ \nu'^{-1} := \left(
    \begin{array}{c}
      \Id \\
      0 \\
      0 \\
    \end{array}
  \right)\colon E_r \to E_r \oplus E_{r-1}^{\dag} \oplus E_r^{\dag}.\]
Now,
\[\left(
     \begin{array}{ccc}
       \Id\, , & 0\, , & -\varpi_{E^{\dag}} \\
     \end{array}
   \right)
\left(
    \begin{array}{c}
      \Id \\
      0 \\
      0 \\
    \end{array}
  \right) - \left(
              \begin{array}{c}
                \Id \\
              \end{array}
            \right) = \left(
              \begin{array}{c}
                0 \\
              \end{array}
            \right),
  \]
whereas
\[\left(
    \begin{array}{c}
      \Id \\
      0 \\
      0 \\
    \end{array}
  \right)
  \left(
     \begin{array}{ccc}
       \Id\, , & 0\, , & -\varpi_{E^{\dag}} \\
     \end{array}
   \right) - \left(
               \begin{array}{ccc}
                 \Id & 0 & 0 \\
                 0 & \Id & 0 \\
                 0 & 0 & \Id \\
               \end{array}
             \right) = \left(
                         \begin{array}{ccc}
                           0 & 0 & -\varpi_{E^{\dag}} \\
                           0 & -\Id & 0 \\
                           0 & 0 & -\Id \\
                         \end{array}
                       \right)
\]
which is equal to $k'\partial + \partial k'$ where the chain homotopy $k'$ is given by:
\[k' = \left(
         \begin{array}{ccc}
           0 & 0 & 0 \\
           0 & 0 & (-1)^{r+1}\Id \\
           0 & 0 & 0 \\
         \end{array}
       \right)
 \colon E_r \oplus E_{r-1}^{\dag} \oplus E_r \to E_{r+1} \oplus E_r^{\dag} \oplus E_{r+1}^{\dag}.\]
We therefore have the diagram:
\[\xymatrix @R+2cm @C+2cm {
 & & (E,0) \ar@/^5pc/[ddl]^{\ol{\ol{\delta}}\circ \nu'^{-1}}\\
(E,\phi) \oplus (E^{\ddag},-\phi^{\ddag}) \ar@/^5pc/[rru]^>>>>>>>>>>>>>>>{\left(\Id,- \varpi_{E^{\dag}} \circ \wt{\varpi}_{E^{\ddag}} \right)
} \ar[r]^-{\left(\begin{array}{cc} \Id & 0 \\  0 & 0 \\  0 & \wt{\varpi}_{E^{\ddag}} \\ \end{array} \right)} \ar @{} [dr] |{\stackrel{\ol{\ol{\g}}}{\sim}} \ar[d] & (\ol{\ol{E}},- 0 \cup_{\phi^{\dag}} 0) \ar[ru]^{\simeq}_{\nu'} \ar[d]^{\ol{\ol{\delta}}}
\\ (Y,\Phi) \oplus (Y^{\ddag},-\Phi^{\ddag}) \ar[r] & (\ol{\ol{V}},\ol{\ol{\Theta}}).
}\]
The top triangle commutes, while the bottom triangle commutes up to the homotopy $k'$: $k'$ gets composed with $\ol{\ol{\gamma}}$ to make the new triad.  Furthermore, $$\nu' (- 0 \cup_{\phi^{\dag}} 0) \nu'^* = 0,$$ so that we indeed have an equivalent triad with the top right as $(E,0)$.  To complete the proof, we need to see that the consistency condition holds.  The following commutative diagram has exact columns, the right hand column being part of the Mayer-Vietoris sequence.  The horizontal maps are given by consistency isomorphisms.  Recall that $\ol{\ol{H'}} := \coker((j_{\flat}^{\dag},-\ol{j_{\flat}^{\dag}}) \colon H^{\dag} \to H' \oplus \ol{H}').$  All homology groups in this diagram are taken with $\Z[\Z]$-coefficients.
\[\xymatrix @R+0.5cm @C+0.6cm {H^\dag \ar[d] \ar[rrr]^-{\xi^\dag}_-{\cong} & & & H_1(Y^\dag) \ar[d] \\
 H' \oplus \ol{H'} \ar[dd] \ar[rrr]_-{\cong}^-{\left(\ba{cc}\xi' & 0 \\ 0 & \ol{\xi'} \ea\right)} & & & H_1(V) \oplus H_1(\ol{V}) \ar[dd]\\
 & H \oplus H^\ddag \ar[ul] \ar@{-->}[dl] \ar[r]_-{\cong}^-{\left(\ba{cc}\xi & 0 \\ 0 & \xi^\ddag \ea\right)} & H_1(Y) \oplus H_1(Y^\ddag) \ar[ur] \ar@{-->}[dr] & \\
\ol{\ol{H'}} \ar[d] \ar@{-->}[rrr]_-{\cong}^-{\ol{\ol{\xi'}}} & & & H_1(\ol{\ol{V}}) \ar[d]\\
   0 & & & 0}\]
The diagonal dotted arrows are induced by the diagram, so as to make it commute.  The horizontal dotted arrow $\ol{\ol{H'}} \to H_1(\Z[\Z] \otimes_{\Z[\Z \ltimes \ol{\ol{H'}}]}\ol{\ol{V}})$ is induced by a diagram chase: the quotient map $H' \oplus \ol{H'} \to \ol{\ol{H'}}$ is surjective.  We obtain a well--defined isomorphism
\[\ol{\ol{\xi'}} \colon \ol{\ol{H'}} \toiso H_1(\Z[\Z] \otimes_{\Z[\Z \ltimes \ol{\ol{H'}}]}\ol{\ol{V}}).\]
The commutativity of the diagram above implies the commutativity of the induced diagram:
\[\xymatrix @R+1cm @C+1cm{H \oplus H^{\ddag} \ar[r] \ar[d]^{\left(\begin{array}{cc} \xi & 0 \\ 0 & \xi^{\ddag} \end{array} \right)} & \ol{\ol{H'}} \ar[d]^-{\ol{\ol{\xi'}}} \\
H_1(\Z[\Z] \otimes_{\zh} Y) \oplus H_1(\Z[\Z] \otimes_{\zh} Y^{\ddag}) \ar[r] & H_1(\Z[\Z] \otimes_{\zhd} \ol{\ol{V}}).
}\]
This completes the proof that $\sim$ is transitive and therefore completes the proof that $\sim$ is an equivalence relation.
\end{proof}

\begin{definition}\label{defn:inverses}
Given an element $(H,\Y,\xi) \in \P$, choose a representative with the boundary given by the model chain complexes.
\[\xymatrix @R+0.5cm @C+0.5cm {\ar @{} [dr] |{\stackrel{g}{\sim}}
(C,\varphi \oplus -\varphi) \ar[r]^{i_-} \ar[d]_{i_+} & (D_-,0) \ar[d]^{f_-}\\ (D_+,0) \ar[r]^{f_+} & (Y,\Phi).
}\]
The following is also a symmetric Poincar\'{e} triad:
\[\xymatrix @R+0.5cm @C+0.5cm {\ar @{} [dr] |{\stackrel{g}{\sim}}
(C,-\varphi \oplus \varphi) \ar[r]^{i_-} \ar[d]_{i_+} & (D_-,0) \ar[d]^{f_-}\\ (D_+,0) \ar[r]^{f_+} & (Y,-\Phi).
}\]
which define as the element $-\Y$.  This is the algebraic equivalent of changing the orientation of the ambient space and of the knot simultaneously. The chain equivalence:
\[\varsigma = \left(
    \begin{array}{cc}
      0 & l_a \\
      l_a^{-1} & 0 \\
    \end{array}
  \right)
 \colon C_i \to C_i\]
for $i=0,1$ sends $\varphi \oplus -\varphi$ to $-\varphi \oplus \varphi$ and satisfies $i_{\pm} \circ \varsigma = i_{\pm}$.  We can therefore define the inverse $-(H,\Y,\xi) \in \P$ to be the triple $(H,-\Y,\xi)$, where $-\Y$ is the symmetric Poincar\'{e} triad:
\[\xymatrix @R+0.5cm @C+0.5cm {\ar @{} [dr] |{\stackrel{g\circ \varsigma}{\sim}}
(C,\varphi \oplus -\varphi) \ar[r]^{i_-} \ar[d]_{i_+} & (D_-,0) \ar[d]^{f_-}\\ (D_+,0) \ar[r]^{f_+} & (Y,-\Phi),
}\]
Summarising, to form an inverse we replace $g$ with $g \circ \varsigma$, and change the sign on the symmetric structures everywhere but on $C$ in the top left of the triad.
\qed \end{definition}

\begin{figure}[h]
    \begin{center}
 {\psfrag{A}{$(Y,\Phi)$}
 \psfrag{B}{$(D_+,0)$}
 \psfrag{C}{$(D_-,0)$}
 \psfrag{D}{$(V,\Theta)$}
 \psfrag{E}{$(D_-,0)$}
 \psfrag{F}{$(D_+,0)$}
 \psfrag{G}{$(D^{\dag}_-,0)$}
 \psfrag{H}{$(Y^{\dag},-\Phi^{\dag})$}
 \psfrag{J}{$(D_+^{\dag},0)$}
 \psfrag{K}{$(C,\varphi \oplus - \varphi)$}
 \psfrag{L}{$(C^{\dag},\varphi^{\dag} \oplus -\varphi^{\dag})$}
\includegraphics[width=8cm]{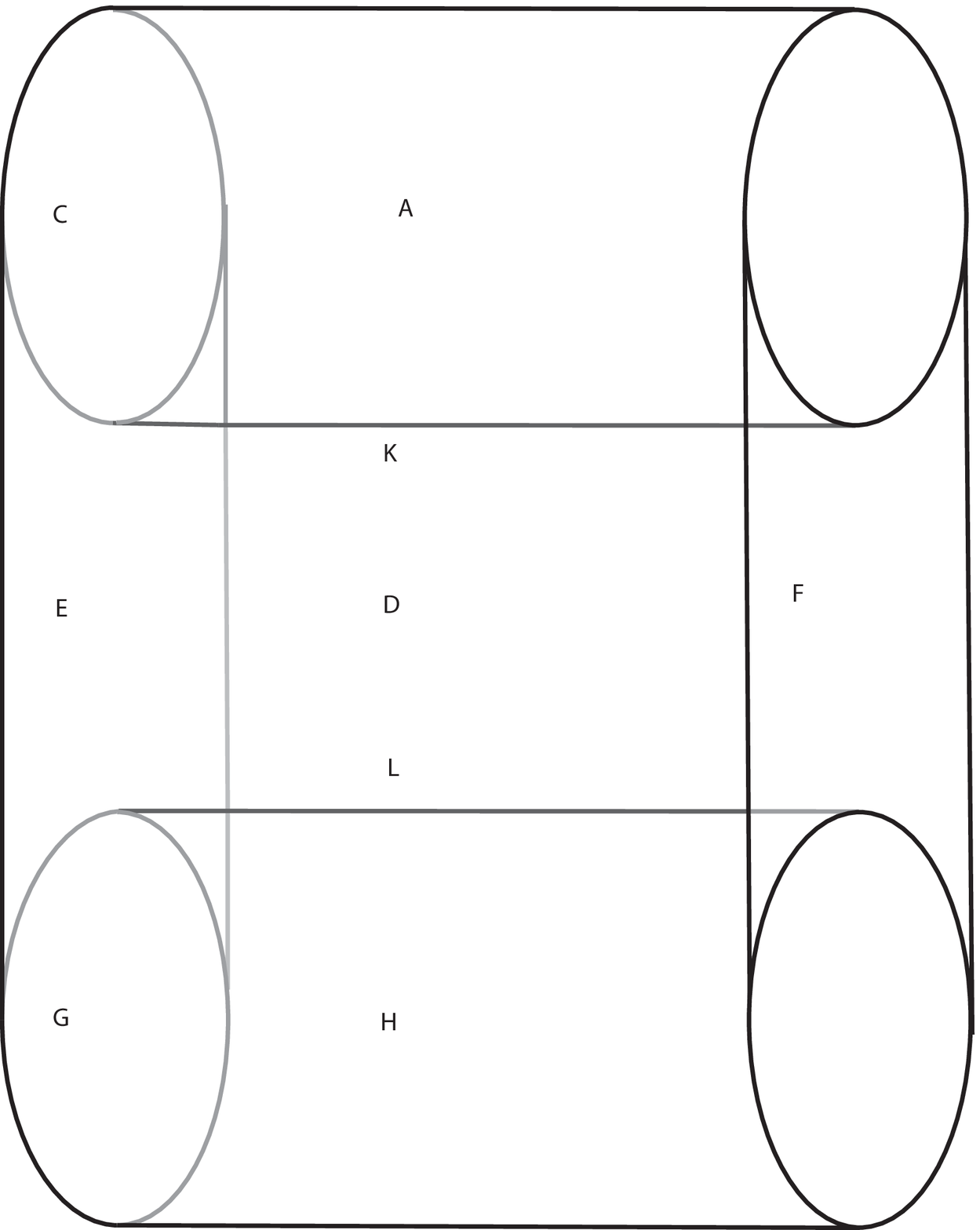}
 }
 \caption{The cobordism which shows that $\mathcal{Y} \sim \mathcal{Y}^{\dag}$.}
 \label{Fig:existenceofinverse2}
 \end{center}
\end{figure}

\begin{figure}[h]
    \begin{center}
 {\psfrag{A}{$(Y,\Phi)$}
 \psfrag{B}{$D_-^{\dag}$}
 \psfrag{C}{$D_-$}
 \psfrag{D}{$(V,\Theta)$}
 \psfrag{E}{$D_-$}
 \psfrag{F}{$D_-^{\dag}$}
 \psfrag{G}{$D_-$}
 \psfrag{H}{$D_- = Y^U$}
 \psfrag{J}{$D_-$}
 \psfrag{K}{$C$}
 \psfrag{L}{$C$}
 \psfrag{M}{$D_+$}
 \psfrag{N}{$D_+^{\dag}$}
 \psfrag{P}{$D_+^{\dag}$}
 \psfrag{Q}{$(Y^{\dag},-\Phi^{\dag})$}
 \psfrag{S}{$C^{\dag}$}
 \includegraphics[width=9cm]{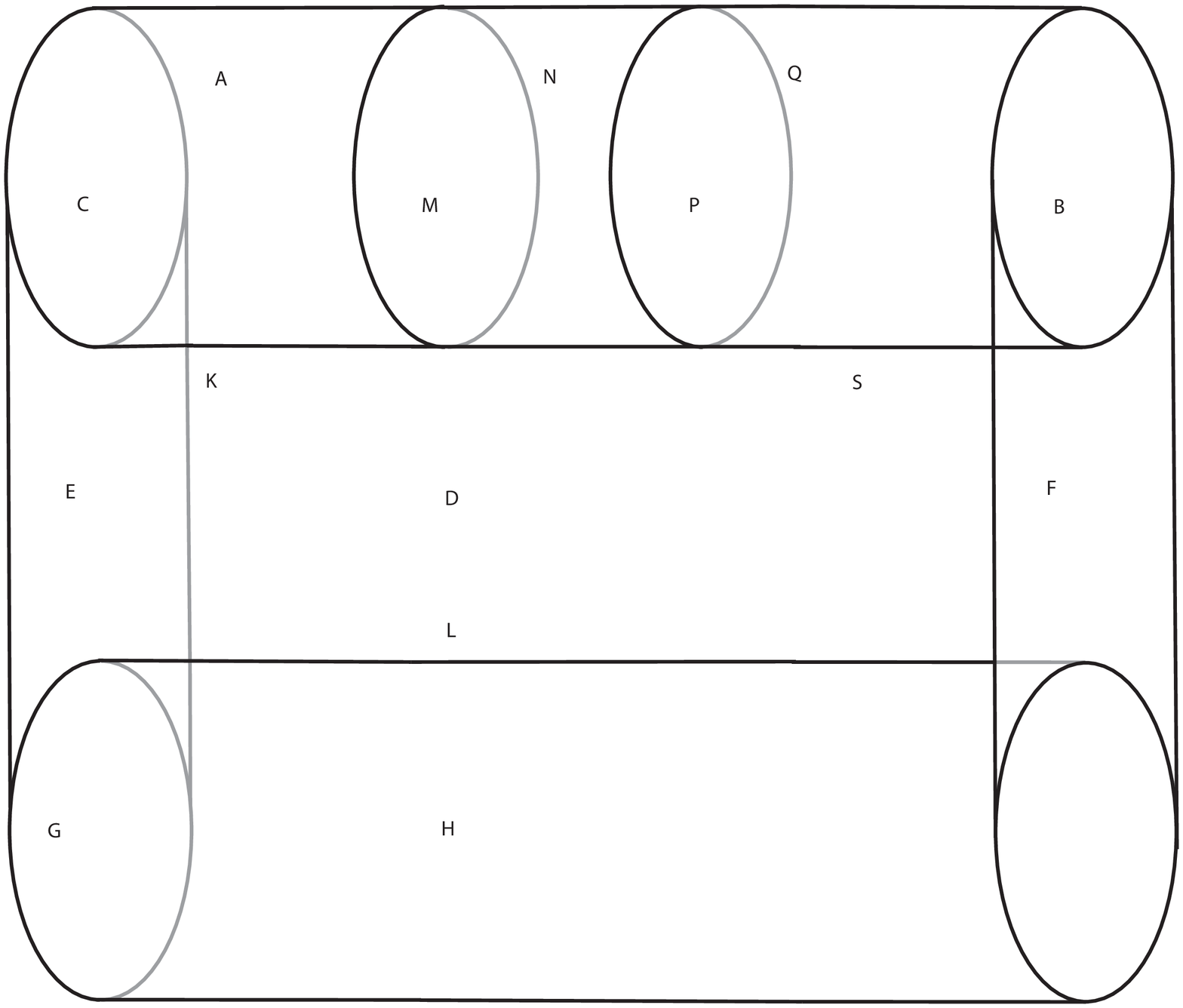}
 }
 \caption{The cobordism which shows that $\mathcal{Y} \, \sharp \, - \mathcal{Y}^{\dag} \sim \mathcal{Y}^U$.}
 \label{Fig:existenceofinverse3}
 \end{center}
\end{figure}

\begin{remark}
We now describe in detail why, for two knots $K$ and $K^{\dag}$, $K \, \sharp \, - K^{\dag}$ is slice if and only if $K$ is concordant to $K^{\dag}$.  The manifold $Z$ obtained by gluing two knot exteriors $X$ and $X^{\dag}$ together along their boundaries:
\[Z := X \cup_{\partial X = S^1 \times S^1} S^1 \times S^1 \times I \cup_{S^1 \times S^1 = \partial X^{\dag}} -X^{\dag},\]
as in Proposition \ref{lemma:basicfactconcordance}, can also be decomposed in a different way using the splitting of the boundary as $S^1 \times D^1 \cup_{S^1 \times S^0 \times I} S^1 \times D^1$.  First, using half the boundary we have the exterior of the connected sum:
\[X^{\ddag} = X \cup_{(S^1 \times D^1)_+} S^1 \times D^1 \times I \cup_{(S^1 \times D^1)^{\dag}_+} -X^{\dag},\]
so that
\begin{eqnarray*}
Z &\approx & X^{\ddag} \cup_{(S^1 \times D^1)_- \cup_{S^1 \times S^0} (S^1 \times D^1)^{\dag}_-} S^1 \times D^1 \times I \\
 & \approx & X^{\ddag} \cup_{S^1 \times S^1} S^1 \times S^1 \times I \cup_{S^1 \times S^1} X^{U},
\end{eqnarray*}
since \[S^1 \times D^1 \times I \approx S^1 \times D^2 \approx S^1 \times S^1 \times I \cup_{S^1 \times S^1} S^1 \times D^2  \approx S^1 \times S^1 \times I \cup_{S^1 \times S^1} X^U,\]
where $X^U \approx S^1 \times D^2$ is the exterior of the unknot.  The same 4-manifold therefore shows that $K$ is concordant to $K^{\dag}$ and that $K \,\sharp \,-K^{\dag}$ is concordant to the unknot.  For a schematic of the former cobordism see Figure \ref{Fig:existenceofinverse2} and for a schematic of the latter see Figure \ref{Fig:existenceofinverse3}.  We proceed in the next proposition to copy this motivating geometric argument in algebra.
\end{remark}

\begin{proposition}\label{prop:inverseswork}
Recall that $(\{0\},\Y^U,\Id_{\{0\}})$ is the triple of the unknot, and let $(H,\Y,\xi)$ and $(H^\dag,\Y^{\dag},\xi^\dag)$ be two triples in $\P$.  Then $$(H,\Y,\xi) \, \sharp \, -(H^\dag,\Y^{\dag},\xi^\dag) \sim (\{0\},\Y^U,\Id_{\{0\}})$$ if and only if $$(H,\Y,\xi) \sim (H^\dag,\Y^{\dag},\xi^\dag).$$
\end{proposition}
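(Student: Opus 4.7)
The strategy is to mimic algebraically the geometric fact illustrated in Figures \ref{Fig:existenceofinverse2} and \ref{Fig:existenceofinverse3}: the same 4-dimensional cobordism can be read both as a concordance from $(H,\Y,\xi)$ to $(H^\dag,\Y^\dag,\xi^\dag)$ and as a null-concordance of the sum with inverse $\Y \,\sharp\, -\Y^\dag$, after gluing on (or excising) a copy of the unknot piece $\Y^U$.

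First I would handle the forward direction. Assume $(H,\Y,\xi) \sim (H^\dag,\Y^\dag,\xi^\dag)$, witnessed by a $\Z[\Z]$-module $H'$, a map $(j_\flat,j_\flat^\dag)\colon H\oplus H^\dag\to H'$ and a 4-dimensional symmetric Poincar\'e triad $(V,\Theta)$ over $\zhd$ satisfying the homological and consistency conditions of Definition \ref{defn:2ndorderconcordant}. To exhibit $\Y \,\sharp\, -\Y^\dag \sim \Y^U$, I would take the same module $H'$ (viewed over $H^\ddag=H\oplus H^\dag$ so that the sum $\Y\,\sharp\,-\Y^\dag$ is defined over $\Z[\Z\ltimes H^\ddag]$, then tensored up), and construct the new cobordism by algebraically gluing $(V,\Theta)$ to the fundamental triad $\Y^U$ of the unknot along the appropriate $D_-$-pieces, using the union construction of Definition \ref{Defn:unionconstruction}. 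Concretely, the boundary of $V$ contains a product cobordism from $D_-\cup_C D_+^\dag$ to itself, and gluing in $\Y^U$ identifies the $D_-$-half with the cylinder coming from the unknot. The result is a 4-dimensional symmetric Poincar\'e triad whose bottom boundary is $Y\,\sharp\,-Y^\dag$ (because the mapping cone defining the sum is precisely this gluing) and whose top boundary is $Y^U$. The homological conditions follow from a Mayer--Vietoris argument: the $\Z$-homology equivalences $f_-\colon D_-\xrightarrow{\simeq}Y,Y^\dag,Y^U$ combine to show $H_*(\Z\otimes V')\cong H_*(S^1;\Z)$, while the consistency square for $\xi'$ extends to one for the new complex because the Alexander module of $\Y^U$ is zero.

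For the reverse direction, assume a null-concordance $(W,\Xi)$ of $\Y\,\sharp\,-\Y^\dag$ over $\Z[\Z\ltimes H'']$ for some $H''$ with $(\{0\},\Y^U,\Id)$. I would extract from $W$ the required concordance between $\Y$ and $\Y^\dag$ by simply \emph{not} filling in the cylinder-piece corresponding to $D_-$: that is, I would view the same chain complex $V:=W$, but now with the boundary triad rearranged so that the pair of product cobordisms along $D_-,D_-^\dag$ is isolated on one side (and correspondingly the $Y^U$ half of the top boundary is treated as a product cobordism rather than as a slice complement). Because the unknot triad $\Y^U$ is itself a product in the strong sense that $f_{\pm}^U = \mathrm{Id}$ and $\Phi^U=0$ (see the proof of Proposition \ref{Prop:abelianmonoid}), gluing $\Y^U$ to any triad produces something chain-equivalent to the original, so this regrouping of the boundary of $W$ really does yield a cobordism of the form required by Definition \ref{defn:2ndorderconcordant}. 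For the $\Z[\Z]$-module data, take $H':=H''$ with the inclusions $H,H^\dag\hookrightarrow H\oplus H^\dag = H^\ddag \to H''$; the consistency isomorphism $\xi'$ is the one inherited from $\xi''$, and commutativity of the relevant square reduces to the commutativity of the square for $\xi''$ together with the trivial consistency for $\Y^U$.

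The main obstacle in both directions is bookkeeping: one must verify that, after the gluing (resp.\ regrouping), the various chain homotopies $g$, $\gamma$, $\gamma^\dag$, $\mu$ fit together to produce a genuine 4-dimensional symmetric Poincar\'e triad in the sense of Definition \ref{Defn:symmPoincaretriad}, and that the two homological conditions of Definition \ref{defn:2ndorderconcordant} survive. The Poincar\'e property of the glued triad follows from the Poincar\'e property of the inputs together with the fact that gluing along a symmetric Poincar\'e pair preserves the Poincar\'e structure (this is the key point of the union construction). The $\Z$-homology equivalence condition follows from the Mayer--Vietoris sequence applied to the gluing, using that $D_\pm$, $Y$, $Y^\dag$, $Y^U$ all have the $\Z$-homology of $S^1$ and that the relevant inclusion maps induce isomorphisms. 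The consistency condition is the most delicate point: one must check that the diagram-chase defining the consistency isomorphism for the combined complex reproduces the original isomorphisms on the $H$ and $H^\dag$ summands; this will parallel the analogous argument for transitivity of $\sim$ that was carried out in Proposition \ref{prop:equivrelation}.
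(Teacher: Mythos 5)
Your proposal captures the right geometric intuition and would likely succeed if carried through, but it is more laborious than the paper's actual argument. The paper's key observation is sharper: both equivalence conditions ask for a 4-dimensional symmetric Poincar\'e pair over $\zhd$ whose boundary is a certain 3-dimensional symmetric Poincar\'e complex, and the two boundary complexes in question --- $E \cup_{E\oplus E^\dag} (Y\oplus Y^\dag)$ for $\Y\sim\Y^\dag$, and $E^{\ddag} \cup_{E^{\ddag}\oplus E^U}(Y^{\ddag}\oplus Y^U)$ for $\Y\,\sharp\,-\Y^\dag\sim\Y^U$ --- are \emph{already chain equivalent} over $\Z[\Z\ltimes(H\oplus H^\dag)]$. (Both use $H'$, since $H^\ddag = H\oplus H^\dag$, so the same $(j_\flat,j_\flat^\dag)$ and consistency square serve in both.) Thus the \emph{same} filler $(V,\Theta)$ works for both conditions: one simply precomposes the chain maps of the known triad with the chain equivalence of boundaries. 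This handles both directions at once and avoids any modification of $V$.

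By contrast, you propose to glue a copy of the $\Y^U$ piece onto $V$ (forward direction) and to "regroup" the boundary of $W$ (reverse direction). This introduces extra verification: after gluing you must re-check the $\Z$-homology condition via Mayer--Vietoris and re-derive the consistency isomorphism, and in the forward direction you are gluing a 3-dimensional triad to a 4-dimensional pair, which takes some care to set up precisely. None of this is wrong, but the paper sidesteps it entirely by recognizing the chain equivalence of the two boundary complexes. The one substantive ingredient your sketch does not make explicit --- and which is the technical core of the paper's proof --- is the step of swapping $D_-^\dag$ and $D_+^\dag$ in $-\Y^\dag$ using the chain homotopy $\mu^\dag$, which is needed to line up the gluing pieces and make the boundary chain equivalence manifest.
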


\begin{proof}
First, note that both boundaries use the Alexander module $H \oplus H^{\dag}$, so that the same homomorphism
\[(j_{\flat},j_{\flat}^{\dag}) \colon H \oplus H^{\dag} \to H'\]
can be used in both equivalences, fitting into the same commutative square.

The next step is to switch $D_-^{\dag}$ and $D_+^{\dag}$ in the symmetric Poincar\'{e} triad $-\Y^{\dag}$.  This is possible thanks to the chain homotopy $\mu^{\dag} \colon f_+^{\dag} \circ \varpi^{\dag} \simeq f_-^{\dag}$.  We have the following diagram for the equivalence of symmetric triads:
\[\xymatrix @R+1.5cm @C+1.3cm {
 & & & D_+^{\dag} \ar@/^5pc/[ldd]^{f_+^{\dag}} \\
 & \ar @{} [dr] |{\stackrel{g^{\dag}\circ \varsigma^{\dag}}{\sim}}
(C^{\dag},\varphi^{\dag} \oplus -\varphi^{\dag}) \ar[r]_{i^{\dag}_-} \ar[d]^{i^{\dag}_+} \ar@/^5pc/[rru]^{i_+^{\dag}} \ar@/_5pc/[ldd]_{i_-^{\dag}} & (D^{\dag}_-,0) \ar[d]_{f^{\dag}_-} \ar[ru]^{\varpi^{\dag}} \ar @{} [r] |{\stackrel{\mu^{\dag}}{\sim}} & \\  & (D^{\dag}_+,0) \ar[r]^{f^{\dag}_+} \ar[ld]_{(\varpi^{\dag})^{-1}} \ar @{} [d] |{\stackrel{\mu^{\dag} \circ (\varpi^{\dag})^{-1}}{\sim}} & (Y^{\dag},-\Phi^{\dag}) & \\ D_-^{\dag} \ar@/_5pc/[rru]_{f_-^{\dag}} & & &.
}\]
The outside square becomes the new triad $-\Y^{\dag}$, with all the chain homotopies shown combined to become a single homotopy.  We now follow the geometric argument above to construct something chain equivalent to the chain complex $E \cup_{E \oplus E^{\dag}} Y \oplus Y^{\dag}$, over $\Z[\Z \ltimes (H\oplus H^{\dag})]$, which must be the boundary of a 4-dimensional symmetric Poincar\'{e} pair in order for $(H,\Y,\xi)$ and $(H^\dag,\Y^{\dag},\xi^\dag)$ to be second order algebraically concordant.  The reader is advised to follow the rest of this proof while looking at Figures \ref{Fig:existenceofinverse2} and \ref{Fig:existenceofinverse3}.

To glue $Y$ and $Y^{\dag}$ together we use only the $D_+$ part of $E$ to begin with.  Note that:
\[Y^{\ddag}_r = Y_r \oplus (D_+^{\dag})_{r-1} \oplus Y^{\dag} \simeq Y_r \oplus (D_+)_{r-1} \oplus (D_+)_{r} \oplus (D_+^{\dag})_{r-1} \oplus Y_r^{\dag}.\]
Now, to form the manifold $Z$ we attached another $S^1 \times D^1 \times I$ to this, which corresponds to attaching the chain complex $D_-$.  However, we can first take the algebraic mapping cylinder of the map $E^{\ddag} \simeq E^U \to Y^U = D_-$ to see that:
\begin{multline*} Y^U_r = (D_-)_r \simeq \\ E^\ddag_r \oplus E^U_{r-1} \oplus Y_U \simeq (D_-)_r \oplus C^{\dag}_{r-1} \oplus (D_-^{\dag})_r \oplus (D_-)_{r-1} \oplus C_{r-2} \oplus (D_-)_{r-1} \oplus (D_-)_r.\end{multline*}
Therefore, gluing $Y^{\ddag}$ to $D_- = Y^U$ along $E^{\ddag}$ we make the chain complex $E^{\ddag} \cup_{E^{\ddag} \oplus E^U} Y^{\ddag} \oplus Y^U$:
\begin{eqnarray*}Y^\ddag_r \oplus E^\ddag_{r-1} \oplus Y^U_{r} \simeq \\ Y_r \oplus (D_+)_{r-1} \oplus (D_+)_{r} \oplus (D_+^{\dag})_{r-1} \oplus Y_r^{\dag} \oplus (D_-)_{r-1} \oplus C^{\dag}_{r-2} \oplus (D_-^{\dag})_{r-1} \\ \oplus (D_-)_r \oplus C^{\dag}_{r-1} \oplus (D_-^{\dag})_r \oplus (D_-)_{r-1} \oplus C_{r-2} \oplus (D_-)_{r-1} \oplus (D_-)_r\end{eqnarray*}
which is the chain complex over $\Z[\Z \ltimes (H\oplus H^{\dag})]$ which must bound a 4-dimensional symmetric Poincar\'{e} pair in order for $(H,\Y,\xi) \, \sharp \, -(H^\dag,\Y^{\dag},\xi^\dag)$ to be second order algebraically null-concordant.

Finally notice that $E^{\ddag} \cup_{E^{\ddag} \oplus E^U} Y^{\ddag} \oplus Y^U$ is, as claimed, chain equivalent to $E \cup_{E \oplus E^{\dag}} Y \oplus Y^{\dag}$, which is the complex which we were constructing to begin with.  To see this, glue $D_- = Y^U$ on to $Y^{\ddag}$ as above, again along $E^\ddag_r = (D_-)_r \oplus C^{\dag}_{r-1} \oplus (D_-^\dag)_r$ but without expanding $D_-$ first, to get:
\begin{eqnarray*}  Y_r \oplus (D_+)_{r-1} \oplus (D_+)_{r} \oplus (D_+^{\dag})_{r-1} \oplus Y_r^{\dag} \oplus (D_-)_{r-1} \oplus C^{\dag}_{r-2} \oplus (D_-^{\dag})_{r-1} \oplus (D_-)_r \simeq & \\  Y_r \oplus (D_+)_{r-1} \oplus (D_+)_{r} \oplus (D_+^{\dag})_{r-1} \oplus Y_r^{\dag} \oplus (D_-)_{r-1} & \\ \oplus C_{r-2} \oplus C_{r-1} \oplus C^{\dag}_{r-2} \oplus (D_-^{\dag})_{r-1} \oplus (D_-)_r  = E \cup_{E \oplus E^{\dag}} Y \oplus Y^{\dag}. &
\end{eqnarray*}
We expand $C$ in the chain equivalence here to get the algebraic equivalent of $S^1 \times S^0 \times I \times I$ inside the $S^1 \times S^1 \times I$, represented by $E$, which glues together $X$ and $-X^{\dag}$ to form $Z$ as in Proposition \ref{lemma:basicfactconcordance}.  Since the two chain complexes are chain equivalent, we see that if one chain complex $(V,\Theta)$ which fits into a 4-dimensional symmetric Poincar\'{e} triad exists, then so does the other, since we can compose the equivalences with the maps in the triad which we know exists, to show that the maps exist in the other triad.  This completes the proof.
\end{proof}

\begin{remark}
Proposition \ref{prop:inverseswork} shows that the putative inverse defined in Definition \ref{defn:inverses} does indeed give us an inverse, so that we have completed the task of showing the existence of the diagram below:
\[\xymatrix @R+1cm @C+1cm{
Knots \ar[r] \ar @{->>}[d] & \P \ar @{->>}[d]  \\
\C \ar[r] & \mathcal{AC}_2,
}\]
with $\C \to \ac2$ a group homomorphism, as we set out to achieve in this chapter.  We proceed in the next chapter to show the relationship of our constructions to the \COT filtration.
\end{remark}



\chapter{$(1.5)$-Solvable Knots are Algebraically $(1.5)$-Solvable}\label{chapter:one_point_five_solvable_knots}



This Chapter contains the proof of the following theorem.

\begin{theorem}\label{Thm:1.5solvable=>2nd_alg_slice}
A $(1.5)$-solvable knot is algebraically $(1.5)$-solvable.
\end{theorem}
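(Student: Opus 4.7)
The plan is to build, from a $(1.5)$-solution $W$ of $M_K$, a $4$-dimensional symmetric Poincaré triad satisfying the conditions of Definition \ref{defn:2ndorderconcordant} for equivalence of $(H,\Y,\xi)$ with $(\{0\},\Y^U,\Id_{\{0\}})$. First I would set $H' := \pi_1(W)^{(1)}/\pi_1(W)^{(2)} \cong H_1(W;\Z[\Z])$, with $\Z[\Z]$-module structure given by conjugation by a meridian, and define $j_{\flat}\colon H \to H'$ as the map induced on second-derived quotients by the inclusion $M_K \hookrightarrow W$. Since $W$ is a compact topological 4-manifold (hence has the homotopy type of a finite CW complex by \cite[Annex~B]{KS}) and a $\Z$-homology circle by condition (i) of Proposition \ref{basicfact_intro}, the arguments of \cite[Propositions~1.1~and~1.2]{Levine2} show that $H'$ is of type $K$, so $\Z[\Z \ltimes H']$ is a well-defined group ring.

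Next I would extend the handle decomposition of $X$ from Chapter \ref{Chapter:handledecomp}, combined with that of $M_K = X \cup_{\partial X}(S^1 \times D^2)$ from Remark \ref{rmk:zerosurgery1}, to a handle decomposition of $W$. This makes the chain complex $V_0 := C_*(W;\Z[\Z \ltimes H'])$, equipped with symmetric structure $\Theta_0 := \backslash\Delta_0([W,\partial W])$, into the lower-right entry of a $4$-dimensional symmetric Poincaré triad whose top row is the trivial product cobordism $(E,\phi) \oplus (E^U,-\phi^U) \to (E,0)$ of Lemma \ref{lemma:productcobordism} and whose left column is the canonical map $(Y,\Phi) \oplus (Y^U,-\Phi^U) \to V_0$. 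Identification of the boundary of $V_0$ with the algebraic gluing of $Y$ and $Y^U$ along $E$ uses the chain equivalence $\Z[\Z \ltimes H'] \otimes_{\Z[\Z]} C_*(S^1 \times D^2;\Z[\Z]) \simeq \Z[\Z \ltimes H'] \otimes_{\Z[\Z]} Y^U$ together with the union construction of Definition \ref{Defn:unionconstruction}.

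The main obstacle is that $V_0$ will not in general satisfy the $\Z$-homology condition of Definition \ref{defn:2ndorderconcordant}, because $H_2(W;\Z)$ is typically nontrivial for a $(1.5)$-solution, with only the intersection form $\lambda_0$ being hyperbolic. This is where the full $(1.5)$-hypothesis enters: by Definition \ref{Defn:COTnsolvable_intro}, $W$ admits two dual $(1)$-Lagrangians in $H_2(W;\Z[\Z])$ whose images form a hyperbolic splitting of $H_2(W;\Z)$, and one of them lifts to a $(2)$-Lagrangian $\widetilde{L} \subseteq H_2(W;\Z[\Z \ltimes H'])$. I would use $\widetilde{L}$, together with its $(0)$-dual Lagrangian, as the input data for a sequence of algebraic surgeries on $(V_0,\Theta_0)$ in the sense of \cite[Part~I,~\S4]{Ranicki3}, carried out over $\Z[\Z \ltimes H']$. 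The output is a new $4$-dimensional symmetric Poincaré triad with chain complex $V$ having unchanged boundary structure and unchanged $H_1$, but with $H_2(\Z \otimes V) = 0$, so that $\Z \otimes V$ is a $\Z$-homology circle. The corresponding geometric surgeries would require embedded $2$-spheres representing $\widetilde{L}$, which are not available in general; the algebraic surgery bypasses this obstruction, which is exactly why the hypothesis is $(1.5)$-solvability rather than sliceness.

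Finally, verification of Definition \ref{defn:2ndorderconcordant} is then routine: the $\Z$-homology condition $j\colon H_*(\Z \otimes Y) \toiso H_*(\Z \otimes V)$ combines condition (i) of Proposition \ref{basicfact_intro} with the killing of $H_2$ by the surgeries, while the consistency isomorphism $\xi'\colon H' \toiso H_1(\Z[\Z] \otimes V)$ is the Hurewicz isomorphism (preserved under surgery on $2$-cells since $H_1$ is unchanged) and the consistency square commutes by naturality of Hurewicz applied to $M_K \hookrightarrow W$.
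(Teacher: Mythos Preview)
Your proposal is correct and follows essentially the same route as the paper: build $V_0=C_*(W;\Z[\Z\ltimes H'])$ from the $(1.5)$-solution, form the relative (Thom) complex, perform algebraic surgery along cochain representatives dual to the $(2)$-Lagrangian, and then verify the homology and consistency conditions, with the dual $(1)$-Lagrangian (not ``$(0)$-dual'') used to show that $H_1$ is unchanged and $H_2$ is killed.

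One point to correct: $W$ is \emph{not} a $\Z$-homology circle---that is precisely why algebraic surgery is needed---so your appeal to Levine's Propositions~1.1 and~1.2 at the outset is not justified as stated. The paper defers the type-$K$ verification of $H'$ until after the surgery, applying Levine's argument to the resulting complex $V^{4-*}$, which \emph{is} a $\Z$-homology circle with $H_1(\Z[\Z]\otimes V^{4-*})\cong H'$. Also note that the dual classes are not themselves surgery data: the surgery input is only the map $\bar f=(l_1,\dots,l_k)^T$ determined by the $(2)$-Lagrangian, while the duals $d_1,\dots,d_k$ enter afterwards, in the homology computation, to witness surjectivity of $-\bar f\,\bar\Theta_0^*$ onto $B_2$ (hence preservation of $H_1$) and to kill the remaining half of $H^2$.
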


We proceed as follows.  After recalling some definitions, we give a motivating discussion.  The main tool for the proof will be the chain complex operation of algebraic surgery, so before giving the proof of Theorem \ref{Thm:1.5solvable=>2nd_alg_slice} we introduce and explain this theory.

To aid the ensuing discussion we first recall once again the definition of $(1.5)$-solubility (from Definition \ref{Defn:COTnsolvable}) and the definition of geometric surgery.

\begin{definition}[\cite{COT} Definition 1.2]\label{Defn:1.5solvableagain}
A \emph{Lagrangian} of a symmetric form $\lambda \colon P \times P \to R$ on a free $R$-module $P$ is a submodule $L \subseteq P$ of half-rank on which $\lambda$ vanishes.  For $n \in \mathbb{N}_0 := \mathbb{N} \cup \{0\}$, let $\lambda_n$ be the intersection form, and $\mu_n$ the self-intersection form, on the middle dimensional homology $H_2(W^{(n)};\Z) \cong H_2(W;\Z[\pi_1(W)/\pi_1(W)^{(n)}])$  of the $n$th derived cover of a 4-manifold $W$, that is the regular covering space $W^{(n)}$ corresponding to the subgroup $\pi_1(W)^{(n)} \leq \pi_1(W)$:
\begin{multline*} \lambda_n \colon H_2(W;\Z[\pi_1(W)/\pi_1(W)^{(n)}]) \times H_2(W;\Z[\pi_1(W)/\pi_1(W)^{(n)}]) \\ \to \Z[\pi_1(W)/\pi_1(W)^{(n)}].\end{multline*}
An $(n)$-\emph{Lagrangian} is a submodule of $H_2(W;\Z[\pi_1(W)/\pi_1(W)^{(n)}])$, on which $\lambda_n$ and $\mu_n$ vanish, which maps via the covering map onto a Lagrangian of $\lambda_0$.

We say that a knot $K$ is $(1.5)$-solvable if the zero surgery $M_K$ bounds a topological spin 4-manifold $W$ such that the inclusion induces an isomorphism on first homology and such that $W$ admits a $(2)$-Lagrangians with a dual $(1)$-Lagrangian.  In this setting, dual means that $\lambda_1$ pairs the image of the $(2)$-Lagrangian non-singularly with the $(1)$-Lagrangian, and that their images freely generate $H_2(W;\Z)$.
\qed \end{definition}

\begin{remark}
The symmetric structure is not subtle enough to allow us to define the self-intersection forms $\mu_n$.  For this, one needs a quadratic enhancement of the symmetric structure.  Our obstruction theory is really obstructing knots from being \emph{rationally} $(1.5)$-solvable, as in \cite[Section~4]{COT}, and in the \CHL work (\cite{cohale}, \cite{cohale2}).  At the $\Z[\Z]$ level, however, there is no difference between the symmetric and quadratic theories \cite[Proposition~7.9.2~(ii)]{Ranicki2}.
\end{remark}

\begin{definition}\label{Defn:Geometricsurgery}
An \emph{elementary geometric $r$-surgery} on an $n$-dimensional manifold $M$ has as data an embedding $g \colon S^r \times D^{n-r} \hookrightarrow M$.  The effect of the surgery is the manifold
\[M' = \cl(M \setminus g(S^r \times D^{n-r})) \cup_{S^r \times S^{n-r-1}} D^{r+1} \times S^{n-r-1}\]
which is the result of cutting out our embedded thickened sphere and gluing in instead $D^{r+1} \times S^{n-r-1}$.
There is a cobordism, called the \emph{trace} of the surgery:
\[W = M \times I \cup_{g \colon S^r \times D^{n-r} \hookrightarrow M \times \{1\}} D^{r+1} \times D^{n-r}\]
between $M$ and $M'$.
Up to homotopy equivalence, $M'$ is the result of attaching an $(r+1)$-cell to $M$ along $g|_{S^r \times \{0\}}$, and then removing a dual cell of dimension $(n-r)$.
\qed \end{definition}

The idea of the proof of Theorem \ref{Thm:1.5solvable=>2nd_alg_slice} is as follows.  The Cappell-Shaneson technique looks for obstructions to being able to perform surgery on a 4-manifold $W$ whose boundary is the zero framed surgery $M_K$, in order to excise the second $\Z$-homology and create a homotopy slice disc exterior.  The main obstruction to being able to do this surgery is the middle-dimensional intersection form of $W$, as in the \COT definition of $(n)$-solubility.  However, even if the Witt class of the intersection form vanishes, with coefficients in $\Z[\pi_1(W)/\pi_1(W)^{(2)}]$ for testing $(1.5)$-solubility, this does not imply that we have a half basis of the second homology  $$H_2(W;\Z[\pi_1(W)/\pi_1(W)^{(2)}])$$  representable by disjointly embedded spheres, as our data for surgery: typically the homology classes will be represented as embedded surfaces of non-zero genus, whose fundamental group maps into $\pi_1(W)^{(2)}$.  We cannot do surgery on such surfaces.

However, the conditions on a $(1.5)$-solution are, as we shall see, precisely the conditions required for being able to perform \emph{algebraic surgery on the chain complex} of the $(1.5)$-solution.  The $(1.5)$-level algebra cannot see the differences between $(2)$-surfaces and spheres, so that we can obtain an \emph{algebraic $(1.5)$-solution} $V$.

In particular, the existence of the dual $(1)$-Lagrangian allows us to perform algebraic surgery \emph{without changing the first homology} at the $\Z[\Z]$ level, therefore maintaining the consistency condition.  When performing geometric surgery on a 4-manifold $W$ along a 2-sphere, we remove $S^2 \times D^2$ and glue in $D^3 \times S^1$.  As mentioned in Remark \ref{Rmk:nsolvablity}, removing the thickening $D^2$ potentially creates new elements of $H_1(W;\Z[\Z])$.  However, the existence of a dual surface to the $S^2$ which we remove guarantees that the boundary $S^1$ of the thickening $D^2$ bounds a surface on the other side, so that we do not create extra 1-homology.  This phenomenon will also be seen when performing algebraic surgery; as ever, the degree of verisimilitude provided by the chain level approach is as high as one could ever hope.

Next, we give the definition of the algebraic surgery operation, which is the chain complex version of the surgery operation on manifolds, followed by some motivation of the construction.

\begin{definition}
An $n$-dimensional symmetric complex $(C,\varphi \in Q^n(C,\eps))$ is \emph{connected} if
\[H_0(\varphi_0 \colon C^{n-*} \to C_*) = 0.\]
An $n$-dimensional symmetric pair
\[(f \colon C \to D,(\delta\varphi,\varphi) \in Q^n(f,\eps))\]
is \emph{connected} if
\[H_0( \left(\begin{array}{c} \delta\varphi_0 \\ \varphi_0 f^* \end{array} \right) \colon D^{n-*} \to \mathscr{C}(f)_*) = 0.\]
\qed \end{definition}

\begin{definition}\label{Defn:algebraicsurgerymatrices}
\cite[Part~I,~page~145]{Ranicki3} Algebraic surgery is a machine which takes as input a connected $n$-dimensional symmetric chain complex over a ring $A$, $(C,\varphi \in Q^n(C,\eps))$, and which takes as data a connected $(n+1)$-dimensional symmetric pair:
\[(f \colon C \to D,(\delta\varphi,\varphi) \in Q^{n+1}(f,\eps)).\]
The output, or effect, of algebraic surgery is the connected $n$-dimensional symmetric chain complex over $A$, $(C',\varphi' \in Q^n(C',\eps))$, given by:
\begin{eqnarray*}
d_{C'} &= &\left(
             \begin{array}{ccc}
               d_C & 0 & (-1)^{n+1}\varphi_0f^* \\
               (-1)^rf & d_D & (-1)^r\delta\varphi_0 \\
               0 & 0 & (-1)^r\delta_D \\
             \end{array}
           \right)
\colon \\& &C'_r = C_r \oplus D_{r+1} \oplus D^{n-r+1} \to C'_{r-1} = C_{r-1} \oplus D_r \oplus D^{n-r+2},\end{eqnarray*}

with the symmetric structure given by:
\begin{eqnarray*}
\varphi'_0 &=& \left(
                 \begin{array}{ccc}
                   \varphi_0 & 0 & 0 \\
                   (-1)^{n-r}fT_{\eps}\varphi_1 & (-1)^{n-r}T_{\eps}\delta\varphi_1 & (-1)^{r(n-r)}\eps \\
                   0 & 1 & 0 \\
                 \end{array}
               \right)
               \colon\\& & C'^{n-r} = C^{n-r} \oplus D^{n-r+1} \oplus D_{r+1} \to C'_r = C_r \oplus D_{r+1} \oplus D^{n-r+1};\text{ and}\\
\varphi'_s &=&  \left(
                 \begin{array}{ccc}
                   \varphi_s & 0 & 0 \\
                   (-1)^{n-r}fT_{\eps}\varphi_{s+1} & (-1)^{n-r}T_{\eps}\delta\varphi_{s+1} & 0 \\
                   0 & 0 & 0 \\
                 \end{array}
               \right)
               \colon\\& & C'^{n-r+s} = C^{n-r+s} \oplus D^{n-r+s+1} \oplus D_{r-s+1} \to C'_r = C_r \oplus D_{r+1} \oplus D^{n-r+1}
\end{eqnarray*}
for $s \geq 1$.
The reader can check that $d_{C'}^2 = 0$ and that $\{\varphi'_s\} \in Q^n(C',\eps)$.
Algebraic surgery on a chain complex which is symmetric but not Poincar\'{e} preserves the homotopy type of the boundary: see \cite[Part~I,~Proposition~4.1~(i)]{Ranicki3} for the proof.
\qed \end{definition}

\begin{definition}
The \emph{suspension morphism} $S$ on chain complexes raises the degree: $(SC)_r = C_{r-1};\; d_{SC} = d_C$.
\qed \end{definition}

\begin{remark}
We give some geometric motivation for the formulae of algebraic surgery.  When performing algebraic surgery, the complex $D$ corresponds to the geometric relative complex $C(W,M')$.  For an \emph{elementary algebraic $r$-surgery}, which should correspond to an elementary geometric surgery, by excision $D = C(W,M') \simeq C(D^{n-r},S^{n-r-1}) \simeq S^{n-r}A$. There is a chance for $\delta\varphi_0 \colon D^{n+1-r} \to D_r$ to be non-zero if $r=n+1-r$; $\delta\varphi_0$ is necessarily zero otherwise.  In general for an elementary algebraic $r$-surgery there will only be one chance for a non-zero $\delta\varphi_s \colon D^{n+1+s-j} \to D_j$, precisely when $s=2j-n-1$ and $j=n-r$, so $s=n-2r-1$.  The choice of $\delta\varphi$ represents the choice of the framing, that is a choice of trivialisation of the normal bundle of our embedded sphere $S^r$.

Throughout an algebraic surgery operation the ring $A$ remains unchanged; for the low-dimensional examples which we are interested in we have to take care of any changes in fundamental group and therefore in the group ring separately, as we have done throughout this work.

By Poincar\'{e}-Lefschetz duality, $D^r = C(W,M')^{r} \simeq C(W,M)_{n+1-r}$ via $\delta\varphi_0$.  Consider the cofibration sequence:
\[C(M) \to C(W) \to C(W,M) \to SC(M) \to SC(W).\]
By taking the algebraic mapping cone on the map $$\varphi_0 f^* \colon D^r \to C_{n-r} = SC_{n-r+1} = SC(M)_{n-r+1},$$ we can attach cells algebraically, and recover the complex $SC(W)$.  Note that for an elementary algebraic surgery the image of the map $f^*$ is the cohomology class which is dual to the homology class we are trying to kill.  In geometry, this is the homology class given under the Hurewicz map by the map \[g|_{S^r \times \{0\}} \colon S^r \to M \in \pi_r(M),\] where $g$ is the data for the corresponding geometric surgery.  We can therefore see that taking a mapping cone on $\varphi_0f^*$ attaches algebraically the required $(r+1)$-cell.

The key fact then is that we can always trivially desuspend algebraically; just lower indices.  Geometrically desuspending is often difficult and in general not possible.  We can therefore recover $C(W)$ from $SC(W)$.  Consider another cofibration sequence:
\[C(M') \to C(W) \to C(W,M') \to SC(M').\]
We can now take another algebraic mapping cone on the map $(f,\delta\varphi_0) \colon C(W) = C_r \oplus D^{n-r+1} \to C(W,M')_r = D_r$ to obtain $SC(M')$.
Recall that above we used the dual complex $C(W,M')^{n+1-r}$ to represent the complex $C(W,M)_{r}$ without using the duality map, $\delta\varphi_0$, whence its inclusion here.  This has the effect, for an elementary surgery, of removing the dual cell algebraically; in algebra it is not possible to remove cells, only to take mapping cones.  The appearance of $\delta\varphi_0$ here means that the choice from geometry of framing in $\pi_r(SO(n-r))$ for the thickening disk $D^{n-r}$ under the embedding $g \colon S^r \times D^{n-r} \hookrightarrow M$ is taken into account in the algebra.  For an elementary surgery, when $D = S^{n-r}A$, the map $f$ represents a cohomology class in $H^{n-r}(C)$, which is killed by the surgery.  Finally, we desuspend $SC(M')$ to get $C(M')_r = C_r \oplus D_{r+1} \oplus D^{n-r+1}$.  I would like to thank Tibor Macko for telling me about this explanation of algebraic surgery using cofibration sequences.
\end{remark}

\begin{proof}[Proof of Theorem \ref{Thm:1.5solvable=>2nd_alg_slice}]
We need to show that the triple $(H^K,\Y^K,\xi^K)$ of a $(1.5)$-solvable knot $K$, with a $(1.5)$-solution $W$, is equivalent to the identity element of $\mathcal{AC}_2$, which is represented by the triple $(\{0\},\Y^U,\Id_{\{0\}})$ corresponding to the unknot.

The chain complex \[N_K := E^K \cup_{E^K \oplus E^U} Y^K \oplus Y^U\] is chain equivalent to the chain complex $C_*(M_K;\Z[\Z \ltimes H_1(M_K;\Z[\Z])])$ of the second derived cover of the zero framed surgery on $K$.  Our first attempt for chain complex which fits into a 4-dimensional symmetric Poincar\'{e} triad as required in Definition \ref{defn:2ndorderconcordant} is the chain complex of the second derived cover of the $(1.5)$ solution $W$
\[(V',\Theta') := (C_*(W;\Z[\Z \ltimes H_1(W;\Z[\Z])]),\backslash\Delta([W,M_K])),\] so that \[H' := \pi_1(W)^{(1)}/\pi_1(W)^{(2)} \toiso H_1(W;\Z[\Z]),\] and we have the triad:

\[\xymatrix @R+1cm @C+1cm {\ar @{} [dr] |{\stackrel{(\gamma^K,\gamma^{U})}{\sim}}
(E^K,\phi^K) \oplus (E^U,-\phi^U) \ar[r]^-{(\Id, \Id \otimes \varpi_{E^{K}})} \ar[d]_{\left( \begin{array}{cc} \eta^K & 0 \\ 0 & \eta^{U}\end{array}\right)} & (E^K,0) \ar[d]^{\delta}
\\ (Y^K,\Phi^K) \oplus  (Y^{U},-\Phi^{U}) \ar[r]^-{(j^K,j^{U})} & (V',\Theta'),
}\]
with a geometrically defined consistency isomorphism $$H' \toiso H_1(W;\Z[\Z]) = H_1(\Z[\Z] \otimes_{\zhd} V).$$

The problem is that $H_2(W;\Z)$ is typically non-zero: if it were zero, we would have our topological concordance exterior and in particular $K$ would be second order algebraically slice.  We therefore need, as indicated above, to perform algebraic surgery on $V'$ to transform it into a $\Z$-homology circle.  We form the algebraic Thom complex (Definition \ref{Defn:algThomcomplexandthickening}):
\begin{multline*} C_*(W,M_K;\Z[\Z \ltimes H']) \simeq \ol{V} := \mathscr{C}((\delta,(-1)^{r-1}\gamma^K, (-1)^{r-1}\gamma^U, -j^K,-j^U) \colon \\ (N_K)_r = E^K_r \oplus E_{r-1}^K \oplus E_{r-1}^U \oplus Y^K_r \oplus Y^U_r \to V'_r),\end{multline*}
with symmetric structure $\ol{\Theta}:= \Theta' / (0 \cup_{\phi^K \oplus -\phi^U} \Phi^K \oplus -\Phi^U)$.  In this chapter the bar is again a notational device and has nothing to do with involutions.

This gives us the input for surgery, since the input for algebraic surgery must be a symmetric chain complex.  Next, we need the data for surgery.

As in the proof of \cite[Proposition~4.3]{COT}, any compact topological 4-manifold has the homotopy type of a finite simplicial complex.  \cite[Proposition~4.3]{COT} cites \cite[Theorem~4.1]{KS}, but it might be better to look at \cite[Annex~B~III,~page~301]{KS}.  In particular this means that $H_2(W;\Z)$ is finitely generated.  We therefore have homology classes $l'_1,\dots,l'_k \in H_2(W;\Z[\Z \ltimes H'])$ which generate the $(2)$-Lagrangian whose existence is guaranteed by definition of a $(1.5)$-solution $W$.  There are therefore dual cohomology classes $l_1,\dots,l_k \in H^2(W,M_K;\Z[\Z \ltimes H'])$, by Poincar\'{e}-Lefschetz duality.  Taking cochain representatives for these, we have maps $l_i \colon \ol{V}_2 \to \Z[\Z \ltimes H']$.  We then take as our data for algebraic surgery the symmetric pair:
\[(\ol{f} \colon \ol{V} \to B:= S^2(\bigoplus_k \, \Z[\Z \ltimes H']),(0,\ol{\Theta})).\]
where
\[\ol{f} = (l_1,\dots,l_k)^T \colon \ol{V}_2 \to B_2 = \bigoplus_k \, \Z[\Z \ltimes H'].\]

The fact that the $l_i$ are cohomology classes means that $l_i d_{\ol{V}} = 0 $, so that $\ol{f}$ is a chain map.  The requirement that the $l_i'$ generate a submodule of $$H_2(W;\Z[\Z \ltimes H']) = H_2(V')$$ on which the intersection form vanishes means that the duals $l_i$ generate a submodule of $H^2(\ol{V})$ on which the cup product vanishes.  The cup product of any two $l_i,l_j$ is given by:
\[\Delta_0^*(l_i \otimes l_j)([W,M_K]) = (l_i\otimes l_j)(\Delta_0([W,M_K])) = (l_i \otimes l_j)\ol{\Theta}_0,\] which under the slant isomorphism is  $l_i \ol{\Theta}_0 l_j^*,$ and so we see that each of these composites vanishes.

The only possibility for non-zero symmetric structure in the data for surgery would arise when $s = n-2r-1 = 4-2\cdot 2-1 = -1$, so no such non-zero structure maps exist.  Therefore the condition for our data for surgery to be a symmetric pair is that:
\[\ol{f} \,\ol{\Theta}_0 \ol{f}^* = 0;\]
which is the condition that the $k \times k$ matrix with $(i,j)$th entry $l_i \ol{\Theta}_0 l_j^*$, is zero.  This is satisfied as we saw above, since $l_i \ol{\Theta}_0 l_j^* \colon \Z[\Z \ltimes H'] \to \Z[\Z \ltimes H']$ is a module homomorphism given by multiplication by the same group ring element as the evaluation on the relative fundamental class $[W,M_K]$ of the cup product of two cohomology classes dual to the $(2)$-Lagrangian, and so equals the value of $\lambda_2(l'_i,l'_j)$.  This means that we can proceed with the operation of algebraic surgery to form the symmetric chain complex $(V,\Theta)$, which is the effect of algebraic surgery, shown below.  We may assume, since $W$ is a 4-manifold with boundary, that we have a chain complex $V'$ whose non-zero terms are $V'_0, V'_1, V'_2$ and $V'_3$.  The non-zero terms in $\ol{V}$ will therefore be of degree less than or equal to four.

The output of algebraic surgery, which we denote as $(V,\Theta)$ is then given, from Definition \ref{Defn:algebraicsurgerymatrices}, by:
\[\xymatrix @R+2cm @C-0.4cm{ \ol{V}^0 \ar[rr]^-{\left( \begin{array}{c} d^*_{\ol{V}} \\ 0 \end{array}\right)} \ar[d]^{\left(\begin{array}{c} \ol{\Theta}_0 \end{array} \right)} &&
\ol{V}^1 \oplus B^2 \ar[rrrr]^-{\left(\begin{array}{cc} d^*_{\ol{V}} & \ol{f}^* \end{array}\right)} \ar[d]^<<<<<<<<<{\left(\begin{array}{cc} \ol{\Theta}_0 & 0 \\ 0 & 1 \end{array} \right)} &&&&
\ol{V}^2 \ar[rrr]^-{\left( \begin{array}{c} d^*_{\ol{V}} \\ -\ol{f}\,\ol{\Theta}_0^*\end{array} \right)} \ar[d]_>>>>>>>{\left(\begin{array}{c} \ol{\Theta}_0 \end{array} \right)} &&&
\ol{V}^3 \oplus B_2 \ar[rrrr]^-{\left( \begin{array}{cc} d^*_{\ol{V}} & 0 \end{array}\right)} \ar[d]^>>>>>>>>{\left(\begin{array}{cc} \ol{\Theta}_0 & 0 \\ -f T\ol{\Theta}_1 & -1 \end{array} \right)} &&&&
\ol{V}^4 \ar[d]_<<<<<<<<<<{\left(\begin{array}{c} \ol{\Theta}_0 \end{array} \right)} \\
\ol{V}_4 \ar[rr]_-{\left( \begin{array}{c} d_{\ol{V}} \\ 0 \end{array}\right)} &&
\ol{V}_3 \oplus B^2 \ar[rrrr]_-{\left(\begin{array}{cc} d_{\ol{V}} & -\ol{\Theta}_0\, \ol{f}^* \end{array}\right)} &&&&
\ol{V}_2 \ar[rrr]_-{\left( \begin{array}{c} d_{\ol{V}} \\ \ol{f}\end{array} \right)} &&&
\ol{V}_1 \oplus B_2 \ar[rrrr]_-{\left( \begin{array}{cc} d_{\ol{V}} & 0 \end{array}\right)} &&&&
\ol{V}_0.
}\]
The higher symmetric structures $\Theta_s$ are just given by the maps $\ol{\Theta}_s$ for $s=1,2,3,4$ except for the map:
\[\Theta_1 = \left(\begin{array}{c} \ol{\Theta}_1 \\ -\ol{f} T \ol{\Theta}_2 \end{array} \right) \colon \ol{V}^4 \to \ol{V}_1 \oplus B_2.\]
Next, we take the algebraic Poincar\'{e} thickening (Definition \ref{Defn:algThomcomplexandthickening}) of $V$ to get:
\[i_V \colon \partial V \to V^{4-*},\]
where, as in Chapter \ref{Chapter:symmconstruction}, we define the complex $V^{4-*}$ by:
\[(V^{4-*})_r = \Hom_{\zhd}(V_{4-r},\zhd),\]
with boundary maps
\[\partial^* \colon (V^{4-*})_{r+1} \to (V^{4-*})_{r}\]
given by
\[\partial^* = (-1)^{r+1}d^*_V,\]
where $d^*_V$ is the coboundary map.
By \cite[Part~I,~Proposition~4.1~(i)]{Ranicki3}, the operation of algebraic surgery does not change the homotopy type of the boundary.  There is therefore a chain equivalence:
\[(N_K, 0 \cup_{\phi^K \oplus -\phi^U} \Phi^K \oplus -\Phi^U) \xrightarrow{\sim} (\partial V, \partial \Theta), \]
so that using the composition of the relevant maps in:
\[N_K = E^K \cup_{E^K \oplus E^U} Y^K \oplus Y^U \xrightarrow{\sim} \partial V \to V^{4-*}\]
we again have a 4-dimensional symmetric Poincar\'{e} triad:
\[\xymatrix @R+1cm @C+1cm {\ar @{} [dr] |{\sim}
(E^K,\phi^K) \oplus (E^U,-\phi^U) \ar[r] \ar[d] & (E^K,0) \ar[d]
\\ (Y^K,\Phi^K) \oplus  (Y^{U},-\Phi^{U}) \ar[r] & (V^{4-*},0).
}\]
To complete the proof we need to check the homology conditions of Definition \ref{defn:2ndorderconcordant}, namely that $V^{4-*}$ has the $\Z$-homology of a circle and the consistency condition that there is an isomorphism $\xi' \colon H' \toiso H_1(\Z[\Z] \otimes_{\zhd} V^{4-*})$.  We have:
\[H_4(\Z \otimes_{\zhd} V^{4-*}) \cong H^0(\Z \otimes_{\zhd} \ol{V}) \cong H^0(W,M_K;\Z) \cong H_4(W;\Z) \cong 0,\]
and
\[H_0(\Z \otimes_{\zhd} V^{4-*}) \cong H^4(\Z \otimes_{\zhd} \ol{V}) \cong H^4(W,M_K;\Z) \cong H_0(W;\Z) \cong \Z,\]
as required.
For each basis element $(0,\dots,0,1,0,\dots,0) \in B^2$, where the $1$ is in the $i$th entry, we have, for $v \in \ol{V}_2$, \[\ba{rcl} \ol{f}^*(0,\dots,0,1,0,\dots,0)(v) & = & (0,\dots,0,1,0,\dots,0)\ol{f}(v) \\ & = & (0,\dots,0,1,0,\dots,0)(l_1,\dots,l_k)^T(v) = l_i(v). \ea\]
This means, since no $l_i$ lies in the image of $d_{\ol{V}}^* \colon \ol{V}^1 \to \ol{V}^2$, that the kernel of $(d_{\ol{V}}^*, \ol{f}^*)$ is zero, so that:
\[H_3(\Z \otimes_{\zhd} V^{4-*}) \cong H^1(\Z \otimes_{\zhd} \ol{V}) \cong H^1(W,M_K;\Z) \cong 0.\]
Also, since the $l_i$ are in the image of $\ol{f}^*$, they are no longer cohomology classes of $V^{4-*}$ as they were of $\ol{V}$.
At this point we need the dual classes; recall that we have, from Definition \ref{Defn:1.5solvableagain}, classes $d'_1,\dots,d'_k \in H_2(W;\Z[\Z])$, whose images in $H_2(W;\Z)$ we also denote by $d'_1,\dots,d'_k$, which satisfy
\[\lambda_1(l'_i,d'_j) = \delta_{ij}.\]
We therefore have, by Poincar\'{e}--Lefschetz duality, classes: \[d_1,\dots,d_k \in H^2(W,M_K;\Z[\Z]),\]
with representative cochains which we also denote $d_1,\dots,d_k \in \ol{V}^2$.

Since, as above, the intersection form is defined in terms of the cup product, we have, over $\Z[\Z]$ and $\Z$, that:
\[l_i\ol{\Theta}_0^* d_j^* = \delta_{ij}.\]
We can use $\ol{\Theta}_0^* = T\ol{\Theta}_0$ instead of $\ol{\Theta}_0$ to calculate the cup products due to the existence of the higher symmetric structure chain homotopy $\ol{\Theta}_1$.  Then
\begin{eqnarray*} -\ol{f}\,\ol{\Theta}_0^*(d_j) &=& -\ol{f}\,\ol{\Theta}_0^*d_j^*(1) = -(l_1\ol{\Theta}_0^*d_j^*(1),\dots,l_k\ol{\Theta}_0^*d_j^*(1))^T \\ &=& -(0,\dots,0,1,0,\dots,0)^T = -e_j,\end{eqnarray*}
where the $1$ is in the $j$th position, and for $j = 1,\dots,k$ we denote the standard basis vectors by $$e_j := (0,\dots,0,1,0,\dots,0)^T \in B_2.$$  This means that the $d_j$ are not in the kernel of $-\ol{f}\ol{\Theta}_0^*$.  Then, since $d_{\ol{V}}^*(d_j) = 0$ as the $d_j$ are cocycles in $\ol{V}$, we know that the $d_j$ are no longer cohomology classes in $H_2(\Z \otimes_{\zhd} V^{4-*})$.  The group $H^2(\Z \otimes_{\zhd} \ol{V})$ was generated by the classes $l_1,\dots,l_k,d_1,\dots,d_k$, which means that we now have \[H_2(\Z \otimes_{\zhd} V^{4-*}) \cong 0.\]

Moreover, over both $\Z[\Z]$ and $\Z$, taking the element $D := \sum_{i=1}^k \, a_j d_j$, for any elements $a_1,\dots,a_k \in \Z[\Z]$, we have that:
\[-\ol{f}\,\ol{\Theta}_0^*(-D) = \sum_{j=1}^k \, a_j (\ol{f}\,\ol{\Theta}_0^*d_j^*(1)) = \sum_{j=1}^k \, a_j e_j \in B_2.\]
This means that $-\ol{f}\,\ol{\Theta}_0^*$ is onto $B_2$.  Therefore:
\[H_1(\Z \otimes_{\zhd} V^{4-*}) \cong H^3(\Z \otimes_{\zhd} \ol{V}) \cong H^3(W,M_K;\Z) \cong H_1(W;\Z) \cong \Z,\]
so the first homology remains unchanged at the $\Z$ level as required.  Similarly, with $\Z[\Z]$ coefficients, we have the isomorphisms:
\begin{eqnarray*} H' & \toiso & H_1(W;\Z[\Z]) \toiso H^3(W,M_K;\Z[\Z]) \\ & \toiso & H^3(\Z[\Z] \otimes_{\zhd} \ol{V}) \toiso H_1(\Z[\Z] \otimes_{\zhd} V^{4-*}), \end{eqnarray*}
which define the map
\[\xi' \colon H' \toiso H_1(\Z[\Z] \otimes_{\zhd} V^{4-*}),\]
so that the consistency condition is satisfied.  Since $H'$ is isomorphic to the $\Z[\Z]$-homology of a finitely generated projective module chain complex which is a $\Z$-homology circle, we can apply Levine's arguments \cite[Propositions~1.1~and~1.2]{Levine2}, to see that $H'$ is of type $K$.  This completes the proof that $(1.5)$-solvable knots are second order algebraically slice, or algebraically $(1.5)$-solvable.
\end{proof}

\begin{remark}
Theorem \ref{Thm:1.5solvable=>2nd_alg_slice} shows that we can extend our diagram to the following:
\[\xymatrix @R+1cm @C+1cm{
Knots \ar[r] \ar[d]^{/\sim} & \P \ar[d]^{/\sim}   \\
\C \ar[r] \ar[d] & \mathcal{AC}_2   \\
\C/\mathcal{F}_{(1.5)}, \ar[ur] &
}\]
so that the homomorphism from $\C$ to $\mathcal{AC}_2$ factors through $\mathcal{F}_{(1.5)}$ as claimed.  In the next chapter we show how to extract the \COT obstructions from an element of $\mathcal{AC}_2$.
\end{remark}



\chapter[Extracting the Cochran-Orr-Teichner obstructions]{Extracting the Cochran-Orr-Teichner Concordance Obstructions}\label{Chapter:extractingCOTobstructions}


In this chapter we aim to complete our diagram:
\[\xymatrix @R+1cm @C+1cm{
Knots \ar[r] \ar @{->>} [d] & \P \ar @{->>} [d]  \\
\C \ar[r] \ar @{->>} [d] & \mathcal{AC}_2 \ar@{-->}[d]  \\
\C/\mathcal{F}_{(1.5)} \ar@{-->}[r]  \ar[ur] & \mc{COT}_{(\C/1.5)},}\]
by explaining the map $\mathcal{AC}_2 \to \mc{COT}_{(\C/1.5)}$ and showing that it is a morphism of pointed sets.  Recall that $\G := \Z \ltimes \Q(t)/\Q[t,t^{-1}]$.  The map $\C/\mathcal{F}_{(1.5)} \to \mc{COT}_{(\C/1.5)}$ was defined in Section \ref{Chapter:COTobstructionthy}.  We will show that:

\begin{theorem}\label{Thm:zeroAC2_goes_to_zeroL4QG}
A triple in $\mathcal{AC}_2$ which is second order algebraically concordant to the triple of the unknot has zero \COT metabelian obstruction; i.e. it maps to $U$ in $\mc{COT}_{(\C/1.5)}$.  See Theorem \ref{Thm:betterstatementzeroAC2_to_zeroL4QG} for a more general and precise statement.
\end{theorem}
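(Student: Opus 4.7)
The plan is to prove Theorem \ref{Thm:betterstatementzeroAC2_to_zeroL4QG} (the precise form of Theorem \ref{Thm:zeroAC2_goes_to_zeroL4QG}) by replicating the proof of the Cochran--Orr--Teichner result (Theorem \ref{Lemma:COT4.4}) at the level of symmetric Poincar\'e triads, and then invoking the localisation exact sequence in $L$-theory. First, I would unpack the hypothesis that $(H,\Y,\xi) \sim (\{0\},\Y^U,\Id_{\{0\}})$ in $\ac2$: by Definition \ref{defn:2ndorderconcordant} this provides a type $K$ module $H'$ (with map $j_\flat \colon H \to H'$), a finitely generated projective $\zhd$-chain complex $(V,\Theta)$ and a $4$-dimensional symmetric Poincar\'e triad whose ``bottom'' edge $Y \oplus Y^U \to V$ is a consistent $\Z$-homology equivalence. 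Capping off the cylinder edge using the algebraic gluing of Definition \ref{Defn:unionconstruction} with the chain complex of $S^1 \times D^2$ along the longitude turns this into a symmetric Poincar\'e pair $j \colon N \to V'$, where $N$ is the algebraic analogue of the zero-surgery appearing in the statement of the theorem, and $V'$ is a $\Z$-homology $S^1 \times D^2$. This is the chain-level analogue of the exterior of a slice disc, and is the input with which I would run the proof of Theorem \ref{Lemma:COT4.4}.

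Next, I would construct the rational Blanchfield pairing $\Bl \colon H \times H \to \Q(t)/\Q[t,t^{-1}]$ by applying Proposition \ref{Prop:chainlevelBlanchfield} to $\Q[\Z] \otimes_{\zh} N$ and transporting along the consistency isomorphism $\xi$. I would then define
\[
P := \xi^{-1}\bigl(\ker(j_* \colon H_1(\Q[\Z]\otimes_{\zh} N) \to H_1(\Q[\Z]\otimes_{\zhd} V'))\bigr) \subseteq H.
\]
To show $P = P^{\perp}$, I would build the algebraic relative linking pairing
\[
\beta_{\mathrm{rel}} \colon \overline{TH_2(\Q[\Z]\otimes V', \Q[\Z]\otimes N)} \times H_1(\Q[\Z]\otimes V') \to \Q(t)/\Q[\Z]
\]
purely from the symmetric structure $\Theta$ (giving Poincar\'e--Lefschetz duality), a Bockstein for the coefficient sequence $\Q[\Z] \hookrightarrow \Q(t) \twoheadrightarrow \Q(t)/\Q[\Z]$, and universal coefficients. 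Non-singularity follows exactly as in the geometric case: the Bockstein is an isomorphism because the consistency condition plus $\Q$-homology equivalence gives that $H_*(V';\Q(t)) = 0$, and universal coefficients applies because $\Q(t)/\Q[\Z]$ is divisible hence injective over the PID $\Q[\Z]$. The commutative square of duality isomorphisms from the proof of Theorem \ref{Lemma:COT4.4} now commutes purely by naturality of Poincar\'e--Lefschetz duality with respect to chain maps of symmetric pairs, and the diagram chase giving $P \subseteq P^{\perp}$ and $P^{\perp} \subseteq P$ goes through verbatim.

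Finally, for each $p \in P$, I would define the representation $\rho \colon \Z \ltimes H \to \G$ by $(n,h) \mapsto (n,\Bl(p,h))$, exactly as in Section \ref{Chapter:COTobstructionthy}. The crucial property is that because $j_\flat(p) \in H'$ lies in the kernel of the composite $H' \to H_1(\Q[\Z]\otimes V')$ induced by $j_*$ and $\xi'$, the map $\rho$ extends over $\Z[\Z \ltimes H']$ to a ring map $\tilde{\rho} \colon \Z[\Z \ltimes H'] \to \Q\G$; this is the algebraic analogue of \cite[Theorem~3.6]{COT} and is forced by the commutative square in the definition of algebraic concordance. Tensoring $j \colon N \to V'$ with $\Q\G$ along $\rho$ and $\tilde{\rho}$ gives a $4$-dimensional symmetric Poincar\'e pair
\[
(\Q\G \otimes_{\zh} N, \theta)_p \to (\Q\G \otimes_{\zhd} V', \delta\theta)_p.
\]
Because a meridian maps non-trivially under $\rho$, the argument of \cite[Proposition~2.11]{COT} shows $\K \otimes_{\Q\G} N_p \simeq 0$, so $(\Q\G \otimes N,\theta)_p$ represents an element of $L^4(\Q\G,\Q\G-\{0\})$. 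The pair to $(\Q\G \otimes V', \delta\theta)_p$ then trivialises this element in the sense of Definition \ref{Defn:Lgroups}, so $(\Q\G \otimes N, \theta)_p = 0 \in L^4(\Q\G,\Q\G -\{0\})$.

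The hard part will be constructing the algebraic relative linking pairing $\beta_{\mathrm{rel}}$ entirely from the symmetric structure on the algebraic concordance and verifying that the duality square of Theorem \ref{Lemma:COT4.4} commutes (up to the correct sign) on the chain level. Geometrically one invokes Poincar\'e--Lefschetz duality for a manifold with boundary, but here one must instead extract these isomorphisms from the components $(\Theta_0,\theta_0)$ of the symmetric Poincar\'e pair and track the Bockstein connecting map through a careful cochain-level calculation. A related subtlety is ensuring that the submodule $P$ so constructed really is a metaboliser (i.e.\ has the maximal size required by the definition of the $\mathcal{COT}$ pointed set) and not merely self-annihilating; this requires using both inclusions $P \subseteq P^{\perp}$ and $P^{\perp} \subseteq P$, and in the latter direction one needs the injectivity of $\Q(t)/\Q[\Z]$ and the surjectivity of the dualised map $i^{\wedge}$, both of which I expect to follow from the PID hypothesis on $\Q[\Z]$.
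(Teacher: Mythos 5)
Your overall strategy correctly mirrors the paper's: you replicate the proof of Theorem \ref{Lemma:COT4.4} at the chain level to obtain the metaboliser $P = P^\bot$, extend the representation over $\Z[\Z \ltimes H']$ via the consistency square, and tensor the algebraic concordance with $\Q\G$ to obtain a $\K$-contractible complex $(\Q\G \otimes_{\zh} N)_p$ together with a 4-dimensional symmetric Poincar\'e pair over $\Q\G$. However, there is a genuine gap at the final step.

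You conclude that the pair $(\Q\G \otimes N,\theta)_p \to (\Q\G \otimes V',\delta\theta)_p$ ``trivialises this element in the sense of Definition \ref{Defn:Lgroups}, so $(\Q\G \otimes N,\theta)_p = 0 \in L^4(\Q\G,\Q\G - \{0\})$.'' But this is not what you have shown, and it is not what Definition \ref{defn:COTobstructionset_2} requires. Having a 4-dimensional symmetric Poincar\'e pair over $\Q\G$ only shows that $(\Q\G \otimes N,\theta)_p$ lies in $\ker(L^4(\Q\G,\Q\G-\{0\}) \to L^3(\Q\G))$; vanishing in $L^4(\Q\G,\Q\G-\{0\})$ itself requires the cobordism to become contractible after tensoring with $\K$. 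More to the point, the equivalence relation defining $\mc{COT}_{(\C/1.5)}$ in Definition \ref{defn:COTobstructionset_2} explicitly demands that the algebraic Thom complex of the given cobordism satisfy
\[
[(\K \otimes_{\Q\G} \mathscr{C}(j_p \oplus j_q^\dag),\,\Id \otimes \delta\theta_{(p,q)}/(\theta_p \oplus -\theta_q^\dag))] = [0] \in L^4_S(\K),
\]
and this is exactly where the obstruction theory lives: one must rule out a nonzero image in $L^0_S(\K)/\im(L^0(\Q\G))$. Your proposal stops before addressing this. The paper's proof of Theorem \ref{Thm:betterstatementzeroAC2_to_zeroL4QG} closes this gap by applying Proposition \ref{Prop:COT2.10chainversion} to the relative chain complex $\mathscr{C}(i)$: since the $\Z$-homology equivalence condition in Definition \ref{defn:2ndorderconcordant} gives $H_*(\Q \otimes_{\zhd} \mathscr{C}(i)) \cong 0$, the PTFA result forces $H_*(\K \otimes_{\zhd} \mathscr{C}(i)) \cong 0$, and combining with $H_*(\K \otimes N^\dag) \cong 0$ shows the algebraic Thom complex $\ol{V}_{(p,q)}$ has vanishing $\K$-homology. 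Hence its middle-dimensional intersection form is trivially zero in $L^0_S(\K)$, which is the nontrivial final ingredient. Your proof should incorporate precisely this acyclicity argument before claiming the image in $\mc{COT}_{(\C/1.5)}$ is $U$.

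A secondary point: to show the map $\ac2 \to \mc{COT}_{(\C/1.5)}$ is well-defined as a morphism of pointed sets, one should treat the general case of two equivalent triples $(H,\Y,\xi) \sim (H^\dag,\Y^\dag,\xi^\dag)$ rather than only the case $\Y^\dag = \Y^U$; this is what Theorem \ref{Thm:betterstatementzeroAC2_to_zeroL4QG} actually asserts and proves. Your argument can be adapted, but you would need to form $N \oplus N^\dag$, work with the metaboliser $P \subseteq H \oplus H^\dag$ for $\Bl \oplus -\Bl^\dag$, and verify the ensuing conditions on the pair $(j_p \oplus j_q^\dag)$ for all $(p,q) \in P$.
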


To define the map $\ac2 \to \mc{COT}_{(\C/1.5)}$, we begin by taking an element $(H,\Y,\xi) \in \ac2$, and forming the algebraic equivalent of the zero surgery $M_K$.  We construct the symmetric \emph{Poincar\'{e}} complex:
\[(N,\theta) := ((Y \oplus (\zh \otimes_{\Z[\Z]} Y^U)) \cup_{E \oplus (\zh \otimes_{\Z[\Z]} E^U)} E, (\Phi \oplus 0) \cup_{\phi \oplus -\phi^U} 0).\]
In the case that $\Y = \Y^K$ is the fundamental symmetric Poincar\'{e} triad of a knot $K$, we have that:
\[N_K \simeq C_*(M_K;\zh).\]
By defining representations $\Z \ltimes H \to \G$, we will obtain elements of $L^4(\Q\G,\Q\G-\{0\})$.  Recall that $L^4(\Q\G,\Q\G-\{0\})$ is the group of 3-dimensional symmetric Poincar\'{e} chain complexes over $\Q\G$ which become contractible when we tensor over the Ore localisation (Definition \ref{Defn:OreLocalisation}) $\K$ of $\Q\G$ with respect to $\Q\G - \{0\}$.  The group $L^4(\Q\G,\Q\G-\{0\})$ fits into the localisation exact sequence:
\[L^4(\Q\G) \to L^4_S(\K) \to L^4(\Q\G,\Q\G-\{0\}) \to L^3(\Q\G).\]
In geometry, a $(1)$-solvable knot $K$ has a zero-surgery $M_K$ which bounds over $\Q\G$ for a subset of the possible representations, by \cite[Theorems~3.6~and~4.4]{COT}, so that:
\[N_K \in \ker(L^4(\Q\G,\Q\G-\{0\}) \to L^3(\Q\G)).\]
In such circumstances, $N_K = 0 \in L^4(\Q\G,\Q\G-\{0\})$ if and only if a lift into $L^4_S(\K)$ is zero in
\[L^4_S(\K)/\im(L^4(\Q\G)) \cong L^0_S(\K)/\im(L^0(\Q\G)).\]
In turn the reduced $L^{(2)}$-signature (Section \ref{chapter:COTsurvey}.\ref{Chapter:L2signature}) obstructs the vanishing of an element of $L^0_S(\K)/\im(L^0(\Q\G))$.  We will describe how to define the signatures purely in terms of the algebraic objects in $\ac2$.  By making use of a result of Higson-Kasparov \cite{HigsonKasparov} which applies to PTFA groups, we do not need to appeal to geometric 4-manifolds in order to define Von Neumann $\rho$-invariants.

The first step is to define the representation
\[\rho \colon \Z \ltimes H \to \G,\]
which sends $\Z \ltimes H$, for varying $H$, to a fixed group, the so-called universally $(1)$-solvable group of \COTN:
\[\G := \Z \ltimes \frac{\Q(t)}{\Q[t,t^{-1}]}.\]
To define a representation, just as in Chapter \ref{chapter:COTsurvey}, choose a $p \in H$, and define:
\[\rho \colon (n,h) \mapsto (n,\Bl(p,h)) \in \G,\]
where $\Bl$ is the Blanchfield pairing,
\[\Bl \colon H \times H \to \frac{\Q(t)}{\Q[t,t^{-1}]},\]
which we will define below.  The key point is that the chain complex with symmetric structure:
\[(\Z[\Z] \otimes_{\zh} N, \Id \otimes \theta),\]
contains the information necessary to extract the Blanchfield pairing.  Note that
\[H_1(\Z[\Z] \otimes_{\zh} Y) \xrightarrow{\simeq} H_1(\Z[\Z] \otimes_{\zh} N).\]
This isomorphism, which is the algebraic equivalent of $H_1(X;\Z[\Z]) \cong H_1(M_K;\Z[\Z])$, arises in the Mayer-Vietoris sequence, since
\[H_1(\Z[\Z] \otimes_{\zh}(\zh \otimes_{\Z[\Z]} Y^U)) \cong H_1(Y^U) \cong 0.\]
We compose $\xi$ with the rationalisation map, to get:
\[\xi \colon H \xrightarrow{\simeq} H_1(\Z[\Z] \otimes_{\zh} N) \rightarrowtail H_1(\Q[\Z] \otimes_{\zh} N).\]
The second map is injective by Theorem \ref{Thm:Levinemodule} (b): $H$ is $\Z$-torsion free.  In this chapter we abuse notation and also refer to this composition of $\xi$ with the rationalisation map as $\xi$.
\begin{proposition}\label{Prop:chainlevelBlanchfield}
Given $[x],[y] \in H_1(\Q[\Z] \otimes N)$, the rational Blanchfield pairing of $[x]$ and $[y]$ is given by:
\[\Bl([x],[y]) = \frac{1}{s} \ol{z(x)}\]
where: \[x,y \in (\Q[\Z] \otimes_{\zh} N)_1,\] \[z \in (\Q[\Z] \otimes_{\zh} N)^1;\] \[\partial^*(z) = s\theta'_0 (y) \text{ for some } s \in \Q[\Z]- \{0\},\] and \[\theta'_0 \colon (\Q[\Z] \otimes_{\zh} N)_1 \to (\Q[\Z] \otimes_{\zh} N)^2\] is part of a chain homotopy inverse \[\theta_0' \colon (\Q[\Z] \otimes_{\zh} N)_r \to (\Q[\Z] \otimes_{\zh} N)^{3-r},\] so that \[\theta_0 \circ \theta'_0 \simeq \Id,\, \theta'_0 \circ \theta_0 \simeq \Id.\]  The Blanchfield pairing is non-singular, sesquilinear and Hermitian.
\end{proposition}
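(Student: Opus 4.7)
The plan is to realise the formula as the chain-level implementation of the composition of three standard isomorphisms defining the Blanchfield pairing, namely Poincar\'{e} duality (provided by $\theta_0$), inverse Bockstein (associated with the coefficient sequence $0 \to \Q[\Z] \to \Q(t) \to \Q(t)/\Q[\Z] \to 0$), and Kronecker evaluation (universal coefficients). I will first establish the torsion properties which make each of these isomorphisms available at the chain level, then define the pairing via the formula and verify well-definedness, and finally deduce the formal properties.

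First I verify the required torsion. Since $H$ is of type $K$ by Theorem \ref{Thm:Levinemodule}(a), and a Mayer-Vietoris argument applied to the union construction defining $N$ shows that $\xi\colon H \xrightarrow{\simeq} H_1(\Z[\Z]\otimes_{\zh}Y) \xrightarrow{\simeq} H_1(\Z[\Z]\otimes_{\zh} N)$, after tensoring with $\Q$ the module $H_1(\Q[\Z]\otimes_{\zh} N)$ is $\Q[\Z]$-torsion and $1-t$ acts on it as an automorphism. Since $(N,\theta)$ is Poincar\'{e}, $\theta_0$ is a chain equivalence with homotopy inverse $\theta'_0$, and so $H^2(\Q[\Z]\otimes_{\zh} N) \cong H_1(\Q[\Z]\otimes_{\zh} N)$ is also $\Q[\Z]$-torsion. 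By flatness of $\Q(t)$ over $\Q[\Z]$ we deduce $H_i(\Q(t)\otimes_{\zh} N) = 0$ for $i=1,2$, and the connecting map
\[\beta\colon H^1((\Q(t)/\Q[\Z])\otimes_{\zh} N) \xrightarrow{\simeq} H^2(\Q[\Z]\otimes_{\zh} N)\]
of the long exact cohomology sequence is an isomorphism.

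Next I define the pairing. Given a cycle $y$ representing $[y]\in H_1(\Q[\Z]\otimes N)$, the cohomology class $[\theta'_0(y)]\in H^2(\Q[\Z]\otimes N)$ lies in the image of $\beta$, so there exist $s \in \Q[\Z]-\{0\}$ and $z \in (\Q[\Z]\otimes N)^1$ with $\partial^* z = s\,\theta'_0(y)$; the cochain $z/s$ then represents the preimage $\beta^{-1}([\theta'_0(y)]) \in H^1((\Q(t)/\Q[\Z])\otimes N)$. Since $\Q(t)/\Q[\Z]$ is divisible and hence injective over the PID $\Q[\Z]$, universal coefficients gives an isomorphism $H^1((\Q(t)/\Q[\Z])\otimes N) \xrightarrow{\simeq} \Hom_{\Q[\Z]}(H_1(\Q[\Z]\otimes N),\Q(t)/\Q[\Z])$ sending $[z/s]$ to the evaluation map $[x]\mapsto \ol{z(x)}/s$, where the involution converts the natural right action on $N^1$ into a left action. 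This is the asserted formula. Well-definedness: any two choices $(s,z)$ and $(s',z')$ satisfying $\partial^*z = s\theta'_0(y)$, $\partial^*z' = s'\theta'_0(y)$ differ in that $s'z - sz'$ is a cocycle, and the class of $z/s$ in $H^1((\Q(t)/\Q[\Z])\otimes N)$ is unchanged; likewise the answer is independent of the cycle representative of $[y]$, since if $y = \partial b$ then $\theta'_0(y) = \partial^*(\theta'_0(b))$ and one can take $s=1$, $z = \theta'_0(b)$, whence $z(x) = \theta'_0(b)(x) = \ol{\langle\theta_0\cdot,\cdot\rangle}$-type evaluation of a boundary against a cycle lies in $\Q[\Z]$.

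Finally I verify the formal properties. Sesquilinearity in each variable is immediate from linearity of $\theta'_0$, of $\partial^*$ with respect to $s$ and $z$, and of the evaluation $z(x)$, combined with the involution. Non-singularity is the composition of three isomorphisms: $\theta_0$ (Poincar\'{e} duality), $\beta$ (Bockstein), and the universal coefficient map. The Hermitian property $\Bl([x],[y]) = \ol{\Bl([y],[x])}$ will be the main technical obstacle, since the naive argument only gives equality up to the failure of $\theta_0$ to be strictly symmetric on the chain level; to handle this I will use the higher chain homotopy $\theta_1 \colon N^{3-r+1} \to N_r$, which satisfies $d_N\theta_1 + (-1)^r\theta_1\delta + (\theta_0 - T\theta_0) = 0$ by Definition \ref{Defn:Qgroups}, to produce an explicit cochain witnessing that the two presentations of the pairing differ by an element of $\Q[\Z]$; the sign conventions from Definition \ref{Defn:Qgroups} must be tracked carefully, but since the constructions are entirely formal once the isomorphisms of paragraph two are in place, the argument parallels the classical case verbatim at the chain level.
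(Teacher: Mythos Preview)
Your proposal is correct and follows essentially the same strategy as the paper: both identify the formula as the chain-level implementation of the composition Poincar\'{e} duality $\circ$ Bockstein${}^{-1}$ $\circ$ universal coefficients, and both invoke the higher homotopy $\theta_1$ to handle the Hermitian property. The only notable difference is that the paper routes the computation through an auxiliary cohomological linking pairing $\wt{\Bl}$ on $TH^2(C)\times TH^2(C)$, defined by $\wt{\Bl}(u,v) = \frac{1}{r}\theta_0(u)(w)$ with $\partial^*w = rv$, and then sets $\Bl(x,y) := \wt{\Bl}(\theta'_0(x),\theta'_0(y))$; it uses the chain homotopy $k\colon \theta_0\theta'_0 \simeq \Id$ to reduce this to the stated formula, and then carries out the well-definedness and Hermitian verifications by fully explicit manipulations with $s,z,r,w$ (in particular exploiting that $[x]$ is also torsion to clear denominators). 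Your argument is more conceptual, deducing well-definedness from the fact that $\beta$ is an isomorphism rather than by direct computation; this is cleaner, though the paper's explicit calculations have the merit of making the role of $\theta_1$ in the Hermitian step completely transparent.
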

\begin{proof}
For this proof, write $(C,\theta) := (\Q[\Z] \otimes_{\zh} N, \Id \otimes \theta)$. We give some detail when checking the properties of the algebraically defined linking form in this proof, since as far as the author is aware these details, which are admittedly fairly straight--forward, do not appear in the literature.  The complex $(C,\theta)$ is a symmetric Poincar\'{e} complex, which implies that $\theta_0$ is a chain equivalence.  Therefore there exists a chain homotopy inverse $\theta'_0$.  Inspection of the sequence of isomorphisms which defined the Blanchfield form in Definition \ref{Defn:Blanchfieldform} shows that the formula given in Proposition \ref{Prop:chainlevelBlanchfield} is the corresponding chain level calculation.  The isomorphisms, given by Poincar\'{e} duality, a Bockstein, and universal coefficients, are defined algebraically: the only one which was not a chain complex construction was taking the Poincar\'{e} dual, and this became an algebraically defined chain complex map with the use of the symmetric structure.  Therefore the pairing is non-singular.  This also follows by algebraic surgery below the middle dimension, since $C$ is a Poincar\'{e} complex and $\Q[\Z]$ is a principal ideal domain and therefore a Dedekind domain: see \cite[Section~4.2]{Ranicki2}.  We can define a linking pairing \[\wt{\Bl} \colon TH^2(C) \times TH^2(C) \to \Q(t)/\Q[t,t^{-1}],\] as on \cite[page~185]{Ranicki2}, as follows.  For torsion $u,v \in H^2(C)$, we define:
\[\wt{\Bl}(u,v) := \frac{1}{r} \theta_0(u)(w),\]
where $w \in C^1$ is such that $\partial^*(w) = rv$ for some $r \in \Q[\Z] -\{0\}$.  This uses the identification of a module with its double dual as in Definition \ref{Defn:signsoontensor}:
\[C_* \xrightarrow{\simeq} C^{**};\; x \mapsto (f \mapsto \ol{f(x)}).\]
We can show that this definition corresponds to our definition of the linking form.  We can then define:
\[\Bl \colon TH_1(C) \times TH_1(C) \to \Q(t)/\Q[t,t^{-1}]\]
by
\[\Bl(x,y) := \wt{\Bl}(\theta'_0(x),\theta'_0(y)).\]
This means that:
\[\Bl(x,y) = \frac{1}{s} (\theta_0(\theta'_0(x)))(z),\]
where $z \in C^1, s \in \Q[\Z] -\{0\}$ are such that $\partial^*(z) = s \theta'_0(y)$.  Let $k: C_i \to C_{i+1}$ be the chain homotopy which shows that $\theta_0 \circ \theta'_0 \simeq \Id.$  Then:
\begin{eqnarray*}\Bl(x,y) &=& \frac{1}{s} (\Id +k\partial + \partial k)(x) (z)\\ &=& \frac{1}{s}(x + \partial k(x))(z) \\ &=& \frac{1}{s} \ol{z(x)} + \frac{1}{s} \ol{(\partial^*(z))(k(x))}\\ & = & \frac{1}{s}\ol{z(x)} + \frac{1}{s}\ol{s\theta'_0(y)(k(x))} \\ & = & \frac{1}{s}\ol{z(x)} + \frac{1}{s}\ol{\theta'_0(y)(k(x)) \ol{s}} \\ & = & \frac{1}{s}\ol{z(x)} + \frac{1}{s}s\ol{\theta'_0(y)(k(x))} \\ & = & \frac{1}{s}\ol{z(x)} + \ol{\theta'_0(y)(k(x))},\end{eqnarray*}
which means that:
\[\Bl(x,y)= \frac{1}{s}\ol{z(x)} \in \Q(t)/\Q[t,t^{-1}],\]
since $\theta'_0(y)(k(x)) \in \Q[t,t^{-1}].$  To show that this definition is independent of the choice of $s$ and $z$, suppose that also there is $s' \in \Q[\Z] - \{0\}, z' \in C^1$ such that:
\[\partial^*(z') = s'\theta_0'(y).\]
Since also $x$ is a torsion element of $H_1(C)$, there is a chain $w \in C_2$ and an $r \in \Q[t,t^{-1}]$ such that $\partial(w) = rx$.  Then:
\begin{eqnarray*} \frac{1}{s}\ol{z(x)} - \frac{1}{s'}\ol{z'(x)}
& = &  \left(\frac{1}{s} \ol{z(x)} - \frac{1}{s'} \ol{z'(x)}\right)\frac{\ol{r}}{\ol{r}} \\
& = & \left(\frac{1}{s} \ol{z(x)}\,\ol{t} - \frac{1}{s'} \ol{z'(x)}\,\ol{r}\right)\frac{1}{\ol{r}} \\
& = & \left(\frac{1}{s} \ol{r(z(x))} - \frac{1}{s'} \ol{r(z'(x))}\right)\frac{1}{\ol{r}} \\
& = & \left(\frac{1}{s} \ol{z(rx)} - \frac{1}{s'} \ol{z'(rx)}\right)\frac{1}{\ol{r}} \\
& = & \left(\frac{1}{s} \ol{z(\partial w)} - \frac{1}{s'} \ol{z'(\partial w)}\right)\frac{1}{\ol{r}} \\
& = & \left(\frac{1}{s} \ol{\partial^*(z)(w)} - \frac{1}{s'} \ol{\partial^*(z')(w)}\right)\frac{1}{\ol{r}} \\
& = & \left(\frac{1}{s} \ol{(s\theta'_0(y))(w)} - \frac{1}{s'} \ol{(s'\theta'_0(y))(w)}\right)\frac{1}{\ol{r}} \\
& = & \left(\frac{1}{s} \ol{(\theta'_0(y))(w)\ol{s}} - \frac{1}{s'} \ol{(\theta'_0(y))(w)\ol{s'}}\right)\frac{1}{\ol{r}} \\
& = & \left(\frac{1}{s} \ol{\ol{s}}\ol{(\theta'_0(y))(w)} - \frac{1}{s'} \ol{\ol{s'}}\ol{(\theta'_0(y))(w)}\right)\frac{1}{\ol{r}} \\
& = & \left(\ol{\theta'_0(y)(w)} - \ol{\theta'_0(y)(w)}\right) \frac{1}{\ol{r}} = 0.\end{eqnarray*}
Furthermore, for $p,q \in \Q[t,t^{-1}]$:
\[\Bl(px,qy) = \frac{1}{s}\ol{(qz)(px)} = \frac{1}{s} \ol{p z(x) \ol{q}} = \frac{1}{s} q \ol{z(x)} \ol{p},\]
so that $\Bl$ is sesquilinear.  To show that $\Bl$ is Hermitian, we will show that $\wt{\Bl}$ is Hermitian.  Recall that for $x,y \in C^2$, \[\wt{\Bl}(x,y) = \frac{1}{s}\ol{z(\theta_0(x))},\]  where $s \in \Q[t,t^{-1}]- \{0\},z \in C^1$ are such that $\partial^*z = sy$.

First, we claim that we can calculate $\wt{\Bl}$ using $T\theta_0 = \theta_0^*$ instead of $\theta_0$.  That is:
\begin{eqnarray*}\frac{1}{s} \ol{z (\theta_0(x))} - \frac{1}{s} \ol{z (\theta_0^*(x))}
& = & \frac{1}{s} \ol{z (\theta_0 - \theta_0^*)(x)} \\
& = & \frac{1}{s} \ol{z((\partial \theta_1 - \theta_1 \partial^*)(x))}\\
& = & \frac{1}{s} \ol{z(\partial \theta_1(x))}\\
& = & \frac{1}{s} \ol{(\partial^*(z))(\theta_1(x))}\\
& = & \frac{1}{s} \ol{(s y)(\theta_1(x))}\\
& = & \frac{1}{s} \ol{y(\theta_1(x))\ol{s}}\\
& = & \frac{1}{s} \ol{\ol{s}}\ol{y(\theta_1(x))}\\
& = & \ol{y(\theta_1(x))}.
\end{eqnarray*}
Since $\ol{y(\theta_1(x))} \in \Q[t,t^{-1}]$, this is zero in $\Q(t)/\Q[t,t^{-1}]$ as claimed.  Now, suppose we also have an $r \in \Q[t,t^{-1}] - \{0\}, w \in C^1$, such that $\partial w = rx$.  Then:
\begin{eqnarray*}\wt{\Bl}(x,y) & = & \frac{1}{s}\,\ol{z(\theta_0^*(x))} \\
& = & \frac{1}{s}\,\ol{z(\theta_0^*(x))}\,\ol{r}\, \frac{1}{\ol{r}} \\
& = & \frac{1}{s}\,\ol{r z(\theta_0^*(x))}\, \frac{1}{\ol{r}} \\
& = & \frac{1}{s}\,\ol{z(\theta_0^*(rx))}\, \frac{1}{\ol{r}} \\
& = & \frac{1}{s}\,\ol{z(\theta_0^*(\partial^* w))}\, \frac{1}{\ol{r}} \\
& = & \frac{1}{s}\,\ol{z(\partial\theta_0^*(w))}\, \frac{1}{\ol{r}} \\
& = & \frac{1}{s}\,\ol{\partial^*z(\theta_0^*(w))}\, \frac{1}{\ol{r}} \\
& = & \frac{1}{s}\,\ol{(s y)(\theta_0^*(w))}\, \frac{1}{\ol{r}} \\
& = & \frac{1}{s}\,\ol{(y)(\theta_0^*(w))\ol{s}}\, \frac{1}{\ol{r}} \\
& = & \frac{1}{s}\,\ol{\ol{s}}\,\ol{y(\theta_0^*(w))}\, \frac{1}{\ol{r}} \\
& = & \ol{y(\theta_0^*(w))}\, \frac{1}{\ol{r}} \\
& = & \theta_0^*(w)(y)\, \frac{1}{\ol{r}} \\
& = & w(\theta_0(y))\, \frac{1}{\ol{r}} \\
& = & \ol{\frac{1}{r}\,\ol{w(\theta_0(y))}} = \ol{\wt{\Bl}(y,x)},
\end{eqnarray*}
which shows that $\wt{Bl}$ and therefore $\Bl$ is Hermitian.  This completes the proof of Proposition \ref{Prop:chainlevelBlanchfield}.
\end{proof}

\begin{definition}
We define $\Bl \colon H \times H \to \Q(t)/\Q[t,t^{-1}]$ by:
\[\Bl(p,h) := \Bl(\xi(p),\xi(h)),\]
recalling that we also use $\xi$ to denote the map:
\[\xi \colon H \xrightarrow{\simeq} H_1(\Z[\Z] \otimes_{\zh} N) \rightarrowtail H_1(\Q[\Z] \otimes_{\zh} N).\]
\qed \end{definition}

\begin{definition}
A \emph{Poly--Torsion--Free--Abelian}, or PTFA, group $\G$ is a group which admits a finite sequence of normal subgroups $$\{1\} = \G_0 \lhd \G_1 \lhd ... \lhd \G_k = \G$$ such that the successive quotients $\G_{i+1}/\G_i$ are torsion-free abelian for each $i \geq 0$.
\qed\end{definition}
\begin{proposition}\label{Prop:NinL4QG}
The chain complex:
\[(\Q\G \otimes_{\zh} N, \Id \otimes \theta) \]
defines an element of $L^4(\Q\G,\Q\Gamma - \{0\})$.  That is, $\K \otimes_{\Q\G} \Q\G \otimes_{\zh} N$ is contractible.
\end{proposition}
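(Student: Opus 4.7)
The plan is to reduce the statement to (an algebraic version of) \cite[Proposition~2.11]{COT}, which asserts that if $M$ is a $\Q$-homology circle whose $H_1$-generator maps under some $\Q\G$-representation to an element of infinite order in a PTFA group $\G$, then $H_*(\K \otimes M)=0$. The key observations that bridge the algebraic setting to that statement are: (a) $N$ is a $3$-dimensional symmetric Poincar\'e complex over $\zh$ which is a $\Z$-homology circle, and (b) the representation $\rho$ sends the preferred meridian $g_1\in \Z\ltimes H$ to an element of infinite order in $\G$.

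First, I would verify (a). By construction $N = (Y \oplus (\zh \otimes Y^U)) \cup_{E \oplus (\zh \otimes E^U)} E$; the union construction for symmetric complexes (Definition \ref{Defn:unionconstruction}) produces a $3$-dimensional symmetric \emph{Poincar\'e} complex since we are gluing two symmetric Poincar\'e pairs along a common boundary. To compute its $\Z$-homology, one uses the Mayer--Vietoris sequence associated with this pushout: both $\Z \otimes_{\zh} Y$ and $\Z \otimes_{\zh} (\zh \otimes Y^U)$ have the homology of $S^1$ (the first by definition of $\P$, the second because $Y^U$ is $D^2\times S^1$), and the inclusion of $E \simeq S^1 \times S^1$ identifies the two circle generators (one being the meridian in $Y$, the other bounding in the solid torus $Y^U$, with the longitude becoming the generator of $H_1$). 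A short Mayer--Vietoris calculation then gives $H_*(\Z \otimes_{\zh} N) \cong H_*(S^1;\Z)$, with $H_1$ generated by the image of $g_1$.

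Second, I would verify (b). By definition $\rho(n,h) = (n, \Bl(p,h))$, so $\rho(g_1) = \rho(1,h_1) = (1, \Bl(p,h_1))$, whose image under the projection $\G \to \Z$ is $1$; hence $\rho(g_1)$ has infinite order in the torsion-free group $\G$. Consequently $\rho(g_1) - 1 \in \Q\G$ is a nonzero element, hence invertible in $\K$.

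Third, to conclude, I would invoke the following algebraic reformulation of \cite[Proposition~2.11]{COT}: if $P$ is a bounded chain complex of finitely generated projective $\Q\G$-modules with $H_*(\Q \otimes_{\Q\G} P) \cong H_*(S^1;\Q)$, and if some element $\gamma \in \G$ of infinite order maps to a generator of $H_1(\Q \otimes_{\Q\G} P)$, then $\K \otimes_{\Q\G} P$ is acyclic. (One proves this by regarding $\K$ as a $(\K, \Q\langle\gamma\rangle)$-bimodule; since $\G$ is PTFA and hence torsion-free, $\Q\G$ is a free right $\Q\langle\gamma\rangle$-module, so flatness of $\K$ over $\Q\G$ combined with the fact that $\K$ is flat over $\Q\langle\gamma\rangle \cong \Q[t,t^{\pm 1}]$ gives $H_*(\K \otimes_{\Q\G} P) \cong \K \otimes_{\Q[t,t^{\pm 1}]} H_*(\Q[t,t^{\pm 1}] \otimes P)$, and the latter vanishes because $t-1$ acts invertibly on $\K$ while $H_*(\Q[t,t^{\pm 1}] \otimes P)$ is a sum of copies of $\Q[t,t^{\pm 1}]/(t-1)$ up to extensions.) Applying this with $P := \Q\G \otimes_{\zh} N$ and $\gamma := \rho(g_1)$ gives exactly what we need.

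The main obstacle is the third step, namely supplying the algebraic analogue of COT's Proposition~2.11 purely at the chain-complex level without reference to a CW structure. The delicate point is correctly exploiting the factorisation of the coefficient change $\zh \to \Q\G \to \K$ through $\Q\langle\rho(g_1)\rangle \cong \Q[t,t^{\pm 1}]$; rather than a direct factorisation of ring maps, this is a flatness argument using the decomposition of $\Q\G$ into right $\Q\langle\rho(g_1)\rangle$-cosets. Alternatively, and perhaps more cleanly, one may argue dimension by dimension: by Poincar\'e duality over $\K$ it suffices to kill $H_0$ and $H_1$, and $H_0(\K \otimes N)$ is already zero because $f_-\colon D_- \to Y$ induces a $\Z$-homology isomorphism and $\K \otimes D_-$ is acyclic (the boundary $\rho(g_1)-1$ is invertible), so every $0$-cycle over $\K$ is a boundary; the same invertibility combined with the consistency isomorphism $\xi$ and the fact that $H$ is $\Q[\Z]$-torsion handles $H_1$.
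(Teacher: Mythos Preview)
Your overall strategy parallels the paper's proof: both reduce to showing $H_0$ and $H_1$ vanish over $\K$ (and then invoke Poincar\'e duality and universal coefficients for $H_2$ and $H_3$), and both use that $\rho(g_1)-1$ is a nonzero element of $\Q\G$, hence a unit in $\K$, to kill the circle complex. Steps (a) and (b) and the duality finish are all correct.

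The gap is in your third step. The flatness factorisation you sketch does not work: the claimed isomorphism $H_*(\K \otimes_{\Q\G} P) \cong \K \otimes_{\Q[t,t^{\pm 1}]} H_*(\Q[t,t^{\pm 1}] \otimes P)$ does not make sense, because there is no ring map $\Q\G \to \Q\langle\gamma\rangle$ allowing you to form $\Q[t,t^{\pm 1}] \otimes P$ from a $\Q\G$-module $P$. If instead you restrict scalars along $\Q\langle\gamma\rangle \hookrightarrow \Q\G$ and form $\K \otimes_{\Q\langle\gamma\rangle} P$, this is in general much larger than $\K \otimes_{\Q\G} P$ (already for $P=\Q\G$ the two differ), so the argument collapses. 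Your alternative at the end has the same underlying gap: the implication ``$f_-$ is a $\Z$-homology isomorphism and $\K\otimes D_-$ is acyclic, therefore $H_0(\K\otimes N)=0$'' is exactly the nontrivial step, since a $\Z$-homology equivalence need not be a $\K$-homology equivalence.

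The paper supplies this step via a clean black box, the chain-complex version of \cite[Proposition~2.10]{COT} (Proposition~\ref{Prop:COT2.10chainversion}): for a nonnegative finitely generated projective $\Q\G$-chain complex $C_*$ with $H_i(\Q \otimes_{\Q\G} C_*)=0$ for $i\le n$, one has $H_i(\K \otimes_{\Q\G} C_*)=0$ for $i\le n$. The paper applies this with $n=1$ to the mapping cone $\mathscr{C}\big(1\otimes f_-\colon C_*(S^1;\Q\G)\to \Q\G\otimes_{\zh} N\big)$, which is $\Q$-acyclic through degree $1$ since $f_-$ is a $\Q$-homology equivalence in those degrees. The long exact sequence of the pair, together with $H_*(S^1;\K)=0$, then gives $H_0(\K\otimes N)=H_1(\K\otimes N)=0$. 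This is the argument your alternative is reaching for; the missing ingredient is precisely Proposition~2.10, whose proof (via Strebel's work on PTFA groups) is where the special properties of $\G$ enter.
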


\begin{proof}
First note that $\G$ is a PTFA group (see \cite[Sections~2~and~3]{COT}, since
\[[\G,\G] = \frac{\Q(t)}{\Q[t,t^{-1}]},\]
which is abelian and means that \[\frac{\G}{[\G,\G]} \cong \Z.\]

The fact that $\G$ is PTFA means that, by \cite[Proposition 2.5]{COT}, the Ore localisation of $\Q\G$ with respect to non-zero elements $\Q\G -\{0\}$ exists.  We will need the following proposition.
\begin{proposition}\label{Prop:COT2.10chainversion} \cite[Proposition~2.10]{COT}
If $C_*$ is a nonnegative chain complex over $\Q\G$ for a PTFA group $\G$ which is finitely generated projective in dimensions $0 \leq i \leq n$ and such that $H_i(\Q \otimes_{\Q\G} C_*) \cong 0$ for $0 \leq i \leq n$, then $H_i(\K \otimes_{\Q\G} C_*) \cong 0$.
\end{proposition}
Note that the hypothesis that the chain complex is finitely generated free for Proposition \ref{Prop:COT2.10chainversion} can be relaxed to $C$ being a finitely generated projective module chain complex, since this still allows the lifting of the partial chain homotopies.  The rest of the proof of Proposition \ref{Prop:NinL4QG} follows closely that of \cite[Proposition~2.11]{COT}, but in terms of chain complexes.  The chain complex of the circle $C_*(S^1;\Q[\Z])$ is given by:
\[\Q[\Z] \xrightarrow{t-1} \Q[\Z].\]
Tensor with $\Q\G$ over $\Q[\Z]$ using the homomorphism $\rho \circ (f_-)_*$, where we have to define $(f_-)_* \colon \Z \to \Z \ltimes H$.  Recall that $f_-$ is a chain map in our symmetric Poincar\'{e} triad $\Y$ (Definition \ref{Defn:algebraicsetofchaincomplexes}), and so we define $(f_-)_*$ to be the corresponding homomorphism of groups: there is, as ever, a symbiosis between the group elements and the 1-chains of the complex.  The homomorphism $(f_-)_* \colon \Z \to \Z \ltimes H$ sends $t \mapsto (1,h_1)$, where $h_1$ is, as in Definition \ref{Defn:algebraicsetofchaincomplexes}, the element of $H$ which makes $f_-$ a chain map.  Thus, passing from $C_*(S^1;\Q[\Z])$ to $C_*(S^1;\Q\G)$, we obtain:
\[\Q\G \otimes_{\Q[\Z]} \Q[\Z] \cong \Q\G \xrightarrow{(\rho\circ (f_-)_*(t) - 1)} \Q\G \otimes_{\Q[\Z]} \Q[\Z] \cong \Q\G ,\]
The chain map \[1 \otimes f_- \colon C_*(S^1;\Q\G) = \Q\G \otimes_{\zh} D_- \to \Q\G \otimes_{\zh} Y \to \Q\G \otimes_{\zh} N,\] is 1-connected on rational homology.  Therefore, by the long exact sequence of a pair,
\[H_k(\Q \otimes_{\Q\G} \mathscr{C}(1 \otimes f_- \colon C_*(S^1;\Q\G) \to \Q\G \otimes_{\zh} N)) \cong 0\]
for $k = 0, 1$.
We apply Proposition \ref{Prop:COT2.10chainversion}, with $n=1$ and $C_* = \mathscr{C}(1 \otimes f_-)$, to show that:
\[H_k(\K \otimes_{\Q\G} \mathscr{C}(1 \otimes f_- \colon C_*(S^1;\Q\G) \to \Q\G \otimes_{\zh} N)) \cong 0\]
for $k = 0,1$.  This implies, again by the long exact sequence of a pair, that there is an isomorphism:
\[H_0(S^1;\K) \cong H_0(\K \otimes_{\zh} N)\]
and a surjection:
\[H_1(S^1;\K) \twoheadrightarrow H_1(\K \otimes_{\zh} N).\]
As in the proof of \cite[Proposition~2.11]{COT}, $t$ maps to a non-trivial element \[\rho\circ (f_-)_*(t) = \rho(1,h_1) = (1,\Bl(p,h_1)) \in \Gamma.\]    Therefore $\rho\circ (f_-)_*(t) -1 \neq 0 \in \Q \G$ is invertible in $\K$, so $H_*(S^1;\K) \cong 0$.  This then implies that \[H_k(\K \otimes_{\zh} N) \cong 0\]
for $k=0,1$.


The proof that $\Q\G \otimes_{\zh} N$ is acyclic over $\K$ is then finished by applying Poincar\'{e} duality and universal coefficients.  The latter theorem is straight-forward since $\K$ is a skew-field, so we see that:
\[H_k(\K \otimes_{\Q\G} (\Q\G \otimes_{\zh} N)) \cong 0\]
for $k=2,3$ as a consequence of the corresponding isomorphisms for $k=0,1$.  A projective module chain complex is contractible if and only if its homology modules vanish \cite[Proposition~3.14~(iv)]{Ranicki}, which completes the proof.
\end{proof}

\begin{remark}
We can always define, for any representation which maps $g_1$ to a non-trivial element of $\G$, a map \[\ac2 \to L^4(\Q\G,\Q\G - \{0\}).\]  However, we will only show that it has the desired property: namely that it maps $0 \in \ac2$ to $0 \in L^4(\Q\G,\Q\G - \{0\})$, in the case that $\xi(p) \in P$, where $p$ is in the definition of the representation $\rho \colon \Z \ltimes H \to \Gamma$, for at least one of the submodules $P \subseteq H_1(\Q[\Z] \otimes_{\zh} N)$ such that $P=P^{\bot}$: that is, $P$ is a metaboliser of the rational Blanchfield form:
\[\Bl \colon H_1(\Q[t,t^{-1}] \otimes_{\zh} N) \times H_1(\Q[t,t^{-1}] \otimes_{\zh} N) \to \frac{\Q(t)}{\Q[t,t^{-1}]}.\]
This complicated vanishing for the \COT obstruction theory  is encoded in the definition of $\mc{COT}_{(\C/1.5)}$: see Definition \ref{defn:COTobstructionset_2}.  We have a two stage definition of the metabelian \COT obstruction set, since we need the Blanchfield form to define the elements and the notion of vanishing in $\mc{COT}_{(\C/1.5)}$; whereas an element of the group $\ac2$ is defined in a single stage from the geometry, via a handle decomposition.  Both stages of the \COT obstruction can be extracted from the single stage element of $\ac2$.  First, we explain the map $\ac2 \to \mathcal{AC}_1$.
\end{remark}

\begin{definition}\label{Defn:AC1}
We recall the definition of the algebraic concordance group, which we denote $\mathcal{AC}_1$.  We give three equivalent formulations; for proofs of their equivalence, see \cite{Ranicki4}.
A \emph{Seifert Form} is a finitely generated free $\Z$-module $S$ with a $\Z$-module homomorphism:
\[V \colon S \to S^* = \Hom_{\Z}(S,\Z),\]
such that $V-V^*$ is an isomorphism.
We define the Witt group of equivalence classes of Seifert forms, with addition by direct sum and the inverse of $(S,V)$ given by $(S,-V)$.  We call an element $(S,V)$ \emph{metabolic} if there is a basis of $S$ with respect to which $V$ has the matrix:
\[\left(
    \begin{array}{cc}
      0 & A \\
      B & C \\
    \end{array}
  \right),
\]
for block matrices $A,B,C$ such that $C=C^T$ and $A-B^T$ is invertible.  We say that $(S,V)$ is equivalent to $(S',V')$ if $(S \oplus S',V \oplus -V')$ is metabolic. Levine \cite[Lemma~1]{Levine} proves that this is an equivalence relation.

A Blanchfield form is an Alexander $\Z[\Z]$-module $H$ (Theorem \ref{Thm:Levinemodule}) with a $\Z[\Z]$-module isomorphism:
\[\Bl \colon H \toiso H^{\wedge}:= \ol{\Hom_{\Z[\Z]}(H, \Q(\Z)/\Z[\Z])},\]
which satisfies $\Bl = \Bl^{\wedge}$.
We define the Witt group of equivalence classes of Blanchfield forms, with addition by direct sum and the inverse of $(H,\Bl)$ given by $(H,-\Bl)$. We call an element $(H,\Bl)$ metabolic if there exists a metaboliser $P \subseteq H$ such that $P = P^{\bot}$ with respect to $\Bl$.  We say that $(H,\Bl)$ is equivalent to $(H',\Bl')$ is $(H \oplus H',\Bl \oplus -\Bl')$ is metabolic.
See Lemma \ref{Lemma:Cancellation_Blanchfield} for the rational version of the proof that this transitive and is therefore an equivalence relation.  The integral version is harder, but follows from the proof (see \cite[Theorems~3.10~and~4.2]{Ranicki4}) of the fact that the Witt group of Seifert forms and the Witt group of Blanchfield forms are isomorphic.

As in \cite{Ranicki4}, both of these Witt groups can be expressed in terms of symmetric $L$-theory by inverting the element $1-t \in \Z[t,t^{-1}]$, as:
\[L^4(\Z[t,t^{-1},(1-t)^{-1}],\Lambda),\]
where $\Lambda := \{p \in \Z[t,t^{-1}] \,| \, p(1) = \pm 1\}$.  This is a group under the addition of chain complexes by direct sum, the inverse of an element $(N,\theta)$ is given by $(N,-\theta)$, and an element is zero if it is the boundary of a 4-dimensional symmetric Poincar\'{e} pair $j \colon N \to U$ over $\Z[t,t^{-1},(1-t)^{-1}]$ such that $U$ is contractible over $\Lambda^{-1}\Z[t,t^{-1},(1-t)^{-1}]$.
\qed\end{definition}

We only prove the rational version of the following lemma, since this was all we needed in Proposition \ref{prop:COTobstr_equiv_rln} to see that the equivalence relation on $\mc{COT}_{(\C/1.5)}$ was transitive.  In particular, in the proof of Proposition \ref{prop:COTobstr_equiv_rln}, we needed an explicit description of the new metaboliser, as provided by Lemma \ref{Lemma:Cancellation_Blanchfield}.  In the case of integral Blanchfield forms, the result follows from the corresponding result for integral Seifert forms.  The proof of Lemma \ref{Lemma:Cancellation_Blanchfield} relies on the fact that $\Q[\Z]$ is a principal ideal domain.  The Witt group of integral Blanchfield forms injects into the Witt group of rational Blanchfield forms (see Proposition \ref{Thm:zeroAC2_goesto_zeroAC1}), so working with rational coefficients is not a large restriction.

\begin{lemma}\label{Lemma:Cancellation_Blanchfield}
Let $(H,\Bl)$ and $(H',\Bl')$ be rational Blanchfield forms.  Suppose that \\$(H \oplus H',\Bl \oplus \Bl')$ is metabolic with metaboliser $P = P^\bot \subseteq H \oplus H'$, and that $(H',\Bl')$ is metabolic with metaboliser $Q = Q^\bot \subseteq H'$.  Then $(H,\Bl)$ is also metabolic, and a metaboliser is given by
\[R := \{h \in H \, | \, \exists \, q \in Q \text{ with } (h,q) \in P\} \subseteq H.\]
\end{lemma}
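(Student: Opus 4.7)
The plan is to prove $R = R^\bot$ with respect to $\Bl$, with the easy containment handled first and the reverse drawn out of a sublagrangian reduction / injectivity argument on $\Q(t)/\Q[t,t^{-1}]$.

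For $R \subseteq R^\bot$, I would argue directly. Given $r_1,r_2 \in R$, pick witnesses $q_1,q_2 \in Q$ with $(r_i,q_i) \in P$. Since $P = P^\bot$ with respect to $\Bl \oplus \Bl'$, we have $\Bl(r_1,r_2) + \Bl'(q_1,q_2) = 0$, and $\Bl'(q_1,q_2) = 0$ because $Q = Q^\bot$. Hence $\Bl(r_1,r_2) = 0$.

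The main work is $R^\bot \subseteq R$. Take $h \in R^\bot$; the goal is to manufacture a $q \in Q$ with $(h,q) \in P$. I would introduce the $\Q[t,t^{-1}]$-linear functional $\psi \colon P \to \Q(t)/\Q[t,t^{-1}]$ defined by $\psi(a,b) := \Bl(h,a)$. If $(a,b),(a',b') \in P$ agree modulo $H \oplus Q$, then $(a-a',b-b') \in P \cap (H \oplus Q)$, so $a - a' \in R$ and $\Bl(h, a-a') = 0$ by hypothesis on $h$. Hence $\psi$ descends to $\bar\psi \colon \bar P \to \Q(t)/\Q[t,t^{-1}]$, where $\bar P := P / (P \cap (H \oplus Q))$ injects into $H'/Q$ via the second projection. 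The key step is now to extend $\bar\psi$ to all of $H'/Q$. Since $\Q[t,t^{-1}]$ is a PID and $\Q(t)/\Q[t,t^{-1}]$ is divisible, hence injective as a $\Q[t,t^{-1}]$-module, the extension exists. Next, because $Q = Q^\bot$ with respect to the non-singular $\Bl'$, the form $\Bl'$ induces an isomorphism $Q \toiso \ol{\Hom_{\Q[t,t^{-1}]}(H'/Q,\, \Q(t)/\Q[t,t^{-1}])}$, so the extension is represented by some $q \in Q$. Unwinding definitions, for every $(a,b) \in P$ we obtain $\Bl(h,a) = \Bl'(q,b)$, which rewrites as $(\Bl \oplus \Bl')((h,-q),(a,b)) = 0$. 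Thus $(h,-q) \in P^\bot = P$ with $-q \in Q$, so $h \in R$ by definition.

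The main obstacle is the extension step together with the careful bookkeeping of the conjugate-linearity of $\Bl'$ when identifying $Q$ with the dual of $H'/Q$: one has to confirm that the injectivity of $\Q(t)/\Q[t,t^{-1}]$ can be invoked for $\bar\psi$ (so that one does indeed obtain a genuine class in the Pontryagin dual of $H'/Q$), and that the sesquilinearity conventions match so that the element produced is a true witness rather than its conjugate. Both points are standard consequences of $\Q[t,t^{-1}]$ being a PID and of the duality already used by Levine \cite{Levine2} in the definition of the Blanchfield pairing, but they are where the PID hypothesis is essential --- the integral analogue breaks down here, which is precisely why the lemma is stated rationally.

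Conceptually, the argument above is the sublagrangian reduction in Witt theory: $0 \oplus Q$ is isotropic inside $(H \oplus H', \Bl \oplus \Bl')$ with orthogonal complement $H \oplus Q$, the quotient $(H \oplus Q)/(0 \oplus Q) \cong H$ inherits the form $\Bl$, and the image of the metaboliser $P$ in this quotient is exactly $R$. Organising the write-up either as a direct functional-extension argument (as above) or as an appeal to this general Witt reduction yields the same proof; I would favour the explicit form for clarity and because it gives the metaboliser $R$ transparently in the shape needed by Proposition \ref{prop:COTobstr_equiv_rln}.
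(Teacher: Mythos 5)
Your proof is correct, and it takes a genuinely different route from the paper's. The paper phrases a metaboliser as a $1$-dimensional symmetric Poincar\'{e} pair (an algebraic null-cobordism of the dual form, following Ranicki), glues the two given null-cobordisms along $H'^{\wedge}$ to produce a new algebraic cobordism $D''$ of $H^{\wedge}$, and then invokes the ``standard argument'' from Theorems \ref{Lemma:COT4.4} and \ref{thm:COT4.4chainversion} --- exactness plus the vertical isomorphisms from Poincar\'{e} duality, Bockstein, and universal coefficients --- to extract $\overline{R} := \im(H^0(D'') \to H^0(C))$ as a metaboliser, and finally identifies $\overline{R}$ with the $R$ in the statement. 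You instead argue directly at the level of linking forms: the containment $R \subseteq R^{\bot}$ by unwinding definitions, and $R^{\bot} \subseteq R$ by defining the functional $\psi(a,b) := \Bl(h,a)$ on $P$, descending through $P/(P \cap (H\oplus Q)) \hookrightarrow H'/Q$, extending via injectivity of $\Q(t)/\Q[t,t^{-1}]$, and using the isomorphism $Q \toiso \ol{\Hom_{\Q[\Z]}(H'/Q,\Q(t)/\Q[t,t^{-1}])}$ coming from non-singularity of $\Bl'$ and $Q = Q^{\bot}$ to produce the witness $-q \in Q$. Both arguments ultimately rest on $\Q[t,t^{-1}]$ being a PID with $\Q(t)/\Q[t,t^{-1}]$ divisible and hence injective, and both identify the same metaboliser $R$; what yours buys is a self-contained module-theoretic argument needing no chain-complex machinery, while the paper's buys uniformity with its $L$-theoretic framework and re-uses the already-proved Theorem \ref{thm:COT4.4chainversion} as a black box. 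Your closing paragraph correctly identifies the argument as sublagrangian reduction for linking forms, which is indeed the Witt-theoretic content shared by both presentations.
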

\begin{proof}
A Blanchfield form is the same as a $0$-dimensional symmetric Poincar\'{e} complex in the category of finitely generated $\Q[t,t^{-1}]$-modules with $1-t$ acting as an automorphism.
By \cite[Propositions~3.2.2~and~3.4.5~(ii)]{Ranicki2}, a metaboliser $P$ for a Blanchfield form $(H,\Bl)$ is the same as a $1$-dimensional symmetric Poincar\'{e} pair
\[(f \colon C \to D, (0,\Bl^{\wedge})),\]
where $C = S^0 H^\wedge$ and $D = S^0P^\wedge$, in the category of finitely generated $\Q[t,t^{-1}]$-modules with $1-t$ acting as an automorphism.  This is an algebraic null--cobordism of $(H^\wedge,\Bl^\wedge)$.  Since $\Q[t,t^{-1}]$ is a PID, all modules are automatically of homological dimension 1.  We need to check that a submodule $P \subseteq H$ also has $1-t$ acting as an automorphism.  To see this, first note that since $P$ is a submodule, it is preserved by $1-t$, so $(1-t)(P) \subseteq P$.  Therefore $\ker(1-t\colon P \to P) \cong 0$, since there is no kernel of $1-t \colon H \to H$.  Note that submodules of $H$ are also finitely generated since $\Q[t,t^{-1}]$ is Noetherian.  Since we may also consider $1-t\colon P \to P$ as a linear transformation of a finite dimensional $\Q$-vector space, it must therefore be an automorphism as claimed, for dimension reasons.
Let
\[\left(\ba{c}g \\ g'\ea\right) \colon P \to H \oplus H'\]
and
\[h \colon Q \to H'\]
be the inclusions of the metabolisers.  We therefore have symmetric Poincar\'{e} pairs:
\[(\left(\ba{cc}g^\wedge & g'^\wedge\ea\right) \colon H^\wedge \oplus H'^\wedge \to P^\wedge = D_0, (0,\Bl^\wedge \oplus \Bl'^\wedge))\]
and
\[(h^\wedge \colon H'^\wedge \to Q^\wedge = D'_0,(0,-\Bl'^\wedge)).\]
We have introduced a minus sign in front of $\Bl'^\wedge$, so that we can glue the two algebraic cobordisms together along $H'^\wedge$ to yield another algebraic cobordism.
\[\xymatrix @R+1cm @C+1cm{ & H'^\wedge = D''_1 \ar[d]^-{\left(\ba{c} g'^\wedge \\ h^\wedge \ea \right)}\\
H^\wedge = C_0 \ar[r]^-{\left(\ba{c} g^\wedge \\ 0 \ea \right)} & P^\wedge \oplus Q^\wedge = D''_0.
}\]
Here $(D'',0 \cup_{\Bl'^{\wedge}} 0)$ is not 0-dimensional, so we cannot yet deduce that we have a metaboliser.  Since $1-t$ acts as an automorphism on submodules, it also acts as an isomorphism on $H_*(D'')$.  Also, again since $\Q[t,t^{-1}]$ is Noetherian, $H_*(D'')$ is finitely generated.  Therefore by \cite[Corollary~1.3]{Levine2}, $H_*(D'')$ is $\Q[t,t^{-1}]$-torsion.  Since $\Q[t,t^{-1}]$ is a PID, we have universal coefficient theorem isomorphisms.  We therefore have the following standard commutative diagram
\[\xymatrix @R+1cm @C+1cm{H^0(D'') \ar[r] \ar[d]^{\cong} & H^0(C) \ar[r] \ar[d]^{\cong} & H^1(D'',C) \ar[d]^{\cong} \\
H^1(D'',C)^\wedge \ar[r] & H^0(C)^\wedge \ar[r] & H^0(D'')^\wedge,
}\]
of $\Q[t,t^{-1}]$-torsion modules with exact rows and vertical isomorphisms, which by a standard argument, given in Theorems \ref{Lemma:COT4.4} and \ref{thm:COT4.4chainversion}, shows that
\[\ol{R}:= \im\Big(H^0(D'') \to H^0(C) \Big)\]
is a metaboliser for $\Bl^\wedge \colon H^0(C) = H^{\wedge\wedge} \times H^{\wedge\wedge} \to \Q(t)/\Q[t,t^{-1}]$, where the over--line indicates the use of the involution.  We make the identifications $$H^0(C) \cong H^{\wedge\wedge} \cong H,$$ $$(D''^0)^\wedge \cong (P^\wedge \oplus Q^\wedge)^\wedge \cong P \oplus Q$$ and $$(D''^1)^\wedge \cong H'^{\wedge\wedge} \cong H',$$ so that
\[H^0(D'') \cong \ker \big( \left(\ba{cc}g'  & h \ea\right)\colon P \oplus Q \to H'\big).\]
Since the identification $H^{\wedge\wedge} \cong H$ involves an involution, we have that
\[\ol{\ol{R}} = R = \im \Big( \left( \ba{cc}g  & 0 \ea\right) \colon \ker \big( \left(\ba{cc}g'  & h \ea\right)\colon P \oplus Q \to H'\big) \to H \Big),\]
is a metaboliser for $\Bl$.  Finally, this is indeed equal to
\[\{h \in H \, | \, \exists \, q \in Q \text{ with } (h,q) \in P\},\]
as required.
\end{proof}

\begin{proposition}\label{Thm:zeroAC2_goesto_zeroAC1}
There is a surjective homomorphism
\[\ac2 \to \mathcal{AC}_1,\]
where $\mathcal{AC}_1$ is the algebraic concordance group.
\end{proposition}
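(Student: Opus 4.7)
The plan is to construct the homomorphism by extracting the Blanchfield form from the symmetric Poincar\'{e} structure of the chain complex associated to $(H,\Y,\xi)$, then use the $4$-dimensional cobordism data of second order algebraic concordance to produce metabolisers, and finally deduce surjectivity from the classical Levine realisation. Given $(H,\Y,\xi) \in \ac2$, first form the algebraic zero-surgery complex $(N,\theta)$ as in the opening of this chapter, together with the composition
\[\xi \colon H \xrightarrow{\simeq} H_1(\Z[\Z]\otimes_{\zh} N) \rightarrowtail H_1(\Q[\Z]\otimes_{\zh} N),\]
using Theorem \ref{Thm:Levinemodule}(b) for injectivity of the rationalisation map. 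Applying Proposition \ref{Prop:chainlevelBlanchfield} to the $3$-dimensional symmetric Poincar\'{e} complex $(\Q[\Z]\otimes_{\zh} N, \Id\otimes\theta)$ yields a non-singular Hermitian pairing which, transported along $\xi$, defines $\Bl_{(H,\Y,\xi)}$ on $H$. To stay in $\mathcal{AC}_1$ rather than its rational version, one checks via the same formula and the PID $\Z[\Z]$-structure that the integral Blanchfield form is obtained when one restricts the construction to $H_1(\Z[\Z]\otimes_{\zh} N)$.

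Next I would show that the assignment $(H,\Y,\xi)\mapsto (H,\Bl_{(H,\Y,\xi)})$ descends to a well-defined map $\ac2\to\mathcal{AC}_1$. For this, let $(H,\Y,\xi)\sim(H^\dag,\Y^\dag,\xi^\dag)$ via a $4$-dimensional symmetric Poincar\'{e} triad with chain complex $(V,\Theta)$ over $\zhd$, where $(j_\flat,j_\flat^\dag)\colon H\oplus H^\dag\to H'$. Define
\[P := \ker\Big((j_*,j^\dag_*) \colon H_1(\Q[\Z]\otimes_{\zh} N)\oplus H_1(\Q[\Z]\otimes_{\zhdag} N^\dag) \to H_1(\Q[\Z]\otimes_{\zhd} V)\Big).\]
The claim is that $P=P^\bot$ with respect to $\Bl\oplus -\Bl^\dag$; this is the chain complex analogue of Theorem \ref{Lemma:COT4.4}, and its proof runs along exactly the same lines, using the algebraically constructed relative linking form $\beta_{rel}$ on $(V,\Theta)$, the symmetric Poincar\'{e} duality isomorphisms for $V$ (which replace geometric Poincar\'{e}--Lefschetz duality), the relative Bockstein, and a universal coefficient argument over the PID $\Q[\Z]$. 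The three-term exact sequence
\[TH_2(V,\partial V;\Q[\Z]) \to H_1(\partial V;\Q[\Z]) \to H_1(V;\Q[\Z])\]
is replaced by the algebraic long exact sequence of the pair, which is available as $V$ and its boundary are chain complexes. Pulling back along $\xi\oplus\xi^\dag$ and using the consistency condition, the metaboliser $P$ descends to a metaboliser of $(H\oplus H^\dag,\Bl\oplus -\Bl^\dag)$ in $\mathcal{AC}_1$. The main obstacle is precisely this step: carrying out Theorem \ref{Lemma:COT4.4} entirely at the chain level, while keeping track of the consistency isomorphism $\xi'$ and the fact that $\Z$-torsion has to be handled by restricting to rational coefficients and then descending to the integral Blanchfield form using the Alexander-module hypothesis.

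The homomorphism property follows from Definition \ref{Defn:connectsumalgebraic}: under $\sharp$, the Alexander module is $H\oplus H^\dag$, the zero-surgery complex $N^\ddag$ is built as a mapping cone identifying the chosen meridians, and a routine Mayer--Vietoris computation with $\Q[\Z]$-coefficients gives $H_1(\Q[\Z]\otimes N^\ddag)\cong H_1(\Q[\Z]\otimes N)\oplus H_1(\Q[\Z]\otimes N^\dag)$, with the associated Blanchfield form equal to $\Bl\oplus\Bl^\dag$; the inverse $-(H,\Y,\xi)$ carries $-\Phi$ and so yields $-\Bl$. Finally, for surjectivity I would factor through geometry: by Levine's realisation theorem any class in $\mathcal{AC}_1$ arises as the Witt class of the Blanchfield form of some knot $K$, and the fundamental symmetric Poincar\'{e} triad of $K$ (Proposition \ref{prop: fundtriaddefinesanelement}) maps under our assignment to this Witt class, because our algebraic recipe for $\Bl_{(H,\Y^K,\xi^K)}$ reproduces the classical Blanchfield pairing of $M_K$ (this is exactly Proposition \ref{Prop:chainlevelBlanchfield} applied to the chain complex of $M_K$). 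Hence the composition $Knots\to\ac2\to\mathcal{AC}_1$ is the classical map, which is surjective, completing the argument.
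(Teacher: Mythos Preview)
Your proposal is correct and follows essentially the same route as the paper: extract the Blanchfield form via Proposition \ref{Prop:chainlevelBlanchfield}, prove well-definedness through the chain-level analogue of Theorem \ref{Lemma:COT4.4} (which the paper isolates as Theorem \ref{thm:COT4.4chainversion}, reducing first via Proposition \ref{prop:inverseswork} to the case of equivalence with the unknot), verify additivity from the form of $\sharp$ in Definition \ref{Defn:connectsumalgebraic}, and deduce surjectivity from Levine's realisation.

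One correction, however: $\Z[\Z]$ is \emph{not} a PID (the ideal $(2,1+t)$ is not principal), so your justification for the integral Blanchfield form being non-singular does not work as stated. The paper handles this by invoking Theorem \ref{Thm:Levinemodule}(b), namely that an Alexander module has homological dimension $1$, which is enough for the universal coefficient spectral sequence to yield the required isomorphism even over $\Z[\Z]$; it then shows that a metaboliser for the rational form restricts to one for the integral form by a clearing-denominators argument, so that well-definedness over $\Q[\Z]$ suffices.
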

\begin{proof}
We use the formulation in terms of the Blanchfield form, since we have already explained how to extract the Blanchfield form from the chain complex, and since a very similar argument to that which would be used here in terms of $L$-theory will be given in the proof of Theorem \ref{Thm:zeroAC2_goes_to_zeroL4QG}.  Given an element $(H,\Y,\xi) \in \ac2$, we can find the Blanchfield form on the $\Z[\Z]$-module:
\[\Bl \colon H_1(\Z[\Z] \otimes_{\zh} Y) \times H_1(\Z[\Z] \otimes_{\zh} Y) \to \frac{\Q(\Z)}{\Z[\Z]},\]
just as in Proposition \ref{Prop:chainlevelBlanchfield}, but with $\Q$ replaced by $\Z$ in the coefficient ring.  The fact that $H$ is homological dimension 1 means that even though $\Z[\Z]$ is not a principal ideal domain, the universal coefficient spectral sequence still yields an isomorphism:
\[H^1(\Q(\Z)/\Z[\Z] \otimes_{\zh} Y) \xrightarrow{\simeq} \Hom_{\Z[\Z]}(H_1(\Z[\Z] \otimes_{\zh} Y),\Q(\Z)/\Z[\Z]),\] as proved in \cite{Levine2}. The integral Blanchfield form is therefore also non-singular.  To see that addition commutes with the map $\ac2 \to \mathcal{AC}_1$, note that the Alexander modules add as in Proposition \ref{prop:2ndderivedsubgroup}.  The symmetric structures also have no mixing between the chain complexes of $Y$ and $Y^{\dag}$ in the formulae in Definition \ref{Defn:connectsumalgebraic}, so that the Blanchfield form of a connected sum in $\ac2$ is the direct sum in the Witt group of Blanchfield forms.  Surjectivity follows from the fact (see \cite{Levine2}) that every Blanchfield form is realised as the Blanchfield form of a knot, and therefore as the Blanchfield form of the fundamental symmetric Poincar\'{e} triad of a knot.

We will show the following, which we state as a separate result, and prove after the rest of the proof of Proposition \ref{Thm:zeroAC2_goesto_zeroAC1}:
\begin{theorem}\label{thm:COT4.4chainversion}
For triple $(H,\Y,\xi) \in \ac2$ which is second order algebraically concordant to the unknot, via a 4-dimensional symmetric Poincar\'{e} pair:
\[(j \colon \zhd \otimes_{\zh} N \to V,\, (\Theta,\theta)),\]
if we define:
\[P:= \ker(j_* \colon H_1(\Q[\Z]\otimes_{\zhd}\zhd \otimes_{\zh} N) \to H_1(\Q[\Z] \otimes_{\zhd} V)),\]
then $P$ is a metaboliser for the rational Blanchfield form on $H_1(\Q[\Z] \otimes_{\zh} N)$.
\end{theorem}
Before proving Theorem \ref{thm:COT4.4chainversion}, we will first show how it implies Proposition \ref{Thm:zeroAC2_goesto_zeroAC1}.  Now recall that the Witt group of integral Blanchfield forms injects into the Witt group of rational Blanchfield forms. To see this, first note that:
\[H_1(\Z[\Z] \otimes_{\zh} N) \rightarrowtail H_1(\Q[\Z] \otimes_{\zh} N) \cong \Q \otimes_{\Z} H_1(\Z[\Z] \otimes_{\zh} N).\]
The first map is an injection since $H_1(\Z[\Z] \otimes_{\zh} N)$ is $\Z$-torsion free (Theorem \ref{Thm:Levinemodule}), while the second map is an isomorphism as $\Q$ is flat as a $\Z$-module.  Then suppose that we have a metaboliser $P_{\Q}$ for the rational Blanchfield form.  This restricts to a metaboliser \[P_{\Z} := P_{\Q} \cap (\Z \otimes_{\Z} H_1(\Z[\Z] \otimes_{\zh} N))\] for the integral Blanchfield form, since the calculation, restricted to the image of \\$H_1(\Z[\Z] \otimes_{\zh} N)$, is the same for the two forms.  The symmetric structure map in the rational case is just the integral map tensored up with the rationals; $(\theta'_0)_{\Q} = \Id_{\Q} \otimes_{\Z} (\theta'_0)_{\Z}$.

Therefore, the only place that the two calculations could differ is if one took \[s \in \Q[t,t^{-1}] \setminus \Z[t,t^{-1}]\] or \[z \in (\Q[\Z] \otimes_{\zh} N)^1 \setminus (\Z[\Z] \otimes_{\zh} N)^1.\]  In these cases we can clear denominators in the equation:
\[\partial^* (z) = s\theta'_0(y)\]
to get:
\[\partial^*(nz) = ns\theta'_0(y),\]
for some $n \in \Z$, so that now $ns \in \Z[t,t^{-1}]$ and $nz \in (\Z[\Z] \otimes_{\zh} N)^1$. Then:
\[\frac{1}{ns}\ol{(nz)(x)} = \frac{n}{ns}\ol{z(x)} = \frac{1}{s}\ol{z(x)},\]
which is the same outcome.
By Theorem \ref{thm:COT4.4chainversion}, second order algebraically slice triples map to metabolic rational Blanchfield forms, which we have now seen restrict to metabolic integral Blanchfield forms.  By applying Proposition \ref{prop:inverseswork}, we see that we have a well--defined homomorphism as claimed.  This completes the proof of Proposition \ref{Thm:zeroAC2_goesto_zeroAC1}.
\end{proof}

Modulo the proof of Theorem \ref{thm:COT4.4chainversion}, we have the following diagram of homomorphisms,
\[\xymatrix @R+1cm @C+1cm{
\C \ar[r] \ar @{->>} [d] & \mathcal{AC}_2 \ar @{->>} [d]  \\
\C/\mathcal{F}_{(0.5)} \ar[r]^{\simeq}  \ar[ur] & \mathcal{AC}_1,}\]
with geometry on the left and algebra on the right; the bottom map is an isomorphism: see \cite[Remark~1.3.2]{COT}.

Next, we will proof Theorem \ref{thm:COT4.4chainversion}.  This theorem is an algebraic reworking of \cite[Theorem~4.4]{COT} (our Theorem \ref{Lemma:COT4.4}): it is crucial for the control which the Blanchfield form provides on which 1-cycles of $\Q[\Z] \otimes_{\zh} N$ bound in some 4-dimensional pair, which in turn controls which representations extend over putative algebraic slice disc exteriors.

\begin{proof}[Proof of Theorem \ref{thm:COT4.4chainversion}]
A large part of this proof can be carried over almost verbatim from the proof of \cite[Theorem~4.4]{COT}, which was our Theorem \ref{Lemma:COT4.4}, subject to a manifold-chain complex dictionary, as follows.  The homology of $M_K$ with coefficients in a ring $R$ should be replaced with the homology of:
\[R \otimes_{\zh} N;\]
the (co)homology of $W$ with coefficients in $R$ should be replaced with the (co)homology of:
\[R \otimes_{\zhd} V; \text{ and}\]
the homology of the pair $(W,M_K)$ with coefficients in $R$ should be replaced with the homology of:
\[R \otimes_{\zhd} \mathscr{C}(j \colon \zhd \otimes_{\zh} N \to V).\]
To complete the proof we need to show that:
\begin{description}
         \item[(i)] The relative linking pairings $\beta_{rel}$ are non-singular.  This will follow from the argument in the proof of Theorem \ref{Lemma:COT4.4} once we show, for an algebraic $(1.5)$-solution $V$, that $$H_*(\Q(\Z) \otimes_{\zhd} V) \cong 0.$$  Note that this also implies by universal coefficients that $$H^*(\Q(\Z) \otimes_{\zhd} V) \cong 0,$$ and that $H_*(\Q[\Z] \otimes_{\zhd} V)$ is torsion, since $\Q(\Z)$ is flat over $\Q[\Z]$.
         \item[(ii)] The sequence \[TH_2(\Q[\Z] \otimes_{\zhd} \mathscr{C}(j)) \xrightarrow{\partial} H_1(\Q[\Z] \otimes_{\zh} N) \xrightarrow{j_*} H_1(\Q[\Z] \otimes_{\zhd} V)\] is exact.
       \end{description}
To prove (i) we once again apply Proposition \ref{Prop:COT2.10chainversion}, here to the chain complex \[\Q[\Z] \otimes_{\zhd} \mathscr{C}(j \circ f_- \colon \zhd \otimes_{\zh} D_- \to V).\]  Since $j \circ f_-$ induces isomorphisms on rational homology, the relative homology groups vanish:
\[H_*(\Q \otimes_{\Q[\Z]} \Q[\Z] \otimes_{\zhd} \mathscr{C}(j \circ f_-)) \cong 0.\]
Proposition \ref{Prop:COT2.10chainversion} then says that:
\[H_*(\Q(\Z) \otimes_{\Q[\Z]} \Q[\Z] \otimes_{\zhd} \mathscr{C}(j \circ f_-)) \cong 0,\]
which implies the second isomorphism of:
\begin{eqnarray*} H_*(\Q(\Z) \otimes_{\zhd} V) & \cong & H_*(\Q(\Z) \otimes_{\Q[\Z]} \Q[\Z] \otimes_{\zhd} V) \\ & \cong & H_*(\Q(\Z) \otimes_{\Q[\Z]} \Q[\Z] \otimes_{\zh} D_-) \cong 0.\end{eqnarray*}
So see the last isomorphism, note that as in the proof of Proposition \ref{Prop:NinL4QG} the homology of the circle with $(t-1)$ inverted vanishes, as long as $t$ maps non-trivially under the representation into $\Q[\Z]$, which in this case it certainly does.  This justifies the statement above that $$H_*(\Q(\Z) \otimes_{\zh} D_-) \cong 0.$$  The definitions of the relative linking pairings can be made purely algebraically using chain complexes, using the corresponding sequences of isomorphisms:
\begin{eqnarray*}\ol{TH_2(\Q[\Z] \otimes_{\zhd} \mathscr{C}(j))} &\xrightarrow{\simeq}& TH^2(\Q[\Z] \otimes_{\zhd} V) \xrightarrow{\simeq} \\ H^1(\Q(\Z)/\Q[\Z] \otimes_{\zhd} V) &\xrightarrow{\simeq}& \Hom_{\Q[\Z]}(H_1(\Q[\Z] \otimes_{\zhd} V),\Q(\Z)/\Q[\Z]); \end{eqnarray*} and
\begin{eqnarray*}
\ol{TH_1(\Q[\Z] \otimes_{\zhd} V)} &\toiso& TH^3(\Q[\Z] \otimes_{\zhd} V) \toiso \\ H^2(\Q(\Z)/\Q[\Z] \otimes_{\zhd} V)
 & \xrightarrow{\simeq}& \Hom_{\Q[\Z]}(H_2(\Q[\Z] \otimes_{\zhd} V),\Q(\Z)/\Q[\Z]).
\end{eqnarray*}
There are also an explicit chain level formulae for the pairings $\beta_{rel}$ in a similar vein to that for $\Bl$ in Proposition \ref{Prop:chainlevelBlanchfield}; for us, the important point is that the above maps are indeed isomorphisms.

To prove (ii), we show that in fact $H_2(\Q[\Z] \otimes_{\zhd} \mathscr{C}(j))$ is entirely torsion.
This follows from the long exact sequence of the pair \[\Id_{\Q(\Z)} \otimes j \colon \Q(\Z) \otimes_{\zh} N \to \Q(\Z) \otimes_{\zhd} V.\]
We have the following excerpt:
\[H_2(\Q(\Z) \otimes_{\zhd} V) \to H_2(\Q(\Z) \otimes_{\zhd} \mathscr{C}(j)) \to H_1(\Q(\Z) \otimes_{\zh} N).\]
We have already seen in (i) that $H_2(\Q(\Z) \otimes_{\zhd} V) \cong 0$.  We claim that $$H_1(\Q(\Z) \otimes_{\zh} N) \cong 0,$$ which then implies by exactness that the central module $H_2(\Q(\Z) \otimes_{\zhd} \mathscr{C}(j))$ is also zero.  Then note, since $\Q(\Z)$ is flat over $\Q[\Z]$, that
\[H_2(\Q(\Z) \otimes_{\zhd} \mathscr{C}(j)) \cong \Q(\Z) \otimes_{\Q[\Z]} H_2(\Q[\Z] \otimes_{\zhd} \mathscr{C}(j)).\]  That this last module vanishes means that $H_2(\Q[\Z] \otimes_{\zhd} \mathscr{C}(j))$ is $\Q[\Z]$-torsion.  To see the claim that $H_1(\Q(\Z) \otimes_{\zh} N) \cong 0$, recall that:
\[H_1(\Q[\Z] \otimes_{\zh} N) \cong H_1(\Q[\Z] \otimes_{\zh} Y) \cong \Q \otimes_{\Z} H_1(\Z[\Z] \otimes_{\zh} Y) \cong \Q \otimes_{\Z} H,\]
and that an Alexander module $H$ is $\Z[\Z]$-torsion, so that the $\Q[\Z]$-module $\Q \otimes_{\Z} H$ is $\Q[\Z]$-torsion.  This completes the proof of (ii); and therefore completes the proof of all the points that the chain complex argument for Theorem \ref{thm:COT4.4chainversion} is not directly analogous to the geometric argument in the proof of Theorem \ref{Lemma:COT4.4}.
\end{proof}


\begin{definition}\label{Defn:map_AC2_to_COT_(C/1.5)}
We define the map $\ac2 \to \mc{COT}_{(\C/1.5)}$ by mapping a triple $(H,\Y,\xi)$ to
\[\bigsqcup_{p \in \Q \otimes_{\Z} H} \, ((\Q\G \otimes_{\zh} N, \Id \otimes \theta)_p,\xi_p), \]
with each $(\Q\G \otimes_{\zh} N)_p$ defined using \[\ba{rcl} \rho \colon \Z \ltimes H &\to & \G \\ (n,h) &\mapsto & (n,\Bl(p,h))\ea\]
and $\xi_p$ given by the composition
\begin{eqnarray*} \xi_p \colon \Q \otimes_{\Z} H &\xrightarrow{\Id \otimes \xi}& \Q \otimes_{\Z} H_1(\Z[\Z] \otimes_{\zh} Y) \toiso H_1(\Q[\Z] \otimes_{\zh} Y) \\ &\toiso& H_1(\Q[\Z] \otimes_{\zh} N) \toiso H_1(\Q[\Z] \otimes_{\Q\G} (\Q\G \otimes_{\zh} N)_p). \end{eqnarray*}
The maps labelled as isomorphisms in this composition are given by the universal coefficient theorem, a Mayer-Vietoris sequence, and a simple chain level isomorphism for the final identification.
\qed\end{definition}

We give a more precise statement of Theorem \ref{Thm:zeroAC2_goes_to_zeroL4QG}, which shows that the map of pointed sets of Definition \ref{Defn:map_AC2_to_COT_(C/1.5)} is well--defined.

\begin{theorem}\label{Thm:betterstatementzeroAC2_to_zeroL4QG}
Let $(H,\Y,\xi) \sim (H^\dag,\Y^\dag,\xi^\dag) \in \ac2$  be equivalent triples.  Then there exists a metaboliser \[P = P^{\bot} \subseteq (\Q \otimes_{\Z} H) \oplus (\Q \otimes_{\Z} H^\dag)\] for the rational Blanchfield form $$\Bl \oplus -\Bl^\dag \colon (\Q \otimes_{\Z} H) \oplus (\Q \otimes_{\Z} H^\dag) \times (\Q \otimes_{\Z} H) \oplus (\Q \otimes_{\Z} H^\dag) \to \Q(t)/\Q[t,t^{-1}],$$ such that, for any $(p,q) \in (\Q \otimes_{\Z} H) \oplus (\Q \otimes_{\Z} H^\dag)$, the corresponding elements in $L^4(\Q\G,\Q\G-\{0\})$, which are obtained using the representation $\rho \colon \Z \ltimes H \to \G$ defined by $\xi, p$ and $\Bl$, satisfy:
\[((\Q\G \otimes_{\zh} N)_p,\theta_p) = ((\Q\G \otimes_{\zh} N^\dag)_q,\theta^\dag_q) \in L^4(\Q\G,\Q\G-\{0\}),\]
with the reason why this holds being a 4-dimensional symmetric Poincar\'{e} pair
\[(j_p \oplus j_q^{\dag} \colon (\Q\G \otimes_{\zh} N)_p \oplus (\Q\G \otimes_{\zh} N^\dag)_q \to V_{(p,q)},(\delta \theta_{(p,q)}, \theta_p \oplus -\theta^{\dag}_q))\]
over $\Q\G$ such that
$$H_1(\Q \otimes_{\Q\G} (\Q\G \otimes_{\zh} N)_p) \toiso H_1(\Q \otimes_{\Q\G} V_{(p,q)}) \xleftarrow{\simeq} H_1(\Q \otimes_{\Q\G} (\Q\G \otimes_{\zh} N^\dag)_q),$$
such that the isomorphism
\begin{multline*}\xi_p \oplus \xi_q^\dag \colon (\Q \otimes_{\Z} H) \oplus (\Q \otimes_{\Z} H^\dag) \toiso \\ H_1(\Q[\Z] \otimes_{\Q\G} (\Q\G \otimes_{\zh} N)_p) \oplus H_1(\Q[\Z] \otimes_{\Q\G} (\Q\G \otimes_{\zhdag} N^\dag)_q)\end{multline*}
restricts to an isomorphism
\[P \toiso \ker \big(H_1(\Q[\Z] \otimes_{\zh} N) \oplus H_1(\Q[\Z] \otimes_{\zhdag} N^\dag) \to H_1(\Q[\Z] \otimes_{\Q\G} V_{(p,q)})\big),\]
and such that the algebraic Thom complex (Definition \ref{Defn:algThomcomplexandthickening}), taken over the Ore localisation, is algebraically null-cobordant in $L^4_S(\K) \cong L^0_S(\K)$:
\[[(\K \otimes_{\Q\G} \mathscr{C}((j_p \oplus j_q^\dag)),\Id \otimes \delta \theta_{(p,q)}/(\theta_{p}\oplus-\theta^\dag_q))] = [0] \in L^4_S(\K).\]
That is,
\[\bigsqcup_{p \in H} \, ((\Q\G \otimes_{\zh} N)_p,\xi_p) \sim \bigsqcup_{q \in H^\dag} \, ((\Q\G \otimes_{\zhdag} N^\dag)_q,\xi^\dag_q)  \in \mc{COT}_{(\C/1.5)}.\]
\end{theorem}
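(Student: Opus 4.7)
The plan is to generalise the proof of Theorem \ref{thm:COT4.4chainversion}: whereas that theorem dealt with the case that one of the two triples was the triple of the unknot, here we must treat a genuine algebraic concordance between two arbitrary triples. The input is the $4$-dimensional symmetric Poincar\'{e} triad over $\Z[\Z \ltimes H']$ provided by $(H,\Y,\xi) \sim (H^\dag,\Y^\dag,\xi^\dag)$. The strategy is to convert this triad into a $4$-dimensional symmetric Poincar\'{e} pair whose boundary is $N \oplus N^\dag$, then use that pair to produce the metaboliser $P$, to extend the representation $\rho_{(p,q)}$ to $\Z \ltimes H'$, and finally to tensor up with $\Q\G$ to produce $V_{(p,q)}$ satisfying all four conditions of Definition \ref{defn:COTobstructionset_2}.

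\textbf{Steps 1--2 (constructing the pair and the metaboliser).} Starting from the triad witnessing the $\ac2$-equivalence, apply the union construction of Definition \ref{Defn:unionconstruction} to glue copies of $\zhd \otimes_{\Z[\Z]} Y^U$ along the torus boundaries of $V$. This yields a $4$-dimensional symmetric Poincar\'{e} pair
\[(j \colon N \oplus N^\dag \to V',\, (\delta\Theta',\theta \oplus -\theta^\dag))\]
over $\Z[\Z \ltimes H']$. A Mayer--Vietoris argument shows that $V'$ remains a $\Z$-homology circle and that the consistency isomorphism $\xi' \colon H' \toiso H_1(\Q[\Z] \otimes V')$ extends from the one provided for $V$. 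Define
\[P := \ker\bigl(j_* \colon H_1(\Q[\Z] \otimes_{\zh} N) \oplus H_1(\Q[\Z] \otimes_{\zhdag} N^\dag) \to H_1(\Q[\Z] \otimes_{\Z[\Z \ltimes H']} V')\bigr).\]
The identical commutative diagram argument used in the proof of Theorem \ref{thm:COT4.4chainversion}, with the algebraic relative linking pairing $\beta_{rel}$ on $V'$ taking the role of the Blanchfield pairing on the boundary, establishes $P = P^\bot$ with respect to $\Bl \oplus -\Bl^\dag$. The non-singularity of $\beta_{rel}$ follows from $\K$-acyclicity of $V'$, proved exactly as in part (i) of the proof of Theorem \ref{thm:COT4.4chainversion} by two applications of Proposition \ref{Prop:COT2.10chainversion}.

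\textbf{Steps 3--4 (extending the representation and verifying).} Given $(p,q) \in P$, choose $w \in TH_2(\Q[\Z] \otimes \mathscr{C}(j))$ with $\partial w = (\xi(p), \xi^\dag(q))$, which exists by the exactness established in Step 2, and define $\tilde\rho \colon \Z \ltimes H' \to \G$ by $(n,h') \mapsto (n,\beta_{rel}(w,h'))$. Commutativity of the diagram of Step 2 forces $\tilde\rho \circ (1 \ltimes (j_\flat,j^\dag_\flat)) = \rho_{(p,q)}$, so setting $V_{(p,q)} := \Q\G \otimes_{\Z[\Z \ltimes H']} V'$ yields a $4$-dimensional symmetric Poincar\'{e} pair over $\Q\G$ with the desired boundary. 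The $H_1(\Q \otimes -)$ conditions follow from $V_{(p,q)}$ having the rational homology of $S^1$, and the statement about $P$ is definitional. For the $L^4_S(\K)$ vanishing, apply Proposition \ref{Prop:COT2.10chainversion} to $\mathscr{C}(\Id \otimes f_- \colon \Q\G \otimes D_- \to V_{(p,q)})$ (whose $\Q$-homology vanishes since both sides are rational homology circles) to conclude $\K \otimes V_{(p,q)} \simeq 0$; combined with $\K$-contractibility of $N_p$ and $N^\dag_q$ from Proposition \ref{Prop:NinL4QG}, the algebraic Thom complex is $\K$-contractible and hence trivially zero in $L^4_S(\K)$.

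\textbf{Main obstacle.} The technical core is Step 3: producing the extended representation $\tilde\rho$ requires the algebraic relative linking pairing on $V'$ to be defined, non-singular, and to fit into the exact Blanchfield duality diagram over $\Q[\Z]$. Since $\Z[\Z \ltimes H']$ is not a PID, the universal coefficient isomorphisms used to define $\beta_{rel}$ require the rationalisation and the calculation $H_*(\Q(\Z) \otimes V') = 0$. This calculation in turn depends on the $\Z$-homology cobordism property and Proposition \ref{Prop:COT2.10chainversion}, and requires careful bookkeeping to ensure the consistency isomorphism $\xi'$ interacts correctly with both Blanchfield pairings on $H$ and $H^\dag$ simultaneously; this is where the single-stage nature of the consistency condition in the definition of $\ac2$ does the essential work.
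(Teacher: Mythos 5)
Your proposal follows the paper's proof closely: the same gluing of $Y^U$ to form the pair $N \oplus N^\dag \to \widehat{V}$, the same application of Theorem \ref{thm:COT4.4chainversion} to obtain the metaboliser, the same representation extension (you unpack the relative linking form $\beta_{rel}$ explicitly where the paper simply cites \cite[Theorem~3.6]{COT}, but the mechanism is identical), and the same use of Proposition \ref{Prop:COT2.10chainversion} to deduce that the algebraic Thom complex is $\K$-contractible and hence zero in $L^4_S(\K)$. The only cosmetic difference is that the paper routes the final $\K$-acyclicity argument through $\mathscr{C}(i)$ rather than $\mathscr{C}(f_-)$, but both work for the same reason.
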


\begin{proof}
By the hypothesis we have a symmetric Poincar\'{e} triad over $\zhd$:
\[\xymatrix @R+1cm @C+0.8cm {\ar @{} [dr] |{\stackrel{(\gamma,\g^{\dag})}{\sim}} (E,\phi) \oplus (E^{\dag},-\phi^{\dag}) \ar[r]^-{(\Id, \Id \otimes \varpi_{E^{\dag}})} \ar[d]_{\left( \begin{array}{cc} \eta & 0 \\ 0 & \eta^{\dag}\end{array}\right)} & (E,0) \ar[d]^{\delta}
\\ (Y,\Phi) \oplus (Y^{\dag},-\Phi^{\dag}) \ar[r]^-{(j,j^{\dag})} & (V,\Theta),
}\]
with isomorphisms
\[H_*(\Z \otimes_{\zh} Y) \toiso H_*(\Z \otimes_{\zhd} V) \xleftarrow{\simeq} H_*(\Z \otimes_{\zhdag} Y^\dag),\]
and a commutative square
\[\xymatrix @R+1cm @C+1cm{H \oplus H^{\dag} \ar[r]^{(j_{\flat},j_{\flat}^{\dag})} \ar[d]^{\left(\begin{array}{cc} \xi & 0 \\ 0 & \xi^{\dag} \end{array} \right)} & H' \ar[d]^{\xi'} \\
H_1(\Z[\Z] \otimes_{\zh} Y) \oplus H_1(\Z[\Z] \otimes_{\zh} Y^{\dag}) \ar[r]^-{\Id_{\Z[\Z]} \otimes (j_*,j^{\dag}_*)} & H_1(\Z[\Z] \otimes_{\zhd} V).
}\]
Corresponding to the manifold triad
\[\xymatrix @R+1cm @C+1cm{S^1 \times S^1 \sqcup S^1 \times S^1 \ar[r] \ar[d] & S^1 \times S^1 \times I \ar[d] \\ S^1 \times D^2 \sqcup S^1 \times D^2 \ar[r] & S^1 \times D^2 \times I,}\]
we have a symmetric Poincar\'{e} triad.
\[\xymatrix @R+1cm @C+1cm { (E^U,-\phi^U) \oplus (E^{U},\phi^U) \ar[r]^-{(\Id, \Id)} \ar[d]_{\left( \begin{array}{cc} \eta^U & 0 \\ 0 &  \eta^{U}\end{array}\right)} & (E^U,0) \ar[d]^{\delta^U}
\\ (Y^U,0) \oplus (Y^U,0) \ar[r]^-{(j^U,j^U)} & (Y^U,0).
}\]
With this triad tensored up over $\zhd$ sending $t \mapsto g_1$ as usual, we glue the two triads together as follows:
\[\xymatrix @R+1cm @C+1cm{(Y^U,0) \oplus (Y^U,0) \ar[r]^-{(j^U,j^U)} & (Y^U,0) \\
(E^U,-\phi^U) \oplus (E^U,\phi^U) \ar[u]^{\left( \begin{array}{cc} \eta^U & 0 \\ 0 &  \eta^{U}\end{array}\right)} \ar[r]^-{(\Id,\Id)} & (E^U,0) \ar[u]^{\delta^U} \\
(E,\phi) \oplus (E^\dag,-\phi^\dag) \ar @{} [dr] |{\stackrel{(\gamma,\g^{\dag})}{\sim}} \ar[u]_{\cong}^{\left( \begin{array}{cc} \varpi_E & 0 \\ 0 &  \varpi_{E^\dag}\end{array}\right)} \ar[r]^-{(\Id, \Id \otimes \varpi_{E^{\dag}})} \ar[d]_-{\left( \begin{array}{cc} \eta & 0 \\ 0 & \eta^{\dag}\end{array}\right)} & (E,0) \ar[u]^{\cong}_{\varpi_{E}} \ar[d]^{\delta} \\
(Y,\Phi) \oplus (Y^{\dag},-\Phi^{\dag}) \ar[r]^-{(j,j^{\dag})} & (V,\Theta),}\]
to obtain a symmetric Poincar\'{e} pair over $\zhd$:
\[((i,i^\dag)\colon N \oplus N^\dag \to \widehat{V} := V \cup_E Y^U,(\widehat{\Theta}:=\Theta \cup 0,\theta \oplus - \theta^{\dag})).\]
We can define $P$, by Theorem \ref{thm:COT4.4chainversion}, to be
\begin{multline*} P := \ker((\Q \otimes_{\Z} H) \oplus (\Q \otimes_{\Z} H^\dag) \to H_1(\Q[\Z] \otimes_{\zh} N) \oplus H_1(\Q[\Z] \otimes N^\dag) \\ \to  H_1(\Q[\Z] \otimes_{\zhd} \widehat{V})).\end{multline*}
Now, for all $(p,q) \in P$, the representation
\begin{multline*} (\Bl\oplus -\Bl^\dag)((\xi(p),\xi^\dag(q)),\bullet) \colon H_1(\Q[\Z] \otimes_{\zh} N) \oplus H_1(\Q[\Z] \otimes_{\zh} N^\dag) \\ \to \Q(t)/\Q[t,t^{-1}],\end{multline*}
extends, by \cite[Theorem~3.6]{COT}, to a representation \[H_1(\Q[\Z] \otimes_{\zhd} \widehat{V}) \to \Q(t)/\Q[t,t^{-1}].\]  This holds since the proof of \cite[Theorem~3.6]{COT} is entirely homological algebra, so carries over to the chain complex situation without the need for additional arguments.  We therefore have an extension:
\[\xymatrix @R+1cm @C+1cm{H \oplus H^\dag \ar[r]^-{(j_{\flat},j^\dag_{\flat})} \ar[d]^{\cong}_{\left(\begin{array}{cc} \xi & 0 \\ 0 & \xi^{\dag} \end{array} \right)} & H' \ar[d]^{\cong}_{\xi'} \\
H_1(\Z[\Z] \otimes_{\zh} N) \oplus H_1(\Z[\Z] \otimes_{\zhdag} N^\dag) \ar[r]^-{\Id_{\Z[\Z]} \otimes (i,i^\dag)} \ar @{>->} [d] & H_1(\Z[\Z] \otimes_{\zhd} \widehat{V}) \ar @{>->} [d] \\
H_1(\Q[\Z] \otimes_{\zh} N) \oplus H_1(\Q[\Z] \otimes_{\zhdag} N^\dag) \ar[r]^-{\Id_{\Q[\Z]} \otimes (i,i^\dag)} \ar[dr] |{(\Bl\oplus\-\Bl^\dag)((\xi(p),\xi^\dag(q)),\bullet)} & H_1(\Q[\Z] \otimes_{\zhd} \widehat{V}) \ar[d] \\
 & \frac{\Q(t)}{\Q[t,t^{-1}]}.
}\]
Noting that, from the Mayer-Vietoris sequence for $\widehat{V} = V \cup_E Y^U$, there is an isomorphism \[H_1(\Z[\Z] \otimes_{\zhd} V) \toiso H_1(\Z[\Z] \otimes_{\zhd} \widehat{V}),\] the top square commutes by the consistency condition.  We therefore have an extension of representations:
\[\xymatrix @R+1cm @C+1cm{\Z \ltimes (H\oplus H^\dag) \ar[r]^{(\Id_{\Z},(j_{\flat},j_{\flat}^\dag))} \ar[dr]_{\rho} & \Z \ltimes H' \ar[d]^{\wt{\rho}} \\
 & \G.
}\]
The element $$((\Q\G \otimes_{\zh} N)_p,\theta_p) \oplus ((\Q\G \otimes_{\zhdag} N^\dag)_p,-\theta_p^\dag)  \in L^4(\Q\G,\Q\G -\{0\})$$ therefore lies, by virtue of the existence of $\Q\G \otimes_{\zhd} \widehat{V}_{(p,q)}$, in
\[\ker (L^4(\Q\G,\Q\G -\{0\}) \to L^3(\Q\G)).\]
As in the $L$-theory localisation sequence (Definition \ref{defn:localisationexactsequence}), we therefore have the element:
\[(\ol{V}_{(p,q)},\ol{\Theta}_{(p,q)}) := ((\K \otimes_{\zhd} \mathscr{C}((i,i^\dag)))_{(p,q)},\Theta_{(p,q)}/(\theta_p \oplus - \theta_q^\dag)) \in L^4_S(\K),\]
whose boundary is $$((\Q\G \otimes_{\zh} N)_p,\theta_p) \oplus ((\Q\G \otimes_{\zhdag} N^\dag)_p,-\theta_p^\dag)  \in L^4(\Q\G,\Q\G -\{0\}).$$  Since $2$ is invertible in $\K$, we can do algebraic surgery below the middle dimension \cite[Part~I,~Proposition~4.4]{Ranicki3}, on $\ol{V}_{(p,q)}$, to obtain a non-singular Hermitian form:
\[(\lambda \colon H^2(\ol{V}_{(p,q)}) \times H^2(\ol{V}_{(p,q)}) \to \K) \in L^0_S(\K) \cong L^4_S(\K),\]
whose image in:
\[L^0_S(\K)/L^0(\Q\G)\]
detects the class of $\Q\G \otimes_{\zh} N \in L^4(\Q\G,\Q\G -\{0\})$.
Once again, we apply Proposition \ref{Prop:COT2.10chainversion}, again noting that it in fact applies just as well to finitely generated projective module chain complexes as to finitely generated free module complexes.  Since $j$ and $j^\dag$ induce isomorphisms on $\Z$-homology, and therefore on $\Q$-homology, we have that the chain map
\[\Id \otimes i \colon \Q \otimes_{\Q\G} (\Q\G \otimes_{\zh} N)_p \to \Q \otimes_{\Q\G} (\Q\G \otimes_{\zhd} \widehat{V}_{(p,q)})\]
induces isomorphisms
\[ i_* \colon H_k(\Q \otimes_{\zh} N) \toiso H_k(\Q \otimes_{\zhd} \widehat{V})\]
for all $k$, by a straight--forward Mayer-Vietoris argument.  Therefore $$H_k(\Q \otimes_{\zhd} \mathscr{C}(i)) \cong 0$$ for all $k$ by the long exact sequence of a pair.  By Proposition \ref{Prop:COT2.10chainversion}, we therefore have that
\[H_k(\K \otimes_{\zhd} \mathscr{C}(i)_{(p,q)}) \cong 0\]
for all $k$.  The long exact sequence in $\K$-homology associated to the short exact sequence
\[0\to \mathscr{C}(i)_{(p,q)} \to \mathscr{C}((i,i^\dag))_{(p,q)} \to S(\zhd \otimes_{\zhdag} N^\dag_q) \to 0\]
implies, noting that $H_*(\K \otimes_{\zhdag} N^\dag_q) \cong 0$, that
\[H_k(\K \otimes_{\zhd} \mathscr{C}((i,i^\dag))_{(p,q)}) = H_k(\ol{V}_{(p,q)}) \cong 0\]
for all $k$.  In particular, since \[H_2(\ol{V}_{(p,q)}) \cong H^2(\ol{V}_{(p,q)}) \cong 0,\] we see that the image of $\ol{V}_{(p,q)}$ in $L^0_S(\K)$, which is the intersection form $\lambda$, is trivially hyperbolic and represents the zero class of $L^0_S(\K)$.  This completes the proof that
\begin{multline*} \bigsqcup_{p \in H} \, ((\Q\G \otimes_{\zh} N,\Id \otimes \theta)_p,\xi_p) \sim \bigsqcup_{q \in H^\dag} \, ((\Q\G \otimes_{\zhdag} N^\dag,\Id \otimes \theta^\dag)_q,\xi^\dag_q) \\ \in \mc{COT}_{(\C/1.5)}.\end{multline*}

\end{proof}

Finally, we have a non-triviality result, which shows that we can extract the $L^{(2)}$-signatures from $\ac2$.  In order to obstruct the equivalence of triples $(H,\Y,\xi) \sim (H^\dag,\Y^\dag,\xi^\dag) \in \ac2$, we just need, by Proposition \ref{prop:inverseswork}, to be able to obstruct an equivalence $(H,\Y,\xi) \sim (\{0\},\Y^U,\Id_{\{0\}})$.  To achieve this, as in Definition \ref{defn:COTobstructionset_2} we need to obstruct the existence of a 4-dimensional symmetric Poincar\'{e} pair over $\Q\G$
\[(j \colon (\Q\G \otimes_{\zh} N)_p \to V_p,(\Theta_p,\theta_p)),\]
for at least one $p \neq 0$, with $\xi(p) \in P$, for each metaboliser $$P = P^\bot \subseteq H_1(\Q[\Z] \otimes_{\zh} N)$$ of the Blanchfield form, where $V_p$ satisfies that
\[\xi(p) \in \ker(j_* \colon H_1(\Q[\Z] \otimes_{\zh} N_p) \to H_1(\Q[\Z] \otimes_{\Q\G} V_p)),\]
that
\[j_*\colon H_1(\Q \otimes_{\zh} N) \xrightarrow{\simeq} H_1(\Q \otimes_{\Q\G} V_p)\]
is an isomorphism, and that
\[[\K \otimes_{\Q\G} \mathscr{C}(j)] = [0] \in L^4_S(\K).\]
We do this by taking $L^{(2)}$-signatures of the middle dimensional pairings on putative such $V_p$, to obstruct the Witt class in $L^4_S(\K) \cong L^0_S(\K)$ from vanishing.  First, we have a notion of algebraic $(1)$-solvability.

\begin{definition}\label{Defn:alg(1)solvable}
We say that an element $(H,\Y,\xi) \in \ac2$ with image $0 \in \mathcal{AC}_1$ is \emph{algebraically $(1)$-solvable} if the following holds. There exists a metaboliser $P = P^{\bot} \subseteq H_1(\Q[\Z] \otimes_{\zh} N)$ for the rational Blanchfield form such that for any $p \in H$ such that $\xi(p) \in P$, we obtain an element:
\[\Q\G \otimes_{\zh} N_p \in \ker(L^4(\Q\G,\Q\G -\{0\}) \to L^3(\Q\G)),\]
via a symmetric Poincar\'{e} pair over $\Q\G$: \[(j \colon \Q\G \otimes_{\zh} N_p \to V_p, (\Theta_p,\theta_p)),\] with
\[P = \ker(j_* \colon H_1(\Q[\Z] \otimes_{\zh} N) \to H_1(\Q[\Z] \otimes_{\Q\G} V_p)),\]
and such that:
\[j_*\colon H_1(\Q \otimes_{\zh} N) \xrightarrow{\simeq} H_1(\Q \otimes_{\Q\G} V_p)\]
is an isomorphism.  We call each such $(j \colon \Q\G \otimes_{\zh} N_p \to V_p, (\Theta_p,\theta_p))$ an \emph{algebraic $(1)$-solution}.
\qed\end{definition}

\begin{theorem}\label{Thm:extractingL2signatures}
Suppose that $(H,\Y,\xi) \in \ac2$ is algebraically $(1)$-solvable with algebraic $(1)$-solution $(V_p,\Theta_p)$ and $\xi(p) \in P$.  Then since:
\[\ker(L^4(\Q\G,\Q\G -\{0\}) \to L^3(\Q\G)) \cong \frac{L^4(\K)}{L^4(\Q\G)} \cong \frac{L^0(\K)}{L^0(\Q\G)},\]
we can apply the $L^{(2)}$-signature homomorphism:
\[\sigma^{(2)} \colon L^0(\K) \to \R,\]
to the intersection form:
\[\lambda_{\K} \colon H_2(\K \otimes_{\Q\G} V_p) \times H_2(\K \otimes_{\Q\G} V_p) \to \K.\]
We can also calculate the signature $\sigma(\lambda_{\Q})$ of the ordinary intersection form:
\[\lambda_{\Q} \colon H_2(\Q \otimes_{\Q\G} V_p) \times H_2(\Q \otimes_{\Q\G} V_p) \to \Q,\]
and so calculate the reduced $L^{(2)}$-signature \[\wt{\sigma}^{(2)}(V_p) = \sigma^{(2)}(\lambda_{\K}) - \sigma(\lambda_{\Q}).\]  This is independent, for fixed $p$, of changes in the choice of chain complex $V_p$.    Provided we check that the reduced $L^{(2)}$-signature does not vanish, for each metaboliser $P$ of the rational Blanchfield form with respect to which $(H,\Y,\xi)$ is algebraically $(1)$-solvable, and for each $P$, for at least one $p \in P \setminus \{0\}$, then we have a \emph{chain--complex--Von--Neumann $\rho$--invariant} obstruction.  This obstructs the image of the element $(H,\Y,\xi)$ in $\mathcal{COT}_{(\C/1.5)}$ from being $U$, and therefore obstructs $(H,\Y,\xi)$ from being second order algebraically slice.


\end{theorem}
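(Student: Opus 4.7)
The plan is to verify three things in sequence: (a) the Witt class $[\lambda_{\K}] \in L^0(\K)/L^0(\Q\G)$ is actually defined from the data of an algebraic $(1)$-solution; (b) the reduced $L^{(2)}$-signature $\widetilde{\sigma}^{(2)}(V_p)$ depends only on $p$, not on the choice of algebraic $(1)$-solution $V_p$; and (c) vanishing of $(H,\Y,\xi)$ in $\mathcal{COT}_{(\C/1.5)}$ forces $\widetilde{\sigma}^{(2)}(V_p)=0$ for every $p$ in an appropriate metaboliser, contradicting the hypothesis.

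For (a), the algebraic $(1)$-solvability hypothesis places $(\Q\G\otimes_{\zh}N)_p$ in $\ker(L^4(\Q\G,\Q\G-\{0\})\to L^3(\Q\G))$, so by the localisation exact sequence (Definition \ref{defn:localisationexactsequence}) it lifts to a class in $L^4_S(\K)/\im L^4(\Q\G)$, represented by $\K\otimes_{\Q\G}\mathscr{C}(j)$. Since $2$ is invertible, algebraic surgery below the middle dimension \cite[Part I, Proposition 4.4]{Ranicki3} identifies this with a class in $L^0_S(\K)/\im L^0(\Q\G)$, namely the intersection form $\lambda_{\K}$ on $H_2(\K\otimes_{\Q\G}V_p)$, which is finitely generated free since $\K$ is a skew field. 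In parallel, $\lambda_{\Q}$ is a finite-dimensional Hermitian form over $\Q$, to which the ordinary signature $\sigma$ applies.

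For (b), given two algebraic $(1)$-solutions $V_p,V_p'$ for the same $p$, use the union construction of Definition \ref{Defn:unionconstruction} to form the closed $4$-dimensional symmetric Poincar\'{e} complex $(W,\Xi):=(V_p\cup_{N_p}-V_p',\Theta_p\cup_{\theta_p}-\Theta_p')$ over $\Q\G$. The middle intersection forms add, so
\[
\widetilde{\sigma}^{(2)}(V_p)-\widetilde{\sigma}^{(2)}(V_p')
 = \sigma^{(2)}(\lambda_{\K}(W))-\sigma(\lambda_{\Q}(W)).
\]
Because $W$ is Poincar\'{e} already over $\Q\G$, its Witt class lies in $\im L^0(\Q\G)\to L^0(\K)$. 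On this image the $L^{(2)}$-signature coincides with the ordinary $\Q$-signature; this is the purely algebraic form of the Cheeger--Gromov/L\"uck--Schick statement used by \COT in Section \ref{Chapter:L2signature} (via Higson--Kasparov for the PTFA group $\G$), and it holds on the chain level because $\sigma^{(2)}$ factors through the dimension function on $\NN\G$-completions. Hence the right-hand side vanishes, proving independence.

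For (c), suppose $(H,\Y,\xi)\sim(\{0\},\Y^U,\mathrm{Id}_{\{0\}})$. Theorem \ref{Thm:betterstatementzeroAC2_to_zeroL4QG} produces a metaboliser $P=P^{\bot}$ of $\Bl\oplus -\Bl^U = \Bl$ and, for each $p$ with $\xi(p)\in P$, a symmetric Poincar\'{e} pair $\widehat{V}_p$ over $\Q\G$ with $[\K\otimes_{\Q\G}\widehat{V}_p]=0\in L^4_S(\K)$. Under surgery below the middle dimension this forces the intersection form of $\widehat{V}_p$ to be hyperbolic, so $\sigma^{(2)}(\lambda_{\K}(\widehat{V}_p))=0=\sigma(\lambda_{\Q}(\widehat{V}_p))$, and $\widetilde{\sigma}^{(2)}(\widehat{V}_p)=0$. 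Since $\widehat{V}_p$ is itself an algebraic $(1)$-solution for this $p$, independence from (b) gives $\widetilde{\sigma}^{(2)}(V_p)=\widetilde{\sigma}^{(2)}(\widehat{V}_p)=0$ for every algebraic $(1)$-solution and every $p\in P\setminus\{0\}$. Contrapositively, if for every metaboliser $P$ with respect to which $(H,\Y,\xi)$ is algebraically $(1)$-solvable there exists some $p\in P\setminus\{0\}$ with $\widetilde{\sigma}^{(2)}(V_p)\neq 0$, then $(H,\Y,\xi)$ is not second order algebraically slice.

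The main obstacle will be the chain-level version of the equality between $\sigma^{(2)}$ and $\sigma$ on closed symmetric Poincar\'{e} complexes over $\Q\G$, needed in step (b). Geometrically this is a Von Neumann--Atiyah $L^2$-index theorem statement; here it must be argued purely from the functional calculus on $\NN\G$-module chain complexes, using that $\G$ is PTFA (hence satisfies the strong Atiyah conjecture after Linnell, and the Baum--Connes assembly result of Higson--Kasparov), so that the $\NN\G$-dimensions assemble into integers agreeing with the ordinary $\Q$-dimensions. Once this additivity-and-agreement statement is in hand, the rest of the argument is formal bookkeeping with the localisation sequence and Theorem \ref{Thm:betterstatementzeroAC2_to_zeroL4QG}.
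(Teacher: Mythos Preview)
Your proposal is correct and follows essentially the same approach as the paper: both glue two algebraic $(1)$-solutions along $N$ via the union construction to obtain a closed symmetric Poincar\'e complex over $\Q\G$, invoke \cite[Proposition~5.12]{COT} (resting on Higson--Kasparov for the PTFA group $\G$) to equate $\sigma^{(2)}$ and $\sigma$ on $\im L^0(\Q\G)$, and then appeal to Novikov additivity. The paper is marginally more explicit in using the $\K$-contractibility of $N$ to identify $V\cup_N V'$ with $V\oplus -V'$ in $L^4_S(\K)$ and in citing Novikov additivity for $\sigma^{(2)}$ separately (\cite[Lemma~5.9.3]{COT}), while your step~(c) spells out what the paper leaves implicit, namely that the $\widehat{V}_p$ coming from Theorem~\ref{Thm:betterstatementzeroAC2_to_zeroL4QG} has both $\lambda_{\K}$ and $\lambda_{\Q}$ equal to the zero form.
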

\begin{remark}
We do not require any references to 4-manifolds, other than for pedagogic reasons, to extract the \COT $L^{(2)}$-signature metabelian concordance obstructions from the triple of a $(1)$-solvable knot, or indeed for any algebraically $(1)$-solvable triple in $\ac2$.  This result relies strongly on the reason for the invariance of the reduced $L^{(2)}$-signatures which is least emphasised in the paper of \COT \cite{COT}.  This is the result of Higson-Kasparov \cite{HigsonKasparov} that the analytic assembly map is onto for PTFA groups - see \cite[Proposition~5.12]{COT}, where it is shown that the surjectivity of the assembly map implies that the $L^{(2)}$-signature and the ordinary signature coincide on the image of $L^0(\Q\G)$.  The key point is that this result does not depend on manifolds; it is a purely algebraic result.

The Higson-Kasparov result does not hold for groups with torsion, a fact made use of in e.g. \cite{chaorr}.  Homology cobordism invariants which use representations to torsion groups appear to be using deeper manifold structure than is captured by symmetric Poincar\'{e} complexes alone.
\end{remark}

\begin{proof}[Proof of Theorem \ref{Thm:extractingL2signatures}]
For this proof we omit the $p$ subscripts from the notation; it is to be understood that tensor products with $\Q\G$ depend on a choice of representation.  Given a pair
\[(j \colon \Q\G \otimes_{\zh} N \to V, (\Theta,\theta)),\]
which exhibits $(H,\Y,\xi)$ as being algebraically $(1)$-solvable, we again take the element:
\[(\K \otimes_{\Q\G} \mathscr{C}(j),\Theta/\theta) \in L^4(\K),\]
and look at its image
\[\lambda_{\K} \in L^0(\K).\]
We can calculate an intersection form $\lambda_{\K}$ on $H^2(\K \otimes_{\Q\G} \mathscr{C}(j))$, as in \cite[page~19]{Ranicki2}, by taking \[x,y \in (\K\ \otimes_{\Q\G} \mathscr{C}(j))^2 \cong \Hom_{\K}((\K \otimes_{\Q\G} \mathscr{C}(j))_2,\K),\] and calculating:
\[y' = (\Theta/\theta)_0(y) \in (\K\ \otimes_{\Q\G} \mathscr{C}(j))_2.\]
Then  \[\lambda_{\K} (x,y) := y'(x) = \ol{x(y')} \in \K.\]
This uses, as in the definition of $\Bl$ in Proposition \ref{Prop:chainlevelBlanchfield}, the identification of $(\K\ \otimes_{\Q\G} \mathscr{C}(j))_2$ with its double dual.
By taking the chain complex $\Q \otimes_{\Q\G} \mathscr{C}(j)$ we can also calculate the intersection form $\lambda_{\Q} \in L^0(\Q)$, with an analogous method.  To see that the intersection form on $H^2(\Q \otimes_{\Q\G} \mathscr{C}(j))$ is non-singular, consider the following long exact sequence of the pair; we claim that the maps labelled as $j^*$ and $\kappa$ are isomorphisms.
\[\xymatrix @C-0.3cm {H^1(\Q \otimes_{\Q\G} V) \ar[r]^-{\cong}_-{j^*} & H^1(\Q \otimes_{\zh} N) \ar[r]^{0} & H^2(\Q \otimes_{\Q\G} \mathscr{C}(j)) \ar[r]^-{\cong}_-{\kappa} & H^2(\Q \otimes_{\Q\G} V). }\]
The intersection form is given by the composition:
\begin{multline*}\lambda_{\Q} \colon H^2(\Q \otimes_{\Q\G} \mathscr{C}(j)) \xrightarrow{\kappa} H^2(\Q \otimes_{\Q\G} V) \toiso H_2(\Q \otimes_{\Q\G} \mathscr{C}(j))\\ \toiso \Hom_{\Q}(H^2(\Q \otimes_{\Q\G} \mathscr{C}(j)),\Q),\end{multline*}
given by the map $\kappa$ from the long exact sequence of a pair, followed by a Poincar\'{e} duality isomorphism induced by the symmetric structure, and a universal coefficient theorem isomorphism.  To show that $\lambda_\Q$ is non-singular we therefore need to show that $\kappa$ is an isomorpism.  The assumption that there is an isomorphism
\[j_* \colon H_1(\Q \otimes_{\zh} N) \toiso H_1(\Q \otimes_{\Q\G} V)\]
on rational first homology implies that, as claimed, there is also an isomorphism $$j^* \colon H^1(\Q \otimes_{\Q\G} V) \toiso H^1(\Q \otimes_{\zh} N)$$ on rational cohomology, by the universal coefficient theorem (the relevant $\Ext$ groups vanish with rational coefficients).  Therefore, by exactness, the map: \[\kappa \colon H^2(\Q \otimes_{\Q\G} \mathscr{C}(j)) \to H^2(\Q \otimes_{\Q\G} V)\]
is injective. Over $\Q$, for dimension reasons, it must therefore, as marked on the diagram, be an isomorphism; the dimensions must be equal since the second and third maps in the composition which gives $\lambda_\Q$ show that $$H^2(\Q \otimes_{\Q\G} V) \cong \Hom_{\Q}(H^2(\Q \otimes_{\Q\G} \mathscr{C}(j)),\Q),$$ and the dimensions over $\Q$ of $\Hom_{\Q}(H^2(\Q \otimes_{\Q\G} \mathscr{C}(j)),\Q)$ and of $H^2(\Q \otimes_{\Q\G} \mathscr{C}(j))$ coincide.  Therefore the intersection form $\lambda_\Q$ is non-singular as claimed.

The reduced $L^{(2)}$-signature
\[\wt{\sigma}^{(2)}(V) = \sigma^{(2)}(\lambda_{\K}) - \sigma(\lambda_{\Q})\]
detects the group $L^0_S(\K)/L^0(\Q\G)$.  This will follow from \cite[Proposition~5.12]{COT}, which uses a result of Higson-Kasparov \cite{HigsonKasparov} on the analytic assembly map for PTFA groups such as $\G$, and says that the $L^{(2)}$-signature agrees with the ordinary signature on the image of $L^0(\Q\G)$.  We claim that a non-zero reduced $L^{(2)}$-signature, for all possible metabolisers $P=P^{\bot}$ of the rational Blanchfield form, implies that $(H,\Y,\xi)$ is not second order algebraically slice.  To see this, we need to show that, for a fixed representation $\rho$, the reduced $L^{(2)}$-signature does not depend on the choice of chain complex $V$.

We first note, by the proof of Theorem \ref{Thm:betterstatementzeroAC2_to_zeroL4QG}, that a change in $(H,\Y,\xi)$ to an equivalent element in $\ac2$ produces an algebraic concordance which we can glue onto $V$ as in Proposition \ref{prop:equivrelation}, which neither changes the second homology of $V$ with $\K$ nor with $\Q$ coefficients, so does not change the corresponding signatures.

To show that the reduced $L^{(2)}$-signature does not depend on the choice of $V$, suppose that we have two algebraic $(1)$-solutions, that is two 4-dimensional symmetric Poincar\'{e} pairs over $\Q\G$:
\[(j \colon \Q\G \otimes_{\zh} N \to V, (\Theta,\theta))\]
and
\[(j^{\diamondsuit} \colon \Q\G \otimes_{\zh} N \to V^{\diamondsuit}, (\Theta^{\diamondsuit},\theta)),\]
such that $p = p^{\diamondsuit} \in H$.  Use the union construction to form the symmetric Poincar\'{e} complex:
\[(V \cup_{\Q\G \otimes N} V^{\diamondsuit},\Theta \cup_{\theta} -\Theta^{\diamondsuit}) \in L^4(\Q\G).\]
Over $\K$, $\Q\G \otimes_{\zh} N$ is contractible, so that:
\[(V \cup_{\Q\G \otimes N} V^{\diamondsuit},\Theta \cup_{\theta} -\Theta^{\diamondsuit}) \simeq (V \oplus V^{\diamondsuit},\Theta \oplus -\Theta^{\diamondsuit}) = (V,\Theta) - (V^{\diamondsuit},\Theta^{\diamondsuit}) \in L^4_S(\K).\]
Therefore
\[(V,\Theta) - (V^{\diamondsuit},\Theta^{\diamondsuit}) = 0 \in L^4(\K)/L^4(\Q\G),\]
which means that the images in $L^0_S(\K)$ satisfy:
\[\lambda_{\K} - \lambda_{\K}^{\diamondsuit} = 0 \in L^0_S(\K)/L^0(\Q\G)\]
If $\lambda_{\K} - \lambda_{\K}^{\diamondsuit} \in L^0(\Q\G)$, then by \cite[Proposition~5.12]{COT}:
\[\sigma^{(2)}(\lambda_{\K} - \lambda_{\K}^{\diamondsuit}) = \sigma(\Q \otimes_{\Q\G} V \cup_{\Q\G \otimes N} V^{\diamondsuit},\Id_{\Q} \otimes (\Theta \cup_{\theta} -\Theta^{\diamondsuit})) = \sigma(\lambda_{\Q}) - \sigma(\lambda_{\Q}^{\diamondsuit}),\]
where the last equality is by Novikov Additivity.  Novikov Additivity also holds for $\sigma^{(2)}$: see \cite[Lemma~5.9.3]{COT}, so that:
\[\sigma^{(2)}(\lambda_{\K}) - \sigma^{(2)}(\lambda_{\K}^{\diamondsuit}) = \sigma(\lambda_{\Q}) - \sigma(\lambda_{\Q}^{\diamondsuit})\]
and therefore:
\[\wt{\sigma}^{(2)}(V) = \wt{\sigma}^{(2)}(V^{\diamondsuit}),\]
as claimed.
\end{proof}

\begin{remark}
The results of \COT \cite{COT2} and Cochran--Harvey--Leidy (\cite{cohale}, \cite{cohale2},\cite{cohale3_primary}), which use Von Neumann $\rho$--invariants to show the existence of infinitely many linearly independent injections of $\Z$ and of $\Z_2$ into $\mathcal{F}_{(1)}/\mathcal{F}_{(1.5)}$, can therefore be applied, so that we can use the chain-complex-Von-Neumann $\rho$-invariant of Theorem \ref{Thm:extractingL2signatures} to show the existence of infinitely many injections of $\Z$ and $\Z_2$ into $\ker(\ac2 \to \mathcal{AC}_1)$.
\end{remark}



\appendix
\chapter[An $N$th order algebraic concordance group]{An $n$th Order Algebraic Concordance Group}\label{Appendix:nth_order_group}


One obvious extension to this project, which the author intends to complete in future work, is to define an algebraic concordance group which captures all of the \COT $(n)$-solvable filtration.  We will give an outline of how we conjecture that this should proceed.  The material in this appendix is presented without proof.

\begin{theorem}\label{Thm:knot_groups2}
Let $\pi = \pi_1(X)$ be the fundamental group of a knot exterior.  Then $\pi$ satisfies the following:
\begin{description}
\item[(a)] the group $\pi$ is finitely presented, where all of the generators are conjugates of one generator;
\item[(b)] the homology groups are $H_1(\pi;\Z) \cong \Z$ and $H_k(\pi;\Z) \cong 0$ for $k \geq 2$; and
\item[(c)] the deficiency of $\pi$, defined to be the maximum over all possible presentations of $g-r$, where $g$ is the number of generators and $r$ is the number of relations, is one.
\end{description}
\end{theorem}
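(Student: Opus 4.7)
All three statements will follow from two foundational facts already established in the excerpt: the Wirtinger-type presentation of $\pi_1(X)$ (Theorems \ref{Thm:unicoverchaincomplex} and \ref{Thm:mainchaincomplex}, and Proposition \ref{wirtingerpresentation}), and the fact that the knot exterior is an Eilenberg--MacLane space $X \simeq K(\pi,1)$ (Theorem \ref{Thm:EMspaces}). I will treat the three clauses in turn.

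For part (a), I will invoke Proposition \ref{wirtingerpresentation}, which gives the Wirtinger presentation
\[
\pi_1(X) = \langle g_1,\dots,g_c \mid r_1,\dots,r_c \rangle,
\]
where each generator $g_i$ represents a meridian at the corresponding arc of the diagram. Each Wirtinger relation $r_i$ has the form $g_{i_2} = g_{i_1}^{\pm 1} g_{i_3} g_{i_1}^{\mp 1}$, expressing one meridian as a conjugate of another. Walking along the knot (i.e. inductively applying the Wirtinger relations arc by arc) shows that every $g_j$ is conjugate in $\pi_1(X)$ to $g_1$, so all generators are conjugate to a single element, as required.

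For part (b), I will use $X \simeq K(\pi,1)$ to identify $H_k(\pi;\Z) \cong H_k(X;\Z)$. Since $X$ is a compact $3$-manifold with non-empty boundary, it collapses onto a $2$-complex, giving $H_k(X;\Z) = 0$ for $k \geq 3$. The remaining calculations are standard: $H_0(X;\Z)\cong\Z$ because $X$ is connected, $H_1(X;\Z)\cong\Z$ by Hurewicz together with the abelianisation computation already used in the excerpt (all Wirtinger generators become equal after abelianising, and the class of a meridian generates an infinite cyclic group detected by linking number with $K$), and $H_2(X;\Z) = 0$ either by Alexander duality applied to $K \subset S^3$, or by a direct Euler-characteristic argument combined with the fact that $X$ has no closed orientable $2$-cycles (it deformation retracts to a $2$-complex of the correct Euler characteristic $0 = 1 - 1 + 0$).

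For part (c), the lower bound $\mathrm{def}(\pi) \geq 1$ comes for free: as observed in Remark \ref{Rmk:identitypresentation}, the sum of certain lifts of the $2$-handles is a cycle, so one Wirtinger relation is a consequence of the others, giving a presentation on $c$ generators and $c-1$ relations. For the upper bound, I will use the standard inequality, derived from the Hopf formula applied to the presentation $2$-complex $Y$ of any presentation $\langle g_1,\dots,g_n \mid r_1,\dots,r_m\rangle$ of $\pi$: since $\pi_1(Y) = \pi$ and the classifying map $Y \to K(\pi,1)$ induces a surjection $H_2(Y;\Z) \twoheadrightarrow H_2(\pi;\Z)$ and an isomorphism on $H_1$, comparing Euler characteristics gives
\[
n - m \;\leq\; \mathrm{rank}_\Z H_1(\pi;\Z) - d\bigl(H_2(\pi;\Z)\bigr),
\]
where $d(\cdot)$ denotes the minimum number of generators. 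Substituting the computations from part (b) yields $\mathrm{def}(\pi) \leq 1 - 0 = 1$, completing the proof.

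The only part requiring real care is the upper bound in (c); the rest is bookkeeping around the two quoted theorems. I expect the main subtlety to be spelling out the Hopf-formula inequality cleanly, but since no non-standard input is needed this should be routine.
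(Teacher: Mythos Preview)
Your proof is correct in all three parts. Note, however, that there is nothing to compare it against: the paper explicitly states at the start of the appendix that ``the material in this appendix is presented without proof,'' and Theorem~\ref{Thm:knot_groups2} is accordingly asserted without argument. Your write-up supplies exactly the standard justifications one would expect --- Wirtinger presentation for (a), the identification $H_*(\pi;\Z)\cong H_*(X;\Z)$ via $X\simeq K(\pi,1)$ together with Alexander duality for (b), and the Hopf-formula/Euler-characteristic bound for (c) --- and these are the canonical arguments, so nothing is missing or unorthodox.
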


\begin{definition}
First, define the set $\P$ to be given by the set of equivalence classes of triples $(\pi,\Y,\xi)$, where $\pi$ is a knot group, by which we mean it satisfies the conditions of Theorem \ref{Thm:knot_groups2}; $\Y$ is a 3-dimensional symmetric Poincar\'{e} triad of finitely generated projective $\zp$-module chain complexes:
\[\xymatrix @R+0.5cm @C+0.5cm {\ar @{} [dr] |{\stackrel{g}{\sim}}
(C,\varphi_C) \ar[r]^{i_-} \ar[d]_{i_+} & (D_-,\delta\varphi_-) \ar[d]^{f_-}\\ (D_+,\delta\varphi_+) \ar[r]^{f_+} & (Y,\Phi),
}\]
as in Definition \ref{Defn:algebraicsetofchaincomplexes}, such that the induced maps
\[f_{\pm} \colon H_*(\Z \otimes_{\zp} D_{\pm}) \toiso H_*(\Z \otimes_{\zp} Y)\]
are isomorphisms; and $\xi$ is a sequence of isomorphisms $\xi = (\xi_0,\xi_1,\xi_2,\dots)$ which fit into the tower:
\[\xymatrix @C+1cm{\pi/\pi^{(1)} \ar[r]^-{\xi_0}_-{\cong} & H_1(\Z[\pi/\pi^{(0)}] \otimes_{\zp} Y) \\
\pi^{(1)}/\pi^{(2)} \ar[r]^-{\xi_1}_-{\cong} & H_1(\Z[\pi/\pi^{(1)}] \otimes_{\zp} Y) \ar[u]_{0} \\
\pi^{(2)}/\pi^{(3)} \ar[r]^-{\xi_2}_-{\cong} & H_1(\Z[\pi/\pi^{(2)}] \otimes_{\zp} Y) \ar[u]_{0} \\
\vdots & \vdots \ar[u]_{0} \\
\pi^{(k)}/\pi^{(k+1)} \ar[r]^-{\xi_k}_-{\cong} & H_1(\Z[\pi/\pi^{(k)}] \otimes_{\zp} Y). \ar[u]_{0} \\
\vdots & \vdots \ar[u]_{0}
}\]
Note the extra condition that the induced vertical maps are the zero maps, using the homomorphisms \[\pi/\pi^{(k)} \to (\pi/\pi^{(k)})/(\pi^{(k-1)}/\pi^{(k)}) \toiso \pi/\pi^{(k-1)}\]
to define the maps and to consider both $$H_1(\Z[\pi/\pi^{(k)}] \otimes_{\zp} Y)$$ and $$H_1(\Z[\pi/\pi^{(k-1)}] \otimes_{\zp} Y)$$ as $\Z[\pi/\pi^{(k)}]$-modules.  This was automatic in the second order case: since we only used the first two levels of the tower, this zero was guaranteed from the $\Z$-homology isomorphism induced by $f_{\pm}$.  It was pointed out to me by Peter Teichner that this was implicit and would need to be considered in the higher order case.

We say that two elements $(\pi,\Y,\xi)$ and $(\pi^\%,\Y^\%,\xi^\%)$ are equivalent if there is an isomorphism $\omega \colon \pi \toiso \pi^\%$, which induces isomorphisms $$\omega_{(k)} \colon \pi/\pi^{(k)} \toiso \pi^\%/(\pi^\%)^{(k)}$$ and $$\omega_{(k/k+1)} \colon \pi^{(k)}/\pi^{(k+1)} \toiso (\pi^\%)^{(k)}/(\pi^\%)^{(k+1)},$$ and an equivalence of triads
\[j \colon \Z[\pi^\%] \otimes_{\zp} \Y \xrightarrow{\sim} \Y^\%\]
such that, for all $k$, the following diagram commutes:
\[\xymatrix @R+1cm @C+1cm{\pi^{(k)}/\pi^{(k+1)} \ar[r]^-{\xi_k}_-{\cong} \ar[dd]_{\omega_{(k/k+1)}}^{\cong} & H_1(\Z[\pi/\pi^{(k)}] \otimes_{\zp} Y) \ar[d]_{\cong}\\
 & H_1(\Z[(\pi^\%)/(\pi^\%)^{(k)}] \otimes_{\Z[\pi^\%]} \Z[\pi^\%] \otimes_{\zp} Y) \ar[d]_{\cong} \\
(\pi^\%)^{(k)}/(\pi^\%)^{(k+1)} \ar[r]^-{\xi^\%_k}_-{\cong} & H_1(\Z[(\pi^\%)/(\pi^\%)^{(k)}] \otimes_{\Z[\pi^\%]} Y^\%).}\]
The upper right vertical map comes from the chain isomorphism
\begin{align*} \Z[\pi/\pi^{(k)}] \otimes_{\zp} Y &\xrightarrow{\omega_{(k)} \otimes \Id} \Z[(\pi^\%)/(\pi^\%)^{(k)}] \otimes_{\zp} Y \\  &\toiso \Z[(\pi^\%)/(\pi^\%)^{(k)}] \otimes_{\Z[\pi^\%]} \Z[\pi^\%] \otimes_{\zp} Y.\end{align*}
The isomorphisms in the above square are a priori isomorphisms of groups, but we require that they are also isomorphisms of $\Z[\pi/\pi^{(k)}]$-modules, using the isomorphism $\omega_{(k)} \colon \pi/\pi^{(k)} \toiso \pi^\%/(\pi^\%)^{(k)}$ to define the module structure on those modules which are ostensibly $\Z[\pi^\%/(\pi^\%)^{(k)}]$-modules.  Note that $\pi^{(k)}/\pi^{(k+1)}$ is abelian, and that $\pi/\pi^{(k)}$ acts on $\pi^{(k)}/\pi^{(k+1)}$ by conjugation, so we can consider $\pi^{(k)}/\pi^{(k+1)}$ as a $\Z[\pi/\pi^{(k)}]$-module.  Similar remarks apply to $\pi^\%$, $\pi^\dag$ and $\pi'$ throughout this appendix.

We conjecture that this defines an equivalence relation.
\qed\end{definition}

We also conjecture that such triples can be combined to give $\P$ the structure of an abelian monoid with a well--defined monoid homomorphism $Knots \to \P$, similarly to Definition \ref{Defn:connectsumalgebraic} and Propositions \ref{prop: fundtriaddefinesanelement},  and \ref{Prop:abelianmonoid}.  We add knot groups $\pi$ and $\pi^\dag$ using the free product to obtain $\pi^\ddag = \pi \ast_{\Z} \pi^\dag$.

Note that for knots whose groups have perfect commutator subgroups (the Alexander polynomial one knots), there is no data in the tower beyond the $k = 0$ level. Each stage in the tower corresponds to the information in a higher--order Alexander module, in the sense of \cite[Section~3]{COT}.

\begin{definition}
We define two triples $(\pi,\Y,\xi), (\pi^\dag,\Y^\dag,\xi^\dag) \in \P$ to be $(n+1)$th order algebraically concordant, or $(n.5)$-solvable equivalent, if there is a finitely presented group $\pi'$ with group homomorphisms
\[j_{\flat} \colon \pi \to \pi'\] and \[j_{\flat}^\dag \colon \pi^\dag \to \pi',\]
which induce homomorphisms
\[(j_{\flat})_{(k/k+1)} \colon \pi^{(k)}/\pi^{(k+1)}  \to (\pi')^{(k)}/(\pi')^{(k+1)},\]
\[(j_{\flat}^\dag)_{(k/k+1)} \colon (\pi^\dag)^{(k)}/(\pi^\dag)^{(k+1)} \to (\pi')^{(k)}/(\pi')^{(k+1)},\]
\[(j_{\flat})_{(k)} \colon \pi/\pi^{(k)} \to \pi'/(\pi')^{(k)}\]
and
\[(j_{\flat}^\dag)_{(k)} \colon \pi^\dag/(\pi^\dag)^{(k)} \to \pi'/(\pi')^{(k)},\]
if there is a finitely generated projective $\zpd$-module chain complex with structure maps $(V,\Theta)$, the requisite chain maps $j,j^{\dag},\delta$, and chain homotopies $\g,\g^{\dag}$ such that there is a 4-dimensional symmetric Poincar\'{e} triad:
\[\xymatrix @R+1cm @C+1.1cm {\ar @{} [dr] |{\stackrel{(\gamma,\g^{\dag})}{\sim}}
\zpd \otimes_{\zp} (E,\phi) \oplus \zpd \otimes_{\zpdag} (E^{\dag},-\phi^{\dag}) \ar[r]^-{(\Id,\Id \otimes \varpi_{E^{\dag}})} \ar[d]_{\Id \otimes \left( \begin{array}{cc} \eta & 0 \\ 0 & \eta^{\dag}\end{array}\right)} & \zpd \otimes_{\Z[\pi]} (E,0) \ar[d]^{\delta}
\\  \zpd \otimes_{\Z[\pi]}(Y,\Phi) \oplus  \zpd \otimes_{\zpdag}(Y^{\dag},-\Phi^{\dag}) \ar[r]^-{(j,j^{\dag})} & (V,\Theta),
}\]
with
\[H_*(\Z \otimes_{\zp} Y) \toiso H_*(\Z \otimes_{\zpd} V) \xleftarrow{\simeq} H_*(\Z \otimes_{\zpdag} Y^\dag),\]
and if there is a sequence of isomorphisms $\xi' = (\xi'_0,\dots,\xi'_n)$, such that there is a tower:
\[\xymatrix @C+1cm{\pi'/\pi'^{(1)} \ar[r]^-{\xi'_0}_-{\cong} & H_1(\Z[\pi'/\pi'^{(0)}] \otimes_{\zpd} V) \\
\pi'^{(1)}/\pi'^{(2)} \ar[r]^-{\xi'_1}_-{\cong} & H_1(\Z[\pi'/\pi'^{(1)}] \otimes_{\zpd} V) \ar[u]_{0} \\
\vdots & \vdots \ar[u]_{0} \\
\pi'^{(n)}/\pi'^{(n+1)} \ar[r]^-{\xi'_n}_-{\cong} & H_1(\Z[\pi'/\pi'^{(n)}] \otimes_{\zpd} V) \ar[u]_{0}
}\]
and such that, for $k = 1,\dots,n$, we have a commutative diagram:
\[\xymatrix @R+1cm @C+0.8cm{\frac{\pi^{(k)}}{\pi^{(k+1)}} \ar[dd]^>>>>>>>>>>>>>>>>{(j_{\flat})_{(k/k+1)}}  \ar[r]^-{\xi_k} & H_1(\Z\left[\frac{\pi}{\pi^{(k)}}\right] \otimes_{\zp} Y) \ar[d] \\
 & H_1(\Z\left[\frac{\pi'}{\pi'^{(k)}}\right] \otimes_{\zp} Y) \ar[d]^{j} \\
\pi'^{(k)}/(\pi')^{(k+1)} \ar[r]^-{\xi'_k} & H_1(\Z\left[\frac{\pi'}{\pi'^{(k)}}\right] \otimes_{\zpd} V),
}\]
and a corresponding commutative diagram with daggers on each occurrence of the letters $\pi, Y, \xi$ and $j$ not in the bottom row.
The upper right vertical map is defined using $(j_{\flat})_{(k)}$ for the map and to define the tensor product in the codomain.  All maps in the above diagram are considered as $\Z[\pi/\pi^{(k)}]$-module homomorphisms, using, when required, the map
\[(j_{\flat})_{(k)} \colon \pi/\pi^{(k)} \to \pi'/(\pi')^{(k)},\]
to define the $\Z[\pi/\pi^{(k)}]$-module structures.
\qed\end{definition}

We conjecture that this defines an equivalence relation, and that this enables us to define an $(n+1)$th order algebraic concordance group $\mc{AC}_{n+1}$.  It seems likely that it will be necessary to have more control on the longitude $l$ of a knot, perhaps including it as part of the data.  We know (Lemma \ref{lemma:longitude}) that $l \in \pi_1(X)^{(2)}$, but that typically $l \notin \pi_1(X)^{(3)}$.  Therefore it will play more of a r\^{o}le in higher order obstruction groups: we can also no longer take $l_b = l_a^{-1}$.

We conjecture that the group $\mc{AC}_{n+1}$ fits into a diagram
\[\xymatrix @R+1cm @C+1cm{
\C \ar[r] \ar @{->>} [d] & \mathcal{AC}_{n+1} \ar@{-->}[d]  \\
\C/\mathcal{F}_{(n.5)} \ar@{-->}[r]  \ar[ur] & \mathcal{COT}_{(\C/n.5)},
}\]
with solid arrows as group homomorphisms and dotted arrows as morphisms of pointed sets, analogously to the results of Chapters \ref{chapter:one_point_five_solvable_knots} and \ref{Chapter:extractingCOTobstructions}.  The pointed set $\mc{COT}_{(\C/n.5)}$ is defined, analogously to $\mc{COT}_{\C/1.5}$, to be the equivalence classes of disjoint unions taken over all the possible choices of representations:
\begin{multline*}\bigsqcup_{(p_1,\dots, p_n) \in \bigoplus_{i=1}^n \frac{\pi^{(i)}}{\pi^{(i+1)}}}\,((N,\theta),\xi)_{(p_1,\dots,p_n)} \\ \subset \bigsqcup_{(p_1,\dots, p_n) \in \bigoplus_{i=1}^n \frac{\pi^{(i)}}{\pi^{(i+1)}}}\,L^4_{\pi,(\Bl_1,\dots,\Bl_n),(p_1,\dots,p_n)}(\Q\G_n,\Q\G_n - \{0\}).\end{multline*}
The higher order Blanchfield forms $\Bl_k$ and universally $(k)$-solvable groups $\G_k$ are defined in \cite[Sections~2~and~3]{COT}.  I have only checked the details for the material in this appendix for $n=1$, but, as mentioned above, hope to prove the general result in future work.



\backmatter
\bibliographystyle{amsalpha}
\bibliography{markbib1}

\providecommand{\bysame}{\leavevmode\hbox to3em{\hrulefill}\thinspace}
\providecommand{\MR}{\relax\ifhmode\unskip\space\fi MR }
\providecommand{\MRhref}[2]{%
  \href{http://www.ams.org/mathscinet-getitem?mr=#1}{#2}
}
\providecommand{\href}[2]{#2}
\begin{thebibliography}{CHL09b}

\bibitem[Bla57]{Blanchfield}
R.~C. Blanchfield, \emph{{I}ntersection theory of manifolds with operators with
  applications to knot theory}, Ann. of Math. (2) \textbf{65} (1957), 340--356.

\bibitem[Bre93]{Bredon}
Glen Bredon, \emph{Topology and geometry}, Springer-Verlag, 1993.

\bibitem[BZ86]{BZ}
G.~Burde and H.~Zieschang, \emph{Knots}, De Gruyter, 1986.

\bibitem[{C}as86]{Casson}
{A}. {C}asson, \emph{{T}hree lectures on new infinite constructions in
  4-dimensional manifolds (notes prepared by {L}. {G}uillou)}, A la {R}echerche
  de la {T}opologie {P}erdue, Progr. Math., vol.~62, Birkhauser Boston, 1986,
  pp.~201--244.

\bibitem[CFK10]{chafriedlkim}
J.~C. Cha, S.~Friedl, and T.~Kim, \emph{The cobordism group of homology
  cylinders}, arxiv:math/GT/0909.5580v3 (2010).

\bibitem[CG86]{CassonGordon}
{A}. {C}asson and {C}.~{M}c{A}. {G}ordon, \emph{{C}obordism of classical
  knots}, A la {R}echerche de la {T}opologie {P}erdue, Progr. Math., vol.~62,
  Birkhauser Boston, 1986, pp.~181--199.

\bibitem[CHL09a]{cohale}
T.~D. Cochran, S.~Harvey, and C.~Leidy, \emph{Knot concordance and higher-order
  {B}lanchfield duality}, Geometry and Topology \textbf{13} (2009), 1419--1482.

\bibitem[CHL09b]{cohale3_primary}
\bysame, \emph{Primary decomposition and the fractal nature of knot
  concordance}, http://arxiv.org/abs/0906.1373 (2009).

\bibitem[CHL10]{cohale2}
\bysame, \emph{Derivatives of knots and second-order signatures}, Algebraic and
  Geometric Topology \textbf{10} (2010), 739--787.

\bibitem[CO09]{chaorr}
J.~C. Cha and K.~E. Orr, \emph{${L}^{(2)}$-signatures, homology localization
  and amenable groups}, To appear in Communications on Pure and Applied Math.,
  Preprint: arXiv:09103.3700 (2009).

\bibitem[Coc04]{Cochran}
T.~D. Cochran, \emph{Non-commutative knot theory}, Alg. Geom. Topology
  \textbf{4} (2004), 347--398.

\bibitem[Col10]{collins2010}
J.~Collins, \emph{The {$L^{(2)}$} signature of torus knots},
  http://arxiv.org/abs/1001.1329 (2010).

\bibitem[COT03]{COT}
T.~D. Cochran, K.~E. Orr, and P.~Teichner, \emph{{K}not concordance, {W}hitney
  towers and {$L^{(2)}$} signatures}, Ann. of Math. (2) \textbf{157, no. 2}
  (2003), 433--519.

\bibitem[COT04]{COT2}
\bysame, \emph{{S}tructure in the classical knot concordance group},
  Commentarii Math. Helv. \textbf{79} (2004), 105--123.

\bibitem[CT07]{CochranTeichner}
T.~D. Cochran and P.~Teichner, \emph{Knot concordance and {V}on {N}eumann
  $\rho$-invariants}, Duke Math. J. \textbf{137} (2007), 337--379.

\bibitem[Dav85]{Davis}
J.~F. Davis, \emph{Higher diagonal approximations and skeletons of {$K(\pi,
  1)$}'s}, Algebraic and Geometric Topology (New Brunswick, N.J., 1983),
  Lecture Notes in Math., vol. 1126, Springer, Berlin, 1985, pp.~51--61.

\bibitem[FM66]{fm}
R.~H. Fox and J.~W. Milnor, \emph{Singularities of {$2$}-spheres in {$4$}-space
  and cobordism of knots}, Osaka J. Math. \textbf{3} (1966), 257--267.

\bibitem[Fox53]{Fox2}
R.~H. Fox, \emph{Free differential calculus. {I}. {D}erivation in the free
  group ring}, Ann. of Math. (2) \textbf{57} (1953), 547--560.

\bibitem[Fox62]{Fox1}
\bysame, \emph{A quick trip through knot theory}, Topology of 3-manifolds and
  related topics (1962), 120--167.

\bibitem[FQ90]{FQ}
M.~Freedman and F.~Quinn, \emph{Topology of four manifolds}, Princeton Univ.
  Press, 1990.

\bibitem[Fri03]{Friedl}
S.~Friedl, \emph{Eta invariants as sliceness obstructions and their relation to
  {C}asson-{G}ordon invariants}, PhD Thesis, Brandeis University, 2003.

\bibitem[Gab86]{Gabai}
D.~Gabai, \emph{Foliations and surgery on knots}, Bulletins of the AMS
  \textbf{15} (1986), 83--87.

\bibitem[Gil83]{Gilmer}
P.~Gilmer, \emph{Slice knots in {$S^3$}}, Quart. J. Math. \textbf{34} (1983),
  305--322.

\bibitem[GT03]{GT03}
S.~Garoufalidis and P.~Teichner, \emph{On knots with trivial {A}lexander
  polynomial}, arXiv:math/0206023v3 [math.GT] (2003).

\bibitem[Hat01]{Hatcher}
A.~Hatcher, \emph{Algebraic topology}, CUP, 2001.

\bibitem[Hem76]{Hempel}
J.~Hempel, \emph{3-manifolds}, Ann. Maths Studies 86, PUP, 1976.

\bibitem[HK97]{HigsonKasparov}
N.~Higson and G.~Kasparov, \emph{Operator {$K$}-theory for groups which act
  properly and isometrically on a {H}ilbert space}, E.R.A. Amer. Math. Soc.
  \textbf{3} (1997), 131--142.

\bibitem[HS71]{HS}
P.~J. Hilton and U.~Stammbach, \emph{A course in homological algebra}, Grad.
  Texts in Math. 4, Springer Verlag, New York, 1971.

\bibitem[Kea75a]{Kearton2}
C.~Kearton, \emph{Blanchfield duality and simple knots}, Trans. Amer. Math.
  Soc. \textbf{202} (1975), 141--160.

\bibitem[Kea75b]{Kearton}
\bysame, \emph{Cobordism of knots and {B}lanchfield duality}, J. London Math.
  Soc. (2) \textbf{10} (1975), no.~4, 406--408.

\bibitem[Ker65]{Kervaire2}
M.~Kervaire, \emph{Les n\oe uds de dimensions sup\'erieures}, Bull. Soc. Math.
  France \textbf{93} (1965), 225--271.

\bibitem[KS77]{KS}
R.~Kirby and L.~C. Siebenmann, \emph{Foundational essays on topological
  manifolds, smoothings, and triangulations}, Princeton University Press,
  Princeton, N.J., 1977, With notes by John Milnor and Michael Atiyah, Annals
  of Mathematics Studies, No. 88.

\bibitem[Let95]{Letsche}
C.~F. Letsche, \emph{Eta invariants and the knot-slice problem}, PhD Thesis,
  Indiana University, 1995.

\bibitem[Lev69]{Levine}
J.~Levine, \emph{Knot cobordism groups in codimension two}, Comment. Math.
  Helv. \textbf{44} (1969), 229--244.

\bibitem[Lev77]{Levine2}
\bysame, \emph{Knot modules}, Trans. Amer. Math. Soc. \textbf{229} (1977),
  1--50.

\bibitem[Mil64]{MilnorMicrobundles}
J.~W. Milnor, \emph{Microbundles. {I}}, Topology \textbf{3} (1964), no.~suppl.
  1, 53--80.

\bibitem[Pap57]{Papa57}
C.~D. Papakyriakopoulos, \emph{On {D}ehn's lemma and the asphericity of knots},
  Annals of Mathematics \textbf{66} (1957), no.~1, 1--26.

\bibitem[Ran80]{Ranicki3}
A.~A. Ranicki, \emph{{T}he algebraic theory of surgery {I} and {I}{I}}, Proc.
  London Math. Soc. \textbf{(3) 40} (1980), 87--283.

\bibitem[Ran81]{Ranicki2}
\bysame, \emph{{E}xact sequences in the algebraic theory of surgery},
  Mathematical Notes 26, PUP, 1981.

\bibitem[Ran02]{Ranicki}
\bysame, \emph{{A}lgebraic and geometric surgery}, OUP, 2002.

\bibitem[Ran03]{Ranicki4}
\bysame, \emph{{B}lanchfield and {S}eifert algebras in high--dimensional knot
  theory}, Moscow Mathematical Journal (2003), no.~3, 1333--1367.

\bibitem[San06]{Sanderson}
B.~Sanderson, \emph{{W}eb notes on knot theory - supplementary material on the
  {A}lexander polynomial, http://www.warwick.ac.uk/$\sim$maaac/add11.html},
  2006.

\bibitem[Sco05]{Scorpan}
A.~Scorpan, \emph{The wild world of 4-manifolds.}, AMS, 2005.

\bibitem[Ste75]{Stenstrom}
B.~Stenstrom, \emph{Rings of quotients}, Springer Verlag, New York, 1975.

\bibitem[Sto77]{Stoltzfus}
N.~W. Stoltzfus, \emph{Unraveling the integral knot concordance group}, Mem.
  Amer. Math. Soc. \textbf{12} (1977), no.~192, iv+91.

\bibitem[Tei01]{sliceknots2}
P~Teichner, \emph{Lecture notes on knot concordance},
  http://www.maths.ed.ac.uk/$\sim$s0783888/sliceknots2.pdf (2001).

\bibitem[Tro62]{Trotter}
H.~F. Trotter, \emph{{H}omology of group systems with applications to knot
  theory}, Ann. of Math. (2) \textbf{76} (1962), 464--498.

\end{thebibliography}
\include{index}

\end{document}